\def\derpar#1#2{\frac{\partial#1}{\partial#2}}
\def\R{\mathbb R}
\def\N{\mathbb N}
\def\T{\mathbb T}
\def\C{\mathbb C}
\def\Z{\mathbb Z}
\def\var{\varepsilon}
\def\eps{\varepsilon}
\def\pa{\partial}
\def\om{\omega}
\def\Om{\Omega}
\def\om{\omega}
\def\OmX{\Omega X} 
\def\OmV{\Omega V}
\def\ov{\overline}
\def\cal{\mathcal}
\def\hat{\widehat}
\def\eps{\varepsilon}
\def\tilde{\widetilde}
\def\Id{{\rm Id}\,}
\def\dps{\displaystyle}
\def\Lip{{\rm Lip}}
\def\<{\langle}
\def\>{\rangle}
\def\dag{\dagger}
\def\ddag{\ddagger}
\def\und{\underline}
\def\sign{{\rm sign\,}}
\def\nohat#1{#1}
\def\RR{\mathcal{R}}
\def\K{\mathcal{K}}
\def\II{{\rm I\!I}}
\def\III{{\rm I\!I\!I}}
\def\Tm{T_{\rm max}}
\def\trn{\|\!|} 
\def\btrn{\Big\|\!\Big|}
\def\qn{\|\!\|} 
\def\Bqn{\Big\|\!\Big\|}
\def\bqn{\big\|\!\big\|}
\def\bbqn{\bigg\|\!\bigg\|}
\def\qqquad{\qquad\qquad\qquad\qquad\qquad}
\def\Cnk#1#2{\small \bigl(\!\begin{array}{c} #1 
                \\ #2 \end{array} \!\bigr)} 
\def\cF{{\cal F}} 
\def\cC{{\cal C}}
\def\cZ{{\cal Z}}
\def\cY{{\cal Y}}
\def\cX{{\cal X}}
\def\cW{{\cal W}}
\def\med{\medskip}
\def\sm{\smallskip}
\def\bul{$\bullet$\ }
\def\begeq{\begin{equation}}
\def\endeq{\end{equation}}
\def\begar{\begin{eqnarray}}
\def\endar{\end{eqnarray}}
\def\begar*{\begin{eqnarray*}}
\def\endar*{\end{eqnarray*}}
\def\begal{\begin{align}}
\def\endal{\end{align}}
\def\begal*{\begin{align*}}
\def\endal*{\end{align*}}
\newtheorem{Thm}{Theorem}
\newtheorem{Lem}[Thm]{Lemma}
\newtheorem{Cor}[Thm]{Corollary}
\newtheorem{Prop}[Thm]{Proposition}
\numberwithin{equation}{section}
\numberwithin{Thm}{section}
\theoremstyle{definition}
\newtheorem{Def}[Thm]{Definition}
\newtheorem{Rk}[Thm]{Remark}
\theoremstyle{remark}
\newtheorem{Ex}[Thm]{Example}
\newtheorem*{Thm*}{Theorem}
\newtheorem*{Lem*}{Lemma}
\newtheorem*{Conj*}{Conjecture}
\newtheorem*{Cor*}{Corollary}
\newtheorem*{Def*}{Definition}
\newtheorem*{Prop*}{Proposition}
\newtheorem*{Exo*}{Exercise}
\newtheorem*{Exs*}{Examples}
\newtheorem*{Ex*}{Example}
\newtheorem*{Rk*}{Remark}
\newtheorem*{Rks*}{Remarks}
\def\signcv{\bigskip \begin{center} {\sc C\'edric Villani\par\vspace{3mm}
ENS Lyon \& Institut Universitaire de France\par
UMPA, UMR CNRS 5669\par
46 all\'ee d'Italie\par
69364 Lyon Cedex 07\par
FRANCE\par\vspace{3mm}
e-mail:} \tt{cvillani@umpa.ens-lyon.fr} \end{center}}
\def\signcm{\bigskip \begin{center} {\sc Cl\'ement
      Mouhot\par\vspace{3mm}
University of Cambridge\par
DAMTP, Centre for Mathematical Sciences\par
Wilberforce Road\par
Cambridge CB3 0WA\par
ENGLAND\par
{\it On leave from:}\par
\'ENS Paris \& CNRS\par
DMA, UMR CNRS 8553\par
45 rue d'Ulm\par
F 75320 Paris cedex 05\par
FRANCE\par\vspace{3mm}
e-mail:} \tt{Clement.Mouhot@ens.fr} \end{center}}
\begin{document}

\title[On Landau damping]{On Landau damping \\[1mm]
  {(Second series, \today)}}

\vspace*{-8mm}

\author{C. Mouhot}
\author{C. Villani}

\begin{abstract} Going beyond the linearized study has been a
  longstanding problem in the theory of Landau damping.  In this paper
  we establish exponential Landau damping in analytic regularity.  The
  damping phenomenon is reinterpreted in terms of transfer of
  regularity between kinetic and spatial variables, rather than
  exchanges of energy; phase mixing is the driving mechanism.  The analysis involves new families of analytic
  norms, measuring regularity by comparison with solutions of the free
  transport equation; new functional inequalities; a control of
  nonlinear echoes; sharp scattering estimates; and a Newton
  approximation scheme. Our results hold for any potential no more
  singular than Coulomb or Newton interaction; the limit cases are
  included with specific technical effort. As a side result, the
  stability of homogeneous equilibria of the nonlinear Vlasov equation
  is established under sharp assumptions. We point out the strong
  analogy with the KAM theory, and discuss physical implications.
\end{abstract}

\maketitle

\vspace*{-13mm}

\tableofcontents

\vspace*{-11mm}

{\bf Keywords.} Landau damping; plasma physics; galactic dynamics;
Vlasov-Poisson equation.

{\bf AMS Subject Classification.} 35B35 (35J05, 62E20, 70F15,
82C99, 85A05, 82D10, 35Q60, 76X05).

\bigskip

Landau damping may be the single most famous mystery of classical
plasma physics. For the past sixty years it has been treated in the
linear setting at various degrees of rigor; but its nonlinear version
has remained elusive, since the only available results
\cite{caglimaff:VP:98,HV:landau} prove the existence of {\em some}
damped solutions, without telling anything about their genericity.

In the present work we close this gap by treating the nonlinear
version of Landau damping in arbitrarily large times, under
assumptions which cover both attractive and repulsive interactions, of
any regularity down to Coulomb/Newton.

This will lead us to discover a distinctive mathematical theory of
Landau damping, complete with its own functional spaces and functional
inequalities.  Let us make it clear that this study is not just for
the sake of mathematical rigor: indeed, we shall get new insights in
the physics of the problem, and identify new mathematical phenomena.

The plan of the paper is as follows.

In Section \ref{sec:intro} we provide an introduction to Landau
damping, including historical comments and a review of the existing
literature. Then in Section \ref{sec:main}, we state and comment on
our main result about ``nonlinear Landau damping'' (Theorem
\ref{thmmain}).

In Section \ref{sec:linear} we provide a rather complete treatment of
linear Landau damping, slightly improving on the existing results both
in generality and simplicity.  This section can be read independently
of the rest.

In Section \ref{sec:analytic} we define the spaces of analytic
functions which are used in the remainder of the paper. The careful
choice of norms is one of the keys of our analysis; the complexity of
the problem will naturally lead us to work with norms having up to 5
parameters. As a first application, we shall revisit linear Landau
damping within this framework.

In Sections \ref{sec:scattering} to \ref{sec:response} we establish
four types of new estimates (scattering estimates, short-term and
long-term regularity extortion, echo control); these are the key
sections containing in particular the physically relevant new
material.

In Section \ref{sec:approx} we adapt the Newton algorithm to the
setting of the nonlinear Vlasov equation. Then in Sections
\ref{sec:local} to \ref{sec:coulomb} we establish some iterative
estimates along this scheme. (Section \ref{sec:coulomb} is devoted
specifically to a technical refinement allowing to handle
Coulomb/Newton interaction.)

From these estimates our main theorem is easily deduced in Section
\ref{sec:convergence}.

An extension to non-analytic perturbations is presented in Section
\ref{sec:NA}.

Some counterexamples and asymptotic expansions are studied in Section
\ref{sec:cex}.

Final comments about the scope and range of applicability of these
results are provided in Section \ref{sec:beyond}.  \sm

Even though it basically proves one main result, this paper is very
long. This is due partly to the intrinsic complexity and richness of
the problem, partly to the need to develop an adequate functional
theory from scratch, and partly to the inclusion of remarks,
explanations and comments intended to help the reader understand the
proof and the scope of the results. The whole process culminates in
the extremely technical iteration performed in Sections \ref{sec:iter}
and \ref{sec:coulomb}. A short summary of our results and methods of
proofs can be found in the expository paper \cite{MV:JMP}.

This project started from an unlikely conjunction of discussions of
the authors with various people, most notably Yan Guo, Dong Li, Freddy
Bouchet and \'Etienne Ghys. We also got crucial inspiration from the
books \cite{BT1,BT2} by James Binney and Scott Tremaine; and
\cite{AlGe} by Serge Alinhac and Patrick G\'erard. Warm thanks to
Julien Barr\'e, Jean Dolbeault, Thierry Gallay, Stephen Gustafson,
Gregory Hammett, Donald Lynden-Bell, Michael Sigal, \'Eric S\'er\'e
and especially Michael Kiessling for useful exchanges and references;
and to Francis Filbet and Irene Gamba for providing numerical
simulations. We are also grateful to Patrick Bernard, Freddy Bouchet,
Emanuele Caglioti, Yves Elskens, Yan Guo, Zhiwu Lin, Michael Loss, Peter
Markowich, Govind Menon, Yann Ollivier, Mario Pulvirenti, Jeff Rauch,
Igor Rodnianski, Peter Smereka, Yoshio Sone, Tom Spencer, and the team
of the Princeton Plasma Physics Laboratory for further constructive
discussions about our results. Finally, we acknowledge the generous
hospitality of several institutions: Brown University, where the first
author was introduced to Landau damping by Yan Guo in early 2005; the
Institute for Advanced Study in Princeton, who offered the second
author a serene atmosphere of work and concentration during the best
part of the preparation of this work; Cambridge University, who
provided repeated hospitality to the first author thanks to the Award
No. KUK-I1-007-43, funded by the King Abdullah University of Science
and Technology (KAUST); and the University of Michigan, where
conversations with Jeff Rauch and others triggered a significant
improvement of our results.

\section{Introduction to Landau damping}
\label{sec:intro}

\subsection{Discovery}

Under adequate assumptions (collisionless regime, nonrelativistic
motion, heavy ions, no magnetic field), a dilute plasma is well
described by the nonlinear Vlasov--Poisson equation 
\begeq\label{VP}
\derpar{f}{t} + v\cdot\nabla_x f + \frac{F}{m} \cdot\nabla_v f =0,
\endeq
where $f=f(t,x,v)\geq 0$ is the density of electrons in phase space
($x$=\, position, $v$=\, velocity), $m$ is the mass of an electron, and
$F=F(t,x)$ is the mean-field (self-consistent) electrostatic force: 
\begeq\label{FE} F =
-e \, E,\qquad E = \nabla \Delta^{-1} (4\pi\rho).
\endeq
Here $e>0$ is the absolute electron charge, $E=E(t,x)$ is the electric field,
and $\rho=\rho(t,x)$ is the density of charges 
\begeq\label{rhoe} \rho
= \rho_i - e \, \int f\,dv,
\endeq
$\rho_i$ being the density of charges due to ions.  This model and
its many variants are of tantamount importance in plasma physics
\cite{akhiezer,balescu:charged:63,KT:plasma,LL:kin:81}.

In contrast to models incorporating collisions
\cite{vill:handbook:02}, the Vlasov--Poisson equation is {\em
  time-reversible}. However, in 1946 Landau \cite{landau} stunned the
physical community by predicting an irreversible behavior on the basis
of this equation. This ``astonishing result'' (as it was called in \cite{stix})
relied on the solution of the Cauchy
problem for the linearized Vlasov--Poisson equation around a
spatially homogeneous Maxwellian (Gaussian) equilibrium.  Landau formally solved the
equation by means of Fourier and Laplace transforms, and after a study
of singularities in the complex plane, concluded that the electric
field decays exponentially fast; he further studied the rate of decay as a
function of the wave vector $k$. Landau's computations are reproduced
in \cite[Section~34]{LL:kin:81} or \cite[Section~4.2]{akhiezer}.

An alternative argument appears in \cite[Section 30]{LL:kin:81}: there
the thermodynamical formalism is used to compute the amount of heat
$Q$ which is dissipated when a (small) oscillating electric field
$E(t,x) = E\, e^{i (k\cdot x - \omega t)}$ ($k$ a wave vector,
$\omega>0$ a frequency) is applied to a plasma whose distribution
$f^0$ is homogeneous in space and isotropic in velocity space; the result is 
\begeq\label{Q} Q =
- |E|^2\, \,\frac{\pi m e^2 \omega}{|k|^2}\,
\phi'\left(\frac{\omega}{|k|}\right),
\endeq
where $\phi(v_1) = \int f^0(v_1,v_2,v_3)\,dv_2\,dv_3$. In particular,
\eqref{Q} is always positive (see the last remark in \cite[Section
30]{LL:kin:81}), which means that the system reacts against the perturbation, and thus possesses some
``active'' stabilization mechanism.

A third argument \cite[Section 32]{LL:kin:81} consists in studying the
dispersion relation, or equivalently searching for the (generalized)
eigenmodes of the linearized Vlasov--Poisson equation, now with
complex frequency $\omega$. After appropriate selection, these
eigenmodes are all {\em decaying} ($\Im \omega<0$) as $t\to\infty$.  This again suggests stability, although
in a somewhat weaker sense than the computation of heat release.

The first and third arguments also apply to the {\em gravitational}
Vlasov--Poisson equation, which is the main model for nonrelativistic
galactic dynamics. This equation is similar to \eqref{VP}, but now $m$
is the mass of a typical star (!), and $f$ is the density of stars in phase
space; moreover the first equation of \eqref{FE} and the relation
\eqref{rhoe} should be replaced by 
\begeq\label{FGrho} F = - {\cal G}  
m E, \qquad \rho = m \, \int f\,dv;
\endeq
where ${\cal G}$ is the gravitational constant, $E$ the gravitational
field, and $\rho$ the density of mass. The books by Binney and
Tremaine \cite{BT1,BT2} constitute excellent references about the use
of the Vlasov--Poisson equation in stellar dynamics --- where it is
often called the ``collisionless Boltzmann equation'', see footnote on
\cite[p.~276]{BT2}.  On ``intermediate'' time scales, the Vlasov--Poisson equation is 
thought to be an accurate description of very large star systems \cite{FP:book}, which are now
accessible to numerical simulations.

Since the work of Lynden-Bell \cite{LB:violent} it has been
recognized that Landau damping, and wilder collisionless relaxation
processes generically dubbed ``violent relaxation'', constitute a
fundamental stabilizing ingredient of galactic dynamics.  Without
these still poorly understood mechanisms, the surprisingly short time
scales for relaxation of the galaxies would remain unexplained.

One main difference between the electrostatic and the gravitational interactions is that in the latter case 
Landau damping should occur only at wavelengths smaller than the {\bf Jeans length} \cite[Section 5.2]{BT2}; 
beyond this scale, even for Maxwellian velocity profiles, the Jeans instability takes over and
governs planet and galaxy aggregation.\footnote{or at least would do, if galactic matter was smoothly distributed; 
in presence of ``microscopic'' heterogeneities, a phase transition for aggregation can occur far below this scale
\cite{kiessling:perso}. In the language of statistical mechanics, the Jeans length corresponds to a ``spinodal point'' 
rather than a phase transition~\cite{SKS}.}

On the contrary, in (classical) plasma physics, Landau damping should hold at all scales under suitable assumptions on 
the velocity profile; and in fact one is in general not interested in scales smaller than the {\bf Debye length},
which is roughly defined in the same way as the Jeans length.

Nowadays, not only has Landau damping become a cornerstone of plasma
physics\footnote{Ryutov \cite{ryutov} estimated in 1998 that
  ``approximately every third paper on plasma physics and its
  applications contains a direct reference to Landau damping''.}, but
it has also made its way in other areas of physics (astrophysics, but
also wind waves, fluids, superfluids,\ldots) and even biophysics.
One may
consult the concise survey papers \cite{ONC,ryutov,vekstein} for a
discussion of its influence and some applications.

\subsection{Interpretation}

True to his legend, Landau deduced the damping effect from a mathematical-style 
study\footnote{not completely rigorous from the mathematical point of view, but formally correct, in contrast to
the previous studies by Landau's fellow physicists --- as Landau himself pointed out without mercy \cite{landau}.},
without bothering to give a physical explanation of the underlying mechanism. His arguments anyway yield exact formulas,
which in principle can be checked experimentally, and indeed provide good qualitative agreement with
observations \cite{malmbergwharton}.

A first set of problems in the interpretation is related to the arrow
of time.  In the thermodynamic argument, the exterior field is
awkwardly imposed from time $-\infty$ on; moreover, reconciling a
positive energy dissipation with the reversibility of the equation is
not obvious. In the dispersion argument, one has to arbitrarily impose
the location of the singularities taking into account the arrow of
time; at mathematical level this is equivalent to a choice of
principal value:
\[ \frac1{z-i\, 0} = {\rm p.v.} \left(\frac1{z}\right) + i\pi \,
\delta_0.\] 
This is not so serious, but then the spectral study
requires some thinking. All in all, the most convincing argument remains
Landau's original one, since it is based only on the study of the
Cauchy problem, which makes more physical sense than the study of the
dispersion relation (see the remark in \cite[p.~682]{BT1}).

A more fundamental issue resides in the use of analytic function
theory, with contour integration, singularities and residue computation, which has
played a major role in the theory of the Vlasov--Poisson equation ever
since Landau \cite[Chapter 32]{LL:kin:81} \cite[Subsection
5.2.4]{BT2} and helps little, if at all, to understand the underlying
physical mechanism.\footnote{Van Kampen \cite{VK:plasma} summarizes the conceptual
problems posed to his contemporaries by Landau's treatment, and comments on more
or less clumsy attempts to resolve the apparent paradox caused by the singularity
in the complex plane.}

The most popular interpretation of Landau damping considers the
phenomenon from an energetic point of view, as the result of the
interaction of a plasma wave with particles of nearby velocity
\cite[p.~18]{landau:terhaar:book} \cite[p.~412]{BT2}
\cite[Section~4.2.3]{akhiezer} \cite[p.~127]{LL:kin:81}.  In a
nutshell, the argument says that dominant exchanges occur with those
particles which are ``trapped'' by the wave because their velocity is
close to the wave velocity. If the distribution function is a
decreasing function of $|v|$, among trapped particles more are
accelerated than are decelerated, so the wave loses energy to the
plasma --- or the plasma surfs on the wave --- and the wave is
damped by the interaction.

Appealing as this image may seem, to a mathematically-oriented mind it
will probably make little sense at first hearing.\footnote{Escande 
\cite[Chapter 4, Footnote 6]{escande:houches} 
points out some misconceptions associated with the surfer image.}
A more down-to-Earth 
interpretation emerged in the fifties from the ``wave packet'' analysis of 
Van Kampen \cite{VK:plasma} and Case \cite{Case:plasma}: Landau damping would result 
from {\bf phase mixing}.  This phenomenon, well-known in galactic dynamics,
describes the damping of oscillations occurring when a continuum is
transported in phase space along an anharmonic Hamiltonian flow
\cite[pp. 379--380]{BT2}.  The mixing results from the simple fact
that particles following different orbits travel at different 
angular\footnote{``Angular'' here refers to action-angle variables, and applies
even for straight trajectories in a torus.}
speeds, so perturbations start ``spiralling'' (see Figure 4.27 on
\cite[p. 379]{BT2}) and homogenize by fast spatial oscillation.  From
the mathematical point of view, phase mixing results in {\em weak
  convergence}; from the physical point of view, this is just the
convergence of observables, defined as averages over the velocity space
(this is sometimes called ``convergence in the mean'').

At first sight, both points of view seem hardly compatible: Landau's
scenario suggests a very smooth process, while phase mixing
involves tremendous oscillations.  The coexistence of these two
interpretations did generate some speculation on the nature of the damping, 
and on its relation to phase mixing, see e.g. \cite{kandrup:violent:98} or
\cite[p.~413]{BT2}.  There is actually no contradiction between the
two points of view: many physicists have rightly pointed out that that
Landau damping should come with filamentation and oscillations of the
distribution function \cite[p.~962]{VK:plasma} \cite[p.~141]{LL:kin:81}
\cite[Vol.~1, pp.~223--224]{akhiezer} \cite[pp.~294--295]{LB:damping}.
Nowadays these oscillations can be visualized spectacularly thanks to
deterministic numerical schemes, see e.g. \cite{ZGS:landau}
\cite[Fig.~3]{HGMM} \cite{filbet:web}. We reproduce below some examples provided by Filbet.

\begin{figure}[htbp]
\begin{minipage}[t]{.4\linewidth}
  \includegraphics[height=5cm]{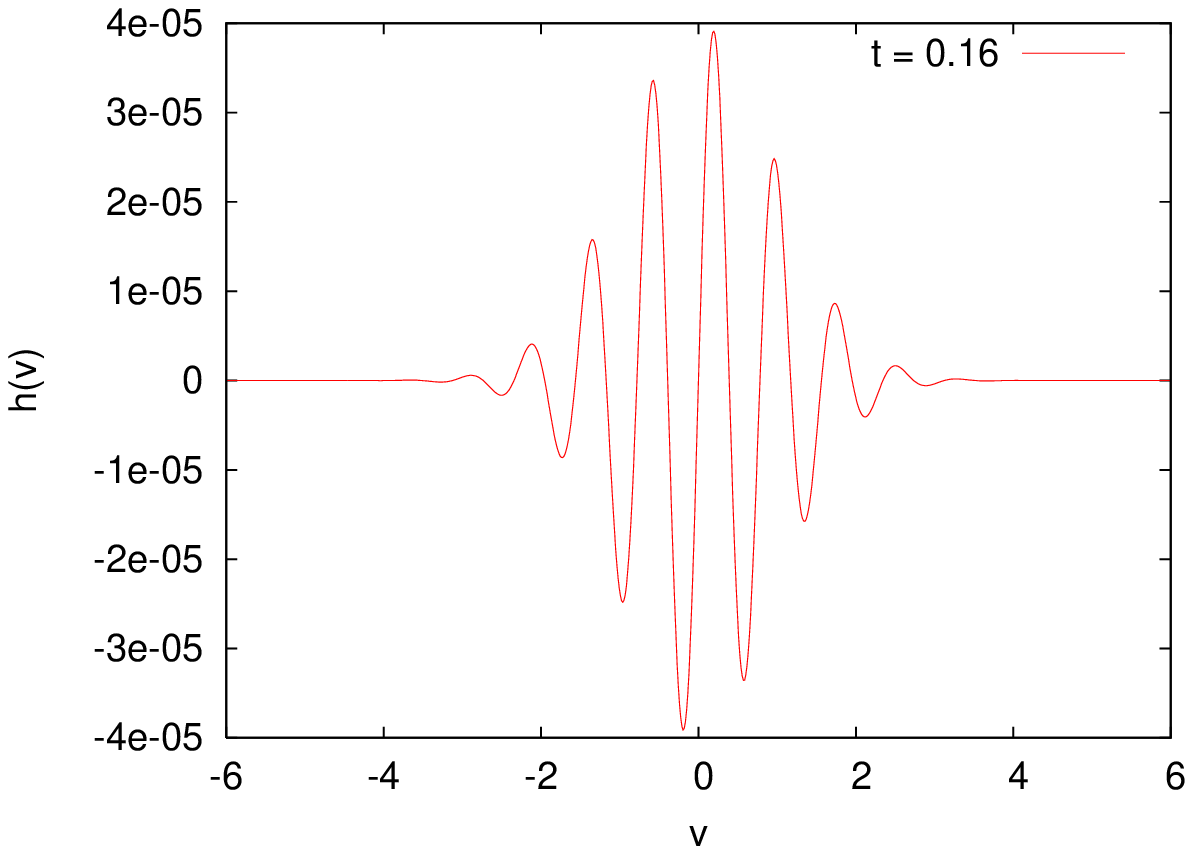}
\end{minipage}\hfill
\begin{minipage}[t]{.4\linewidth}
  \includegraphics[height=5cm]{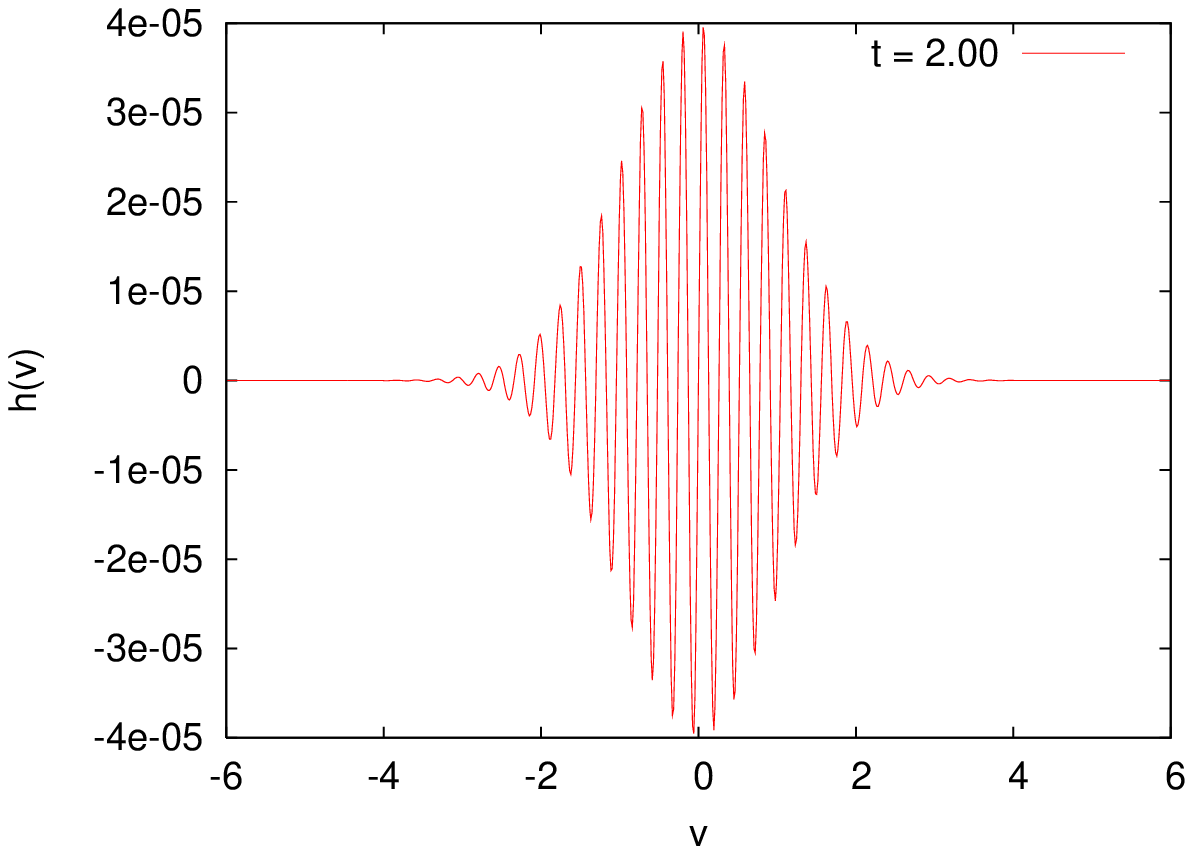}
\end{minipage}\hfill \hspace*{1mm}
\caption{A slice of the distribution function (relative to a homogeneous equilibrium)
for gravitational Landau damping, at two different times.}
\label{fig1}
\end{figure}

\begin{figure}[htbp]
\begin{minipage}[t]{.4\linewidth}
  \includegraphics[height=5cm]{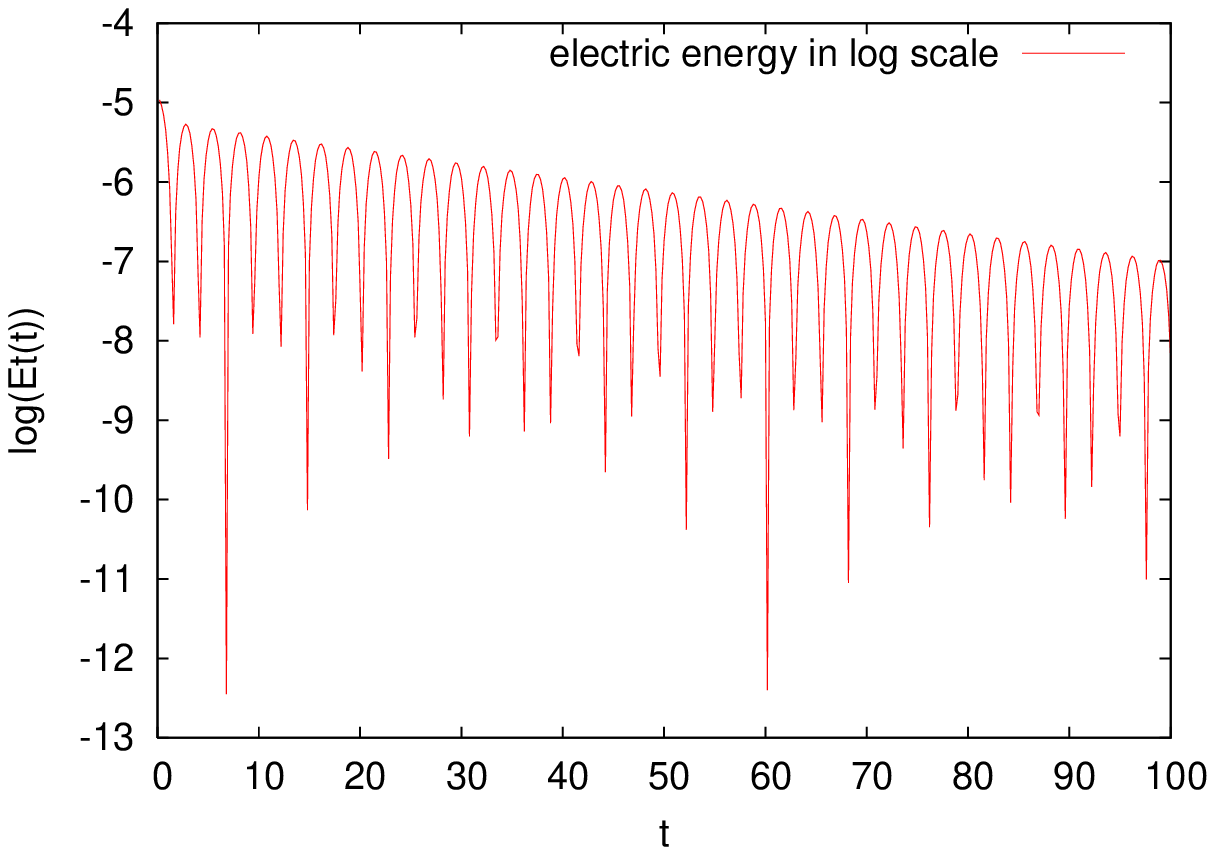}
\end{minipage}\hfill
\begin{minipage}[t]{.4\linewidth}
  \includegraphics[height=5cm]{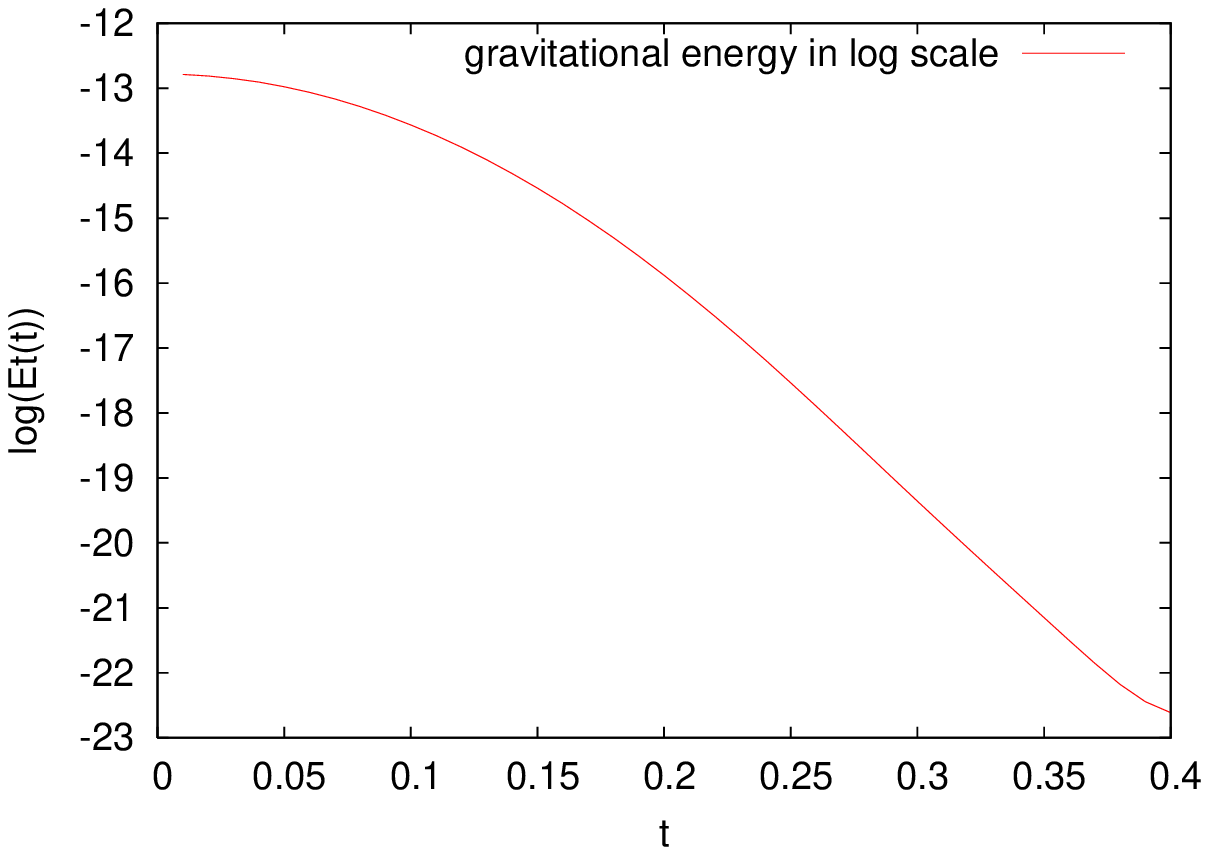}
\end{minipage}\hfill \hspace*{1mm}
\caption{Time-evolution of the norm of the field, for electrostatic
  (on the left) and gravitational (on the right) interactions. Notice
  the fast Langmuir oscillations in the electrostatic case.}
\label{fig2}
\end{figure}

In any case, there is still no definite interpretation of Landau damping: as noted by Ryutov
\cite[Section 9]{ryutov}, papers devoted to the interpretation and teaching of Landau damping
were still appearing regularly fifty years after its discovery; to quote just a couple
of more recent examples let us mention works by Elskens and 
Escande \cite{elskens:fields,EE:book,escande:houches}.
The present paper will also contribute a new point of view.

\subsection{Range of validity}

The following issues are addressed in the literature
\cite{isichenko:nonlinearlandau,kandrup:violent:98,manfredi:landau:97,ZGS:landau}
and slightly controversial: \sm

\bul Does Landau damping really hold for gravitational interaction?
The case seems thinner in this situation than for plasma interaction,
all the more that there are many instability results in the
gravitational context; up to now there has been no
consensus among mathematical physicists \cite{rein:perso}.
(Numerical evidence is not conclusive because of the difficulty of accurate simulations
in very large time --- even in one dimension of space.)
\sm

\bul Does the damping hold for unbounded systems? Counterexamples from
\cite{glasseyschaeffer:decayvlasov:94,glasseyschaeffer:decayvlasov:95}
show that some kind of confinement is necessary, even in the
electrostatic case. More precisely, Glassey and Schaeffer show that a
solution of the linearized Vlasov--Poisson equation in the whole space
(linearized around a homogeneous equilibrium $f^0$ of infinite mass)
decays at best like $O(t^{-1})$, modulo logarithmic corrections, for
$f^0(v) = c/(1+|v|^2)$; and like $O((\log t)^{-\alpha})$ if $f^0$ is a
Gaussian. In fact, Landau's original calculations already indicated
that the damping is {\em extremely} weak at large wavenumbers; see the
discussion in \cite[Section 32]{LL:kin:81}. Of course, in the
gravitational case, this is even more dramatic because of the Jeans
instability.  \sm

\bul Does convergence hold in infinite time for the solution of the
``full'' nonlinear equation?  This is not clear at all since there is
no mechanism that would keep the distribution close to the original
equilibrium {\em for all
  times}.  
Some authors do not believe that there is convergence as $t\to\infty$;
others believe that there is convergence but argue that it should be
very slow \cite{isichenko:nonlinearlandau}, say $O(1/t)$. In the first
mathematically rigorous study of the subject, Backus \cite{backus} notes 
that in general the linear and nonlinear evolution break apart after some 
(not very large) time, and questions the validity of the
linearization.\footnote{From the abstract: ``The linear theory predicts that in stable plasmas the neglected
term will grow linearly with time at a rate proportional to the initial disturbance amplitude,
destroying the validity of the linear theory, and vitiating positive conclusions about stability
based on it.''} O'Neil \cite{oneil} argues that relaxation holds in
the ``quasilinear regime'' on larger time scales, when the ``trapping
time'' (roughly proportional the inverse square root of the size of
the perturbation) is much smaller than the damping time.  Other
speculations and arguments related to trapping appear in many sources,
e.g. \cite{manfredi:landau:97,MDRS}. Kaganovich \cite{kaganovich} argues that nonlinear
effects may quantitatively affect Landau damping related phenomena by
several orders of magnitude.

The so-called ``quasilinear relaxation theory''
\cite[Section~49]{LL:kin:81} \cite[Section~9.1.2]{akhiezer}
\cite[Chapter~10]{KT:plasma} uses second-order approximation of the
Vlasov equation to predict the convergence of the spatial average of
the distribution function. The procedure is most esoteric, involving
averaging over statistical ensembles, and diffusion equations with
discontinuous coefficients, acting only near the resonance velocity
for particle-wave exchanges. Because of these discontinuities, the
predicted asymptotic state is discontinuous, and collisions are
invoked to restore smoothness.  Linear Fokker--Planck
equations\footnote{These equations act on some ensemble average of the
  distribution; they are different from the Vlasov--Landau equation.}
in velocity space have also been used in astrophysics
\cite[p.~111]{LB:violent}, but only on phenomenological grounds (the
{\it ad hoc} addition of a friction term leading to a Gaussian
stationary state); and this procedure has been exported to the study
of two-dimensional incompressible fluids \cite{chavanis:euler,CSR}.

Even if it were more rigorous, quasilinear theory only aims at
second-order corrections. But the effect of higher order perturbations
might be even worse: think of something like $e^{-t} \sum_n (\var^n
t^n)/\sqrt{n!}$: truncation at any order in $\var$ converges
exponentially fast as $t\to\infty$, but the whole sum diverges to
infinity.

Careful numerical simulation \cite{ZGS:landau} seems to show that the
solution of the nonlinear Vlasov--Poisson equation does converge to a
spatially homogeneous distribution, but only as long as the size of
the perturbation is small enough. We shall call this phenomenon
{\bf nonlinear Landau damping}. This terminology summarizes well the
problem, still it is subject to criticism since (a) Landau himself
sticked to the linear case and did not discuss the large-time
convergence of the data; (b) damping is expected to hold when the
regime is close to linear, but not necessarily when the nonlinear term
dominates\footnote{although phase mixing might still play a crucial
  role in violent relaxation or other unclassified nonlinear
  phenomena.}; and (c) this expression is also used to
designate a related but different phenomenon \cite[Section~10.1.3]{akhiezer}.\sm

\bul Is Landau damping related to the more classical notion of
stability in orbital sense?  Orbital stability means that the system,
slightly perturbed at initial time from an equilibrium distribution,
will always remain close to this equilibrium. Even in the favorable
electrostatic case, stability is not granted; the most prominent
phenomenon being the Penrose instability \cite{penrose:instability}
according to which a distribution with two deep bumps may be
unstable. In the more subtle gravitational case, various stability and
instability criteria are associated with the names of Chandrasekhar,
Antonov, Goodman, Doremus, Feix, Baumann, \ldots \cite[Section
7.4]{BT2}.  There is a widespread agreement (see e.g. the comments in
\cite{ZGS:landau}) that Landau damping and stability are related, and
that Landau damping cannot be hoped for if there is no orbital
stability.

\subsection{Conceptual problems}

Summarizing, we can identify three main conceptual obstacles which
make Landau damping mysterious, even sixty years after its
discovery: \sm

{\bf (i)} The equation is time-reversible, yet we are looking for an
irreversible behavior as $t\to +\infty$ (or $t\to -\infty$). The value
of the entropy does not change in time, which physically speaking
means that there is no loss of information in the distribution
function.  The spectacular experiment of the ``plasma echo''
illustrates this conservation of microscopic information
\cite{echo,echo:expe}: a plasma which is apparently back to
equilibrium after an initial disturbance, will react to a second
disturbance in a way that shows that it has not forgotten the first
one.\footnote{Interestingly enough, this experiment was suggested as a
  way to evaluate the strength of irreversible phenomena going on
  inside a plasma, e.g. the collision frequency, by measuring attenuations
  with respect to the predicted echo. See \cite{SOC} for an
  interesting application and striking pictures.}  And at the linear
level, if there are decaying modes, there also has to be growing
modes!  \sm

{\bf (ii)} When one perturbs an equilibrium, there is no mechanism
forcing the system to go back to this equilibrium in large time; so
there is no justification in the use of linearization to predict the
large-time behavior.\sm

{\bf (iii)} At the technical level, Landau damping (in Landau's own treatment) rests on
analyticity, and its most attractive interpretation is in terms of
phase mixing. But both phenomena are {\em incompatible in the
  large-time limit}: phase mixing implies an irreversible
deterioration of analyticity. For instance, it is easily checked that
free transport induces an exponential growth of analytic norms as
$t\to \infty$ --- except if the initial datum is spatially homogeneous.
In particular, the Vlasov--Poisson equation is {\bf unstable} (in
large time) in any norm incorporating velocity regularity.
(Space-averaging is one of the ingredients used in the quasilinear
theory to formally get rid of this instability.)  \med

How can we respond to these issues?

One way to solve the first problem (time-reversibility) is to appeal
to Van Kampen modes as in \cite[p.~415]{BT2}; however these are not so
physical, as noticed in \cite[p.~682]{BT1}. A simpler conceptual
solution is to invoke the notion of weak convergence: reversibility
manifests itself in the conservation of the information contained in
the density function; but information may be lost irreversibly in the
limit when we consider weak convergence. Weak convergence only
describes the long-time behavior of arbitrary observables, each of
which does not contain as much information as the density
function.\footnote{In Lynden-Bell's appealing words
  \cite[p.295]{LB:damping}, ``a system whose density has achieved a
  steady state will have information about its birth still stored in
  the peculiar velocities of its stars.''}  As a very simple
illustration, consider the time-reversible evolution defined by
$u(t,x) = e^{itx} u_i(x)$, and notice that it does converge weakly to
0 as $t\to \pm\infty$; this convergence is even exponentially fast if
the initial datum $u_i$ is analytic. (Our example is not chosen at
random: although it is extremely simple, it may be a good illustration
of what happens in phase mixing.)  In a way, microsocopic reversibility is
compatible with macroscopic irreversibility, provided that the
  ``microscopic regularity'' is destroyed asymptotically.

Still in respect to this reversibility, it should be noted that the
``dual'' mechanism of {\bf radiation}, according to which an
infinite-dimensional system may lose energy towards very large scales,
is relatively well understood and recognized as a crucial stability
mechanism \cite{BFS:radiation,SW:radiation}.

The second problem (lack of justification of the linearization)
only indicates that there is a wide gap between
the understanding of linear Landau damping, and that of the
nonlinear phenomenon. Even if unbounded corrections appear in the linearization
procedure, the effect of the large terms might be averaged over time
or other variables.

The third problem, maybe the most troubling from an analyst's perspective, does not dismiss the
phase mixing explanation, but suggests that we shall have to keep
track of the initial time, in the sense that a rigorous proof cannot
be based on the propagation of some phenomenon. 
This situation is of course in sharp contrast with the study of
dissipative systems possessing a Lyapunov functional, as do many
collisional kinetic equations \cite{vill:handbook:02,vill:hypoco}; it
will require completely different mathematical techniques.

\subsection{Previous mathematical results}

At the linear level, the first rigorous treatments of Landau damping were performed
in the sixties; see Saenz \cite{saenz} for rather complete results and a review
of earlier works. The theory was rediscovered and renewed at the beginning of the eighties 
by Degond \cite{degond:landau}, and Maslov and Fedoryuk \cite{MF:landaudamping}.  
In all these works, analytic arguments play a crucial role (for instance for the analytic extension of resolvent operators),
and asymptotic expansions for the electric field associated to the linearized Vlasov--Poisson equation are obtained.

Also at the linearized level, there are counterexamples by Glassey and Schaeffer
\cite{glasseyschaeffer:decayvlasov:94,glasseyschaeffer:decayvlasov:95}
showing that there is in general no exponential decay for the
linearized Vlasov--Poisson equation without analyticity, or without
confining.

In a {\em nonlinear} setting, the only rigorous treatments so far are
those by Caglioti--Maffei \cite{caglimaff:VP:98}, and later
Hwang--V\'elazquez \cite{HV:landau}. Both sets of authors work in the
one-dimensional torus and use fixed-point theorems and perturbative
arguments to prove the {\em existence} of a class of analytic
solutions behaving, asymptotically as $t\to +\infty$, and in a strong
sense, like perturbed solutions of free transport. Since solutions of
free transport weakly converge to spatially homogeneous distributions,
the solutions constructed by this ``scattering'' approach are indeed
damped.
The weakness of these results is that they say nothing about the
initial perturbations leading to such solutions, which could be very
special. In other words: damped solutions do exist, but do we ever
reach them?

Sparse as it may seem, this list is kind of exhaustive. On the other
hand, there is a rather large mathematical literature on the orbital
stability problem, due to Guo, Rein, Strauss, Wolansky and Lemou--M\'ehats--Rapha\"el.
In this respect see for instance \cite{GS:instability:95} for the plasma
case, and \cite{GR:stability:07} for the gravitational case; both
sources contain many references on the subject. This body of works has
confirmed the intuition of physicists, although with quite different
methods. The gap between a formal, linear treatment and a rigorous,
nonlinear one is striking: Compare the Appendix of
\cite{GR:stability:07} to the rest of the paper.  In the gravitational
case, these works do not consider homogeneous equilibria, but only
localized solutions.

Our treatment of Landau damping will be performed from scratch, and
will not rely on any of these results.

\section{Main result}
\label{sec:main}

\subsection{Modelling} We shall work in adimensional units throughout
the paper, in $d$ dimensions of space and $d$ dimensions of velocity
($d\in\N$).

As should be clear from our presentation in Section \ref{sec:intro},
to observe Landau damping, we need to put a restriction on the length
scale (anyway plasmas in experiments are usually confined). To achieve
this we shall take the position space to be the $d$-dimensional torus
of sidelength $L$, namely $\T^d_L = \R^d/( L\Z)^d$.  This is
admittedly a bit unrealistic, but it is commonly done in plasma
physics (see e.g. \cite{balescu:charged:63}).

In a periodic setting the Poisson equation has to be reinterpreted,
since $\Delta^{-1}\rho$ is not well-defined unless $\int_{\T^d_L} \rho
=0$. The natural solution consists in removing the mean value of
$\rho$, independently of any ``neutrality'' assumption; in galactic
dynamics this is known as the {\bf Jeans swindle}, a trick considered
as efficient but logically absurd. However, in 2003 Kiessling
\cite{kiessling:swindle} re-opened the case and acquitted Jeans, on
the basis that his ``swindle'' can be justified by a simple limit
procedure. In the present case, one may adapt Kiessling's argument and
approximate the Coulomb potential $V$ by some potential
$V_\kappa$ exhibiting a ``cutoff'' at large distances,
e.g. of Debye type (invoking screening for a plasma, or a
cosmological constant for stellar systems; anyway the particular
choice of approximation has no influence on the result). If $\nabla V_\kappa
\in L^1(\R^d)$, then 
$\nabla V_\kappa\ast\rho$ makes sense for a periodic $\rho$, and moreover
\[ (\nabla V_\kappa \ast \rho)  (x) = 
\int_{\R^d} \nabla V_\kappa(x-y)\,\rho(y)\,dy
= \int_{[0,L]^d} \nabla V_\kappa^{(L)}(x-y)\,\rho(y)\,dy,\]
where $V_\kappa ^{(L)} (z) = \sum_{\ell\in\Z^d} V_\kappa(z+\ell L)$.
Passing to the limit as $\kappa\to 0$ yields
\[ \int_{[0,L]^d} \nabla V^{(L)}(x-y)\,\rho(y)\,dy = \int_{[0,L]^d}
\nabla V^{(L)}(x-y)\,\bigl(\rho-\<\rho\>\bigr)(y)\,dy = - \nabla
\Delta_L^{-1}\bigl(\rho-\<\rho\>\bigr),\] where $\Delta_L^{-1}$ is the
inverse Laplace operator on $\T^d_L$. We refer to
\cite{kiessling:swindle} for a discussion of the physics underlying
this limit $\kappa\to 0$.  

More generally, we may consider any interaction potential $W$ on
$\T^d_L$, satisfying certain regularity assumptions.  Then the
self-consistent field will be given by
\[ F = -\nabla W \ast\rho,\qquad \rho (x) = \int f(x,v)\,dv,\]
where now $\ast$ denotes the convolution on $\T^d_L$.

In accordance with our conventions from Appendix \ref{appFourier}, we
shall write $\hat W^{(L)}(k) = \int_{\T^d_L} e^{-2i\pi k\cdot \frac{x}{L}}
\, W(x)\,dx$. In particular, if $W$ is the periodization of a potential
$\R^d\to\R$ (still denoted $W$ by abuse of notation), {\it i.e.},
\[ W(x) = W^{(L)}(x) = \sum_{\ell\in\Z^d} W(x+\ell L),\]
then
\begeq\label{WL} \hat{W}^{(L)}(k) = \hat{W}\left(\frac{k}{L}\right),
\endeq
where $\hat{W}(\xi) = \int_{\R^d} e^{-2i\pi \xi\cdot x} \, W(x)\,dx$
is the original Fourier transform in the whole space.

\subsection{Linear damping} \label{sub:lineardamping}

It is well-known that Landau damping requires some stability
assumptions on the unperturbed homogeneous distribution function, say
$f^0(v)$. In this paper we shall use a very general assumption,
expressed in terms of the Fourier transform 
\begeq\label{f0trans}
\tilde{f}^0(\eta) = \int_{\R^d} e^{-2i\pi \eta\cdot v} \, f^0(v)\,dv,
\endeq
the length $L$, and the interaction potential $W$. To state it, we
define, for $t\geq 0$ and $k\in\Z^d$, 
\begeq\label{K0tk} K^0(t,k) = -
4\pi^2\,\hat{W}^{(L)}(k)\,\tilde{f}^0\left(\frac{kt}{L}\right)\,\frac{|k|^2}{L^2}\,t;
\endeq
and, for any $\xi\in\C$, we define a function ${\cal L}$ {\it via} the
following Fourier--Laplace transform of $K^0$ in the time variable:
\begeq\label{K0hatxik} 
{\cal L}(\xi,k) = \int_0^{+\infty}
e^{2\pi\xi^*\frac{|k|}{L} t}\,K^0(t,k)\,dt,
\endeq
where $\xi^*$ is the complex conjugate to $\xi$.  
Our linear damping condition is expressed as follows:
\bigskip
\med

\hspace*{-5mm}\begin{tabular}{>{\bfseries}p{1cm} >{\itshape}p{13cm}}
  (L) & There are constants $C_0,\lambda,\kappa>0$ such that for any $\eta\in\R^d$,
$|\tilde{f}^0(\eta)| \leq C_0\,e^{-2\pi\lambda|\eta|}$; and for any $\xi\in\C$ with 
$0\leq \Re \, \xi < \lambda$,
\med
\[ \hspace*{-35mm} \inf_{k\in\Z^d}\ \bigl|{\cal L}(\xi,k)-1\bigr| \geq \kappa.\]
\end{tabular}
\medskip

We shall prove in Section \ref{sec:linear} that {\bf (L)} implies Landau damping.
For the moment, let us give a few sufficient conditions for {\bf (L)} to be
satisfied. The first one can be thought of as a smallness assumption
on either the length, or the potential, or the velocity
distribution. The other conditions involve the
marginals of $f^0$ along arbitrary wave vectors $k$:
\begeq\label{marginals} 
\varphi_k(v) = \int_{\frac{k}{|k|}v + k^\bot}
f^0(w)\,dw, \quad v \in \R.
\endeq
All studies known to us are based on one of these assumptions, so {\bf (L)} appears as a unifying condition
for linear Landau damping around a homogeneous equilibrium.

\begin{Prop} \label{suffL} Let $f^0=f^0(v)$ be a velocity distribution
  such that $\tilde{f}^0$ decays exponentially fast at infinity, let
  $L>0$ and let $W$ be an interaction potential on $\T^d_L$, $W\in
  L^1(\T^d)$.  If any one of the following conditions is
  satisfied: \sm

  (a) \underline{smallness:} \begeq\label{condstab} 4\pi^2 \,\left(
    \max_{k\in\Z^d _*}\, \bigl|\hat{W}^{(L)}(k)\bigr|\right)\
  \left(\sup_{|\sigma|=1}\, \int_0^\infty \bigl|
    \tilde{f}^0(r\sigma)\bigr|\,r\,dr\right) < 1;
\endeq

(b) \underline{repulsive interaction and decreasing marginals:} for
all $k\in\Z^d$ and $v\in\R$, \begeq\label{condstab1}
\hat{W}^{(L)}(k)\geq 0;\qquad
\begin{cases} v<0 \Longrightarrow \varphi'_k(v) > 0\\[2mm]
v>0 \Longrightarrow \varphi'_k(v) < 0;
\end{cases}
\endeq

(c) \underline{generalized Penrose condition on marginals:}\ for all
$k\in\Z^d$, 
\begeq\label{penrose} 
\forall \, w\in\R,\qquad
\varphi'_k(w)=0 \ \Longrightarrow \quad \hat{W}^{(L)}(k)\, \left({\rm
    p.v.} \int_\R \frac{\varphi'_k(v)}{v-w}\,dv \right) <1;
\endeq
\sm

\noindent then {\bf (L)} holds true for some $C_0,\lambda,\kappa>0$.
\end{Prop}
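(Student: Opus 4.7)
The plan is to reduce condition \textbf{(L)} to a quantitative lower bound on $|\mathcal{L}(\xi,k)-1|$, which is then established separately under each of the three sufficient conditions. Starting from \eqref{K0hatxik} and substituting $s = |k|t/L$ one rewrites
\[
\mathcal{L}(\xi,k) \;=\; -4\pi^2\, \hat{W}^{(L)}(k) \int_0^{+\infty} s\, e^{2\pi\xi^* s}\, \tilde{\varphi}_k(s)\, ds,
\]
and integration by parts in $v$ (using $s\,\tilde{\varphi}_k(s) = (2i\pi)^{-1}\widetilde{\varphi_k'}(s)$) together with Fubini valid for $\Re\xi<0$ gives the Cauchy-type representation
\[
\mathcal{L}(\xi,k) \;=\; \hat{W}^{(L)}(k) \int_{\mathbb{R}} \frac{\varphi_k'(v)}{v - z}\, dv, \qquad z = -i\xi^*, \quad \Im z > 0,
\]
which extends analytically across the line $\Re\xi = 0$ by Sokhotski--Plemelj. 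The exponential decay of $\tilde f^0$ is already built into assumption, so the task is to get the uniform bound on $|\mathcal{L}(\xi,k)-1|$ throughout $\{0\leq\Re\xi<\lambda\}\times\mathbb{Z}^d$.

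For condition \textbf{(a)} a brute-force majorization gives
\[
|\mathcal{L}(\xi, k)| \;\leq\; 4\pi^2\, |\hat{W}^{(L)}(k)| \int_0^{+\infty} r\, e^{2\pi\Re\xi\, r}\, |\tilde{f}^0(r k/|k|)|\, dr,
\]
and the exponential bound $|\tilde f^0(\eta)|\leq C_0 e^{-2\pi\lambda|\eta|}$ lets one take $\Re\xi$ strictly positive but sufficiently small while keeping the right-hand side bounded by the left-hand side of \eqref{condstab}, minus a gap. This yields $|\mathcal{L}(\xi,k)|\leq 1-\kappa$ and hence $|\mathcal{L}(\xi,k)-1|\geq\kappa$.

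For condition \textbf{(b)}, the sign structure forces $\varphi_k$ to have its unique maximum at $v=0$, so the only point where $\varphi_k'(w)=0$ is $w=0$; since $\varphi_k'(v)/v\leq 0$ everywhere, one has $\mathrm{p.v.}\int_{\mathbb{R}}\varphi_k'(v)/v\,dv\leq 0$, and multiplication by $\hat{W}^{(L)}(k)\geq 0$ makes the product $\leq 0<1$, so \textbf{(b)} reduces to \textbf{(c)}. For condition \textbf{(c)}, on the boundary $\xi=i\tau$ the Plemelj formula yields
\[
\mathcal{L}(i\tau, k) \;=\; \hat{W}^{(L)}(k)\left[\mathrm{p.v.}\int_{\mathbb{R}} \frac{\varphi_k'(v)}{v - w}\, dv \;+\; i\pi\, \varphi_k'(w)\right], \qquad w = \frac{\tau L}{|k|};
\]
the equation $\mathcal{L}(i\tau,k)=1$ would force $\varphi_k'(w)=0$ from the imaginary part and then contradict \eqref{penrose} through the real part. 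Extending non-attainment of $1$ to the open strip $0<\Re\xi<\lambda$ is a classical Nyquist--Penrose argument: Riemann--Lebesgue decay of the $s$-integral gives $|\mathcal{L}(\xi,k)|\to 0$ as $|\Im\xi|\to\infty$ on vertical lines, so the level set $\mathcal{L}(\xi,k)=1$ cannot escape to infinity, and a winding-number count for the curve $\tau\mapsto\mathcal{L}(i\tau,k)$ forces it to lie entirely in $\{\Re\xi<0\}$.

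The main obstacle is turning the qualitative statement $\mathcal{L}(\xi,k)\neq 1$ into the quantitative uniform bound $|\mathcal{L}(\xi,k)-1|\geq\kappa$ demanded by \textbf{(L)}. This is handled by compactness-contradiction: Riemann--Lebesgue decay of $\hat{W}^{(L)}(k)\to 0$ as $|k|\to\infty$ (using $W\in L^1$) makes $|\mathcal{L}(\xi,k)-1|\to 1$ for large $|k|$, and the same decay as $|\Im\xi|\to\infty$ confines any purported sequence $(\xi_n,k_n)$ with $\mathcal{L}(\xi_n,k_n)\to 1$ to a compact set in the closed strip. Continuity, together with the analytic extension of the Cauchy integral, then places the limit point either on $\Re\xi=0$ (contradicting \eqref{penrose} as above) or in the interior (contradicting the Nyquist argument). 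Carrying out this contradiction cleanly, uniformly in $k$, and with explicit constants $C_0,\lambda,\kappa$, is the technical heart of the proposition.
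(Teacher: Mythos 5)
Your treatment of (a) is the same as the paper's: change variables $r = |k|t/L$, majorize, and use the exponential decay of $\tilde f^0$ plus continuity to make room for a strictly positive $\Re\xi$. Your reduction of (b) to (c) also matches the paper: under (b) the only zero of $\varphi_k'$ is $w=0$, where $\mathrm{p.v.}\int\varphi_k'(v)/v\,dv\le 0$, so the Penrose product is nonpositive.

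For (c) you take a genuinely different and more elaborate route than the paper. You invoke a full Nyquist argument: Riemann--Lebesgue decay of $\mathcal L(\xi,k)$ on vertical lines plus a winding-number count for $\tau\mapsto\mathcal L(i\tau,k)$ to rule out solutions of $\mathcal L(\xi,k)=1$ in the interior of the strip, and then a compactness contradiction (using $\hat W^{(L)}(k)\to 0$ and the vertical decay) for the uniform bound. This is the classical Penrose--Nyquist route and it does work, but it carries more machinery than needed: you must argue carefully that the boundary curve has zero winding number about $1$, and that the semicircle contribution at infinity vanishes uniformly. The paper avoids the Nyquist argument entirely by exploiting that condition \textbf{(L)} only demands \emph{some} $\lambda>0$: it proves $|\mathcal L(i\omega,k)-1|\ge\kappa$ uniformly on the boundary $\Re\xi=0$ (via Plemelj, the Penrose inequality, and the same compactness you describe), then observes that $\mathcal L(\gamma+i\omega,k)\to\mathcal L(i\omega,k)$ \emph{uniformly} in $(\omega,k)$ as $\gamma\to 0^{+}$, and shrinks $\lambda$ until the strip $0\le\Re\xi<\lambda$ is thin enough that the boundary estimate propagates. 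Your approach gives a slightly stronger conclusion (non-attainment in the whole right half-plane, not just a thin strip), at the price of the winding-number bookkeeping you acknowledge as the ``technical heart''; the paper's approach gives a weaker but sufficient conclusion with less effort. Both are correct; just be aware that the winding-number step, which you currently state rather than prove, requires showing that the closed curve $\omega\mapsto\mathcal L(i\omega,k)$ never crosses the ray $\{\Re z\ge 1,\,\Im z=0\}$ --- which is exactly what \eqref{penrose} plus Plemelj give --- and that the curve closes up at $0$ as $|\omega|\to\infty$.
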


\begin{Rk} \cite[Problem, Section 30]{LL:kin:81} If $f^0$ is radially
  symmetric and positive, and $d\geq 3$, then all marginals of $f^0$
  are decreasing functions of $|v|$. Indeed, if $\varphi(v) =
  \int_{\R^{d-1}} f(\sqrt{v^2+|w|^2})\,dw$, then after
  differentiation and integration by parts we find
  \[ \begin{cases}
    \dps \varphi'(v) = 
    - (d-3)\, v \int_{\R^{d-1}} 
    f\bigl(\sqrt{v^2+|w|^2}\bigr)\,\frac{dw}{|w|^2}\qquad (d\geq 4)\\[3mm]
    \varphi'(v) = - 2\pi\,v\,f(|v|)\qquad (d=3).
  \end{cases}\]
\end{Rk}

\begin{Ex} Take a gravitational interaction and Mawellian background:
  \[ \hat{W}(k) = - \frac{\cal G}{\pi\,|k|^2},\qquad f^0(v) = 
  \rho^0\,\frac{e^{-\frac{|v|^2}{2T}}}{(2\pi T)^{d/2}}.\] 
  Recalling \eqref{WL}, we see that \eqref{condstab} becomes 
  \begeq\label{LJ} L
  < \sqrt{\frac{\pi\,T}{{\cal G}\,\rho^0}} =: L_J(T,\rho^0).
  \endeq
  The length $L_J$ is the celebrated Jeans length
  \cite{BT2,kiessling:swindle}, so criterion (a) can be applied, all the
  way up to the onset of the Jeans instability.
\end{Ex}

\begin{Ex} If we replace the gravitational interaction by the
  electrostatic interaction, the same computation yields
  \begeq\label{LD}
  L < \sqrt{\frac{\pi\,T}{e^2\,\rho^0}} =: L_D(T,\rho^0),
  \endeq
  and now $L_D$ is essentially the Debye length. Then criterion (a)
  becomes quite restrictive, but because the interaction is repulsive we
  can use criterion (b) as soon as $f^0$ is a strictly monotone function of $|v|$;
  this covers in particular Maxwellian distributions, {\em independently of the
  size of the box}. Criterion (b) also applies if
  $d\geq 3$ and $f^0$ has radial symmetry. For given $L>0$, the condition
{\bf (L)} being open, it will also be satisfied if $f^0$ is a small (analytic)
perturbation of a profile satisfying (b); this includes the so-called
``small bump on tail'' stability. Then if the distribution presents two large bumps,
the Penrose instability will take over.
\end{Ex}

\begin{Ex} For the electrostatic interaction in dimension~1, \eqref{penrose} becomes
\begeq\label{genpenrose} 
(f^0)'(w)=0 \ \Longrightarrow \ \int \frac{(f^0)'(v)}{v-w}\,dv 
< \frac{\pi}{e^2\,L^2}.
\endeq
This is a variant of the Penrose stability condition \cite{penrose:instability}.
This criterion is in general sharp for linear stability
(up to the replacement of the strict inequality
by the nonstrict one, and assuming that the critical points of $f^0$ are nondegenerate);
see \cite[Appendix]{linzeng:preprint} for precise statements.
\end{Ex}

We shall show in Section \ref{sec:linear} that {\bf (L)} implies
linear Landau damping (Theorem \ref{thmlineardamping}); then we shall prove
Proposition \ref{suffL} at the end of that section.
The general ideas are close to those appearing in previous works,
including Landau himself; the only novelties lie in the more
general assumptions, the elementary nature of the arguments, and the
slightly more precise quantitative results.

\subsection{Nonlinear damping} 

As others have done before in the study of Vlasov--Poisson
\cite{caglimaff:VP:98}, we shall quantify the analyticity by means of
natural norms involving Fourier transform in both variables (also denoted with
a tilde in the sequel). So we define
\begeq \label{flambdamu}
 \|f\|_{\lambda,\mu} = \sup_{k,\eta}\, \Bigl( e^{2\pi \lambda
  |\eta|}\, e^{2\pi \mu \frac{|k|}{L}}
\bigl|\tilde{f}^{(L)}(k,\eta)\bigr|\Bigr),
\endeq
 where $k$ varies in
$\Z^d$, $\eta\in\R^d$, $\lambda,\mu$ are positive parameters, and we
recall the dependence of the Fourier transform on $L$ (see Appendix
\ref{appFourier} for conventions). Now we can state our main result as
follows:

\begin{Thm}[Nonlinear Landau damping] \label{thmmain}
Let $f^0:\R^d\to\R_+$ be an analytic velocity profile.
Let $L>0$ and $W:\T^d_L\to\R$ be an interaction potential satisfying
\begeq\label{nablaW}
\forall \, k\in\Z^d,\qquad |\hat{W}^{(L)}(k)| \leq \frac{C_W}{|k|^{1+\gamma}}
\endeq
for some constants $C_W>0$, $\gamma\ge 1$.
Assume that $f^0$ and $W$ satisfy the stability
condition {\bf (L)} from Subsection \ref{sub:lineardamping}, with some
constants $\lambda,\kappa>0$; further assume that, for the same
parameter $\lambda$,
\begin{equation}\label{condanalf0} 
\sup_{\eta\in\R^d}
\Bigl(|\tilde{f}^0(\eta)|\,e^{2\pi\lambda|\eta|}\Bigr)\leq C_0,\qquad
\sum_{n\in\N_0^d} \frac{\lambda^n}{n!} \, \|\nabla_v^n
f^0\|_{L^1(\R^d)} \leq C_0 < +\infty.
\end{equation}

Then for any $0 < \lambda'<\lambda$, $\beta>0$, $0 < \mu' < \mu$, there is 
$\var = \var (d,L,C_W,C_0,\kappa,\lambda,\lambda',\mu,\mu',\beta,\gamma)$
with the following property: if $f_i = f_i(x,v)$ is an initial datum satisfying
\begeq\label{fif0} 
\delta := \|f_i-f^0\|_{\lambda,\mu} + 
\iint_{\T^d_L\times\R^d} |f_i-f^0|\,e^{\beta |v|}\,dv\,dx \leq \var,
\endeq
then\\

\noindent\bul the unique classical solution $f$ to the nonlinear
Vlasov equation 
\begeq\label{nlV} 
\derpar{f}{t} + v\cdot\nabla_x f 
- (\nabla W \ast \rho)\cdot\nabla_v f = 0,\qquad \rho = \int_{\R^d} f\,dv,
\endeq
with initial datum $f(0,\, \cdot\,)=f_i$, converges in the weak topology
as $t\to \pm\infty$, with rate $O(e^{-2\pi\lambda'|t|})$, to a
spatially homogeneous equilibrium $f_{\pm\infty}$;
\sm

\noindent \bul the density $\rho(t,x)=\int f(t,x,v)\,dv$ converges in the
strong topology as $t\to\pm\infty$, with rate
$O(e^{-2\pi\lambda'|t|})$, to the constant density
\[ \rho_\infty = \frac1{L^d} \int_{\R^d} \int_{\T_L ^d} f_i(x,v)\,dx\,dv;\]
in particular the force $F=-\nabla W\ast \rho$ converges exponentially fast to~0.
\sm

\noindent \bul the space average $\<f\>(t,v) = \int f(t,x,v)\,dx$ converges in
the strong topology as $t\to\pm\infty$, with rate
$O(e^{-2\pi\lambda'|t|})$, to $f_{\pm\infty}$.  \sm

\noindent More precisely, there are $C>0$, and spatially homogeneous
distributions $f_{+\infty}(v)$ and $f_{-\infty}(v)$, depending continuously on
$f_i$ and $W$, such that 
\begeq\label{f0orb} \sup_{t\in\R}\:
\Bigl\|f(t,x+vt,v)-f^0(v)\Bigr\|_{\lambda',\mu'} \leq C\,\delta;
\endeq
\[ \forall \, \eta\in\R^d,\qquad
| \tilde{f}_{\pm\infty}(\eta) - \tilde{f}^0(\eta) | \leq 
C\, \delta\,e^{-2\pi \lambda' |\eta|};\]
and
\[ \forall  \, (k,\eta)\in\Z^d\times\R^d,\qquad 
\Bigl|L^{-d}\,\tilde{f}^{(L)}(t,k,\eta) - \tilde{f}_{+\infty}(\eta) 1_{k=0}\Bigr| 
= O(e^{-2\pi \frac{\lambda'}{L} t})\quad \text{as $t\to +\infty$};\]
\[ \forall  \, (k,\eta)\in\Z^d\times\R^d,\qquad \Bigl|L^{-d}\,\tilde{f}^{(L)}(t,k,\eta) - \tilde{f}_{-\infty}(\eta) 1_{k=0}\Bigr| 
= O(e^{-2\pi \frac{\lambda'}{L} |t|})\quad \text{as $t\to -\infty$};\]
\begeq\label{convrho} 
\forall \, r\in\N,\qquad \bigl\|\rho(t,\cdot)
-\rho_\infty\bigr\|_{C^r(\T^d)} 
= O \bigl( e^{-2\pi \frac{\lambda'}{L} |t|}\bigr)
\qquad \text{as $|t|\to\infty$};
\endeq
\begeq\label{convF} 
\forall \, r\in\N,\qquad \bigl\|F(t,\,\cdot\,)\|_{C^r(\T^d)}
= O \bigl( e^{-2\pi \frac{\lambda'}{L} |t|}\bigr)
\qquad \text{as $|t|\to\infty$};
\endeq
\begeq\label{convav}
\forall\, r\in\N,\ \forall \sigma>0, \qquad 
\Bigl\| \bigl\< f(t,\cdot,v)\bigr\> - f_{\pm\infty} \Bigr\|_{C^r_\sigma(\R^d_v)}
= O \bigl( e^{-2\pi \frac{\lambda'}{L} |t|}\bigr) \quad 
\text{as $t\to\pm\infty$}.
\endeq
\end{Thm}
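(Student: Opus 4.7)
The plan is to run a Newton iteration scheme on the nonlinear Vlasov equation, rather than a Picard or direct perturbative argument. The reason is that the natural perturbation $h = f - f^0$ satisfies an equation whose linearization loses a velocity derivative when read along the free-transport flow (since $\nabla_v$ turns into $\nabla_v + t\nabla_x$). A Picard iteration would then accumulate a constant loss of regularity at each step, incompatible with infinite-time analytic control; Newton's quadratic convergence, giving error $O(\delta^{2^n})$ at stage $n$, can absorb a geometric erosion of the analyticity parameters and still converge in the intersection. Concretely I set $f^{(0)}=f^0$, define $f^{(1)}$ as the solution of the Vlasov equation linearized around $f^0$ with datum $f_i$, and iteratively $f^{(n+1)} = f^{(n)} + h^{(n+1)}$ where $h^{(n+1)}$ solves the equation linearized around $f^{(n)}$ with source equal to the quadratic defect generated by $f^{(n)}$.

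All estimates will be performed in the gliding analytic norms built on the composition $f^{(n)}(t,x+vt,v)$ rather than on $f^{(n)}(t,x,v)$ directly. These norms are tailored to free transport: they stay bounded as $t\to\infty$ on solutions of free transport, and they turn the phase-mixing mechanism into a quantifiable transfer of regularity from $v$ to $x$, which is exactly what powers Landau damping at the linear level (Section \ref{sec:linear}). The actual family of norms will carry up to five parameters — a position-regularity index $\mu$, a velocity-regularity index $\lambda$, a velocity weight for moments, a time-shift parameter absorbing the free-transport shear, and a ``gliding'' gauge $\tau$ that degrades slowly along the iteration.

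The engine of a single Newton step is a Volterra-type integral equation for the density increment $\rho^{(n+1)}(t,x) = \int h^{(n+1)}(t,x,v)\,dv$. Integrating the linearized equation along the characteristics of $f^{(n)}$ and taking Fourier transform in $x$ and Laplace transform in $t$, one finds a kernel which is a small perturbation of the one controlled by hypothesis \textbf{(L)}; hence the operator $I-\mathcal L$ is invertible with exponentially decaying inverse, provided the characteristics of $f^{(n)}$ are sufficiently close to those of free transport. This is where the scattering estimates (Section \ref{sec:scattering}) enter: they bound the straightening map between the true flow and the free flow in the gliding norms, using the exponential decay of $F^{(n)}=-\nabla W*\rho^{(n)}$ obtained at the previous stage. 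Once $\rho^{(n+1)}$ is controlled, $h^{(n+1)}$ itself is reconstructed along characteristics, and the short-term and long-term regularity extortion inequalities convert the gain on $\rho$ into a gain on $h$ in phase space.

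The principal obstacle, and the one that dictates the whole design of the norms, is the control of nonlinear echoes. A mode $k$ excited at time $\tau$ refocuses, through the composition of the free-transport shear with the interaction, into a mode $\ell$ at a resonant time $t \sim \tau|k|/|\ell-k|$, producing bursts of $\rho$ arbitrarily far in time; summing over $(k,\ell)$ with a Coulomb-like $|\hat W^{(L)}(k)|\sim|k|^{-2}$ yields a kernel with only borderline summability, and naive estimates lose all gliding regularity. The remedy is to allow the gliding radius to decrease slowly in $t$ in a way calibrated against the echo kernel bounds of Section \ref{sec:response}, and to exploit the quadratic Newton gain to beat the resulting loss at each stage — with the Coulomb/Newton endpoint requiring the additional technical refinement of Section \ref{sec:coulomb}. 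Once the iteration is closed in this five-parameter family, the limit $f = \lim f^{(n)}$ exists with $\|f(t,x+vt,v)-f^0(v)\|_{\lambda',\mu'}\le C\delta$ uniformly in $t$, and all the decay statements \eqref{convrho}--\eqref{convav} and the existence of $f_{\pm\infty}$ follow by inspecting the $k\neq 0$ and $k=0$ Fourier modes of the straightened distribution.
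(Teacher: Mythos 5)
Your proposal captures the paper's architecture — Newton iteration, gliding hybrid analytic norms built on composition with free transport, scattering estimates to straighten characteristics, short-term and long-term regularity extortion, echo control, and a mode-by-mode refinement at the Coulomb/Newton endpoint — and concludes with the same limit argument. This is essentially the paper's proof.

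One passage is internally inconsistent and would have to be repaired before the sketch could be filled in. You write that, after straightening characteristics, the Volterra kernel for $\rho^{(n+1)}$ is a small perturbation of the one controlled by \textbf{(L)}, so that $I-\mathcal{L}$ is invertible with exponentially decaying inverse. That would be true only if the background $\nabla_v f^{(n)}$ were spatially homogeneous. For $n\geq 1$ it is not: its nonzero Fourier modes couple distinct spatial frequencies of $\rho$ and produce the echo kernel, which is not a perturbation of the linear kernel (its time integral grows like $t$), so the integral inequality does \emph{not} give exponential decay at the regularity level $\lambda_n^*$. The actual mechanism (Theorem \ref{thmgrowth}) is that one only obtains an $e^{\var t}$ growth bound with an enormous constant, then spends a small amount of the gliding regularity ($\lambda_n^\dag<\lambda_n^*$) to absorb the $e^{\var t}$, and counts on the super-fast decay of the Newton increments $\delta_n$ to swallow the constants. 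Your next paragraph gestures at exactly this, so the claim about an exponentially decaying inverse contradicts your own echo discussion and should simply be dropped. Relatedly, the closure is not a one-way "bound $\rho^{(n+1)}$ then reconstruct $h^{(n+1)}$": the iteration has to propagate the genuine pair of bounds \eqref{pairest} (one on the density in $\cF^{\lambda\tau+\mu}$, one on $h^k_\tau\circ\Om^{k-1}_{t,\tau}$ at a slightly degraded gliding index), neither of which implies the other, together with a stratified decomposition of the force $F^n=\sum E^k$ so that the composition errors and the Jacobian of the scattering stay under control uniformly in $n$.
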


In this statement $C^r$ stands for the usual norm on $r$ times
continuously differentiable functions, and $C^r_\sigma$ involves in
addition moments of order $\sigma$, namely $\|f\|_{C^r_\sigma} =
\sup_{r' \le r, v \in \R^d} |f^{(r')}(v)\,(1+|v|^\sigma)|$.
These results could be reformulated in a number of alternative norms,
both for the strong and for the weak topology.

\subsection{Comments}

Let us start with a list of remarks about Theorem \ref{thmmain}.  \sm

\bul The decay of the force field, statement \eqref{convF}, is the
experimentally measurable phenomenon which may be called Landau
damping.\sm

\bul Since the energy
\[ E = \frac12 \iint \rho(x)\,\rho(y)\,W(x-y)\,dx\,dy + \int
f(x,v)\,\frac{|v|^2}{2}\,dv\,dx\] (=\ potential + kinetic energy) is
conserved by the nonlinear Vlasov evolution, there is a conversion of
potential energy into kinetic energy as $t\to\infty$ (kinetic energy
goes up for Coulomb interaction, goes down for Newton interaction).
Similarly, the entropy
\[ S = - \iint f \log f = - \left(\int \rho\log\rho + \int
  f\log\frac{f}{\rho}\right)\] (=\ spatial + kinetic entropy) is
preserved, and there is a transfer of information from spatial to
kinetic variables in large time.\sm

\bul Our result covers both attractive and repulsive interactions, as
long as the linear damping condition is satisfied; it covers
Newton/Coulomb potential as a limit case ($\gamma=1$ in
\eqref{nablaW}). The proof breaks down for $\gamma<1$; this is a
nonlinear effect, as any $\gamma> 0$ would work for the linearized
equation.  The singularity of the interaction at short scales will be
the source of important technical problems.\footnote{In a related
  subject, this singularity is also the reason why the Vlasov--Poisson
  equation is still far from being established as a mean-field limit
  of particle dynamics (see \cite{JH} for partial results covering
  much less singular interactions).}\sm

\bul Condition \eqref{condanalf0} could be replaced by
\begin{equation}\label{altcondanalf0} 
  |\tilde{f}^0(\eta)| \leq
C_0\,e^{-2\pi\lambda|\eta|},\qquad \int f^0(v)\, e^{\beta |v|}\,dv
\leq C_0.
\end{equation}
But condition \eqref{condanalf0} is more general, in view of Theorem
\ref{thminj} below.  For instance, $f^0(v) = 1/(1+v^2)$ in dimension
$d=1$ satisfies \eqref{condanalf0} but not \eqref{altcondanalf0}; this
distribution is commonly used in theoretical and numerical studies,
see e.g. \cite{HGMM}.  We shall also establish slightly more precise
estimates under slightly more stringent conditions on $f^0$, see
\eqref{condanalf0'}.  \sm

\bul Our conditions are expressed in terms of the initial datum, which
is a considerable improvement over \cite{caglimaff:VP:98,HV:landau}.
Still it is of interest to pursue the ``scattering'' program started
in \cite{caglimaff:VP:98}, e.g. in a hope of better understanding of
the nonperturbative regime.  \sm

\bul The smallness assumption on $f_i-f^0$ is expected, for instance
in view of the work of O'Neil \cite{oneil},
or the numerical results of \cite{ZGS:landau}. We also make
the standard assumption that $f_i-f^0$ is well localized.  \sm

\bul No convergence can be hoped for if the initial datum is
only close to $f^0$ in the weak topology: indeed there is {\em
  instability} in the weak topology, even around a Maxwellian
\cite{caglimaff:VP:98}.  \sm

\bul Strictly speaking, known existence and uniqueness results for
solutions of the nonlinear Vlasov--Poisson equation
\cite{BattRein,lionsperth:VP:91} do not apply to the present setting
of close-to-homogeneous analytic solutions. (The problem with
\cite{BattRein} is that velocities are assumed to be uniformly
bounded, and the problem with \cite{lionsperth:VP:91} is that the
position space is the whole of $\R^d$; in both papers these
assumptions are not superficial.) However, this really is not a big
deal: our proof will provide an existence theorem, together with
regularity estimates which are considerably stronger than what is
needed to prove the uniqueness. We shall not come back to these issues
which are rather irrelevant for our study: uniqueness only needs local
in time regularity estimates, while all the difficulty in the study of
Landau damping consists in handling (very) large time.  \sm

\bul $f(t,\cdot)$ is {\em not} close to $f^0$ in analytic norm as
$t\to\infty$, and does not converge to anything in the strong topology, so
the conclusion cannot be improved much. Still we shall establish more
precise quantitative results, and the limit profiles $f_{\pm\infty}$
are obtained by a constructive argument.
\sm

\bul Estimate \eqref{f0orb} expresses the orbital ``travelling
stability'' around $f^0$; it is much stronger than the usual orbital
stability in Lebesgue norms \cite{GS:instability:95,GR:stability:07}.
An equivalent formulation is that if $(T_t)_{t\in\R}$ stands for the
nonlinear Vlasov evolution operator, and $(T_t^0)_{t\in\R}$ for the
free transport operator, then in a neighborhood of a homogeneous
equilibrium satisfying the stability criterion {\bf (L)},
$T^0_{-t}\circ T_t$ remains uniformly close to $\Id$ for all $t$. Note
the important difference: unlike in the usual orbital stability
theory, our conclusions are expressed in functional spaces involving
smoothness, {\em which are not invariant under the free transport
  semigroup}. This a source of difficulty (our functional spaces are
sensitive to the filamentation phenomenon), but it is also the reason
for which this ``analytic'' orbital stability contains much more
information, and in particular the damping of the density. \sm

\bul Compared with known nonlinear stability results, and even
forgetting about the smoothness, estimate \eqref{f0orb} is new in
several respects. In the context of plasma physics, it is the first
one to prove stability for a distribution which is not necessarily a
decreasing function of $|v|$ (``small bump on tail''); while in the
context of astrophysics, it is the first one to establish stability of
a homogeneous equilibrium against periodic perturbations with
wavelength smaller than the Jeans length. \sm

\bul While analyticity is the usual setting for Landau damping, both
in mathematical and physical studies, it is natural to ask whether
this restriction can be dispended with.  (This can be done only at the
price of losing the exponential decay.) In the linear case, this is
easy, as we shall recall later in Remark \ref{rklowreg}; but in the
nonlinear setting, leaving the analytic world is much more tricky. In
Section \ref{sec:NA}, we shall present the first results in this
direction. 

\med

With respect to the questions raised above, our analysis brings the
following answers:\sm

(a) Convergence of the distribution $f$ does hold for $t\to +\infty$;
it is indeed based on phase mixing, and therefore involves very fast
oscillations.  In this sense it is right to consider Landau damping as
a ``wild'' process.  But on the other hand, the spatial density (and
therefore the force field) converges strongly and smoothly.  \sm

(b) The space average $\<f\>$ does converge in large time. However the
conclusions are quite different from those of quasilinear
relaxation theory, since there is no need for extra randomness, and
the limiting distribution is smooth, even without collisions.  \sm

(c) Landau damping is a linear phenomenon, which survives nonlinear
perturbation thanks to the structure of the Vlasov--Poisson
equation. The nonlinearity manifests itself by the presence of {\em
  echoes}.  Echoes were well-known to specialists of plasma physics
\cite[Section~35]{LL:kin:81} \cite[Section~12.7]{akhiezer}, but were
not identified as a possible source of unstability. Controlling the
echoes will be a main technical difficulty; but the fact that the
response appears in this form, with an associated {\em time-delay}
and localized in time, will in the end explain the
stability of Landau damping. These features can be expected in other
equations exhibiting oscillatory behavior.  \sm

(d) The large-time limit is in general different from the limit
predicted by the linearized equation, and depends on the interaction and initial datum
(more precise statements will be given in Section \ref{sec:cex});
still the linearized equation, or higher-order expansions, do provide a good approximation. 
We shall also set up a systematic recipe for approximating the large-time
limit with arbitrarily high precision as the strength of the
perturbation becomes small. This justifies {\it a posteriori} many
known computations.  \sm

(e) From the point of view of dynamical systems, the nonlinear Vlasov
equation exhibits a truly remarkable behavior. It is not uncommon for
a Hamiltonian system to have many, or even countably many heteroclinic
orbits (there are various theories for this, a popular one being the
Melnikov method); but in the present case we see that
heteroclinic/homoclinic orbits\footnote{Here we use these words just
  to designate solutions connecting two distinct/equal equilibria,
  without any mention of stable or unstable manifolds.}  are so
numerous as to fill up {\em a whole neighborhood} of the equilibrium.
This is possible only because of the infinite-dimensional nature of
the system, and the possibility to work with nonequivalent norms; such
a behavior has already been reported for other systems
\cite{kuksin:LNM:93,kuksin:oxford:00}, in relation with
infinite-dimensional KAM theory. 
\sm

(f) As a matter of fact, the nonlinear Landau damping has strong similarities with
the KAM theory. It has been known since the early days of the theory that the linearized Vlasov equation
can be reduced to an infinite system of uncoupled Volterra equations, which makes this equation
completely integrable in some sense. (Morrison \cite{morrison} gave a more precise meaning to
this property.) To see a parallel with classical KAM, one step of our result
is to prove the preservation of the phase-mixing property under
nonlinear perturbation of the interaction. (Although there is no
ergodicity in phase space, the mixing will imply an ergodic behavior
for the spatial density.) The analogy goes further since the proof of Theorem
\ref{thmmain} shares many features with the proof of the KAM theorem
(closest to Kolmogorov's original version, see \cite{chierchia} for a
complete exposition). 

Thus we see that three of the most famous paradoxical phenomena from
twentieth century classical physics: Landau damping, echoes, and KAM theorem, are
intimately related (only in the nonlinear variant of Landau's linear argument!). 
This relation, which we did not expect, is one of the main discoveries of the present paper.

\subsection{Interpretation}

A successful point of view adopted in this paper is that Landau
damping is a {\bf relaxation by smoothness} and by {\bf mixing}. 
In a way, phase mixing converts the smoothness into decay. Thus Landau damping emerges
as a rare example of a physical phenomenon in which regularity is not only crucial
from the mathematical point of view, but also can be ``measured'' by a physical experiment.

\subsection{Main ingredients}

Some of our ingredients are similar to those in
\cite{caglimaff:VP:98}: in particular, the use of Fourier transform to
quantify analytic regularity and to implement phase mixing.  New
ingredients used in our work include \sm

\bul the introduction of a time-shift parameter to keep memory of the
initial time (Sections \ref{sec:analytic} and \ref{sec:scattering}),
thus getting uniform estimates in spite of the loss of regularity in
large time.  We call this the {\bf gliding regularity}: it shifts in
phase space from low to high modes.  Gliding regularity automatically
comes with an improvement of the regularity in $x$, and a
deterioration of the regularity in $v$, as time passes by.  \sm

\bul the use of carefully designed flexible analytic norms behaving
well with respect to composition (Section \ref{sec:analytic}). This
requires care, because analytic norms are very sensitive to
composition, contrary to, say, Sobolev norms.  \sm

\bul ``finite-time scattering'' {\em at the level of trajectories} to
reduce the problem to homogenization of free flow (Section
\ref{sec:scattering}) {\it via} composition. The physical meaning is
the following: when a background with gliding regularity acts on (say)
a plasma, the trajectories of plasma particles are asymptotic to free
transport trajectories.  \sm

\bul new functional inequalities of bilinear type, involving analytic
functional spaces, integration in time and velocity variables, and
evolution by free transport (Section \ref{sec:regdecay}). These
inequalities morally mean the following: when a plasma acts (by
forcing) on a smooth background of particles, the background reacts by
lending a bit of its (gliding) regularity to the plasma, uniformly in
time. Eventually the plasma will exhaust itself (the force will decay).
This most subtle effect, which is at the heart of Landau's
damping, will be mathematically expressed in the
formalism of analytic norms with gliding regularity. 
\sm

\bul a new analysis of the time response associated to the
Vlasov--Poisson equation (Section \ref{sec:response}), aimed
ultimately at controlling the self-induced echoes of the plasma.  For
any interaction less singular than Coulomb/Newton, this will be done
by analyzing time-integral equations involving a norm of the spatial
density.  To treat Coulomb/Newton potential we shall refine the analysis,
considering individual modes of the spatial density. \sm

\bul a Newton iteration scheme, solving the nonlinear evolution
problem as a succession of linear ones (Section
\ref{sec:iter}). Picard iteration schemes still play a role, since
they are run at each step of the iteration process, to estimate the
scattering operators.\med

It is only in the linear study of Section \ref{sec:linear} that the
length scale $L$ will play a crucial role, {\it via} the stability
condition {\bf (L)}. In all the rest of the paper we shall normalize
$L$ to~1 for simplicity.

\subsection{About phase mixing}

A physical mechanism transferring energy from large scales to very
fine scales, asymptotically in time, is sometimes called {\bf weak
  turbulence}. Phase mixing provides such a mechanism, and in a
way our study shows that the Vlasov--Poisson equation is subject to
weak turbulence. But the phase mixing interpretation provides a more
precise picture. While one often sees weak turbulence as a ``cascade''
from low to high Fourier modes, the relevant picture would rather be a
two-dimensional figure with an interplay between spatial Fourier modes
and velocity Fourier modes. More precisely, phase mixing transfers the
energy from each nonzero spatial frequency $k$, to large velocity
frequences $\eta$, and this transfer occurs at a speed proportional to
$k$. This picture is clear from the solution of free transport in
Fourier space, and is illustrated in Fig. \ref{transfer}. (Note the
resemblance with a shear flow.) So there is transfer of energy from
one variable (here $x$) to another (here $v$); homogenization in the
first variable going together with filamentation in the second one.
The same mechanism may also underlie other cases of weak turbulence.

\begin{figure}[htbp]
\begin{center}
\input{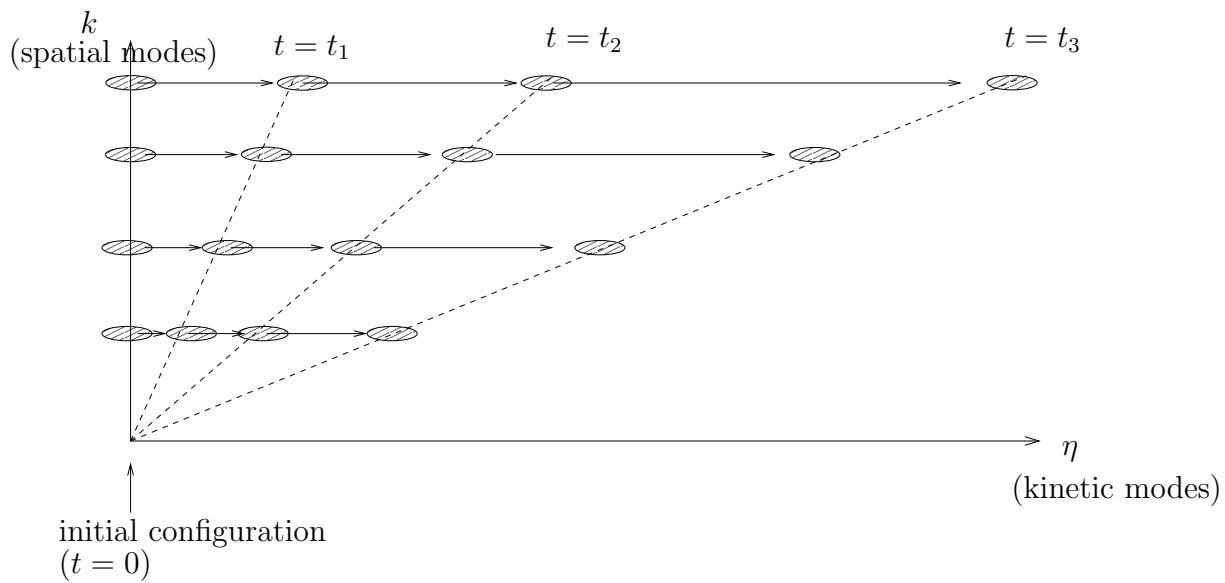}
\caption{Schematic picture of the evolution of energy by free
  transport, or perturbation thereof; marks indicate localization of
  energy in phase space.} \label{transfer}
\end{center}
\end{figure}

\begin{figure}[htbp]
  \includegraphics[height=5cm]{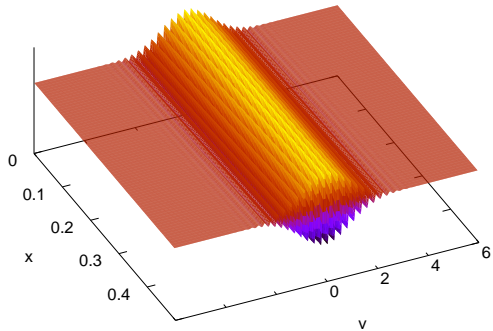}
  \caption{The distribution function in phase space (position,
    velocity) at a given time; notice how the fast oscillations in $v$
    contrast with the slower variations in $x$.}
\label{figxv}
\end{figure}

Whether ultimately the high modes are damped by some ``random''
microscopic process (collisions, diffusion, \ldots) not described by
the Vlasov--Poisson equation is certainly undisputed in plasma physics
\cite[Section~41]{LL:kin:81}\footnote{See
  \cite[Problem~41]{LL:kin:81}: thanks to Landau damping, collisions
  are expected to smooth the distribution quite efficiently; this is a hypoelliptic problematic.}, 
and is the object
of debate in galactic dynamics; anyway this is a different story. Some
mathematical statistical theories of Euler and Vlasov--Poisson
equations do postulate the existence of some small-scale coarse
graining mechanism, but resulting in mixing rather than dissipation
\cite{robert,turkington}.

\section{Linear damping}
\label{sec:linear}

In this section we establish Landau damping for the linearized Vlasov equation.
Beforehand, let us recall that the free transport equation
\begeq\label{FTE}
\derpar{f}{t} + v\cdot\nabla_x f =0
\endeq
has a strong {\em mixing} property: any solution of \eqref{FTE} converges weakly in large time
to a spatially homogeneous distribution equal to the space-averaging of the initial datum.
Let us sketch the proof. 

If $f$ solves \eqref{FTE} in $\T^d\times\R^d$, with initial datum
$f_i= f(0,\,\cdot\,)$, then $f(t,x,v) = f_i(x-vt,v)$, so the
space-velocity Fourier transform of $f$ is given by the formula
\begeq\label{fitilde} \tilde{f}(t,k,\eta) = \tilde{f}_i(k,\eta+kt).
\endeq  
On the other hand, if $f_\infty$ is defined by
\[ f_\infty(v) = \< f_i(\,\cdot\,,v)\> = \int_{\T^d} f_i(x,v)\,dx,\]
then $\tilde{f}_\infty(k,\eta) = \tilde{f}_i(0,\eta)\,1_{k=0}$. 
So, by the Riemann--Lebesgue lemma, for any fixed $(k,\eta)$ we have
\[ \Bigl|\tilde{f}(t,k,\eta) - \tilde{f}_\infty(k,\eta) \Bigr|
\xrightarrow[|t|\to \infty]{} 0,\] which shows that $f$ converges
weakly to $f_\infty$. The convergence holds as soon as $f_i$ is merely
integrable; and by \eqref{fitilde}, the rate of convergence is
determined by the decay of $\tilde{f}_i(k,\eta)$ as $|\eta|\to\infty$,
or equivalently the smoothness in the velocity variable.  In
particular, the convergence is exponentially fast if (and only if)
$f_i(x,v)$ is analytic in $v$.  \sm

This argument obviously works independently of the size of the box.
But when we turn to the Vlasov equation, length scales will matter, so
we shall introduce a length $L>0$, and work in $\T^d_L =
\R^d/(L\Z^d)$. Then the length scale will appear in the Fourier
transform: see Appendix \ref{appFourier}. (This is the only section in
this paper where the scale will play a nontrivial role, so in all the
rest of the paper we shall take $L=1$.)

Any velocity distribution $f^0=f^0(v)$ defines a stationary state for
the nonlinear Vlasov equation with interaction potential $W$. Then the
linearization of that equation around $f^0$ yields 
\begeq\label{Vllin} \begin{cases} 
\dps \derpar{f}{t} + v\cdot\nabla_x f 
- (\nabla W\ast \rho)\cdot\nabla_v f^0 =0\\[2mm]
\qquad \dps \rho = \int f\,dv.
\end{cases}
\endeq
Note that there is no force term in \eqref{Vllin}, due to the fact that $f^0$ does not depend on $x$. 
This equation describes what happens to a plasma density $f$ which tries to force a stationary homogeneous 
background $f^0$; equivalently, it describes the {\em reaction} exerted by the background which is acted upon.

\begin{Thm}[Linear Landau damping] \label{thmlineardamping} Let
  $f^0=f^0(v)$, $L>0$, $W:\T^d_L\to\R$ such that $\|\nabla
  W\|_{L^1}\leq C_W <+\infty$, and $f_i(x,v)$ such that \sm

  (i) Condition {\bf (L)} from Subsection \ref{sub:lineardamping}
  holds for some constants $\lambda,\kappa>0$; \sm

  (ii) $\forall \, \eta\in\R^d,\quad |\tilde{f}^0(\eta)| \leq
  C_0\,e^{-2\pi\lambda |\eta|}$ for some constant $C_0>0$; \sm

  (iii) $\forall \, k\in\Z^d,\ \forall \, \eta\in\R^d, \quad
  |\tilde{f}^{(L)}_i(k,\eta)|\leq C_i\,e^{-2\pi\alpha |\eta|}$ for
  some constants $\alpha > 0$, $C_i>0$.  \sm \sm

  \noindent Then as $t\to +\infty$ the solution $f(t,\cdot)$ to the
  linearized Vlasov equation \eqref{Vllin} with initial datum $f_i$
  converges weakly to $f_\infty = \<f_i\>$ defined by
  \[ f_\infty(v) = \frac1{L^d} \int_{\T^d_L} f_i(x,v)\,dx;\] and
  $\rho(x) = \dps \int f(x,v)\,dv$ converges strongly to the constant
  \[ \rho_\infty = \frac1{L^d} \iint_{\T^d_L\times\R^d}
  f_i(x,v)\,dx\,dv.\] More precisely, for any
  $\lambda'<\min\{\lambda \,; \, \alpha\}$,
  \[\begin{cases} \dps
    \forall \, r\in\N,\quad \bigl\|\rho(t,\cdot) -
    \rho_\infty\bigr\|_{C^r} 
    = O\bigl(e^{-\frac{2\pi\lambda'}{L} |t|}\bigr)\\[3mm]
    \dps \forall \, (k,\eta)\in\Z^d\times\Z^d,\qquad
    \Bigl|\tilde{f}^{(L)}(t,k,\eta) -
    \tilde{f}^{(L)}_\infty(k,\eta)\Bigr| =
    O\bigl(e^{-\frac{2\pi\lambda'}{L}|kt|}\bigr).\end{cases}\]
\end{Thm}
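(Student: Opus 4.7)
The plan is to follow Landau's Fourier--Laplace strategy, exploiting condition \textbf{(L)} at the level of Volterra equations. Let $F(s, \cdot) = -\nabla W * \rho(s, \cdot)$. Writing \eqref{Vllin} in Duhamel form against free transport,
$$f(t, x, v) = f_i(x - vt, v) + \int_0^t F(s, x - v(t-s)) \cdot \nabla_v f^0(v) \, ds,$$
then integrating in $v$, Fourier-transforming in $x$, and reducing the convolution by the substitution $y = x - v(t-s)$, produces the closed scalar Volterra equation
$$\hat\rho^{(L)}(t, k) = \tilde f_i^{(L)}(k, kt/L) + \int_0^t K^0(t - s, k) \, \hat\rho^{(L)}(s, k) \, ds, \qquad k \in \Z^d,$$
where the kernel is precisely \eqref{K0tk}. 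For $k = 0$ the kernel vanishes and $\hat\rho^{(L)}(t, 0) \equiv L^d \rho_\infty$; the whole problem reduces to controlling the nonzero modes.

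I would then invert this Volterra equation by Laplace transform. From (ii) and $\|\nabla W\|_{L^1} \le C_W$ one gets $|K^0(t, k)| \le C (|k| t/L) \, e^{-2\pi\lambda |k| t/L}$, and from (iii), $|\tilde f_i^{(L)}(k, kt/L)| \le C_i \, e^{-2\pi\alpha |k| t/L}$. In the rescaled time $\tau = |k| t/L$, condition \textbf{(L)} is precisely the statement that $|1 - \mathcal{L}(\xi, k)| \ge \kappa$ uniformly in $k$ throughout the strip $0 \le \Re \xi < \lambda$. The Paley--Wiener contour-shift argument then produces a resolvent kernel $R(\tau, k)$ with $|R(\tau, k)| \le C_{\lambda''} \, e^{-2\pi\lambda'' \tau}$ for every $\lambda'' < \lambda$, and convolving against the source yields, for any $\lambda' < \min\{\lambda, \alpha\}$,
$$\bigl| \hat\rho^{(L)}(t, k) - \rho_\infty L^d \, 1_{k = 0} \bigr| \le C(\lambda') \, e^{-2\pi\lambda' |k| t/L}.$$
Summation over $k$, with polynomial weights $|k|^r$ absorbed into an infinitesimal decrease of $\lambda'$, at once delivers the $C^r$ convergence $\rho(t, \cdot) \to \rho_\infty$ at the advertised exponential rate.

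For the pointwise bound on $\tilde f^{(L)}(t, k, \eta)$ I would feed this decay into the full Fourier transform of the same Duhamel identity, which for $k \neq 0$ reads
$$\tilde f^{(L)}(t, k, \eta) = \tilde f_i^{(L)}(k, \eta + kt/L) - \frac{4\pi^2}{L} \int_0^t \hat W^{(L)}(k) \, k \cdot \!\bigl(\eta + \tfrac{k(t-s)}{L}\bigr) \, \tilde f^0\!\bigl(\eta + \tfrac{k(t-s)}{L}\bigr) \, \hat\rho^{(L)}(s, k) \, ds.$$
The free-transport contribution is $O(e^{-2\pi\alpha |\eta + kt/L|})$ by (iii). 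Inside the integral, the polynomial prefactors are absorbed by a marginal loss in $\lambda'$; pairing the previous bound on $\hat\rho^{(L)}$ with $|\tilde f^0(\eta + k(t-s)/L)| \le C_0 \, e^{-2\pi\lambda |\eta + k(t-s)/L|}$, using $\lambda' \le \lambda$ and the triangle inequality $|k| s/L + |\eta + k(t-s)/L| \ge |k| t/L - |\eta|$, shows the integrand is dominated by $C \, e^{2\pi\lambda' |\eta|} \, e^{-2\pi\lambda' |k| t/L}$, which integrated in $s$ yields the claimed $O(e^{-2\pi\lambda' |kt|/L})$ at each fixed $\eta$. For $k = 0$ the integral vanishes and $\tilde f^{(L)}(t, 0, \eta) \equiv \tilde f_i^{(L)}(0, \eta) = \tilde f_\infty^{(L)}(0, \eta)$; weak convergence of $f$ to $f_\infty$ then follows by testing against smooth functions.

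The main obstacle is the Paley--Wiener step: one must justify deforming the Bromwich contour from the imaginary axis up to any vertical line $\Re \xi = \lambda'' < \lambda$. Holomorphy of $\xi \mapsto \mathcal{L}(\xi, k)$ in the open strip is immediate from the exponential decay of $\tilde f^0$ in (ii); the decay in $|\Im \xi|$ needed to close the contour is obtained by a single integration by parts in \eqref{K0hatxik} (the factor $t$ in $K^0$ is precisely what this integration consumes); and hypothesis \textbf{(L)} supplies the essential uniform lower bound on $|1 - \mathcal{L}|$, turning $1/(1 - \mathcal{L})$ into a bounded Fourier--Laplace multiplier whose inverse transform is the exponentially decaying resolvent $R$. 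Tracking the constants through each step yields the explicit dependence of $C(\lambda')$ and the decay rate on the data $L, C_W, C_0, C_i, \alpha, \lambda, \kappa$.
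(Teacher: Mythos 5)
Your proposal derives the same closed Volterra equation \eqref{closedrho} for $\hat\rho^{(L)}(t,k)$ and the same Duhamel identity \eqref{sofL} for $\tilde f^{(L)}$, and the endgame (triangle inequality $|k|s/L+|\eta+k(t-s)/L|\ge |k|t/L-|\eta|$, absorbing polynomial prefactors by an infinitesimal loss in $\lambda'$, summing over $k$ for $C^r$ decay) is the same as the paper's. The genuine difference is in how the Volterra equation is inverted. You propose the classical route: take the Laplace transform, use \textbf{(L)} to make $1/(1-\mathcal{L}(\xi,k))$ a bounded multiplier on the strip $0\le\Re\xi<\lambda$, and appeal to a Paley--Wiener theorem for Volterra resolvents to obtain an exponentially decaying resolvent kernel $R(\cdot,k)$ which then produces $\varphi=a+R*a$. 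The paper's Lemma~\ref{lemvolterra} explicitly \emph{declines} this route (``It is standard to solve these Volterra equations by Laplace transform; but, with a view to the nonlinear setting, we shall prefer a more flexible and quantitative approach'') and instead multiplies by $e^{2\pi\lambda'|k|t/L}$, takes the Fourier transform \emph{only} in the time variable, applies Plancherel to get an $L^2_t$ estimate on $\Phi$ in terms of $\|A\|_{L^2_t}/\kappa$, and then feeds this $L^2$ bound back into the Volterra equation once to upgrade to an $L^\infty_t$ bound. Both roads are valid; the paper's has the advantage of producing the explicit constant $C(\lambda,\lambda',\kappa)=C(1+\kappa^{-1}(1+(\lambda-\lambda')^{-1/2}))$ with no residue/contour bookkeeping, which is what they later need to track uniformly through the Newton iteration. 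Two minor cautions about your route: the pointwise bound $|R(\tau,k)|\le C e^{-2\pi\lambda''\tau}$ you state is stronger than what a single integration by parts on $\mathcal{L}$ (giving only $O(1/|\Im\xi|)$ decay along vertical lines) supplies; however, the $L^1$-weighted resolvent bound $Re^{-\lambda''(\cdot)}\in L^1$ from the standard Paley--Wiener theorem for Volterra kernels is all you actually need, since $a$ is bounded and $R*a$ then inherits the exponential decay. Also note, as the paper does in closing Lemma~\ref{lemvolterra}, that the Fourier/Laplace inversion is formally an \emph{a priori} estimate; one must append a short continuity-in-$\lambda'$ argument to justify taking the transform of $\Phi$ in the first place.
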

\sm

\begin{Rk} Even if the initial datum is more regular than analytic,
  the convergence will in general not be better than exponential
  (except in some exceptional cases \cite{hayes:cimento}). See
  \cite[pp.~414--416]{BT2} for an illustration. Conversely, if the analyticity
  width $\alpha$ for the initial datum is smaller than the ``Landau rate'' $\lambda$,
then the rate of decay will not be better than $O(e^{-\alpha t})$. See \cite{belmont1,belmont2}
for a discussion of this fact, often overlooked in the physical literature.
\end{Rk}

\begin{Rk} The fact that the convergence is to the average of the
  initial datum will not survive nonlinear perturbation, as shown by
  the counterexamples in Subsection~\ref{sec:cex}.
\end{Rk}

\begin{Rk} Dimension does not play any role in the linear
  analysis.  This can be attributed to the fact that only longitudinal
  waves occur, so everything happens ``in the direction of the wave
  vector''. Transversal waves arise in plasma physics only when
  magnetic effects are taken into account \cite[Chapter~5]{akhiezer}.
\end{Rk}

\begin{Rk} \label{rklowreg} The proof can be adapted to the case when
  $f^0$ and $f_i$ are only $C^\infty$; then the convergence is not
  exponential, but still $O(t^{-\infty})$. The regularity can also be
  further decreased, down to $W^{s,1}$, at least for any $s>2$; more
  precisely, if $f^0\in W^{s_0,1}$ and $f_i\in W^{s_i,1}$ there will
  be damping with a rate $O(t^{-\kappa})$ for any $\kappa <
  \max\{s_0-2\, ;\, s_i\}$.  (Compare with \cite[Vol.~1,
  p.~189]{akhiezer}.)  This is independent of the regularity of the
  interaction.
\end{Rk}

The proof of Theorem \ref{thmlineardamping} relies on the following
elementary estimate for Volterra equations. We use the notation of
Subsection \ref{sub:lineardamping}.

\begin{Lem} \label{lemvolterra} 
  Assume that {\bf (L)} holds true for some constants $C_0,\kappa,\lambda>0$; let 
  $C_W=\|W\|_{L^1(\T^d_L)}$ and let $K^0$ be defined by \eqref{K0tk}.
  Then any solution $\varphi(t,k)$ of
  \begeq\label{volterra} 
  \varphi(t,k) = a(t,k) + \int_0^t
  K^0(t-\tau,k)\,\varphi(\tau,k)\,d\tau
  \endeq 
  satisfies, for any $k\in\Z^d$ and any $\lambda'<\lambda$,
  \[ \sup_{t\geq 0} 
  \Bigl( |\varphi(t,k)|\,e^{2\pi\lambda'\frac{|k|}{L}t}\Bigr)
  \leq \Bigl[ 1+ C_0\, C_W\, C(\lambda,\lambda',\kappa)\Bigr]\
  \sup_{t\geq 0}\, \Bigl( |a(t,k)|\,e^{2\pi\lambda \frac{|k|}{L}t}\Bigr).\]
Here $C(\lambda,\lambda',\kappa) = C\,(1+\kappa^{-1} (1+(\lambda-\lambda')^{-1/2}))$ for some universal constant $C$.
\end{Lem}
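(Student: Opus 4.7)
The plan is to use the Fourier--Laplace transform to invert the Volterra convolution, exploiting the spectral hypothesis \textbf{(L)}.

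First, when $k=0$ the kernel $K^0(t,0)$ vanishes identically because of the factor $|k|^2/L^2$, so the equation reduces to $\varphi(t,0)=a(t,0)$ and there is nothing to prove. Fix $k\neq 0$ henceforth and rescale time by $u=|k|t/L$, writing $\bar\varphi(u)=\varphi(uL/|k|,k)$ and similarly $\bar a,\bar K$. The rescaled equation becomes $\bar\varphi=\bar a+\bar K*\bar\varphi$, where a direct computation from the definition of $K^0$ and the hypothesis $|\tilde f^0(\eta)|\leq C_0 e^{-2\pi\lambda|\eta|}$ gives
\[ |\bar K(u)|\leq 4\pi^2 C_0\,|\hat W^{(L)}(k)|\,u\,e^{-2\pi\lambda u}. \]
The target is then to bound $\sup_u|\bar\varphi(u)|\,e^{2\pi\lambda'u}$ by a multiple of $M:=\sup_u|\bar a(u)|\,e^{2\pi\lambda u}$, the constant scaling linearly in $|\hat W^{(L)}(k)|\leq C_W$.

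Apply the transform $\tilde g(\xi)=\int_0^\infty e^{2\pi\xi^* u}g(u)\,du$, which converges for $\Re\xi<\lambda$ on the relevant quantities. Unwinding definitions yields $\tilde{\bar K}(\xi)={\cal L}(\xi,k)$, and Fubini converts the Volterra identity into the algebraic relation $\tilde{\bar\varphi}(\xi)\bigl(1-{\cal L}(\xi,k)\bigr)=\tilde{\bar a}(\xi)$. By \textbf{(L)}, $|1-{\cal L}(\xi,k)|\geq\kappa$ on $0\leq\Re\xi<\lambda$, so the resolvent kernel $R$ defined by $\tilde R(\xi)={\cal L}(\xi,k)/(1-{\cal L}(\xi,k))$ is well-defined there, and the decomposition $1/(1-{\cal L})=1+{\cal L}/(1-{\cal L})$ produces $\bar\varphi=\bar a+R*\bar a$.

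To estimate $R_0:=R\,e^{2\pi\lambda'\cdot}$, parameterize $\xi=\lambda'+i\omega$ and note that $\tilde R(\lambda'+i\omega)$ is precisely the Fourier transform of $R_0$. Plancherel combined with \textbf{(L)} gives
\[ \|R_0\|_{L^2(\R_+)}\leq\kappa^{-1}\|\bar K\,e^{2\pi\lambda'\cdot}\|_{L^2(\R_+)}, \]
and the right-hand side is controlled by direct integration from the pointwise bound on $\bar K$. Since $|\bar a(v)|\leq M\,e^{-2\pi\lambda v}$, setting $A(v)=\bar a(v)\,e^{2\pi\lambda'v}$ gives $|A(v)|\leq M\,e^{-2\pi(\lambda-\lambda')v}$, so $\|A\|_{L^2}\leq M/\sqrt{4\pi(\lambda-\lambda')}$. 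The identity $(\bar\varphi-\bar a)(u)\,e^{2\pi\lambda'u}=(R_0*A)(u)$ together with Cauchy--Schwarz on the convolution yields the desired uniform-in-$u$ estimate.

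The main obstacle is extracting the sharp dependence $(\lambda-\lambda')^{-1/2}$ claimed in the statement: the naive Cauchy--Schwarz above overestimates the negative power of $\lambda-\lambda'$, because $\|\bar K\,e^{2\pi\lambda'\cdot}\|_{L^2}$ already costs $(\lambda-\lambda')^{-3/2}$ due to the factor $u$ in $\bar K$. To recover the advertised rate one must split the Fourier integral defining $R_0$ into low- and high-frequency regimes, using respectively the $L^\infty$ bound $|\tilde{\bar K}|\leq\|\bar K\,e^{2\pi\lambda'\cdot}\|_{L^1}$ and the Plancherel $L^2$ bound, and exploit the fact that $|{\cal L}(\lambda'+i\omega,k)|$ itself decays in $\omega$; the spectral hypothesis \textbf{(L)} enters only through the uniform lower bound on $|1-{\cal L}|$.
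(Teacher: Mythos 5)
Your proof takes essentially the same route as the paper: Fourier--Laplace transform, use of hypothesis \textbf{(L)} to invert $1-{\cal L}$, Plancherel to obtain an $L^2$ bound, and Cauchy--Schwarz on the convolution to upgrade to $L^\infty$. The only cosmetic difference is where Plancherel is applied: the paper bounds $\|\Phi\|_{L^2}$ and then feeds the weighted Volterra equation back to extract $\|\Phi\|_{L^\infty}$, whereas you write $\bar\varphi = \bar a + R*\bar a$ with a resolvent kernel $R$ whose Fourier transform is ${\cal L}/(1-{\cal L})$ and estimate $\|R\,e^{2\pi\lambda'\cdot}\|_{L^2}$. Unwinding both shows they yield literally the same product $\kappa^{-1}\|\bar K\,e^{2\pi\lambda'\cdot}\|_{L^2}\,\|A\|_{L^2}$, so there is no substantive novelty, only a rearrangement.

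You are right to flag the constant: tracking the dependence, $\|\bar K\,e^{2\pi\lambda'\cdot}\|_{L^2}$ contributes $(\lambda-\lambda')^{-3/2}$ and $\|A\|_{L^2}$ contributes $(\lambda-\lambda')^{-1/2}$, so the argument actually produces a factor of order $(\lambda-\lambda')^{-2}$, not the $(\lambda-\lambda')^{-1/2}$ stated in the lemma. However, this is a slip in the paper's own statement rather than a gap in your reasoning --- the paper's proof carries out exactly this $L^2$ bookkeeping (its equations \eqref{onehandA} and \eqref{otherhandB}) and arrives at the same exponent; the precise negative power of $\lambda-\lambda'$ is in any case immaterial to the downstream applications of the lemma. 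Your suggested low/high frequency splitting to sharpen this is left as a sketch; it would need to be executed to claim the tighter constant, but since the tighter constant is not what the paper proves either, I would not hold this against you.

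The one genuine omission is the justification that the Fourier-transform manipulation is legitimate. You apply the Fourier--Laplace transform to $\bar\varphi\,e^{2\pi\lambda'\cdot}$, but its integrability is precisely what the lemma aims to establish, so what you have written is an \emph{a priori} estimate rather than a proof. The paper closes this loop with a short continuity argument: replace $\lambda'$ by a parameter $\alpha$ running from $-\epsilon$ (where integrability is guaranteed by Gronwall) up to $\lambda'$, and observe that \textbf{(L)} gives a bound uniform in the whole strip $0\leq\Re\xi\leq\lambda'$. You should include some version of this step to make the argument rigorous.
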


\begin{Rk} 
  It is standard to solve these Volterra equations by Laplace
  transform; but, with a view to the nonlinear setting, we shall
  prefer a more flexible and quantitative approach.
\end{Rk}

\begin{proof}[Proof of Lemma \ref{lemvolterra}]
If $k=0$ this is obvious since $K^0(t,0)=0$; so we assume $k\neq 0$.
Consider $\lambda'<\lambda$, multiply \eqref{volterra} 
by $e^{2\pi\lambda'\frac{|k|}{L}t}$, and write
\[ \Phi(t,k) = \varphi(t,k)\,e^{2\pi\lambda'\frac{|k|}{L}t},\qquad
A(t,k) = a(t,k)\,e^{2\pi\lambda'\frac{|k|}{L}t};\]
then \eqref{volterra} becomes
\begeq\label{PhiA} \Phi(t,k) = A(t,k) + \int_0^t K^0(t-\tau,k)\,
e^{2\pi\lambda'\frac{|k|}{L}(t-\tau)}\,\Phi(\tau,k)\,d\tau.
\endeq
\med

\noindent {\bf A particular case:}
The proof is extremely simple if we make the stronger assumption
\[ \int_0^{+\infty}
|K^0(\tau,k)|\,e^{2\pi\lambda'\,\frac{|k|}{L}\tau}\,d\tau \leq
1-\kappa, \qquad \kappa \in (0,1).\] 
Then from \eqref{PhiA},
\begin{multline*} \sup_{0\leq t\leq T}\ |\Phi(t,k)|
\leq \sup_{0\leq t\leq T}\ |A(t,k)|\\
+ \sup_{0\leq t\leq T}\ \left(\int_0^t \bigl|K^0(t-\tau,k)\bigr|\,
     e^{2\pi\lambda' \frac{|k|}{L}(t-\tau)}\,d\tau\right)\,\
\sup_{0\leq \tau\leq T} \ |\Phi(\tau,k)|,
\end{multline*}
whence
\[ \sup_{0\leq \tau\leq t} \dps
|\Phi(\tau,k)| \leq \frac{\dps \sup_{0\leq \tau\leq t} |A(\tau,k)|}
{\dps 1-\int_0^{+\infty} |K^0(\tau,k)|\,e^{2\pi\lambda' \frac{|k|}{L}\tau}\,d\tau}
\leq \frac{\dps \sup_{0\leq\tau\leq t} |A(\tau,k)|}{\kappa},\]
and therefore
\[ \sup_{t\geq 0} \left(e^{2\pi\lambda' \frac{|k|}{L}t} |\varphi(t,k)|\right)
\leq \left(\frac1{\kappa}\right)\, \sup_{t\geq 0} \Bigl( |a(t,k)|\,
e^{2\pi\lambda' \frac{|k|}{L}t}\Bigr).\]
\med

\noindent{\bf The general case:}
To treat the general case we take the Fourier transform in the time
variable, after extending $K$, $A$ and $\Phi$ by~$0$
at negative times. (This presentation was suggested to us by Sigal,
and appears to be technically simpler than the use of the Laplace
transform.) Denoting the Fourier transform with a hat and recalling \eqref{K0hatxik}, we have, for
$\xi=\lambda'+i\omega L/|k|$, 
\[ \hat{\Phi}(\omega,k) = \hat{A}(\omega,k)
+ {\cal L}(\xi,k)\,\hat{\Phi}(\omega,k).\]
By assumption ${\cal L}(\xi,k)\neq 1$, so
\[ \hat{\Phi}(\omega,k) = \frac{\hat A(\omega,k)}{1-{\cal L}(\xi,k)}.\]

From there, it is traditional to apply the Fourier (or Laplace)
inversion transform.  Instead, we apply Plancherel's identity to find
(for each $k$)
\[ \|\Phi\|_{L^2(dt)} \leq \frac{\|A\|_{L^2(dt)}}{\kappa};\]
and then we plug this in the equation \eqref{PhiA} to get
\begin{align} \label{Phiinfty}
\|\Phi\|_{L^\infty(dt)} & \leq \|A\|_{L^\infty(dt)} + \|K^0\,e^{2\pi\lambda'\frac{|k|}{L}t}\|_{L^2(dt)}\, \|\Phi\|_{L^2(dt)}\\
& \leq \|A\|_{L^\infty(dt)} + \frac{\bigl\|K^0\,e^{2\pi\lambda'\frac{|k|}{L}t}\bigr\|_{L^2(dt)}\, \|A\|_{L^2(dt)}}{\kappa}.
\nonumber \end{align}

It remains to bound the second term. On the one hand,
\begin{align} \label{onehandA}
\|A\|_{L^2(dt)} & = \left( \int_0^\infty |a(t,k)|^2\,e^{4\pi\lambda'\frac{|k|}{L}t}\,dt\right)^{1/2}\\
& \leq \left(\int_0^\infty e^{-4\pi(\lambda-\lambda')\frac{|k|}{L}t}\right)^{1/2}\
\sup_{t\geq 0} \Bigl( |a(t,k)|\,e^{2\pi\lambda\frac{|k|}{L}t}\Bigr) \nonumber \\
& = \left(\frac{L}{4\pi|k|\,(\lambda-\lambda')}\right)^{\frac12}\ 
\sup_{t\geq 0} \Bigl( |a(t,k)|\,e^{2\pi\lambda\frac{|k|}{L}t}\Bigr). \nonumber
\end{align}
On the other hand,
\begin{align} \label{otherhandB} \bigl\|K^0
  \,e^{2\pi\lambda'\frac{|k|}{L}t}\bigr\|_{L^2(dt)} & = 4\pi^2\,
  |\hat{W}^{(L)}(k)|\,\frac{|k|^2}{L^2} \left(\int_0^\infty
    e^{4\pi\lambda'\frac{|k|}{L}t}\,
    \left|\tilde{f}^0\left(\frac{kt}{L}\right)\right|^2\,
    t^2\,dt \right)^{1/2}\\
  & = 4\pi^2\, |\hat{W}^{(L)}(k)|\,\frac{|k|^{1/2}}{L^{1/2}}
  \,\left(\int_0^\infty e^{4\pi\lambda'u}\, |\tilde{f}^0(\sigma\,
    u)|^2\,u^2\,du\right)^{1/2}, \nonumber
\end{align}
where $\sigma = k/|k|$.  The estimate follows immediately. (Note that the
factor $|k|^{-1/2}$ in \eqref{onehandA} cancels with $|k|^{1/2}$ in
\eqref{otherhandB}.)  \sm

It seems that we only used properties of the function ${\cal L}$ in a
strip $\Re\xi \simeq \lambda$; but this is an illusion. Indeed, we
have taken the Fourier transform of $\Phi$ without checking that it
belongs to $(L^1+L^2)(dt)$, so what we have established is only an
{\it a priori} estimate. To convert it into a rigorous result, one can
use a continuity argument after replacing $\lambda'$ by a parameter
$\alpha$ which varies from $-\epsilon$ to $\lambda'$. (By the
integrability of $K^0$ and Gronwall's lemma, $\varphi$ is obviously
bounded as a function of $t$; so $\varphi(k,t)\,e^{-\epsilon |k|t/L}$
is integrable for any $\epsilon>0$, and continuous as $\epsilon\to
0$.) Then assumption {\bf (L)} guarantees that our bounds
are uniform in the strip $0\leq\Re\xi\leq\lambda'$, and the proof goes
through.
\end{proof}

\begin{proof}[Proof of Theorem \ref{thmlineardamping}]
  Without loss of generality we assume $t\geq 0$.  Considering
  \eqref{Vllin} as a perturbation of free transport, we apply
  Duhamel's formula to get 
  \begeq\label{duhamel} f(t,x,v) =
  f_i(x-vt,v) + \int_0^t \bigl[ (\nabla W\ast \rho)\cdot\nabla_v
  f^0\bigr] \bigl(\tau,x-v(t-\tau),v\bigr)\,d\tau.
  \endeq
Integration in $v$ yields \begeq\label{intvrho} \rho(t,x) =
\int_{\R^d} f_i(x-vt,v)\,dv + \int_0^t \int_{\R^d} \bigl[(\nabla
W\ast\rho)\cdot\nabla_v f^0\bigr]
\bigl(\tau,x-v(t-\tau),v\bigr)\,dv\,d\tau.
\endeq
Of course, $\int \rho(t,x)\,dx = \iint f_i(x,v)\,dx\,dv$.

For $k\neq 0$, taking the Fourier transform of \eqref{intvrho}, we obtain
\begin{align*}
  \hat{\rho}^{(L)}(t,k) & =
  \int_{\T^d_L} \int_{\R^d} f_i(x-vt,v)\,e^{-2i \pi \frac{k}{L}\cdot x}\,dv\,dx \\
  & \qquad + \int_0^t \int_{\T^d_L} \int_{\R^d} \bigl[ (\nabla W\ast
  \rho)\cdot \nabla_v f^0\bigr]
  \bigl(\tau, x-v(t-\tau),v\bigr)\,e^{-2i\pi \frac{k}{L}\cdot x}\,dv\,dx\,d\tau\\[2mm]
  & = \int_{\T^d_L} \int_{\R^d} f_i(x,v)\, e^{-2i\pi \frac{k}{L}\cdot
    x}\, e^{-2i\pi \frac{k}{L}\cdot vt}\,dv\,dx \\
  & \qquad + \int_0^t \int_{\T^d_L} \int_{\R^d} \bigl[(\nabla W\ast \rho)\cdot \nabla_v
  f^0\bigr](\tau,x,v)\,
  e^{-2i\pi \frac{k}{L}\cdot x}\, e^{-2i\pi \frac{k}{L}\cdot v(t-\tau)}\,dv\,dx\,d\tau\\[2mm]
  & = \tilde{f}^{(L)}_i\left(k, \frac{kt}{L}\right) + \int_0^t (\nabla
  W\ast\rho)^{\hat{ }(L)}(\tau,k)\cdot\tilde{\nabla_v f^0}
  \left(\frac{k(t-\tau)}{L}\right)\,d\tau\\[2mm]
  & = \tilde{f}_i^{(L)} \left(k,\frac{kt}{L}\right) + \int_0^t \left(
    2i\pi \frac{k}{L}
    \hat{W}^{(L)}(k)\,\hat{\rho}^{(L)}(\tau,k)\right)\cdot \left(
    2i\pi
    \frac{k(t-\tau)}{L}\,\tilde{f}^0\left(\frac{k(t-\tau)}{L}\right)\right)\,d\tau.
\end{align*}
In conclusion, we have established the {\em closed equation} on $\hat{\rho}^{(L)}$:
\begin{multline}\label{closedrho}
\hat{\rho}^{(L)}(t,k)
= \tilde{f}_i^{(L)}\left(k, \frac{kt}{L}\right)\\
- 4 \pi^2 \, \hat{W}^{(L)}(k)
\int_0^t \hat{\rho}^{(L)}(\tau,k)\,\tilde{f}^0\left(\frac{k(t-\tau)}{L}\right)\,
\frac{|k|^2}{L^2}\,(t-\tau)\,d\tau.
\end{multline}

Recalling \eqref{K0tk}, this is the same as
\[ \hat{\rho}^{(L)}(t,k) = \tilde{f}_i^{(L)}\left(k,\frac{kt}{L}\right)
+ \int_0^t K^0(t-\tau,k)\,\hat{\rho}^{(L)}(\tau,k)\,d\tau.\]
Without loss of generality, $\lambda \leq\alpha$. By Assumption {(\bf L)} and Lemma \ref{lemvolterra},
\[ \bigl| \hat{\rho}^{(L)}(t,k)\bigr| \leq 
C_0\,C_W\,C(\lambda,\lambda',\kappa)\, C_i\, e^{-2\pi \frac{\lambda'\,|k|}{L}\,t}.\]
In particular, for $k\neq 0$ we have
\[ \forall \, t \ge 1, \quad |\hat{\rho}^{(L)}(t,k)| = O \Bigl(
e^{-\frac{2\pi\lambda''}{L}t}\,
e^{-\frac{2\pi(\lambda'-\lambda'')}{L}|k|}\Bigr);\] so any Sobolev
norm of $\rho-\rho_\infty$ converges to zero like $O(e^{-\frac{2 \pi
    \lambda''}{L} t})$, where $\lambda''$ is arbitrarily close to
$\lambda'$ and therefore also to $\lambda$.  By Sobolev embedding, the
same is true for any $C^r$ norm.  \sm

Next, we go back to \eqref{duhamel} and take the Fourier transform in
both variables $x$ and $v$, to find
\begin{align*}
  \tilde{f}^{(L)}(t,k,\eta)
 & = \int_{\T^d} \int_{\R^d} f_i(x-vt,v)\, e^{-2i\pi\frac{k}{L}\cdot x}\, e^{-2i\pi \eta\cdot v}\,dx\,dv\\
 &\qquad + \int_0^t \int_{\T^d} \int_{\R^d} (\nabla W\ast \rho) \bigl(\tau,x-v(t-\tau)\bigr)\cdot\nabla_v f^0(v)\,
e^{-2i\pi \frac{k}{L}\cdot x}\,e^{-2i\pi\eta\cdot v}\,dx\,dv\,d\tau\\[2mm]
& = \int_{\T^d} \int_{\R^d} f_i(x,v)\,e^{-2i\pi \frac{k}{L}\cdot x}\,e^{-2i\pi \frac{k}{L}\cdot vt}\,
e^{-2i\pi \eta\cdot v}\,dx\,dv\\
& \qquad + \int_0^t \int_{\T^d} \int_{\R^d} 
(\nabla W \ast\rho)(\tau,x)\cdot\nabla_v f^0(v)\,
e^{-2i\pi \frac{k}{L}\cdot x}\,e^{-2i\pi \frac{k}{L}\cdot v(t-\tau)}\,
e^{-2i\pi \eta\cdot v}\,dx\,dv\,d\tau\\[2mm]
& = \tilde{f}_i^{(L)} \left(k, \eta+\frac{kt}{L}\right)
+ \int_0^t \hat{\nabla W}^{(L)}(k)\,\hat{\rho}^{(L)}(\tau,k)\cdot
\tilde{\nabla_v f^0} \left(\eta + \frac{k}{L} (t-\tau)\right)\,d\tau.
\end{align*}
So
\begeq\label{sofL}
\tilde{f}^{(L)} \left( t,k,\eta-\frac{kt}{L}\right)
= \tilde{f}_i^{(L)}(k,\eta)
+ \int_0^t \hat{\nabla W}^{(L)}(k)\, \hat{\rho}^{(L)}(\tau,k)\cdot
\tilde{\nabla_v f^0}\left(\eta - \frac{k\tau}{L}\right)\,d\tau.
\endeq

In particular, for any $\eta\in\R^d$,
\begeq\label{fL=fi}
\tilde{f}^{(L)}(t,0,\eta) = \tilde{f}_i^{(L)}(0,\eta);
\endeq
in other words, $\<f\> = \int f\,dx$ remains equal to $\<f_i\>$ 
for all times.

On the other hand, if $k\neq 0$,
\begin{align} \label{boundfL} \left| \tilde{f}^{(L)}\left( t, k,
      \eta-\frac{kt}{L}\right) \right| & \leq
  \bigl|\tilde{f}_i^{(L)}(k,\eta)\bigr|\\ \nonumber & \qquad +
  \int_0^t \bigl|\hat{\nabla W}^{(L)}(k)\bigr|\,
  \bigl|\hat{\rho}^{(L)}(\tau,k)\bigr|\, \left| \tilde{\nabla_v
      f^0}\left(\eta - \frac{k\tau}{L}\right)\right|\,d\tau\\[1mm]
  \nonumber & \leq C_i\,e^{-2\pi\alpha |\eta|}\\ \nonumber & \qquad +
  \int_0^t C_W\, C(\lambda,\lambda',\kappa)\, C_i\,e^{-2\pi\lambda'
    \frac{|k|}{L}\tau}\, \left( 2\pi C_0\,
    \left|\eta-\frac{k\tau}{L}\right|\, e^{-2\pi\lambda
      \left|\eta-\frac{k\tau}{L}\right|}\right)\,d\tau\\[1mm]
  \nonumber & \leq C \left( e^{-2\pi\alpha |\eta|} + \int_0^t
    e^{-2\pi\lambda'\frac{|k|}{L}\tau}\,e^{-2\pi\frac{(\lambda'+
        \lambda)}2 \, \left|\eta -
        \frac{k\tau}{L}\right|}\,d\tau\right),
\end{align}
where we have used $\lambda'< (\lambda'+\lambda)/2 < \lambda$, 
and $C$ only depends on $C_W,C_i,\lambda,\lambda',\kappa$.

In the end, 
\begin{align*}
\int_0^t e^{-2\pi \lambda' \frac{|k|}{L}\tau}\,e^{-2\pi \frac{(\lambda'+
    \lambda)}2 \, \left|\eta-\frac{k\tau}{L}\right|}\,d\tau
& \leq \int_0^t e^{-2\pi\lambda' |\eta|}\, 
e^{- 2\pi\frac{(\lambda-\lambda')}2 \, \left|\eta - \frac{k \tau}{L} 
  \right|}\,d\tau\\
& \leq \frac{L}{\pi(\lambda-\lambda')}\, e^{-2\pi \left( \lambda' -
    \frac{(\lambda - \lambda')}{2}\right) |\eta|}.
\end{align*}

Plugging this back in \eqref{boundfL}, we obtain, with $\lambda''=
\lambda' - (\lambda - \lambda')/2$,
 \begeq\label{boundflin} \left|
  \tilde{f}^{(L)}\left( t, k, \eta-\frac{kt}{L}\right) \right| \leq
C\, e^{-2\pi \lambda'' |\eta|}.
\endeq
In particular, for any fixed $\eta$ and $k\neq 0$,
\[ \bigl| \tilde{f}^{(L)}(t,k,\eta)\bigr|\leq C\, e^{-2\pi\lambda''
  \left|\eta+\frac{kt}{L}\right|} = O \bigl( e^{-2\pi
  \frac{\lambda''}L |t|}\bigr).\] We conclude that $\tilde{f}^{(L)}$
converges pointwise, exponentially fast, to the Fourier transform of
$\<f_i\>$. Since $\lambda'$ and then $\lambda''$ can be taken as close
to $\lambda$ as wanted, this ends the proof.
\end{proof}

We close this section by proving Proposition \ref{suffL}.

\begin{proof}[Proof of Proposition \ref{suffL}]
  First assume (a). Since $\tilde{f}^0$ decreases exponentially fast,
  we can find $\lambda,\kappa>0$ such that
  \[ 4\pi^2\,\max \bigl|\hat{W}^{(L)}(k)\bigr|\, \sup_{|\sigma|=1}
  \int_0^\infty \bigl|\tilde{f}^0(r\sigma)\bigr|\,r\,e^{2\pi\lambda
    r}\,dr \leq 1-\kappa.\] Performing the change of variables
  $kt/L=r\sigma$ inside the integral, we deduce
\[ \int_0^\infty 4\pi^2\,|\hat{W}^{(L)}(k)|\,
\left|\tilde{f}^0\left(\frac{kt}{L}\right)\right|\,
\frac{|k|^2\,t}{L^2}\,e^{2\pi\lambda \frac{|k|}{L}t}\,dt \leq
1-\kappa,\] and this obviously implies {\bf (L)}.  \sm

The choice $w=0$ in \eqref{penrose} shows that Condition (b) is a particular case of (c),
so we only treat the latter assumption.
The reasoning is more subtle than for case (a). First we note that
\begin{align*}
K^0(t,k) & = -4\pi^2\,\hat{W}(k)\int_{\R^d} f^0(v)\,e^{-2i\pi \frac{kt}{L}\cdot v}\,\frac{|k|^2}{L^2}\,t\,dv\\
& = -4\pi^2\,\hat{W}(k)\int_{\R} \varphi_k(v)\,e^{-2i \pi \frac{|k|}{L}t v}\,\frac{|k|^2}{L^2}\,t\,dv\\
& = -4\pi^2\,\frac{|k|^2\,\hat{W}(k)\,t}{L^2}
\int_{\R} \left(\frac{2i\pi\,|k|t}{L}\right)^{-1}\,\varphi'_k(v)\,e^{-2i \pi \frac{|k|}{L}t v}\,dv\\
& = 2i\pi \frac{|k|\,\hat{W}(k)}{L}
\int_\R \varphi'_k(v)\,e^{-2i\pi \frac{|k|}{L} t v} \, dv.
\end{align*}

Then, for $\xi = \gamma + i\omega$, using the formula 
\[ \int_0^\infty e^{-st}\,e^{i\omega t}\,dt = \frac{s+i\omega}{s^2+\omega^2},\]
we get from \eqref{K0hatxik}
\[ {\cal L}(\xi,k) = \hat{W}(k) \, \int_\R \varphi'_k(v)
\left[\frac{(v+\omega) - i\gamma}{(v+\omega)^2 +
    \gamma^2}\right]\,dv.\] (To be rigorous, one may first establish
this formula for $\gamma<0$, and then use analyticity to derive it for
$\gamma\in [0,\lambda)$.)

As $\gamma\to 0$, this expression approaches, uniformly in $k$ and $\omega$,
\begin{align}\label{Lio}
  {\cal L}(i\omega, k) & = \hat{W}(k) \int \frac{\varphi'_k}{v+\omega+i\,0}\,dv \\
& = \hat{W}(k)\ {\rm p.v.} \left(\int_\R \frac{\varphi'_k(v)}{v+\omega}\,dv\right)
- i\pi\, \hat{W}(k)\,\varphi'_k(-\omega) \nonumber
\end{align}
(Plemelj formula for the Cauchy transform). The problem is to show
that ${\cal L}(i\omega,k)$ stays away from~1 as $\omega$ varies in
$\R$; then the same will be true for $\xi = \gamma+i\omega$ with
$\gamma$ small enough. Equation \eqref{Lio} shows that the imaginary
part of ${\cal L}(i\omega,k)$ vanishes only in the limit
$\hat{W}(k)\to 0$ (but then also the real part approaches~0), or in
the limit $|\omega|\to\infty$ (but then also the real part
approaches~0), or if $\varphi'_k(-\om)=0$; but then by \eqref{penrose}
\[ {\cal L}(i\omega,k)=
 \hat{W}(k)\int_\R\frac{\varphi'_k(v)}{v+\omega}\,dv < 1,\]
so even in this case ${\cal L}$ cannot approach~1. Case (c) of
Proposition \ref{suffL} follows by a compactness argument.
\end{proof}

\section{Analytic norms}
\label{sec:analytic}

In this section we introduce some functional spaces of analytic functions on $\R^d$, $\T^d=\R^d/\Z^d$, 
and most importantly $\T^d\times\R^d$. (Changing the sidelength of the torus only results in some
changes in the constants.) Then we establish a number of functional inequalities which will be
crucial in the subsequent analysis. At the end of this section we shall reformulate the linear study in this
new setting.

Throughout the whole section $d$ is a positive integer. Working with analytic functions will force us to
be careful with combinatorial issues, and proofs will at times involve summation over many indices.

\subsection{Single-variable analytic norms}

Here ``single-variable'' means that the variable lives in either $\R^d$ or $\T^d$, but $d$ may be greater than~1.

Among many possible families of norms for analytic functions,
two will be of particular interest for us; they will be denoted by
$\cC^{\lambda;p}$ and $\cF^{\lambda;p}$. The $\cC^{\lambda;p}$ norms are defined for functions
on $\R^d$ or $\T^d$, while the $\cF^{\lambda;p}$ norms are defined only for $\T^d$
(although we could easily cook up a variant in $\R^d$).
We shall write $\N_0^d$ for the set of $d$-tuples of integers (the subscript being here to insist 
that 0 is allowed). If $n\in\N_0^d$ and $\lambda\geq 0$ we shall write
$\lambda^n = \lambda^{|n|}$. Conventions about Fourier transform and multidimensional differential 
calculus are gathered in the Appendix.

\begin{Def}[One-variable analytic norms]
For any $p\in [1,\infty]$ and $\lambda\geq 0$, we define
\begeq\label{CFlambda} 
\|f\|_{\cC^{\lambda;p}} := \sum_{n\in\N_0^d}
\frac{\lambda^n}{n!}\, \|f^{(n)}\|_{L^p};\qquad
\|f\|_{\cF^{\lambda;p}} := \left( 
\sum_{k\in\Z^d} e^{2\pi\lambda p |k|}\, |\hat{f}(k)|^p \right)^{1/p};
\endeq
the latter expression standing for $\sup_k (e^{2\pi\lambda|k|}\,|\hat{f}(k)|)$ if $p=\infty$.
We further write
\begeq
\cC^{\lambda,\infty} = \cC^\lambda,\qquad \cF^{\lambda,1} = \cF^\lambda.
\endeq
\end{Def}

\begin{Rk} The parameter $\lambda$ can be interpreted as a radius of convergence.
\end{Rk}

\begin{Rk} The norms $\cC^\lambda$ and $\cF^\lambda$ are of particular interest because they are algebra norms.
\end{Rk}

We shall sometimes abbreviate $\|\cdot\|_{\cC^{\lambda;p}}$ or
$\|\cdot\|_{\cF^{\lambda;p}}$ into $\|\cdot\|_{\lambda;p}$ when no
confusion is possible, or when the statement works for either. 

The norms in \eqref{CFlambda} extend to vector-valued functions in a natural way: 
if $f$ is valued in $\R^d$ or $\T^d$ or $\Z^d$, define $f^{(n)}=(f_1^{(n)},\ldots,f_d^{(n)})$,
$\hat{f}(k)=(\hat{f}_1(k),\ldots,\hat{f}_d(k))$; then the formulas in \eqref{CFlambda}
make sense provided that we choose a norm on $\R^d$ or $\T^d$ or $\Z^d$. Which norm we choose
will depend on the context; the choice will always be done in such a way to get the duality right
in the inequality $|a\cdot b|\leq \|a\|\,\|b\|_*$. For instance if $f$ is valued in $\Z^d$ and
$g$ in $\T^d$, and we have to estimate $f\cdot g$, we may norm $\Z^d$ by $|k|=\sum |k_i|$
and $\T^d$ by $|x|=\sup |x_i|$.\footnote{Of course all norms are equivalent, still the choice is not innocent
when the estimates are iterated infinitely many times; an advantage of the supremum norm on $\R^d$ is that it has
the algebra property.} This will not pose any problem, and the reader can forget about
this issue; we shall just make remarks about it whenever needed. For the rest of this section,
we shall focus on scalar-valued functions for simplicity of exposition.
\med

Next, we define ``homogeneous'' analytic seminorms by removing the zero-order term.
We write $\N_*^d=\N_0^d\setminus \{0\}$, $\Z_*^d=\Z^d\setminus\{0\}$. 

\begin{Def}[One-variable homogeneous analytic seminorms]
For $p\in [1,\infty]$ and $\lambda\geq 0$ we write
\[ \|f\|_{\dot{\cC}^{\lambda;p}} =
\sum_{{n\in\N_*^d}}\frac{\lambda^n}{n!}\, \|f^{(n)}\|_{L^p};\qquad
 \|f\|_{\dot{\cF}^{\lambda;p}} = \left( \sum_{k\in\Z_*^d} e^{2\pi\lambda p |k|}\,
|\hat{f}(k)|^p \right)^{1/p}.\] 
\end{Def}

It is interesting to note that affine functions $x \mapsto a \cdot x
+b$ can be included in $\dot{\cC}^\lambda=\dot{\cC}^{\lambda;\infty}$,
even though they are unbounded; in particular
$\| a \cdot x + b \|_{\dot{\cC}^\lambda} = \lambda \,|a|$. On the other hand,
linear forms $x\longmapsto a\cdot x$ do not naturally belong to
$\dot{\cF}^\lambda$, because their Fourier expansion is not even summable 
(it decays like $1/k$).

The spaces $\cC^{\lambda;p}$ and $\cF^{\lambda;p}$ enjoy remarkable properties, 
summarized in Propositions \ref{propalg}, \ref{propcompos} and \ref{propgrad} below.
Some of these properties are well-known, other not so.

\begin{Prop}[Algebra property] \label{propalg} 
  
  (i) For any $\lambda \ge 0$, and $p,q,r \in [1,+\infty]$ such that
  $1/p+1/q=1/r$, we have
  \[ \| f \, g \|_{\cC^{\lambda;r}} \le \| f \|_{\cC^{\lambda;p}} \, 
                                     \| g \|_{\cC^{\lambda;q}}. \]
  
  (ii) For any $\lambda \ge 0$, and $p,q,r \in [1,+\infty]$ such that
  $1/p+1/q=1/r+1$, we have
  \[ \| f \, g \|_{\cF^{\lambda;r}} \le \| f \|_{\cF^{\lambda;p}} \, 
                                     \| g \|_{\cF^{\lambda;q}}. \]
  
  (iii) As a consequence, for any $\lambda \ge 0$, 
  $\cC^\lambda = \cC^{\lambda;\infty}$ and 
  $\cF^\lambda=\cF^{\lambda;1}$ are normed algebras: for either space,
  \[ \|fg\|_\lambda \leq \|f\|_\lambda\, \|g\|_\lambda.\] 
  In particular, $\|f^n\|_\lambda \leq \|f\|_\lambda^n$ for any
  $n\in\N_0$, and $\|e^f\|_\lambda \leq e^{\|f\|_\lambda}$.
\end{Prop}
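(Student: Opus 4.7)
The plan is to prove the three statements in turn by combining three ingredients: the multidimensional Leibniz rule for derivatives, Hölder's inequality for (i), Young's convolution inequality for (ii), and elementary series manipulations throughout.

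For (i), I would start from the identity $(fg)^{(n)} = \sum_{m\leq n} \binom{n}{m} f^{(m)}\, g^{(n-m)}$ (where $m\leq n$ is understood componentwise in $\N_0^d$). Applying Hölder's inequality with $1/p+1/q=1/r$ gives
\[ \|(fg)^{(n)}\|_{L^r} \leq \sum_{m\leq n} \binom{n}{m}\, \|f^{(m)}\|_{L^p}\, \|g^{(n-m)}\|_{L^q}. \]
Then multiplying by $\lambda^n/n!$, summing over $n\in\N_0^d$, using $\binom{n}{m}/n! = 1/(m!(n-m)!)$ and the change of index $k=n-m$ turns the double sum into the product of two independent sums, which is exactly $\|f\|_{\cC^{\lambda;p}}\,\|g\|_{\cC^{\lambda;q}}$.

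For (ii), the key observation is that $\widehat{fg}(k) = (\hat f * \hat g)(k) = \sum_{\ell\in\Z^d} \hat f(\ell)\,\hat g(k-\ell)$, together with the subadditivity $|k|\leq |\ell|+|k-\ell|$, which gives
\[ e^{2\pi\lambda|k|}\,|\widehat{fg}(k)| \leq \sum_{\ell} \bigl(e^{2\pi\lambda|\ell|}|\hat f(\ell)|\bigr)\,\bigl(e^{2\pi\lambda|k-\ell|}|\hat g(k-\ell)|\bigr). \]
Thus the weighted sequence for $fg$ is bounded pointwise by the convolution of the weighted sequences for $f$ and $g$. Applying Young's convolution inequality on $\Z^d$ under the hypothesis $1/p+1/q=1/r+1$ yields the claimed bound on $\cF^{\lambda;r}$.

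Statement (iii) is then immediate: take $p=q=r=\infty$ in (i), and $p=q=r=1$ in (ii), to obtain the algebra property for $\cC^\lambda$ and $\cF^\lambda$ respectively. The bound $\|f^n\|_\lambda \leq \|f\|_\lambda^n$ follows by induction on $n$, and termwise application to the exponential series $e^f = \sum_{n\geq 0} f^n/n!$ (valid since the series converges absolutely in the normed algebra) gives $\|e^f\|_\lambda \leq e^{\|f\|_\lambda}$. There is no genuine obstacle; the only point requiring attention is the combinatorial bookkeeping in (i), and the verification that the weight $e^{2\pi\lambda|k|}$ is submultiplicative under convolution in (ii) — once this is granted, everything reduces to standard inequalities.
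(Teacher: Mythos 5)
Your proposal is correct and follows essentially the same route as the paper: multidimensional Leibniz plus Hölder for (i), pointwise domination of the weighted Fourier coefficients by a convolution and then Young's inequality on $\Z^d$ for (ii), and the choice $p=q=r=\infty$ (resp. $p=q=r=1$) together with iteration and the exponential series for (iii). Both arguments reduce to the same two standard inequalities and the same submultiplicativity of the weight $e^{2\pi\lambda|\cdot|}$, so there is nothing to distinguish them beyond notation.
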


\begin{Rk} Ultimately, property (iii) relies on the fact that
  $L^\infty$ and $L^1$ are normed algebras for the multiplication and
  convolution, respectively.
\end{Rk}

\begin{Rk} \label{rkcontrol} It follows from the Fourier inversion
  formula and Proposition \ref{propalg} that $\|f\|_{\cC^\lambda} \le
  \|f\|_{\cF^\lambda}$ (and $\|f\|_{\dot{\cC}^\lambda} \le
  \|f\|_{\dot{\cF}^\lambda}$); this is a special case of Proposition
  \ref{propcompos} (iv) below. The
    reverse inequality does not hold, because $\|f\|_\infty$ does not
    control $\|\hat{f}\|_{L^1}$.
\end{Rk}

Analytic norms are very sensitive to composition; think that if $a>0$
then $\|f\circ (a\,\Id)\|_{\cC^{\lambda;p}} = a^{-d/p}
\,\|f\|_{\cC^{a\lambda;p}}$; so we typically lose on the functional
space.  This is a major difference with more traditional norms used in
partial differential equations theory, such as H\"older or Sobolev
norms, for which composition may affect constants but not regularity
indices. The next proposition controls the loss of regularity implied
by composition.

\begin{Prop}[Composition inequality] \label{propcompos} 
(i) For any $\lambda >0$ and any $p \in [1,+\infty]$,
\[ \|f\circ H\|_{\cC^{\lambda;p}} \leq \bigl\| (\det\nabla
H)^{-1}\bigr\|^{1/p}_\infty \, \|f\|_{\cC^{\nu;p}},\qquad \nu =
\|H\|_{\dot{\cC}^\lambda},\] where $H$ is possibly unbounded.

(ii) For any $\lambda>0$, any $p \in [1,\infty]$ and any $a>0$,
\[ \Bigl\| f\circ (a\,\Id + G) \Bigr\|_{\cC^{\lambda;p}} \leq 
a^{-d/p} \, \|f\|_{\cC^{a \lambda+\nu \, ;p}},\qquad
\nu = \|G\|_{\cC^{\lambda}}.\]

(iii) For any $\lambda>0$,
\[ \Bigl\| f\circ (\Id + G) \Bigr\|_{\cF^{\lambda}} \leq
\|f\|_{\cF^{\lambda+\nu}},\qquad \nu =
\|G\|_{\dot{\cF}^{\lambda}}.\] 

(iv) For any $\lambda>0$ and any $a> 0$,
\[ \Bigl\| f\circ (a\,\Id + G) \Bigr\|_{\cC^\lambda} \leq 
\|f\|_{\cF^{a\lambda+\nu}},\qquad \nu = \|G\|_{\dot{\cC}^\lambda}.\]
\end{Prop}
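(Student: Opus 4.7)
The argument splits by technique: parts (i)--(ii) use direct differentiation via Fa\`a di Bruno, while (iii)--(iv) exploit the Fourier expansion of $f$ together with the algebra property (Proposition \ref{propalg}). For (i), I expand $\partial^n(f\circ H)$ by the multivariate Fa\`a di Bruno formula as a sum over set partitions $\pi$ of the multi-index $n$ of products $f^{(|\pi|)}(H)\,\prod_{B\in\pi} H^{(\underline{B})}$, where $\underline{B}$ encodes the multi-index of the block $B$. The $L^p$-norm of $f^{(m)}\circ H$ is bounded by the change of variables $y=H(x)$, producing the factor $\|(\det\nabla H)^{-1}\|_\infty^{1/p}$, while each $H^{(\underline{B})}$ is bounded in $L^\infty$. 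After multiplying by $\lambda^n/n!$ and summing over $n$, I reorganize by the number of blocks $m$; the combinatorial identity
\[ \sum_n \frac{\lambda^n}{n!} \sum_{\pi\text{ partition of } n,\ |\pi|=m} \prod_{B\in\pi}\|H^{(\underline{B})}\|_\infty = \frac{(\|H\|_{\dot{\cC}^\lambda})^m}{m!}, \]
which is the degree-$m$ term of $\exp\bigl(\sum_{n'\in\N_*^d}\lambda^{n'}\|H^{(n')}\|_\infty/n'!\bigr)$, yields $\|f\|_{\cC^{\nu;p}}$ with $\nu=\|H\|_{\dot{\cC}^\lambda}$.

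Part (ii) reduces to (i) via the rescaling $y=ax$, $\tilde G(y)=G(y/a)$: a direct computation gives $\|f\circ(a\Id+G)\|_{\cC^{\lambda;p}}=a^{-d/p}\|f\circ(\Id+\tilde G)\|_{\cC^{a\lambda;p}}$, since each $\partial_x^n$ produces $a^{|n|}$ converting $\lambda^n/n!$ into $(a\lambda)^n/n!$, while the Lebesgue change of variable in $y$ produces $a^{-d/p}$. Applying (i) with $H=\Id+\tilde G$, and using the exact balance $\|\tilde G\|_{\dot{\cC}^{a\lambda}}=\|G\|_{\dot{\cC}^\lambda}$ together with $\|\Id\|_{\dot{\cC}^{a\lambda}}=a\lambda$, yields the stated bound with $\nu\leq a\lambda+\|G\|_{\dot{\cC}^\lambda}\leq a\lambda+\|G\|_{\cC^\lambda}$ (the Jacobian of $\Id+\tilde G$ being absorbed under the standing assumption that $\Id+\tilde G$ is a near-identity diffeomorphism).

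For (iii) and (iv) I Fourier-expand $f(y)=\sum_k \hat f(k)\,e^{2\pi i k\cdot y}$, so that $f\circ H(x)=\sum_k \hat f(k)\,e^{2\pi ik\cdot H(x)}$, and the algebra property reduces the problem to bounding $\|e^{2\pi ik\cdot H}\|_\lambda$. The key estimate, for any real-valued $\phi$, is
\[ \|e^{i\phi}\|_{\cF^\lambda}\leq e^{\|\phi\|_{\dot{\cF}^\lambda}}, \qquad \|e^{i\phi}\|_{\cC^\lambda}\leq e^{\|\phi\|_{\dot{\cC}^\lambda}}, \]
with the exponent being the \emph{dotted} seminorm because $|e^{i\phi}|=1$ and every derivative of $e^{i\phi}$ involves only derivatives of $\phi$. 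In the $\cF^\lambda$ case this follows by isolating the constant mode $\hat\phi(0)$, whose exponential is a unit-modulus factor, and applying $\|e^{i\psi}\|_{\cF^\lambda}\leq e^{\|\psi\|_{\cF^\lambda}}$ (from the power series of $e^{i\psi}$ and algebra) to the mean-zero remainder; in the $\cC^\lambda$ case it follows from Fa\`a di Bruno and the Bell-polynomial identity $\sum_n B_n(\psi_1,\dots,\psi_n)t^n/n!=\exp\bigl(\sum_{m\geq 1}\psi_m t^m/m!\bigr)$. Part (iii) applies this with $\phi=2\pi k\cdot G$ and uses $\|e^{2\pi ik\cdot x}\|_{\cF^\lambda}=e^{2\pi\lambda|k|}$; part (iv) first peels off the affine piece via the explicit computation $\|e^{2\pi iak\cdot x}\|_{\cC^\lambda}=e^{2\pi a\lambda|k|}$ (a product of one-dimensional exponentials under the $\ell^1$-norm on $\Z^d$), and then applies the above estimate to $e^{2\pi ik\cdot G}$; reassembly yields $\|f\|_{\cF^{a\lambda+\nu}}$.

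The main subtlety is ensuring that the exponent in the $\|e^{i\phi}\|$-estimate uses the \emph{dotted} rather than the full seminorm. This improvement is crucial for the nonlinear iteration of the paper, since the scattering-type maps $G$ encountered later may have large amplitude even when their derivatives are small. A secondary technical point is the consistent choice of norms on $\R^d$, $\Z^d$, and on matrix-valued derivatives, to avoid spurious factors of $d$ and to keep the duality $|a\cdot b|\leq|a|\,|b|_*$ working in all combinations of scalar, vector and covector pairings.
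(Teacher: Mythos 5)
Your parts (i), (iii) and (iv) follow essentially the same route as the paper: Fa\`a di Bruno and a combinatorial reorganization by the number of blocks for (i), and Fourier expansion combined with the algebra property and the key estimate $\|e^{ih}\|_{\cC^\lambda}\leq e^{\|h\|_{\dot{\cC}^\lambda}}$ (Bell-polynomial identity, isolating the mean mode in the $\cF^\lambda$ case) for (iii)--(iv). You also correctly identify why the homogeneous seminorm suffices in the exponent, which matches the paper's discussion.

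Part (ii), however, has a genuine gap. Your rescaling $y=ax$, $\tilde G(y)=G(y/a)$ cleanly produces the factor $a^{-d/p}$, but when you then apply (i) with $H=\Id+\tilde G$, the conclusion of (i) carries the extra factor $\bigl\|(\det\nabla(\Id+\tilde G))^{-1}\bigr\|_\infty^{1/p}$. There is no such factor in statement (ii), and no ``near-identity diffeomorphism'' hypothesis appears in the proposition, so this term cannot be discarded: for $p<\infty$ and $G$ with $\|\nabla G\|_\infty\geq a$ somewhere, $\det\nabla(\Id+\tilde G)$ may vanish and the factor is infinite, while $\|G\|_{\cC^\lambda}$ remains finite. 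The paper sidesteps this entirely by a different decomposition: it writes
\[
  f\bigl(ax+G(x)\bigr)=\sum_{n\in\N_0^d}\frac{(\nabla^n f)(ax)}{n!}\,G(x)^n,
\]
so that the only change of variables inside an $L^p$ norm is the \emph{linear} one $x\mapsto ax$, which yields exactly $a^{-d/p}$, while the nonlinear part $G$ enters only as a multiplicative factor bounded in $\cC^\lambda$ via the algebra property (Proposition~\ref{propalg}). This also explains why the paper's $\nu$ must be the full norm $\|G\|_{\cC^\lambda}$ rather than the homogeneous seminorm: the algebra property holds for $\cC^\lambda$ but not for $\dot{\cC}^\lambda$. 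You should replace your reduction to (i) by this Taylor-type expansion.
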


\begin{Rk} Inequality (iv), with $\cC$ on the left and $\cF$ on the
  right, will be most useful.  The reverse inequality is not likely to
  hold, in view of Remark \ref{rkcontrol}.
\end{Rk}

The last property of interest for us is the control of the loss of
regularity involved by differentiation.

\begin{Prop}[Control of gradients] \label{propgrad} For any
  $\ov{\lambda}>\lambda$, any $p \in [1,+\infty]$, we have
  \begeq\label{nablafC} \|\nabla
  f\|_{\cC^{\lambda;p}} \leq \left( \frac1{\lambda e\, \log
      (\ov{\lambda}/\lambda)}\right)\, \|f\|_{\dot{\cC}^{\ov{\lambda};p}};
  \endeq
  \begeq\label{nablafF} \|\nabla f\|_{\cF^{\lambda;p}} \leq
  \left(\frac1{2\pi e\, (\ov{\lambda}-\lambda)}\right)\,
  \|f\|_{\dot{\cF}^{\ov{\lambda};p}}.
  \endeq
\end{Prop}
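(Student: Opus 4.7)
The two inequalities are independent; I will treat them separately, using in each case a reduction to an elementary one-variable optimization.

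\textbf{Part (ii), Fourier side.} Using the convention $\widehat{\partial_j f}(k) = 2\pi i k_j \hat f(k)$, we have $|\widehat{\nabla f}(k)| \le 2\pi|k|\,|\hat f(k)|$ for any reasonable choice of vector norm on the Fourier side. Substituting into the definition of $\cF^{\lambda;p}$ and noting that the $k=0$ mode vanishes for $\nabla f$, we obtain
\[
\|\nabla f\|_{\cF^{\lambda;p}} \le \sup_{k\in\Z_*^d}\Bigl(2\pi|k|\,e^{-2\pi(\bar\lambda-\lambda)|k|}\Bigr)\cdot\|f\|_{\dot\cF^{\bar\lambda;p}}.
\]
(This uses the obvious pointwise factorization $e^{2\pi\lambda|k|}\cdot 2\pi|k| = (2\pi|k|\,e^{-2\pi(\bar\lambda-\lambda)|k|})\cdot e^{2\pi\bar\lambda|k|}$ for $p=\infty$, and Hölder-type manipulations for general $p$.) The elementary calculus fact $\sup_{y\ge 0}\,y\,e^{-\alpha y} = 1/(e\alpha)$ with $\alpha = 2\pi(\bar\lambda-\lambda)$ yields the stated constant $1/(2\pi e(\bar\lambda-\lambda))$.

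\textbf{Part (i), physical side.} The key is a reindexing. Writing $(\partial_j f)^{(n)} = f^{(n+e_j)}$ and using the triangle inequality for the vector norm on $\nabla f$, I get
\[
\|\nabla f\|_{\cC^{\lambda;p}} \le \sum_{j=1}^d \sum_{n\in\N_0^d} \frac{\lambda^{|n|}}{n!}\,\|f^{(n+e_j)}\|_{L^p}.
\]
Substituting $m = n+e_j$ (so $n! = m!/m_j$ when $m_j\ge 1$) and summing over $j$, the double sum collapses (using $\sum_j m_j = |m|$) to
\[
\|\nabla f\|_{\cC^{\lambda;p}} \le \sum_{m\in\N_*^d} \frac{|m|\,\lambda^{|m|-1}}{m!}\,\|f^{(m)}\|_{L^p}.
\]
It therefore suffices to establish the scalar bound $|m|\,\lambda^{|m|-1} \le C_{\lambda,\bar\lambda}\,\bar\lambda^{|m|}$ for every $|m|\ge 1$, which reduces to controlling $\sup_{k\ge 0}(k+1)r^k$ where $r = \lambda/\bar\lambda\in(0,1)$ and $k = |m|-1$. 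Setting the derivative to zero gives the critical point $k+1 = 1/\ln(1/r)$, at which $(k+1)r^k = 1/(e\,r\,\ln(1/r))$; hence $C_{\lambda,\bar\lambda} = 1/(e\lambda\ln(\bar\lambda/\lambda))$ as claimed.

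\textbf{Main obstacle.} Neither step is deep; the only slightly delicate point is the range of validity of the optimization in part (i). The critical point $k+1 = 1/\ln(1/r)$ is nonnegative only when $\bar\lambda/\lambda \ge e$; in the opposite regime $1<\bar\lambda/\lambda<e$, the maximum over integer $k\ge 0$ is actually attained at $k=0$, giving a bound of $1/\bar\lambda$. A short comparison, amounting to the elementary inequality $u\ge e\ln u$ for all $u\ge 1$ (with equality at $u=e$), shows that $1/\bar\lambda\le 1/(e\lambda\ln(\bar\lambda/\lambda))$, so the stated formula remains valid throughout. The combinatorial reindexing in (i) is routine but must be done carefully, since the factor $|m|$ that eventually appears is precisely what permits the logarithmic gain over the naive bound.
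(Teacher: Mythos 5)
Your proof follows essentially the same route as the paper's: both parts reduce to a one-variable optimization over the modulus of the index (over $|m|$ on the physical side, over $|k|$ on the Fourier side) after factoring out $\bar\lambda^{-|m|}$, resp.\ $e^{-2\pi(\ov{\lambda}-\lambda)|k|}$. If anything your reindexing in part (i) is slightly cleaner: the paper bounds each $\partial_i f$ separately via the inequality $(n+1_i)! \leq (|n|+1)\,n!$, whereas you aggregate over $j$ and use the exact identity $(m-e_j)! = m!/m_j$ together with $\sum_j m_j = |m|$, which yields the same constant without any per-component loss.

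One slip in the final paragraph: you have the two regimes interchanged. With $r=\lambda/\ov{\lambda}$ and $\alpha=\ln(1/r)=\ln(\ov{\lambda}/\lambda)$, the stationary point $k^*+1 = 1/\alpha$ satisfies $k^*\geq 0$ precisely when $\alpha\leq 1$, i.e.\ $\ov{\lambda}/\lambda\leq e$ (not $\geq e$ as you state); conversely, for $\ov{\lambda}/\lambda>e$ the sequence $k\mapsto (k+1)r^k$ is decreasing on $\N_0$ and the supremum over integers is attained at $k=0$, with value $1$, not $1/\ov{\lambda}$ (and in general $1/\ov{\lambda}$ is not what $\sup_k(k+1)r^k$ produces at $k=0$; the $1/\ov{\lambda}$ factor comes separately from your $\ov{\lambda}^{-|m|}$ normalization). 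The conclusion is unaffected: in the regime $\ov{\lambda}/\lambda>e$ your comparison inequality $u\geq e\ln u$ indeed shows the stated constant dominates, and in the regime $\ov{\lambda}/\lambda\leq e$ the continuous optimum reproduces the stated constant exactly. So the estimate is correct — only the labelling of the cases is inverted, and the parenthetical claim that the $k=0$ value is $1/\ov{\lambda}$ should be removed or repaired.
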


The proofs of Propositions \ref{propalg} to \ref{propgrad} will be
preparations for the more complicated situations considered in the
sequel.

\begin{proof}[Proof of Proposition \ref{propalg}]
(i) Denoting by $\|\cdot\|_{\lambda;p}$ the norm of $\cC^{\lambda;p}$, using
the multidimensional Leibniz formula from Appendix \ref{appdc}, we have
\begin{align*}
\|fg\|_{\lambda;r} = \sum_{\ell\in\N_0^d} \|(fg)^{(\ell)}\|_{L^r} \frac{\lambda^\ell}{\ell!}
& \leq \sum_{\ell\in\N_0^d} \sum_{m\leq\ell} \Cnk{\ell}{m}\,
\bigl\| f^{(m)} g^{(\ell-m)} \bigr\|_{L^r}\, \frac{\lambda^\ell}{\ell!}\\
& \leq \sum_{\ell\in\N_0^d} \sum_{m\leq \ell}
\Cnk{\ell}{m} \,\|f^{(m)}\|_{L^p} \, \|g^{(\ell-m)}\|_{L^q} \, \frac{\lambda^\ell}{\ell!} \\
& = \sum_\ell \sum_m \frac{\|f^{(m)}\|_{L^p} \,\lambda^m}{m!}\, 
\frac{\|g^{(\ell-m)}\|_{L^q} \,\lambda^{\ell-m}}{(\ell-m)!} \\
& = \|f\|_{\lambda;p}\, \|g\|_{\lambda;q}.
\end{align*}
\sm

(ii) Denoting now by $\|\cdot\|_{\lambda;p}$ the norm of
$\cF^{\lambda;p}$, and applying Young's convolution inequality, we get
\begin{multline*} \|fg\|_{\lambda;r} = \left( \sum |\hat{fg}(k)|^r
    \,e^{2\pi\lambda r |k|} \right)^{1/r} 
  \leq \left( \sum_k \Bigl( \sum_\ell |\hat{f}(\ell)|\,
    |\hat{g}(k-\ell)| \, e^{2\pi\lambda |k-\ell|} e^{2\pi \lambda
     |\ell|} \Bigr)^r \right)^{1/r}\\
\leq \left(\sum_k |\hat{f}(k)|^p\,e^{2\pi\lambda p|k-\ell|}\right)^{\frac1{p}}\
\left(\sum_\ell |\hat{g}(\ell)|^q\,e^{2\pi\lambda q|\ell|}\right)^{\frac1{q}}.
\end{multline*}
\end{proof}

\begin{proof}[Proof of Proposition \ref{propcompos}]

  {\bf Case (i).} 
  We use the (multi-dimensional) Fa\`a di Bruno formula:
  \[ (f\circ H)^{(n)} = \sum_{\sum_{j=1} ^n j \, m_j = n}
  \frac{n!}{m_1!\ldots m_n!}\, \bigl( f^{(m_1+\ldots + m_n)}\circ H\bigr)\,
  \prod_{j=1}^n \left(\frac{H^{(j)}}{j!}\right)^{m_j};\]
  so
  \[ \bigl\|(f\circ H)^{(n)}\bigr\|_{L^p} \leq \sum_{\sum_{j=1} ^n j
    \, m_j = n}
  \frac{n!}{m_1!\ldots m_n!}\, \Bigl\| f^{(m_1+\ldots + m_n)} \circ H\Bigr\|_{L^p}\,
  \prod_{j=1}^n \left\|\frac{H^{(j)}}{j!}\right\|^{m_j} _\infty;\]
  thus
\begin{align*}
  \sum_{n\ge 1} \frac{\lambda^n}{n!}\, \bigl\|(f\circ H)^{(n)}\bigr\|_{L^p}
& \leq \bigl\| (\det\nabla H)^{-1}\bigr\|^{1/p} _\infty \, 
\Bigg( \sum_{k=1} ^{+\infty} \|f^{(k)}\|_{L^p} \\
& \qquad \qquad \qquad \sum_{\sum_{j=1} ^n j \, m_j = n,\ \sum_{j=1} ^n m_j = k}\,
\frac{\lambda^n}{m_1!\ldots m_n!}\prod_{j=1}^n 
\left\|\frac{H^{(j)}}{j!}\right\|_\infty^{m_j} \Bigg) \\
& = \bigl\| (\det\nabla H)^{-1}\bigr\|^{1/p} _\infty \, \left( 
\sum_{k \ge 1} \|f^{(k)}\|_{L^p}\,\frac1{k!} \, 
\left( \sum_{|\ell|\geq 1} \frac{\lambda^\ell}{\ell!}\,
  \|H^{(\ell)}\|_\infty \right)^k \right),
\end{align*}
where the last step follows from the multidimensional binomial formula.

\med

{\bf Case (ii).} We decompose $h(x) := f(ax + G(x))$ as 
$$
h(x) = \sum_{n \in\N^d_0} \frac{(f^{(n)})(ax)}{n!} \, G(x)^n
$$
and we apply $\nabla^k$:
$$
\nabla^k h(x) = \sum_{k_1+k_2 = k, \, \in \N^d _0} \ \sum_{n \in \N^d_0}
\frac{k! \, a^{k_1}}{k_1 ! \, k_2 ! \, n!} \, (\nabla^{k_1+n} f)(ax) \, 
(\nabla^{k_2}(G^n))(x).
$$
Then we take the $L^p$ norm, multiply by $\lambda^k/k!$ and sum over $k$:
\begin{align*}
  \| h \|_{\cC^{\lambda;p}} & \le |a|^{-d/p} \, \sum_{k_1,k_2, n \ge 0}
  \frac{\lambda^{k_1+k_2} \, |a|^{k_1}}{k_1 ! \, k_2 ! \, n!} \, \|
  \nabla^{k_1+n} f \|_{L^p} \,
  \bigl\| \nabla^{k_2}(G^n) \bigr\|_\infty\\
  & = |a|^{-d/p} \, \sum_{k_1, n \ge 0} \frac{\lambda^{k_1} \,
    |a|^{k_1}}{k_1 ! \, n!} \, \| \nabla^{k_1+n} f \|_{L^p} \,
  \| G^n \|_{\cC^\lambda}\\
  & \le |a|^{-d/p} \, \sum_{k_1, n \ge 0} \frac{\lambda^{k_1} \,
    |a|^{k_1}}{k_1 ! \, n!} \, \| \nabla^{k_1+n} f \|_{L^p} \,
  \| G \|_{\cC^\lambda} ^n \\
  & =|a|^{-d/p} \,\sum_{m \ge 0} \frac{\left(a \, \lambda+\|
    G\|_{\cC^\lambda}\right)^m}{m!} \, \| \nabla^{m} f \|_{L^p},
\end{align*}
where Proposition \ref{propalg} (iii) was used in the but-to-last step.
\med

{\bf Case (iii).} In this case we write, with $G_0=\hat{G}(0)$,
\[ h(x) = f(x+G(x)) = \sum_k \hat{f}(k)\,e^{2i\pi k\cdot x} \, e^{2i\pi k\cdot G_0}
\, e^{2i\pi k\cdot (G(x)-G_0)};\]
so
\[ \hat{h}(\ell) = \sum_k \hat{f}(k) \, e^{2i\pi k\cdot G_0}\,
\bigl[ e^{2i\pi k\cdot (G-G_0)}\bigr]^{\hat{\ }}\,(\ell-k).\]
Then (using again Proposition \ref{propalg})
\begin{align*}
\sum_\ell |\hat{h}(\ell)|\,e^{2\pi \lambda |\ell|}
& \leq \sum_k \sum_\ell |\hat{f}(k)|\,e^{2\pi\lambda |k|}\,
e^{2\pi\lambda |\ell-k|}\,
\Bigl| \bigl[ e^{2i\pi k\cdot (G-G_0)}\bigr]^{\hat{\ }}\,(\ell-k)\Bigr|\\
& = \sum_k |\hat{f}(k)|\,e^{2\pi\lambda |k|}\, \Bigl\| e^{2i\pi k\cdot (G-G_0)}\Bigr\|_\lambda\\
& \leq \sum_k |\hat{f}(k)|\,e^{2\pi\lambda |k|}\, e^{\|2\pi k\cdot (G-G_0)\|_\lambda}\\
& \leq \sum_k |\hat{f}(k)|\, e^{2\pi\lambda |k|}\,e^{2\pi|k|\,\|G-G_0\|_\lambda}\\
& = \|f\|_{\lambda + \|G-G_0\|_\lambda} = \|f\|_{\lambda + \nu},\qquad \nu =\|G\|_{\dot{\cF}^\lambda}.
\end{align*}
\med

{\bf Case (iv).} We actually have the more precise result
\begeq\label{CFcompos}
\|f\circ H\|_{\cC^\lambda} \leq \sum |\hat{f}(k)|\, e^{2\pi|k|\, \|H\|_{\dot{\cC}^\lambda}}.
\endeq
Writing $f\circ H = \sum \hat{f}(k)\,e^{2i\pi k\cdot H}$, we see that \eqref{CFcompos} 
follows from
\begeq\label{normeih} \|e^{ih}\|_{\cC^\lambda} \leq e^{\|h\|_{\dot{\cC}^\lambda}}.
\endeq
To prove \eqref{normeih}, let $P_n$ be the polynomial in the variables
$X_m$ ($m\leq n$) defined by the identity $(e^f)^{(n)}=
P_n((f^{(m)})_{m\leq n})\,e^f$; this polynomial (which can be made
more explicit from the Fa\`a di Bruno formula) has nonnegative
coefficients, so $\|(e^{if})^{(n)}\|_\infty \leq P_n
((\|f^{(m)}\|)_{m\leq n})$. The conclusion will follow from the identity
(between formal series!)  
\begeq\label{idtyseries} 1 +
\sum_{n\in\N_*^d} \frac{\lambda^n}{n!}  \, P_n((X_m)_{m\leq n}) = \exp
\left( \sum_{k\in\N_*^d}\frac{\lambda^k}{k!}\,X_k\right).
\endeq
To prove \eqref{idtyseries}, it is sufficient to note that the
left-hand side is the expansion of $e^g$ in powers of $\lambda$ at 0,
where $g(\lambda)= \sum_{k\in\N_*^d} \frac{\lambda^k}{k!}\,X_k$.
\end{proof}

\begin{proof}[Proof of Proposition \ref{propgrad}]
(a) Writing $\|\cdot\|_{\lambda;p} = \|\cdot\|_{\cC^{\lambda;p}}$, we have
\[ \|\pa_i f\|_{\lambda;p} = \sum_n \frac{\lambda^n}{n!}\, \|\pa_x^n \pa_i f\|_{L^p},\]
where $\pa_i = \pa/\pa x_i$. If $1_i$ is the $d$-uple of
integers with 1 in position $i$, then $(n+1_i)!\leq (|n|+1) n!$, so
\[ \|\pa_i f\|_{\lambda;p} \leq \sup_n
\left( \frac{(|n|+1)\, \lambda^n}{{\ov{\lambda}}^{n+1}}\right)\,
\sum_{|m|\geq 1} \frac{{\ov{\lambda}}^m}{m!}\, \|\nabla^m f\|_{L^p},\]
and the proof of \eqref{nablafC} follows easily.
\sm

(b) Writing $\|\cdot\|_{\lambda;p} = \|\cdot\|_{\cF^{\lambda;p}}$, we have
\begin{align*}
  \|\pa_i f\|_{\lambda;p} & = \left( \sum_k |k_i|^p \, |\hat{f}(k)|^p
    \,e^{2\pi\lambda p \, |k|} \right)^{1/p}\\
  & \leq \left[ \sup_{k\in\Z} \left( |k|\,e^{2\pi(\lambda
        -\ov{\lambda}) \, |k|}\right) \right]\, \left(\sum_{k\in\Z^d}
    |\hat{f}(k)|^p\,e^{2\pi\ov{\lambda} p \, |k|}\right)^{1/p},
\end{align*}
and \eqref{nablafF} follows.
\end{proof}

\subsection{Analytic norms in two variables} \label{suban2}

To estimate solutions and trajectories of kinetic equations we will
work on the phase space $\T^d_x\times\R^d_v$, and use three
parameters: $\lambda$ ({\em gliding} analytic regularity); $\mu$
(analytic regularity in $x$); and $\tau$ (time-shift along the free
transport semigroup). The regularity quantified by $\lambda$ is said
to be gliding because for $\tau=0$ this is an analytic regularity in
$v$, but as $\tau$ grows the regularity is progressively transferred
from velocity to spatial modes, according to the evolution by free
transport. This catch is crucial to our analysis: indeed, the solution
of a transport equation like free transport or Vlasov {\em cannot} be
uniformly analytic\footnote{By this we mean of course that some {\em norm} or seminorm quantifying 
the degree of analytic smoothness in $v$ will remain uniformly bounded.}
in $v$ as time goes by --- except of course if it
is spatially homogeneous.  Instead, the best we can do is compare the
solution at time $\tau$ to the solution of free transport at the same
time --- a kind of scattering point of view.

The parameters $\lambda,\mu$ will be nonnegative; $\tau$ will vary in $\R$, but often be restricted to
$\R_+$, just because we shall work in positive time. When $\tau$ is
not specified, this means $\tau=0$. Sometimes we shall abuse notation
by writing $\|f(x,v)\|$ instead of $\|f\|$, to stress the dependence
of $f$ on the two variables.

Putting aside the time-shift for a moment, we may generalize the norms $\cC^\lambda$ and $\cF^\lambda$ 
in an obvious way:

\begin{Def}[Two-variables analytic norms]
For any $\lambda,\mu\geq 0$, we define
\begeq\label{Clambdamu}
\|f\|_{\cC^{\lambda,\mu}} =
\sum_{m\in\N_0^d} \sum_{n\in\N_0^d} \frac{\lambda^n}{n!}\,\frac{\mu^m}{m!}\,
\Bigl\| \nabla_x^m \nabla_v ^nf\Bigr\|_{L^\infty(\T^d_x\times\R^d_v)};
\endeq
\begeq\label{Flambdamu}
\|f\|_{\cF^{\lambda,\mu}} = 
\sum_{k\in\Z^d} \int_{\eta\in\R^d}
|\tilde{f}(k,\eta)|\, e^{2\pi \lambda |\eta|}\, e^{2\pi\mu|k|}\,d\eta.
\endeq
\end{Def}

Of course one might also introduce variants based on $L^p$ or $\ell^p$
norms (with two additional parameters $p,q$, since one can make
different choices for the space and velocity variables).

The norm \eqref{Flambdamu} is better adapted to the periodic nature of
the problem, and is very well suited to estimate solutions of kinetic
equations (with fast decay as $|v|\to\infty$); but in the sequel we
shall also have to estimate characteristics (trajectories) which are
unbounded functions of $v$. We could hope to play with two
different families of norms, but this would entail considerable
technical difficulties. Instead, we shall mix the two recipes to get
the following {\em hybrid norms}:

\begin{Def}[Hybrid analytic norms] For any $\lambda,\mu\geq 0$, let
  \begeq\label{Zlambdamu} \|f\|_{\cZ^{\lambda,\mu}} =
  \sum_{\ell\in\Z^d} \sum_{n\in\N_0^d} \frac{\lambda^n}{n!}\,
  e^{2\pi\mu|\ell|}\, \Bigl\| \hat{\nabla_v^n
    f}(\ell,v)\Bigr\|_{L^\infty(\R^d_v)}.
  \endeq
  More generally, for any $p\in [1,\infty]$ we define
  \begeq\label{Zlambdamu:p} \|f\|_{\cZ^{\lambda,\mu;p}} =
  \sum_{\ell\in\Z^d} \sum_{n\in\N_0^d} \frac{\lambda^n}{n!}\,
  e^{2\pi\mu|\ell|}\, \Bigl\| \hat{\nabla_v^n
    f}(\ell,v)\Bigr\|_{L^p(\R^d_v)}.
  \endeq
\end{Def}

Now let us introduce the time-shift $\tau$. We denote by
$(S^0_\tau)_{\tau\geq 0}$ the geodesic semigroup: $(S^0_\tau)(x,v) =
(x+v\tau,v)$.  Recall that the backward free transport semigroup is
defined by $(f\circ S^0_\tau)_{\tau\geq 0}$, and the forward semigroup
by $(f\circ S^0_{-\tau})_{\tau\geq 0}$. 

\begin{Def}[Time-shift pure and hybrid analytic norms]
\begeq\label{Clambdamu:shift}
\|f\|_{\cC^{\lambda,\mu}_\tau} = \| f\circ S^0_\tau\|_{\cC^{\lambda,\mu}} = 
\sum_{m\in\N_0^d} \sum_{n\in\N_0^d} \frac{\lambda^n}{n!}\,\frac{\mu^m}{m!}\,
\Bigl\| \nabla_x^m (\nabla_v + \tau\nabla_x)^nf\Bigr\|_{L^\infty(\T^d_x\times\R^d_v)};
\endeq
\begeq\label{Flambdamu:shift}
\|f\|_{\cF^{\lambda,\mu}_\tau} = \| f\circ S^0_\tau\|_{\cF^{\lambda,\mu}} = 
\sum_{k\in\Z^d} \int_{\eta\in\R^d}
|\tilde{f}(k,\eta)|\, e^{2\pi \lambda |k\tau + \eta|}\, e^{2\pi\mu|k|}\,d\eta;
\endeq
\begeq\label{Zlambdamu:shift}
\|f\|_{\cZ^{\lambda,\mu}_\tau} = \| f\circ S^0_\tau\|_{\cZ^{\lambda,\mu}} = 
\sum_{\ell\in\Z^d} \sum_{n\in\N_0^d} \frac{\lambda^n}{n!}\, e^{2\pi\mu|\ell|}\,
\Bigl\| (\nabla_v + 2i\pi\tau \ell)^n \hat{f}(\ell,v)\Bigr\|_{L^\infty(\R^d_v)};
\endeq
\begeq\label{Zlambdamup:shift:p}
\|f\|_{\cZ^{\lambda,\mu;p}_\tau} =
\sum_{\ell\in\Z^d} \sum_{n\in\N_0^d} \frac{\lambda^n}{n!}\, e^{2\pi\mu|\ell|}\,
\Bigl\| (\nabla_v + 2i\pi\tau \ell)^n \hat{f}(\ell,v)\Bigr\|_{L^p(\R^d_v)}.
\endeq
\end{Def}

This choice of norms is one of the cornerstones of our
analysis: first, because of their hybrid nature, they will connect
well to both periodic (in $x$) estimates on the force field, and
uniform (in $v$) estimates on the ``scattering transforms'' studied in
Section \ref{sec:scattering}.  Secondly, they are well-behaved with
respect to the properties of free transport, allowing to keep track of
the initial time without needing ridiculous (and inaccessible) amounts of regularity in
$x$ as time goes by.  Thirdly, they will satisfy the algebra property
(for $p=\infty$), the composition inequality and the gradient
inequality (for any $p\in [1,\infty]$).  Before going on with the
proof of these properties, we note the following alternative
representations.

\begin{Prop} \label{propalterZ}
The norm $\cZ^{\lambda,\mu;p}_\tau$ admits the alternative representations:
\begeq\label{alterZ1}
\|f\|_{\cZ^{\lambda,\mu;p}_\tau}
= \sum_{\ell\in\Z^d} \sum_{n\in\N_0^d}
\frac{\lambda^n}{n!}\,e^{2\pi\mu|\ell|}\,
\Bigl\| \nabla_v^n \bigl( \hat{f}(\ell,v)\, e^{2i\pi\tau \ell\cdot v}\bigr)\Bigr\|_{L^p(\R^d_v)};
\endeq
\begeq\label{alterZ2}
\|f\|_{\cZ^{\lambda,\mu;p}_\tau} = \sum_{n\in\N_0^d} \frac{\lambda^n}{n!}\,
\btrn (\nabla_v + \tau\nabla_x)^n f\btrn_{\mu;p},
\endeq
where
\begeq\label{trn} \trn g\trn_{\mu;p} = \sum_{\ell\in\Z^d} e^{2\pi\mu |\ell|}\,
\bigl\|\hat{g}(\ell,v)\bigr\|_{L^p(\R^d_v)}.
\endeq
\end{Prop}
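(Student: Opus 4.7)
Both identities are essentially book-keeping: the first one uses a product-rule conjugation by a plane wave, and the second one uses that the partial Fourier transform in $x$ intertwines $\tau\nabla_x$ with multiplication by $2i\pi\tau\ell$. The plan is to verify the two representations in turn.

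For the first identity \eqref{alterZ1}, the key point is the conjugation formula
\[ \nabla_v^n \bigl( \hat{f}(\ell,v)\, e^{2i\pi\tau\ell\cdot v}\bigr)
   = e^{2i\pi\tau\ell\cdot v}\,(\nabla_v + 2i\pi\tau\ell)^n \hat{f}(\ell,v), \]
which I would prove by induction on $|n|$ (the base case $n=0$ is trivial; the inductive step is a direct application of the Leibniz rule, since $\nabla_v e^{2i\pi\tau\ell\cdot v} = (2i\pi\tau\ell)\,e^{2i\pi\tau\ell\cdot v}$). Equivalently, the multi-dimensional binomial formula yields
\[ (\nabla_v + 2i\pi\tau\ell)^n \hat{f}(\ell,v) = \sum_{m \le n} \Cnk{n}{m} (2i\pi\tau\ell)^{n-m} \nabla_v^m \hat{f}(\ell,v), \]
and the same expansion arises on the right-hand side through Leibniz. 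Since $|e^{2i\pi\tau\ell\cdot v}|=1$ pointwise in $v$, the two functions have identical $L^p(\R^d_v)$ norms, which, after summation against $\lambda^n/n!$ and $e^{2\pi\mu|\ell|}$, gives \eqref{alterZ1}.

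For the second identity \eqref{alterZ2}, I would use the elementary fact that the partial Fourier transform in the $x$ variable turns $\partial_{x_j}$ into multiplication by $2i\pi\ell_j$ and leaves $\nabla_v$ unchanged. Hence for any $n\in\N_0^d$,
\[ \widehat{(\nabla_v + \tau\nabla_x)^n f}\,(\ell,v) = (\nabla_v + 2i\pi\tau\ell)^n \hat{f}(\ell,v). \]
Plugging this into the definition \eqref{trn} of $\trn\cdot\trn_{\mu;p}$ gives
\[ \trn (\nabla_v + \tau\nabla_x)^n f\trn_{\mu;p}
= \sum_{\ell\in\Z^d} e^{2\pi\mu|\ell|}\,
\bigl\|(\nabla_v+2i\pi\tau\ell)^n \hat{f}(\ell,v)\bigr\|_{L^p(\R^d_v)}, \]
and multiplying by $\lambda^n/n!$ and summing over $n\in\N_0^d$ yields precisely the definition \eqref{Zlambdamup:shift:p} of $\|f\|_{\cZ^{\lambda,\mu;p}_\tau}$. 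Fubini-type exchange of the two summations (both being series of nonnegative terms) is harmless.

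There is no real obstacle here; the only thing one has to be careful about is that the operators $\nabla_v$ and multiplication by $2i\pi\tau\ell$ (for fixed $\ell$) commute, so that no ordering issue appears in the binomial expansion, and that Fourier conventions are consistent with those fixed in Appendix \ref{appFourier}. Everything then reduces to the two displayed conjugation/intertwining identities.
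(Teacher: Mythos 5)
Your proof is correct, and it follows the intended route: the paper itself gives no proof of Proposition \ref{propalterZ} (it is presented as a pair of easily checked alternative representations). Both of your reductions are exactly right: \eqref{alterZ1} comes from the conjugation identity $\nabla_v^n\bigl(\hat{f}(\ell,v)e^{2i\pi\tau\ell\cdot v}\bigr)=e^{2i\pi\tau\ell\cdot v}(\nabla_v+2i\pi\tau\ell)^n\hat{f}(\ell,v)$ together with $|e^{2i\pi\tau\ell\cdot v}|=1$ in $L^p(dv)$, and \eqref{alterZ2} comes from the intertwining $\widehat{(\nabla_v+\tau\nabla_x)^nf}(\ell,\cdot)=(\nabla_v+2i\pi\tau\ell)^n\hat{f}(\ell,\cdot)$ followed by term-by-term matching with \eqref{Zlambdamup:shift:p}.
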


\subsection{Relations between functional spaces}

The next propositions are easily checked. 

\begin{Prop} \label{propZCF} With the notation from Subsection
  \ref{suban2}, for any $\tau \in \R$,

(i) if $f$ is a function only of $x$ then
\[ \|f\|_{\cC^{\lambda,\mu}_\tau} = \|f\|_{\cC^{\lambda
    |\tau|+\mu}},\qquad \|f\|_{\cF^{\lambda,\mu}_\tau} =
\|f\|_{\cZ^{\lambda,\mu}_\tau} = \|f\|_{\cF^{\lambda |\tau|+\mu}} ;\]

(ii) if $f$ is a function only of $v$ then
\[ \|f\|_{\cC^{\lambda,\mu;p}_\tau} = \|f\|_{\cZ^{\lambda,\mu;p}_\tau}
= \|f\|_{\cC^{\lambda;p}},\qquad \|f\|_{\cF^{\lambda,\mu}_\tau} =
\|f\|_{\cF^\lambda};\]

(iii) for any function $f=f(x,v)$, if $\<\,\cdot\,\>$ stands for
spatial average then
\[ \|\<f\>\|_{\cC^{\lambda;p}} \leq \|f\|_{\cZ^{\lambda,\mu;p}_\tau};\]

(iv) for any function $f=f(x,v)$,
\[ \left\| \int_{\R^d} f\,dv \right\|_{\cF^{\lambda|\tau|+\mu}} \leq
\|f\|_{\cZ^{\lambda,\mu;1}_\tau}.\]
\end{Prop}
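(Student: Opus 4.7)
\smallskip

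The plan is to unfold the definitions and exploit the two alternative representations of the hybrid norm in Proposition~\ref{propalterZ}. All four identities/inequalities should then follow from elementary manipulations, essentially because each specialization kills all but one ``slice'' of the multi-index sums defining the norms.

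For parts (i) and (ii) I would argue as follows. If $f=f(x)$, then $\nabla_v f=0$, so $(\nabla_v+\tau\nabla_x)^n f=\tau^n\nabla_x^n f$, and the multinomial expansion gives
\[ \|f\|_{\cC^{\lambda,\mu}_\tau}=\sum_{m,n}\frac{\mu^m(|\tau|\lambda)^n}{m!\,n!}\,\|\nabla_x^{m+n}f\|_\infty=\sum_j\frac{(\mu+|\tau|\lambda)^j}{j!}\,\|\nabla_x^j f\|_\infty=\|f\|_{\cC^{\lambda|\tau|+\mu}}. \]
Similarly, in Fourier $\tilde f(k,\eta)$ is concentrated at $\eta=0$, so in (\ref{Flambdamu:shift}) the exponential factor $e^{2\pi\lambda|k\tau+\eta|}$ reduces to $e^{2\pi\lambda|k||\tau|}$; and for $\cZ^{\lambda,\mu}_\tau$, $\hat f(\ell,v)$ is independent of $v$, hence $(\nabla_v+2i\pi\tau\ell)^n\hat f(\ell)=(2i\pi\tau\ell)^n\hat f(\ell)$, and summing the series in $n$ produces exactly $e^{2\pi\lambda|\tau||\ell|}|\hat f(\ell)|$. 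Conversely, if $f=f(v)$, then only the Fourier mode $\ell=k=0$ survives, and both $\cZ^{\lambda,\mu;p}_\tau$ and $\cC^{\lambda,\mu;p}_\tau$ collapse to $\sum_n \frac{\lambda^n}{n!}\|\nabla_v^n f\|_{L^p}=\|f\|_{\cC^{\lambda;p}}$, while $\cF^{\lambda,\mu}_\tau$ collapses to $\cF^\lambda$. Part (iii) is then immediate, because $\<f\>(v)=\hat f(0,v)$, and the $\ell=0$ term of $\|f\|_{\cZ^{\lambda,\mu;p}_\tau}$ already equals $\|\<f\>\|_{\cC^{\lambda;p}}$; since the remaining $\ell\neq 0$ terms are nonnegative, we get the claimed inequality.

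The only non-automatic statement is (iv). Here I would use representation (\ref{alterZ1}): setting $G_\ell(v)=e^{2i\pi\tau\ell\cdot v}\hat f(\ell,v)$, we have $\|(\nabla_v+2i\pi\tau\ell)^n\hat f(\ell,\cdot)\|_{L^1}=\|\nabla_v^n G_\ell\|_{L^1}$, and on the other hand the Fourier mode of $\int f\,dv$ in $x$ at frequency $\ell$ is $\int\hat f(\ell,v)\,dv=\widehat{G_\ell}(\tau\ell)$. The standard Fourier inequality
\[ (2\pi|\tau\ell|)^{n}\,|\widehat{G_\ell}(\tau\ell)|\leq \|\nabla_v^n G_\ell\|_{L^1} \]
(applied multi-index-wise) then yields, after multiplying by $\lambda^n/n!$ and summing in $n$,
\[ e^{2\pi\lambda|\tau||\ell|}\,\bigl|\widehat{G_\ell}(\tau\ell)\bigr|\leq \sum_{n\in\N_0^d}\frac{\lambda^n}{n!}\,\|\nabla_v^n G_\ell\|_{L^1}. \]
Multiplying by $e^{2\pi\mu|\ell|}$ and summing over $\ell$ gives (iv).

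The only point requiring a bit of care, and what I would flag as the main (minor) obstacle, is the consistency of the choice of norms on $\Z^d$ and $\R^d$ used throughout: the separation $|{2\pi\tau\ell}|^n\to\prod_i(2\pi|\tau||\ell_i|)^{n_i}$ needed to factor the exponential series is clean precisely when one uses the $\ell^1$ norm $|\ell|=\sum_i|\ell_i|$ (matching the sup-norm on the dual side); this is consistent with the duality convention mentioned just after the definition of the one-variable analytic norms. With that convention fixed, each of the four assertions reduces to the direct calculations sketched above.
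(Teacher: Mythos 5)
Your proposal is correct and takes essentially the same route as the paper: parts (i)--(iii) follow by unfolding the definitions (the paper treats these as immediate), and for (iv) the paper also reduces to the identity $(2i\pi\tau k)^n\hat\rho(k)=\int(\nabla_v+2i\pi\tau k)^n\hat f(k,v)\,dv$, which is exactly what your auxiliary function $G_\ell$ and the Fourier inequality encode after unwrapping. Your remark about the $\ell^1$ convention on $\Z^d$ being what makes the exponential series factor cleanly is also correct and consistent with the paper's Appendix.
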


\begin{Rk} Note, in Proposition \ref{propZCF} (i) and (iv),
how the regularity in $x$ is improved by the time-shift.
\end{Rk}

\begin{proof}[Proof of Proposition \ref{propZCF}]
  Only (iv) requires some explanations. Let $\rho(x) = \int
  f(x,v)\,dv$. Then for any $k\in\Z^d$,
\[ \hat{\rho}(k) = \int_{\R^d} \hat{f}(k,v)\,dv;\]
so for any $n\in\N^d_0$,
\begin{align*}
(2i\pi tk)^n\,\hat{\rho}(k) & = \int (2i\pi tk)^n\hat{f}(k,v)\,dv\\
& = \int (\nabla_v + 2i\pi t k)^n\,\hat{f}(k,v)\,dv.
\end{align*}
Recalling the conventions from Appendix \ref{app:exp} we deduce
\begin{align*}
\sum_{k,n} e^{2\pi \mu|k|}\, \frac{|2\pi \lambda tk|^n}{n!}\,|\hat{\rho}(k)|
& \leq \sum_{k,n} e^{2\pi \mu|k|}\, \frac{\lambda^n}{n!}
\int \Bigl|(\nabla_v + 2i\pi tk)^n\,\hat{f}(k,v)\Bigr|\,dv\\
& = \|f\|_{\cZ^{\lambda,\mu;1}_t}.
\end{align*}
\end{proof}

\begin{Prop} \label{propincluZ}
With the notation from Subsection \ref{suban2},
\[ \lambda\leq \lambda', \ \mu\leq \mu' \ \Longrightarrow
\|f\|_{\cZ^{\lambda,\mu}_\tau} \leq
\|f\|_{\cZ^{\lambda',\mu'}_\tau}.\]
Moreover, for $\tau,\bar \tau \in \R$, and any $p\in [1,\infty]$,
\begeq\label{moreoverff}
\|f\|_{\cZ^{\lambda,\mu;p}_{\ov{\tau}}} \leq \|f\|_{\cZ^{\lambda,\mu+\lambda |\ov{\tau}-\tau|;p}_\tau}.
\endeq
\end{Prop}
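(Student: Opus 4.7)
The first inequality is immediate from the definition \eqref{Zlambdamup:shift:p}: every term $\frac{\lambda^n}{n!}\, e^{2\pi\mu|\ell|}\,\|(\nabla_v+2i\pi\tau\ell)^n \hat f(\ell,v)\|_{L^p}$ is nonnegative and monotone nondecreasing in both $\lambda\geq 0$ and $\mu\geq 0$, so the sum defining $\|f\|_{\cZ^{\lambda,\mu;p}_\tau}$ does not decrease when the parameters are raised. No further analysis is needed.

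For the time-shift inequality \eqref{moreoverff} I would use the alternative representation \eqref{alterZ1}, which replaces the operator $(\nabla_v+2i\pi\tau\ell)^n$ by $\nabla_v^n$ acting on the twisted function $\hat f(\ell,v)\,e^{2i\pi\tau\ell\cdot v}$. The main step is the elementary factorization $e^{2i\pi\bar\tau\ell\cdot v}=e^{2i\pi\tau\ell\cdot v}\cdot e^{2i\pi(\bar\tau-\tau)\ell\cdot v}$, combined with the multidimensional Leibniz formula, yielding
$$\nabla_v^n\bigl(\hat f(\ell,v)\,e^{2i\pi\bar\tau\ell\cdot v}\bigr)=\sum_{m\leq n}\binom{n}{m}\,\nabla_v^m\bigl(\hat f(\ell,v)\,e^{2i\pi\tau\ell\cdot v}\bigr)\,\bigl(2i\pi(\bar\tau-\tau)\ell\bigr)^{n-m}\,e^{2i\pi(\bar\tau-\tau)\ell\cdot v}.$$
Taking $L^p(\R^d_v)$ norms (the last exponential has modulus one), using $\binom{n}{m}/n!=1/(m!\,(n-m)!)$, and reindexing $k=n-m$, the $\lambda^n/n!$-weighted sum over $n\in\N_0^d$ factors as a product of two independent sums, the one over $k\in\N_0^d$ reading
$$\sum_{k\in\N_0^d}\prod_i\frac{\bigl(2\pi\lambda|\bar\tau-\tau|\,|\ell_i|\bigr)^{k_i}}{k_i!}=\prod_i e^{2\pi\lambda|\bar\tau-\tau|\,|\ell_i|}=e^{2\pi\lambda|\bar\tau-\tau||\ell|}.$$
This factor combines with $e^{2\pi\mu|\ell|}$ to give $e^{2\pi(\mu+\lambda|\bar\tau-\tau|)|\ell|}$, and summing over $\ell$ and $m$ reconstructs exactly $\|f\|_{\cZ^{\lambda,\mu+\lambda|\bar\tau-\tau|;p}_\tau}$, which is the desired bound.

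The computation is really just a bookkeeping of the Leibniz expansion and an interchange of summations, with no genuine analytic obstacle. The one point that deserves attention---and what I would flag as the main (mild) subtlety---is the last equality above: it relies on the choice $|\ell|=\sum_i|\ell_i|$ for the norm on $\Z^d$, so that $\prod_i e^{a|\ell_i|}=e^{a|\ell|}$ holds with no dimensional loss. This matches the convention on $\Z^d$-norms already flagged in the paper (the duality pairing between $\Z^d$ and $\T^d$ discussed in Subsection \ref{suban2}); with any other equivalent norm on $\Z^d$ the same argument still works but introduces a dimensional factor in the exponent that would spoil the stated constant.
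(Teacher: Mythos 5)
Your proof is correct and self-contained. The paper dismisses this proposition (and the neighboring ones in Subsection~4.3) with the single remark that ``the next propositions are easily checked'' and gives no explicit argument, so there is nothing to compare against; your direct Leibniz computation starting from the twisted representation \eqref{alterZ1} is exactly the kind of bookkeeping the authors have in mind. One equivalent and perhaps slightly more idiomatic route, worth noting: apply Lemma~\ref{lemlei} directly to the form \eqref{Zlambdamup:shift:p} with $a=2i\pi\tau\ell$, $b=2i\pi(\bar\tau-\tau)\ell$, and $g\equiv 1$, which produces the same factorization without passing to the exponential-twist picture. Your closing remark pinpointing the role of the $\ell^1$ norm $|\ell|=\sum_i|\ell_i|$ on $\Z^d$ (so that $\prod_i e^{a|\ell_i|}=e^{a|\ell|}$ exactly) is indeed the one place where the computation uses the paper's norm convention in an essential way, and is a useful thing to flag.
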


\begin{Rk} Note carefully that the spaces $\cZ^{\lambda,\mu}_\tau$ are
  {\bf not} ordered with respect to the parameter $\tau$, which cannot
  be thought of as a regularity index. We could dispend with this
  parameter if we were working in time $O(1)$; but \eqref{moreoverff}
  is of course of absolutely no use. This means that errors on the exponent $\tau$ should
  remain somehow small, in order to be controllable by small losses on
  the exponent $\mu$.
\end{Rk}

Finally we state an easy proposition which follows from the
time-invariance of the free transport equation:

\begin{Prop} \label{propTL} 
  For any $X\in \{\cC, \cF, \cZ\}$, and any
  $t,\tau\in\R$,
  \[ \|f\circ S_t^0\|_{X^{\lambda,\mu}_\tau} =
  \|f\|_{X^{\lambda,\mu}_{t+\tau}}.\]
\end{Prop}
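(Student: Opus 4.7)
The plan is essentially to reduce the statement to the group property of the free transport flow. Recall that $S^0_t(x,v)=(x+vt,v)$, and the definitions \eqref{Clambdamu:shift}, \eqref{Flambdamu:shift}, \eqref{Zlambdamu:shift}, \eqref{Zlambdamup:shift:p} can all be written uniformly as
\[ \|g\|_{X^{\lambda,\mu}_\sigma} \;=\; \|g\circ S^0_\sigma\|_{X^{\lambda,\mu}}. \]
So for $g = f\circ S^0_t$, I would simply compute
\[ \|f\circ S^0_t\|_{X^{\lambda,\mu}_\tau} \;=\; \|(f\circ S^0_t)\circ S^0_\tau\|_{X^{\lambda,\mu}} \;=\; \|f\circ (S^0_t\circ S^0_\tau)\|_{X^{\lambda,\mu}}. \]

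The next step is to verify the one-parameter group law $S^0_t\circ S^0_\tau = S^0_{t+\tau}$, which is immediate from
\[ (S^0_t\circ S^0_\tau)(x,v) = S^0_t(x+v\tau,v) = (x+v\tau+vt,v) = S^0_{t+\tau}(x,v). \]
Plugging this back gives $\|f\circ S^0_{t+\tau}\|_{X^{\lambda,\mu}} = \|f\|_{X^{\lambda,\mu}_{t+\tau}}$, which is the claim.

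There is no genuine obstacle here; the only thing to take care of is to check that the three norms $\cC^{\lambda,\mu}$, $\cF^{\lambda,\mu}$, $\cZ^{\lambda,\mu}$ really are defined as $\|\cdot\circ S^0_\tau\|_{X^{\lambda,\mu}}$ in the time-shifted versions — which is exactly the content of the definitions in Subsection~\ref{suban2} — so the same one-line argument applies to all three cases simultaneously and also to the $L^p$ variants. The statement can be read as saying that the family $(X^{\lambda,\mu}_\tau)_{\tau\in\R}$ is tailored precisely so that the free transport flow acts as a shift on the $\tau$ index, which is the whole point of introducing the ``gliding'' regularity parameter.
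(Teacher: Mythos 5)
Your proof is correct and is exactly the intended one: the paper states Proposition \ref{propTL} without proof, treating it as immediate from the definitions \eqref{Clambdamu:shift}--\eqref{Zlambdamup:shift:p} and the group law $S^0_t\circ S^0_\tau=S^0_{t+\tau}$, which is precisely the reduction you carry out.
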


Now we shall see that the hybrid norms, and certain variants thereof,
enjoy properties rather similar to those of the single-variable
analytic norms studied before. This will be sometimes technical, and the
reader who would like to reconnect to physical problems is advised to go directly
to Subsection \ref{submeasur}.

\subsection{Injections}

In this section we relate $\cZ^{\lambda,\mu;p}_{\tau}$ norms to more
standard norms entirely based on Fourier space.  In the next theorem
we write 
\begeq\label{Y} \|f\|_{\cY^{\lambda,\mu}_\tau} =
\|f\|_{\cF^{\lambda,\mu;\infty}_\tau} = \sup_{k\in\Z^d}\,
\sup_{\eta\in\R^d}\, e^{2\pi\mu|k|}\,e^{2\pi\lambda |\eta+k\tau|}\,
|\tilde{f}(k,\eta)|.
\endeq

\begin{Thm}[Injections between analytic spaces] \label{thminj}
(i) If $\lambda,\mu\geq 0$ and $\tau\in\R$ then
\begeq\label{fcYZ}\|f\|_{\cY^{\lambda,\mu}_\tau} \leq \|f\|_{\cZ^{\lambda,\mu;1}_\tau}.
\endeq
\sm

(ii) If $0<\lambda<\ov{\lambda}$, $0<\mu<\ov{\mu}\leq M$, $\tau\in\R$, then
\begeq\label{fcZY} \|f\|_{\cZ^{\lambda,\mu}_\tau} \leq \frac{C(d,\ov{\mu})}{(\ov{\lambda}-\lambda)^d\, (\ov{\mu}-\mu)^d}\, 
\|f\|_{\cY^{\ov{\lambda},\ov{\mu}}_\tau}.
\endeq
\sm

(iii) If $0< \lambda <\ov{\lambda} \leq \Lambda$, 
$0< \mu< \ov{\mu}\leq M$, $b\leq \beta\leq B$, then
there is $C=C(\Lambda,M,b,B,d)$ such that
\begin{multline*}
\label{fcZZZZ} 
\|f\|_{\cZ^{\lambda,\mu;1}_\tau}
\leq C^{\frac1{\min \{ \ov{\lambda}-\lambda \, ; \, \ov{\mu}-\mu \}}}\,
\Bigg( \|f\|_{\cY^{\ov{\lambda},\ov{\mu}}_\tau} + \\
\max \left\{ \left( \iint | f(x,v)|\, e^{\beta |v|}\,dv\,dx \right) \, ; \
\Bigl( \iint |f(x,v)|\,e^{\beta |v|}\,dv\,dx \Bigr)^2\right\}\Bigg).
\end{multline*}
\end{Thm}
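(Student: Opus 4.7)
For all three parts, the natural object to work with is $g_\ell(v)=\hat f(\ell,v)\,e^{2i\pi\tau\ell\cdot v}$, so that $\hat g_\ell(\xi)=\tilde f(\ell,\xi-\tau\ell)$ and, by Proposition \ref{propalterZ},
\[ \|f\|_{\cZ^{\lambda,\mu;p}_\tau} = \sum_{\ell,n}\frac{\lambda^n}{n!}\,e^{2\pi\mu|\ell|}\,\|\nabla^n g_\ell\|_{L^p(\R^d_v)}, \qquad \|f\|_{\cY^{\lambda,\mu}_\tau}=\sup_{\ell,\xi}\,e^{2\pi\mu|\ell|}\,e^{2\pi\lambda|\xi|}\,|\hat g_\ell(\xi)|. \]
I would use the $\ell^1$ norm $|\xi|=\sum_i|\xi_i|$ on $\R^d$, which gives the key multinomial identity $e^{2\pi\lambda|\xi|}=\sum_{n\in\N_0^d}(2\pi\lambda)^{|n|}|\xi^n|/n!$, the pivot between the $\cZ$ sums and the $\cY$ exponentials.

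For (i), the chain
\[ |\xi^n\hat g_\ell(\xi)|=(2\pi)^{-|n|}\,\bigl|\widehat{\nabla^n g_\ell}(\xi)\bigr|\leq (2\pi)^{-|n|}\,\|\nabla^n g_\ell\|_{L^1} \]
combined with the multinomial identity yields $e^{2\pi\lambda|\xi|}|\hat g_\ell(\xi)|\leq \sum_n(\lambda^n/n!)\|\nabla^n g_\ell\|_{L^1}$; multiplying by $e^{2\pi\mu|\ell|}$ and replacing $\sup_\ell$ by $\sum_\ell$ concludes. Part (ii) runs the same argument in reverse via Fourier inversion $\nabla^n g_\ell(v)=\int(2i\pi\xi)^n\hat g_\ell(\xi)e^{2i\pi\xi\cdot v}d\xi$; summing $\lambda^n/n!\cdot\|\nabla^n g_\ell\|_\infty$ folds the exponentials together and, using the $\cY$ bound $|\hat g_\ell(\xi)|\leq\|f\|_{\cY^{\ov\lambda,\ov\mu}_\tau}e^{-2\pi\ov\mu|\ell|}e^{-2\pi\ov\lambda|\xi|}$, yields
\[ \sum_n\frac{\lambda^n}{n!}\|\nabla^n g_\ell\|_\infty\leq \|f\|_{\cY^{\ov\lambda,\ov\mu}_\tau}\,e^{-2\pi\ov\mu|\ell|}\int_{\R^d}e^{-2\pi(\ov\lambda-\lambda)|\xi|}\,d\xi=\frac{\|f\|_{\cY^{\ov\lambda,\ov\mu}_\tau}\,e^{-2\pi\ov\mu|\ell|}}{\bigl(\pi(\ov\lambda-\lambda)\bigr)^d}; \]
the geometric series $\sum_\ell e^{-2\pi(\ov\mu-\mu)|\ell|}$ then contributes the remaining $(\ov\mu-\mu)^{-d}$ factor uniformly for $\ov\mu\leq M$.

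For (iii), the difficulty is promoting the $L^\infty$ control from (ii) into $L^1$, and this is where the moment hypothesis and the blow-up $C^{1/\ov\nu}$ with $\ov\nu=\min(\ov\lambda-\lambda,\ov\mu-\mu)$ enter. My plan is to split, for a truncation radius $R>0$ to be chosen,
\[ \|\nabla^n g_\ell\|_{L^1} \leq C_d\,R^d\,\|\nabla^n g_\ell\|_{L^\infty}+\int_{|v|>R}|\nabla^n g_\ell(v)|\,dv, \]
controlling the first term via (ii) applied at intermediate parameters $\lambda'\in(\lambda,\ov\lambda)$, $\mu'\in(\mu,\ov\mu)$. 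For the tail, I would use the Fourier integration-by-parts identity
\[ (2i\pi v)^\alpha\nabla^n g_\ell(v) = (-1)^{|\alpha|}\int \partial_\xi^\alpha\bigl[(2i\pi\xi)^n \hat g_\ell(\xi)\bigr]e^{2i\pi\xi\cdot v}\,d\xi, \]
combined with the moment estimate
\[ |\partial^\gamma \hat g_\ell(\xi)|\leq (2\pi)^{|\gamma|}\iint|v|^{|\gamma|}|f(x,v)|\,dx\,dv\leq \left(\frac{2\pi|\gamma|}{e\beta}\right)^{|\gamma|}M_2, \]
derived from the elementary inequality $|v|^k\leq (k/(e\beta))^k e^{\beta|v|}$, where $M_2=\iint|f|e^{\beta|v|}dxdv$ and $M_1=\|f\|_{\cY^{\ov\lambda,\ov\mu}_\tau}$. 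Choosing $|\alpha|=d+1$ and splitting the Fourier integral at a scale $T$ (using the moment bound for $|\xi|\leq T$ and the $\cY$-decay for $|\xi|>T$) produces a pointwise estimate of the form $|v|^{d+1}|\nabla^n g_\ell(v)|\leq P_n(d,\beta,\ov\lambda,\ov\mu)(M_1+M_2^2)$, whose integral over $|v|>R$ is bounded by $R^{-1}P_n(M_1+M_2^2)$. Balancing the two terms fixes $R=R(n)$, and the final summation $\sum_n\lambda^n/n!$ together with $\sum_\ell e^{2\pi\mu|\ell|}$ produces, through geometric-series losses $(\ov\lambda-\lambda)^{-1}$ and $(\ov\mu-\mu)^{-1}$ compounded at each scale, the factor $C^{1/\ov\nu}$. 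The hardest part will be the combinatorial bookkeeping in the Leibniz expansion of $\partial_\xi^{d+1}[(2i\pi\xi)^n\hat g_\ell]$: the moment factor $(|\gamma|/(e\beta))^{|\gamma|}$ stays bounded since $|\gamma|\leq d+1$, but the polynomial growth $|\xi|^{|n|-|\beta|}$ must be absorbed either by exponential decay of $\hat g_\ell$ on the high-frequency side (contributing $M_1$) or by the bounded region $|\xi|\leq T$ on the low-frequency side (contributing $M_2$); the quadratic term $M_2^2$ appears because when the moment mechanism governs both the pointwise derivative bound and, via interpolation, the choice of truncation scale $T$, the same constant enters twice.
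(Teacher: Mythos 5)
Parts (i) and (ii) are correct and essentially reproduce the paper's own argument, merely phrased through the auxiliary function $g_\ell(v)=\hat f(\ell,v)e^{2i\pi\tau\ell\cdot v}$ rather than by reducing to $\tau=0$ via free-transport invariance; the multinomial expansion of $e^{2\pi\lambda|\cdot|}$, the integration by parts in $v$ for (i), and the Fourier inversion plus geometric sums for (ii) all match.

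Part (iii) has a genuine gap, located exactly where you flag it as ``the hardest part.'' Your integration-by-parts identity $(2i\pi v)^\alpha\nabla^n g_\ell(v)=(-1)^{|\alpha|}\int\partial_\xi^\alpha\bigl[(2i\pi\xi)^n\hat g_\ell(\xi)\bigr]e^{2i\pi\xi\cdot v}\,d\xi$ requires the right-hand integrand to be integrable over $\xi\in\R^d$. Expanding by Leibniz, every term except the one in which all $\alpha$ derivatives hit the polynomial $\xi^n$ carries a factor $\partial^\gamma\hat g_\ell(\xi)$ with $|\gamma|\geq 1$, and your only handle on this object is the moment bound, which is a constant in $\xi$ (and also uniform in $\ell$). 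Multiplied by the surviving polynomial $|\xi|^{|n|-|\beta|}$, this gives a divergent integral, and the $T$-split does not repair it on the high-frequency side: the $\cY$ norm controls $\hat g_\ell$ itself, not its $\xi$-derivatives, so there is no exponential decay of $\partial^\gamma\hat g_\ell$ to invoke for $|\xi|>T$. The same deficiency reappears in the $\ell$-sum, since the moment-controlled pieces carry no $e^{-2\pi\ov\mu|\ell|}$ factor and $\sum_\ell e^{2\pi\mu|\ell|}$ diverges. What is missing -- and what the paper isolates as a separate Lemma \ref{leminterp} -- is a Gagliardo--Nirenberg-type interpolation: from $|\hat g_\ell(\xi)|\leq Ae^{-c|\xi|}$ (the $\cY$ bound) and $\sup_r\beta^r\|\nabla^r\hat g_\ell\|_\infty/r!\leq M_2$ (the moment bound), one deduces $|\nabla^q\hat g_\ell(\xi)|\lesssim C^{1/\var}A^{1-\var}e^{-(1-2\var)c|\xi|}\max\{M_2^\var,M_2^{2\var}\}$, i.e.\ exponential decay of the derivatives at the cost of a $\var$-fraction of the rate. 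This simultaneously restores integrability in $\xi$ and decay in $\ell$ (since $A$ carries $e^{-2\pi\ov\mu|\ell|}$), and is the true origin of both the $\max\{M_2,M_2^2\}$ term and the $C^{1/\min\{\ov\lambda-\lambda;\,\ov\mu-\mu\}}$ blow-up -- not the compounded geometric losses you describe. Proving that interpolation lemma with its $C^{1/\var}$ constant (via an iterated one-dimensional interpolation) is the actual technical content of part (iii).
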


\begin{Rk} The combination of (ii) and (iii), plus elementary Lebesgue interpolation, enables to control
all norms $\cZ^{\lambda,\mu;p}_\tau$, $1\leq p\leq\infty$.
\end{Rk}

\begin{proof}[Proof of Theorem \ref{thminj}]
By the invariance under the action of free transport, it is sufficient to do the proof for $\tau=0$.

By integration by parts in the Fourier transform formula, we have
\[
\tilde{f}(k,\eta) = \int\hat{f}(k,v)\,e^{-2i\pi \eta\cdot v}\,dv
= \int \nabla_v^m \hat{f}(k,v)\,\frac{e^{-2i\pi \eta\cdot v}}{(2i\pi \eta)^m}\,dv.
\]
So
\[ |\tilde{f}(k,\eta)| \leq
\frac1{(2\pi|\eta|)^m} \int |\nabla_v^m \hat{f}(k,v)|\,dv;\]
and therefore
\begin{align*}
e^{2\pi \mu |k|}\, e^{2\pi \lambda |\eta|}\, |\tilde{f}(k,\eta)|
& \leq e^{2\pi \mu|k|}\, \sum_n \frac{(2\pi \lambda)^n}{n!} |\eta|^n \,|\tilde{f}(k,\eta)|\\
& \leq e^{2\pi \mu|k|}\, \sum_n \frac{\lambda^n}{n!}\int |\nabla_v^n\tilde{f}(k,v)|\,dv.
\end{align*}
This establishes (i).
\sm

Next, by differentiating the identity
\[ \hat{f}(k,v) = \int \tilde{f}(k,\eta)\,e^{2i\pi \eta\cdot
  v}\,d\eta,\]
we get
\begeq\label{Dmhatf}
\nabla_v^m \hat{f}(k,v)= \int \tilde{f}(k,\eta)\,(2i\pi\eta)^m\,e^{2i\pi \eta\cdot v}\,d\eta.
\endeq
Then we deduce (ii) by writing
\begin{align*}
\sum_{k,m}
e^{2\pi\mu|k|}\, & \frac{\lambda^m}{m!}\, \|\nabla_v^m \hat{f}(k,v)\|_{L^\infty(dv)}\\
& \leq \sum_k e^{2\pi\mu|k|} \int e^{2\pi\lambda |\eta|} |\tilde{f}(k,\eta)|\,d\eta\\
& \leq \left(\sum_k e^{-2\pi (\ov{\mu}-\mu)|k|}\right)
\left(\int e^{-2\pi (\ov{\lambda}-\lambda)|\eta|}\,d\eta\right)
\left(\sup_{k,\eta} e^{2\pi\ov{\lambda}|\eta|}\, e^{2\pi\ov{\mu}|k|}\, |\tilde{f}(k,\eta)|\right).
\end{align*}

The proof of (iii) is the most tricky. We start again from \eqref{Dmhatf}, but now we integrate by
parts in the $\eta$ variable:
\begeq\label{Dmveta}
\nabla_v^m \hat{f}(k,v)
= (-1)^q \int \nabla_\eta^q \bigl[ \tilde{f}(k,\eta)\, (2i\pi\eta)^m\bigr]\,
\frac{e^{2i\pi\eta\cdot v}}{(2i\pi v)^q}\,dv,
\endeq
where $q=q(v)$ is a multi-index to be chosen.

We split $\R^d_v$ into $2^d$ disjoint regions $\Delta(i_1,\ldots,i_n)$, where the $i_j$
are distinct indices in $\{1,\ldots,d\}$:
\[ \Delta(I) = \Bigl\{ v\in\R^d; \ |v_i|\geq 1 \ \forall \, i\in
I,\quad |v_i|<1\ \forall \, i \notin I\Bigr\}.\] 
If $v\in \Delta(i_1,\ldots, i_n)$ we apply \eqref{Dmveta} with the multi-index
$q$ defined by $q_j=2$ if $j\in\{i_1,\ldots,i_n\}$, $q_j=0$
otherwise. This gives
\[ \int_{\Delta(i_1,\ldots,i_n)}
|\nabla_v^m \hat{f}(k,v)|\,dv
\leq \left(\frac{1}{(2\pi)^{2n}}\int_{\Delta(i_1,\ldots,i_n)}
\frac{dv_{i_1}\ldots dv_{i_n}}{|v_{i_1}|^2\ldots |v_{i_n}|^2}\right)\,
\sup_{k,\eta} \Bigl| \nabla_\eta^q \bigl[ \tilde{f}(k,\eta)\, (2i\pi \eta)^m\bigr]\Bigr|.\]

Summing up all pieces and using the Leibniz formula, we get
\[ \int |\nabla_v^m \hat{f}(k,v)|\,dv
\leq C(d) (1+m^{2d})\ \sup_{k,\eta}\
\sup_{|q|\leq 2d} |\nabla_\eta^q\tilde{f}(k,\eta)|\,|2\pi\eta|^{m-q}.\]

At this point we apply Lemma \ref{leminterp} below with 
\[ \var = \frac14\, \min \left\{ \frac{\ov{\lambda}-\lambda}{\ov{\lambda}} \, ;
    \,  \frac{\ov{\mu}-\mu}{\ov{\mu}}\right\},\]
and we get, for $q \le 2d$, 
\begin{multline*}
|\nabla_\eta^q \tilde{f}(k,\eta)|\leq
C(d)^{\max \left\{\frac{\ov{\lambda}}{\ov{\lambda}-\lambda} \, ; \, \frac{\ov{\mu}}{\ov{\mu}-\mu}\right\}}
\, K(b,B)\, e^{-2\pi \frac{\lambda + \bar \lambda}2 |\eta|}\\
\Bigl(\sup_\eta e^{2\pi \ov{\lambda} |\eta|} |\tilde{f}(k,\eta)|\Bigr)^{1-\var}\
\max \left\{ \left( \sup_{\ell,\eta} \frac{\beta^\ell\,
      \|\nabla_\eta^\ell \tilde{f}\|_\infty}{\ell!}\right)^\var \, ;
  \, \left( \sup_{\ell,\eta} 
   \frac{\beta^\ell\, \|\nabla_\eta^\ell \tilde{f}\|_\infty}{\ell!}\right)^{2\var}\right\}.
\end{multline*}

Of course,
\begin{multline*}
\frac{\beta^\ell \, |\nabla_\eta^\ell \tilde{f}(k,\eta)|}{\ell!} \leq
(2\pi\beta)^\ell \int_{\R^d} |\hat{f}(k,v)|\,\frac{|v|^\ell}{\ell!}\,dv \\
\leq \int_{\R^d} |f(x,v)|\, (2\pi\beta)^\ell\, \frac{|v|^\ell}{\ell!}\,dv
\leq \int_{\R^d} |f(x,v)|\,e^{2\pi\beta |v|}\,dv.
\end{multline*}
So, all in all,
\begin{multline*}
\sum_{k,m} e^{2\pi \mu|k|}\,\frac{\lambda^m}{m!} 
\int |\nabla_v^m \hat{f}(k,v)|\,dv \\
\leq \sum_{|q|\leq 2d} 
C(d,\Lambda,M,b,B)^{\frac1{\min\{\ov{\lambda}-\lambda \, ; \, \ov{\mu}-\mu\}}}\\
\sup_{\eta \in \R^d} \left( e^{-2\pi \frac{\lambda + \bar \lambda}2  |\eta|}
\, \sum_m \frac{\lambda^m (1+m)^{2d}\,|2\pi\eta|^{m-q}}{m!}\right)\,
\left(\sum_k e^{-2\pi (\ov{\mu}(1-\var)-\mu)|k|}\right)\\
\left(\sup_{k,\eta} e^{2\pi\ov{\mu}|k|}\,e^{2\pi\ov{\lambda}|\eta|}\, 
|\tilde{f}(k,\eta)|\right)^{1-\var}\
\max \left\{ \left(\int_{\R^d} |f(x,v)|\,e^{\beta |v|}\,dv \right)^\var \, ; \
\left(\int_{\R^d} |f(x,v)|\,e^{\beta |v|}\,dv \right)^{2\var}\right\}.
\end{multline*}
Since 
$$
\sum_m \frac{\lambda^m (1+m)^{2d}\,|2\pi\eta|^{m-q}}{m!}
\leq C(q,\Lambda)\,e^{2\pi\frac{\lambda+\ov{\lambda}}2 |\eta|}
$$
and
\[ \sum_k e^{-2\pi (\ov{\mu}(1-\var)-\mu)|k|}\leq \sum_k e^{-\pi
  (\ov{\mu}-\mu)|k|} \leq C/(\ov{\mu}-\mu)^{d},\] we easily end up
with the desired result.
\end{proof}

\begin{Lem} \label{leminterp}
Let $f:\R^d\to\C$, and let $\alpha>0$, $A\geq 1$, $q\in\N_0^d$. Let $\beta$ such that
$0<b\leq \beta\leq B$. If $|f(x)|\leq A\, e^{-\alpha |x|}$ for all $x$, then
for any $\var\in (0,1/4)$ one has
\begin{multline} \label{nablaqf}
 |\nabla^q f(x)| \leq C(q,d)^{\frac1{\var}}\, K(b,B)\, A^{1-\var} \, e^{- (1-2\var) \alpha|x|}\\
\sup_{r\in\N_0^d} \max \left\{ \left( \beta^r \, \frac{\|\nabla^r
      f\|_\infty}{r!}\right)^{\var}\, ; \, 
\left(\beta^r \, \frac{\|\nabla^r f\|_\infty}{r!}\right)^{2\var}\right\}.  
\end{multline}
\end{Lem}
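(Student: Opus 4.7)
The plan is to interpolate the pointwise decay of $f$ against its analyticity via a localized Landau--Kolmogorov inequality. First, set $M := \sup_{r\in\N_0^d} \beta^{|r|}\|\nabla^r f\|_\infty/r!$. From the $r=0$ case one has $\|f\|_\infty \leq M$; combined with the hypothesis $|f(y)| \le A e^{-\alpha|y|}$, a trivial geometric interpolation yields
\[ |f(y)| \leq A^{1-\varepsilon}\, M^\varepsilon\, e^{-(1-\varepsilon)\alpha|y|}, \qquad \forall\, y \in \R^d. \]

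Next, I would invoke the pointwise Gorny--Cartan--Kolmogorov inequality: for any $\rho>0$ and any $N\in\N$,
\[ |\nabla^q f(x)| \leq C(q,N,d)\left[ \rho^{-|q|}\sup_{|y-x|\leq \rho}|f(y)| + \rho^N \sup_{|y-x|\leq \rho}|\nabla^{|q|+N}f(y)|\right], \]
which follows from Taylor expansion at $|q|+N$ Chebyshev-like nodes in $B(x,\rho)$ and Vandermonde-type inversion, the constant $C(q,N,d)$ growing at most polynomially in $q,N$. Substituting the above interpolated bound on $|f|$ (using $|y|\geq|x|-\rho$, which yields the factor $A^{1-\varepsilon}M^\varepsilon e^{(1-\varepsilon)\alpha\rho}e^{-(1-\varepsilon)\alpha|x|}$), and the bound $\|\nabla^{|q|+N}f\|_\infty \leq M\,(|q|+N)!/\beta^{|q|+N}$ from analyticity, one balances the two terms by choosing $N = |q|/\varepsilon$ and $\rho = \rho(x)$ determined by equating them. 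Stirling's formula $(|q|+N)!^{1/(|q|+N)}\sim (|q|+N)/e$ then produces
\[ |\nabla^q f(x)| \leq K(b,B)\,C(q,d)^{1/\varepsilon}\, A^{(1-\varepsilon)^2}\, M^{\varepsilon(2-\varepsilon)}\, e^{-(1-\varepsilon)^2\alpha|x|}. \]

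To match the form of the statement, observe that $A\geq 1$ gives $A^{(1-\varepsilon)^2}\leq A^{1-\varepsilon}$; that $(1-\varepsilon)^2\geq 1-2\varepsilon$, so $e^{-(1-\varepsilon)^2\alpha|x|}\leq e^{-(1-2\varepsilon)\alpha|x|}$; and that $M^{\varepsilon(2-\varepsilon)} \leq M^\varepsilon$ when $M\leq 1$ and $\leq M^{2\varepsilon}$ when $M\geq 1$, so in both regimes $M^{\varepsilon(2-\varepsilon)}\leq \max\{M^\varepsilon,M^{2\varepsilon}\}$. The constraint $\varepsilon<1/4$ ensures the exponents behave safely. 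The factor $\varepsilon^{-|q|}$ arising from the optimal $\rho$ can be absorbed via $\varepsilon^{-|q|}\leq e^{|q|/\varepsilon}\leq C(q)^{1/\varepsilon}$, and $\beta \in [b,B]$ supplies the constant $K(b,B)$.

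The main obstacle is the careful balancing of $(\rho,N)$: in particular, the scale $\rho$ must be allowed to depend on $|x|$ (shrinking as $|x|$ grows) in order to convert the otherwise $x$-independent Taylor-remainder term into one with the required exponential decay, while simultaneously the choice $N\sim |q|/\varepsilon$ must be compatible with the Stirling estimates for $(|q|+N)!$ so that all combinatorial factors collapse to $K(b,B)\,C(q,d)^{1/\varepsilon}$. A secondary technical point is the correct $d$-dimensional statement of the pointwise Landau--Kolmogorov inequality, most conveniently obtained by tensorizing the one-dimensional version on the cube $\{y : |y_i-x_i|\leq \rho\}$.
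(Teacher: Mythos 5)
Your route is genuinely different from the paper's. The paper starts from the one-dimensional \emph{pointwise weighted} Landau--Kolmogorov inequality $|\partial_1 f(x)|\le 2\sqrt{A e^{-\alpha|x|}}\,\|\partial_1^2 f\|_\infty^{1/2}$ (quoted from \cite{DV:FP:01}), iterates it to produce the product-form estimate $|\partial_1^{q_1}f|\le C_{q_1,r_1}(Ae^{-\alpha|x|})^{1-q_1/r_1}\|\partial_1^{r_1}f\|_\infty^{q_1/r_1}$ with $C_{q,r}\le 2^{q(r-q)}$, tensorizes direction by direction to obtain the nested bound \eqref{bignabla}, and only \emph{then} introduces $\varepsilon$ by picking $r_i$ so that $q_i/r_i\in[\varepsilon/d,2\varepsilon/d]$. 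You instead use the \emph{two-term} form of the Kolmogorov inequality on a ball $B(x,\rho)$ and recover the pointwise decay by letting $\rho$ shrink as $|x|\to\infty$, with $\rho$ and $N$ chosen to balance the two terms. The two approaches buy different things: the paper's product form carries the pointwise weight through each application for free and never needs to optimize an auxiliary scale; your two-term form is more familiar and makes the interpolation exponents transparent, at the cost of having to make $\rho=\rho(x)$ adaptive and then chase the constants through the $\rho$-optimization.

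The argument is essentially workable, but two steps are underspecified in a way that matters here. First, the $d$-dimensional two-term inequality that you write as $|\nabla^q f(x)|\le C[\rho^{-|q|}\sup_{B(x,\rho)}|f|+\rho^N\sup_{B(x,\rho)}|\nabla^{|q|+N}f|]$ is not the correct tensorization: iterating the one-dimensional estimate coordinate by coordinate produces a sum of $2^d$ cross terms of the form $\rho^{\cdot}\sup|\partial_1^{a_1}\cdots\partial_d^{a_d}f|$ with $a_i\in\{q_i,q_i+N_i\}$, and each term has to be estimated and balanced separately (the paper does exactly this bookkeeping in \eqref{bignabla}). Replacing the whole thing by a single $\nabla^{|q|+N}$ term is a genuine shortcut that must be justified. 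Second, the claimed intermediate exponents $A^{(1-\varepsilon)^2}M^{\varepsilon(2-\varepsilon)}e^{-(1-\varepsilon)^2\alpha|x|}$ are not what the balance at $N=|q|/\varepsilon$ gives; equating the two terms yields exponents $(1-\varepsilon)/(1+\varepsilon)$ for $A$, $2\varepsilon/(1+\varepsilon)$ for $M$, and $-(1-\varepsilon)/(1+\varepsilon)$ for the exponential weight, together with a Stirling factor $(m!/\beta^m)^{\varepsilon/(1+\varepsilon)}\sim(|q|/(e\beta\varepsilon))^{|q|}$. These do ultimately fall under $A^{1-\varepsilon}$, $\max\{M^\varepsilon,M^{2\varepsilon}\}$, $e^{-(1-2\varepsilon)\alpha|x|}$, and $K(b,B)C(q,d)^{1/\varepsilon}$ respectively (using $A\ge 1$ and $\varepsilon^{-|q|}\le e^{|q|/\varepsilon}$), so the conclusion survives, but the chain of inequalities you would have to write down is not the one you sketched. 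In short: right idea, different route, but the multi-dimensional step and the exponent bookkeeping both need to be carried out honestly before this counts as a proof.
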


\begin{Rk} \label{rkMiro} One may conjecture that the optimal constant in the right-hand side of \eqref{nablaqf} is in fact
  polynomial in $1/\var$; if this conjecture holds true, then the constants in Theorem
  \ref{thminj} (iii) can be improved accordingly.
  Mironescu communicated to us a derivation of polynomial bounds for
  the optimal constant in the related inequality
  $$\|f^{(k)}\|_{L^\infty(\R)} \leq
  C(k)\,\|f\|^{1/(k+2)}_{L^1(\R)}\|f^{(k+1)}\|^{(k+1)/(k+2)}_{L^\infty(\R)},$$
  based on a real interpolation method.
\end{Rk}

\begin{proof}[Proof of Lemma \ref{leminterp}]
Let us first see $f$ as a function of $x_1$, and treat
$x'=(x_2,\ldots,x_d)$ as a parameter.
Thus the assumption is $|f(x_1,x')|\leq (A\,e^{-\alpha |x'|})\,e^{-\alpha |x_1|}$.
By a more or less standard interpolation inequality \cite[Lemma A.1]{DV:FP:01},
\begeq\label{interp2}
|\pa_1 f(x_1,x')|\leq 2\sqrt{A\,e^{-\alpha |x'|}}\,\sqrt{e^{-\alpha
    |x_1|}}\, \|\pa_1^2 f(x_1,x')\|^{\frac12} _\infty
= 2 \sqrt{A\,e^{-\alpha |x|}}\sqrt{\|\pa_1^2 f\|_\infty}.
\endeq
Let $C_{q_1,r_1}$ be the optimal constant (not smaller than 1) such that
\begeq\label{Cq1r1}
|\pa_1^{q_1} f(x_1,x')| \leq C_{q_1,r_1}\, (A\,e^{-\alpha |x|})^{1-\frac{q_1}{r_1}}\,
\|\pa_r^{r_1} f(x_1,x')\|_\infty^{\frac{q_1}{r_1}}.
\endeq
By iterating \eqref{interp2}, we find $C_{q_1,r_1} \leq 2\sqrt{C_{q_1-1,r_1}\, C_{q_1+1,r_1}}$. It follows
by induction that
\[ C_{q,r} \leq 2^{q(r-q)}.\]

Next, using \eqref{Cq1r1} and interpolating according to the second variable $x_2$ as in \eqref{interp2},
we get
\begin{align*}
|\pa_2^{q_2} \pa_1^{q_1} f(x)|
& \leq C_{q_2,r_2}\, \Bigl( C_{q_1,r_1}\,(A\,e^{-\alpha |x|})^{1-\frac{q_1}{r_1}}\,
\|\pa_1^{r_1} f\|_\infty^{\frac{q_1}{r_1}}\Bigr)^{1-\frac{q_2}{r_2}}\,
\|\pa_2^{r_2} \pa_1^{q_1} f\|_\infty^{\frac{q_2}{r_2}}\\
& \leq C_{q_1,r_1}\,C_{q_2,r_2}\,
(A\,e^{-\alpha |x|})^{(1-\frac{q_1}{r_1})(1-\frac{q_2}{r_2})}\,
\|\pa_1^{r_1} f\|_\infty^{\frac{q_1}{r_1}\,(1-\frac{q_2}{r_2})}\,
\|\pa_2^{r_2}\pa_1^{q_1}f\|_\infty^{\frac{q_2}{r_2}}.
\end{align*}

We repeat this until we get
\begin{multline} \label{bignabla}
|\nabla^q f(x)| \leq
(C_{q_1,r_1}\, \ldots\, C_{q_d,r_d})\, (A\,e^{-\alpha |x|})^{(1-\frac{q_1}{r_1})\ldots (1-\frac{q_d}{r_d})}\\
\|\pa_1^{r_1} f\|_\infty^{\frac{q_1}{r_1}(1-\frac{q_2}{r_2})\ldots (1-\frac{q_d}{r_d})}\,
\|\pa_1^{q_1}\pa_2^{r_2}f\|^{\frac{q_2}{r_2}(1-\frac{q_3}{r_3})\ldots (1-\frac{q_d}{r_d})}\,\ldots
\|\pa_1^{q_1}\pa_2^{q_2}\ldots \pa_{d-1}^{q_{d-1}}\pa_d^{r_d} f\|^{\frac{q_d}{r_d}}.
\end{multline}
Choose $r_i$ ($1\leq i\leq d$) in such a way that
\[ \frac{\var}{d} \leq \frac{q_i}{r_i}\leq \frac{2\var}{d};\] this is
always possible for $\var<d/4$. Then $C_{q_i,r_i}\leq
(2^{dq_i^2})^{1/\var}$, and \eqref{bignabla} implies
\[ |\nabla^q f(x)| \leq (2^{d|q|^2})^{1/\var}\,(A\,e^{-\alpha
  |x|})^{1-\var}\, \max_{s\leq r+q} \left\{ \|\nabla^s
f\|_\infty^{\var} \, ; \, \|\nabla^s f\|_\infty^{2\var}\right\}.\] 
Then,
since $2(r+q)\var\leq 3 d q$ we have, by a crude application of
Stirling's formula (in quantitative form), for $s \le r+q$, 
\begin{align*}
\|\nabla^s f\|^{\var}_\infty & \leq \left(\frac{\beta^s\,\|\nabla^s f\|_\infty}{s!}\right)^{\var}\,
\left(\frac{s!}{\beta^s}\right)^\var \\
& \leq \left( \sup_n \frac{\beta^n\,\|\nabla^n f\|_\infty}{n!}\right)^\var\,
C(\beta,q,d)\,\var^{-3dq},
\end{align*}
and the result follows easily.
\end{proof}

\subsection{Algebra property in two variables}

In this section we only consider the norms $\cZ^{\lambda,\mu;p}_\tau$;
but similar results would hold true for the two-variables $\cC$ and
$\cF$ spaces, and could be proven with the same method as those used
for the one-variable spaces $\cF^\lambda$ and $\cC^\lambda$
respectively (note that the Leibniz formula still applies because
$\nabla_x$ and $(\nabla_v + \tau \nabla_x)$ commute).

\begin{Prop} \label{propalgZ} (i) For any $\lambda, \mu\geq 0$, $\tau
  \in\R$ and $p,q,r \in [1,+\infty]$ such that $1/p+1/q=1/r$, we have
\[ \| f \, g \|_{\cZ^{\lambda,\mu;r}_\tau} \le  \| f \|_{\cZ^{\lambda,\mu;p}_\tau} \, 
\| g \|_{\cZ^{\lambda,\mu;q}_\tau}. \]

(ii) As a consequence, $\cZ^{\lambda,\mu}_\tau = \cZ^{\lambda,\mu;\infty} _\tau$ 
is a normed algebra: 
\[ \|fg\|_{\cZ^{\lambda,\mu}_\tau} \leq \|f\|_{\cZ^{\lambda,\mu}_\tau}\,  \|g\|_{\cZ^{\lambda,\mu} _\tau}.\] 
In particular, $\|f^n\|_{\cZ^{\lambda,\mu}_\tau} \leq  \|f\|_{\cZ^{\lambda,\mu}_\tau} ^n$ for any
$n\in\N_0$, and $\|e^f\|_{\cZ^{\lambda,\mu}_\tau} \leq e^{\|f\|_{\cZ^{\lambda,\mu}_\tau}}$.
\end{Prop}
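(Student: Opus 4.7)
The plan is to reduce everything to the case $\tau=0$ using the semigroup representation, then combine a Leibniz formula in $v$ with a Young-type convolution in $x$.

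First, since $(fg)\circ S^0_\tau = (f\circ S^0_\tau)(g\circ S^0_\tau)$ and $\|h\|_{\cZ^{\lambda,\mu;p}_\tau} = \|h\circ S^0_\tau\|_{\cZ^{\lambda,\mu;p}}$, it is enough to prove (i) for $\tau=0$. (Alternatively, one can invoke the representation \eqref{alterZ2}: the operator $D_\tau := \nabla_v + \tau\nabla_x$ is a first-order differential operator, hence a derivation on pointwise products, so it obeys the multidimensional Leibniz rule, and the estimate reduces to one for the seminorm $\trn\cdot\trn_{\mu;p}$ defined in \eqref{trn}.)

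With $\tau=0$, use the Fourier convolution identity in $x$:
\[
\widehat{fg}(\ell,v) = \sum_{k\in\Z^d}\hat{f}(k,v)\,\hat{g}(\ell-k,v),
\]
apply the Leibniz formula in $v$, and apply Hölder's inequality in $v$ with $1/p+1/q=1/r$:
\[
\bigl\|\nabla_v^n\widehat{fg}(\ell,v)\bigr\|_{L^r_v}
\le \sum_{k}\sum_{m\leq n}\Cnk{n}{m}\,\|\nabla_v^m\hat{f}(k,v)\|_{L^p_v}\,\|\nabla_v^{n-m}\hat{g}(\ell-k,v)\|_{L^q_v}.
\]
Multiplying by $e^{2\pi\mu|\ell|}$, using $|\ell|\le |k|+|\ell-k|$, and summing over $\ell$, the sum factorizes (Young-type trick: change variables $j=\ell-k$) into a product of two sums. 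Multiplying next by $\lambda^n/n!$ and summing in $n$, the binomial identity $\lambda^n/n! \cdot \binom{n}{m} = (\lambda^m/m!)(\lambda^{n-m}/(n-m)!)$ again splits the sum into a product, yielding
\[
\|fg\|_{\cZ^{\lambda,\mu;r}}\le \|f\|_{\cZ^{\lambda,\mu;p}}\,\|g\|_{\cZ^{\lambda,\mu;q}},
\]
which is (i).

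Part (ii) is the special case $p=q=r=\infty$ of (i). The bound $\|f^n\|_{\cZ^{\lambda,\mu}_\tau}\le \|f\|_{\cZ^{\lambda,\mu}_\tau}^n$ follows by induction on $n$, and
\[
\|e^f\|_{\cZ^{\lambda,\mu}_\tau} \le \sum_{n\in\N_0}\frac{\|f^n\|_{\cZ^{\lambda,\mu}_\tau}}{n!} \le \sum_{n\in\N_0}\frac{\|f\|_{\cZ^{\lambda,\mu}_\tau}^n}{n!} = e^{\|f\|_{\cZ^{\lambda,\mu}_\tau}}
\]
by dominated convergence on the (absolutely convergent) Fourier series defining the norm.

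No serious obstacle is expected: the only minor subtlety is to make sure that the shift by $\tau$ is compatible with Leibniz, which is either handled by reducing to $\tau=0$ via $S^0_\tau$, or by observing that $D_\tau = \nabla_v + \tau\nabla_x$ is a derivation. All exchanges of summation order are justified since every term in sight is nonnegative.
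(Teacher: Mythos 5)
Your proof is correct and follows essentially the same route as the paper's: a Young-type convolution estimate in the spatial Fourier variable (with the splitting $e^{2\pi\mu|\ell|}\le e^{2\pi\mu|k|}e^{2\pi\mu|\ell-k|}$ and H\"older in $v$), combined with the multidimensional Leibniz rule and the binomial factorization of $\lambda^n/n!$. The paper works directly with $(\nabla_v+\tau\nabla_x)^n$ via the representation \eqref{alterZ2} and mentions the reduction to $\tau=0$ as an alternative, while you do the reverse, leading with the $\tau=0$ reduction; this is purely a presentational difference.
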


\begin{proof}[Proof of Proposition \ref{propalgZ}]
  First we note that (with the notation \eqref{trn})
  $\trn\cdot\trn_{\mu;r}$ satisfies the ``$(p,q,r)$ property'':
  whenever $p,q,r \in [1,+\infty]$ satisfy $1/p + 1/q = 1/r$, we have
  \begin{align*}
    \trn fg\trn_{\mu;r} & = \sum_{\ell\in\Z^d} e^{2\pi\mu|\ell|}\,
    \|\hat{fg}(\ell,\,\cdot\,)\|_{L^r(\R^d_v)}\\
    & = \sum_{\ell\in\Z^d} e^{2\pi\mu|\ell|}\,
    \left\| \sum_k \hat{f}(k,\,\cdot\,)\,\hat{g}(\ell-k,\,\cdot\,) 
    \right\|_{L^r(\R^d_v)}\\
    & \leq \sum_{\ell\in\Z^d} \sum_{k\in\Z^d}
    e^{2\pi\mu|k|}\,e^{2\pi\mu|\ell-k|}\,
    \|\hat{f}(k,\,\cdot\,)\|_{L^p(\R^d_v)}\,
    \|\hat{g}(\ell-k,\,\cdot\,)\|_{L^q(\R^d_v)}\\
    & = \trn f\trn_{\mu;p}\, \trn g\trn_{\mu;q}.
\end{align*}

Next, we write
\begin{align*}
\trn & fg \trn_{\cZ^{\lambda,\mu;r}_\tau}
= \sum_{n\in\N_0^d} \frac{\lambda^n}{n!}\, 
\trn (\nabla_v + \tau\nabla_x)^n (fg)\trn_{\mu;r}\\
& = \sum_n \frac{\lambda^n}{n!}\, \btrn \sum_{m\leq n} \Cnk{n}{m} 
(\nabla_v+\tau\nabla_x)^m f\, (\nabla_v + \tau\nabla_x)^{n-m} g\btrn_{\mu;r} \\
& \leq \sum_n \frac{\lambda^n}{n!} \sum_{m\leq n} \Cnk{n}{m}\,
\trn (\nabla_v + \tau\nabla_x)^m f \trn_{\mu;p}\,
\trn (\nabla_v + \tau\nabla_x)^{n-m} g \trn_{\mu;q} \\
& = \left( \sum_m \frac{\lambda^m}{m!}\, 
\trn (\nabla_v +\tau\nabla_x)^mf\trn_{\mu;p}\right)\,
\left( \sum_\ell \frac{\lambda^\ell}{\ell!}\, 
\trn (\nabla_v + \tau\nabla_x)^\ell f\trn_{\mu;q}\right)\\
& = \trn  f \trn_{\cZ^{\lambda,\mu;p}_\tau} \, \trn  g
\trn_{\cZ^{\lambda,\mu;q}_\tau}.
\end{align*}
(We could also reduce to $\tau=0$ by means of Proposition \ref{propTL}.)
\end{proof}

\subsection{Composition inequality}

\begin{Prop}[Composition inequality in two variables] 
\label{propcompos2}
For any $\lambda,\mu\geq 0$ and any $p\in [1,\infty]$, $\tau\in\R$,
$\sigma \in \R$, $a \in\R\setminus\{0\}$, $b \in \R$,
\begeq\label{compos2} \Bigl\| f\bigl(x+ bv + X(x,v), av +
V(x,v)\bigr)\Bigr\|_{\cZ^{\lambda,\mu;p}_\tau} \leq |a|^{-d/p} \,
\|f\|_{\cZ^{\alpha,\beta;p}_\sigma},
\endeq
where
\begeq\label{alphaV} 
\alpha = \lambda|a| + \|V\|_{\cZ^{\lambda,\mu}_\tau},\qquad
\beta = \mu + \lambda\,|b+\tau-a \sigma| + 
\|X-\sigma V\|_{\cZ^{\lambda,\mu}_\tau}.
\endeq
\end{Prop}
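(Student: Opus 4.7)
\textbf{Proof plan for Proposition \ref{propcompos2}.}

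First I would reduce to the case $\tau=\sigma=0$ using the free-transport invariance of Proposition \ref{propTL}. Setting $\tilde h = h \circ S^0_\tau$ where $h(x,v) = f(x+bv+X(x,v),av+V(x,v))$, and $\tilde f = f \circ S^0_\sigma$, a direct computation (using that $f(y,w) = (f\circ S^0_\sigma)(y-\sigma w,w)$) gives
\[ \tilde h(x,v) = \tilde f\bigl(x + Bv + \tilde X(x,v),\ av + \tilde V(x,v)\bigr), \]
with $B = b+\tau-a\sigma$, $\tilde V = V\circ S^0_\tau$, $\tilde X = (X-\sigma V)\circ S^0_\tau$. By Proposition \ref{propTL}, $\|h\|_{\cZ^{\lambda,\mu;p}_\tau} = \|\tilde h\|_{\cZ^{\lambda,\mu;p}}$, $\|f\|_{\cZ^{\alpha,\beta;p}_\sigma} = \|\tilde f\|_{\cZ^{\alpha,\beta;p}}$, and $\|\tilde V\|_{\cZ^{\lambda,\mu}} = \|V\|_{\cZ^{\lambda,\mu}_\tau}$, $\|\tilde X\|_{\cZ^{\lambda,\mu}} = \|X-\sigma V\|_{\cZ^{\lambda,\mu}_\tau}$. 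So the general claim reduces to the case $\tau=\sigma=0$ with parameter $B$ in place of $b$.

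Next, for $\tau=\sigma=0$, expand $f$ via Fourier in its first variable, $f(y,w)=\sum_k \hat f(k,w)\,e^{2i\pi k\cdot y}$, which yields
\[ h(x,v) = \sum_k \Psi_k(x,v)\,e^{2i\pi k\cdot Bv}\,e^{2i\pi k\cdot x}, \qquad \Psi_k(x,v) = \hat f\bigl(k,av+V(x,v)\bigr)\,e^{2i\pi k\cdot X(x,v)}. \]
Taking the Fourier transform in $x$ gives $\hat h(\ell,v) = \sum_k \hat\Psi_k(\ell-k,v)\,e^{2i\pi k\cdot Bv}$. Since $\|e^{2i\pi k\cdot Bv}\|_{\cC^{\lambda;\infty}_v} \le e^{2\pi\lambda|k|\,|B|}$, Proposition \ref{propalg}(i) (applied with exponents $p,\infty,p$) gives $\|\hat h(\ell,\cdot)\|_{\cC^{\lambda;p}_v} \le \sum_k e^{2\pi\lambda|k||B|}\|\hat\Psi_k(\ell-k,\cdot)\|_{\cC^{\lambda;p}_v}$. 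Multiplying by $e^{2\pi\mu|\ell|}$, summing over $\ell$, and using $e^{2\pi\mu|\ell|}\le e^{2\pi\mu|k|}e^{2\pi\mu|\ell-k|}$ yields
\[ \|h\|_{\cZ^{\lambda,\mu;p}} \le \sum_k e^{2\pi(\mu+\lambda|B|)|k|}\,\|\Psi_k\|_{\cZ^{\lambda,\mu;p}}. \]
Then Proposition \ref{propalgZ} and $\|e^{2i\pi k\cdot X}\|_{\cZ^{\lambda,\mu}} \le e^{2\pi|k|\,\|X\|_{\cZ^{\lambda,\mu}}}$ give $\|\Psi_k\|_{\cZ^{\lambda,\mu;p}} \le \|\hat f(k,av+V)\|_{\cZ^{\lambda,\mu;p}}\,e^{2\pi|k|\,\|X\|_{\cZ^{\lambda,\mu}}}$.

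For the remaining factor, I Taylor-expand in $w$ around $av$:
\[ \hat f(k,av+V(x,v)) = \sum_{n\in\N_0^d}\frac{(\nabla_w^n \hat f)(k,av)}{n!}\,V(x,v)^n. \]
Applying Leibniz to $\nabla_v^m$ termwise, the $x$-independent factor $(\nabla_w^{m_1+n}\hat f)(k,av)$ has $\trn\cdot\trn_{\mu;p}$-norm equal (by change of variable $w=av$) to $|a|^{-d/p}\|(\nabla_w^{m_1+n}\hat f)(k,\cdot)\|_{L^p_w}$, and carries a factor $|a|^{m_1}$ from the chain rule; the other factor $\trn\nabla_v^{m_2}V^n\trn_{\mu;\infty}$ is absorbed into $\|V^n\|_{\cZ^{\lambda,\mu}} \le \|V\|_{\cZ^{\lambda,\mu}}^{|n|}$ (algebra). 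Using the identity $\binom{m_1+m_2}{m_1}/(m_1+m_2)! = 1/(m_1!\,m_2!)$ to separate the $m$-sum, and the multi-index binomial $\sum_{n_1+n_2=\ell}\binom{\ell}{n_1}A^{|n_1|}B^{|n_2|} = (A+B)^{|\ell|}$ to combine the two remaining exponential weights, one obtains
\[ \|\hat f(k,av+V)\|_{\cZ^{\lambda,\mu;p}} \le |a|^{-d/p}\,\|\hat f(k,\cdot)\|_{\cC^{|a|\lambda+\|V\|_{\cZ^{\lambda,\mu}};p}_w}. \]
Substituting and summing over $k$ identifies the right-hand side as $|a|^{-d/p}\|f\|_{\cZ^{\alpha,\beta;p}}$ with $\alpha = |a|\lambda+\|V\|_{\cZ^{\lambda,\mu}}$, $\beta = \mu+\lambda|B|+\|X\|_{\cZ^{\lambda,\mu}}$, completing the reduced case and hence the proposition.

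The main obstacle is the combinatorial bookkeeping in the last step: one must carefully coordinate Leibniz applied to the Taylor expansion of $\hat f$ with the hybrid Young/algebra structure of $\trn\cdot\trn_{\mu;p}$ (which mixes $\cF^\mu$ in $x$ with $L^p_v$), and then perform the multi-index binomial collapse that fuses the Taylor radius $|a|\lambda$ and the perturbation size $\|V\|_{\cZ^{\lambda,\mu}}$ into a single regularity exponent. The reduction to $\tau=\sigma=0$ and the decoupling of the exponentials $e^{2i\pi k\cdot Bv}$ and $e^{2i\pi k\cdot X}$ via the algebra property are comparatively routine once Propositions \ref{propTL} and \ref{propalgZ} are in hand.
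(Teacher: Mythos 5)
Your proposal is correct and follows essentially the same route as the paper's proof: Fourier-expand $f$ in its first variable, Taylor-expand $\hat f(k,\,\cdot\,)$ around $av$ in powers of $V$, use the algebra property to control $e^{2i\pi k\cdot X}$ (and the factor $e^{2i\pi k\cdot Bv}$), then apply multi-index Leibniz and the binomial identity to collapse the $(|a|\lambda)^{m_1}\|V\|^n$ terms into $(|a|\lambda+\|V\|)^q$, and reduce to $\tau=\sigma=0$ via $S^0_\tau$, $S^0_\sigma$ with exactly the same formulas $B=b+\tau-a\sigma$, $\tilde X=(X-\sigma V)\circ S^0_\tau$, $\tilde V=V\circ S^0_\tau$. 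The only cosmetic difference is that you perform the time-shift reduction first and factor out the exponentials before Taylor-expanding, whereas the paper carries out the $\tau=\sigma=0$ case in a single chain (Leibniz over four factors at once) and does the reduction last.
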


\begin{Rk} The norms in \eqref{alphaV} for $X$ and $V$ have to be
  based on $L^\infty$, not just any $L^p$.  Also note: the fact that
  the second argument of $f$ has the form $av + V$ (and not $av + cx +
  V$) is related to Remark \ref{rkcontrol}.
\end{Rk}

\begin{proof}[Proof of Proposition \ref{propcompos2}]
  The proof is a combination of the arguments in Proposition
  \ref{propcompos}. In a first step, we do it for the case $\tau
  =\sigma=0$, and we write $\|\cdot\|_{\lambda,\mu;p} =
  \|\cdot\|_{\cZ^{\lambda,\mu;p}_0}$.

  From the expansion $f(x,v) = \sum \hat{f}(k,v)\,e^{2i\pi k\cdot x}$
  we deduce
\begin{align*}
h(x,v)& := f\Bigl( x+bv + X(x,v),\, av + V(x,v)\Bigr)\\
& = \sum_k \hat{f}(k,av + V)\, e^{2i\pi k\cdot (x+bv + X)}\\
& = \sum_k \sum_m \nabla_v^m \hat{f}(k,av)\cdot \frac{V^m}{m!} \,
e^{2i\pi k\cdot x}\, e^{2i\pi k\cdot bv}\, e^{2i\pi k\cdot X}.
\end{align*}
Taking the Fourier transform in $x$, we see that for any $\ell\in\Z^d$,
\[ \hat{h}(\ell,v) = \sum_k \sum_m \nabla_v^m \hat{f}(k,av)\, e^{2i\pi
  k\cdot bv} \, \sum_j \frac{(V^m)^{\hat{}}(j)}{m!}\, (e^{2i\pi k\cdot
  X})^{\hat{}}(\ell-k-j).\] 
Differentiating $n$ times {\it via} the Leibniz
formula (here applied to a product of four functions), we get
\begin{multline*}
\nabla_v^n \hat{h}(\ell,v) = 
\sum_{k,m,j}\ \sum_{n_1+n_2+n_3+ n_4=n} \frac{n!\, a^{n_1}}{n_1!\,n_2!\,
  n_3!\,n_4!} 
\nabla_v^{m+n_1}\hat{f}(k,av)\\
\frac{\nabla_v^{n_2} (V^m)^{\hat{}}(j)}{m!}\,
\nabla_v^{n_3} \bigl(e^{2i\pi k\cdot X}\bigr)^{\hat{}}(\ell-k-j,v)\,
(2i\pi b k)^{n_4}\,e^{2i\pi k\cdot b v}.
\end{multline*}

Multiplying by $\lambda^n\,e^{2\pi\mu|\ell|}/n!$ and summing over $n$
and $\ell$, taking $L^{p}$ norms and using $\|fg\|_{L^{p}} \leq
\|f\|_{L^{p}} \|g\|_{L^\infty}$, we finally obtain
\begin{align*}
& \|h\|_{\lambda,\mu} \leq
|a|^{-d/p} \, \sum_{k,j,\ell \in \Z^d _0;\ m,n, \, n_1+n_2+n_3+n_4=n \ge
0} \frac{\lambda^n\, e^{2\pi\mu|\ell|} |a|^{n_1}}
{n_1!\, n_2!\, n_3!\, n_4!}\, \bigl\|\nabla_v^{m+n_1} \hat{f}(k,\cdot)\bigr\|_{L^{p}}\,
\left\| \frac{\nabla_v^{n_2}(V^m)^{\hat{}}(j)}{m!}\right\|_\infty\,\\
& \qqquad\qqquad\qquad
\Bigl\| \nabla_v^{n_3} (e^{2i\pi k\cdot X})^{\hat{}}(\ell-k-j)\Bigr\|_\infty\,
(2\pi |b|\,|k|)^{n_4}\\[2mm]
& = |a|^{-d/p} \, \sum_{k,j,\ell \in \Z^d _0, \ m,n_1,n_2,n_3,n_4 \ge 0}
\frac{\lambda^{n_1+n_2+n_3+n_4}\,e^{2\pi\mu|k|}\,e^{2\pi\mu |j|}\, 
e^{2\pi\mu|\ell-k-j|}\,|a|^{n_1}}{n_1!\, n_2!\, n_3!\, n_4!}\,
\|\nabla_v^{m+n_1}\hat{f}(k,\cdot)\|_{L^{p}}\\
&\qqquad\qquad\qquad
 \left\|
  \frac{\nabla_v^{n_2} (V^m)^{\hat{}}(j)}{m!}\right\|_\infty\,
\Bigl\| \nabla_v^{n_3} (e^{2i\pi k\cdot X})^{\hat{}}(\ell-k-j)\Bigr\|_\infty\,
(2\pi |b|\,|k|)^{n_4}\\[2mm]
& \leq |a|^{-d/p} \, \sum_{k,n_1,m}
\frac{\lambda^{n_1} |a|^{n_1}}{n_1!}\, \bigl\|\nabla_v^{n_1+m} 
\hat{f}(k,\cdot)\bigr\|_{L^{p}}\,e^{2\pi\mu|k|}\,
\left(\frac1{m!} \sum_{n_2,j} \frac{\lambda^{n_2}}{n_2!}\, e^{2\pi\mu|j|}\,
\|\nabla_v^{n_2} (V^m)^{\hat{}}(j)\|_\infty\right)\,\\
&\qqquad\qquad\qquad
\left( \sum_{n_3,h}
  \frac{\lambda^{n_3}}{n_3!}\,e^{2\pi\mu|h|}\,\bigl\|\nabla_v^{n_3} 
(e^{2i\pi k\cdot X})^{\hat{}}(h)
\bigr\|_\infty \right)\,
\left( \sum_{n_4} \frac{(2\pi\lambda |b||k|)^{n_4}}{n_4!}\right)\\[2mm]
& = |a|^{-d/p} \, \sum_{k,p,m} \frac{(\lambda |a|)^{n_1}}{n_1!}\,e^{2\pi \mu|k|}\,
\bigl\|\nabla_v^{n_1+m}\hat{f}(k,\cdot)\|_{L^{p}}\,
\frac{\|V^m\|_{\lambda,\mu}}{m!}\, \bigl\|e^{2i\pi k\cdot X}\|_{\lambda,\mu}\,e^{2\pi\lambda |b||k|}\\
& \leq |a|^{-d/p} \,\sum_{k,n_1,m} \frac{(\lambda |a|)^{n_1}}{n_1!}\,
e^{2\pi (\mu+\lambda |b|)|k|}\, \bigl\| \nabla_v^{n_1+m}\hat{f}(k,\cdot)\bigr\|_{L^{p}}\,
\frac{\|V\|^m_{\lambda,\mu}}{m!}\,e^{2\pi|k|\,\|X\|_{\lambda,\mu}}\\
& = |a|^{-d/p} \, \sum_{k,n}\frac1{n!} \bigl(\lambda |a| + \|V\|_{\lambda,\mu}\bigr)^n\,
\bigl\|\nabla_v^n \hat{f}(k,\cdot)\bigr\|_{L^{\ov{p}}}\, 
e^{2\pi |k|(\mu + \lambda |b| + \|X\|_{\lambda,\mu})}\\
& = |a|^{-d/p} \,\|f\|_{\lambda |a| + \|V\|_{\lambda,\mu},\, \mu + \lambda |b| 
+ \|X\|_{\lambda,\mu}}.
\end{align*}
\med

Now we generalize this to arbitrary values of $\sigma$ and $\tau$: by
Proposition \ref{propTL},
\begin{align*}
\Bigl\| & f\Bigl( x+ bv + X(x,v),\, a v + V(x,v) 
         \Bigr)\Bigr\|_{\cZ^{\lambda,\mu;p}_\tau}\\
& = \Bigl\|  f\Bigl( x+ v (b+\tau) + X(x+v\tau,v),
\, a v + V(x+v\tau,v) \Bigr)\Bigr\|_{\cZ^{\lambda,\mu;p}}\\
& = \Bigl\| f\circ S^0_\sigma \circ S^0_{-\sigma} 
\Bigl( x+ v (b+\tau) + X(x+v\tau,v),
\, a v + V(x+v\tau,v) \Bigr)\Bigr\|_{\cZ^{\lambda,\mu;p}}\\
& = \Bigl\| (f\circ S^0_\sigma) \Bigl( x + v (b+\tau-a \sigma)
+ (X-\sigma V)(x+v\tau,v),
\, a v + V(x+v\tau, v)\Bigr)\Bigr\|_{\cZ^{\lambda,\mu;p}}\\
& = \Bigl\| (f\circ S^0_\sigma)\Bigl( x + v (b+\tau-a \sigma) + Y(x,v),\,
av + W(x,v)\Bigr)\Bigr\|_{\cZ^{\lambda,\mu;p}},
\end{align*}
where
\[ W(x,v) = V\circ S^0_\tau(x,v),\qquad
Y(x,v) = (X-\sigma V)\circ S^0_\tau(x,v).\]
Applying the result for $\tau=0$, we deduce that the norm of $h(x,v) = 
f( x+ bv + X(x,v),\, a v + V(x,v) )$ in $\cZ^{\lambda,\mu}_\tau$ is bounded by
\[ \|f\circ S^0_\sigma\|_{\cZ^{\alpha,\beta;p}} = \|f\|_{\cZ^{\alpha,\beta;p}_\sigma},\]
where
\[ \alpha = \lambda|a| + \|V\circ S^0_\tau\|_{\cZ^{\lambda,\mu}}
= |a|\lambda + \|V\|_{\cZ^{\lambda,\mu}_\tau},\]
and
\[ \beta = \mu + \lambda |b+\tau-a \sigma|
+ \|(X-\sigma V)\circ S^0_\tau\|_{\cZ^{\lambda,\mu}}
= \mu + \lambda |b+\tau-a\sigma| + \|X-\sigma V\|_{\cZ^{\lambda,\mu}_\tau}.\]
This establishes the desired bound.
\end{proof}

\subsection{Gradient inequality}

In the next proposition we shall write
\begeq\label{Z*}
\|f\|_{\dot{\cZ}^{\lambda,\mu}_\tau} =
\sum_{\ell\in\Z^d\setminus\{0\}} \sum_{n\in\N_0^d} \frac{\lambda^n}{n!}\, e^{2\pi\mu|\ell|}\,
\Bigl\| (\nabla_v + 2i\pi\tau \ell)^n \hat{f}(\ell,v)\Bigr\|_{L^\infty(\R^d_v)}.
\endeq
This is again a homogeneous (in the $x$ variable) seminorm.

\begin{Prop} \label{propgrad2} 
For $\ov{\lambda}>\lambda \ge 0$,
  $\ov{\mu}>\mu \ge 0$, we have the functional inequalities
\[ \|\nabla_x f\|_{\cZ^{\lambda,\mu;p}_\tau}\leq
\frac{C(d)}{(\ov{\mu}-\mu)}\,\|f\|_{\dot{\cZ}^{\lambda,\ov{\mu};p}_\tau};\]
\[ \bigl\|(\nabla_v +\tau\nabla_x)f \bigr\|_{\cZ^{\lambda,\mu;p}_\tau}
\leq \frac{C(d)}{\lambda\,\log(\ov{\lambda}/\lambda)}\,\|f\|_{\cZ^{\ov{\lambda},\mu;p}_\tau}.\]
In particular, for $\tau\geq 0$ we have
\[ \|\nabla_v f\|_{\cZ^{\lambda,\mu;p}_\tau}
\leq C(d) \left[ \left(\frac1{\lambda\,\log(\ov{\lambda}/\lambda)}\right)
\|f\|_{\cZ^{\ov{\lambda},\ov{\mu};p}_\tau}
+\left(\frac{\tau}{(\ov{\mu}-\mu)}\right)\, \|f\|_{\dot{\cZ}^{\ov{\lambda},\ov{\mu};p}_\tau}\right].\]
\end{Prop}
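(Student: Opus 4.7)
The plan is to treat each of the three inequalities separately, exploiting the explicit structure of the hybrid norm and two elementary sup estimates:
\[ \sup_{t>0}\, t\,e^{-at} = \frac{1}{ea},\qquad \sup_{m\geq 1}\,m\,r^m \leq \frac{1}{e\log(1/r)} \quad (0<r<1). \]

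For the first inequality, I will use that $\widehat{\nabla_x f}(\ell,v) = 2i\pi\ell\,\hat{f}(\ell,v)$, and that multiplication by the constant $2i\pi\ell$ commutes with the operator $(\nabla_v+2i\pi\tau\ell)^n$. Plugging this into the definition \eqref{Zlambdamup:shift:p}, the extra factor $2\pi|\ell|$ appears inside the sum over $\ell$; restricting to $\ell\neq 0$ (since $\ell=0$ contributes nothing) and writing $e^{2\pi\mu|\ell|}=e^{-2\pi(\bar{\mu}-\mu)|\ell|}\,e^{2\pi\bar{\mu}|\ell|}$, it suffices to bound $\sup_{\ell\neq 0}\,2\pi|\ell|\,e^{-2\pi(\bar{\mu}-\mu)|\ell|} \leq 1/(e(\bar{\mu}-\mu))$, which follows from the first elementary sup estimate.

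For the second inequality, observe that $\widehat{(\nabla_v+\tau\nabla_x)f}(\ell,v) = (\nabla_v+2i\pi\tau\ell)\hat{f}(\ell,v)$, so that the operator $(\nabla_v+2i\pi\tau\ell)^n$ in the definition gets replaced by $(\nabla_v+2i\pi\tau\ell)^{n+1}$. Shifting the index $m=n+1\geq 1$, one has $\lambda^{m-1}/(m-1)! = (m/\lambda)(\lambda/\bar{\lambda})^m\cdot\bar{\lambda}^m/m!$, so the desired bound reduces to $\sup_{m\geq 1}\, m\,(\lambda/\bar{\lambda})^m \leq 1/(e\log(\bar{\lambda}/\lambda))$, which is the second elementary estimate.

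The third inequality then follows immediately from the first two by writing $\nabla_v f = (\nabla_v+\tau\nabla_x)f - \tau\nabla_x f$, applying the triangle inequality in $\cZ^{\lambda,\mu;p}_\tau$, and using the trivial monotonicity $\|\cdot\|_{\cZ^{\lambda,\mu;p}_\tau}\leq\|\cdot\|_{\cZ^{\bar{\lambda},\bar{\mu};p}_\tau}$ (and similarly for the homogeneous variant) to upgrade both parameters to $(\bar{\lambda},\bar{\mu})$ on the right-hand side; the factor $|\tau|$ is pulled out of $\|\tau\nabla_x f\|$ by linearity. I do not anticipate any real obstacle here: the whole proposition is a direct computation on Fourier-series coefficients, and the only mild subtlety is keeping track of the commutation of $\nabla_x$ (a multiplier $2i\pi\ell$ after Fourier transform in $x$) with the ``gliding'' operator $\nabla_v+2i\pi\tau\ell$, which is clear because the latter differentiates only in $v$ while the multiplier depends only on $\ell$.
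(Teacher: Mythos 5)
Your proof is correct and follows essentially the same route as the paper's, which for Proposition~4.10 simply refers back to the proof of Proposition~4.9 and notes that the constant $C(d)$ comes from the choice of norm on $\R^d$. The key observations you use --- that $\nabla_x$ becomes the multiplier $2i\pi\ell$ on the $\ell$-th $x$-Fourier mode, that $\nabla_v+\tau\nabla_x$ becomes the gliding operator $\nabla_v+2i\pi\tau\ell$ (hence an index shift $n\mapsto n+1_i$), and that the two elementary sup bounds $\sup_{y>0}y\,e^{-ay}=1/(ea)$ and $\sup_{m\geq 1}m\,r^m\leq 1/(e\log(1/r))$ close the argument --- are exactly the ones used in Proposition~4.9; your derivation of the third inequality from the first two {\it via} $\nabla_v f=(\nabla_v+\tau\nabla_x)f-\tau\nabla_x f$, the triangle inequality, and the monotonicity of the $\cZ$ norms in $(\lambda,\mu)$ is the intended argument as well.
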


The proof is similar to the proof of Proposition \ref{propgrad}; the
constant $C(d)$ arises in the choice of norm on $\R^d$. 
As a consequence, if $1<\ov{\lambda}/\lambda \leq 2$, we have e.g. the bound
\[ 
\|\nabla  f\|_{\cZ^{\lambda,\mu;p}_\tau} \leq 
C(d) \left(\frac1{\ov{\lambda}-\lambda}
+ \frac{1+\tau}{\ov{\mu}-\mu}\right)\, 
\|f\|_{\cZ^{\ov{\lambda},\ov{\mu};p}_\tau}.
\]

\subsection{Inversion} \label{sub:inversion}

From the composition inequality follows an inversion estimate.

\begin{Prop}[Inversion inequality] \label{propinv}
(i) Let $\lambda,\mu\ge0$, $\tau\in \R$, and $F:\T^d\times\R^d\to\T^d\times\R^d$.
Then there is $\var=\var(d)$ such that if $F$ satisfies
\[ \|\nabla (F-\Id)\|_{\cZ^{\lambda',\mu'}_\tau}\leq \var(d),\]
where
\[ \lambda' = \lambda + 2 \|F-\Id\|_{\cZ^{\lambda,\mu}_\tau},\qquad
\mu' = \mu + 2 (1+|\tau|)\, \|F-\Id\|_{\cZ^{\lambda,\mu}_\tau},\]
then $F$ is invertible and
\begeq\label{F-1}
\|F^{-1}-\Id\|_{\cZ^{\lambda,\mu}_\tau} \leq 2\, \|F-\Id\|_{\cZ^{\lambda,\mu}_\tau}.
\endeq
\sm

(ii) More generally, if $F$ and $G$ are functions $\T^d\times\R^d\to\T^d\times\R^d$ such that
\begeq\label{nFI} \|\nabla(F-\Id)\|_{\cZ^{\lambda',\mu'}_\tau}\leq \var(d),
\endeq
where
\[ \lambda' = \lambda + 2 \|F-G\|_{\cZ^{\lambda,\mu}_\tau},\qquad
\mu' = \mu + 2 (1+|\tau|)\, \|F-G\|_{\cZ^{\lambda,\mu}_\tau},\]
then $F$ is invertible and
\begeq\label{FG-1}
\|F^{-1}\circ G-\Id\|_{\cZ^{\lambda,\mu}_\tau}
\leq 2\, \|F-G\|_{\cZ^{\lambda,\mu}_\tau}.
\endeq
\end{Prop}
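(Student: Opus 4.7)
The plan is to derive both parts of the proposition from a single contraction-mapping argument, observing that (i) is the case $G = \Id$ of (ii) (since then $\|F - G\|_{\cZ^{\lambda,\mu}_\tau} = \|F - \Id\|_{\cZ^{\lambda,\mu}_\tau}$), and that a right inverse of $F$ will be a two-sided inverse once $F$ is a local diffeomorphism — which is guaranteed by the smallness of $\nabla(F - \Id)$ coming from \eqref{nFI}. So I focus on (ii). Writing $f := F - \Id$, the equation $F \circ H = G$ for an unknown $H = \Id + h$ is equivalent to the fixed-point equation
\[
h = \Phi(h), \qquad \Phi(h) := (G - \Id) - f \circ (\Id + h),
\]
and \eqref{FG-1} amounts to exhibiting a fixed point with $\|h\|_{\cZ^{\lambda,\mu}_\tau} \leq 2 \|F - G\|_{\cZ^{\lambda,\mu}_\tau}$.

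I would run the Picard iteration $h_0 = 0$, $h_{n+1} = \Phi(h_n)$, so that $h_1 = G - F$ and $\|h_1 - h_0\|_{\cZ^{\lambda,\mu}_\tau} = \|F - G\|_{\cZ^{\lambda,\mu}_\tau} =: \delta$. The goal is to prove by induction on $n$ that $\|h_n\|_{\cZ^{\lambda,\mu}_\tau} \leq 2\delta$ and $\|h_{n+1} - h_n\|_{\cZ^{\lambda,\mu}_\tau} \leq 2^{-n}\delta$, which implies geometric convergence to a fixed point $h$ with $\|h\|_{\cZ^{\lambda,\mu}_\tau} \leq 2\delta$ and hence \eqref{FG-1}.

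The crux is the Lipschitz estimate
\[
\|\Phi(h_1) - \Phi(h_2)\|_{\cZ^{\lambda,\mu}_\tau} \leq \tfrac12 \, \|h_1 - h_2\|_{\cZ^{\lambda,\mu}_\tau}
\]
for $h_1, h_2$ with $\max_i \|h_i\|_{\cZ^{\lambda,\mu}_\tau} \leq 2\delta$. Set $h_s := h_2 + s(h_1 - h_2)$ for $s \in [0,1]$, so that $\|h_s\|_{\cZ^{\lambda,\mu}_\tau} \leq 2\delta$ by convexity of the norm. The convex-combination identity
\[
f \circ (\Id + h_1) - f \circ (\Id + h_2) = \int_0^1 \bigl[ (\nabla f) \circ (\Id + h_s) \bigr] \cdot (h_1 - h_2) \, ds
\]
combined with the algebra property (Proposition \ref{propalgZ}) yields
\[
\|\Phi(h_1) - \Phi(h_2)\|_{\cZ^{\lambda,\mu}_\tau} \leq \sup_{s \in [0,1]} \bigl\|\nabla f \circ (\Id + h_s)\bigr\|_{\cZ^{\lambda,\mu}_\tau} \cdot \|h_1 - h_2\|_{\cZ^{\lambda,\mu}_\tau}.
\]
Decomposing $h_s = (X_s, V_s)$ in the phase-space splitting and applying the composition inequality (Proposition \ref{propcompos2}) with $a = 1$, $b = 0$, $\sigma = \tau$, the composition $\nabla f \circ (\Id + h_s)$ is controlled by $\|\nabla f\|_{\cZ^{\alpha_s, \beta_s}_\tau}$, where
\[
\alpha_s = \lambda + \|V_s\|_{\cZ^{\lambda,\mu}_\tau} \leq \lambda + 2\delta = \lambda', \qquad \beta_s = \mu + \|X_s - \tau V_s\|_{\cZ^{\lambda,\mu}_\tau} \leq \mu + 2(1 + |\tau|)\delta = \mu'.
\]
By monotonicity of the norms in $(\lambda,\mu)$ (Proposition \ref{propincluZ}), this is $\leq \|\nabla f\|_{\cZ^{\lambda',\mu'}_\tau} \leq \var(d)$ thanks to hypothesis \eqref{nFI}, and choosing $\var(d)$ small enough to also absorb the dimensional constant incurred by componentwise application of the scalar composition and algebra inequalities to the vector-valued quantities yields the contraction factor $\tfrac12$.

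The main delicate point of the proof is precisely the bookkeeping of the two independent losses in analyticity: the size $2\delta$ of the fixed-point ball in $\cZ^{\lambda,\mu}_\tau$ must exactly match the radii $\lambda',\mu'$ prescribed in the statement, and the asymmetry between gliding regularity ($\lambda$) and spatial regularity ($\mu$) is reflected in the factor $(1+|\tau|)$ attached to $\mu'$ — which ultimately originates in the term $\|X_s - \tau V_s\|$ in Proposition \ref{propcompos2}. Once this is in place, the iteration converges in $\cZ^{\lambda,\mu}_\tau$ to a fixed point $h$ satisfying $F \circ (\Id + h) = G$ and $\|h\|_{\cZ^{\lambda,\mu}_\tau} \leq 2\delta$, establishing \eqref{FG-1}; specializing to $G = \Id$ and noting that the smallness of $\nabla(F - \Id)$ forces $F$ to be a global bijection of $\T^d \times \R^d$ (so that the right inverse $\Id + h$ is also a left inverse) gives \eqref{F-1}.
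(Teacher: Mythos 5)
Your proof is correct and follows essentially the same route as the paper: the same fixed-point reformulation $h = (G-\Id) - f\circ(\Id+h)$, the same convex-combination linearization, and the same invocation of Propositions \ref{propalgZ}, \ref{propcompos2}, and \ref{propincluZ} to verify that $\Phi$ is $(1/2)$-Lipschitz on $B(0,2\delta)$, with the constants $\lambda'$ and $\mu'$ emerging exactly as in the statement. The only cosmetic differences are that you run the Picard iteration explicitly where the paper invokes Theorem \ref{thmfpt}, and you add a brief remark on why the right inverse is a two-sided inverse, which the paper leaves implicit.
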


\begin{Rk} The conditions become very stringent
  as $\tau$ becomes large: basically, $F-\Id$ (or $F-G$ in case
  (ii)) should be of order $o(1/\tau)$ for Proposition \ref{propinv}
  to be applicable.
\end{Rk}

\begin{Rk} By Proposition \ref{propgrad2}, a sufficient condition for
  \eqref{nFI} to hold is that there be $\lambda'',\mu''$ such that
  $\lambda\leq \lambda''\leq 2\lambda$, $\mu\leq \mu''$, and
  \[ \|F-\Id\|_{\cZ^{\lambda'',\mu''}_\tau} \leq
  \frac{\var'(d)}{1+\tau}\, \min\{ \lambda''-\lambda' \, ; \,  \mu''-\mu'\}.\]
  However, this condition is in practice hard to fulfill.
\end{Rk}

\begin{proof}[Proof of Proposition \ref{propinv}]
  We prove only (ii), of which (i) is a particular case. Let
  $f=F-\Id$, $h=F^{-1} \circ G -\Id$, $g=G-\Id$, so that $\Id+g = (\Id+f)\circ
  (\Id+h)$, or equivalently
  \[ h = g - f \circ (\Id+h).\] So $h$ is a fixed point of
  \[ \Phi: Z\longmapsto g - f\circ (\Id+Z).\] Note that
  $\Phi(0)=g-f$. If $\Phi$ is $(1/2)$-Lipschitz on the ball
  $B(0,2\|f-g\|)$ in ${\cZ^{\lambda,\mu}_\tau}$, then \eqref{FG-1}
  will follow by fixed point iteration as in Theorem \ref{thmfpt}.

So let $Z,\tilde{Z}$ be given with
\[ \|Z\|_{\cZ^{\lambda,\mu}_\tau}, \|\tilde{Z}\|_{\cZ^{\lambda,\mu}_\tau}\leq 2 \|f-g\|_{\cZ^{\lambda,\mu}_\tau}.\]
We have
\begin{align*}
\Phi(Z)-\Phi(\tilde{Z})
& = f(\Id + \tilde{Z}) - f(\Id+Z)\\
& = \left(\int_0^1 \nabla f \Bigl( \Id + (1-\theta)Z + \theta\tilde{Z}\Bigr)\,d\theta\right)\cdot (\tilde{Z}-Z).
\end{align*}
By Proposition \ref{propalgZ},
\[ \|\Phi(Z)-\Phi(\tilde{Z})\|_{\cZ^{\lambda,\mu}_\tau}\leq
\left(\int_0^1 \Bigl\| \nabla f \bigl(\Id + (1-\theta) Z + \theta\tilde{Z}\bigr)\Bigr\|_{\cZ^{\lambda,\mu}_\tau}\,d\theta\right)\
\|\tilde{Z}-Z\|_{\cZ^{\lambda,\mu}_\tau}.\]
For any $\theta\in [0,1]$, by Proposition \ref{propcompos2},
\[ \Bigl\| \nabla f \Bigl( \Id+(1-\theta) Z + \theta\tilde{Z}\Bigr)\Bigr\|_{\cZ^{\lambda,\mu}_\tau}
\leq \|\nabla f\|_{\cZ^{\hat{\lambda},\hat{\mu}}_\tau},\]
where
\[ \hat{\lambda} = \lambda + \max \{\|Z\| \, ; \, \|\tilde{Z}\|\} 
\leq \lambda + 2 \|f-g\|_{\cZ^{\lambda,\mu}_\tau}\]
and (writing $Z=(Z_x,Z_v)$, $\tilde{Z}=(\tilde{Z}_x,\tilde{Z}_v)$)
\[ \hat{\mu} = \mu + \max \bigl\{ \|Z_x - \tau Z_v\| \, ; \,  
\|\tilde{Z}_x-\tau\tilde{Z}_v\|\bigr\}
\leq \mu + 2 (1+|\tau|)\, \|f-g\|_{\cZ^{\lambda,\mu}_\tau}.\]
If $F$ and $G$ satisfy the assumptions of Proposition \ref{propinv}, we deduce that
\[ \|\Phi\|_{\Lip(B(0,2))} \leq C(d)\,\var(d),\]
and this is bounded above by $1/2$ if $\var(d)$ is small enough.
\end{proof}

\subsection{Sobolev corrections}

We shall need to quantify Sobolev regularity corrections to the analytic regularity, in the $x$ variable.

\begin{Def}[Hybrid analytic norms with Sobolev corrections]
For $\lambda,\mu,\gamma\geq 0$, $\tau\in\R$, $p\in [1,\infty]$, we define
\[ \|f\|_{\cZ^{\lambda, (\mu,\gamma);p}_\tau}
= \sum_{\ell\in\Z^d} \sum_{n\in\N^d_0}\,
\frac{\lambda^n}{n!} e^{2\pi \mu |\ell|}\, (1+|\ell|)^\gamma\,
\bigl\|(\nabla_v + 2i\pi\tau\ell)^n\, \hat{f}(\ell,v)\bigr\|_{L^p(\R^d_v)};\]
\[ \|f\|_{\cF^{\lambda,\gamma}} = \sum_{k\in\Z^d} e^{2\pi\lambda |k|}\,(1+|k|)^\gamma\, |\hat{f}(k)|.\]
\end{Def}

\begin{Prop} \label{prophybrsob}
Let $\lambda,\mu, \gamma \ge 0$, $\tau \in \R$ and $p \in [1,+\infty]$. 
We have the following functional inequalities:
\sm

(i) $\dps \|f\|_{\cZ^{\lambda, (\mu,\gamma);p}_{t+\tau}}
= \|f\circ S^0_t\|_{\cZ^{\lambda, (\mu,\gamma);p}_\tau}$;
\sm

(ii) $\dps 1/p+1/q = 1/r \ \Longrightarrow \
\|fg\|_{\cZ^{\lambda, (\mu,\gamma);r}_\tau}
\leq \|f\|_{\cZ^{\lambda, (\mu,\gamma);p}_\tau}\, \|g\|_{\cZ^{\lambda,
    (\mu,\gamma);q}_\tau}$ and therefore
in particular $\cZ^{\lambda, (\mu,\gamma)}_\tau = \cZ^{\lambda, (\mu,\gamma);\infty}_\tau$ 
is a normed algebra;
\sm

(iii) If $f$ depends only on $x$ then $\dps \|f\|_{\cZ^{\lambda, (\mu,\gamma)}_\tau} =
\|f\|_{\cF^{\lambda |\tau| + \mu,\gamma}}$;
\sm

(iv) $\dps \|f\|_{\cZ^{\lambda, (\mu,\gamma);p}_{\ov{\tau}}} \leq
\|f\|_{\cZ^{\lambda, (\mu+\lambda |\tau-\ov{\tau}|,\gamma);p}_\tau}$;
\sm

(v) for any $\sigma\in\R$, $a \in \R \setminus \{0\}$, $b\in\R$, $p\in
[1,\infty]$,
\[ \Bigl\| f\Bigl( x + bv + X(x,v),\ av + V(x,v)
\Bigr)\Bigr\|_{\cZ^{\lambda, (\mu,\gamma);p}_\tau} \leq |a|^{-d/p}\,
\|f\|_{\cZ^{\alpha,(\beta,\gamma);p}_\sigma},\] where $\alpha =
\lambda |a| + \|V\|_{\cZ^{\lambda, (\mu,\gamma)}_\tau}$ and $\beta =
\mu + \lambda |b + \tau - a\sigma| +\|X -\sigma V\|_{\cZ^{\lambda,
    (\mu,\gamma)}_\tau}$.  
\sm

(vi) Gradient inequality:
\[ \|\nabla_x f\|_{\cZ^{\lambda,(\mu,\gamma);p}_\tau}
\leq \frac{C(d)}{\ov{\mu}-\mu}\, \|f\|_{\cZ^{\lambda,(\ov{\mu},\gamma);p}_\tau},\]
\[ \|\nabla f\|_{\cZ^{\lambda,(\mu,\gamma);p}_\tau}
\leq C(d)\,\left(\frac1{\ov{\lambda}-\lambda} +
  \frac{1+\tau}{\ov{\mu}-\mu}\right)\,
\|f\|_{\cZ^{\ov{\lambda},(\ov{\mu},\gamma);p}_\tau}.\]
\sm

(vii) Inversion: If $F$ and $G$ are functions $\T^d\times\R^d\to\T^d\times\R^d$ such that
\[ \|\nabla (F-\Id)\|_{\cZ^{\lambda',(\mu',\gamma)}_\tau}\leq \var(d),\]
where
\[ \lambda' = \lambda + 2 \|F-G\|_{\cZ^{\lambda,(\mu,\gamma)}_\tau},\qquad
\mu' = \mu + 2 (1+|\tau|)\, \|F-G\|_{\cZ^{\lambda,(\mu,\gamma)}_\tau},\]
then
\begeq\label{FG-12}
\|F^{-1}\circ G-\Id\|_{\cZ^{\lambda,(\mu,\gamma)}_\tau}
\leq 2\, \|F-G\|_{\cZ^{\lambda,(\mu,\gamma)}_\tau}.
\endeq
\end{Prop}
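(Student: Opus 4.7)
The plan is that every assertion in this proposition is a direct analogue of an already-established property of $\cZ^{\lambda,\mu;p}_\tau$ (Propositions \ref{propTL}, \ref{propalgZ}, \ref{propZCF}(i), \ref{propincluZ}, \ref{propcompos2}, \ref{propgrad2}, \ref{propinv}), the only novelty being an extra polynomial weight $(1+|\ell|)^\gamma$ attached to each Fourier mode $\ell\in\Z^d$. The key observation which makes everything go through is the submultiplicativity
\[ (1+|k+j+m|)^\gamma \leq (1+|k|)^\gamma (1+|j|)^\gamma (1+|m|)^\gamma \qquad (\gamma\geq 0),\]
a trivial consequence of $1+a+b+c\leq (1+a)(1+b)(1+c)$ for $a,b,c\geq 0$.

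With this inequality, the proofs of (ii) (algebra) and (v) (composition) copy verbatim the computations of Propositions \ref{propalgZ} and \ref{propcompos2}, respectively: the exponential weight $e^{2\pi\mu|\ell|}$ and the Sobolev weight $(1+|\ell|)^\gamma$ split across the relevant convolution indices exactly in parallel. Likewise for (iv), writing
\[ (\nabla_v+2i\pi\ov\tau\ell)^n = \sum_{k\leq n}\binom{n}{k}(\nabla_v+2i\pi\tau\ell)^{n-k}(2i\pi(\ov\tau-\tau)\ell)^k \]
produces a factor $e^{2\pi\lambda|\ov\tau-\tau|\cdot|\ell|}$ which is absorbed into the $\mu$-parameter, without affecting the $(1+|\ell|)^\gamma$ factor.

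Items (i) and (iii) are immediate from the definition: the time-shift changes only the differential operator $(\nabla_v+2i\pi\tau\ell)$, and for $f=f(x)$ only the $n=0$ term survives, yielding the Fourier norm $\|f\|_{\cF^{\lambda|\tau|+\mu,\gamma}}$ after summing the $\lambda^n/n!$ series. The gradient inequality (vi) follows as in Proposition \ref{propgrad2}: $\widehat{\nabla_x f}(\ell,v)=2i\pi\ell\,\hat f(\ell,v)$, so $\nabla_x$ contributes a factor $|\ell|$ which is absorbed into $e^{2\pi(\ov\mu-\mu)|\ell|}$, leaving the Sobolev weight untouched; for $\nabla_v$ one writes it in Fourier on each slice as $(\nabla_v+2i\pi\tau\ell)-2i\pi\tau\ell$ and handles the two pieces separately, exactly as in the Sobolev-free case.

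Finally, (vii) is proved by running the contraction argument of Proposition \ref{propinv} inside $\cZ^{\lambda,(\mu,\gamma)}_\tau$: the algebra (ii), composition (v), and gradient (vi) inequalities established above act as direct replacements for their Sobolev-free counterparts, and no new ingredient is needed. The only real bookkeeping care lies in (v), where one must verify that in the triple-sum decomposition over Fourier indices $(k,j,\ell-k-j)$ arising from $f(x+bv+X,av+V)$, the three factors of $(1+|\ell|)^\gamma$ land on the correct pieces --- namely the $\cZ^{\alpha,(\beta,\gamma);p}_\sigma$-norm of $f$, and the $\cZ^{\lambda,(\mu,\gamma)}_\tau$-norms of $V^m$ and $e^{2i\pi k\cdot X}$, the latter two being controlled by iterating (ii) and expanding the exponential as a power series. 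Everything else is a transcription of the existing arguments.
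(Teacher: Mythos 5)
Your proposal is correct and takes exactly the same route as the paper: identify the submultiplicativity $(1+|k|)^\gamma\leq(1+|k-\ell|)^\gamma(1+|\ell|)^\gamma$ as the one new ingredient, and observe that it rides along with the exponential weight $e^{2\pi\mu|\cdot|}$ in every step of the earlier proofs (Propositions~\ref{propTL}, \ref{propZCF}, \ref{propincluZ}, \ref{propalgZ}, \ref{propcompos2}, \ref{propgrad2}, \ref{propinv}). The paper states only this inequality and the phrase ``the proofs are the same as for the plain hybrid norms''; your writeup fills in the bookkeeping, but the mathematics is identical.
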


\begin{proof}[Proof of Proposition \ref{prophybrsob}]
  The proofs are the same as for the ``plain'' hybrid norms; the only
  notable point is that for the proof of (ii) we use, in addition
  to $e^{2\pi\lambda |k|} \leq e^{2\pi\lambda |k-\ell|}\,
  e^{2\pi\lambda |\ell|}$, the inequality
  \[ (1+|k|)^\gamma \leq (1+|k-\ell|)^\gamma\, (1+|\ell|)^\gamma.\]
\end{proof}

\begin{Rk} Of course, some of the estimates in Proposition
  \ref{prophybrsob} can be ``improved'' by taking advantage of
  $\gamma$; e.g. for $\gamma\geq 1$ we have
  \[ \|\nabla_x f\|_{\cZ^{\lambda,\mu;p}_\tau}\leq C(d)\,
  \|f\|_{\cZ^{\lambda,(\mu,\gamma);p}_\tau}.\]
\end{Rk}

\subsection{Individual mode estimates}

To handle very singular cases, we shall at times need to estimate Fourier modes individually,
rather than full norms. If $f=f(x,v)$, we write
\begeq\label{Pkf}
(P_kf)(x,v) = \hat{f}(k,v)\,e^{2i\pi k\cdot x}.
\endeq
In particular the following estimates will be useful.

\begin{Prop}\label{prop424} 
  For any $\lambda,\mu\geq 0$, $\tau\in\R$, Lebesgue exponents $1/r =
  1/p + 1/q$ and $k\in\Z^d$, we have the estimate
\[ \|P_k(fg)\|_{\cZ^{\lambda,\mu;r}_\tau} \leq \sum_{\ell\in\Z^d}
\|P_\ell f\|_{\cZ^{\lambda,\mu;p}_\tau}\,
\|P_{k-\ell}g\|_{\cZ^{\lambda,\mu;q}_\tau}.\]
\end{Prop}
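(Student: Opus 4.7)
My plan is to treat this as a ``mode-localized'' version of the algebra property (Proposition \ref{propalgZ}), following essentially the same Leibniz-and-Young argument but without summing over the output mode at the end.

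First, I would compute the Fourier expansion of $P_k(fg)$ in the $x$ variable. Since Fourier transform turns multiplication into convolution, $\widehat{fg}(k,v) = \sum_{\ell\in\Z^d} \hat{f}(\ell,v)\,\hat{g}(k-\ell,v)$, so
\[ P_k(fg)(x,v) = \Biggl(\sum_{\ell\in\Z^d} \hat{f}(\ell,v)\,\hat{g}(k-\ell,v)\Biggr) e^{2i\pi k\cdot x}. \]
I would then use the alternative representation from Proposition \ref{propalterZ} (formula \eqref{alterZ1}), which at mode $k$ reads
\[ \|P_k(fg)\|_{\cZ^{\lambda,\mu;r}_\tau} = \sum_{n\in\N_0^d} \frac{\lambda^n}{n!}\,e^{2\pi\mu|k|}\,\bigl\| \nabla_v^n\bigl(\widehat{fg}(k,v)\,e^{2i\pi \tau k\cdot v}\bigr)\bigr\|_{L^r(\R^d_v)}. \]

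The key algebraic observation is that the phase factors split cleanly along the convolution: for each $\ell$,
\[ \hat{f}(\ell,v)\,\hat{g}(k-\ell,v)\,e^{2i\pi \tau k\cdot v} = \bigl(\hat{f}(\ell,v)\,e^{2i\pi\tau\ell\cdot v}\bigr)\,\bigl(\hat{g}(k-\ell,v)\,e^{2i\pi\tau(k-\ell)\cdot v}\bigr). \]
Applying the multidimensional Leibniz formula to $\nabla_v^n$ of this product, then the Hölder inequality with exponents $1/p+1/q=1/r$ to the $L^r$ norm, I get
\[ \bigl\|\nabla_v^n\bigl(\widehat{fg}(k,v)\,e^{2i\pi\tau k\cdot v}\bigr)\bigr\|_{L^r} \leq \sum_{\ell}\sum_{m\leq n}\Cnk{n}{m}\,\bigl\|\nabla_v^m(\hat{f}(\ell,v)e^{2i\pi\tau\ell\cdot v})\bigr\|_{L^p}\,\bigl\|\nabla_v^{n-m}(\hat{g}(k-\ell,v)e^{2i\pi\tau(k-\ell)\cdot v})\bigr\|_{L^q}. \]

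Finally, I would use the elementary bound $e^{2\pi\mu|k|}\leq e^{2\pi\mu|\ell|}\,e^{2\pi\mu|k-\ell|}$ and the Cauchy product identity
\[ \sum_n\sum_{m\leq n}\frac{\lambda^n}{n!}\Cnk{n}{m}\,a_m\,b_{n-m} = \Bigl(\sum_m\frac{\lambda^m}{m!}a_m\Bigr)\Bigl(\sum_j\frac{\lambda^j}{j!}b_j\Bigr) \]
(exactly as in the proof of Proposition \ref{propalgZ}) to factor the double sum, recognizing the two factors as $\|P_\ell f\|_{\cZ^{\lambda,\mu;p}_\tau}$ and $\|P_{k-\ell}g\|_{\cZ^{\lambda,\mu;q}_\tau}$ respectively. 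Summing over $\ell$ yields the claimed inequality. There is no real obstacle here: the whole argument is a bookkeeping exercise in which one simply refrains from the final summation over the external mode $k$ that one would perform to obtain the full algebra inequality.
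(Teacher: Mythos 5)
Your proof is correct and takes essentially the same route as the paper, which simply reduces to $\tau=0$ by noting that $P_k$ commutes with the free transport semigroup and then repeats the Leibniz--Young--Cauchy-product argument of Proposition~\ref{propalgZ} mode by mode. Your phase-splitting identity $\hat{f}(\ell,v)\hat{g}(k-\ell,v)e^{2i\pi\tau k\cdot v}=(\hat{f}(\ell,v)e^{2i\pi\tau\ell\cdot v})(\hat{g}(k-\ell,v)e^{2i\pi\tau(k-\ell)\cdot v})$ is precisely the content of that commutation stated at the level of the alternative representation~\eqref{alterZ1}, so the two handlings of the time shift are equivalent.
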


\begin{Prop} \label{prop425} For any $\lambda>0$, $\ov{\mu}\ge\mu\geq
  0$, $\tau\in\R$, $p \in [1,\infty]$ and $k\in\Z^d$, we have the
  estimate
  \[ \Bigl\|P_k\Bigl[
  f\bigl(x+X(x,v),v\bigr)\Bigr]\Bigr\|_{\cZ^{\lambda,\mu;p}_\tau} \leq
  \sum_{\ell\in\Z^d} e^{-2\pi (\ov{\mu}-\mu)|k-\ell|}\,\|P_\ell
  f\|_{\cZ^{\lambda,\nu;p}_\tau}, \qquad \nu = \mu +
  \|X\|_{\cZ^{\lambda,\ov{\mu}}_\tau}. \]
\end{Prop}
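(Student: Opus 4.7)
The plan is to expand $f$ in Fourier series in its first argument and track how each Fourier mode of $f$ contributes to the $k$-th Fourier mode of $f(x+X(x,v),v)$, then quantify the ``smearing'' due to composition with $X$ using the algebra property of $\cZ^{\lambda,\ov{\mu}}_\tau$ applied to the exponentials $e^{2i\pi\ell\cdot X}$.

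Concretely, I would first write $f(x+X(x,v),v) = \sum_\ell \hat{f}(\ell,v)\,e^{2i\pi\ell\cdot x}\,e^{2i\pi\ell\cdot X(x,v)}$. Setting $g_\ell(x,v) := e^{2i\pi\ell\cdot X(x,v)}$ and taking the $x$-Fourier transform, the $k$-th mode reads
\[ \widehat{P_k[f(x+X,v)]}(k,v) = \sum_\ell \hat{f}(\ell,v)\,\hat{g_\ell}(k-\ell,v). \]
Next I would use the alternative representation \eqref{alterZ1} of the $\cZ^{\lambda,\mu;p}_\tau$ norm: since $P_k[\cdot]$ is supported on the single $x$-frequency $k$,
\[ \|P_k[f(x+X,v)]\|_{\cZ^{\lambda,\mu;p}_\tau} = e^{2\pi\mu|k|}\sum_n \frac{\lambda^n}{n!}\,\Big\|\nabla_v^n\!\sum_\ell \bigl[\hat{f}(\ell,v)\,e^{2i\pi\tau\ell\cdot v}\bigr]\bigl[\hat{g_\ell}(k-\ell,v)\,e^{2i\pi\tau(k-\ell)\cdot v}\bigr]\Big\|_{L^p}, \]
where I split $e^{2i\pi\tau k\cdot v}=e^{2i\pi\tau\ell\cdot v}\cdot e^{2i\pi\tau(k-\ell)\cdot v}$ to match the two factors.

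I would then apply the Leibniz formula to $\nabla_v^n$, take $L^p\cdot L^\infty$ H\"older, and use the standard convolution argument (as in Propositions \ref{propalg} and \ref{propalgZ}) to factor the double sum over $n,m$ into a product. This yields
\[ \|P_k[f(x+X,v)]\|_{\cZ^{\lambda,\mu;p}_\tau} \leq e^{2\pi\mu|k|}\sum_\ell A_\ell \cdot B_{k-\ell,\ell}, \]
where $A_\ell = e^{-2\pi\nu|\ell|}\|P_\ell f\|_{\cZ^{\lambda,\nu;p}_\tau}$ by direct identification, and
\[ B_{m,\ell} := \sum_{m'}\tfrac{\lambda^{m'}}{m'!}\,\bigl\|\nabla_v^{m'}[\hat{g_\ell}(m,v)\,e^{2i\pi\tau m\cdot v}]\bigr\|_{L^\infty} \]
is the contribution of the $m$-th Fourier mode of $g_\ell$ to its $\cZ^{\lambda,\ov{\mu}}_\tau$-norm, divided by $e^{2\pi\ov{\mu}|m|}$.

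The key estimate is then to bound $B_{k-\ell,\ell}$ via the algebra property (Proposition \ref{propalgZ}): since $\cZ^{\lambda,\ov{\mu}}_\tau$ is a normed algebra, $\|e^{2i\pi\ell\cdot X}\|_{\cZ^{\lambda,\ov{\mu}}_\tau} \leq e^{2\pi|\ell|\,\|X\|_{\cZ^{\lambda,\ov{\mu}}_\tau}} = e^{2\pi|\ell|(\nu-\mu)}$, which gives
\[ B_{k-\ell,\ell}\leq e^{-2\pi\ov{\mu}|k-\ell|}\, e^{2\pi|\ell|(\nu-\mu)}. \]
Plugging this and the expression for $A_\ell$ into the previous inequality, the factor $e^{-2\pi\nu|\ell|}\cdot e^{2\pi|\ell|(\nu-\mu)}=e^{-2\pi\mu|\ell|}$ appears, and combining with the triangle inequality $|k|\leq|\ell|+|k-\ell|$ in the form $e^{2\pi\mu|k|}\,e^{-2\pi\mu|\ell|}\leq e^{2\pi\mu|k-\ell|}$ yields the advertised weight $e^{-2\pi(\ov{\mu}-\mu)|k-\ell|}$. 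The only mildly delicate point is the bookkeeping between the $L^p$ factor (carried by $f$) and the $L^\infty$ factor (carried by $g_\ell$, which is where the $\cZ^{\lambda,\ov{\mu}}$ norm of $X$ --- without a Lebesgue index --- enters); everything else is a direct replay of the proof of Proposition \ref{propcompos2} localized to a single output mode.
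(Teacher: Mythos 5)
Your proof is correct and follows essentially the same route as the paper's: expand $f$ in Fourier modes, isolate mode $k$ of the composition, apply Leibniz plus H\"older (with the $L^\infty$ factor carried by $e^{2i\pi\ell\cdot X}$), bound the single $(k-\ell)$-mode contribution of $e^{2i\pi\ell\cdot X}$ by its full $\cZ^{\lambda,\ov{\mu}}_\tau$-norm, and finish with the algebra bound $\|e^{2i\pi\ell\cdot X}\|\leq e^{2\pi|\ell|\|X\|}$ together with the triangle inequality on frequencies. The only cosmetic difference is that you carry the $e^{2i\pi\tau\ell\cdot v}$ factors explicitly via the representation \eqref{alterZ1}, whereas the paper implicitly reduces to $\tau=0$ by the commutation of $P_k$ with the free transport semigroup.
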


These estimates also have variants with Sobolev corrections.  Note
that when $\mu=\ov{\mu}$, Proposition \ref{prop425} is a direct
consequence of Proposition~\ref{propcompos2} with $V=0$, $b=0$ and
$a=1$:
\[
\Bigl\|P_k\Bigl[
f\bigl(x+X(x,v),v\bigr)\Bigr]\Bigr\|_{\cZ^{\lambda,\mu;p}_\tau} \leq 
\Bigl\| f\bigl(x+X(x,v),v\bigr)\Bigr\|_{\cZ^{\lambda,\mu;p}_\tau} \le
\| f\|_{\cZ^{\lambda,\nu;p}_\tau}, \qquad \nu = \mu +
\|X\|_{\cZ^{\lambda,\mu}_\tau}.
\]

\begin{proof}[Proof of Propositions~\ref{prop424} and \ref{prop425}]
  The proof of Proposition \ref{prop424} is quite similar to the proof
  of Proposition \ref{propalgZ} (It is no restriction to choose $\tau=0$
because $P_k$ commutes with the free transport semigroup.)
  Proposition \ref{prop425} needs a few words of explanation. As in
  the proof of Proposition \ref{propcompos2} we let $h(x,v) =
  f(x+X(x,v),v)$, and readily obtain
  \begin{align*} \|P_kh\|_{\cZ^{\lambda,\mu;p}_\tau} & =
    \sum_{n\in\N_0^d} \frac{\lambda^n\,e^{2\pi\mu|k|}}{n!}\,
    \|\nabla_v^n \hat{h}(k,v)\|_{L^p(dv)}\\
    & \leq \sum_{n \geq 0} \sum_{\ell\in\Z^d}
    \frac{\lambda^{n}\,e^{2\pi\mu|k|}}{n!}  \|\nabla_v^{n}
    \hat{f}(\ell,v)\|_{L^p(dv)}\, \left( \sum_{m \ge 0}
      \frac{\lambda^{m}}{m!} \, \left\|\nabla_v^{m} \left( e^{2i \pi
          \ell \, X} \right)^{\hat{}} (k-\ell,v)\right\|_{L^\infty(dv)}
    \right).
\end{align*}
At this stage we write
\[ e^{2\pi\mu|k|}\leq
e^{2\pi\mu|\ell|}\,e^{-2\pi(\ov{\mu}-\mu)|k-\ell|}\,e^{2\pi\ov{\mu}|k-\ell|},\]
and use the crude bound
\[\forall \, \ell\in\Z^d,\quad e^{2\pi \ov{\mu}|k-\ell|}
\left\|\nabla_v^{m} \left( e^{2i \pi
          \ell \, X} \right)^{\hat{}}
      (k-\ell,v)\right\|_{L^\infty(dv)} \leq \sum_{j\in\Z^d}
e^{2\pi\ov{\mu}|j|} \, 
\left\|\nabla_v^{m} \left( e^{2i \pi
          \ell \, X} \right)^{\hat{}} (j,v)\right\|_{L^\infty(dv)}.\] 
The rest of the proof is as in Proposition \ref{propcompos2}.
\end{proof}

\subsection{Measuring solutions of kinetic equations in large
  time} \label{submeasur}

As we already discussed, even for the simplest kinetic equation,
namely free transport, we cannot hope to have uniform in time
regularity estimates in the velocity variable: rather, because of
filamentation, we may have $\|\nabla_v f(t,\cdot)\| = O(t)$,
$\|\nabla^2_v f(t,\cdot)\|=O(t^2)$, etc.  For analytic norms we may at
best hope for an exponential growth.

But the invariance of the ``gliding'' norms ${\cZ^{\lambda,\mu}_\tau}$
under free transport (Proposition \ref{propTL}) makes it possible to
look for uniform estimates such as \begeq\label{uniff}
\|f(\tau,\cdot)\|_{\cZ^{\lambda,\mu}_\tau} = O(1) \quad \text{as
  $\tau\to+\infty$}. \endeq

Of course, by Proposition \ref{propgrad2}, \eqref{uniff} implies
\begeq\label{nablavftau} \|\nabla_v
f(\tau,\cdot)\|_{\cZ^{\lambda',\mu'}_\tau} = O(\tau), \qquad 
\lambda' <\lambda, \ \mu' < \mu,
\endeq 
and nothing better as far as the asymptotic behavior of $\nabla_vf$ is
concerned; but \eqref{uniff} is much more precise than
\eqref{nablavftau}. For instance it implies $\|(\nabla_v +
\tau\nabla_x) f(\tau,\cdot)\|_{\cZ^{\lambda',\mu'}_\tau} = O(1)$ for
$\lambda' < \lambda$, $\mu' < \mu$.

Another way to get rid of filamentation is to {\em average} over the
spatial variable $x$, a common sense procedure which has already been
used in physics \cite[Section~49]{LL:kin:81}.  Think that, if $f$
evolves according to free transport, or even according to the
linearized Vlasov equation \eqref{Vllin}, then its space-average
\begeq\label{fbar} \<f\> (\tau,v) := \int_{\T^d} f(\tau,x,v)\,dx
\endeq
is time-invariant. (We used this infinite number of conservation laws
to determine the long-time behavior in Theorem
\ref{thmlineardamping}.)

The bound \eqref{uniff} easily implies a bound on the space average: indeed,
\begeq\label{unifavf} \|\<f\>(\tau,\cdot)\|_{\cC^{\lambda}} = 
\|\<f\>(\tau,\cdot)\|_{\cZ^{\lambda,\mu}_\tau} \leq \|f(\tau,\cdot)\|_{\cZ^{\lambda,\mu}_\tau} =
O(1)\qquad \text{as $\tau\to\infty$}; \endeq
and in particular, for $\lambda'<\lambda$,
\begeq\label{unifavfg} \|\<\nabla_v f\>(\tau,\cdot)\|_{\cC^{\lambda'}} = O(1)\qquad \text{as $\tau\to\infty$}. \endeq
Again, \eqref{uniff} contains a lot more information than \eqref{unifavfg}.

\begin{Rk} \label{rkbourgain} The idea to estimate solutions of a
  nonlinear equation by comparison to some unperturbed (reversible)
  linear dynamics is already present in the definition of Bourgain
  spaces $X^{s,b}$ \cite{bourgain}. The analogy stops here, since time
  is a dummy variable in $X^{s,b}$ spaces, while in
  $\cZ^{\lambda,\mu}_t$ spaces it is frozen and appears as a
  parameter, on which we shall play later.
\end{Rk}

\subsection{Linear damping revisited} \label{sub:revisited}

As a simple illustration of the functional analysis introduced in this section, let us recast
the linear damping (Theorem \ref{thmlineardamping}) in this language. This will be the first step
for the study of the nonlinear damping. For simplicity we set $L=1$.

\begin{Thm}[Linear Landau damping again] \label{thmlindampingagain}
Let $f^0=f^0(v)$, $W:\T^d\to\R$ such that $\|\nabla W\|_{L^1}\leq C_W$, and $f_i(x,v)$ such that
\sm

(i) Condition {\bf (L)} from Subsection \ref{sub:lineardamping} holds for some constants $C_0,\lambda,\kappa>0$;
\sm

(ii) $\|f^0\|_{\cC^{\lambda;1}}\leq C_0$;
\sm

(iii) $\|f_i\|_{\cZ^{\lambda,\mu;1}} \leq\delta$ for some $\mu>0$, $\delta>0$;
\sm

\noindent Then for any $\lambda'<\lambda$ and $\mu'< \mu$, the solution of the linearized Vlasov equation \eqref{Vllin} satisfies
\begeq\label{Vllinsat}
\sup_{t\in\R}\ \bigl\|f(t,\,\cdot\,)\bigr\|_{\cZ^{\lambda',\mu';1}_t} \leq C\,\delta,
\endeq
for some constant $C=C(d,C_W,C_0,\lambda,\lambda',\mu,\mu',\kappa)$.
In particular, $\rho=\int f\,dv$ satisfies
\begeq\label{rllinsat}
\sup_{t\in\R}\ \bigl\|\rho(t,\,\cdot\,)\bigr\|_{\cF^{\lambda'|t|+\mu'}} \leq C\,\delta.
\endeq
As a consequence, as $|t|\to\infty$, $\rho$ converges strongly to
$\rho_\infty = \iint f_i(x,v)\,dx\,dv$, and $f$ converges weakly to
$\<f_i\>=\int f_i\,dx$, at rate $O(e^{-\lambda''|t|})$ for any
$\lambda''<\lambda'$.  \sm

If moreover $\|f^0\|_{\cC^{\lambda;p}}\leq C_0$ and
$\|f_i\|_{\cZ^{\lambda,\mu;p}} \leq \delta$ for all $p$ in some
interval $[1,\ov{p}]$, then \eqref{Vllinsat} can be reinforced into
\begeq\label{Vllinsat'} \sup_{t\in\R}\
\bigl\|f(t,\,\cdot\,)\bigr\|_{\cZ^{\lambda',\mu';p}_t} \leq
C\,\delta,\qquad 1\leq p\leq \ov{p}.
\endeq
\end{Thm}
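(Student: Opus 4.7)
The strategy is a two-step bootstrap: first use the closed Volterra equation on $\hat\rho$ (derived exactly as in the proof of Theorem \ref{thmlineardamping}) to obtain exponential decay of every nonzero spatial mode of $\rho$, and then lift this to $f$ by plugging back into Duhamel's formula and working in the free-transport-invariant norm $\cZ^{\lambda',\mu';1}_t$. By time reversal it is enough to treat $t\ge 0$.

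For the Volterra step, Theorem \ref{thminj} (i) turns $\|f_i\|_{\cZ^{\lambda,\mu;1}}\le\delta$ into the pointwise bound $|\tilde f_i(k,\eta)|\le\delta\,e^{-2\pi\mu|k|}\,e^{-2\pi\lambda|\eta|}$, so the source term of the Volterra equation satisfies $\sup_{t\ge 0}|\tilde f_i(k,kt)|\,e^{2\pi\lambda|k|t}\le\delta\,e^{-2\pi\mu|k|}$. Picking $\lambda''\in(\lambda',\lambda)$ and applying Lemma \ref{lemvolterra} mode by mode under hypothesis {\bf (L)} yields
\[
|\hat\rho(t,k)|\le C(\lambda,\lambda'',\kappa,C_0,C_W)\,\delta\,e^{-2\pi\mu|k|}\,e^{-2\pi\lambda''|k|t}\qquad (k\ne 0,\ t\ge 0),
\]
together with the exact conservation $|\hat\rho(t,0)|=|\tilde f_i(0,0)|\le\delta$. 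Summing in $k$, the gap $\mu>\mu'$ gives summability and the gap $\lambda''>\lambda'$ gives exponential decay of all nonzero modes; combining this with the constant contribution at $k=0$ produces \eqref{rllinsat}, and since $\cF^\sigma$ dominates every $C^r$ norm this also delivers the announced strong $C^r$ convergence of $\rho$ to $\rho_\infty:=\tilde f_i(0,0)$.

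To lift to $f$, I use Duhamel's formula for \eqref{Vllin} and observe that the integrand $E(s,x-v(t-s))\cdot\nabla_v f^0(v)$, with $E:=-\nabla W\ast\rho$, equals $(H(s,\cdot,\cdot)\circ S^0_{s-t})(x,v)$ for $H(s,x,v):=E(s,x)\cdot\nabla_v f^0(v)$. The invariance Proposition \ref{propTL} then yields
\[
\|f(t,\cdot)\|_{\cZ^{\lambda',\mu';1}_t}\le\|f_i\|_{\cZ^{\lambda',\mu';1}}+\int_0^t\|H(s,\cdot)\|_{\cZ^{\lambda',\mu';1}_s}\,ds,
\]
the first term being at most $\delta$ (Proposition \ref{propincluZ}). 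The tensor structure of $H$, together with the algebra property (Proposition \ref{propalgZ}, Lebesgue exponents $(\infty,1,1)$) and Proposition \ref{propZCF} (i)--(ii), gives
\[
\|H(s,\cdot)\|_{\cZ^{\lambda',\mu';1}_s}\le\|E(s,\cdot)\|_{\cF^{\lambda'|s|+\mu'}}\cdot\|\nabla_v f^0\|_{\cC^{\lambda';1}},
\]
and Proposition \ref{propgrad} bounds the second factor by $C(\lambda,\lambda')\,C_0$. The crucial point is that $\widehat{\nabla W}(0)=0$ and $|\widehat{\nabla W}(k)|\le\|\nabla W\|_{L^1}\le C_W$, so only the exponentially decaying nonzero modes of $\hat\rho$ enter:
\[
\|E(s,\cdot)\|_{\cF^{\lambda'|s|+\mu'}}\le C_W\sum_{k\ne 0}|\hat\rho(s,k)|\,e^{2\pi(\lambda'|s|+\mu')|k|}\le C''\,\delta\,e^{-2\pi\alpha s},\qquad \alpha:=\lambda''-\lambda'>0.
\]
Integrating in $s\in[0,\infty)$ produces a bound independent of $t$, which is \eqref{Vllinsat}.

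The pointwise convergence $\tilde f(t,k,\eta)\to\tilde f_i(0,\eta)\mathbf{1}_{k=0}$ with rate $O(e^{-2\pi\lambda''|t|})$, and hence the weak convergence of $f$ to $\<f_i\>$, follow from the estimates already produced in the proof of Theorem \ref{thmlineardamping}, whose hypotheses are satisfied here through the injection $\cZ^{\lambda,\mu;1}\hookrightarrow\cY^{\lambda,\mu}$. The $L^p$ reinforcement \eqref{Vllinsat'} requires no new idea: the Volterra step only involves spatial modes of $\rho$ and is unchanged, while in the bootstrap the algebra property is used with exponents $(\infty,p,p)$, the only $L^p$-dependent input being $\|\nabla_v f^0\|_{\cC^{\lambda';p}}\le C(\lambda,\lambda')\,C_0$. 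The one ingredient that is not pure bookkeeping is the exponential-in-$s$ decay of $\|E(s,\cdot)\|_{\cF^{\lambda'|s|+\mu'}}$, obtained by spending a fraction of each of the margins $\lambda-\lambda'$ and $\mu-\mu'$; this elementary trade-off between loss of analyticity exponents and gain of time integrability is the linear prototype of the echo-control mechanism that will dominate the nonlinear analysis.
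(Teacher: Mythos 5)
Your proof is correct and follows essentially the same route as the paper: a Volterra bound on $\hat\rho$ (derived from the closed equation as in Theorem \ref{thmlineardamping}) followed by a Duhamel bootstrap in the free-transport-invariant $\cZ$ norm, with the key decay coming from $\hat{\nabla W}(0)=0$ and a small sacrifice in the exponents $\lambda$, $\mu$. The only cosmetic difference is that you pass through the $\cY^{\lambda,\mu}$ norm via Theorem \ref{thminj}(i) to extract a pointwise bound before applying Lemma \ref{lemvolterra} mode by mode, whereas the paper works directly with $\cF^{\lambda t+\mu}$ norms using Propositions \ref{propZCF} and \ref{propTL}; the content is identical.
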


\begin{Rk} The notions of weak and strong convergence are the same as those
  in Theorem \ref{thmlineardamping}. With respect to that statement,
  we have added an extra analyticity assumption in the $x$ variable;
  in this linear context this is an overkill (as the proof will show),
  but later in the nonlinear context this will be important.
\end{Rk}

\begin{proof}[Proof of Theorem \ref{thmlindampingagain}]
  Without loss of generality we restrict our attention to $t\geq
  0$. Although \eqref{rllinsat} follows from \eqref{Vllinsat} by
  Proposition \ref{propZCF}, we shall establish \eqref{rllinsat}
  first, and deduce \eqref{Vllinsat} thanks to the equation. We shall
  write $C$ for various constants depending only on the parameters in
  the statement of the theorem.

As in the proof of Theorem \ref{thmlineardamping}, we have
\[ \hat{\rho}(t,k) = \tilde{f}_i(k,kt) + \int_0^t K^0(t-\tau,k)\,\hat{\rho}(\tau,k)\,d\tau\]
for any $t\geq 0$, $k\in\Z^d$. By Lemma \ref{lemvolterra}, for any $\lambda'<\lambda$, $\mu'<\mu$,
\begin{align*}
& \sup_{t\geq 0} \left( \sum_k |\hat{\rho}(t,k)|\,e^{2\pi (\lambda't+\mu')|k|}\right)
\\ 
& \qquad \qquad 
\leq C(\lambda,\lambda',\kappa)\, \left(\sum_k e^{-2\pi (\mu-\mu')|k|}\right)\,
\sup_{t\geq 0} \ \sup_{k\in\Z^d} \ \bigl|\tilde{f}_i(k,kt)\bigr|\,e^{2\pi (\lambda' t + \mu)|k|}\\
& \qquad \qquad \leq \frac{C(\lambda,\lambda',\kappa)}{(\mu-\mu')^d}\, \sup_{t\geq 0} 
\left(\sum_{k\in\Z^d} \bigl|\tilde{f}_i(k,kt)\bigr|\,e^{2\pi(\lambda t + \mu)|k|}\right).
\end{align*}
Equivalently,
\begeq\label{rhofiS}
\sup_{t\geq 0}\ \bigl\|\rho(t,\,\cdot\,)\bigr\|_{\cF^{\lambda't+\mu'}}
\leq C\ \sup_{t\geq 0} \ \left\| \int f_i\circ S^0_{-t}\,dv\right\|_{\cF^{\lambda t+\mu}}.
\endeq
By Propositions \ref{propZCF} and \ref{propTL},
\[ \left\| \int f_i\circ S^0_{-t}\,dv\right\|_{\cF^{\lambda t+\mu}}
\leq \bigl\|f_i\circ S^0_{-t}\bigr\|_{\cZ^{\lambda,\mu;1}_t} = \|f_i\|_{\cZ^{\lambda,\mu;1}_0} \leq \delta.\]
This and \eqref{rhofiS} imply \eqref{rllinsat}. 

To deduce \eqref{Vllinsat}, we first write
\[ f(t,\,\cdot\,) = f_i\circ S^0_{-t}
+ \int_0^t \Bigl( (\nabla W\ast\rho_\tau)\circ S^0_{-(t-\tau)}\Bigr)\cdot\nabla_v f^0\,d\tau,\]
where $\rho_\tau=\rho(\tau,\,\cdot\,)$.
Then for any $\lambda''<\lambda'$ we have, by Propositions \ref{propalgZ} and \ref{propZCF},
for all $t\geq 0$,
\begin{align}\label{ffrf}
\|f\|_{\cZ^{\lambda'',\mu';1}_t}
& \leq \bigl\|f_i\circ S^0_{-t}\bigr\|_{\cZ^{\lambda'',\mu;1}_t}
+ \int_0^t \Bigl\| (\nabla W\ast\rho_\tau)\circ S^0_{-(t-\tau)}\Bigr\|_{\cZ^{\lambda'',\mu';\infty}_t}\,
\|\nabla_v f^0\|_{\cZ^{\lambda'',\mu;1}_t}\, d\tau\\
& = \|f_i\|_{\cZ^{\lambda'',\mu;1}} + 
\left(\int_0^t \|\nabla W\ast\rho_\tau\|_{\cF^{\lambda''\tau +\mu'}}\,d\tau\right)\
\|\nabla_v f^0\|_{\cC^{\lambda'';1}}. \nonumber
\end{align}

Since $\hat{\nabla W}(0)=0$, we have, for any $\tau\geq 0$,
\begin{align*}
\bigl\|\nabla W\ast\rho_\tau\bigr\|_{\cF^{\lambda''\tau+\mu}}
& \leq e^{-2\pi (\lambda''-\lambda')\tau}\, 
\bigl\|\nabla W\ast\rho_\tau\bigr\|_{\cF^{\lambda'\tau+\mu'}}\\
& \leq \|\nabla W\|_{L^1}\, e^{-2\pi (\lambda''-\lambda')\tau}\, 
\|\rho_\tau\|_{\cF^{\lambda'\tau+\mu'}}\\
& \leq C_W\,C\,\delta\, e^{-2\pi (\lambda''-\lambda')\tau};
\end{align*}
in particular
\begeq\label{int0tnablaW}
\int_0^t \bigl\|\nabla W\ast\rho_\tau\bigr\|_{\cF^{\lambda''+\mu'}}
\leq \frac{C\,\delta}{\lambda''-\lambda'}.
\endeq

Also, by Proposition \ref{propgrad}, for $1<\lambda'/\lambda''\leq 2$ we have
\begeq\label{alsonablav}
\|\nabla_v f^0\|_{\cC^{\lambda'';1}}
\leq \frac{C}{\lambda-\lambda''}\, \|f^0\|_{\cC^{\lambda;1}} \leq \frac{C\,C_0}{\lambda-\lambda''}.
\endeq
Plugging \eqref{int0tnablaW} and \eqref{alsonablav} in \eqref{ffrf},
we deduce \eqref{Vllinsat}.  The end of the proof is an easy exercise
if one recalls that $\<f(t,\,\cdot\,)\> = \<f_i\>$ for all $t$.
\end{proof}

\section{Scattering estimates}
\label{sec:scattering}

Let be given a small time-dependent force field, denoted by $\var\,
F(t,x)$, on $\T^d\times\R^d$, whose analytic regularity {\em improves}
linearly in time. (Think of $\var F$ as the force created by a damped density.) This force field
perturbs the trajectories $S^0_{\tau,t}$ of the free transport ($\tau$
the initial time, $t$ the current time) into trajectories
$S_{\tau,t}$.  The goal of this section is to get an estimate on the
maps $\Om_{t,\tau}= S_{t,\tau}\circ S^0_{\tau,t}$ (so that $S_{t,\tau}
= \Om_{t,\tau} \circ S^0 _{t,\tau}$). These bounds should be in an
analytic class about as good as $F$, with a loss of analyticity
depending on $\var$; they should also be (for $0 \le \tau \le t$)\sm

\bul uniform in $t\geq \tau$;
\sm

\bul small as $\tau\to\infty$;
\sm

\bul small as $\tau\to t$.
\sm

We shall informally say that $\Om_{t,\tau}$ is a {\bf scattering}
transform, even though this terminology is usually reserved for the
asymptotic regime $t\to\pm\infty$. 

\begin{Rk} The order of composition of the free semigroup and
  perturbed semigroup is dictated by the need to get uniformity as
  $t\to\infty$. If we had defined, say,
  $\Lambda_{t,\tau}=S^0_{\tau,t}\circ S_{t,\tau}$, so that $S_{t,\tau}
  = S^0_{t,\tau}\circ \Lambda_{t,\tau}$, and if the force was, say,
  supported in $0\leq t\leq 1$, we would get (denoting $S_{t,\tau}
  =(X_{t,\tau},V_{t,\tau})$) 
  \[ \Lambda_{t,0}(x,v) = \Bigl(X_{1,0}(x-v(t-1),v) +t
  V_{1,0}(x-v(t-1),v),\, V_{1,0}(x-v(t-1),v)\Bigr),\] which does not
  converge to anything as $t\to\infty$.
\end{Rk}

\subsection{Formal expansion}

Before stating the main result, we sketch a heuristic perturbation study.
Let us write a formal expansion of $V_{0,t}(x,v)$ as a perturbation series:
\[ V_{0,t}(x,v) = v+\var\,v^{(1)}(t,x,v) + \var^2\,v^{(2)}(t,x,v) +
\ldots\]
Then we deduce
\[ X_{0,t}(x,v) = x + vt + \var \int_0^t v^{(1)}(s,x,v)\,ds
+ \var^2 \int_0^t v^{(2)}(s,x,v)\,ds + \ldots,\]
with $v^{(i)}(t=0) = 0$.

So
\[ \frac{\partial^2 X_{0,t}}{\partial t^2}=
\var \,\derpar{v^{(1)}}{t} + \var^2 \, \derpar{v^{(2)}}{t} + \ldots.\]

On the other hand,
\begin{align*}
\var \, F(t,X_{0,t}) 
& = \var \, \sum_k \hat{F}(t,k)\, e^{2i\pi k\cdot x} e^{2i\pi k\cdot vt}
e^{2i\pi k\cdot \left[ \var \int_0^t v^{(1)}\,ds + \var^2 \int_0^t v^{(2)}\,ds + \ldots\right]} \\
& = \var \, \sum_k \hat{F}(t,k) \, e^{2i\pi k\cdot x} e^{2i\pi k\cdot vt}
\, \Bigl[ 1 + 2 i \pi \var k\cdot \int_0^t v^{(1)}\,ds
+ 2i \pi \var^2 k\cdot \int_0^t v^{(2)}\,ds \\
& \qquad\qquad\qquad\qquad
- (2\pi)^2 \var^2 \left( k\cdot \int_0^t v^{(1)}\,ds \right)^2 + \ldots \Bigr].
\end{align*}

By successive identification,
\[ \derpar{v^{(1)}}{t} = \sum_k \hat{F}(t,k)\, e^{2i\pi k\cdot x} e^{2i\pi k\cdot vt};\]
\[ \derpar{v^{(2)}}{t} = \sum_k \hat{F}(t,k)\, e^{2i\pi k\cdot x} e^{2i\pi k\cdot vt}\,
2i\pi k\cdot \int_0^t v^{(1)}\,ds;\]
\[ \derpar{v^{(3)}}{t} = \sum_k \hat{F}(t,k)\, e^{2i\pi k\cdot x} e^{2i\pi k\cdot vt}
\left[ 2i\pi k\cdot \int_0^t v^{(2)}\,ds - (2\pi)^2 \var^2
\left( k\cdot\int_0^t v^{(1)}\,ds\right)^2 \right],\]
etc.

In particular notice that $\left| \derpar{v^{(1)}}{t} \right| \leq \sum_k |\hat{F}(t,k)|$, so
\begin{multline*} \int_0^\infty \left|\derpar{v^{(1)}}{t} \right|\,dt 
\leq \int_0^\infty \sum_k |\hat{F}(t,k)|\,dt \leq 
\int_0^\infty \sum |\hat{F}(t,k)|\, e^{2\pi \mu t} e^{-2\pi \mu t}\,dt\\
\leq C_F \int_0^\infty e^{-2\pi \mu t} = \frac{C_F}{2\pi \mu}. 
\end{multline*}
So, under our uniform analyticity assumptions we expect $V_{0,t}(x,v)$ to be a {\em uniformly bounded}
analytic perturbation of $v$.

\subsection{Main result} \label{maincharac}

On $\T^d_x$ we consider the dynamical system
\[ \frac{d^2X}{dt^2} = \var\,F(t,X);\]
its phase space is $\T^d\times\R^d$.
Although this system is reversible, we shall only consider $t\geq 0$.
The parameter $\var$ is here only to recall the perturbative nature of the estimate.

For any $(x,v)\in\T^d\times\R^d$ and any two times $\tau,t\in\R_+$, 
let $S_{\tau,t}$ be the transform mapping the state of the system at 
time $\tau$, to the state of the system at time $t$.
In more precise terms, $S_{\tau,t}$ is described by the equations
\[ S_{\tau,t}(x,v) = \bigl(X_{\tau,t}(x,v), V_{\tau,t}(x,v)\bigr);\]
\[ X_{\tau,\tau}(x,v)=x,\qquad V_{\tau,\tau}(x,v) = v;\]
\begeq\label{eqS}
\frac{d}{dt} X_{\tau,t}(x,v) = V_{\tau,t}(x,v),\qquad
\frac{d}{dt} V_{\tau,t}(x,v) = \var\,F(t,X_{\tau,t}(x,v)).
\endeq
From the definition we have the composition identity 
\begeq\label{SSS} S_{t_2,t_3}\circ S_{t_1,t_2} = S_{t_1,t_3};
\endeq
in particular $S_{t,\tau}$ is the inverse of $S_{\tau,t}$.

We also write $S^0_{\tau,t}$ for the same transform in the case 
of the free dynamics ($\var=0$);
in this case there is an explicit expression:
\begeq\label{S0} S^0_{\tau,t}(x,v) = (x+v(t-\tau),v), \endeq
where $x+v(t-\tau)$ is evaluated modulo $\Z^d$.
Finally, we define the ``scattering transforms associated with $\var F$'':
\begeq\label{Omttau}
\Om_{t,\tau} = S_{t,\tau}\circ S^0_{\tau,t}.
\endeq
(There is no simple semigroup property for the transforms $\Om_{t,\tau}$.)

In this section we establish the following estimates:

\begin{Thm}[Analytic estimates on scattering transforms in hybrid norms] \label{thmscat}
Let $\var>0$ and let $F=F(t,x)$ on $\R_+\times\T^d$ satisfy
\begeq\label{CF}
\hat{F}(t,0) =0,\qquad
\sup_{t\geq 0} \Bigl( \|F(t,\cdot)\|_{{\cal F}^{\lambda t+\mu}} + 
\|\nabla_x F(t,\cdot)\|_{{\cal F}^{\lambda t+\mu}}\Bigr) \leq C_F
\endeq
for some parameters $\lambda,\mu>0$ and $C_F>0$. 
Let $t\geq \tau\geq 0$, and let
\[ \Om_{t,\tau} = \bigl(\OmX_{t,\tau},\OmV_{t,\tau}\bigr)\]
be the scattering transforms associated with $\var \, F$. 
Let $0 \le \lambda' < \lambda$, $0 \le \mu'<\mu$ and $\tau'\geq 0$ be such that
\begin{equation}\label{hypscat0}
\lambda' \, (\tau'-\tau) \le \frac{(\mu-\mu')}2.
\end{equation}
Let
\[ 
\begin{cases}
R_1(\tau,t) = C_F \, e^{-2 \pi \, (\lambda - \lambda') \, \tau} \,
\min \left\{ (t-\tau) \, ; \ (2 \pi (\lambda-\lambda'))^{-1} \right\}; \\[3mm]
R_2(\tau,t) = C_F \, e^{-2 \pi \, (\lambda - \lambda') \, \tau} \, 
\min \left\{ (t-\tau)^2/2 \, ; \ (2 \pi (\lambda-\lambda'))^{-2}
\right\}. 
\end{cases}
\]
Assume that
\begeq\label{asscat1}
\forall \, 0 \le \tau \le t, \quad 
\var\, R_2(\tau,t) \leq \frac{(\mu-\mu')}{4},
\endeq
and
\begeq\label{asscat2}
\var\, C_F \,\leq \frac{4 \pi^2 \, (\lambda-\lambda')^2}{2}.
\endeq
Then
\begeq\label{estOmX}
\forall \, 0 \le \tau \le t, \quad 
\|\OmX_{t,\tau}-\Id\|_{\cZ^{\lambda',\mu'}_{\tau'}} \leq 2 \, \eps \, R_2(\tau,t)
\endeq
and
\begeq\label{estOmV} 
\forall \, 0 \le \tau \le t, \quad 
\|\OmV_{t,\tau}-\Id\|_{\cZ^{\lambda',\mu'}_{\tau'}} \leq \eps \, R_1(\tau,t).
\endeq
\end{Thm}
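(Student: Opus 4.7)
My plan is to derive a fixed-point equation for the position component of the backward trajectory and run a Picard iteration in the appropriate space of time-dependent $\cZ^{\lambda',\mu'}_{\tau'}$-valued functions. By definition, if $(X(s),V(s)):=S_{\tau,s}(\Omega_{t,\tau}(x,v))$, the boundary conditions $X(t)=x+v(t-\tau)$, $V(t)=v$ together with $\ddot X=\varepsilon F(s,X)$ integrate (backward) to
\[
X(s)=x+v(s-\tau)+\delta X(s),\quad \delta X(s)=\varepsilon\int_s^t(u-s)\,F\bigl(u,X(u)\bigr)\,du,
\]
so that $\Omega^X_{t,\tau}(x,v)-x=\delta X(\tau)$ and $\Omega^V_{t,\tau}(x,v)-v=-\varepsilon\int_\tau^t F(u,X(u))\,du$. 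I would therefore work in the Banach space $E=C([\tau,t];\cZ^{\lambda',\mu'}_{\tau'})$ with the sup-in-$s$ norm and study the map $T:W\mapsto \varepsilon\int_\cdot^t(u-\cdot)F(u,x+v(u-\tau)+W(u))\,du$.

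The heart of the argument is to estimate $\|F(u,\cdot+v(u-\tau)+W(u))\|_{\cZ^{\lambda',\mu'}_{\tau'}}$ by Proposition \ref{propcompos2} (with $a=1$, $b=u-\tau$, $V=0$, $X=W(u)$, and the free parameter $\sigma=u$). This yields, via Proposition \ref{propZCF}(i),
\[
\|F(u,\cdot+v(u-\tau)+W(u))\|_{\cZ^{\lambda',\mu'}_{\tau'}}\le \|F(u,\cdot)\|_{\cF^{\lambda' u+\mu'+\lambda'|\tau'-\tau|+\|W(u)\|_{\cZ}}}.
\]
Now invoking the zero-mean condition $\hat F(u,0)=0$ (so only modes $|k|\ge 1$ contribute) together with \eqref{CF}, and provided the exponent stays $\le\lambda u+\mu$, the right-hand side is bounded by $C_F\,e^{-2\pi(\lambda-\lambda')u}$. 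Using hypothesis \eqref{hypscat0}, this works as long as $\|W(u)\|_{\cZ}\le (\mu-\mu')/2$, and then $\int_s^t(u-s)\,e^{-2\pi(\lambda-\lambda')u}\,du\le R_2(s,t)/C_F\le R_2(\tau,t)/C_F$, so $\|T(W)(s)\|_{\cZ}\le\varepsilon R_2(s,t)$. Assumption \eqref{asscat1} then ensures $T$ stabilizes the ball $B=\{\|W\|_E\le(\mu-\mu')/2\}$.

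For contraction, I would write $F(u,y+W)-F(u,y+\tilde W)=\int_0^1\nabla F(u,y+(1-\theta)\tilde W+\theta W)\,d\theta\cdot(W-\tilde W)$ and use the algebra property of $\cZ^{\lambda',\mu'}_{\tau'}$ (Proposition \ref{propalgZ}). Since $\widehat{\nabla F}(u,0)=0$ as well, the same composition estimate applied to $\nabla F$ gives $\|\nabla F(u,\cdot+\cdots)\|_{\cZ^{\lambda',\mu'}_{\tau'}}\le C_F\,e^{-2\pi(\lambda-\lambda')u}$, whence
\[
\|T(W)-T(\tilde W)\|_E\le \varepsilon C_F\int_0^\infty r\,e^{-2\pi(\lambda-\lambda')r}\,dr\;\|W-\tilde W\|_E\le\frac{\varepsilon C_F}{(2\pi(\lambda-\lambda'))^2}\,\|W-\tilde W\|_E,
\]
which is $\le \tfrac12\|W-\tilde W\|_E$ by assumption \eqref{asscat2}. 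Banach's fixed-point theorem then produces a unique $\delta X\in B$ with $\|\delta X\|_E\le 2\|T(0)\|_E\le 2\varepsilon R_2(\tau,t)$; evaluating at $s=\tau$ yields \eqref{estOmX}. Estimate \eqref{estOmV} follows by plugging this $\delta X$ into the formula for $\Omega^V_{t,\tau}-v$ and bounding the integrand by $C_F\,e^{-2\pi(\lambda-\lambda')u}$ as above, then integrating to obtain $R_1(\tau,t)/C_F$.

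The main subtlety is the simultaneous management of three losses of regularity in the composition step — the free-transport shift $b=u-\tau$, the time-shift mismatch $\tau'-\tau$ in the target norm, and the perturbation $\delta X(u)$ itself — without spending the full budget $\mu-\mu'$. Extracting the exponential decay $e^{-2\pi(\lambda-\lambda')u}$ is what closes the estimate, and it relies crucially on both the hypothesis $\hat F(t,0)=0$ (so that $|k|\ge 1$) and on the fact that the analytic radius of $F$ in \eqref{CF} grows linearly in $t$; the two quantitative smallness assumptions \eqref{asscat1}--\eqref{asscat2} are then exactly what is needed for stability and contraction respectively.
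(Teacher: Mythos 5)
Your strategy is essentially the paper's: a Picard fixed point for the trajectory deviation, closed in analytic norms via Proposition \ref{propcompos2}, with $\hat F(\cdot,0)=0$ and the $t$-dependent analyticity radius in \eqref{CF} producing the decay $e^{-2\pi(\lambda-\lambda')u}$, and conditions \eqref{asscat1}--\eqref{asscat2} ensuring stability and contraction respectively. The paper works instead with the deviation $Z_{t,s}(x,v)$, so that $\Om_{t,\tau}-\Id=\var(Z,\partial_\tau Z)\circ S^0_{t-\tau}$, fixes $t$, and uses a norm normalized by $R_2(\tau,t)$ and supped over $\tau$ and an auxiliary shift; you fix both $\tau$ and $t$ and sup over the running time $s$. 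These are equivalent bookkeepings of the same iteration.

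There is, however, a concrete gap in your composition step. Taking the free parameter $\sigma=u$ in Proposition \ref{propcompos2} yields the $\cF$-exponent $\lambda' u+\mu'+\lambda'|\tau'-\tau|+\|W(u)\|_{\cZ}$, and you invoke \eqref{hypscat0} to absorb the middle term. But \eqref{hypscat0} bounds $\lambda'(\tau'-\tau)$, not $\lambda'|\tau'-\tau|$; when $\tau'<\tau$ (for instance $\tau'=0$ and $\tau$ large, a case the theorem must handle) the absolute value equals $\lambda'(\tau-\tau')$ and is uncontrolled, so the exponent can overshoot $\lambda u+\mu$ and the claimed bound $C_F\,e^{-2\pi(\lambda-\lambda')u}$ is lost. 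The fix is to choose the free parameter $\sigma=u+\tau'-\tau$ instead: since $b+\tau'-a\sigma=(u-\tau)+\tau'-(u+\tau'-\tau)=0$, the exponent becomes $\lambda'(u+\tau'-\tau)+\mu'+\|W(u)\|_{\cZ}$ with no absolute value (note $u+\tau'-\tau\geq\tau'\geq 0$ for $u\geq\tau$), and $(\lambda u+\mu)-\bigl[\lambda'(u+\tau'-\tau)+\mu'+\|W(u)\|_{\cZ}\bigr]\geq(\lambda-\lambda')u$ then follows from \eqref{hypscat0} together with $\|W(u)\|_{\cZ}\leq(\mu-\mu')/2$. This is exactly the shift the paper's Step 1 implements through Proposition \ref{propTL}, reducing $\|F(s,\,\cdot\,)\|_{\cZ^{\lambda',\mu'}_{s+\tau'-\tau}}$ to $\|F(s,\,\cdot\,)\|_{\cF^{\lambda'(s+\tau'-\tau)+\mu'}}$. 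Once this is repaired, the rest of your argument goes through.
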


\begin{Rk} The proof of Theorem \ref{thmscat} is easily adapted to include
Sobolev corrections. It is important to note that the scattering
transforms are smooth, uniformly in time, not just in gliding
regularity ($\tau'=0$ is admissible in \eqref{hypscat0}).
\end{Rk}

\begin{proof}[Proof of Theorem~\ref{thmscat}]
For a start, let us make the ansatz
\[ S_{t,\tau}(x,v) = \Bigl( x-v(t-\tau) + \var\, Z_{t,\tau}(x,v),\, v+ \var\,\pa_\tau Z_{t,\tau}(x,v)\Bigr),\]
with
\[ Z_{t,t}(x,v)=0,\qquad \pa_\tau Z_{t,\tau}\Bigr|_{\tau=t}(x,v) = 0.\]
Then it is easily checked that
\[ \Om_{t,\tau} - \Id = \var\, (Z,\pa_\tau Z)\circ S^0_{t-\tau};\]
in particular
\[ \|\Om_{t,\tau}-\Id\|_{\cZ^{\lambda',\mu'}_{\tau'}} 
= \var\, \bigl\|(Z,\pa_\tau Z)\bigr\|_{\cZ^{\lambda',\mu'}_{t+\tau'-\tau}}.\]

To estimate this we shall use a fixed point argument based on the equation for $S_{t,\tau}$, namely
\[ \frac{d^2 X_{t,\tau}}{d\tau^2}  = \var\, F(\tau,X_{t,\tau}),\]
or equivalently
\[ \frac{d^2 Z_{t,\tau}}{d\tau^2}  = F\Bigl(\tau,x-v(t-\tau) + \var\,Z_{t,\tau}\Bigr).\]
\sm

So let us fix $t$ and define
$$
\Psi : \left(W_{t,\tau} \right)_{0 \le \tau \le t} \longmapsto \left(Z_{t,\tau} \right)_{0 \le \tau \le t}
$$
such that $\left(Z_{t,\tau} \right)_{0 \le \tau \le t}$ is the solution of 
\begeq\label{edo}\begin{cases}
\dps \frac{\partial^2 Z_{t,\tau}}{\partial \tau^2} = F\Bigl(\tau,x - v (t-\tau) + \eps \, W_{t,\tau}\Bigr)\\[2mm]
Z_{t,t} = 0, \quad (\partial_\tau Z_{t,\tau})\Big|_{\tau=t} = 0.
\end{cases}
\endeq
What we are after is an estimate of the fixed point of $\Psi$. We do this in two steps.
\sm

\noindent
{\bf Step 1. Estimate of $\Psi(0)$.} 
Let $Z^0=\Psi(0)$. By integration of \eqref{edo} (for $W=0$) we have
$$ Z^0_{t,\tau} = \int_\tau ^t (s-\tau) \, F\left(s,x - v (t-s)\right) \, ds. $$

Let $\sigma$ such that $\lambda'\sigma \leq (\mu-\mu')/2$. 
We apply the $\cZ_{t+\sigma}^{\lambda',\mu'}$ norm and use Proposition~\ref{propTL}:
$$
\| Z^0_{t,\tau} \|_{\cZ_{t+\sigma} ^{\lambda',\mu'}} \le 
\int_\tau ^t (s-\tau) \, \| F\left(s,\,\cdot\, \right) \|_{\cZ_{s+\sigma}^{\lambda',\mu'}} \, ds = 
\int_\tau ^t (s-\tau) \, \| F\left(s,\,\cdot\, \right) \|_{\cF^{\lambda's + \lambda' \sigma + \mu'}} \, ds.
$$
Of course $\lambda'\sigma+\mu'\leq \mu$, so in particular
\[ 
\lambda' \, s + \lambda' \, \sigma + \mu' \le - (\lambda - \lambda') \, s + \lambda \, s + \mu.
\]
Combining this with the assumption $\hat{F}(s,0)=0$ yields
\begin{align*}
\| F\left(s,\cdot \right) \|_{\cF^{\lambda' s + \lambda' \, \sigma + \mu'}} 
& \le \|F\left(s,\cdot \right) \|_{\cF^{\lambda s + \mu}} \, e^{-2\pi (\lambda - \lambda')\, s}\\
& \le C_F\, e^{-2\pi(\lambda-\lambda')s}.
\end{align*}
So
\begin{align*}
\|Z^0_{t,\tau}\|_{\cZ^{\lambda',\mu'}_{t+\sigma}}
& \le C_F \, \int_\tau^t (s-\tau) \, e^{-2\pi (\lambda - \lambda')\, s} \, ds\\
& \le C_F \, e^{-2\pi (\lambda - \lambda')\, \tau} \,
\min \left\{ \frac{(t-\tau)^2}{2} \, ;
\ \frac1{(2\pi(\lambda-\lambda'))^2}\right\} \le R_2(\tau,t).
\end{align*}

With $t$ still fixed, we define the norm
\begeq\label{bbqn}
\bbqn \left(Z_{t,\tau} \right)_{0 \le \tau \le t} \bbqn := 
\sup\left\{ \frac{\| Z_{t,\tau} \|_{\cZ_{t+\sigma}^{\lambda',\mu'}}}{R_2(\tau,t)};\quad
0\leq\tau\leq t;\ \sigma+t\geq 0;\ \lambda'\sigma \leq \frac{\mu-\mu'}{2}\right\}.
\endeq
The above estimates show that $\qn\Psi(0)\qn \leq 1$.
(We can assume $t+\sigma\geq 0$ since $t+(\tau'-\tau) \ge t-\tau \ge 0$,
and we aim at finally choosing $\sigma=\tau'-\tau$.)
\sm

\noindent
{\bf Step 2. Lipschitz constant of $\Psi$.} 
We shall prove that under our assumptions, $\Psi$ is $1/2$-Lipschitz on the ball $B(0,2)$ in the norm $\qn \cdot \qn$. 
Let $W,\tilde{W}\in B(0,2)$, and $Z=\Psi(W)$, $\tilde{Z}=\Psi(\tilde{W})$. By solving the differential inequality
for $Z-\tilde{Z}$ we get
\begin{multline*}
Z_{t,\tau} - \tilde Z_{t,\tau} = 
\eps \, \Bigg[ \int_0 ^1 \int_\tau ^t (s-\tau) \, 
\nabla_x F\Big(s,x - v (t-s) \\+ \eps \, \Big( \theta W_{t,s}  + (1-\theta) \tilde W_{t,s} \Big) \Big) \, ds \,
d\theta \Bigg]
\cdot \left(W_{t,s} - \tilde W_{t,s}\right).
\end{multline*}

We divide by $R_2(\tau,t)$, take the $\cZ$ norm, and note that $R_2(s,t) \le R_2(\tau,t)$; we get
\[ \bbqn \left( Z_{t,\tau} - \tilde Z_{t,\tau} \right)_{0\le \tau \le t} \bbqn
\le \eps \, \bbqn \left( W_{t,s} - \tilde W_{t,s} \right)_{0\le s \le t} \bbqn \, A(t)
\]
with
\begin{multline*}
A(t)= \sup_{\sigma,\tau} \int_0 ^1 \int_\tau ^t (s-\tau) \,
\Bigg\| \nabla_x F\Big(s,x - v (t-s) + \eps \, \Big( \theta W_{t,s} + (1-\theta) \tilde W_{t,s} \Big) \Big) \Bigg\|
_{\cZ_{t+\sigma} ^{\lambda',\mu'}}\, ds \,
d\theta.
\end{multline*}

By Proposition \ref{propcompos2} (composition inequality),
\begin{multline*}
A(t) \le \int_\tau ^t (s-\tau) \, 
\left\| \nabla_x F (s, \,\cdot\,) \right\|_{\cZ_{s+\sigma}^{\lambda',\mu'+e(t,s,\sigma)}} \, ds \\ 
= \int_\tau ^t (s-\tau) \, 
\left\| \nabla_x F (s, \,\cdot\,) \right\|_{\cF^{\lambda' s + \lambda'
    \sigma+ \mu'+e(t,s,\sigma)}} \, ds,
\end{multline*}
with
\[ e(t,s,\sigma) := \eps \, 
\Bigl\| \theta \, W_{t,s} + (1-\theta) \, \tilde W_{t,s} \Bigr\|_{\cZ_{t+\sigma} ^{\lambda',\mu'}}
\leq 2\, \eps\, R_2(s,t)\leq 2\, \eps\, R_2(\tau,t).
\]

Using \eqref{asscat1}, we get
\begin{align*}
\lambda' s + \lambda' \sigma + \mu'+ e(s,t,\sigma) & \le 
\lambda' s + \lambda' \sigma + \mu' + 2 \, \eps \, R_2(\tau,t)\\
& \leq \lambda' s + \mu = (\lambda s+\mu) - (\lambda -\lambda')s.
\end{align*}
Using again the bound on $\nabla_x F$ and the assumption
$\hat{F}(s,0)=0$, we deduce
\[ 
A(t) \le \sup_\tau \int_\tau ^t (s-\tau) \, C_F \, e^{-2\pi
  (\lambda-\lambda') \, s} \, ds \le R_2(0,t) \leq
\frac{C_F}{4 \pi^2 \, (\lambda-\lambda')^2}.\] 
Using \eqref{asscat2}, we conclude that
\[
\bbqn \left( Z_{t,\tau} - \tilde Z_{t,\tau} \right)_{0\le \tau \le t} \bbqn
 \le \frac12
\bbqn \left( W_{t,s} - \tilde W_{t,s} \right)_{0\le s \le t} \bbqn.
\]
So $\Psi$ is $1/2$-Lipschitz on $B(0,2)$, and we can conclude the proof of \eqref{estOmX} by
applying Theorem \ref{thmfpt} and choosing $\sigma=\tau'-\tau$.
\med

It remains to control the velocity component of $\Om$, {\it i.e.},
establish \eqref{estOmV}; this will follow from the control of the
position component. Indeed, if we write $Q_{t,\tau} =
\var^{-1}(\OmV_{t,\tau}-\Id)(x,v)$, we have
\[ Q_{t,\tau} = \int_\tau ^t 
F\Big(s,x - v (t-s) + \eps \, W_{t,s} \Big) \, ds \]
so we can estimate as before
\[
\| Q_{t,\tau} \|_{\cZ_{t+(\tau'-\tau)}^{\lambda',\mu'}} 
\le \int_\tau ^t  \left\|  F (s, \cdot) \right\|_{\cF^{\lambda' s + \lambda'
    (\tau'-\tau) + \mu'+ e(t,s,\tau'-\tau)}} \, ds
\]
to get
\[ 
\| Q_{t,\tau} \|_{\cZ_{t+(\tau'-\tau)}^{\lambda',\mu'}}  
\le \int_\tau ^t C_F \, e^{-2\pi (\lambda-\lambda') \, s} \, ds \le R_1(\tau,t).
\]
Thus the proof is complete.
\end{proof}

\begin{Rk} Loss and Bernard independently suggested to compare 
  the estimates in the present section with the Nekhoroshev theorem
  in dynamical systems theory \cite{Nekhoroshev:partI,Nekhoroshev:partII}.
  The latter theorem roughly states that
  for a perturbation of a completely integrable system, trajectories
  remain close to those of the unperturbed system for a time growing
  exponentially in the inverse of the size of the perturbation 
  (unlike KAM theory, this result is not global in time; but it is more general 
  in the sense that it also applies outside invariant tori).
  In the present setting the situation is better since the perturbation decays.
\end{Rk}

\section{Bilinear regularity and decay estimates}
\label{sec:regdecay}

To introduce this crucial section, let us reproduce and improve a key
computation from Section \ref{sec:linear}. Let $G$ be a function of
$v$, and $R$ a time-dependent function of $x$ with $\hat{R}(0)=0$;
both $G$ and $R$ may be vector-valued.  (Think of $G(v)$ as $\nabla_v
f(v)$ and of $R(\tau,x)$ as $\nabla W\ast\rho(\tau,x)$.) Let further
\[
\sigma(t,x) = 
\int_0^t \int_{\R^d} G(v)\cdot R\bigl(\tau, x-v(t-\tau)\bigr)\,dv\,d\tau.
\]
Then
\begin{align*}
\hat{\sigma}(t,k) = 
& \int_0^t \int_{\T^d} \int_{\R^d} G(v)\cdot R\bigl(\tau, x-v(t-\tau)\bigr)\,e^{-2i\pi k\cdot x}\,dv\,dx\,d\tau\\
& = \int_0^t \int_{\T^d} \int_{\R^d} G(v)\cdot R(\tau,x)\, e^{-2i\pi k\cdot x}\, e^{-2i\pi k\cdot v(t-\tau)}\,dv\,dx\,d\tau\\
& = \int_0^t \tilde{G}(k(t-\tau))\cdot \hat{R}(\tau,k)\,d\tau.
\end{align*}

Let us assume that $G$ has a ``high'' gliding analytic regularity $\ov{\lambda}$, and estimate $\sigma$ in regularity
$\lambda t$, with $\lambda<\ov{\lambda}$. Let $\alpha=\alpha(t,\tau)$ satisfy
\[ 0\leq \alpha(t,\tau) \leq (\ov{\lambda}-\lambda)\,(t-\tau);\]
then
\begin{align*}
\|\sigma(t)\|_{\cF^{\lambda t}}
& \leq \sum_{k\neq 0} \int_0^t e^{2\pi \lambda t|k|}\,
|\tilde{G}(k(t-\tau))|\, |\hat{R}(\tau,k)|\,d\tau\\
& \leq \int_0^t \left( \sup_{k\neq 0}\, e^{2\pi [\lambda (t-\tau) +\alpha]\,|k|}\,
|\tilde{G}(k(t-\tau))|\right)\, \left(\sum_k e^{2\pi (\lambda\tau - \alpha)|k|}|\hat{R}(\tau,k)|\right)\,d\tau\\
& \leq \left(\sup_{\eta} e^{2\pi\ov{\lambda}|\eta|} |\tilde{G}(\eta)|\right)\,
\left(\sup_{0\leq \tau\leq t} \|R(\tau,\cdot)\|_{\cF^{\lambda\tau-\alpha}}\right)
\int_0^t e^{-2\pi [(\ov{\lambda}-\lambda)(t-\tau)-\alpha]}\,d\tau,
\end{align*}
where we have used
\[ k\neq 0 \Longrightarrow \ 2\pi (\lambda(t-\tau) + \alpha)|k| \leq
2\pi \ov{\lambda}|k|(t-\tau) - \bigl( (\ov{\lambda}-\lambda)(t-\tau)-\alpha\bigr).\]

Let us choose
\[ \alpha(t,\tau) = \frac{(\ov{\lambda}-\lambda)}{2}\,\min\{ 1 \, ; \,
t-\tau  \} ;\]
then
\[ \int_0^t e^{-2\pi \bigl[(\ov{\lambda}-\lambda)(t-\tau)-\alpha\bigr]}\,d\tau
\leq \int_0^t e^{-\pi (\ov{\lambda}-\lambda)(t-\tau)}\,d\tau \leq \frac1{\pi(\ov{\lambda}-\lambda)}.\]
So in the end
\[ \|\sigma(t)\|_{\cF^{\lambda t}} \leq
\frac{\|G\|_{\cX^{\ov{\lambda}}}}{\pi(\ov{\lambda}-\lambda)}\,
\sup_{0\leq \tau\leq t} \|R(\tau)\|_{\cF^{\lambda\tau - \alpha(t,\tau)}},\]
where $\|G\|_{\cX^{\ov{\lambda}}} = \sup_\eta (e^{2\pi\ov{\lambda}|\eta|}|\tilde{G}(\eta)|)$.
\sm

In the preceding computation there are three important things to
notice, which lie at the heart of Landau damping: \sm

\begin{itemize}
\item The natural index of analytic regularity of $\sigma$ in $x$
increases linearly in time: this is an automatic consequence of the
gliding regularity, already observed in Section \ref{sec:analytic}.
\item A bit $\alpha(t,\tau)$ of analytic regularity of $G$ was
transferred from $G$ to $R$, however not more than a fraction of
$(\ov{\lambda}-\lambda)(t-\tau)$.  We call this the {\bf regularity
  extortion}: if $f$ {\em forces} $\ov{f}$, it satisfies an equation
of the form $\pa_t f + v\cdot\nabla_x f + F[f]\cdot\nabla_v \ov{f} =
S$, then $\ov{f}$ will give away some (gliding) smoothness to $\rho =
\int f\,dv$. 
\item The combination of higher regularity of $G$ and the assumption
$\hat{R}(0)=0$ has been converted into a time decay, so that the
time-integral is bounded, uniformly as $t\to\infty$. Thus there is
{\bf decay by regularity}. 
\end{itemize} \sm

The main goal of this section is to establish quantitative variants of
these effects in some general situations when $G$ is not only a
function of $v$ and $R$ not only a function of $t,x$.  Note that we
shall have to work with regularity indices depending on $t$ and
$\tau$!

Regularity extortion is related to velocity averaging regularity,
well-known in kinetic theory \cite{jabin:ercole}; what is unusual
though is that we are working in analytic regularity, and in large
time, while velocity averaging regularity is mainly a short-time
effect.  In fact we shall study two distinct mechanisms for the
extortion: the first one will be well suited for short times ($t-\tau$
small), and will be crucial later to get rid of small deteriorations
in the functional spaces due to composition; the second one will be
well adapted to large times $(t-\tau\to\infty)$ and will ensure
convergence of the time integrals.

The estimates in this section lead to a serious twist on the popular
view on Landau damping, according to which the waves gives energy to
the particles that it forces; instead, the picture here is that the
wave gains regularity from the background, and regularity is converted
into decay.

For the sake of pedagogy, we shall first establish the basic, simple
bilinear estimate, and then discuss the two mechanisms once at a
time.

\subsection{Basic bilinear estimate}

\begin{Prop}[Basic bilinear estimate in gliding regularity] \label{propbasic}
Let $G=G(\tau,x,v)$, $R=R(\tau,x,v)$,
\[ \beta(\tau,x) = \int_{\R^d} (G\cdot R)\bigl(\tau, x-v(t-\tau),v\bigr)\,dv,\]
\[ \sigma(t,x) = \int_0^t \beta(\tau,x)\,d\tau.\] Then
\begeq\label{betaGR} \|\beta(\tau,\cdot)\|_{\cF^{\lambda t+\mu}} \leq
\|G\|_{\cZ^{\lambda,\mu;1}_\tau}\, \|R\|_{\cZ^{\lambda,\mu}_\tau};
\endeq
and \begeq\label{sigmaGR} \|\sigma(t,\cdot)\|_{\cF^{\lambda t+\mu}}
\leq \int_0^t
\|G\|_{\cZ^{\lambda,\mu;1}_\tau}\,\|R\|_{\cZ^{\lambda,\mu}_\tau}\,d\tau.
\endeq
\end{Prop}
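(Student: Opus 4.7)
The plan is to combine three ingredients already developed in the paper: the control of the spatial density by the hybrid norm (Proposition~\ref{propZCF}(iv)), the invariance of the shifted hybrid norms under composition with free transport (Proposition~\ref{propTL}), and the Hölder-type algebra inequality for hybrid norms (Proposition~\ref{propalgZ}(i)).

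First I would observe that (7.2) follows from (7.1) by the triangle inequality in $\tau$, so the real statement is (7.1). Introduce $H(\tau,x,v) := (G\cdot R)(\tau,x,v)$ and the free-transported function $\tilde H(\tau,x,v) := H(\tau, x-v(t-\tau), v) = (H\circ S^0_{-(t-\tau)})(\tau,x,v)$, so that $\beta(\tau,x) = \int_{\R^d} \tilde H(\tau,x,v)\,dv$.

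Next I would apply Proposition~\ref{propZCF}(iv) with shift parameter $t$ (which is $\geq 0$), giving
\[
\|\beta(\tau,\cdot)\|_{\cF^{\lambda t+\mu}}
= \Bigl\|\int \tilde H(\tau,\cdot,v)\,dv\Bigr\|_{\cF^{\lambda t+\mu}}
\leq \|\tilde H(\tau,\cdot)\|_{\cZ^{\lambda,\mu;1}_{t}}.
\]
Now by Proposition~\ref{propTL}, composition by a free-transport map shifts the index additively, so
\[
\|\tilde H(\tau,\cdot)\|_{\cZ^{\lambda,\mu;1}_{t}}
= \|H(\tau,\cdot)\circ S^0_{-(t-\tau)}\|_{\cZ^{\lambda,\mu;1}_{t}}
= \|H(\tau,\cdot)\|_{\cZ^{\lambda,\mu;1}_{\tau}}.
\]
Finally, Proposition~\ref{propalgZ}(i), applied with $(p,q,r)=(1,\infty,1)$, bounds the product:
\[
\|H(\tau,\cdot)\|_{\cZ^{\lambda,\mu;1}_{\tau}}
= \|G\cdot R\|_{\cZ^{\lambda,\mu;1}_\tau}
\leq \|G\|_{\cZ^{\lambda,\mu;1}_\tau}\,\|R\|_{\cZ^{\lambda,\mu}_\tau}.
\]
Chaining these three inequalities gives (7.1), and integrating over $\tau\in[0,t]$ yields (7.2).

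There is no real obstacle here: the result is essentially a bookkeeping exercise, and the whole point of having designed the hybrid shifted norms $\cZ^{\lambda,\mu;p}_\tau$ in Section~\ref{sec:analytic} was precisely to make this estimate automatic. The only small care to take is to match the shift parameter correctly so that the free-transport composition is absorbed exactly into the change from $\cZ_t$ on the transported function to $\cZ_\tau$ on $H$, which is what converts the $x$-regularity index on the left-hand side into the natural $\lambda t+\mu$ gliding index. The estimate is sharp in the sense that it already displays the mechanism highlighted in the introduction to the section (linear growth in $t$ of the $x$-regularity index), without yet exploiting any cancellation coming from $\hat R(\tau,0)=0$ or from extra gliding regularity of $G$; those refinements will be taken up in the subsequent propositions of the section.
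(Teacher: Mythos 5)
Your proof is correct and follows exactly the same route as the paper: rewrite $\beta$ as the velocity integral of $(G\cdot R)\circ S^0_{\tau-t}$, apply Proposition~\ref{propZCF}(iv), then Proposition~\ref{propTL}, then the algebra inequality Proposition~\ref{propalgZ}(i), and deduce \eqref{sigmaGR} by integrating in $\tau$. Nothing to add.
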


\begin{proof}[Proof of Proposition \ref{propbasic}]
  Obviously \eqref{sigmaGR} follows from \eqref{betaGR}. To prove
  \eqref{betaGR} we apply successively Propositions \ref{propZCF},
  \ref{propTL} and \ref{propalgZ}:
\begin{align*}
\|\beta(\tau,\cdot)\|_{\cF^{\lambda t+\mu}}
& \leq \left\| \int_{\R^d} (G\cdot R)\circ S^0_{\tau-t}\,dv \right\|_{\cF^{\lambda t+\mu}}\\
& \leq \Bigl\| (G\cdot R)\circ S^0_{\tau-t} \Bigr\|_{\cZ^{\lambda,\mu;1}_t}\\
& = \|G\cdot R\|_{\cZ^{\lambda,\mu;1}_\tau} 
\leq \|G\|_{\cZ^{\lambda,\mu;1}_\tau}\, \|R\|_{\cZ^{\lambda,\mu}_\tau}.
\end{align*}
\end{proof}

\subsection{Short-term regularity extortion by time
  cheating} \label{sub:rt}

\begin{Prop}[Short-term regularity extortion] \label{proprt}
Let $G=G(x,v)$, $R=R(x,v)$, and
\[ \beta(x) = \int_{\R^d} (G\cdot R)\, (x-v(t-\tau),v)\,dv.\] Then for
any $\lambda,\mu,t\geq 0$ and any $b>-1$, we have
\begeq\label{beta+rt} \|\beta\|_{\cF^{\lambda t + \mu}} \leq
\|G\|_{\cZ^{\lambda (1+b),\mu;1}_{\tau - \frac{bt}{1+b}}}\,
\|R\|_{\cZ^{\lambda (1+b),\mu}_{\tau - \frac{bt}{1+b}}}.
\endeq
Moreover, if $P_k$ stands for the projection on the $k$th Fourier mode
as in \eqref{Pkf}, one has \begeq\label{beta+rtk} e^{2\pi (\lambda t
  +\mu)|k|}|\hat{\beta}(k)| \leq \sum_{\ell\in\Z^d} \|P_\ell
G\|_{\cZ^{\lambda(1+b),\mu;1}_{\tau-\frac{bt}{1+b}}}
\|P_{k-\ell}R\|_{\cZ^{\lambda(1+b),\mu}_{\tau-\frac{bt}{1+b}}}.
\endeq
\end{Prop}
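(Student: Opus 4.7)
The proof of Proposition \ref{proprt} is essentially the basic bilinear estimate of Proposition \ref{propbasic} combined with a well-chosen relabeling of the regularity/time-shift parameters. The starting point is the identity
\[
\beta(x) = \int_{\R^d} \bigl[(G\cdot R)\circ S^0_{\tau-t}\bigr](x,v)\,dv,
\]
since $S^0_{\tau-t}(x,v)=(x+(\tau-t)v,v)$. The basic estimate would now apply Proposition \ref{propZCF}(iv) with shift $t$ and regularity $\lambda$, producing $\cF^{\lambda t+\mu}$ on the left and $\cZ^{\lambda,\mu;1}_\tau$ on the right. The whole content of the proposition is to notice that Proposition \ref{propZCF}(iv) leaves us a free rescaling of those two parameters: any pair $(\lambda',s)$ with $\lambda' s=\lambda t$ will do, and the accompanying time-shift in $\cZ$ changes accordingly.

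Concretely, for $b>-1$ choose
\[
\lambda' = \lambda\,(1+b),\qquad s = \frac{t}{1+b}\ge 0,
\]
so that $\lambda' s=\lambda t$. Apply Proposition \ref{propZCF}(iv) with these parameters to the function $f=(G\cdot R)\circ S^0_{\tau-t}$; this yields
\[
\|\beta\|_{\cF^{\lambda t+\mu}} \le \bigl\|(G\cdot R)\circ S^0_{\tau-t}\bigr\|_{\cZ^{\lambda(1+b),\mu;1}_{t/(1+b)}}.
\]
Then Proposition \ref{propTL} (translation invariance of the $\cZ$ norms under free transport) shifts the time parameter:
\[
\bigl\|(G\cdot R)\circ S^0_{\tau-t}\bigr\|_{\cZ^{\lambda(1+b),\mu;1}_{t/(1+b)}} = \|G\cdot R\|_{\cZ^{\lambda(1+b),\mu;1}_{\tau'}},\qquad \tau' := \frac{t}{1+b}+(\tau-t) = \tau-\frac{bt}{1+b}.
\]

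For the first bound \eqref{beta+rt}, I would then invoke the algebra property of the $\cZ$ norms (Proposition \ref{propalgZ} with Hölder exponents $1=1+1/\infty$) to split the product:
\[
\|G\cdot R\|_{\cZ^{\lambda(1+b),\mu;1}_{\tau'}} \le \|G\|_{\cZ^{\lambda(1+b),\mu;1}_{\tau'}}\,\|R\|_{\cZ^{\lambda(1+b),\mu}_{\tau'}}.
\]
For the mode-by-mode refinement \eqref{beta+rtk}, I would observe that by linearity $\hat\beta(k) e^{2i\pi k\cdot x}=\int P_k(G\cdot R)\circ S^0_{\tau-t}\,dv$, so that the same chain of estimates applied to the single mode $P_k(G\cdot R)$ gives
\[
e^{2\pi(\lambda t+\mu)|k|}|\hat\beta(k)|=\|P_k\beta\|_{\cF^{\lambda t+\mu}} \le \|P_k(G\cdot R)\|_{\cZ^{\lambda(1+b),\mu;1}_{\tau'}},
\]
and the proof concludes by the mode-wise algebra inequality of Proposition \ref{prop424} (again with $1/1=1/1+1/\infty$).

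There is no real obstacle here beyond the observation that drives the whole argument: the norm $\cF^{\lambda t+\mu}$ on the left-hand side is invariant under the simultaneous rescaling $(\lambda,t)\mapsto (\lambda(1+b),t/(1+b))$, and performing this "time cheat'' before undoing the free-transport composition costs us nothing on the left but, via Proposition \ref{propTL}, converts the shift on the right from $\tau$ into $\tau-bt/(1+b)$. The parameter $b>-1$ is dictated by the requirement $s\ge 0$ in Proposition \ref{propZCF}(iv), i.e. $1+b>0$. The only mildly delicate point worth double-checking is the bookkeeping of the time-shift $\tau'$, but this is just arithmetic.
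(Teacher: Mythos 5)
Your proof is correct, and it takes a genuinely different, more modular route than the paper's. You notice that $\cF^{\lambda t+\mu}$ is unchanged under the joint rescaling $(\lambda, t)\mapsto(\lambda(1+b), t/(1+b))$, and so you apply Proposition \ref{propZCF}(iv) with these rescaled parameters; Proposition \ref{propTL} then converts the shift $t/(1+b)$ into $t/(1+b)+(\tau-t)=\tau - bt/(1+b)$, and Propositions \ref{propalgZ} and \ref{prop424} handle the product (globally and mode-by-mode respectively). The paper instead performs a direct commutator computation: it derives the identity \eqref{CmnD} expressing $(t\nabla_x)^n(R\circ S)$ in terms of powers of the operator $D=(\tau-b(t-\tau))\nabla_x+(1+b)\nabla_v$ and of $(1+b)\nabla_v$, integrates in $v$ so that the $\nabla_v$ terms vanish, and then Fourier transforms, identifying the factor $(1+b)$ and the shift $\tau-bt/(1+b)$ through $\tau-b(t-\tau)=(1+b)\bigl(\tau-bt/(1+b)\bigr)$. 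Both routes ultimately rest on that same algebraic identity; yours factors it through the lemmas already established in Section \ref{sec:analytic} and is accordingly shorter, while the paper's exhibits the differential operator $D$ explicitly (the authors remark that the commutators in their argument are classically tied to hypoelliptic regularity and velocity averaging, a connection your abstract route does not display but also does not need). Your mode-by-mode version via $P_k$ and Proposition \ref{prop424} is likewise correct, since $P_k$ commutes with $S^0$ and with $\int\cdot\,dv$.
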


\begin{Rk} If $R$ only depends on $t,x$, then the norm of $R$ in the
  right-hand side of \eqref{beta+rt} is $\|R\|_{\cF^{\nu}}$ with
  \[ \nu = \lambda (1+b)\,\left|\tau - \frac{bt}{1+b}\right| + \mu =
  (\lambda \tau+\mu) - b(t-\tau),\] as soon as $\tau\geq
  bt/(1+b)$. Thus some regularity has been gained with respect to
  Proposition \ref{propbasic}. Even if $R$ is not a function of $t,x$
  alone, but rather a function of $t,x$ {\em composed} with
    a function depending on all the variables, this gain will be
  preserved through the composition inequality.
\end{Rk}

\begin{proof}[Proof of Proposition \ref{proprt}]
  The proof presented here relies on commutators involving $\nabla_v$,
  $\nabla_x$ and the transport semigroup, all of them classically
  related to hypoelliptic regularity and velocity
  averaging. Separating the different components of $R$ and $G$, we
  may assume that both are scalar-valued.

  Let $S=S^0_{\tau-t}$, so that $R\circ S(x,v) = R(x-v(t-\tau),v)$. By
  direct computation, \begeq\label{tDRS} t\nabla_x (R\circ S) =
  (t\nabla_x R)\circ S = \Bigl[ \bigl( (\tau-b(t-\tau))\nabla_x +
  (1+b)\nabla_v\bigr) R\Bigr]\circ S - (1+b) \nabla_v (R\circ S).
  \endeq
  Let
  \[ D = D_{\tau,t,b} := \bigl(\tau-b(t-\tau)\bigr) \nabla_x +
  (1+b)\nabla_v.\] Then \eqref{tDRS} becomes \begeq\label{commDRS}
  t\nabla_x (R\circ S) = (DR)\circ S - (1+b)\nabla_v (R\circ S).
  \endeq
  Since $\nabla_x$ commutes with $\nabla_v$ and $D$, and with the
  composition by $S$ as well, we deduce from \eqref{commDRS} that
\begin{align*}
t\,\pa_{x_i} \Bigl[ (1+b)^k \nabla_v^k ((D^\ell R)\circ S)\Bigr]
& = (1+b)^k \nabla_v^k \bigl( t\pa_{x_i} (D^\ell R)\circ S\bigr)\\
& = (1+b)^k \nabla_v^k\bigl((D^{\ell+1_i}R)\circ S\bigr)
- (1+b)^k \nabla_v^k \Bigl((1+b) \pa_{v_i}(D^\ell R\circ S)\Bigr)\\
& = \bigl[ (1+b)\nabla_v\bigr]^k \bigl((D^{\ell+1_i}R)\circ S\bigr)
- \bigl[(1+b)\nabla_v\bigr]^{k+1_i}\bigl((D^\ell R)\circ S\bigr).
\end{align*}
So by induction,
\begeq\label{CmnD}
(t\nabla_x)^n (R\circ S)
= \sum_{m\leq n} \Cnk{n}{m} \bigl[-(1+b)\nabla_v\bigr]^m
\bigl((D^{n-m}R)\circ S\bigr).
\endeq

Applying this formula with $R$ replaced by $G\cdot R$ and integrating
in $v$ yields
\begin{align*}
(t\nabla_x)^n \int_{\R^d} (G\cdot R)\circ S^0_{\tau-t}\,dv
& = \int_{\R^d} D^n(G\cdot R)\circ S^0_{\tau-t}\,dv\\
& = \int_{\R^d} D^n(G\cdot R)\,dv.
\end{align*}

It follows by taking Fourier transform that
\begin{align*}
(2i\pi t k)^n \hat{\beta}(k) 
& = \int_{\R^d} \bigl[D^n(G\cdot R)\bigr]^{\hat{ }} \,dv \\
& = \int_{\R^d} \Bigl( (1+b)\nabla_v + 2i\pi \bigl(\tau-b(t-\tau)\bigr) k\Bigr)^n
\, (G\cdot R)^{\hat{ }}(k,v)\,dv,
\end{align*}
whence
\begin{align*}
  \sum_{k,n} e^{2\pi \mu|k|} & \frac{|2\pi \lambda tk|^n}{n!}\,|\hat{\beta}(k)| \\
  & \leq \sum_{k,n} e^{2\pi \mu|k|}
  \frac{\bigl(\lambda(1+b)\bigr)^n}{n!}  \Bigl\| \Bigl[ \nabla_v +
  2i\pi \Bigl(\tau-\frac{bt}{1+b}\Bigr) k\Bigr]^n\,
  (G\cdot R)^{\hat{ }}(k,v)\Bigr\|_{L^1(dv)} \\
  & = \|G\cdot R\|_{\cZ^{\lambda(1+b),\mu;1}_{\tau-\frac{bt}{1+b}}},
\end{align*}
and the conclusion follows by Proposition \ref{propalgZ}.

Inequality \eqref{beta+rtk} is obtained
in a similar way with the help of Proposition \ref{prop424}.  
\end{proof}

Let us conclude this subsection with some comments on Proposition
\ref{proprt}.  When we wish to apply it, what constraints on
$b(t,\tau)$ (assumed to be nonnegative to fix the ideas) does this
presuppose?  First, $b$ should be small, so that $\lambda(1+b)\leq
\ov{\lambda}$ given.  But most importantly, we have estimated $G_\tau$
in a norm ${\cZ_{\tau'}}$ instead of ${\cZ_\tau}$ (this is the time
cheating), where $|\tau'-\tau| = bt/(1+b)$. To compensate for this
discrepancy, we may apply \eqref{moreoverff}, but for this to work
$bt/(1+b)$ should be small, otherwise we would lose a large index of
analyticity in $x$, or at best we would inherit an undesirable
exponentially growing constant.  So all we are allowed is $b(t,\tau) =
O(1/(1+t))$.  This is not enough to get the time-decay which would
lead to Landau damping. Indeed, if $R=R(x)$ with
$\hat{R}(0)=0$, then
\[ \|R\|_{\cZ^{\lambda(1+b),\mu}_{\tau-bt/(1+b)}}
= \|R\|_{\cF^{\lambda \tau + \mu - \lambda\, b (t-\tau)}}
\leq e^{-\lambda b(t-\tau)}\,\|R\|_{\cF^{\lambda \tau+\mu}};\]
so we gain a coefficient $e^{-\lambda b(t-\tau)}$, but then
\[ \int_0^t e^{-\lambda b(t-\tau)}\,d\tau \geq
\int_0^t e^{-\lambda \var \left(\frac{t-\tau}{t}\right)}\,d\tau 
= \left(\frac{1-e^{-\lambda \var}}{\lambda\, \var}\right)t,\]
which of course diverges in large time.

To summarize: Proposition \ref{proprt} is helpful when $(t-\tau) =
O(1)$, or when some extra time-decay is available. This will already be very useful; 
but for long-time estimates we need another, complementary mechanism.

\subsection{Long-term regularity extortion}

To search for the extra decay, let us refine the computation of the beginning of this section.
Assume that $G_\tau = \nabla_v g_\tau$, where $(g_\tau)_{\tau\geq 0}$ solves a transport-like equation, so
$\tilde{G}(\tau,k,\eta) = 2i\pi\eta\,\tilde{g}(\tau,k,\eta)$, and
\[ |\tilde{G}(\tau,k,\eta)| \lesssim 2\pi |\eta|\, e^{-2\pi \ov{\mu}|k|}\,e^{-2\pi \ov{\lambda}|\eta+k\tau|}.\]
Up to slightly increasing $\ov{\lambda}$ and $\ov{\mu}$, we may assume
\begeq\label{compG}
|\tilde{G}(\tau,k,\eta)| \lesssim (1+\tau)\, e^{-2\pi \ov{\mu}|k|}\,e^{-2\pi \ov{\lambda}|\eta+k\tau|}.
\endeq
Let then $\rho(\tau,x) = \int f(\tau,x,v)\,dv$, where also $f$ solves a transport equation, but has a lower
analytic regularity; and $R=\nabla W\ast \rho$. Assuming $|\hat{\nabla W}(k)| = O(|k|^{-\gamma})$
for some $\gamma\geq 0$, we have
\begeq\label{compR}
|\hat{R}(\tau,k)|\lesssim \frac{e^{-2\pi (\lambda \tau+\mu)|k|}\, 1_{k\neq 0}}{1+|k|^\gamma}.
\endeq

Let again
\[ \sigma(t,x) = \int_0^t \int_{\R^d}
 G\bigl( \tau,x-v(t-\tau),v\bigr)\cdot R\bigl(\tau,x-v(t-\tau)\bigr)\,dv\,d\tau.\]
As $t\to +\infty$, $G$ in the integrand of $\sigma$ oscillates wildly in phase space, so it is not clear that it will
help at all. But let us compute:
\begin{align*}
\hat{\sigma}(t,k)
& = \int_0^t \int_{\R^d} \int_{\T^d} G\bigl(\tau,x-v(t-\tau),v\bigr)\cdot R\bigl(\tau,x-v(t-\tau)\bigr)\,e^{-2i\pi k\cdot x}\,dx\,dv\,d\tau\\
& = \int_0^t \int_{\R^d} \int_{\T^d} G(\tau,x,v)\cdot R(\tau,x)\,e^{-2i \pi k\cdot x}\, e^{-2i \pi k\cdot v(t-\tau)}\,dx\,dv\,d\tau\\
& = \int_0^t \int_{\R^d} \hat{G\cdot R}(\tau,k,v)\,e^{-2i\pi k\cdot v(t-\tau)}\,dv\,d\tau\\
& = \int_0^t \int_{\R^d} \sum_\ell \hat{G}(\tau,\ell,v)\cdot\hat{R}(\tau,k-\ell)\,e^{-2i\pi k\cdot v(t-\tau)}\,dv\,d\tau\\
& = \int_0^t \sum_\ell \tilde{G}\bigl(\tau,\ell,k(t-\tau)\bigr)\,\cdot \hat{R}(\tau,k-\ell)\,d\tau.
\end{align*}
At this level, the difference with respect to the beginning of this
section lies in the fact that there is a summation over $\ell\in\Z^d$,
instead of just choosing $\ell=0$. Note that 
$\hat{\sigma}(t,0) = \int_0^t \iint G(\tau,x,v)\cdot
R(\tau,x)\,dx\,dv\,d\tau =0$, because $G$ is a $v$-gradient.

From \eqref{compG} and \eqref{compR} we deduce
\begin{multline*}
\sum_k e^{2\pi (\lambda t+\mu)|k|}|\hat{\sigma}(t,k)|\\
\lesssim \int_0^t (1+\tau)\sum_{\ell\neq k,\ k\neq 0}
e^{2\pi \mu|k|}\, e^{2\pi\lambda t|k|}\, e^{-2\pi \ov{\mu}|\ell|}\,
e^{-2\pi \ov{\lambda}|k(t-\tau)+\ell\tau|}\,
e^{-2\pi \mu|k-\ell|}\,\frac{e^{-2\pi \lambda \tau |k-\ell|}}{1+|k-\ell|^\gamma}.
\end{multline*}
Using the inequalities
\[ e^{-2\pi \mu|k-\ell|}\,e^{2\pi\mu|k|}\,e^{-2\pi\ov{\mu}|\ell|}\leq e^{-2\pi (\ov{\mu}-\mu)|\ell|}\]
and 
\[ e^{-2\pi \lambda \tau |k-\ell|}\,e^{2\pi\lambda t|k|}\,e^{-2\pi\ov{\lambda}|k(t-\tau)+\ell\tau|}\leq
e^{-2\pi(\ov{\lambda}-\lambda)|k(t-\tau)+\ell\tau|},\]
we end up with
\[ \|\sigma(t)\|_{\cF^{\lambda t+\mu}}\lesssim
\sum_{k\neq 0,\ \ell\neq k}
\frac{e^{-2\pi (\ov{\mu}-\mu)|\ell|}}{1+|k-\ell|^\gamma}
\int_0^t e^{-2\pi (\ov{\lambda}-\lambda)|k(t-\tau)+\ell\tau|}\,(1+\tau)\,d\tau.\]
If it were not for the negative exponential, the time-integral would be $O(t^2)$ as $t\to\infty$.
The exponential helps only a bit: its argument vanishes e.g. for $d=1$, $k>0$, $\ell<0$ and $\tau=(k/(k+|\ell|))t$.
Thus we have the essentially optimal bounds
\begeq\label{expbound1}
\int_0^t e^{-2\pi (\ov{\lambda}-\lambda)|k(t-\tau)+\ell\tau|}\,d\tau\leq
\frac1{\pi(\ov{\lambda}-\lambda)\,|k-\ell|}
\endeq
and 
\begeq\label{expbound2}
\int_0^t e^{-2\pi (\ov{\lambda}-\lambda)|k(t-\tau)+\ell\tau|}\,\tau\,d\tau \leq
\frac1{2\pi^2 (\ov{\lambda}-\lambda)^2\, |k-\ell|^2} + \left(\frac1{\pi(\ov{\lambda}-\lambda)}\right)\,
\frac{|k|t}{|k-\ell|}.
\endeq

From this computation we conclude that:
\sm

\bul The higher regularity of $G$ has allowed to reduce the
time-integral thanks to a factor $e^{-\alpha |k(t-\tau)+\ell\tau|}$;
but this factor is not small when $\tau/t$ is equal to $k/(k-\ell)$.
As discussed in the next section, this reflects an important physical
phenomenon called (plasma) echo, which can be assimilated to a resonance.  \sm

\bul If we had (in ``gliding'' norm) $\|G_\tau\|=O(1)$ this would
ensure a uniform bound on the integral, as soon as $\gamma>0$, thanks
to \eqref{expbound1} and
\[ \sum_{k,\ell} \frac{e^{-\alpha|\ell|}}{(1+|k-\ell|)^{1+\gamma}} < +\infty.\]
\sm

\bul But $G_\tau$ is a velocity-gradient, so --- unless of course $G$
depends only on $v$ --- $\|G_\tau\|$ diverges like $O(\tau)$ as
$\tau\to\infty$, which implies a divergence of our bounds in large
time, as can be seen from \eqref{expbound2}. If $\gamma\leq 1$ this
comes with a divergence in the $k$ variable, since in this case
$\sum_{k,\ell}
\frac{e^{-\alpha|\ell|}|k|}{(1+|k-\ell|)^{1+\gamma}}=+\infty$. (The
Coulomb case corresponds to $\gamma=1$, so in this respect it has a
borderline divergence.)  \med

The following estimate adapts this computation to the formalism of
hybrid norms, and at the same time allows a time-cheating similar to
the one in Proposition \ref{proprt}.  Fortunately, we shall only need
to treat the case when $R=R(\tau,x)$; the more general case with
$R=R(\tau,x,v)$ would be much more tricky.

\begin{Thm}[Long-term regularity extortion]
\label{thmcombi'}
Let $G=G(\tau,x,v)$, $R=R(\tau,x)$, and
\[ \sigma(t,x) = \int_0^t \int_{\R^d}
G\bigl(\tau,x-v(t-\tau),v\bigr)\cdot
R\bigl(\tau,x-v(t-\tau)\bigr)\,dv\,d\tau.\] Let
$\lambda,\ov{\lambda},\mu,\ov{\mu}$, $\mu'=\mu'(t,\tau)$, $M\geq 1$ such that
$(1+M)\lambda\geq \ov{\lambda}>\lambda> 0$, $\ov{\mu} \ge \mu'>\mu> 0$,
$\gamma\geq 0$ and $b=b(t,\tau)\geq 0$.
Then
\begin{equation}\label{K=RX}
\|\sigma(t,\cdot) \|_{\dot{\cF}^{\lambda t+\mu}} \leq \int_0^t
K_0^G(t,\tau)\, \|R_\tau\|_{\cF^\nu}\,d\tau + \int_0^t
K_1^G(t,\tau)\,\|R_\tau\|_{\cF^{\nu,\gamma}}\,d\tau,
\end{equation}
where
\begeq\label{nucombi} \nu = \max\left\{ \lambda\tau + \mu' -
  \frac{\lambda}{2}\,b(t-\tau) \, ; \ 0 \right\},
\endeq
\begeq\label{K0G} K_0^G(t,\tau) = e^{-2\pi
  \left(\frac{\ov{\lambda}-\lambda}{2}\right)(t-\tau)}\, \left\| \int
  G(\tau,x,\,\cdot\,)\,dx \right\|_{\cC^{\ov{\lambda}(1+b);1}},
\endeq
\begeq\label{K1G} K_1^G(t,\tau) = \sup_{0\leq\tau\leq t}
\left(\frac{\|G_\tau\|_{\cZ^{\ov{\lambda}(1+b),
        \ov{\mu}}_{\tau-bt/(1+b)}}}{1+\tau}\right)\,
K_1(t,\tau),
\endeq
\begeq\label{K1} K_1(t,\tau) = (1+\tau)\ \sup_{k\neq 0,\ \ell\neq 0}
\left( \frac{e^{-2\pi \left(\frac{\ov{\mu}-\mu}{2}\right)|\ell|}\,
    e^{-2\pi
      \left(\frac{\ov{\lambda}-\lambda}{2M}\right)|k(t-\tau)+\ell\tau|}\,
    e^{-2\pi \bigl[ (\mu'-\mu) + \frac{\lambda\,b}{2}(t-\tau)\bigr]\,
      |k-\ell|}} {1+|k-\ell|^\gamma}\right).
\endeq
\end{Thm}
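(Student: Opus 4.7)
\textbf{Proof strategy for Theorem \ref{thmcombi'}.} The plan is to Fourier-transform in $x$ and split the resulting sum into a ``resonance-free'' piece ($\ell=0$) and a ``resonant'' piece ($\ell\neq 0$). Proceeding as in the heuristic computation at the start of this section, we have for $k\neq 0$
\[
\hat{\sigma}(t,k)=\int_0^t \sum_{\ell\in\Z^d}\tilde{G}\bigl(\tau,\ell,k(t-\tau)\bigr)\cdot\hat{R}(\tau,k-\ell)\,d\tau=:\hat{\sigma}_0(t,k)+\hat{\sigma}_1(t,k),
\]
according as $\ell=0$ or $\ell\neq 0$. Our task is to measure each of these two contributions in $\dot{\cF}^{\lambda t+\mu}$.

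\textbf{The off-resonant piece ($\sigma_0$).} Here $\tilde{G}(\tau,0,\eta)$ is the Fourier transform in $v$ of $\int G(\tau,x,\cdot)\,dx$, so successive integrations by parts yield the pointwise estimate
\[
\bigl|\tilde{G}(\tau,0,\eta)\bigr|\,e^{2\pi\ov{\lambda}(1+b)|\eta|}\le \left\|\int G(\tau,x,\cdot)\,dx\right\|_{\cC^{\ov{\lambda}(1+b);1}},
\]
by summing the weighted inequalities $|(2\pi\eta)^n\tilde{G}(\tau,0,\eta)|\le\|\nabla_v^n\!\int\!G\,dx\|_{L^1}$. Evaluating at $\eta=k(t-\tau)$ and multiplying by $e^{2\pi(\lambda t+\mu)|k|}$, one has for $|k|\ge 1$
\[
e^{2\pi(\lambda t+\mu)|k|}\,e^{-2\pi\ov{\lambda}(1+b)|k|(t-\tau)}\le e^{2\pi\nu|k|}\,e^{-2\pi\frac{\ov{\lambda}-\lambda}{2}(t-\tau)},
\]
since $(\ov{\lambda}-\lambda)|k|(t-\tau)\ge (\ov{\lambda}-\lambda)(t-\tau)/2+(\ov{\lambda}-\lambda)(t-\tau)|k|/2$ and $\mu\le\mu'$, $\ov{\lambda}b\ge \lambda b/2$ so the remaining exponent in $|k|$ is bounded by $\nu$. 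Summing in $k$ against $|\hat{R}(\tau,k)|$ gives the first integral in \eqref{K=RX}.

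\textbf{The resonant piece ($\sigma_1$).} For $\ell\neq 0$ we exploit the gliding regularity of $G$ through the injection $\|G_\tau\|_{\cY^{\ov{\lambda}(1+b),\ov{\mu}}_\sigma}\lesssim (1+\tau)\,\|G_\tau\|_{\cZ^{\ov{\lambda}(1+b),\ov{\mu}}_\sigma}$ (with $\sigma=\tau-bt/(1+b)$; the $(1+\tau)$ loss reflects the passage from $L^\infty_v$ to the Fourier pointwise bound). This yields
\[
\bigl|\tilde{G}(\tau,\ell,k(t-\tau))\bigr|\le C\,(1+\tau)\,\|G_\tau\|_{\cZ^{\ov{\lambda}(1+b),\ov{\mu}}_\sigma}\,e^{-2\pi\ov{\mu}|\ell|}\,e^{-2\pi\ov{\lambda}(1+b)|k(t-\tau)+\ell\sigma|}.
\]
The crucial algebraic identity
\[
(1+b)\bigl(k(t-\tau)+\ell\sigma\bigr)=k(t-\tau)+\ell\tau+b(k-\ell)(t-\tau)
\]
gives $\ov{\lambda}(1+b)|k(t-\tau)+\ell\sigma|\ge \ov{\lambda}|k(t-\tau)+\ell\tau|-\ov{\lambda}b(t-\tau)|k-\ell|$. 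Splitting $\ov{\lambda}=\frac{\ov{\lambda}-\lambda}{2M}+\bigl(\ov{\lambda}-\frac{\ov{\lambda}-\lambda}{2M}\bigr)$ and using $(1+M)\lambda\ge\ov{\lambda}$ so that $\ov{\lambda}-\frac{\ov{\lambda}-\lambda}{2M}$ can absorb the growth, we extract the resonance factor $e^{-2\pi\frac{\ov{\lambda}-\lambda}{2M}|k(t-\tau)+\ell\tau|}$ appearing in $K_1(t,\tau)$. Combining with the bound $|\hat{R}(\tau,k-\ell)|\le \|R_\tau\|_{\cF^{\nu,\gamma}}\,e^{-2\pi\nu|k-\ell|}(1+|k-\ell|)^{-\gamma}$, distributing the $\ov{\mu}$-weight in $\ell$ as $\frac{\ov{\mu}-\mu}{2}+\frac{\ov{\mu}+\mu}{2}$, and using the latter fraction together with $(\mu'-\mu)+\lambda b(t-\tau)/2$ on the $|k-\ell|$ variable to match the exponent $\nu$, one recovers exactly the kernel $K_1^G$.

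\textbf{Main obstacle.} The delicate point is balancing exponents on $k(t-\tau)+\ell\tau$ versus those on $|\ell|$ and $|k-\ell|$, since the resonance quantity $k(t-\tau)+\ell\tau$ vanishes along the echo line $\tau=|k|t/(|k|+|\ell|)$, on which the Gaussian-like decay collapses. The parameter $M$ provides the required room to trade off some $\ov{\lambda}-\lambda$ for the resonance decay while still leaving enough analytic regularity to absorb the cheating shift $\ell bt/(1+b)$ into a $|k-\ell|$-decay for $R$. The fact that only an algebraic decay in $|k-\ell|$ (through $\gamma$) is left then forces the integrability of $K_1(t,\tau)$ to be analyzed in the subsequent section on echo control, rather than being immediate from the present pointwise bound.
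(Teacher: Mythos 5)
Your overall route is the same as the paper's: Fourier-transform $\sigma$ in $x$, split the $\ell$-sum into $\ell=0$ and $\ell\neq 0$, treat the $\ell=0$ piece via the analyticity of $\int G\,dx$ (giving $K_0^G$), and for $\ell\neq 0$ use the gliding regularity of $G$ to produce a pointwise Fourier bound $|\tilde{G}(\tau,\ell,\eta)|\lesssim e^{-2\pi\ov{\mu}|\ell|}e^{-2\pi\ov{\lambda}(1+b)|\eta+\ell\sigma|}$ with $\sigma=\tau-bt/(1+b)$, then juggle the exponents using the convexity of the triangle inequality and the split $\ov{\lambda}=(\ov{\lambda}-\lambda)+\lambda$, $M\ge1$. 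Your $\ell=0$ computation and the algebraic identity $(1+b)\bigl(k(t-\tau)+\ell\sigma\bigr)=k(t-\tau)+\ell\tau+b(k-\ell)(t-\tau)$ are both correct and match what the paper does (the paper's $\tau'=\tau-b(t-\tau)$ satisfies $k(t-\tau')+\ell\tau'=(1+b)(k(t-\tau)+\ell\sigma)$, so the two parametrizations are the same thing). The paper does not \emph{cite} an injection lemma but instead expands $e^{2\pi\ov{\lambda}|k(t-\tau')+\ell\tau'|}$ into a power series and integrates by parts once per power; your proposal to package that step as a black-box $\cY$--$\cZ$ injection is legitimate and arguably cleaner.

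However, the injection you write down is wrong, and this is the one genuine gap. You claim $\|G_\tau\|_{\cY^{\ov{\lambda}(1+b),\ov{\mu}}_\sigma}\lesssim(1+\tau)\|G_\tau\|_{\cZ^{\ov{\lambda}(1+b),\ov{\mu}}_\sigma}$ and attribute the factor $(1+\tau)$ to ``the passage from $L^\infty_v$ to the Fourier pointwise bound.'' No such passage exists. The $\cY$-norm is controlled by the $\cZ^{;1}$-norm (Theorem \ref{thminj}(i)): $\|G_\tau\|_{\cY^{\lambda',\mu'}_\sigma}\le\|G_\tau\|_{\cZ^{\lambda',\mu';1}_\sigma}$, with no loss whatsoever; while the $L^\infty_v$-based norm $\cZ^{\lambda',\mu';\infty}_\sigma$ does \emph{not} control the $\cY$-norm (one cannot bound $\int\hat{G}(\ell,v)e^{-2i\pi\eta v}\,dv$ by $\|\hat{G}(\ell,\cdot)\|_{L^\infty}$ without decay in $v$, and $(1+\tau)$ cannot repair this). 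The $(1+\tau)$ appearing in $K_1^G$ in \eqref{K1G} is \emph{not} an injection loss: it is a deliberate normalization of $\|G_\tau\|$ (to make the sup finite even when $\|G_\tau\|$ grows like $O(\tau)$), which is exactly cancelled by the $(1+\tau)$ prefactor that the paper writes into $K_1(t,\tau)$. Also note that the $\cZ^{\ov{\lambda}(1+b),\ov{\mu}}$ (without the $;1$) in \eqref{K1G} is a slip in the statement; the proof produces an $L^1_v$-norm, and the companion mode-by-mode Theorem \ref{thmcombi'k} writes $\cZ^{\ov{\lambda}(1+b),\ov{\mu};1}$ explicitly. If you use the correct injection (with $\cZ^{;1}$ and no $(1+\tau)$ factor), your exponent manipulations are on the right track, although you should work them out to verify that they land exactly on the kernel $K_1$ — in particular, the careful split of $\ov{\lambda}-\lambda$ using $0\le\ov{\lambda}-\lambda\le M\lambda$ to absorb the error term $|k-\ell|(\tau-\tau')$ at cost $\lambda/2$ in the $|k-\ell|$ exponent, which is what sets the value of $\nu$.
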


\begin{Rk} It is essential in \eqref{K=RX} to separate the
  contribution of $\hat{G}(\tau,0,v)$ from the rest. Indeed, if we
  removed the restriction $\ell\neq 0$ in \eqref{K1} the kernel $K_1$
  would be too large to be correctly controlled in large time. What
  makes this separation reasonable is that, although in cases of
  application $G(\tau,x,v)$ is expected to grow like $O(\tau)$ in
  large time, the spatial average $\int G(\tau,x,v)\,dx$ is expected
  to be bounded. Also, we will not need to take advantage of the
  parameter $\gamma$ to handle this term.
\end{Rk}

\begin{proof}[Proof of Theorem \ref{thmcombi'}]
Without loss of generality we may assume that $G$ and $R$ are scalar-valued.
(E.g. choose $\|G\|=\sup_{1\leq i\leq d} \|G^i\|$, $\|R\|=\sum_i \|R^i\|$,
where $G=(G^1,\ldots,G^d)$, $R=(R^1,\ldots,R^d)$; then it suffices to bound
$G^i\,R^i$ for all $i$.) 
First we assume $\hat{G}(\tau,0,v)=0$, and we write as before
\[ \hat{\sigma}(t,k) = \int_0^t \left(
  \sum_{\ell\in\Z^d\setminus\{0\}}\int_{\R^d} \hat{G}(\tau,\ell,v)\,
  \hat{R}(\tau,k-\ell)\, e^{-2i\pi k\cdot
    v(t-\tau)}\,dv\right)\,d\tau,\] \begeq\label{sigma1}
|\hat{\sigma}(t,k)| \leq \int_0^t \sum_{\ell\neq 0} \left|
  \int\hat{G}(\tau,\ell,v)\, e^{-2i\pi k\cdot v(t-\tau)}\,dv\right|\,
|\hat{R}(\tau,k-\ell)|\,d\tau.
\endeq

Next we let $\tau'=\tau-b(t-\tau)$ and write
\begin{multline}\label{sigma2}
  e^{2\pi(\lambda t+\mu)|k|}\leq e^{-2\pi (\ov{\mu} -
    \mu)|\ell|}\,e^{-2\pi \lambda (\tau-\tau')|k-\ell|}\,
  e^{-2\pi (\mu'-\mu)|k-\ell|}\,e^{-2\pi (\ov{\lambda}-\lambda)\,|k(t-\tau')+\ell\tau'|}\\
  e^{2\pi\ov{\mu}|\ell|}\,e^{2\pi(\lambda\tau+\mu')|k-\ell|}\,
  e^{2\pi\ov{\lambda}|k(t-\tau')+\ell\tau'|}.
\end{multline}

Since $0\leq \ov{\lambda}-\lambda\leq M\lambda$, we have
\[ e^{-2\pi (\ov{\lambda}-\lambda)|k(t-\tau')+\ell\tau'|} \leq
e^{-2\pi\left(\frac{\ov{\lambda}-\lambda}{2M}\right)|k(t-\tau)+\ell\tau|}\,
e^{2\pi\frac{\lambda}{2}(\tau-\tau')|k-\ell|};\]
so \eqref{sigma2} implies
\begin{multline} \label{sigma3} e^{2\pi(\lambda t+\mu)|k|} \leq
  e^{-2\pi (\ov{\mu}-\mu)|\ell|}\,e^{-2\pi
    \left(\frac{\ov{\lambda}-\lambda}{2M}\right)
    |k(t-\tau)+\ell\tau|}\, e^{-2\pi \bigl[ (\mu'-\mu) +
    \frac{\lambda}{2}(\tau-\tau')\bigr]\,|k-\ell|}\\
  e^{2\pi \ov{\mu}|\ell|}\,e^{2\pi(\lambda\tau+\mu')|k-\ell|}\,
  \sum_{n\in\N_0^d}
  \frac{\bigl|(2i\pi\ov{\lambda})\,
    \bigl(k(t-\tau')+\ell\tau'\bigr)\bigr|^n}{n!}.
\end{multline}

For each $n\in\N_0^d$,
\begin{align*}
  & \frac{\Bigl|
    (2i\pi\ov{\lambda})\,\bigl(k(t-\tau')+\ell\tau'\bigr)\Bigr|^n}{n!}
  \left|\int \hat{G}(\tau,\ell,v)\,e^{-2i\pi k\cdot v(t-\tau)}\,dv\right|\\
  & = \frac{\ov{\lambda}^n}{n!}  \left| \int\hat{G}(\tau,\ell,v)\,
    \bigl[ 2i\pi (k(t-\tau')+\ell\tau')\bigr]^n\,
    e^{-2i\pi k\cdot v(t-\tau)}\,dv \right|\\
  & = \frac{\ov{\lambda}^n}{n!}  \left(\frac{t-\tau'}{t-\tau}\right)^n
  \left|\int \hat{G}(\tau,\ell,v)\, \Bigl[ (2i\pi) \Bigl(k(t-\tau) +
    \ell\tau' \left(\frac{t-\tau}{t-\tau'}\right)
    \Bigr)\Bigr]^n \, e^{-2i\pi k\cdot v(t-\tau)}\,dv \right|\\
  & = \frac{\ov{\lambda}^n}{n!}  \left(\frac{t-\tau'}{t-\tau}\right)^n
  \left|\int \hat{G}(\tau,\ell,v)\, \Bigl[ -\nabla_v + 2i\pi \ell\tau'
    \left(\frac{t-\tau}{t-\tau'}\right)
    \Bigr)\Bigr]^n \, e^{-2i\pi k\cdot v(t-\tau)}\,dv \right|\\
  & = \frac{\ov{\lambda}^n}{n!}  \left(\frac{t-\tau'}{t-\tau}\right)^n
  \left|\int \Bigl[ \nabla_v + 2i\pi \ell\tau'
    \left(\frac{t-\tau}{t-\tau'}\right)\Bigr]^n \,
    \hat{G}(\tau,\ell,v)\,
    e^{-2i\pi k\cdot v(t-\tau)}\,dv \right|\\
  & \leq \frac{\ov{\lambda}^n}{n!}
  \left(\frac{t-\tau'}{t-\tau}\right)^n \left\|\Bigl( \nabla_v + 2i\pi
    \ell\tau' \left(\frac{t-\tau}{t-\tau'}\right)\Bigr)^n \,
    \hat{G}(\tau,\ell,v)\right\|_{L^1(dv)}\\
  & = \frac{\ov{\lambda}^n (1+b)^n}{n!}  \left\|\Bigl( \nabla_v +
    2i\pi \ell\left(\tau- \frac{bt}{1+b}\right)\Bigr)^n \,
    \hat{G}(\tau,\ell,v)\right\|_{L^1(dv)}.
\end{align*}

Combining this with \eqref{sigma1} and \eqref{sigma3}, summing over
$k$, we deduce
\begin{align*}
  & \bigl\|\sigma(t,\,\cdot\,)\bigr\|_{\dot{\cF}^{\lambda t+\mu}}
  = \sum_{k\neq 0} e^{2\pi(\lambda t+\mu)|k|}\,|\hat{\sigma}(t,k)|\\
  & \leq \int_0^t \sum_{k \ell n} \left( \frac{e^{-2\pi
        (\ov{\mu}-\mu)|\ell|}\,e^{-2\pi
        \left(\frac{\ov{\lambda}-\lambda}{2M}\right)\,
        |k(t-\tau)+\ell\tau|}\, e^{-2\pi \bigl((\mu'-\mu)+
        \frac{\lambda}{2}(\tau-\tau')\bigr)|k-\ell|}}
    {1+|k-\ell|^\gamma}\right) e^{2\pi\ov{\mu}|\ell|}\,e^{2\pi \bigl[
    (\lambda\tau+\mu') - \frac{\lambda}{2} b(t-\tau)\bigr]|k-\ell|}\,
  \\[2mm]
  &\qquad\qquad \frac{\ov{\lambda}^n(1+b)^n}{n!}\,
  |\hat{R}(\tau,k-\ell)|\, \left\|\left(\nabla_v + 2i\pi \ell
      \left(\tau-\frac{bt}{1+b}\right)\right)^n\,
    \hat{G}(\tau,\ell,v)\right\|_{L^1(dv)} \,d\tau,
\end{align*}
and the desired estimate follows readily.  \med

Finally we consider the contribution of $\hat{G}(\tau,0,v) =\int
G(\tau,x,v)\,dx$. This is done in the same way, noting that
\[ \sup_{k\neq 0} \frac{e^{-2\pi (\ov{\lambda}-\lambda)|k|(t-\tau)}}{1+|k|^\gamma}
\leq e^{-2\pi (\ov{\lambda}-\lambda)(t-\tau)}.\]
\end{proof}

To conclude this section we provide a ``mode by mode'' variant of
Theorem \ref{thmcombi'}; this will be useful for very singular
interactions ($\gamma=1$ in Theorem \ref{thmmain}).

\begin{Thm}
\label{thmcombi'k}
Under the same assumptions as Theorem \ref{thmcombi'}, for all
$k\in\Z^d$ we have the estimate
\begin{multline}
  e^{2\pi(\lambda t+\mu)|k|}|\hat{\sigma}(t,k)|\leq
  \int_0^t K_0^G (t,\tau)\, \bigl(e^{2\pi\nu|k|}\, 
  |\hat{R}(\tau,k)|\bigr)\,d\tau \\
  + \int_0^t \sum_{\ell\in\Z^d}
  K_{k,\ell}^G(t,\tau)\,e^{2\pi\nu|k-\ell|}\,(1+|k-\ell|^\gamma)\,
  |\hat{R}(\tau,k-\ell)|\,d\tau,
\end{multline}
where $K_0^G$ is defined by \eqref{K0G}, $\nu$ by \eqref{nucombi}, and
\[ K_{k,\ell}^G(t,\tau) = \sup_{0\leq \tau\leq t}
\left(\frac{\|G\|_{\cZ^{\ov{\lambda}(1+b),\ov{\mu};1}_{\tau-bt/(1+b)}}}{1+\tau}\right)\,
K_{k,\ell}(t,\tau),\]
\[ K_{k,\ell}(t,\tau)
= \frac{(1+\tau)\,e^{-2\pi (\ov{\mu}-\mu)|\ell|}\,
e^{-2\pi \left(\frac{\ov{\lambda}-\lambda}{2M}\right)|k(t-\tau)+\ell\tau|}\,
e^{-2\pi \bigl[ (\mu'-\mu) + \frac{\lambda}{2}\,b(t-\tau)\bigr]\,|k-\ell|}}
{1+|k-\ell|^\gamma}.\]
\end{Thm}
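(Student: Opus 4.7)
The plan is to rerun the argument of Theorem \ref{thmcombi'} with the outer sum over $k$ deferred, so that the resulting estimate is pointwise in $k$ rather than global in $\cF^{\lambda t+\mu}$-norm. Starting from the same Fourier identity
\[
\hat{\sigma}(t,k) = \int_0^t \sum_{\ell\in\Z^d} \int_{\R^d}
\hat{G}(\tau,\ell,v)\,\hat{R}(\tau,k-\ell)\,e^{-2i\pi k\cdot v(t-\tau)}\,dv\,d\tau,
\]
I would split off the $\ell=0$ contribution (coming from $\int G(\tau,x,v)\,dx$) and treat it separately exactly as in the proof of Theorem \ref{thmcombi'}: multiplying by $e^{2\pi(\lambda t+\mu)|k|}$, inserting $e^{-2\pi(\ov{\lambda}-\lambda)|k|(t-\tau)}\leq e^{-2\pi(\ov{\lambda}-\lambda)(t-\tau)}$ for $k\neq 0$, and performing the same integration by parts in $v$, yields the $K_0^G(t,\tau)\,e^{2\pi\nu|k|}|\hat{R}(\tau,k)|$ term. (For $k=0$ the whole contribution vanishes since $G$ is effectively a $v$-gradient; even without this, the $\ell=0$ piece of $\hat\sigma(t,0)$ is handled trivially.)

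For the $\ell\neq 0$ part, I would reuse the key algebraic identity from the earlier proof, namely the rewriting
\[
\bigl(k(t-\tau')+\ell\tau'\bigr)^n\,e^{-2i\pi k\cdot v(t-\tau)}
= \Bigl(\tfrac{t-\tau'}{t-\tau}\Bigr)^n
\Bigl[\nabla_v + 2i\pi\ell\bigl(\tau-\tfrac{bt}{1+b}\bigr)\Bigr]^n\,e^{-2i\pi k\cdot v(t-\tau)}
\]
(applied by integration by parts under the $dv$ integral), together with the pointwise exponential decomposition \eqref{sigma3}
\begin{multline*}
e^{2\pi(\lambda t+\mu)|k|} \leq
e^{-2\pi(\ov{\mu}-\mu)|\ell|}\,
e^{-2\pi\bigl(\frac{\ov{\lambda}-\lambda}{2M}\bigr)|k(t-\tau)+\ell\tau|}\,
e^{-2\pi[(\mu'-\mu)+\frac{\lambda}{2}(\tau-\tau')]|k-\ell|}\\
\times\, e^{2\pi\ov{\mu}|\ell|}\,e^{2\pi(\lambda\tau+\mu')|k-\ell|}\,
\sum_n \tfrac{\bigl|2\pi\ov{\lambda}(k(t-\tau')+\ell\tau')\bigr|^n}{n!}.
\end{multline*}
Combining these exactly as before reproduces the kernel $K_{k,\ell}(t,\tau)$, except that we now multiply and divide by $(1+|k-\ell|^\gamma)$ so that this factor is absorbed into the individual-mode quantity $(1+|k-\ell|^\gamma)|\hat{R}(\tau,k-\ell)|$ on one side, and into the denominator of $K_{k,\ell}$ on the other.

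The bulk of the estimate is then bookkeeping: bound $\sum_n\tfrac{\ov\lambda^n(1+b)^n}{n!}\bigl\|(\nabla_v+2i\pi\ell(\tau-bt/(1+b)))^n\hat G(\tau,\ell,v)\bigr\|_{L^1(dv)}$ by $\|G\|_{\cZ^{\ov\lambda(1+b),\ov\mu;1}_{\tau-bt/(1+b)}}$ after factoring out $(1+\tau)\,e^{-2\pi\ov\mu|\ell|}$, which reconstructs the prefactor $\sup_\tau(\|G\|/(1+\tau))$ appearing in $K_{k,\ell}^G$. I do not expect any real obstacle here: the only subtlety is that, since we are not summing over $k$, we must not collapse the estimate by taking a supremum over $(k,\ell)$ as was done in deriving $K_1$ in Theorem \ref{thmcombi'}; instead we keep the full kernel $K_{k,\ell}(t,\tau)$ and retain the sum over $\ell$. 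Everything else, including the definition of $\nu$ and the use of the inequality $e^{-2\pi(\ov\lambda-\lambda)|k(t-\tau')+\ell\tau'|}\leq e^{-2\pi\frac{\ov\lambda-\lambda}{2M}|k(t-\tau)+\ell\tau|}e^{\pi\lambda(\tau-\tau')|k-\ell|}$, carries over verbatim from the proof of Theorem \ref{thmcombi'}.
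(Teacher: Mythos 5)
Your proposal is correct and follows essentially the same route as the paper's (very terse) proof: rerun the argument of Theorem~\ref{thmcombi'} with $k$ fixed and the sum over $\ell$ retained, and bound the individual $\ell$-mode of $G$, namely $e^{2\pi\ov{\mu}|\ell|}\sum_n\frac{(\ov{\lambda}(1+b))^n}{n!}\bigl\|(\nabla_v+2i\pi\ell(\tau-\tfrac{bt}{1+b}))^n\hat G(\tau,\ell,\cdot)\bigr\|_{L^1}$, by the full norm $\|G\|_{\cZ^{\ov{\lambda}(1+b),\ov{\mu};1}_{\tau-bt/(1+b)}}$ --- this ``crude bound'' over $j\in\Z^d$ is precisely the single new ingredient the paper singles out. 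The remaining bookkeeping (splitting off $\ell=0$ to produce $K_0^G$, the exponential decomposition \eqref{sigma3}, integration by parts in $v$, and shifting the $(1+|k-\ell|^\gamma)$ factor onto $|\hat R(\tau,k-\ell)|$) matches the paper; your stated pointwise identity for $(k(t-\tau')+\ell\tau')^n e^{-2i\pi k\cdot v(t-\tau)}$ is a slight abuse (signs and a $2i\pi$ are off before taking moduli), but as you note it is to be read through the integration-by-parts step, where this is harmless.
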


\begin{proof}[Proof of Theorem \ref{thmcombi'k}]
  The proof is similar to the proof of Theorem \ref{thmcombi'}, except
  that $k$ is fixed and we use, for each $\ell$, the crude bound
  \begin{multline*} e^{2\pi\ov{\mu}|\ell|}\ \left\| \left[ \nabla_v +
        2i\pi\ell \left(\tau-\frac{bt}{1+b}\right)\right]^n
      \hat{G}(\tau,\ell,v)\right\|_{L^1(dv)} \\
    \leq \sum_{j\in\Z^d} e^{2\pi\ov{\mu}|j|}\, \left\| \left[ \nabla_v
        + 2i\pi j \left(\tau-\frac{bt}{1+b}\right)\right]^n
      \hat{G}(\tau,j,v)\right\|_{L^1(dv)}.
\end{multline*}
\end{proof}

\section{Control of the time-response}
\label{sec:response}

To motivate this section, let us start from the linearized
equation \eqref{Vllin}, but now assume that $f^0$ depends on $t,x,v$
and that there is an extra source term $S$, decaying in time. Thus the
equation is
\[ \derpar{f}{t} + v\cdot\nabla_x f - (\nabla W\ast \rho)\cdot\nabla_v
f^0 = S,\] 
and the equation for the density $\rho$, as in the proof of
Theorem \ref{thmlineardamping}, is
\begin{multline} \label{rhoagain} 
\rho(t,x) = \int_{\R^d} f_i(x-vt,v)\,dv
+ \int_0^t \int_{\R^d} \nabla_v f^0\bigl(\tau,x-v(t-\tau),v\bigr)\cdot (\nabla W\ast\rho)(\tau,x-v(t-\tau))\,dv\,d\tau\\
+ \int_0^t \int_{\R^d} S(\tau,x-v(t-\tau),v)\,dv\,d\tau.
\end{multline}

Hopefully we may apply Theorem \ref{thmcombi'} to deduce from
\eqref{rhoagain} an integral inequality on $\varphi(t):=
\|\rho(t)\|_{\cF^{\lambda t+\mu}}$, which will look like
\begeq\label{looklike} 
\varphi(t) \leq A + c \, \int_0^t
K(t,\tau)\,\varphi(\tau)\,d\tau,
\endeq
where $A$ is the contribution of the initial datum and the source
term, and $K(t,\tau)$ a kernel looking like, say, \eqref{K1}.

From \eqref{looklike} how do we proceed? Assume for a start that
a smallness condition of the form (a) in Proposition \ref{suffL} is
satisfied. Then the simple and natural way, as in Section
\ref{sec:linear}, would be to write
$$
\varphi(t) \leq A + c \, \left(\int_0^t K(t,\tau)\,d\tau \right) \, 
\left(\sup_{0\leq\tau\leq t} \varphi(\tau)\right),
$$ 
and deduce
\begin{equation}
\label{deducephi} \varphi(t) \leq \frac{A}{\left( 1 - c \, \int_0^t
    K(t,\tau)\,d\tau\right)}
\end{equation}
(assuming of course the denominator to be positive).
However, if $K$ is given by \eqref{K1}, it is easily seen that
$\int_0^t K(t,\tau)\,d\tau\geq \kappa\, t$ as $t\to\infty$, where
$\kappa>0$; then \eqref{deducephi} is useless. In fact
\eqref{looklike} {\em does not} prevent $\varphi$ from going to
$+\infty$ as $t\to\infty$. Nevertheless, its growth may be controlled
under certain assumptions, as we shall see in this section.  Before
embarking on cumbersome calculations, we shall start with a
qualitative discussion.

\subsection{Qualitative discussion}
\label{subqualit}

The kernel $K$ in \eqref{K1} depends on the choice of
$\mu'=\mu(t,\tau)$. How large $\mu'-\mu$ can be depends in turn on the
amount of regularization offered by the convolution with the
interaction $\nabla W$.  We shall distinguish several cases according to
the regularity of the interaction.

\subsubsection{Analytic interaction} \label{analint}

If $\nabla W$ is analytic, there is $\sigma>0$ such that
\[\forall\nu\geq 0,\qquad \|\rho\ast\nabla W\|_{\cF^{\nu+\sigma}} \leq C\, \|\rho\|_{\cF^\nu};\]
then in \eqref{K1} we can afford to choose, say, $\mu'-\mu=\sigma$,
and $\gamma=0$.  Thus, assuming $b=B/(1+t)$ with $B$ small
enough so that $(\mu'-\mu) - \lambda b (t-\tau) \ge \sigma/2$, 
$K$ is bounded by 
\begeq\label{Ka} \ov{K}^{(\alpha)}(t,\tau) = (1+\tau)\,
\sup_{k\neq 0,\ \ell\neq 0}
e^{-\alpha|\ell|}\,e^{-\alpha|k-\ell|}\,e^{-\alpha
  |k(t-\tau)+\ell\tau|},
\endeq
where $\alpha = \frac12 \min \{ \ov{\lambda}-\lambda\, ; \,
\ov{\mu}-\mu \, ; \, \sigma\}$. To fix ideas, let us work in dimension $d=1$.
The goal is to estimate solutions of
\begeq\label{phiKa}
\varphi(t) \leq a + c \int_0^t \ov{K}^{(\alpha)}(t,\tau)\,\varphi(\tau)\,d\tau.
\endeq

Whenever $\tau/t$ is a rational number distinct from $0$ or $1$, there
are $k,\ell\in\Z$ such that $|k(t-\tau)+\ell\tau|=0$, and the size of
$\ov{K}^{(\alpha)}(t,\tau)$ mainly depends on the minimum admissible
values of $k$ and $k-\ell$. Looking at values of $\tau/t$ of the form
$1/(n+1)$ or $n/(n+1)$ suggests the approximation
\begeq\label{approxKa} 
\ov{K}^{(\alpha)} \lesssim (1+\tau) \, \min \ \Bigl\{
e^{-\alpha \left(\frac{\tau}{t-\tau}\right)}\,e^{-2\alpha} \, ; \
e^{-2\alpha\left(\frac{t-\tau}{\tau}\right)}\,e^{-\alpha}\Bigr\}.
\endeq 
But this estimate is terrible: the time-integral of the right-hand side is much
larger than the integral of $\ov{K}^{(\alpha)}$. In fact, the fast
variation and ``wiggling'' behavior of $\ov{K}^{(\alpha)}$ are
essential to get decent estimates.  \bigskip

\begin{figure}[htbp]
\hspace*{-20mm}
\begin{minipage}[t]{.3\linewidth}
  \includegraphics[height=4cm]{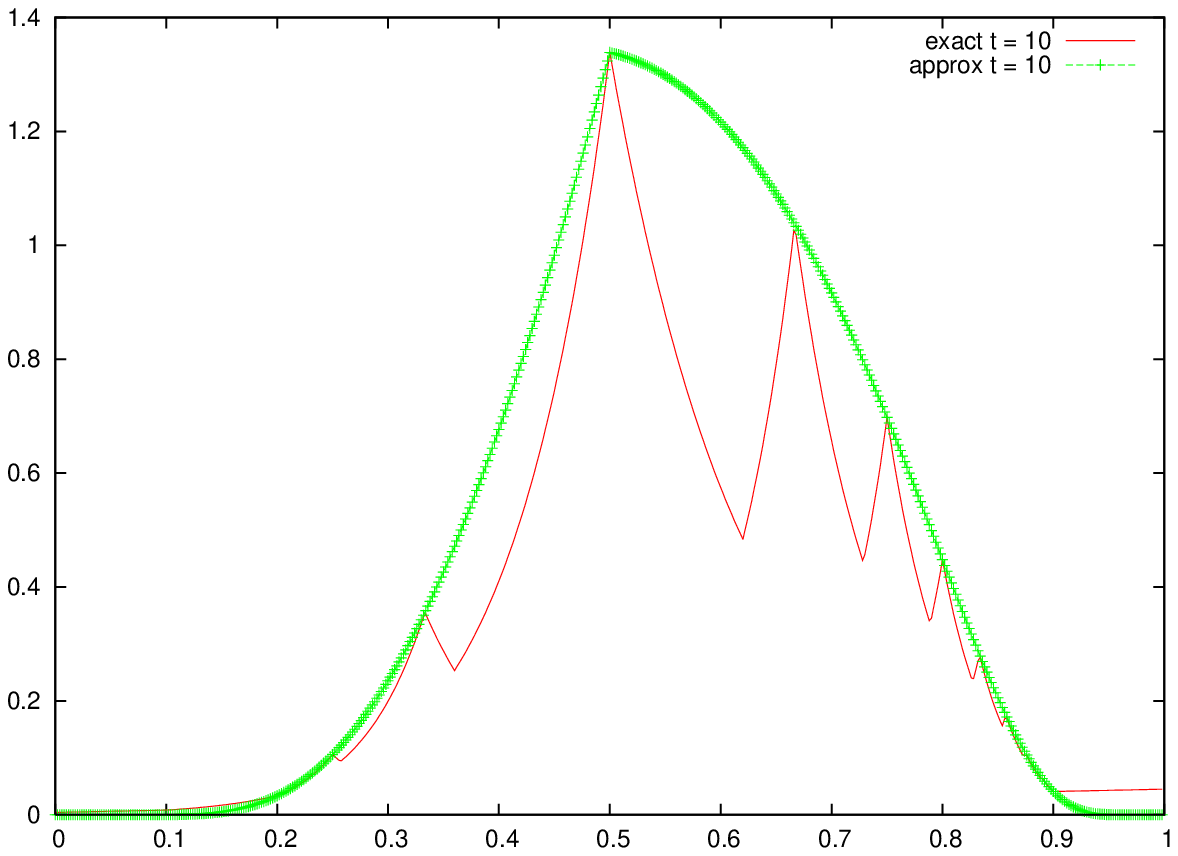}
\end{minipage}\hfill
\begin{minipage}[t]{.3\linewidth}
  \includegraphics[height=4cm]{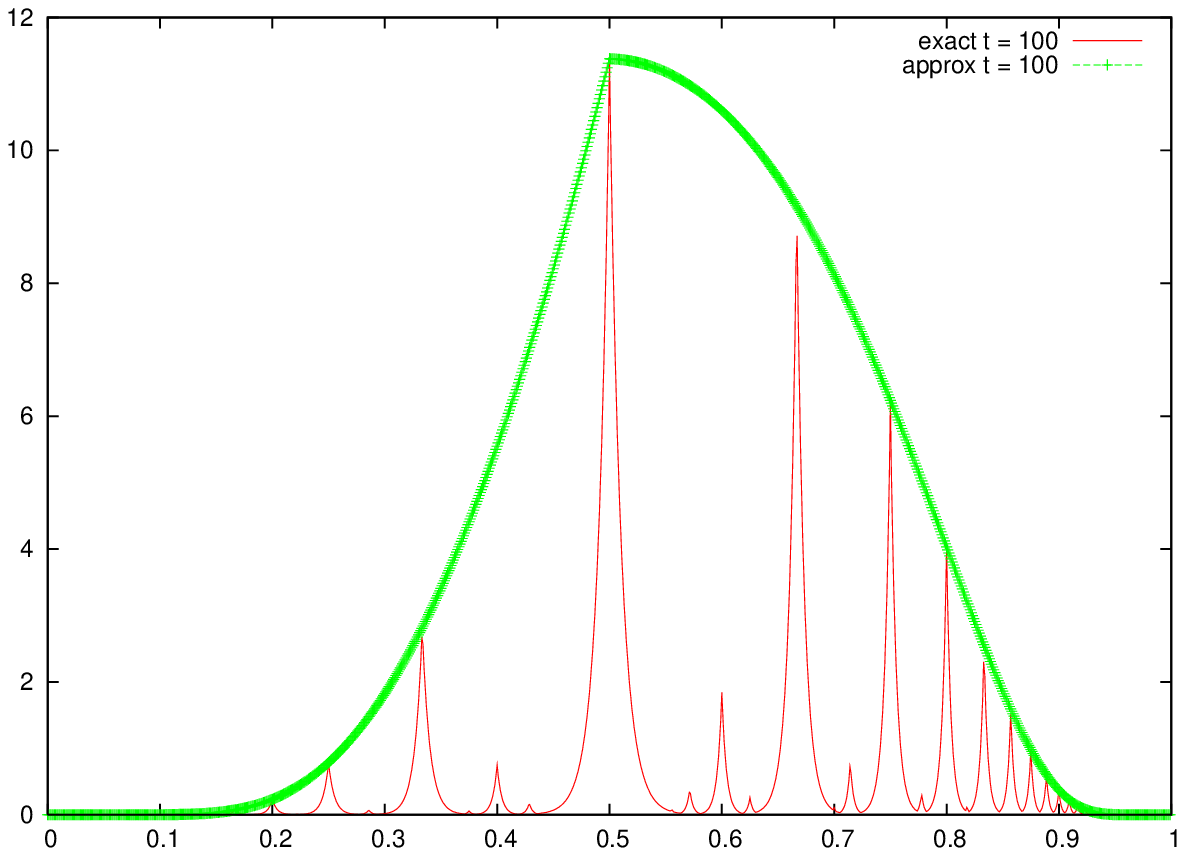}
\end{minipage}\hfill
\begin{minipage}[t]{.3\linewidth}
  \includegraphics[height=4cm]{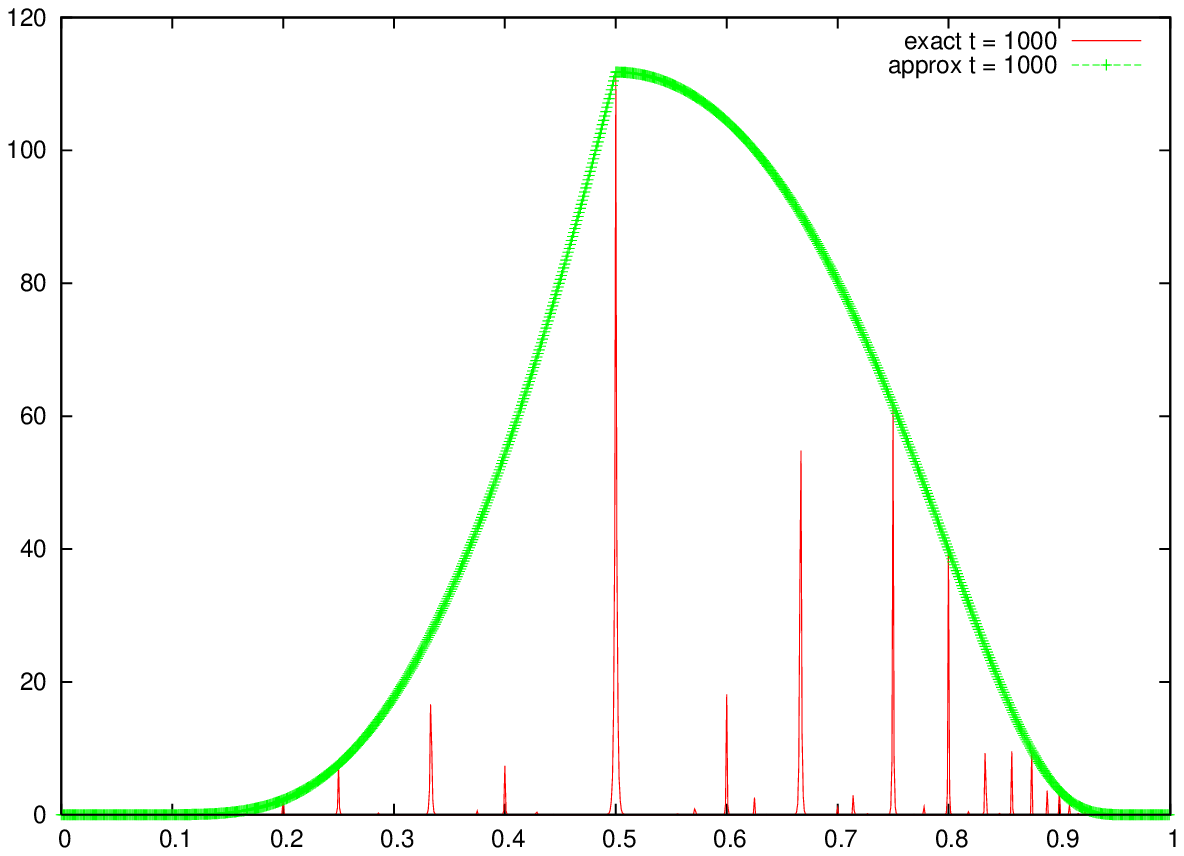}
\end{minipage}\hfill \hspace*{1mm}
\caption{the kernel $\ov{K}^{(\alpha)}(t,\tau)$, together with the approximate upper bound in \eqref{approxKa},
for $\alpha=0.5$ and $t=10$, $t=100$, $t=1000$.}
\label{figK}
\end{figure}

To get a better feeling for $\ov{K}^{(\alpha)}$, let us only retain
the term in $k=1$, $\ell=-1$; this seems reasonable since we have an
exponential decay as $k$ or $\ell$ go to infinity (anyway, throwing
away all other terms can only improve the estimates).  So we look at
$\tilde{K}^{(\alpha)}(t,\tau) = (1+\tau)\,
e^{-3\alpha}\,e^{-\alpha|t-2\tau|}$.  Let us time-rescale by setting
$k_t(\theta) = t\,\tilde{K}^{(\alpha)}(t,t\theta)$ for $\theta\in
[0,1]$ (the $t$ factor because $d\tau = t\, d\theta$); then it is not
hard to see that
\[ 
\frac{k_t}{t} \longrightarrow \frac{e^{-3\alpha}}{2 \, \alpha}\,
\delta_{\frac12}
\]
This suggests the following baby model for \eqref{phiKa}:
\begeq\label{babya} \varphi(t) \leq a + c\, t\,
\varphi\left(\frac{t}{2}\right).
\endeq

The important point in \eqref{babya} is that, although the kernel has
total mass $O(t)$, this mass is located far from the endpoint
$\tau=t$; this is what prevents the fast growth of $\varphi$. Compare
with the inequality $\varphi(t) \leq a + c \, t\, \varphi(t)$, which
implies no restriction at all on $\varphi$.

To be slightly more quantitative, let us look for a power series
$\Phi(t) = \sum_k a_k \, t^k$ achieving equality in
\eqref{babya}. This yields $a_0=a$, $a_{k+1} = c\, a_k \, 2^{-k}$, so
\begeq\label{Phi} \Phi(t) = a \sum_{k=0}^\infty
\frac{c^k\,t^k}{2^{k(k-1)/2}}.
\endeq
The function $\Phi$ exhibits a truly remarkable behavior: it grows
faster than any polynomial, but slower than any fractional exponential
$\exp(c\,t^\nu)$, $\nu \in (0,1)$; essentially it behaves like
$A^{(\log t)^2}$ (as can also be seen directly from \eqref{babya}).
One may conjecture that solutions of \eqref{phiKa} exhibit a similar
kind of growth.  \med

Let us interpret these calculations. Typically, the kernel $K$
controls the time variation of (say) the spatial density $\rho$ which
is due to binary interaction of waves. When two waves of distinct
frequencies interact, the effect over a long time period is most of
the time very small; this is a consequence of the oscillatory nature
of the evolution, and the resulting time-averaging. But at certain
particular times, the interaction becomes strong: this is known in
plasma physics as the {\bf plasma echo}, and can be thought of as a
kind of resonance. Spectacular experiments by Malmberg and
collaborators are based on this effect \cite{echo,echo:expe}. Namely, if one
starts a wave at frequency $\ell$ at time~0, and forces it at time
$\tau$ by a wave of frequency $k-\ell$, a strong response is obtained
at time $t$ and frequency $k$ such that
\begeq\label{dispersion} k(t-\tau) + \ell\tau = 0
\endeq
(which of course is possible only if $k$ and $\ell$ are parallel to each
other, with opposite directions).

In the present nonlinear setting, whatever variation the density
function is subject to, will result in echoes at later times. Even if
each echo in itself will eventually decay, the problem is whether the
accumulation of echoes will trigger an uncontrolled growth
(unstability). As long as the expected growth is eaten by the
time-decay coming from the linear theory, nonlinear Landau damping is expected.
In the present case, the growth of \eqref{Phi} is very slow in regard of the exponential time-decay 
due to the analytic regularity.

\subsubsection{Sobolev interaction} \label{sobolint}

If $\nabla W$ only has Sobolev regularity, we cannot afford in
\eqref{K1} to take $\mu'(t,\tau)$ larger than $\mu + \eta
(t-\tau)/t$ (because the amount of regularity transferred in the
bilinear estimates is only $O((t-\tau)/t)$, recall the discussion at
the end of Subsection \ref{sub:rt}). On the other hand, we have
$\gamma>0$ such that
\[ \forall \nu\geq 0,\qquad \|\nabla W\ast \rho\|_{\cF^{\nu,\gamma}}
\leq C\, \|\rho\|_{\cF^\nu},\] and then we can choose this $\gamma$ in
\eqref{K1}. So, assuming $b=B/(1+t)$ with $B$ small enough
  so that $(\mu'-\mu) - \lambda b (t-\tau) \ge \eta (t-\tau)/(2t)$,
$K$ in \eqref{K1} will be controlled by \begeq\label{hKa}
\nohat{K}^{(\alpha),\gamma}(t,\tau) = (1+\tau) \sup_{k\neq 0,\
  \ell\neq 0} \frac{e^{-\alpha|\ell|}\,
  e^{-\alpha\left(\frac{t-\tau}{t}\right)|k-\ell|}\, e^{-\alpha
    |k(t-\tau)+\ell\tau|}} {1+|k-\ell|^\gamma},
\endeq
where $\alpha = \frac12 \min \{ \ov{\lambda}-\lambda\, ; \,
\ov{\mu}-\mu \, ; \, \eta \}$. The equation we are considering now is
\begeq\label{phiKb} \varphi(t) \leq a + \int_0^t
\nohat{K}^{(\alpha),\gamma}(t,\tau)\,\varphi(\tau)\,d\tau.
\endeq

For, say, $\tau\leq t/2$, we have $\nohat{K}^{(\alpha)}\leq
\ov{K}^{(\alpha/2)}$, and the discussion is similar to that in
\ref{analint}.  But when $\tau$ aproaches $t$, the term
$\exp\bigl(-\alpha (\frac{t-\tau}{t})\,|k-\ell|\bigr)$ hardly helps.
Keeping only $k>0$ and $\ell=-1$ (because of the exponential decay in
$\ell$) leads to consider the kernel
\[ \check{K}^{(\alpha)} (t,\tau) = (1+\tau)\, \sup_{k\neq 0}\
\frac{e^{-\alpha |kt - (k+1)\tau|}}{1+(k+1)^\gamma}.\] 
Once again we
perform a time-rescaling, setting $\check{k}_t(\theta) =
t\,\check{K}^{(\alpha)}(t,t\theta)$, and let $t\to\infty$. In this
limit each exponential $\exp(-\alpha |kt-(k+1)\tau|)$ becomes
localized in a neighborhood of size $O(1/k\,t)$ around
$\theta=k/(k+1)$, and contributes a Dirac mass at $\theta=k/(k+1)$,
with amplitude $2/(\alpha (k+1))$;
  \[ \frac{\check{k}_t}{t} \xrightarrow[t\to\infty]{} \frac{2}{\alpha}
  \sum_k \frac1{1+(k+1)^\gamma}\, \frac{k}{(k+1)^2}\,
  \delta_{1-\frac1{k+1}}.\]

This leads us to the following baby model for \eqref{phiKb}:
\begeq\label{babyb}
\varphi(t) \leq a + c\,t\, \sum_{k\geq 1} \frac1{k^{1+\gamma}}\,\varphi\left( \left(1-\frac1{k}\right)t\right).
\endeq
If we search for $\sum a_n t^n$ achieving equality, this yields
\[ a_0=a, \qquad a_{n+1} = c \left(\sum_{k\geq 1} \frac1{k^{1+\gamma}}\, \left(1-\frac1{k}\right)^n\right)\,a_n.\]
To estimate the behavior of the $\sum_k$ above, we compare it with
\begin{align*}
\int_1^\infty \frac1{t^{1+\gamma}}\, \left(1-\frac1{t}\right)^n\,dt
& = \int_0^t u^{\gamma-1}\, (1-u)^n\,du = B(\gamma,n+1)\qquad \text{(Beta function)}\\
& = \frac{\Gamma(\gamma)\,\Gamma(n+1)}{\Gamma(n+\gamma+1)} = O\left(\frac1{n^\gamma}\right).
\end{align*}
All in all, we may expect $\varphi$ in \eqref{phiKb} to behave
qualitatively like
\[ \Phi(t) = a\, \sum_{n\geq 0} \frac{c^n\,t^n}{(n!)^\gamma}.\] Notice
that $\Phi$ is subexponential for $\gamma>1$ (it grows essentially
like the fractional exponential $\exp(t^{1/\gamma})$) and exponential
for $\gamma=1$. In particular, as soon as $\gamma>1$ we expect nonlinear Landau damping again.

\subsubsection{Coulomb/Newton interaction ($\gamma=1$)} \label{subheurcoul}

When $\gamma=1$, as is the case for Coulomb or Newton interaction, the
previous analysis becomes borderline since we expect \eqref{babyb} to
be compatible with an exponential growth, and the linear decay is also
exponential. To handle this more singular case, we shall work mode by
mode, rather than on just one norm.  Starting again from
\eqref{rhoagain}, we consider, for each $k\in\Z^d$,
\[ \varphi_k(t) = e^{2\pi(\lambda t+\mu)|k|}\,|\hat{\rho}(t,k)|,\] and
hope to get, {\it via} Theorem \ref{thmcombi'k}, an inequality which will
roughly take the form \begeq\label{phikt} \varphi_k(t) \leq A_k +
c\int_0^t \sum_\ell K_{k,\ell}(t,\tau)\,\varphi_{k-\ell}(\tau)\,d\tau.
\endeq
(Note: summing in $k$ would yield an inequality worse than
\eqref{phiKb}.) To fix the ideas, let us work in dimension $d=1$, and
set $k\geq 1$, $\ell = -1$. Reasoning as in subsection
\eqref{sobolint}, we obtain the baby model \begeq\label{baby1}
\varphi_k(t) \leq A_k +
\frac{c\,t}{(k+1)^{1+\gamma}}\,\varphi_{k+1}\left(\frac{kt}{k+1}\right).
\endeq
The gain with respect to \eqref{babyb} is clear: for different values
of $k$, the ``dominant times'' are distinct. From the physical point
of view, we are discovering that, in some sense, {\em echoes occurring
  at distinct frequencies are asymptotically well separated}.

Let us search again for power series solutions: we set
\[ \varphi_k(t) = \sum_{m\geq 0} a_{k,m}\,t^m,\qquad a_{k,0} = A_k.\]
By identification, $a_{k,m} = a_{k+1,m-1} \, c\, (k+1)^{-(1+\gamma)} \,
(k/(k+1))^{m-1}$, and by induction
\[ a_{k,m} = A_{k+m}\, c^m\,\left[
  \frac{k!}{(k+m)!}\right]^{1+\gamma}\, \frac{k^{m-1}\,c^m}{(k+1)
  (k+2)\ldots (k+m)}\simeq A_{k+m}\,
\left[\frac{k!}{(k+m)!}\right]^{\gamma+2}\, k^{m-1}\,c^m.\]
We may expect $A_{k+m}\lesssim A\,e^{-a(k+m)}$; then
\[ a_{k,m} \lesssim A\,(k\,e^{-ak})\,k^m\,c^m\,\frac{e^{-am}}{(m!)^{\gamma+2}},\]
and in particular
\[ \varphi_k(t) \lesssim A\,e^{-ak/2}\,
\sum_m \frac{(c k t)^m}{(m!)^{\gamma+2}}\lesssim A\,e^{(1-\alpha)\,(ckt)^\alpha},
\qquad \alpha = \frac{1}{\gamma+2}.\]
This behaves like a fractional exponential even for $\gamma=1$, and we
can now believe in nonlinear Landau damping for such interactions! 
(The argument above works even for more singular interactions;
but in the proof later the condition $\gamma\geq 1$ will be required for
other reasons, see pp.~\pageref{gg1'} and \pageref{gammageq1}.)

\subsection{Exponential moments of the kernel} \label{subsexp}

Now we start to estimate the kernel $\nohat{K}^{(\alpha),\gamma}$ from
\eqref{hKa}, without any approximation this time. Eventually, instead
of proving that the growth is at most fractional exponential, we shall
compare it with a slow exponential $e^{\var t}$.  For this, the first
step consists in estimating exponential moments of the kernel
$e^{-\var t}\int K(t,\tau)\,e^{\var \tau}\,d\tau$. (To get more precise
estimates, one can study $e^{-\var t^\alpha} \int
K(t,\tau)\,e^{\var\tau^\alpha}\,d\tau$, but such a refinement is not
needed for the proof of Theorem \ref{thmmain}.)

The first step consists in estimating exponential moments.

\begin{Prop}[Exponential moments of the kernel] \label{propexpK} Let
  $\gamma\in [1,\infty)$ be given. For any $\alpha\in (0,1)$, let
  $\nohat{K}^{(\alpha),\gamma}$ be defined by \eqref{hKa}.
Then for any $\gamma<\infty$ there is $\ov{\alpha}=\ov{\alpha}(\gamma)>0$ such
  that if $\alpha\leq \ov{\alpha}$ and $\var \in (0,1)$, then for any
  $t>0$,
  \begin{multline*} e^{-\var t} \int_0^t \nohat{K}^{(\alpha),\gamma}(t,\tau)\,e^{\var \tau}\,d\tau\\
    \leq C \left( \frac{1}{\alpha\,\var^{\gamma}\,t^{\gamma-1}} +
      \frac{\ln \frac1{\alpha}} {\alpha\,\var^\gamma\,t^{\gamma}} +
      \frac{1}{\alpha^2\,\var^{1+\gamma}\,t^{1+\gamma}} + \left(
        \frac1{\alpha^3} \, + \frac{\ln
          \frac1{\alpha}}{\alpha^2\var}\right) \,
      e^{-\frac{\var\,t}{4}} +
      \frac{e^{-\frac{\alpha\,t}2}}{\alpha^3}\right),
\end{multline*}
where $C=C(\gamma)$.
In particular,

\bul If $\gamma>1$ and $\var\leq\alpha$, then $\dps e^{-\var t}
\int_0^t \nohat{K}^{(\alpha),\gamma}(t,\tau)\,e^{\var \tau}\,d\tau
\leq \frac{C(\gamma)}{\alpha^3\,\var^{1+\gamma}\,t^{\gamma-1}}$; \sm

\bul If $\gamma=1$ then $\dps e^{-\var t} \int_0^t
\nohat{K}^{(\alpha),\gamma}(t,\tau)\,e^{\var \tau}\,d\tau \leq
\frac{C}{\alpha^3}\left( \frac1{\var} + \frac1{\var^2\,t}\right)$.
\end{Prop}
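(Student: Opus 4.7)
The plan is to exploit the resonant structure of the kernel: for each fixed pair $(k,\ell)\in(\Z^d\setminus\{0\})^2$, the function $\tau\mapsto k(t-\tau)+\ell\tau$ vanishes on $[0,t]$ only when $k$ and $\ell$ are (anti)parallel in the appropriate sense, and then the unique vanishing point is $\tau^\star=kt/(k-\ell)$ at which the derivative has modulus $|k-\ell|$. Thus the exponential $e^{-\alpha|k(t-\tau)+\ell\tau|}$ is localized in a window of width $1/(\alpha|k-\ell|)$ around $\tau^\star$. After bounding the supremum in \eqref{hKa} by the sum over $(k,\ell)$, I would split the analysis in two cases: the \emph{non-resonant} case $k\cdot\ell\geq 0$ (or $k$ and $\ell$ not collinear), where $|k(t-\tau)+\ell\tau|\gtrsim t$ uniformly on $[0,t]$ and the global bound $(1+t)e^{-\alpha t}\sum_{k,\ell\neq 0}e^{-\alpha|\ell|}/(\alpha^2(1+|k-\ell|^\gamma))$ produces precisely the last term $\alpha^{-3}e^{-\alpha t/2}$; and the \emph{resonant} case, which carries the bulk of the mass.

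In the resonant case I would reduce, by the $e^{-\alpha|\ell|}$ decay, to $\ell=-\ell'\,k/|k|$ with $\ell'\geq 1$ (so $k$ and $\ell$ are exactly antiparallel), and write $n=|k-\ell|=|k|+\ell'$. Then $\tau^\star=|k|t/n$, $t-\tau^\star=\ell' t/n$, and the Sobolev weight in \eqref{hKa} gives $\exp(-\alpha((t-\tau^\star)/t)|k-\ell|)=e^{-\alpha\ell'}$. Estimating the $\tau$-integral of $(1+\tau)e^{\var(\tau-t)}e^{-\alpha|k-\ell||\tau-\tau^\star|}$ by a peak-of-height times width inequality,
\[
\int_0^t(1+\tau)e^{\var(\tau-t)}e^{-\alpha n|\tau-\tau^\star|}\,d\tau
\;\leq\; \frac{C}{\alpha n}\bigl(1+\tau^\star\bigr)e^{\var(\tau^\star-t)}
\;+\;\text{smaller}\,,
\]
one obtains, after incorporating the prefactors,
\[
e^{-\var t}\!\int_0^t \nohat K^{(\alpha),\gamma}(t,\tau)e^{\var\tau}d\tau
\;\leq\; C\!\sum_{\substack{k\in\Z^d,\,|k|\geq 1\\ \ell'\geq 1}}
\frac{e^{-2\alpha\ell'}}{\alpha\,n(1+n^\gamma)}\Bigl(1+\frac{|k|t}{n}\Bigr)
e^{-\var\ell' t/n},
\]
plus the already-treated non-resonant remainder.

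The remaining task is a careful book-keeping of this double sum, organized by $n=|k|+\ell'$ and by the two regimes $\ell' t/n\gtrsim 1$ versus $\ell' t/n\lesssim 1$. When $\ell' t/n\gtrsim 1$ the exponential $e^{-\var\ell' t/n}$ forces $\ell'\gtrsim n/(\var t)$; using $\ell'\geq 1$ and the elementary bounds $\sum_{n}(n^{1+\gamma})^{-1}<\infty$, $\sum_{n}n^{-\gamma}\lesssim \var^{-\gamma+1}t^{-\gamma+1}$ (truncated at $n\lesssim \var t$), produces the leading terms $(\alpha\var^\gamma t^{\gamma-1})^{-1}$ and $(\alpha^2\var^{1+\gamma}t^{1+\gamma})^{-1}$. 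When $\ell' t/n\lesssim 1$ one has to separate the $|k|t/n$ factor: the trivial piece contributes a harmonic-type sum whose truncation at $\alpha n\lesssim 1$ (to keep $e^{-2\alpha\ell'}\gtrsim $ const) generates the logarithm $\ln(1/\alpha)$, whence the term $\ln(1/\alpha)/(\alpha\var^\gamma t^\gamma)$. The remaining ``off-resonance'' remainder from the width cut-off, together with the terms where $\tau^\star$ is close to $0$, gives the $e^{-\var t/4}$ contributions with the prefactors indicated in the statement; here the $\ln(1/\alpha)$ arises again from the same harmonic truncation. The two simplified estimates at the end of the Proposition then follow by specializing: for $\gamma>1$ and $\var\leq\alpha$, the third term dominates; for $\gamma=1$, the first and third terms combine into $(\alpha^3)^{-1}(\var^{-1}+\var^{-2}t^{-1})$, after which the exponentials $e^{-\alpha t/2}$, $e^{-\var t/4}$ are absorbed since they decay faster than any negative power of $t$.

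The main obstacle, and the only delicate point, will be the bookkeeping in the double sum: tuning the two cut-offs (in $n$ and in $\ell'$) so that every one of the five terms in the statement genuinely arises with the advertised $\alpha$- and $\var$-dependence, and in particular tracking the origin of the $\ln(1/\alpha)$ factor, which must come from a harmonic sum truncated precisely at the scale where the resonance width $1/(\alpha n)$ becomes comparable to the length of the time-interval. Everything else (the pointwise estimate on each $(k,\ell)$ integral, the reduction to $\ell$ antiparallel to $k$, and the non-resonant remainder) is an essentially routine application of the localization idea described above.
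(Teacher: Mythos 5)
Your high-level strategy --- localizing each exponential $e^{-\alpha|k(t-\tau)+\ell\tau|}$ around the echo time $\tau^\star$ in a window of width $\sim 1/(\alpha|k-\ell|)$, bounding the $\sup$ by a $\sum$, and then organizing the double sum by $n=|k-\ell|$ --- is the same as the paper's, and the leading terms you extract from the resulting bookkeeping are the right ones (in particular the $\ln(1/\alpha)$ does come from the harmonic-type sum $\sum_\ell e^{-\alpha\ell}/\ell^\gamma$ at $\gamma=1$; the paper gets it from $\int_1^\infty e^{-\alpha x}x^{-\gamma}dx$). The paper also separates the cases $\tau\le t/2$ and $\tau\ge t/2$ rather than your resonant/non-resonant split, and compares the resulting sum to a two-dimensional integral {\it via} the change of variables $u=\var x t/(x+y)$, but that is a difference of presentation, not of substance.

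There is, however, one genuine gap in your dichotomy, and it concerns precisely the step the paper handles first. You propose to dispose of all non-collinear pairs $(k,\ell)$ by the estimate ``$|k(t-\tau)+\ell\tau|\gtrsim t$ uniformly on $[0,t]$,'' absorbing them into the remainder $\alpha^{-3}e^{-\alpha t/2}$. This is correct when $k\cdot\ell\ge 0$, but it is \emph{false} for non-collinear pairs with $k\cdot\ell<0$ in dimension $d\ge 2$. The segment $\{k(t-\tau)+\ell\tau:\tau\in[0,t]\}$ joins $kt$ to $\ell t$, and for integer non-collinear $k,\ell$ its distance to the origin is only bounded below by $t\,|k\wedge\ell|/|k-\ell|$, which can be as small as $t/|k-\ell|$. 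A concrete example in $d=2$: with $k=(N,1)$, $\ell=(-N-1,-1)$, one computes $\min_\tau|k(t-\tau)+\ell\tau|=t/\sqrt{4N^2+4N+5}\sim t/(2N)$. Such ``near-resonant'' non-collinear pairs are not exponentially small in $t$ and cannot be swept into the $e^{-\alpha t/2}$ remainder; nor do they fit your reduction ``to $\ell=-\ell'\,k/|k|$,'' since the $e^{-\alpha|\ell|}$ decay does not distinguish an antiparallel $\ell$ from a near-antiparallel one of the same length. The paper sidesteps this entirely by first observing that the kernel is a nonincreasing function of the dimension once a nonzero component of $k$ (for $\tau\le t/2$) or of $\ell$ (for $\tau\ge t/2$) is selected, and so reduces to $d=1$, where non-collinear pairs simply do not exist. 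That reduction is the missing ingredient in your sketch; without it, the non-resonant remainder must itself be treated by the same resonance-width analysis, defeating the purpose of the split. The rest of the computation --- the bookkeeping you defer --- is then exactly what the paper does, and your leading-order predictions agree with it.
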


\begin{Rk} Much stronger estimates can be obtained if the interaction is analytic;
that is, when $K^{(\alpha),\gamma}$ is replaced by $\ov{K}^{(\alpha)}$ defined in \eqref{Ka}.
A notable point about Proposition \ref{propexpK} is that for $\gamma=1$ we do not have
any time-decay as $t\to\infty$.
\end{Rk}

\begin{proof}[Proof of Proposition \ref{propexpK}] 
  To simplify notation we shall not recall the dependence of
  $\nohat{K}$ on $\gamma$.  We first assume $\gamma<\infty$, and
  consider $\tau\leq t/2$, which is the favorable case. We write
  \[ \nohat{K}^{(\alpha)}(t,\tau)\leq (1+\tau)\, \sup_{k\neq 0}\,
  \sup_{\ell}\,
  e^{-\alpha|\ell|}\,e^{-\frac{\alpha}{2}|k-\ell|}\,e^{-\alpha|k(t-\tau)+\ell\tau|}.\]
  Since we got rid of the condition $\ell\neq 0$, the right-hand side
  is now a nonincreasing function of $d$.  (To see this, pick up a
  nonzero component of $k$, and recall our norm conventions from
  Appendix \ref{app:exp}.) So we assume $d=1$. By symmetry we may also
  assume $k>0$.

Explicit computations yield
\[ \int_0^{t/2} e^{-\alpha |k(t-\tau)+\ell\tau|}\,(1+\tau)\,d\tau
\leq \begin{cases}
  \dps \frac1{\alpha\, (\ell-k)} + \frac1{\alpha^2\,(\ell-k)^2}\qquad \text{if $\ell>k$}\\[4mm]
  \dps e^{-\alpha kt}\, \left(\frac{t}{2} + \frac{t^2}{8}\right)\qquad \text{if $\ell=k$}\\[4mm]
  \dps \frac{e^{-\alpha \left(\frac{k+\ell}{2}\right)t}}{\alpha
    |k-\ell|}\, \left(1+\frac{t}2\right)
  \qquad \text{if $-k\leq\ell<k$}\\[4mm]
  \dps \left(\frac2{\alpha|k-\ell|} + \frac{2 \, kt}{\alpha
      |k-\ell|^2} + \frac1{\alpha^2 |k-\ell|^2}\right) \qquad \text{if
    $\ell<-k$}.
\end{cases}\] 
In all cases,
\begin{multline*}
  \int_0^{t/2} e^{-\alpha |k(t-\tau)+\ell\tau|}\,(1+\tau)\,d\tau \leq
  \left( \frac3{\alpha |k-\ell|} + \frac1{\alpha^2 |k-\ell|^2}
    + \frac{2 \, t}{\alpha|k-\ell|} \right)\,1_{k\neq\ell}\\
  + e^{-\alpha kt}\, \left(\frac{t}{2} +
    \frac{t^2}{8}\right)\,1_{\ell=k}.
\end{multline*}
So
\begin{align*}
  e^{-\var t} & \int_0^{t/2} e^{-\alpha |k(t-\tau)+\ell\tau|}\,(1+\tau)\,e^{\var \tau}\,d\tau\\
  & \leq e^{-\frac{\var t}{2}}\, \left(\frac3{\alpha |k-\ell|} +
    \frac1{\alpha^2 |k-\ell|^2} + \frac{2 \, t}{\alpha|k-\ell|}
  \right)\,1_{k\neq \ell}
  + e^{-\alpha k t}\,\left(\frac{t}2 + \frac{t^2}8\right)\,1_{\ell=k}\\
  & \leq e^{-\frac{\var t}{4}}\, \left( \frac3{\alpha |k-\ell|} +
    \frac1{\alpha^2 |k-\ell|^2} + \frac{8 \, z}{\alpha\var |k-\ell|}
  \right)\,1_{k\neq \ell} + e^{-\frac{t\alpha}{2}}
  \left(\frac{z}{\alpha} + \frac{8 \, z^2}{\alpha^2}\right)\,1_{\ell
    =k},
\end{align*}
where $z=\sup(xe^{-x}) =e^{-1}$. Then
\begin{multline*} 
  e^{\var t} \int_0^{t/2} \nohat{K}^{(\alpha)}(t,\tau)\,e^{\var
    \tau}\,d\tau \\
  \leq e^{-\frac{\var t}{4}} \sum_{k\neq 0} \sum_{\ell\neq k}
  e^{-\alpha |\ell|}\,e^{-\frac{\alpha}{2} |k-\ell|}\,
  \left(\frac{3}{\alpha |k-\ell|} + \frac{1}{\alpha^2 |k-\ell|^2}
    + \frac{8 \, z}{\alpha \var |k-\ell|} \right)\\
  + e^{-\frac{t\alpha}{2}} \sum_\ell e^{-\alpha|\ell|}
  \left(\frac{z}{\alpha} + \frac{8 z^2}{\alpha^2}\right).
\end{multline*}
Using the bounds (for $\alpha \sim 0^+$) 
\[ 
\sum_\ell e^{-\alpha \ell} = O\left( \frac{1}{\alpha}\right),\quad
\sum_\ell \frac{e^{-\alpha \ell}}{\ell} = O \left(\ln
  \frac1{\alpha}\right),\quad \sum_\ell \frac{e^{-\alpha
    \ell}}{\ell^2} = O(1),\] we end up, for $\alpha\leq 1/4$, with a
bound like
\begin{multline*}
  C\,e^{-\frac{\var t}{4}} \, \left( \frac1{\alpha^2}\ln
    \frac1{\alpha} + \frac1{\alpha^3} +
    \frac1{\alpha^2\var}\ln\frac1{\alpha} \right)
  + C\, e^{-\frac{\alpha t}{2}}\, \left(\frac1{\alpha^2}+\frac1{\alpha^3}\right)\\
  \leq C\left[ e^{-\frac{\var t}{4}}\, \left(\frac1{\alpha^3} +
      \frac1{\alpha^2\var}\,\ln\frac1{\alpha}\right) +
    \frac{e^{-\frac{\alpha t}2}}{\alpha^3}\right].
\end{multline*}
(Note that the last term is $O(t^{-3})$, so it is anyway negligible in
front of the other terms if $\gamma\leq 4$; in this case the
restriction $\var\leq\alpha$ can be dispended with.)  \sm

\bul Next we turn to the more delicate contribution of $\tau\geq
t/2$. For this case we write \begeq\label{Kbdt2}
\nohat{K}^{(\alpha)}(t,\tau) \leq (1+\tau) \,\sup_{\ell\neq 0}\,
e^{-\alpha |\ell|}\, \sup_k\, \frac{e^{-\alpha
    |k(t-\tau)+\ell\tau|}}{1+|k-\ell|^\gamma},
\endeq
and the upper bound is a nonincreasing function of $d$, so we assume
$d=1$.  Without loss of generality we restrict the supremum to
$\ell>0$.

The function $x\longmapsto (1+|x-\ell|^\gamma)^{-1}\,e^{-\alpha
  |x(t-\tau)+\ell\tau|}$ is decreasing for $x\geq \ell$, increasing
for $x\leq -\ell\tau/(t-\tau)$; and on the interval
$[-\frac{\ell\tau}{t-\tau},\ell]$ its logarithmic derivative goes from
$$
\left(-\alpha + \frac{\frac{\gamma}{\ell t}}
{1+\left(\frac{(t-\tau)}{\ell t}\right)^\gamma}\right) 
\, (t-\tau) \qquad \mbox{ to } -\alpha(t-\tau).
$$ 
So if $t\geq \gamma/\alpha$ there is a unique maximum at
$x=-\ell\tau/(t-\tau)$, and the supremum in \eqref{Kbdt2} is achieved
for $k$ equal to either the lower integer part, or the upper integer
part of $-\ell \tau/(t-\tau)$. Thus a given integer $k$ occurs in the
supremum only for some times $\tau$ satisfying $k-1 <
-\ell\tau/(t-\tau)<k+1$.  Since only negative values of $k$ occur, let
us change the sign so that $k$ is nonnegative.  The equation
\[ k-1 < \frac{\ell\tau}{t-\tau} < k+1\]
is equivalent to
\[ \left(\frac{k-1}{k+\ell-1}\right)\,t < \tau <
\left(\frac{k+1}{k+\ell+1}\right)\,t.\] Moreover, $\tau>t/2$ implies
$k\geq \ell$. Thus, for $t\geq \gamma/\alpha$ we have
\begeq\label{tba} e^{-\var t} \int_{t/2}^t
\nohat{K}^{(\alpha)}(t,\tau)\,e^{\var \tau}\,d\tau \leq e^{-\var t}
\sum_{\ell\geq 1} e^{-\alpha \ell} \sum_{k\geq \ell}
\int_{\left(\frac{k-1}{k+\ell-1}\right)t}^{\left(\frac{k+1}{k+\ell+1}\right)t}
(1+\tau)\, \frac{e^{-\alpha |k(t-\tau)-\ell\tau|}\,e^{\var
    \tau}}{1+(k+\ell)^\gamma}\,d\tau.
\endeq

For $t\leq \gamma/\alpha$ we have the trivial bound
\[ e^{-\var t} \int_{t/2}^t
\nohat{K}^{(\alpha)}(t,\tau)\,e^{\var\tau}\,d\tau \leq
\frac{\gamma}{2\alpha};\] so in the sequel we shall just focus on the
estimate of \eqref{tba}.

\med

To evaluate the integral in the right-hand side of \eqref{tba}, we
separate according to whether $\tau$ is smaller or larger than
$kt/(k+\ell)$; we use trivial bounds for $e^{\var\tau}$ inside the
integral, and in the end we get the explicit bounds
$$
e^{-\var t} \int_{
  \left(\frac{k-1}{k+\ell-1}\right)t}^{\left(\frac{k}{k+\ell}\right)t}
(1+\tau)\,e^{-\alpha |k(t-\tau)-\ell\tau|}\,e^{\var \tau}\,d\tau \le
e^{-\frac{\var \ell t}{k+\ell}} \, \left[ \frac{1}{\alpha (k + \ell)}
  + \frac{kt}{\alpha (k+ \ell)^2} \right],
$$
$$
e^{-\var t} \int_{
  \left(\frac{k}{k+\ell}\right)t}^{\left(\frac{k+1}{k+\ell+1}\right)t}
(1+\tau)\,e^{-\alpha |k(t-\tau)-\ell\tau|}\,e^{\var \tau}\,d\tau \le
e^{-\frac{\var \ell t}{k+\ell+1}} \, \left[ \frac{1}{\alpha (k +
    \ell)} + \frac{kt}{\alpha (k+ \ell)^2} + \frac{1}{\alpha^2
    (k+\ell)^2} \right].
$$
All in all, there is a numeric constant $C$ such that \eqref{tba} is
bounded above by \begeq\label{tbba} C\ \sum_{\ell\geq 1} e^{-\alpha
  \ell} \sum_{k\geq \ell} \left( \frac{1}{\alpha^2
    (k+\ell)^{2+\gamma}} + \frac{1}{\alpha (k + \ell)^{1+\gamma}} +
  \frac{kt}{\alpha (k+ \ell)^{2+\gamma}} \right)\,
e^{-\frac{\var\,\ell\,t}{k+\ell}},
\endeq
together with an additional similar term where $e^{-\var \ell
  t/(k+\ell)}$ is replaced by $e^{-\var\ell t/(k+\ell+1)}$, and which
will satisfy similar estimates.

We consider separately the three contributions in the right-hand side
of \eqref{tbba}.  The first one is
\[ \frac1{\alpha^2} \sum_{\ell\geq 1} e^{-\alpha \ell} \sum_{k\geq
  \ell} \frac{e^{-\frac{\var\,\ell
      t}{k+\ell}}}{(k+\ell)^{2+\gamma}}.\] To evaluate the behavior of
this sum, we compare it to the two-dimensional integral
\[ I(t) = \frac1{\alpha^2} \int_1^\infty e^{-\alpha x} \int_x^\infty
\frac{e^{-\frac{\var\, xt}{x+y}}}{(x+y)^{2+\gamma}}\,dy\,dx.\] We
change variables $(x,y)\to (x,u)$, where $u(x,y) = \var
xt/(x+y)$. This has Jacobian determinant $(dx\,dy)/(dx\,du) = (\var x
t)/u^2$, and we find
\[ I(t) = \frac1{\alpha^2\,\var^{1+\gamma}\, t^{1+\gamma}}
\int_1^\infty \frac{e^{-\alpha x}}{x^{1+\gamma}}\,dx \int_0^{\var t/2}
e^{-u}\,u^\gamma\,du = O \left(
  \frac1{\alpha^2\,\var^{1+\gamma}\,t^{1+\gamma}}\right).\]

The same computation for the second integral in the right-hand side of
\eqref{tbba} yields
\[ \frac1{\alpha\,\var^{\gamma}\, t^{\gamma}} \int_1^\infty
\frac{e^{-\alpha x}}{x^{\gamma}}\,dx \int_0^{\var t/2}
e^{-u}\,u^{\gamma-1}\,du = O \left(\frac{\ln \frac1{\alpha}}
  {\alpha\,\var^{\gamma}\,t^{\gamma}}\right).\] (The logarithmic
factor arises only for $\gamma=1$.)

The third exponential in the right-hand side of \eqref{tbba} is the
worse.  It yields a contribution \begeq\label{worse} \frac{t}{\alpha}
\sum_{\ell\geq 1} e^{-\alpha \ell} \sum_{k\geq \ell}
\frac{e^{-\frac{\var\, \ell t}{k+\ell}}\,k}{(k+\ell)^{2+\gamma}}.
\endeq
We compare this with the integral
\[ \frac{t}{\alpha} \int_1^\infty e^{-\alpha x} \int_x^\infty
\frac{e^{-\frac{\var\, xt}{x+y}}\,y}{(x+y)^{2+\gamma}}\,dx\,dy,\] and
the same change of variables as before equates this with
\begin{multline*}
\frac1{\alpha\,\var^{\gamma}\, t^{\gamma-1}} \, 
\int_1^\infty \frac{e^{-\alpha x}}{x^{\gamma}}\,dx
\int_0^{\var t/2} e^{-u}\, u^{\gamma-1}\,du\
- \frac1{\alpha\,\var^{1+\gamma}\, t^\gamma} \, 
\int_1^\infty \frac{e^{-\alpha x}}{x^{\gamma}}\,dx
\int_0^{\var t/2} e^{-u}\,u^\gamma\,du \\
= O \left(\frac{\ln \frac1\alpha}{\alpha\,\var^{\gamma}\,t^{\gamma-1}}\right).
\end{multline*} 
(Again the logarithmic factor arises only for $\gamma=1$.)

The proof of Proposition \ref{propexpK} follows by collecting all these bounds
and keeping only the worse one.
\end{proof}

\begin{Rk} It is not easy to catch (say numerically) the behavior of
  \eqref{worse}, because it comes as a superposition of exponentially
  decaying modes; any truncation in $k$ would lead to a radically
  different time-asymptotics.
\end{Rk}

\begin{figure}[htbp]
\begin{minipage}[t]{.4\linewidth}
  \includegraphics[height=5cm]{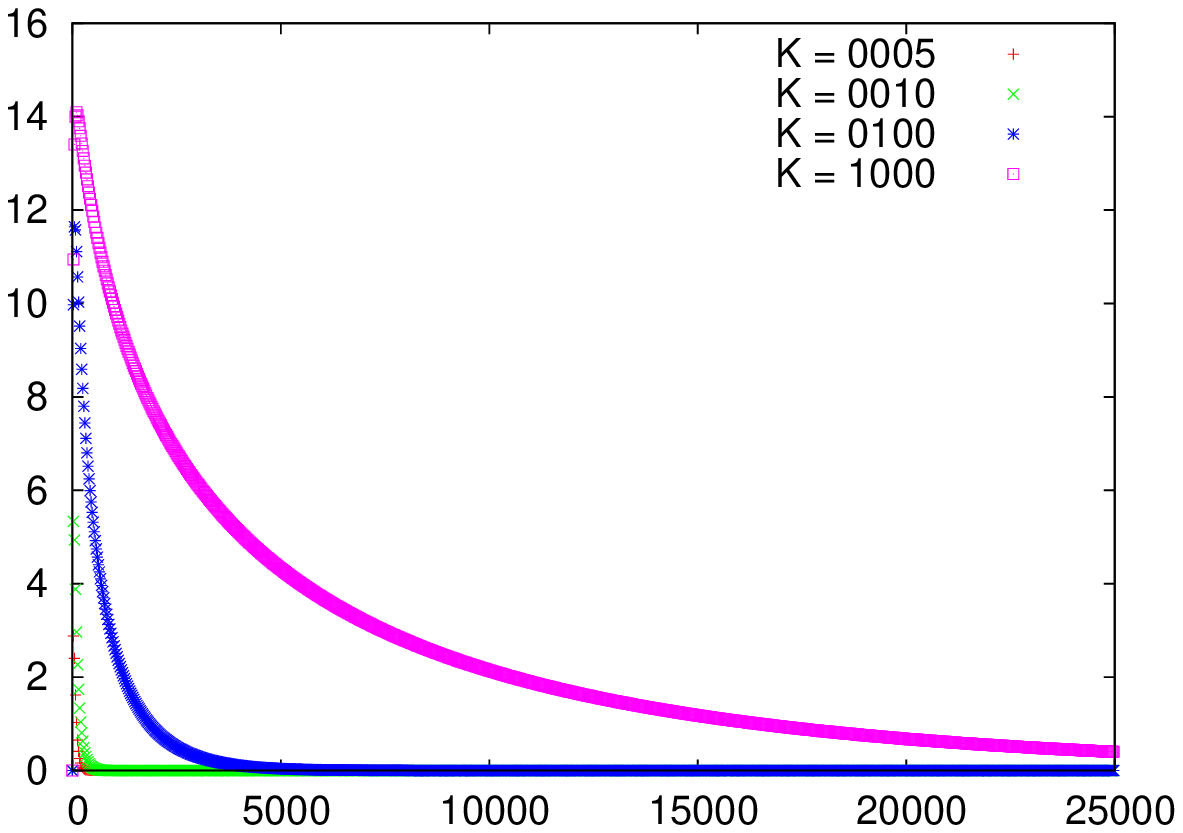}
\end{minipage}\hfill
\begin{minipage}[t]{.4\linewidth}
  \includegraphics[height=5cm]{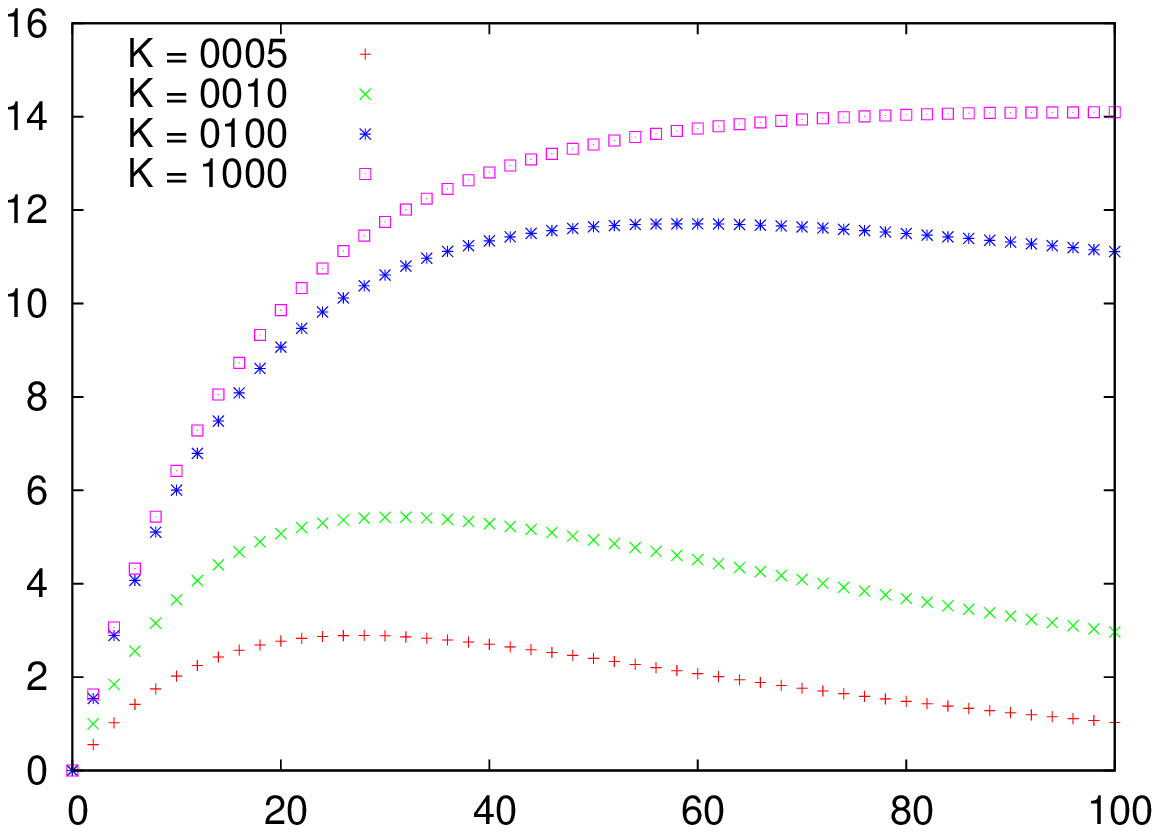}
\end{minipage}\hfill \hspace*{1mm}
\caption{The function \eqref{worse} truncated at $\ell=1$ and $k\leq K$,
for $K=5,10,100,1000$.
The decay is slower and slower, but still
  exponential (picture on the left); however, the maximum value occurs on a much slower time scale
and slowly increases with the truncation parameter (picture on the right, which is a zoom on
shorter times).}
\label{echolointain}
\end{figure}

From Proposition \ref{propexpK} we deduce $L^2$ exponential bounds:

\begin{Cor}[$L^2$ exponential moments of the kernel] \label{cor2expK}
  With the same notation as in Proposition \ref{propexpK},
  
\begeq\label{2expK} e^{-2\var t} \int_0^t
  K^{(\alpha),\gamma}(t,\tau)^2\,e^{2\var\tau}\,d\tau \leq \begin{cases}
    \dps \frac{C(\gamma)}{\alpha^4\,\var^{1+2\gamma}\,t^{2(\gamma-1)}}
 \qquad\qquad \text{if $\gamma>1$}\\[4mm]
    \dps C\, \left(\frac1{\alpha^3\,\var^2} +
      \frac1{\alpha^2\,\var^3\,t}\right)
 \qquad \text{if $\gamma=1$}.
\end{cases}
\endeq
\end{Cor}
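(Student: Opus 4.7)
The plan is to reduce the $L^2$ moment of $K^{(\alpha),\gamma}$ to an $L^1$ moment of a related kernel that is already covered by Proposition \ref{propexpK}. The key observation is that because $K^{(\alpha),\gamma}(t,\tau)$ is $(1+\tau)$ times a supremum of nonnegative quantities, squaring commutes with the supremum, giving
$$
K^{(\alpha),\gamma}(t,\tau)^2 \;=\; (1+\tau)^2 \sup_{k\neq 0,\,\ell\neq 0}\
\frac{e^{-2\alpha|\ell|}\,e^{-2\alpha\,\frac{t-\tau}{t}\,|k-\ell|}\,e^{-2\alpha|k(t-\tau)+\ell\tau|}}{\bigl(1+|k-\ell|^\gamma\bigr)^2}.
$$
Combining the elementary inequalities $(1+x^\gamma)^2 \geq 1+x^{2\gamma}$ and $(1+\tau)^2 \leq (1+t)(1+\tau)$ (valid for $\tau\leq t$), I obtain the pointwise comparison
$$
K^{(\alpha),\gamma}(t,\tau)^2 \;\leq\; (1+t)\,K^{(2\alpha),2\gamma}(t,\tau).
$$

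With this in hand, I apply Proposition \ref{propexpK} with the substitution $(\alpha,\gamma,\var)\mapsto(2\alpha,2\gamma,2\var)$. The smallness hypothesis $2\alpha\leq \ov{\alpha}(2\gamma)$ is just a slight tightening of the assumption ``$\alpha$ small'' (absorbed by taking $\alpha \leq \ov\alpha(2\gamma)/2$), and since we are in the regime $\var\leq\alpha$ we do have $2\var\leq 2\alpha$. This yields
$$
e^{-2\var t}\int_0^t K^{(2\alpha),2\gamma}(t,\tau)\,e^{2\var\tau}\,d\tau
\;\leq\; C(\gamma)\!\left[\frac{1}{\alpha\,\var^{2\gamma}\,t^{2\gamma-1}} + \frac{\ln(1/\alpha)}{\alpha\,\var^{2\gamma}\,t^{2\gamma}} + \frac{1}{\alpha^2\,\var^{1+2\gamma}\,t^{1+2\gamma}} + E(t)\right],
$$
where $E(t)$ collects the exponentially small remainders in $\var t$ and $\alpha t$ from Proposition \ref{propexpK}.

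It then remains to multiply by $(1+t)$ and collect terms. For $t\geq 1$, I use $(1+t)/t^s \leq 2\,t^{1-s}$, converting the three polynomial terms to orders $t^{-2(\gamma-1)}$, $t^{-(2\gamma-1)}$ and $t^{-2\gamma}$. When $\gamma>1$ the first dominates, and the bound $\frac{C(\gamma)}{\alpha\,\var^{2\gamma}\,t^{2(\gamma-1)}}$ is absorbed into $\frac{C(\gamma)}{\alpha^4\,\var^{1+2\gamma}\,t^{2(\gamma-1)}}$ using $\var\leq\alpha\leq 1$. When $\gamma=1$ the three contributions become $\frac{C}{\alpha\,\var^2}$, $\frac{C\,\ln(1/\alpha)}{\alpha\,\var^2\,t}$ and $\frac{C}{\alpha^2\,\var^3\,t}$; absorbing the logarithm via $\ln(1/\alpha)/\alpha\leq C/\alpha^2$ and using $\var\leq 1$, they assemble into $\frac{C}{\alpha^3\,\var^2} + \frac{C}{\alpha^2\,\var^3\,t}$, matching the claim. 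The regime $t\leq 1$ is handled trivially by the crude pointwise bound $K^{(\alpha),\gamma}(t,\tau)\leq 1+\tau\leq 2$, which gives an $O(1)$ bound that is easily dominated by either right-hand side; the exponentially decaying terms in $E(t)$ are similarly absorbed.

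The only real obstacle is the bookkeeping in the last step — ensuring that each of the several terms produced by Proposition \ref{propexpK}, once multiplied by $(1+t)$, fits within the simple two-term form of the corollary. The main mechanism itself, namely that squaring the kernel essentially doubles $(\alpha,\gamma)$ thanks to the $\sup_{k,\ell}$ structure, is elementary.
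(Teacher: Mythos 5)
Your proof is correct and is essentially the paper's own argument: the paper's entire proof consists of the pointwise bound $K^{(\alpha),\gamma}(t,\tau)^2 \leq C\,(1+t)\,K^{(2\alpha),2\gamma}(t,\tau)$ followed by an appeal to Proposition~\ref{propexpK}, which is exactly what you did (you even established the pointwise bound with the sharp constant $C=1$). The only quibble is that you invoke $\var\leq\alpha$ to justify the substitution, but since you apply the general estimate of Proposition~\ref{propexpK} (not its first ``particular case''), only $\var\in(0,1)$ and $2\alpha\leq\ov{\alpha}(2\gamma)$ are actually needed; this does not affect correctness.
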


\begin{proof}[Proof of Corollary \ref{cor2expK}]
  This follows easily from Proposition \ref{propexpK} and the obvious
  bound
\[ K^{(\alpha),\gamma}(t,\tau)^2 \leq C\,(1+t)\,K^{(2\alpha),2\gamma}(t,\tau).\]
\end{proof} 

\subsection{Dual exponential moments}

\begin{Prop} \label{propdualexpK} With the same notation as in
  Proposition \ref{propexpK}, for any $\gamma\geq 1$ we have
  \begeq\label{supdual} \sup_{\tau\geq 0}\: e^{\var \tau} \,
  \int_\tau^\infty e^{-\var t}\,K^{(\alpha),\gamma}(t,\tau)\,dt \leq
  C(\gamma) \, \left( \frac{1}{\alpha^2\,\var} + \frac{\ln
      \frac1\alpha}{\alpha \, \var^\gamma} \right).
\endeq
\end{Prop}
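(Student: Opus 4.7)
The plan is to fix $\tau\ge 0$, substitute $s=t-\tau$, and rewrite
\[
I(\tau) \;:=\; e^{\var\tau}\int_\tau^\infty e^{-\var t}\,K^{(\alpha),\gamma}(t,\tau)\,dt \;=\; \int_0^\infty e^{-\var s}\,K^{(\alpha),\gamma}(s+\tau,\tau)\,ds,
\]
so the claim reduces to controlling $\sup_{\tau\ge 0} I(\tau)$. As in the proof of Proposition~\ref{propexpK} we reduce to $d=1$, drop the harmless factor $e^{-\alpha(s/(s+\tau))|k-\ell|}\le 1$, and replace the sup by a sum in \eqref{hKa} to obtain
\[
I(\tau) \le (1+\tau)\sum_{k,\ell\in\Z\setminus\{0\}} \frac{e^{-\alpha|\ell|}}{1+|k-\ell|^\gamma}\,J_{k,\ell}(\tau), \qquad J_{k,\ell}(\tau):=\int_0^\infty e^{-\var s-\alpha|ks+\ell\tau|}\,ds.
\]
Elementary computation of $J_{k,\ell}$ splits into two cases. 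When $k\ell>0$ the integrand is monotone, giving $J_{k,\ell}(\tau)=e^{-\alpha|\ell|\tau}/(\var+\alpha|k|)$; when $k\ell<0$ (say $k\ge 1$, $\ell=-m$, $m\ge 1$) it peaks at $s_\ast=m\tau/k$ and under the mild restriction $\var\le\alpha/2$, $J_{k,-m}(\tau)\le C\,e^{-\var m\tau/k}/(\alpha k)$.

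The same-sign part is handled by taking $\sup_\tau$ term by term: $\sup_\tau(1+\tau)e^{-\alpha|\ell|\tau}\le C/(\alpha|\ell|)$, combined with $\sum_{k\ge 1} 1/\bigl(k(1+|k-\ell|^\gamma)\bigr)\le C(\gamma)\ln(1+|\ell|)/|\ell|$ and summation against $e^{-\alpha|\ell|}$, contributes $C(\gamma)/\alpha^2\le C(\gamma)/(\alpha^2\var)$ for $\var\le 1$. The main difficulty is the opposite-sign part, which reduces to bounding
\[
\sup_{\tau\ge 0}\Phi_m(\tau), \qquad \Phi_m(\tau):=(1+\tau)\sum_{k\ge 1}\frac{e^{-\var m\tau/k}}{k\bigl(1+(k+m)^\gamma\bigr)},
\]
uniformly in $m\ge 1$. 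Here the $k$-summation \emph{must precede} the $\tau$-supremum: the naive term-by-term estimate $\sup_\tau(1+\tau)e^{-\var m\tau/k}\lesssim 1+k/(\var m)$ produces a divergent sum over $k$ when $\gamma=1$, because the peaks of the individual terms occur at widely separated values of $\tau$.

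To carry out the $k$-summation I would note that $F_m(\tau):=\sum_k e^{-\var m\tau/k}/[k(1+(k+m)^\gamma)]$ is nonincreasing in $\tau$, and use the substitution $u=\var m\tau/k$ to approximate the discrete sum by
\[
F_m(\tau) \;\simeq\; \frac{1}{m^\gamma}\int_0^{\var m\tau}\frac{u^{\gamma-1}e^{-u}}{(\var\tau+u)^\gamma}\,du.
\]
A straightforward analysis then yields $\sup_\tau\Phi_m(\tau)\le C(\gamma)/(\var m^\gamma)$ when $\gamma>1$ (realized near $\tau\sim 1/\var$), and $\sup_\tau\Phi_m(\tau)\le C\,\max\{\ln(1+m)/m,\,1/(\var m)\}$ when $\gamma=1$ (the first branch coming from $\tau=0$, where $F_m(0)=H_{m+1}/(m+1)$, the second from $\tau\sim 1/\var$). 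Summation against $e^{-\alpha m}/\alpha$ finishes the job: for $\gamma>1$ the convergence of $\sum_m e^{-\alpha m}(1+\ln m)/m^\gamma$ gives $C(\gamma)/(\alpha\var)\le C(\gamma)\ln(1/\alpha)/(\alpha\var^\gamma)$; for $\gamma=1$, splitting the $m$-sum at $m\sim\min\{e^{1/\var},1/\alpha\}$, the low-$m$ range yields $C\ln(1/\alpha)/(\alpha\var)$, matching the second term of the proposition, while the high-$m$ range produces at most $C\ln^2(1/\alpha)/\alpha$, absorbed into $C/(\alpha^2\var)$ under the standing smallness hypothesis $\alpha\ln^2(1/\alpha)\le 1$. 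The main obstacle is precisely the Coulomb/Newton case $\gamma=1$: the argument is tight and crucially exploits the physical separation of echoes in $\tau$ by respecting the order ``sum in $k$ first, sup in $\tau$ second''.
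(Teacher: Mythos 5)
Your proof is correct and establishes the stated bound, but it follows a genuinely different route than the paper.

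The paper's proof splits the integral at $t=2\tau$, writing $\int_\tau^\infty = \int_\tau^{2\tau} + \int_{2\tau}^\infty$: on $[2\tau,\infty)$ it uses the crude uniform bound $K^{(\alpha),\gamma}(t,\tau) \le C(1+\tau)/\alpha^2$, yielding the $1/(\alpha^2\var)$ term; on $[\tau,2\tau]$ it borrows the nearest-integer bookkeeping from Proposition~\ref{propexpK} (for each $(k,\ell)$ only a $\tau$-dependent window of $t$ contributes to the sup, and the constraint $\tau\le t\le 2\tau$ forces $k\ge\ell$), and then performs essentially the same change of variable $u=\var x\tau/y$ that you use. You instead substitute $s=t-\tau$, compute the single-mode integral $J_{k,\ell}$ in closed form, and split by the \emph{sign} of $k\ell$ rather than by time: the same-sign pairs carry an $e^{-\alpha|\ell|\tau}$ and give the crude $1/(\alpha^2\var)$, while the opposite-sign pairs carry the echo mechanism. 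Your formulation has the pedagogical advantage of making explicit what the paper leaves implicit — namely, that the $k$-summation must precede the $\sup_\tau$, because the echoes at distinct $k$ live at well-separated values of $\tau$ and the term-by-term supremum diverges at $\gamma=1$. The paper's time-split is a slicker way to impose the same separation via the constraint $k\ge\ell$. A minor cost of your approach is the extra standing hypothesis $\var\le\alpha/2$ (used to control $J_{k,-m}$), which the paper does not need at this point; this is harmless since the restriction $\var < \alpha/4$ is imposed later in the scheme anyway. Both arguments reduce to the same Beta-type integral after the change of variable, so the analytic core is identical; your version simply foregrounds the echo-separation structure, at the price of slightly longer bookkeeping in the $\gamma=1$ endgame.
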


\begin{Rk} The corresponding computation for the baby model considered 
in Subsection \ref{sobolint} is
\begin{align*}
e^{\var \tau} \, \left(\frac{1+\tau}{\alpha}\right)\,
\sum_{k\geq 1} \frac{e^{-\var \left(\frac{k+1}{k}\right)\tau}}{k^{1+\gamma}}
& \simeq \left(\frac{1+\tau}{\alpha}\right) \int_1^\infty \frac{e^{-\var \tau/x}}{x^{1+\gamma}}\,dx\\
& = \left(\frac{1+\tau}{\tau^\gamma}\right)\, \left(\frac1{\alpha\,\var^\gamma}\right)
\int_0^{\var \tau}e^{-u}\,u^{\gamma-1}\,du.
\end{align*}
So we expect the dependence upon $\var$ in \eqref{supdual} to be sharp
for $\gamma\to 1$.
\end{Rk} 

\begin{proof}[Proof of Proposition \ref{propdualexpK}]

We first reduce to $d=1$, and split the integral as 
\begin{align*}
e^{\var \tau} \, \int_\tau^\infty e^{-\var t}\,K^{(\alpha),\gamma}(t,\tau)\,dt
& = e^{\var \tau} \, \int_{2\tau} ^\infty e^{-\var t}\,K^{(\alpha),\gamma}(t,\tau)\,dt 
+ e^{\var \tau} \int_\tau^{2\tau} e^{-\var t}\,K^{(\alpha),\gamma}(t,\tau)\,dt \\
& =: I_1 + I_2.
\end{align*}

The first term $I_1$ is easy: for $2 \tau \le t \le +\infty$ we have
$$
K^{(\alpha),\gamma}(t,\tau) \le (1+\tau) \, \sum_{k >1, \ \ell \neq 0} 
e^{-\alpha |\ell| - \frac{\alpha}2 |k-\ell|} \le \frac{C\, (1+\tau)}{\alpha^2},
$$
and thus
$$
e^{\var \tau} \, \int_{2\tau} ^\infty e^{-\var t}\,K^{(\alpha),\gamma}(t,\tau)\,dt 
\le \frac{C\,(1+\tau)}{\alpha^2}\,e^{-\var\tau}\leq \frac{C}{\var \, \alpha^{2}}.
$$

We treat the second term $I_2$ as in the proof of Proposition \ref{propexpK}:
\begin{align*} e^{\var\tau} \int_\tau^{2\tau} & K^{(\alpha),\gamma}(t,\tau)\,e^{-\var t}\,dt\\
& \leq e^{\var\tau}\,(1+\tau) \sum_{\ell\geq 1} e^{-\alpha\ell}
\sum_{k\geq \ell} \int_{\left(\frac{k+\ell+1}{k+1}\right)\tau}^{\left(\frac{k+\ell-1}{k-1}\right)\tau}
\frac{e^{-\alpha |k(t-\tau)-\ell\tau|}}{1+(k+\ell)^\gamma}\,e^{-\var t}\,dt\\
& \leq (1+\tau) \sum_{\ell\geq 1} e^{-\alpha \ell}
\sum_{k\geq\ell} \frac{e^{-\var\frac{\ell}{k}\tau}}{k^\gamma}\,\left(\frac2{k\alpha}\right).
\end{align*}
We compare this with
\begin{align*}
\frac{2\,(1+\tau)}{\alpha}
\int_1^\infty & e^{-\alpha x} 
\int_x^\infty \frac{e^{-\var\frac{x}{y}\tau}}{y^{1+\gamma}}\,dy\,dx\\
& = \frac{2}{\alpha\,\var^\gamma}
\left(\frac{1+\tau}{\tau^\gamma}\right)
\int_1^\infty \frac{e^{-\alpha x}}{x^\gamma}
\int_0^{\var\tau} e^{-u}\,u^{\gamma-1}\,du\,dx\\
& \leq \frac{C\, \ln (1/\alpha)}{\alpha\,\var^\gamma},
\end{align*}
where we used the change of variables $u=\var x \tau/y$.  The desired
conclusion follows.  Note that as before the term $\ln (1/\alpha)$
only occurs when $\gamma=1$, and that for $\gamma >1$, one could
improve the estimate above into a time decay of the form
$O(\tau^{-(\gamma-1)})$.
\end{proof}

\subsection{Growth control} \label{subsgrowth}

To state the main result of this section we shall write
$\Z^d_*=\Z^d\setminus\{0\}$; and if a sequence of functions
$\Phi(k,t)$ ($k\in\Z^d_*$, $t\in\R$) is given, then
$\|\Phi(t)\|_\lambda = \sum_k e^{2\pi\lambda |k|}\,|\Phi(k,t)|$. We
shall use $K(s)\,\Phi(t)$ as a shorthand for
$(K(k,s)\,\Phi(k,t))_{k\in\Z^d_*}$, etc.

\begin{Thm}[Growth control {\it via} integral inequalities] 
\label{thmgrowth} 
Let $f^0=f^0(v)$ and $W=W(x)$ satisfy condition {\bf (L)} from Subsection \ref{sub:lineardamping}
with constants $C_0,\lambda_0,\kappa$; in particular
$|\tilde{f}^0(\eta)|\leq C_0\, e^{-2\pi\lambda_0|\eta|}$. Let further
\[C_W = \max \left\{ \sum_{k\in\Z^d_*} |\hat{W}(k)|,\ \
  \sup_{k\in\Z^d_*}\, |k|\,|\hat{W}(k)| \right\}.\] Let $A\geq 0$,
$\mu\geq 0$, $\lambda\in (0,\lambda^*]$ with
$0<\lambda^*<\lambda_0$. Let $(\Phi(k,t))_{k\in\Z^d_*,\ t\geq 0}$ be a
continuous function of $t\geq 0$, valued in $\C^{\Z^d_*}$, such that
\begin{multline} \label{cond74} \forall \, t\geq 0,\qquad
  \left\| \Phi(t) - \int_0^t K^0(t-\tau)\,\Phi(\tau)\,d\tau \right\|_{\lambda t+\mu}\\
  \leq A + \int_0^t \left[ K_0(t,\tau)+ K_1(t,\tau) +
    \frac{c_0}{(1+\tau)^m}\right]\ \|\Phi(\tau)\|_{\lambda\tau
    +\mu}\,d\tau,
\end{multline}
where $c_0\geq 0$, $m>1$ and $K_0(t,\tau)$, $K_1(t,\tau)$ are
nonnegative kernels.  Let $\varphi(t) = \|\Phi(t)\|_{\lambda
  t+\mu}$. Then \sm

(i) Assume $\gamma>1$ and $K_1 = c\, K^{(\alpha),\gamma}$ for
some $c>0$, $\alpha\in (0,\ov{\alpha}(\gamma))$, where
$K^{(\alpha),\gamma}$ is defined by \eqref{hKa}, and
$\ov{\alpha}(\gamma)$ appears in Proposition \ref{propexpK}. Then
there are positive constants $C$ and $\chi$, depending only on
$\gamma,\lambda^*,\lambda_0,\kappa,c_0,C_W,m$, uniform as
$\gamma\to 1$, such that if 
\begeq\label{condchi} \sup_{t\geq 0}
\int_0^t K_0(t,\tau)\,d\tau \leq \chi
\endeq
and
\begeq\label{condc0}
\sup_{t\geq 0} \left( \int_0^t K_0(t,\tau)^2\,d\tau \right)^{1/2}
+ \sup_{\tau \geq 0} \int_\tau^\infty K_0(t,\tau)\,dt \leq 1,
\endeq
then for any $\var \in (0,\alpha)$, 
\begeq\label{phileq} \forall \,
t\geq 0,\qquad \varphi(t) \leq C\, A\, 
\frac{(1+c_0^2)}{\sqrt{\var}}  \, 
e^{C \, c_0}\,\left(1+\frac{c}{\alpha\,\var} \right)\,
e^{CT}\,e^{C\,c \,(1+T^2)}\,e^{\var t},
\endeq
where \begeq\label{condT} T = C\,\max \left\{
  \left(\frac{c^2}{\alpha^5\,\var^{2+\gamma}}\right)^{\frac1{\gamma-1}}\,
  ; \,
  \left(\frac{c}{\alpha^2\,\var^{\gamma+\frac12}}\right)^{\frac1{\gamma-1}};\,
  \left(\frac{c_0^2}{\var}\right)^{\frac1{2m-1}}\right\}.
\endeq 
\sm

(ii) Assume $K_1 = \sum_{1\leq i\leq N} c_i\, K^{(\alpha_i),1}$ for
some $\alpha_i \in (0,\ov{\alpha}(1))$, where $\ov{\alpha}(1)$ appears
in Proposition \ref{propexpK}; then there is a numeric constant
$\Gamma>0$ such that whenever
\[ 1\geq \var \geq \Gamma\ \sum_{i=1} ^N \frac{c_i}{\alpha_i ^3}, \]
one has, with the same notation as in (i), \begeq\label{phileq1}
\forall \, t\geq 0,\qquad \varphi(t) \leq C\,A\, \frac{(1+c_0^2)\,e^{C\,c_0}}
{\sqrt{\var}}\, e^{CT}\,e^{C\, c\,(1+T^2)}\,e^{\var t},
\endeq
where
\[ c= \sum_{i=1} ^N c_i,\qquad
T = C\, \max \left\{ 
  \frac{1}{\var^2} \, \left( \sum_{i=1} ^N \frac{c_i}{\alpha_i^3}\right) \, ;\,
\left(\frac{c_0^2}{\var}\right)^{\frac1{2m-1}}\right\}.\]
\end{Thm}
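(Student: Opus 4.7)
The strategy combines three ingredients: inversion of the linear Volterra operator via condition \textbf{(L)}; a short-time Gronwall estimate for $t \le T$; and a long-time fixed-point argument in an exponentially weighted norm that exploits the moment bounds of Propositions~\ref{propexpK}--\ref{propdualexpK} and Corollary~\ref{cor2expK}.

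\textbf{Stage 1: Elimination of the Volterra kernel.} For each fixed mode $k\in\Z^d_*$, denote by $R_k(t)$ a pointwise upper bound for $|\Phi(k,t)-\int_0^t K^0(t-\tau,k)\Phi(k,\tau)\,d\tau|$, so that $\sum_k e^{2\pi(\lambda t+\mu)|k|}R_k(t)$ equals the right-hand side of \eqref{cond74}. A pointwise version of Lemma~\ref{lemvolterra}, obtained by Fourier--Laplace inversion in time combined with condition \textbf{(L)} in the strip $0\le\Re\xi<\lambda$, expresses $\Phi(k,\,\cdot\,)$ in terms of $R_k$ via a resolvent kernel whose $L^1$ and $L^2$ norms in time are controlled by $C(\kappa,\lambda,\lambda^\star)$ (with the $L^2$ bound producing the $1/\sqrt{\var}$ factor in the final estimate through Plancherel, as in \eqref{Phiinfty}). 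Summing in $k$ with weights $e^{2\pi((\lambda-\eta) t+\mu)|k|}$ for a tiny loss $\eta>0$ (reabsorbed in the final constants), this reduces \eqref{cond74} to the clean integral inequality
\[
\varphi(t) \le C_\kappa\,\frac{A}{\sqrt{\var}} + C_\kappa \int_0^t \Bigl[K_0(t,\tau)+K_1(t,\tau)+\tfrac{c_0}{(1+\tau)^m}\Bigr]\,\varphi(\tau)\,d\tau.
\]

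\textbf{Stage 2: Short-time bound.} Fix $T$ as in \eqref{condT}. For $t\in[0,T]$, the bounds $\int_0^t K_0\,d\tau\le\chi$ (assumption \eqref{condchi}), $\int_0^t c_0(1+\tau)^{-m}\,d\tau\le C c_0$, and the crude estimate $K^{(\alpha),\gamma}(t,\tau)\le C(1+\tau)/\alpha^2$ which yields $\int_0^T K_1(t,\tau)\,d\tau \le C\,c\,(1+T^2)$, combined with Gronwall's lemma, give
\[
\varphi(t) \le \tfrac{C A}{\sqrt\var}\,\exp\bigl(C_\kappa[\chi+c(1+T^2)+c_0]\bigr),\qquad t\le T,
\]
which accounts for the $e^{CT}e^{Cc(1+T^2)}e^{Cc_0}$ factors in \eqref{phileq}.

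\textbf{Stage 3: Long-time bootstrap.} Set $\psi(t):=\varphi(t)e^{-\var t}$ and rewrite the inequality as
\[
\psi(t) \le \tfrac{C_\kappa A}{\sqrt\var} e^{-\var t}+ C_\kappa \int_0^t \Bigl[K_0(t,\tau)+K_1(t,\tau)+\tfrac{c_0}{(1+\tau)^m}\Bigr]\,e^{-\var(t-\tau)}\,\psi(\tau)\,d\tau.
\]
The contribution of $\tau\le T$ is estimated by combining the short-time bound from Stage~2 with Proposition~\ref{propexpK}. For $\tau>T$, one performs a fixed-point iteration for $\psi$ in the space $L^\infty([T,\infty))$ (using $L^\infty\to L^\infty$ bounds from Proposition~\ref{propexpK}, together with $L^\infty\to L^2$ and $L^2\to L^\infty$ bounds from Corollary~\ref{cor2expK} and Proposition~\ref{propdualexpK} to treat the $K_0$ term squared through Cauchy--Schwarz and the assumption \eqref{condc0}). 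The choice of $T$ in \eqref{condT} is precisely the threshold at which the time-decay $t^{-(\gamma-1)}$ in Proposition~\ref{propexpK} makes the operator norms of these kernels strictly less than $1/2$, ensuring convergence of the iteration and producing the factor $\bigl(1+c/(\alpha\var)\bigr)$.

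\textbf{Main obstacle and Coulomb case.} The delicate point is case (ii), $\gamma=1$: Proposition~\ref{propexpK} then provides no time-decay, only the uniform bound $\int_0^t K^{(\alpha),1}(t,\tau)e^{-\var(t-\tau)}\,d\tau \le C/(\alpha^3\var)$. One cannot win smallness by sending $T\to\infty$; instead the structural assumption $\var\ge \Gamma\sum c_i/\alpha_i^3$ directly forces the $K_1$-contribution to the operator norm to stay below $1/2$, so the Picard iteration in Stage~3 still closes. The remaining time-decaying contributions from $K_0$ and $c_0/(1+\tau)^m$ then fix $T$ through the second alternative in \eqref{condT}. This separation between the ``uniform'' estimate needed at $\gamma=1$ and the ``decaying'' estimate available for $\gamma>1$ is the reason the two cases of the theorem receive distinct statements, and it is the single place where all the fine structure of the echo kernel $K^{(\alpha),\gamma}$ developed in Section~\ref{sec:response} is genuinely used.
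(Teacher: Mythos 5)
Your Stage~1 is where the proposal breaks down, and the gap is not cosmetic: the Volterra kernel $K^0$ cannot be eliminated by a ``pointwise version of Lemma~\ref{lemvolterra}'' to yield the clean inequality
\[
\varphi(t) \le C_\kappa\,\frac{A}{\sqrt{\var}} + C_\kappa \int_0^t \Bigl[K_0(t,\tau)+K_1(t,\tau)+\tfrac{c_0}{(1+\tau)^m}\Bigr]\,\varphi(\tau)\,d\tau,
\]
for two reasons. First, condition \textbf{(L)} gives the bound $|1-\hat{\K}^0_k|\geq\kappa$ after Fourier transform in time, which controls the resolvent operator $(I-K^0\ast)^{-1}$ only in $L^2(dt)$ via Plancherel; the resolvent is not known to be $L^\infty\to L^\infty$ bounded under \textbf{(L)} alone, and indeed the paper's simplified proof (under the extra hypothesis \eqref{K0simplif} of $L^1$-smallness of $K^0 e^{2\pi\lambda'|k|t/L}$, which is strictly stronger than \textbf{(L)}) is precisely the case where a direct pointwise comparison argument works. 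Second, even if a pointwise resolvent bound were available, convolving the resolvent kernel with $K_0$, $K_1$ and $c_0/(1+\tau)^m$ would produce new kernels whose echo structure and moment bounds would need to be re-derived from scratch; it is not legitimate to assert that the ``reduced'' kernels have the same form. The $A/\sqrt{\var}$ factor is also misattributed: in the paper it comes from the elementary identity $\bigl(\int_0^\infty e^{-2\var t}A^2\,dt\bigr)^{1/2}=A/\sqrt{2\var}$ inside the $L^2$ estimate (see \eqref{oc1}), not from any resolvent inversion.

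The paper's actual route is a three-step $L^\infty$--$L^2$--$L^\infty$ sandwich that avoids ever inverting the Volterra operator pointwise. Step~1 (crude pointwise bounds) keeps $K^0$ and bounds $\sup_k|K^0(k,t-\tau)|e^{2\pi\lambda(t-\tau)|k|}\leq C C_0 C_W/(\lambda_0-\lambda)$ to run Gronwall and control $\varphi$ on $[0,T]$. Step~2 defines $\Psi_k(t)=e^{-\var t}\Phi(k,t)e^{2\pi(\lambda t+\mu)|k|}$, exploits \textbf{(L)} only through the Plancherel inequality $\|\Psi_k\|_{L^2}\leq\kappa^{-1}\|R_k\|_{L^2}$, and then uses Minkowski, Jensen and Young-type estimates together with Propositions~\ref{propexpK}, \ref{propdualexpK} to obtain a closed $L^2(dt)$ inequality on $\varphi(t)e^{-\var t}$; this is where the smallness condition \eqref{condchi}, the role of $T$ as the threshold for smallness of the $K_1$ exponential moments, and the $1/\sqrt\var$ factor all enter. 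Step~3 substitutes the $L^2$ bound back into the original inequality \eqref{cond74}, applying Cauchy--Schwarz with the $L^2$ moment bounds from Corollary~\ref{cor2expK}, to recover a pointwise bound. Your intuition about the role of $T$ and the Coulomb distinction in part (ii) is right, but without the $L^2$ detour --- and with the claim that the Volterra kernel disappears up front --- the argument does not close.
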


\begin{Rk} Let apart the term $c_0/(1+\tau)^m$ which will appear as a
  technical correction, there are three different kernels appearing in
  Theorem \ref{thmgrowth}: the kernel $K^0$, which is associated with
  the linearized Landau damping; the kernel $K_1$, describing
  nonlinear echoes (due to interaction between differing Fourier
  modes); and the kernel $K_0$, describing the instantaneous response
  (due to interaction between identical Fourier modes).
\end{Rk}

We shall first prove Theorem \ref{thmgrowth} assuming
\begeq\label{c00}
c_0 =0
\endeq
and 
\begeq\label{K0simplif}
\int_0^\infty \sup_k |K^0(k,t)|\,e^{2\pi\lambda_0|k|t}\,dt 
\leq 1-\kappa, \qquad \kappa \in (0,1),
\endeq
which is a reinforcement of condition {\bf (L)}. Under these assumptions the proof
of Theorem \ref{thmgrowth} is much simpler, and its conclusion can be substantially simplified
too: $\chi$ depends only on $\kappa$; condition \eqref{condc0} on $K_0$ can be dropped;
and the factor $e^{CT}(1+c/(\alpha\,\var^{3/2}))$ in \eqref{phileq} can be omitted.
If $\hat{W}\leq 0$ (as for gravitational interaction) and $\tilde{f}^0\geq 0$ (as for Maxwellian
background), these additional assumptions do not constitute a loss of generality,
since \eqref{K0simplif} becomes essentially equivalent to {\bf (L)}, and
for $c_0$ small enough the term $c_0(1+\tau)^{-m}$ can be incorporated inside $K_0$.

\begin{proof}[Proof of Theorem \ref{thmgrowth} under \eqref{condc0} and \eqref{K0simplif}]
We have
\begeq\label{phiak}
\varphi(t) \leq A + \int_0^t \Bigl(|K^0| (t-\tau)+ K_0(t,\tau) + K_1(t,\tau)\Bigr)\,\varphi(\tau)\,d\tau,
\endeq
where $|K^0(t)| = \sup_k\, |K^0(k,t)|$.
We shall estimate $\varphi$ by a maximum principle argument.
Let $\psi(t) = B\,e^{\var t}$, where $B$ will be chosen later. If $\psi$ satisfies, for some $T\geq 0$,
\begeq\label{systempsi}
\begin{cases} \varphi(t) < \psi(t)\qquad \text{for $0\leq t\leq T$},\\[3mm]
\dps \psi(t)\geq A + \int_0^t \Bigl( |K^0|(t,\tau) + K_0(t,\tau) 
+ K_1(t,\tau)\Bigr) \, \psi(\tau)\,d\tau 
\qquad \text{for $t\geq T$},
\end{cases}
\endeq
then $u(t) := \psi(t) - \varphi(t)$ is positive for $t\leq T$, and satisfies $u(t) \geq \int_0^t K(t,\tau)\,u(\tau)\,d\tau$
for $t\geq T$, with $K=|K^0|+K_0+K_1>0$; this prevents $u$ from vanishing at later times, so $u\geq 0$ and
$\varphi\leq \psi$. Thus it is sufficient to establish \eqref{systempsi}.
\med

\noindent
{\bf Case (i)}: By Proposition \ref{propexpK}, and since $\int (|K^0| +
K_0)\,d\tau \leq 1 - \kappa/2$ (for $\chi \le \kappa/2$),
\begin{multline} \label{proofexpeps} A + \int_0^t \bigl[|K^0|(t,\tau)
  + K_0(t,\tau)\bigr]\,\psi(\tau)\,d\tau
  + c \int_0^t \nohat{K}^{(\alpha),\gamma}(t,\tau)\,\psi(\tau)\,d\tau\\
  \leq A + \left[ \left(1-\frac{\kappa}2\right) +
    \frac{c\,C(\gamma)}{\alpha^3\,\var^{1+\gamma}\,t^{\gamma-1}}\right]\,
  B\,e^{\var  t}.
\end{multline} 
For $t\geq T:= (4 \, c\, C \,
(\alpha^3\,\var^{1+\gamma}\kappa))^{1/(\gamma-1)}$, this is bounded
above by $A+(1-\kappa/4)\,B\,e^{\var t}$, which in turn is bounded by
$B\,e^{\var t}$ as soon as $B\geq 4 \, A/\kappa$.

On the other hand, from the inequality
\[ \varphi(t) \leq A + \left(1-\frac{\kappa}{2}\right)\, \sup_{0\leq
  \tau\leq t}\varphi(\tau) + c\, (1+t) \int_0^t \varphi(\tau)\,d\tau\]
we deduce
\[ \varphi(t) \leq \left(\frac{2 A}{\kappa}\right)\, (1+t) \,
e^{\frac{2c}{\kappa}\left(t+\frac{t^2}{2}\right)}\] 
 In particular, if
\[ \frac{4A}{\kappa}\, e^{c'(T + T^2)} \leq B\] with $c'=c'(c,\kappa)$
large enough, then for $0\leq t\leq T$ we have $\varphi(t) \leq
\psi(t)/2$, and \eqref{systempsi} holds.\med

\noindent
{\bf Case (ii)}: $K_1 = \sum c_i\,K^{(\alpha_i),1}$. We use the same
reasoning, replacing the right-hand side in \eqref{proofexpeps} by
\[ A+ \left[ \left(1-\frac{\kappa}{2}\right) +
  C \, \left( \sum_{i=1} ^N \frac{c_i}{\alpha_i ^3\,\var} 
    + \sum_{i=1} ^N \frac{c_i}{\alpha_i ^3\,\var^2 \, t} \right)
\right] \, B\, e^{\var t}.\] 

To conclude the proof, we may first impose a lower bound on $\var$ to ensure
\begeq\label{lbensures}
C \, \sum_{i=1} ^N \frac{c_i}{\alpha_i ^3\,\var} \le \frac{\kappa}8,
\endeq
and then choose $t$ large enough to guarantee
\begeq\label{tlargeguar}
C \, \sum_{i=1} ^N \frac{c_i}{\alpha_i ^3\,\var^2 \, t} \le
\frac{\kappa}8;
\endeq
this yields (ii). 
\end{proof}

\begin{proof}[Proof of Theorem \ref{thmgrowth} in the general case]
  We only treat (i), since the reasoning for (ii) is rather
  similar; and we only establish the conclusion as an {\it a priori}
  estimate, skipping the continuity/approximation argument needed to
  turn it into a rigorous estimate. Then the proof is done in three
  steps.  \sm

\noindent {\bf Step 1:} {\em Crude pointwise bounds.}
From \eqref{cond74} we have
\begin{align} \label{phileqA0t}
\varphi(t) & = \sum_{k\in\Z^d_*}
|\Phi(k,t)|\,e^{2\pi(\lambda t +\mu)|k|}\\
\nonumber & \leq A + \sum_k \int_0^t \bigl|K^0(k,t-\tau)\bigr|\,e^{2\pi(\lambda t+\mu)|k|}\,
|\Phi(t,\tau)|\,d\tau\\
\nonumber & \qquad + \int_0^t \left[ K_0(t,\tau) + K_1(t,\tau) + \frac{c_0}{(1+\tau)^m}\right]\,
\varphi(\tau)\,d\tau\\
\nonumber & \leq A + \int_0^t \Biggl[ \left(\sup_k\ \bigl|K^0(k,t-\tau)\bigr|\,
e^{2\pi\lambda (t-\tau)|k|}\right) \\
\nonumber & \qquad \qquad\qquad
+K_1(t,\tau) + K_0(t,\tau) + \frac{c_0}{(1+\tau)^m}\Biggr]\,\varphi(\tau)\,d\tau.
\end{align}
We note that for any $k\in\Z^d_*$ and $t\geq 0$,
\begin{align*}
\bigl|K^0(k,t-\tau)\bigr|\,e^{2\pi\lambda |k|(t-\tau)}
& \leq 4\pi^2\, |\hat{W}(k)|\,C_0\,e^{-2\pi (\lambda_0-\lambda)|k|t}\,|k|^2\,t \\
& \leq \frac{C\,C_0}{\lambda_0-\lambda}\,
\left(\sup_{k\neq 0}\ |k|\,|\hat{W}(k)|\right) \leq \frac{C\,C_0\,C_W}{\lambda_0-\lambda},
\end{align*}
where (here as below) $C$ stands for a numeric constant which may change from line to line.
Assuming $\int K_0(t,\tau)\,d\tau \leq 1/2$, we deduce from \eqref{phileqA0t}
\[ \varphi(t) \leq A + \frac12 \, \left(\sup_{0\leq\tau\leq t}\ \varphi(\tau)\right)
+ C \int_0^t \left(\frac{C_0\,C_W}{\lambda_0-\lambda} + c\,(1+t) + \frac{c_0}{(1+\tau)^m}\right)\,
\varphi(\tau)\,d\tau,\]
and by Gronwall's lemma
\begeq\label{Gron}
\varphi(t) \leq 2A\, e^{C\left( \frac{C_0\,C_W}{\lambda_0-\lambda}\,t
+ c (t+ t^2) + c_0\,C_m\right)},
\endeq
where $C_m = \int_0^\infty (1+\tau)^{-m}\,d\tau$. \med

\noindent{\bf Step~2:} {\em $L^2$ bound}. This is the step where the
smallness assumption \eqref{condchi} will be most important. For all
$k\in\Z^d_*$, $t\geq 0$, we define \begeq\label{Psik} \Psi_k(t) =
e^{-\var t}\,\Phi(k,t)\,e^{2\pi(\lambda t+\mu)|k|},
\endeq
\begeq\label{K0k}
\K^0_k(t) = e^{-\var t}\,K^0(k,t)\,e^{2\pi(\lambda t+\mu)|k|},
\endeq
\begin{align}\label{Rkt}
R_k(t) & = e^{-\var t}\,
\left( \Phi(k,t) - \int_0^t K^0(k,t-\tau)\,\Phi(k,\tau)\,d\tau\right)\,e^{2\pi(\lambda t+\mu)|k|}\\
\nonumber & = \bigl(\Psi_k - \Psi_k\ast\K^0_k\bigr)(t),
\end{align}
and we extend all these functions by~$0$ for negative values of $t$.
Taking Fourier transform in the time variable yields $\hat{R}_k =
(1-\hat{\K}^0_k)\,\hat{\Psi}_k$; since condition {\bf (L)} implies
$|1-\hat{\K}^0_k|\geq \kappa$, we deduce $\|\hat{\Psi}_k\|_{L^2}\leq
\kappa^{-1}\,\|\hat{R}_k\|_{L^2}$, {\it i.e.}, \begeq\label{PsikRk}
\|\Psi_k\|_{L^2(dt)} \leq \frac{\|R_k\|_{L^2(dt)}}{\kappa}.
\endeq 
Plugging \eqref{PsikRk} into \eqref{Rkt}, we deduce
\begeq\label{PsiRkleq} \forall \, k\in\Z^d_*,\qquad
\bigl\|\Psi_k-R_k\bigr\|_{L^2(dt)} \leq
\frac{\|\K^0_k\|_{L^1(dt)}}{\kappa}\, \|R_k\|_{L^2(dt)}.
\endeq

Then
\begin{align}\label{phiepsL2}
\bigl\|\varphi(t)\,e^{-\var t}\bigr\|_{L^2(dt)}
& = \left\|\sum_k |\Psi_k|\, \right\|_{L^2(dt)} \\
\nonumber & \leq \left\|\sum_k |R_k|\, \right\|_{L^2(dt)}
+ \sum_k \|R_k-\Psi_k\|_{L^2(dt)}\\
\nonumber & \leq \left\|\sum_k |R_k|\, \right\|_{L^2(dt)}
\ \left(1 + \frac1{\kappa} \sum_{\ell\in\Z^d_*} \|\K^0_\ell\|_{L^1(dt)}\right).
\end{align}
(Note: We bounded $\|R_\ell\|$ by $\|\sum_k |R_k|\|$, which seems very
crude; but the decay of $\K^0_k$ as a function of $k$ will save us.)
Next, we note that
\begin{align*}\|\K^0_k\|_{L^1(dt)}
& \leq 4\pi^2\,|\hat{W}(k)| 
\int_0^\infty C_0\,e^{-2\pi (\lambda_0-\lambda)|k|t}\,
|k|^2\,t\,dt \\
& \leq 4 \pi^2\, |\hat{W}(k)|\,\frac{C_0}{(\lambda_0-\lambda)^2},
\end{align*}
so
\[ \sum_k \|\K^0_k\|_{L^1(dt)} \leq 4\pi^2\, \left(\sum_k
  |\hat{W}(k)|\right)\, \frac{C_0}{(\lambda_0-\lambda)^2}.\]
Plugging this in \eqref{phiepsL2} and using \eqref{cond74} again, we
obtain
\begin{align} \label{estim1L2} \bigl\| & \varphi(t)\,e^{-\var
    t}\bigr\|_{L^2(dt)} \leq \left(1+
    \frac{C\,C_0\,C_W}{\kappa\,(\lambda_0-\lambda)^2}\right)\,
  \left\|\sum_k |R_k|\,\right\|_{L^2(dt)}\\
  \nonumber & \leq \left( 1 +
    \frac{C\,C_0\,C_W}{\kappa\,(\lambda_0-\lambda)^2}\right)\,
  \left\{ \int_0^\infty e^{-2\var t} \left( A + \int_0^t \left[
        K_1+K_0+\frac{c_0}{(1+\tau)^m}\right]\,\varphi(\tau)\,d\tau
    \right)^2\,dt \right\}^{\frac12}.
\end{align}

We separate this (by Minkowski's inequality) into various
contributions which we estimate separately.  First, of course
\begeq\label{oc1} \left(\int_0^\infty e^{-2\var
    t}\,A^2\,dt\right)^{\frac12} = \frac{A}{\sqrt{2\var}}.
\endeq

Next, for any $T\geq 1$, by Step~1 and $\int_0^t
K_1(t,\tau)\,d\tau\leq C c(1+t)/\alpha$,
\begin{align} \label{oc2} \biggl \{ \int_0^T e^{-2 \var t} &
  \left(\int_0^t K_1(t,\tau)\,\varphi(\tau)\,d\tau \right)^2\,
  dt\biggr\}^{\frac12}\\
  & \nonumber \leq \left[\sup_{0\leq t\leq T} \varphi(t)\right]
  \left(\int_0^T e^{-2\var t}\left(\int_0^t K_1(t,\tau)\,d\tau\right)^2\,dt\right)^{\frac12}\\
  & \nonumber \leq
  C\,A\,e^{C\left[\frac{C_0\,C_W}{\lambda_0-\lambda}\,T +
      c\,(T+T^2)\right]}\,
  \frac{c}{\alpha} \left(\int_0^\infty e^{-2\var t} (1+t)^2\,dt\right)^{\frac12}\\
  & \nonumber \leq C\, A\,\frac{c}{\alpha\,\var^{3/2}}\, e^{C
    \left[\frac{C_0\,C_W}{\lambda_0-\lambda}\,T +
      c\,(T+T^2)\right]}.
\end{align}

Invoking Jensen and Fubini, we also have
\begin{align}\label{oc3}
  \biggl\{ \int_T^\infty e^{-2\var t}& 
  \left(\int_0^t K_1(t,\tau)\,\varphi(\tau)\,d\tau\right)^2\, 
  dt\biggr\}^{\frac12}\\
  \nonumber & = \left\{ \int_T^\infty \left(\int_0^t
      K_1(t,\tau)\,e^{-\var (t-\tau)}\,
      e^{-\var \tau}\,\varphi(\tau)\,d\tau\right)^2\,dt \right\}^{\frac12}\\
  \nonumber & \leq \left\{\int_T^\infty \left(\int_0^t
      K_1(t,\tau)\,e^{-\var (t-\tau)}\,d\tau\right) \left(\int_0^t
      K_1(t,\tau)\,e^{-\var (t-\tau)}\,e^{-2\var
        \tau}\varphi(\tau)^2\,d\tau\right) dt \right\}^{\frac12}\\
  \nonumber & \leq\left(\sup_{t\geq T} \int_0^t e^{-\var
      t}\,K_1(t,\tau)\,e^{\var \tau}\,d\tau\right)^{\frac12}
  \left(\int_T^\infty \int_0^t K_1(t,\tau)\,e^{-\var(t-\tau)}\,e^{-2\var \tau}\varphi(\tau)^2\,d\tau\,dt\right)^{\frac12}\\
  \nonumber & = \left(\sup_{t\geq T} \int_0^t e^{-\var
      t}\,K_1(t,\tau)\,e^{\var \tau}\,d\tau\right)^{\frac12}
  \left(\int_0^\infty \int_{\max\{\tau\, ; \, T\}} ^{+\infty}
    K_1(t,\tau)\,e^{-\var (t-\tau)}\,
    e^{-2\var\tau}\,\varphi(\tau)^2\,dt\,d\tau\right)^{\frac12}\\
  \nonumber & \leq \left(\sup_{t\geq T} \int_0^t e^{-\var
      t}\,K_1(t,\tau)\,e^{\var\tau}\,d\tau\right)^{\frac12}
  \left(\sup_{\tau\geq 0} \int_\tau^\infty e^{\var\tau}\,
    K_1(t,\tau)\,e^{-\var t}\,dt\right)^{\frac12} \left(\int_0^\infty
    e^{-2\var\tau}\,\varphi(\tau)^2\,d\tau\right)^{\frac12}.
\end{align}
(Basically we copied the proof of Young's inequality.) Similarly,
\begin{align}\label{oc4}
\biggl\{\int_0^\infty & e^{-2\var t}
\left(\int_0^t K_0(t,\tau)\,\varphi(\tau)\,d\tau\right)^2\,dt\biggr\}^{\frac12}\\
\nonumber & \leq \left(\sup_{t\geq 0} \int_0^t e^{-\var t}\, K_0(t,\tau)\,e^{\var \tau}\,d\tau\right)^{\frac12}
\left(\sup_{\tau\geq 0} \int_\tau^\infty e^{\var\tau}\,K_0(t,\tau)\,e^{-\var t}\,dt\right)^{\frac12}
\left(\int_0^\infty e^{-2\var\tau}\,\varphi(\tau)^2\,d\tau\right)^{\frac12}\\
\nonumber & \leq \left(\sup_{t\geq 0} \int_0^t K_0(t,\tau)\,d\tau\right)^{\frac12}
\left(\sup_{\tau\geq 0} \int_\tau^\infty K_0(t,\tau)\,dt\right)^{\frac12}
\left(\int_0^\infty e^{-2\var \tau}\,\varphi(\tau)^2\,d\tau\right)^{\frac12}.
\end{align}

The last term is also split, this time according to $\tau\leq T$ or $\tau>T$:
\begin{align}\label{oc5}
\biggl\{\int_0^\infty e^{-2\var t} & \, \left(\int_0^T \frac{c_0\,\varphi(\tau)}{(1+\tau)^m}\,d\tau\right)^2
\,dt \biggr\}^{\frac12}\\
\nonumber & \leq c_0\,\left(\sup_{0\leq \tau\leq T} \varphi(\tau)\right)
\left\{\int_0^\infty e^{-2\var t}\left(\int_0^T \frac{d\tau}{(1+\tau)^m}\right)^2\,dt\right\}^{\frac12}\\
\nonumber & \leq c_0\,\frac{C\,A}{\sqrt{\var}}\,
e^{C\left[ \left(\frac{C_0\,C_W}{\lambda_0-\lambda}\right)T + c\, (T+T^2)\right]}\,C_m,
\end{align}
and
\begin{align}\label{oc6}
  \biggl\{\int_0^\infty & e^{-2\var t}
  \left(\int_T^t
    \frac{c_0\,\varphi(\tau)\,d\tau}{(1+\tau)^m}\right)^2\,
   dt\biggr\}^{\frac12}\\
  \nonumber & = c_0 \left\{\int_0^\infty \left(\int_T^t e^{-\var
        (t-\tau)}\,\frac{e^{-\var \tau}\,\varphi(\tau)}
      {(1+\tau)^m}\,d\tau\right)^2\,dt\right\}^{\frac12}\\
  \nonumber & \leq c_0\, \left\{\int_0^\infty \left(\int_T^t
      \frac{e^{-2\var(t-\tau)}}{(1+\tau)^{2m}}\,d\tau\right)
    \left(\int_T^t e^{-2\var \tau}\,\varphi(\tau)^2\,d\tau\right)\,
 dt\right\}^{\frac12}\\
  \nonumber & \leq c_0 \, \left(\int_0^\infty e^{-2\var
      t}\,\varphi(t)^2\,dt\right)^{\frac12}
  \left(\int_0^\infty \int_T^t \frac{e^{-2\var
        (t-\tau)}}{(1+\tau)^{2m}}\,
   d\tau\,dt\right)^{\frac12}\\
  \nonumber & = c_0\, \left(\int_0^\infty e^{-2\var
      t}\,\varphi(t)^2\,dt\right)^{\frac12} \left(\int_T^\infty
    \frac1{(1+\tau)^{2m}}\left(\int_\tau^\infty
      e^{-2\var(t-\tau)}\,dt\right)\,d\tau \right)^{\frac12}\\
  \nonumber & = c_0 \left(\int_0^\infty e^{-2\var
      t}\,\varphi(t)^2\,dt\right)^{\frac12} \left(\int_T^\infty
    \frac{d\tau}{(1+\tau)^{2m}}\right)^{\frac12}
  \left(\int_0^\infty e^{-2\var s}\,ds\right)^{\frac12}\\
  \nonumber & = \frac{C_{2m}^{1/2}\,c_0}{\sqrt{\var}\,T^{m-1/2}}\,
  \left(\int_0^\infty e^{-2\var t}\,\varphi(t)^2\,dt\right)^{\frac12}.
\end{align}

Gathering estimates \eqref{oc1} to \eqref{oc6}, we deduce from
\eqref{estim1L2}
\begin{multline} \label{endstep2} \bigl\|\varphi(t)\,e^{-\var
    t}\bigr\|_{L^2(dt)} \leq \left( 1 +
    \frac{C\,C_0\,C_W}{\kappa\,(\lambda_0-\lambda)^2}\right)\,
  \frac{C\,A}{\sqrt{\var}}\, \left[ 1 + \left(\frac{c}{\alpha\,\var}+
      c_0\,C_m\right)\right]\,
  e^{C\,\left[\frac{C_0\,C_W}{\lambda_0-\lambda}\,T + c\, (T+T^2)\right]}\\
  \ + a\, \bigl\|\varphi(t)\,e^{-\var t}\bigr\|_{L^2(dt)},
\end{multline}
where
\begin{multline*}
  a =
  \left(1+\frac{C\,C_0\,C_W}{\kappa\,(\lambda_0-\lambda)^2}\right)\
  \biggl[ \left(\sup_{t\geq T} \int_0^t e^{-\var
      t}\,K_1(t,\tau)\,e^{\var \tau}\,d\tau\right)^{\frac12}
  \left(\sup_{\tau\geq 0} \int_\tau^\infty e^{\var\tau}\,K_1(t,\tau)\,
 e^{-\var t}\,dt\right)^{\frac12}\\
  + \left(\sup_{t\geq 0}\int_0^t K_0(t,\tau)\,d\tau\right)^{\frac12}
  \left(\sup_{\tau\geq 0}\int_\tau^\infty
    K_0(t,\tau)\,dt\right)^{\frac12} +
  \,\frac{C_{2m}^{1/2}\,c_0}{\sqrt{\var}\,T^{m-1/2}}\biggr].
\end{multline*} 

Using Propositions \ref{propexpK} (case $\gamma >1$) and
\ref{propdualexpK}, as well as assumptions \eqref{condchi} and
\eqref{condc0}, we see that $a\leq 1/2$ for $\chi$ small enough and
$T$ satisfying \eqref{condT}. Then from \eqref{endstep2} follows
\[
\bigl\|\varphi(t)\,e^{-\var t}\bigr\|_{L^2(dt)}
\leq \left( 1 + \frac{C\,C_0\,C_W}{\kappa\,(\lambda_0-\lambda)^2}\right)\,
\frac{C\,A}{\sqrt{\var}}\, 
\left[ 1 + \left(\frac{c}{\alpha\,\var}+ c_0\,C_m\right)\right]\,
e^{C\,\left[\frac{C_0\,C_W}{\lambda_0-\lambda}\,T + c\, (T+T^2)\right]}.
\]
\sm

\noindent{\bf Step 3:} {\em Refined pointwise bounds.}
Let us use \eqref{cond74} a third time, now for $t\geq T$:
\begin{align} \label{3rd}
e^{-\var t}\,\varphi(t) &
\leq A\,e^{-\var t}  + \int_0^t \left(\sup_k\ |K^0(k,t-\tau)|\,e^{2\pi\lambda (t-\tau)|k|}\right)\,
\varphi(\tau)\,e^{-\var\tau}\,d\tau \\
\nonumber  & \qquad\qquad + \int_0^t \left[K_0(t,\tau) + \frac{c_0}{(1+\tau)^m}\right]\,\varphi(\tau)\,e^{-\var \tau}\,d\tau\\
\nonumber & \qquad\qquad + \int_0^t \left(e^{-\var t}\,K_1(t,\tau)\,e^{\var \tau}\right)\,\varphi(\tau)\,e^{-\var \tau}\,d\tau\\
\nonumber & \leq A\,e^{-\var t} 
+ \biggl[ \biggl(\int_0^t \biggl(\sup_{k\in\Z^d_*} |K^0(k,t-\tau)|\,
e^{2\pi\lambda (t-\tau)|k|}\biggr)^2\,d\tau \biggr)^{\frac12}\\
\nonumber & \qquad \qquad\qquad + \left(\int_0^t K_0(t,\tau)^2\,d\tau\right)^{\frac12}
+ \left(\int_0^\infty \frac{c_0^2}{(1+\tau)^{2m}}\,d\tau\right)^{\frac12}\\
\nonumber & \qquad \qquad\qquad + \left(\int_0^t e^{-2\var t}\,K_1(t,\tau)^2\,e^{2\var\tau}\,d\tau\right)^{\frac12}
\biggr]\ \left(\int_0^\infty\varphi(\tau)^2\,e^{-2\var\tau}\,d\tau\right)^{\frac12}.
\end{align}

We note that, for any $k\in\Z^d_*$,
\begin{align*}
\Bigl( |K^0(k,t)|\, e^{2\pi\lambda |k|t}\Bigr)^2
& \leq 16\,\pi^4\,|\hat{W}(k)|^2\, \bigl|\tilde{f}^0(kt)\bigr|^2\,|k|^4\,t^2\,e^{4\pi\lambda |k|t}\\
& \leq C\, C_0^2\,|\hat{W}(k)|^2\,e^{-4\pi(\lambda_0-\lambda)|k|t}\,|k|^4\,t^2\\
& \leq \frac{C\,C_0^2}{(\lambda_0-\lambda)^2}\, |\hat{W}(k)|^2\,
e^{-2\pi (\lambda_0-\lambda)|k|t}\,|k|^2\\
& \leq \frac{C\,C_0^2}{(\lambda_0-\lambda)^2}\, C_W^2\,
e^{-2\pi (\lambda_0-\lambda)|k|t}\\
& \leq \frac{C\,C_0^2}{(\lambda_0-\lambda)^2}\, C_W^2\, e^{-2\pi (\lambda_0-\lambda)t};
\end{align*}
so
\[ \int_0^t \left(\sup_{k\in\Z^d_*}\ \bigl|K^0(k,t-\tau)\bigr|\,
e^{2\pi\lambda (t-\tau)|k|} \right)^2\,d\tau
\leq \frac{C\,C_0^2\,C_W^2}{(\lambda_0-\lambda)^3}.\]
Then the conclusion follows from \eqref{3rd}, Corollary \ref{cor2expK},
conditions \eqref{condT} and \eqref{condc0}, and Step~2.
\end{proof}

\begin{Rk} Theorem \ref{thmgrowth} leads to enormous constants, and it
  is legitimate to ask about their sharpness, say with respect to the
  dependence in $\var$. We expect the constant to be
  roughly of the order of
  \[ \sup_t\ \bigl( e^{(ct)^{1/\gamma}}\, e^{-\var t}\bigr) \simeq
  \exp\left(\var^{-\frac1{\gamma-1}}\right).\] Our bound is roughly
  like $\exp(\var^{-(4+2\gamma)/(\gamma-1)})$; this is worse, but
  displays the expected behavior as an exponential of an inverse power
  of $\var$, with a power that diverges like $O((1-\gamma)^{-1})$ as
  $\gamma\to 1$. 
\end{Rk}

\begin{Rk} Even in the case of an analytic interaction, a similar argument
suggests constants that are at best like $(\ln 1/\var)^{\ln 1/\var}$,
and this grows faster than any inverse power of $1/\var$.
\end{Rk}

 To obtain
sharper results, in Section \ref{sec:coulomb} we shall later ``break
the norm'' and work directly on the Fourier modes of, say, the spatial
density. In this subsection we establish the estimates which will be
used later; the reader who does not particularly care about the case
$\gamma=1$ in Theorem \ref{thmmain} can skip them.

For any $\gamma\geq 1$, $\alpha>0$, $k,\ell\in\Z^d\setminus \{0\} =
\Z^d_*$ and $0\leq\tau\leq t$, we define \begeq\label{Kkl}
K^{(\alpha),\gamma}_{k,\ell}(t,\tau) = \frac{(1+\tau)\,
  e^{-\alpha|\ell|}\,e^{-\alpha\left(\frac{t-\tau}{t}\right)|k-\ell|}\,
  e^{-\alpha|k(t-\tau)+\ell\tau|}}{1+|k-\ell|^\gamma}.
\endeq

We start by exponential moment estimates.

\begin{Prop} \label{propexpkl} Let $\gamma\in [1,\infty)$ be
  given. For any $\alpha\in (0,1)$, $k,\ell\in\Z^d_*$, let
  $K_{k,\ell}^{(\alpha),\gamma}$ be defined by \eqref{Kkl}. Then there
  is $\ov{\alpha}=\ov{\alpha}(\gamma)>0$ such that if $\alpha\leq
  \ov{\alpha}$ and $\var\in (0,\alpha/4)$ then for any $t>0$
  \begeq\label{expkl1} \sup_{k\in\Z^d_*}\ \sum_{\ell\in\Z^d_*}
  e^{-\var t} \, \int_0^t
  K_{k,\ell}^{(\alpha),\gamma}(t,\tau)\,e^{\var\tau}\,d\tau \leq
  \frac{C(d,\gamma)}{\alpha^{1+d}\,\var^{\gamma+1}\,t^\gamma};
  \endeq
  \begeq\label{expkl2} \sup_{k\in\Z^d_*}\ \sum_{\ell\in\Z^d_*}
  e^{-\var t} \left(\int_0^t K_{k,\ell}^{(\alpha),\gamma}(t,\tau)^2\,
    e^{2\var\tau}\,d\tau\right)^{\frac12} \leq
  \frac{C(d,\gamma)}{\alpha^d\,\var^{\gamma+\frac12}\,t^{\gamma-\frac12}};
  \endeq
  \begeq\label{expkl3} \sup_{k\in\Z^d_*}\ \sum_{\ell\in\Z^d_*}
  \sup_{\tau\geq 0} e^{\var\tau} \int_\tau^\infty
  K_{k,\ell}^{(\alpha),\gamma}\,e^{-\var t}\,dt \leq
  \frac{C(d,\gamma)}{\alpha^{2+d}\,\var}.
\endeq
\end{Prop}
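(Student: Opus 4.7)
My plan is to adapt the arguments of Propositions~\ref{propexpK} and \ref{propdualexpK} to the new situation where $k\in\Z^d_*$ is held fixed (hence appears inside a supremum) and $\ell\in\Z^d_*$ is summed (rather than placed under a supremum as well). The extra summation on $\ell$ is what will cost the prefactors $\alpha^{-d}$, $\alpha^{-(1+d)}$, $\alpha^{-(2+d)}$ through geometric series of the type $\sum_{\ell\in\Z^d} e^{-\alpha|\ell|/c} \lesssim \alpha^{-d}$.

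First I would reduce to a one-dimensional analysis. Writing $\ell = \ell_\parallel \hat k + \ell_\perp$ with $\hat k = k/|k|$, one has $|k(t-\tau)+\ell\tau|\ge |\ell_\perp|\tau$, so the transverse modes contribute a factor $\int_0^t e^{-\alpha|\ell_\perp|\tau}(1+\tau)\,d\tau$ that, after summation over $\ell_\perp\in\Z^{d-1}$ using $e^{-\alpha|\ell_\perp|}$, is uniformly $O(\alpha^{-(d-1)})$; this disposes of the perpendicular directions and reduces each of \eqref{expkl1}--\eqref{expkl3} to its $d=1$ analogue multiplied by $\alpha^{-(d-1)}$. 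From now on $k\in\Z_*$ and $\ell\in\Z_*$, and by symmetry we may suppose $k>0$.

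Next I would split the time interval as in Proposition~\ref{propexpK}. On $\tau\in[0,t/2]$ the key bound $e^{-\alpha\frac{t-\tau}{t}|k-\ell|}\le e^{-\frac{\alpha}{2}|k-\ell|}$ makes the Sobolev factor irrelevant and the proof of the $\tau\le t/2$ part of Proposition~\ref{propexpK} applies essentially verbatim: the resulting bound is the one of Proposition~\ref{propexpK} with an additional $\sum_\ell e^{-\alpha|\ell|}\lesssim 1/\alpha$, which is subdominant compared to the target. On $\tau\in[t/2,t]$ I would proceed exactly as on p.~\pageref{tba}: by sign symmetry, only the anti-parallel case $\ell<0$ and $|\ell|\le k$ contributes to the echo, and for each such $\ell$ the integer $k'$ for which $e^{-\alpha|k'(t-\tau)+\ell\tau|}$ is appreciable lies in one of the finitely many integration windows $\tau/t\in[(k'-1)/(k'+|\ell|-1),(k'+1)/(k'+|\ell|+1)]$. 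Explicit integration on each window gives the usual three contributions $1/(\alpha(k'+|\ell|))$, $1/(\alpha^2(k'+|\ell|)^2)$, $k't/(\alpha(k'+|\ell|)^2)$ multiplied by the resonant cost $e^{-\varepsilon|\ell| t/(k'+|\ell|)}$. The new twist is that $k=k'$ is now fixed, so the double sum $\sum_{\ell,k'}$ of Proposition~\ref{propexpK} collapses to a single sum $\sum_{\ell}$ combined with the Sobolev weight $1/(1+|k-\ell|^\gamma)=1/(1+(k+|\ell|)^\gamma)$. Comparing this sum with the integral $\int e^{-\alpha|\ell|}(1+t)(k+|\ell|)^{-1-\gamma}e^{-\varepsilon|\ell|t/(k+|\ell|)}\,d|\ell|$ and using the change of variables $u=\varepsilon|\ell|t/(k+|\ell|)$ (whose Jacobian yields a factor $k/\varepsilon t$) produces the exponents $\varepsilon^{-(1+\gamma)}t^{-\gamma}$ announced in \eqref{expkl1}; here the missing factor $\alpha^{-1}$ coming from $\sum_{\ell}e^{-\alpha|\ell|}$ replaces one of the $\alpha^{-1}$ factors of Proposition~\ref{propexpK}. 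The $L^2$ bound \eqref{expkl2} then follows at once from $K_{k,\ell}^{(\alpha),\gamma}(t,\tau)^2 \le C(1+t)\, K_{k,\ell}^{(2\alpha),2\gamma}(t,\tau)$ combined with \eqref{expkl1} applied with $\alpha\to 2\alpha$, $\gamma\to 2\gamma$, $\varepsilon\to 2\varepsilon$, exactly as in Corollary~\ref{cor2expK}. The dual bound \eqref{expkl3} is obtained by the same window decomposition, now freezing $\tau$ and integrating in $t$; the roles of $k$ and $\ell$ in the resonance condition are unchanged and the computation is the $k$-fixed analog of Proposition~\ref{propdualexpK}.

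The main obstacle will be Step~3 (the $\tau\ge t/2$ range), where I must sum the Sobolev-weighted resonances over $\ell$ rather than take a supremum over $(k,\ell)$ as in the proof of Proposition~\ref{propexpK}. The delicate bookkeeping is that for fixed $k$ the dominant contribution comes from $\ell<0$ with $|\ell|$ ranging freely: I need to check that the double decay $e^{-\alpha|\ell|}\cdot(1+(k+|\ell|)^\gamma)^{-1}$ together with the resonant factor $e^{-\varepsilon|\ell|t/(k+|\ell|)}$ produces the same $\varepsilon^{-(1+\gamma)}t^{-\gamma}$ scaling as in the unconstrained case, without any spurious blow-up for large $|\ell|$ or large $k$. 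Once this is verified through the integral comparison above, the remaining steps are mechanical adaptations of the proofs of Propositions~\ref{propexpK} and \ref{propdualexpK}.
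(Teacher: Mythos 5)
Your overall strategy — reduce to $d=1$, analyze the resonance windows, change variables $u = \var|\ell|t/(k+|\ell|)$, and deduce the $L^2$ and dual bounds — is in the spirit of the paper's proof, but two of the steps do not work as described.

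The first gap is in the reduction to $d=1$. You decompose $\ell = \ell_\parallel \hat k + \ell_\perp$ and want to peel off the transverse sum, but this decomposition does not respect the lattice: for a generic $k\in\Z^d_*$, the pair $(\ell_\parallel,\ell_\perp)$ does not range over $\Z\times\Z^{d-1}$ as $\ell$ ranges over $\Z^d_*$, and the $\ell^1$-type weight $e^{-\alpha|\ell|}$ does not factor as $e^{-\alpha|\ell_\parallel|}e^{-\alpha|\ell_\perp|}$ in that basis. Worse, even granting the decomposition, your claim that the transverse modes contribute a factor $\int_0^t e^{-\alpha|\ell_\perp|\tau}(1+\tau)\,d\tau$ which, after summation in $\ell_\perp$, is $O(\alpha^{-(d-1)})$ conflates the pieces of a single $\tau$-integral: $\int_0^t f_\parallel(\tau)f_\perp(\tau)\,d\tau$ does not split as $\bigl(\int f_\parallel\bigr)\bigl(\int f_\perp\bigr)$, and the transverse piece on its own blows up like $\alpha^{-(d-1)}\tau^{-(d-1)}$ near $\tau=0$ before the $\ell_\perp$-sum. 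The paper avoids both issues entirely by exploiting that $|\ell|$, $|k-\ell|$, $|k(t-\tau)+\ell\tau|$ are all $\ell^1$ sums of coordinates, so for each $j$ one can bound $K_{k,\ell}(t,\tau)\le \bigl(\prod_{i\neq j}e^{-\alpha|\ell_i|}\bigr)K_{k_j,\ell_j}(t,\tau)$ pointwise in $\tau$; summing over $\ell$ then produces the $C(d)\alpha^{-(d-1)}$ factor {\it before} any $\tau$-integration. This coordinate-wise bound is what you should use.

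The second gap is in the passage to \eqref{expkl2}. You state it "follows at once" from \eqref{expkl1} via $K_{k,\ell}^2 \le C(1+t)K_{k,\ell}^{(2\alpha),2\gamma}$, exactly as in Corollary~\ref{cor2expK}. But in Corollary~\ref{cor2expK} the kernel carries no $\ell$-sum, so the square root can be pulled in and out freely; here the square root sits {\em inside} the $\ell$-sum ($\sum_\ell(\int K_{k,\ell}^2)^{1/2}$, not $(\sum_\ell\int K_{k,\ell}^2)^{1/2}$), and $\sum_\ell a_\ell^{1/2}$ is not controlled by $(\sum_\ell a_\ell)^{1/2}$. One must instead bound each $\bigl(\int_0^t K_{k,\ell}^2 e^{2\var\tau}d\tau\bigr)^{1/2}$ individually — which the paper does, producing terms like $e^{-\alpha\ell}(1+t)^{1/2}/(\alpha(k+\ell)^{\gamma+1/2})$ and $e^{-\alpha\ell}(kt)^{1/2}(1+t)^{1/2}/(\alpha^{1/2}(k+\ell)^{1+\gamma})$ — and only then sum over $\ell$. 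The computations are indeed close to those in \eqref{expkl1}, but it is not a formal corollary; a fresh per-$\ell$ estimate is needed before summation.

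Apart from these two points, your one-dimensional resonance analysis (restricting to $k>0$, $\ell<0$ as the dominant case, the windows $\tau/t \approx k'/(k'+|\ell|)$, and the comparison to integrals via $u=\var|\ell|t/(k+|\ell|)$) matches what the paper actually does; you are correct that the $\ell$-sum replaces one of the $\alpha^{-1}$ factors of Proposition~\ref{propexpK}. Note also that for this proposition the paper does not need the $[0,t/2]/[t/2,t]$ split used in Proposition~\ref{propexpK}: since $(k,\ell)$ are now fixed, a direct explicit evaluation of $\int_0^t e^{-\var t}K_{k,\ell}e^{\var\tau}d\tau$ suffices.
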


\begin{proof}[Proof of Proposition \ref{propexpkl}] We first reduce to
  the case $d=1$.  Monotonicity cannot be used now, but we note that
\[ K_{k,\ell}^{(\alpha),\gamma}(t,\tau)
\leq \sum_{1\leq j\leq d} e^{-\alpha|\ell_1|}\,e^{-\alpha|\ell_2|}\ldots
e^{-\alpha|\ell_{j-1}|}\,K_{k_j,\ell_j}^{(\alpha),\gamma}(t,\tau)\,
e^{-\alpha|\ell_{j+1}|}\ldots e^{-\alpha|\ell_d|},\]
where $K_{k_j,\ell_j}$ stands for a one-dimensional kernel. Thus
\begin{align*}
  \sup_k \sum_\ell \int_0^t & e^{-\var t}\,
  K^{(\alpha),\gamma}_{k,\ell}(t,\tau)\,e^{\var\tau}\,d\tau \leq
  \sup_k \left(\sum_{m\in\Z^d}e^{-\alpha|m|}\right)^{d-1} \sum_{1\leq
    j\leq d}\ \sum_{\ell_j \in\Z} \int_0^t e^{-\var t}\,
  K^{(\alpha),\gamma}_{k_j,\ell_j}(t,\tau)\,
  e^{\var\tau}\,d\tau\\
  & \leq \frac{C(d)}{\alpha^{d-1}}\ \sup_{1\leq j\leq d}\
  \sup_{k_j\in\Z} \sum_{\ell_j\in\Z} \int_0^t e^{-\var t}\,
  K_{k_j,\ell_j}^{(\alpha,\gamma)}(t,\tau)\,e^{\var\tau}\,d\tau.
\end{align*}
In other words, for \eqref{expkl1} we may just consider the
one-dimensional case, provided we allow an extra multiplicative constant
$C(d)/\alpha^{d-1}$. A similar reasoning holds for \eqref{expkl2} and
\eqref{expkl3}. From now on we focus on the case $d=1$.  \sm

Without loss of generality we assume $k>0$, and only treat the worse
case $\ell<0$. (The other case $k,\ell>0$ is simpler and
  yields an exponential decay in time of the form
  $e^{-c\,\min\{\alpha,\eps\}t}$). For simplicity we also write
$K_{k,\ell}=K^{(\alpha),\gamma}_{k,\ell}$. An easy computation yields
\begin{multline*}
  e^{-\var t} \int_0^t K_{k,\ell}(t,\tau)\,e^{\var \tau}\,d\tau \\
  \leq \frac{C\,e^{-\alpha|\ell|}}{1+|k-\ell|^\gamma}\
  \left(\frac1{\alpha |k-\ell|} + \frac{|k|t}{\alpha |k-\ell|^2} +
    \frac1{\alpha^2 |k-\ell|^2}\right)\, e^{-\frac{\var
      |\ell|t}{|k-\ell|}}.
\end{multline*}
Then for any $k\geq 1$, we have (crudely writing $\alpha^2 = O(\alpha)$)
\begin{multline}\label{suml}
  \sum_{\ell \leq -1} \int_0^t e^{-\var t}\,K_{k,\ell}(t,\tau)\,e^{\var\tau}\,d\tau\\
  \leq C \left(\sum_{\ell\geq 1} \frac{e^{-\alpha
        \ell}\,e^{-\frac{\var\ell t}{k+\ell}}}
    {\alpha^2\,(k+\ell)^{1+\gamma}} + \sum_{\ell\geq 1}
    \frac{e^{-\alpha\ell}\,e^{-\frac{\var\ell t}{k+\ell}}} {\alpha\,
      (k+\ell)^{2+\gamma}}\,kt\right).
\end{multline}

For the first sum in the right-hand side of \eqref{suml} we write
\begin{align}\label{firstsum}
  \sum_{\ell\geq 1} \frac{e^{-\alpha\ell}\,e^{-\frac{\var\ell
        t}{k+\ell}}}{(k+\ell)^{1+\gamma}} & \leq\sum_{\ell \ge 1}
  \frac{e^{-\alpha\ell}}{\ell^{1+\gamma}}\, \left[ \left(\frac{\var
        \ell t}{k+\ell}\right)^{1+\gamma}\, e^{-\frac{\var \ell
        t}{k+\ell}}\right]\
  \frac{1}{(\var t)^{1+\gamma}}\\
  & \leq \frac{C(\gamma)}{(\var t)^{1+\gamma}}. \nonumber
\end{align}

For the second sum in the right-hand side of \eqref{suml} we separate
according to $1\leq \ell\leq k$ or $\ell\geq k+1$:
\begin{align}\label{secondsum1}
  \sum_{1\leq \ell\leq k} \frac{e^{-\alpha\ell}\,e^{-\frac{\var\ell
        t}{k+\ell}}}{(k+\ell)^{2+\gamma}}\,kt
  & \leq \sum_{1\leq\ell\leq k} \frac{e^{-\alpha\ell}\, 
    e^{-\frac{\var t}{k+1}}}{(k+1)^{2+\gamma}}\,kt \\
  & \leq \frac{C}{\alpha} \left[ e^{-\frac{\var t}{k+1}}\,
    \left(\frac{\var t}{k+1}\right)^{1+\gamma}\right]
  \left(\frac{k}{k+1}\right)\, \frac{t}{(\var t)^{1+\gamma}} \nonumber \\
  & \leq \frac{C}{\alpha\,\var^{1+\gamma}\,t^\gamma}; \nonumber
\end{align}
\begin{align}\label{secondsum2}
  \sum_{\ell\geq k+1} \frac{e^{-\alpha\ell}\,e^{-\frac{\var\ell
        t}{k+\ell}}}{(k+\ell)^{2+\gamma}}\,kt
  & \leq C \sum_{\ell\geq k+1} \frac{e^{-\alpha\ell}\, 
    e^{-\frac{\var t}{2}}}{k^{2+\gamma}}\,kt \\
  & \leq \frac{C}{\alpha} \frac{e^{-\var t/4}}{\var\,k^{1+\gamma}}
  \leq \frac{C}{\alpha\,\var^{1+\gamma}\,t^\gamma}. \nonumber
\end{align}
The combination of \eqref{suml} \eqref{firstsum}, \eqref{secondsum1}
and \eqref{secondsum2} completes the proof of \eqref{expkl1}. \med

Now we turn to \eqref{expkl2}. The estimates are rather similar, since
\[ K_{k,\ell}(t,\tau)^2 \leq C\, (1+t)\,K_{k,\ell}(t,\tau)\] with
$\gamma \to 2 \gamma$ and $\alpha \to 2 \alpha$.  So \eqref{suml}
should be replaced by
\begin{multline}\label{sum2}
  \sum_\ell e^{-\var t} \left(\int_0^t K_{k,\ell}(t,\tau)^2\, 
    e^{2\var\tau}\,d\tau\right)^{\frac12} \\
  \leq C\left(\sum_{\ell\geq 1}
    \frac{e^{-\alpha\ell}\,e^{-\frac{\var\ell
          t}{k+\ell}}\,(1+t)^{1/2}}
    {\alpha\,(k+\ell)^{\frac12+\gamma}} \ + \sum_{\ell\geq 1}
    \frac{e^{-\alpha\ell}\,e^{-\frac{\var\ell
          t}{k+\ell}}\,(kt)^{1/2}\,(1+t)^{1/2}}
    {\alpha^{1/2}\,(k+\ell)^{1+\gamma}}\right).
\end{multline}
For the first sum we use \eqref{firstsum} with $\gamma$ replaced by
$\gamma-1/2$: for $t\geq 1$, \begeq\label{1sum2} (1+t)^{1/2} \sum_\ell
\frac{e^{-\alpha \ell}\,e^{-\frac{\var\ell t}{k+\ell}}}
{(k+\ell)^{1+(\gamma-1/2)}} \leq \frac{C(\gamma)\,t^{1/2}}{(\var
  t)^{\gamma+1/2}} \leq \frac{C(\gamma)}{\var^{\gamma+1/2}t^\gamma}.
\endeq
For the second sum in the right-hand side of \eqref{sum2} we write
\begin{align*}
  \sum_{1\leq \ell\leq k} \frac{e^{-\alpha\ell}\,e^{-\frac{\var\ell
        t}{k+\ell}}\,k^{1/2}t}{(k+\ell)^{1+\gamma}} & \leq C \sum_\ell
  e^{-\alpha \ell}\ \left[ e^{-\frac{\var t}{k+1}}\, \left(\frac{\var
        t}{k+1}\right)^{\gamma+1/2}
  \right]\, \frac{k^{1/2}}{(1+k)^{1/2}} \, \frac{t}{(\var t)^{\gamma+1/2}}\\
  & \leq \frac{C}{\alpha\,\var^{\gamma+\frac12}\,t^{\gamma-\frac12}}
\end{align*} 
and
\[
\sum_{\ell\geq k+1} \frac{e^{-\alpha\ell}\,e^{-\frac{\var\ell
      t}{k+\ell}}\,k^{1/2}t}{(k+\ell)^{1+\gamma}} \leq C \sum_\ell
\frac{e^{-\alpha\ell}}{\ell^{\gamma+\frac12}}\,e^{-\frac{\var t}2}\,t
\leq C\, e^{-\frac{\var t}{t}} \leq
\frac{C}{(\var\,t)^{\gamma-\frac12}\,\var}.
\]
With this \eqref{expkl2} is readily obtained. 
\med

Finally we consider \eqref{expkl3}. As in Proposition
\ref{propdualexpK} one easily shows that
\[ \sup_k \sum_\ell \sup_\tau e^{\var \tau}\int_{2\tau}^\infty
e^{-\var t}\,K_{k,\ell}(t,\tau)\,d\tau \leq
\frac{C}{\var\,\alpha^2}\,\sum_\ell e^{-\alpha|\ell|}\ \leq
\frac{C}{\var\,\alpha^3}.\] Then one has
\[ e^{\var\tau}\int_\tau^{2\tau} e^{-\alpha
  |k(t-\tau)+\ell\tau|}\,e^{-\var t}\,dt \leq \frac{C}{\alpha^2 k} +
\frac{C}{\alpha \eps k} + \frac{C}{\alpha k}\,e^{-\frac{\var \ell
    \tau}{k}}.\] 
So the problem amounts to estimate
\begin{align*}
\sum_\ell\ \sup_\tau \left[ (1+\tau)\ \frac{e^{-\alpha \ell}\,e^{-\frac{\var \ell \tau}{k}}}
{\alpha k (k+\ell)^\gamma} \right] 
& \leq \sum_\ell e^{-\alpha \ell}\left[\frac1{\alpha}
+ \frac1{\var \ell (k+\ell)^\gamma}\ \left(e^{-\frac{\var\ell\tau}{k}}\,\frac{\var\ell\tau}{k}\right)\right]\\
& \leq C\, \left(\frac1{\alpha^2} + \frac1{\var}\right),
\end{align*}
and the proof is complete.
\end{proof}

We conclude this section with a mode-by-mode analogue of Theorem
\ref{thmgrowth}.

\begin{Thm} \label{thmgrowthk}
Let $f^0=f^0(v)$ and $W=W(x)$ satisfy condition {\bf (L)} from Subsection \ref{sub:lineardamping}
with constants $C_0,\lambda_0,\kappa$; in particular
$|\tilde{f}^0(\eta)|\leq C_0\, e^{-2\pi\lambda_0|\eta|}$. Further let
\[ C_W = \max \left\{ \sum_{k\in\Z^d_*} |\hat{W}(k)|,\ \
\sup_{k\in\Z^d_*}\, |k|\,|\hat{W}(k)| \right\}.\] Let
$(A_k)_{k\in\Z^d_*}$, $\mu\geq 0$, $\lambda\in (0,\lambda^*]$ with
$0<\lambda^*<\lambda_0$. Let $(\Phi(k,t))_{k\in\Z^d_*,\ t\geq 0}$ be a
continuous function of $t\geq 0$, valued in $\C^{\Z^d_*}$, such that
for all $t\geq 0$ and $k\in\Z^d_*$,
\begin{multline} \label{cond74k} e^{2\pi(\lambda t+\mu)|k|}\ \Bigl|
  \Phi(k,t) - \int_0^t K^0(k,t-\tau)\,\Phi(k,\tau)\,d\tau \Bigr|
  \leq A_k + \int_0^t K_0(t,\tau)\,e^{2\pi(\lambda \tau+\mu)|k|}\, 
  |\Phi(k,\tau)|\,d\tau\\
  + \int_0^t \sum_{\ell\in\Z^d_*} \left(
    c\,K^{(\alpha),\gamma}_{k,\ell}(t,\tau) +
    \frac{c_\ell}{(1+\tau)^m}
  \right)\,e^{2\pi(\lambda\tau+\mu)|k-\ell|}\,
  |\Phi(k-\ell,\tau)|\,d\tau,
\end{multline}
where $c>0$, $c_\ell\geq 0$ ($\ell\in\Z^d_*$), $m>1$, $\gamma\geq 1$,
$K_0(t,\tau)$ is a nonnegative kernel, $K_{k,\ell}^{(\alpha),\gamma}$
are defined by \eqref{Kkl}, $\alpha<\ov{\alpha}(\gamma)$ defined in
Proposition \ref{propexpkl}. Then there are positive constants $C$ and
$\chi$, depending only on $\gamma$, $\lambda^*$, $\lambda_0$,
$\kappa$, $\bar c:= \max\{ \sum_\ell c_\ell, \left( \sum_\ell c_\ell ^2
  \right)^{1/2}\}$, $C_W$,
$m$, such that if \begeq\label{condchik} \sup_{t\geq 0} \int_0^t
K_0(t,\tau)\,d\tau \leq \chi
\endeq
and \begeq\label{condc0k} \sup_{t\geq 0} \left( \int_0^t
  K_0(t,\tau)^2\,d\tau \right)^{1/2} + \sup_{\tau \geq 0}
\int_\tau^\infty K_0(t,\tau)\,dt \leq 1,
\endeq
then for any $\var \in (0,\alpha/4)$ and for any $t\geq 0$,
\begeq\label{phileqk} \sup_k\ \bigl(e^{2\pi (\lambda t + \mu)|k|}\,
|\Phi(k,t)| \bigr) \leq C\, \bar A \, \frac{(1+\bar c ^2)}{\sqrt{\var}} \, 
e^{C \, \bar c}\,\left(1+\frac{c}{\alpha^2\,\var}\right)\,
e^{CT}\, e^{C\,\frac{c}{\alpha}\,(1+T^2)}\,e^{\var t},
\endeq
where $\bar A := \left(\sup_k A_k \right)$ and 
\begeq\label{condTk} T = C\,\max
\left\{\left(\frac{c^2}{\alpha^{3+2d}\,\var^{\gamma+2}}
  \right)^{\frac1{\gamma}};
  \,
  \left(\frac{c}{\alpha^d\,\var^{\gamma+\frac12}}
  \right)^{\frac1{\gamma-\frac12}};\,
  \left(\frac{\bar c^2}{\var}\right)^{\frac1{2m-1}}\right\}.
\endeq 
\end{Thm}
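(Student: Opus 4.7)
The plan is to adapt the three-step proof of Theorem \ref{thmgrowth} (general case) to the mode-by-mode setting. Define
\[
\Psi_k(t) := e^{-\var t}\,e^{2\pi(\lambda t+\mu)|k|}\,\Phi(k,t),\qquad \mathcal K^0_k(t) := e^{-\var t}\,K^0(k,t)\,e^{2\pi(\lambda t+\mu)|k|}.
\]
Where Theorem \ref{thmgrowth} worked with the summed norm $\sum_k e^{2\pi(\lambda t+\mu)|k|}|\Phi(k,t)|$ and invoked Proposition \ref{propexpK}, I will control $\sup_k\|\Psi_k\|_{L^2(dt)}$ and then $\sup_{k,t}|\Psi_k(t)|$, systematically replacing Proposition \ref{propexpK} by its mode-by-mode counterpart Proposition \ref{propexpkl}, whose $\sup_k\sum_\ell$ exponential-moment estimates are precisely tailored to the coupling $\sum_\ell K^{(\alpha),\gamma}_{k,\ell}\,|\Psi_{k-\ell}|$ that arises in \eqref{cond74k}.

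Step 1 is a crude Gronwall bound. Using \eqref{cond74k} directly, the elementary bound $|K^0(k,t-\tau)|\,e^{2\pi\lambda |k|(t-\tau)}\lesssim C_0 C_W/(\lambda_0-\lambda)$, the trivial estimate $\sum_\ell K^{(\alpha),\gamma}_{k,\ell}(t,\tau)\lesssim (1+\tau)/\alpha^d$, and $\sum_\ell c_\ell/(1+\tau)^m\leq \bar c/(1+\tau)^m$, after taking $\sup_k$ and applying Gronwall, yield
\[
\sup_k|\Psi_k(t)|\leq C\bar A\,\exp\!\Bigl(C\Bigl[\tfrac{C_0C_W}{\lambda_0-\lambda}\,t+\tfrac{c}{\alpha^d}(t+t^2)+\bar c\,C_m\Bigr]\Bigr)\cdot e^{-\var t},
\]
good enough on a compact window $[0,T]$ but useless beyond.

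Step 2 is the heart of the argument. Condition \textbf{(L)} applied on $\Re\xi=\var<\lambda^*<\lambda_0$ gives $|1-\hat{\mathcal K}^0_k(\omega)|\geq \kappa$, hence by Plancherel $\|\Psi_k\|_{L^2}\leq \kappa^{-1}\|R_k\|_{L^2}$ with $R_k:=\Psi_k-\Psi_k*\mathcal K^0_k$. From \eqref{cond74k},
\[
|R_k(t)|\leq A_k e^{-\var t}+\!\!\int_0^t \!\!K_0(t,\tau)|\Psi_k(\tau)|\,d\tau+c\!\!\int_0^t\!\sum_\ell K^{(\alpha),\gamma}_{k,\ell}(t,\tau)\,e^{-\var(t-\tau)}\,|\Psi_{k-\ell}(\tau)|\,d\tau+(\text{slow tail}).
\]
Taking $\|\cdot\|_{L^2(dt)}$ and splitting $\int_0^\infty=\int_0^T+\int_T^\infty$, on $[0,T]$ I use Step 1 pointwise; on $[T,\infty)$ I apply the Young-type Cauchy--Schwarz
\[
\Bigl\|\!\!\int \!\!K\cdot g\Bigr\|_{L^2}^2\leq \Bigl(\sup_t\!\!\int_0^t\!\! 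K\Bigr)\Bigl(\sup_\tau\!\!\int_\tau^\infty \!\!K\Bigr)\|g\|_{L^2}^2
\]
with $K(t,\tau)=K^{(\alpha),\gamma}_{k,\ell}(t,\tau)\,e^{-\var(t-\tau)}$ and $g=\Psi_{k-\ell}$, followed by a further Cauchy--Schwarz on the $\ell$-sum, and then $\sup_k$. Setting $M:=\sup_{k'}\|\Psi_{k'}\|_{L^2}$, estimates \eqref{expkl1} and \eqref{expkl3} bound the echo contribution by
\[
\frac{C\,c}{\alpha^{(3+2d)/2}\,\var^{(\gamma+2)/2}\,T^{\gamma/2}}\,M,
\]
which is $\leq M/4$ provided $T$ satisfies the first inequality in \eqref{condTk}. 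The $K_0$-term is handled exactly as in Theorem \ref{thmgrowth} via \eqref{condc0k}, and the slow tail via the third term in \eqref{condTk}. Absorbing yields $M\leq C\bar A\,\var^{-1/2}$ times the announced prefactors.

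Step 3 reinjects the $L^2$ bound into \eqref{cond74k} for $t\geq T$: Cauchy--Schwarz in $\tau$ and summation in $\ell$, combined with \eqref{expkl2} in the form
\[
\sup_k\sum_\ell\Bigl(\!\int_0^t\!\!(K^{(\alpha),\gamma}_{k,\ell}(t,\tau))^2\,e^{-2\var(t-\tau)}\,d\tau\Bigr)^{1/2}\leq \frac{C(d,\gamma)}{\alpha^d\,\var^{\gamma+1/2}\,t^{\gamma-1/2}},
\]
together with the analogous $L^2$-in-time bounds on $\sup_k|K^0(k,\cdot)|\,e^{2\pi\lambda|k|\cdot}$ and on $K_0$, delivers the pointwise estimate \eqref{phileqk}. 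The main obstacle I anticipate is the bookkeeping at Step 2: the $\ell$-sum must be moved inside both factors of the Young decomposition \emph{before} \eqref{expkl1}--\eqref{expkl3} can be applied, and the powers of $\alpha$ and $\var$ generated by the double Cauchy--Schwarz have to be absorbable by the choice of $T$ in \eqref{condTk} uniformly as $\gamma\downarrow 1$. Precisely because Proposition \ref{propexpkl} still yields $\ell$-summable exponential moments at $\gamma=1$ (in contrast with the vanishing $t^{\gamma-1}$ denominator in Proposition \ref{propexpK}(i)), this scheme carries through the critical Coulomb/Newton exponent.
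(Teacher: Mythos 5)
Your proposal is correct and takes essentially the same approach as the paper's own proof: a mode-by-mode adaptation of the three-step argument for Theorem \ref{thmgrowth} (crude Gronwall bound, Plancherel $L^2$ control via $\|\Psi_k\|_{L^2}\leq\kappa^{-1}\|R_k\|_{L^2}$ plus a Schur/Young bound on the echo kernel, then pointwise reinjection), with Proposition \ref{propexpK} systematically replaced by Proposition \ref{propexpkl}. The only minor imprecision is that your ``further Cauchy--Schwarz on the $\ell$-sum'' is, in the paper, an $\ell^1\times\ell^\infty$ H\"older step performed after Fubini so that the $\ell$-sum sits inside the $\sup_t$ and $\sup_\tau$, exactly as required to invoke \eqref{expkl1} and \eqref{expkl3}; your own closing remark shows you already anticipated this bookkeeping point.
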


\begin{proof}[Proof of Theorem \ref{thmgrowthk}] The proof is quite
  similar to the proof of Theorem \ref{thmgrowth}, so we shall only
  point out the differences. 
  As in the proof of Theorem \ref{thmgrowth} we start by crude
  pointwise bounds obtained by Gronwall inequality; but this time on
  the quantity
  \[ \varphi(t) = \sup_k\, |\Phi(k,t)|\,e^{2\pi(\lambda t+\mu)|k|}.\]
  Since $\sum_\ell K_{k,\ell}(t,\tau) = O((1+\tau)/\alpha)$, we find
  \begeq\label{phisupk} \varphi(t) \leq 2\,\bar A \,e^{C
    \left(\frac{C_0\,C_W}{\lambda_0-\lambda} t +
      \frac{c}{\alpha}(t+t^2) + \bar c \, C_m\right)}.
\endeq

Next we define $\Psi_k$, ${\cal K}^0_k$, $R_k$ as in Step~2 of the
proof of Theorem \ref{thmgrowth}, and we deduce \eqref{PsikRk} and
\eqref{PsiRkleq}. Let \begeq\label{phik} \varphi_k(t) =
e^{2\pi(\lambda t+\mu)|k|}\,|\Phi(k,t)|,
\endeq
then
\begin{align*}
  \bigl\|\varphi_k(t)\,e^{-\var t}\bigr\|_{L^2(dt)}
  & \leq \|R_k\|_{L^2(dt)}\, 
  \left(1+ \frac{\|{\cal K}^0_k\|_{L^1(dt)}}{\kappa}\right) \\
  & \leq \|R_k\|_{L^2(dt)}\, \left(1+
    \frac{C\,C_W\,C_0}{\kappa}\right);
\end{align*}
whence
\begin{multline}\label{phiepsL2k}
  \bigl\|\varphi_k(t)\,e^{-\var t}\bigr\|_{L^2(dt)} \leq \left( 1 +
    \frac{C\,C_0\,C_W}{\kappa\,(\lambda_0-\lambda)^2}\right)\,
  \left\{ \int_0^\infty e^{-2\var t}
    \left( A_k + \int_0^t K_0(t,\tau)\,\varphi_k(\tau)\,d\tau\right.\right.\\
  \left.\left.  + \sum_\ell \int_0^t \left( c\,K_{k,\ell}(t,\tau) +
        \frac{c_\ell}{(1+\tau)^m}\right)\,
      \varphi_{k-\ell}(\tau)\,d\tau\right)^2\,dt\right\}^{\frac12}.
\end{multline}

We separate this into various contributions as in the proof of Theorem
\ref{thmgrowth}.  In particular, using \eqref{phisupk} and $\int_0^t
\sum_\ell K_{k,\ell}\,d\tau = O((1+t)/\alpha^2)$,
we find
\begin{align} \label{oc2k} \biggl \{ \int_0^T e^{-2 \var t} &
  \left(\int_0^t \sum_\ell
    K_{k,\ell}(t,\tau)\,
    \varphi_{k-\ell}(\tau)\,d\tau \right)^2\,dt\biggr\}^{\frac12}\\
  & \nonumber \leq \left[\sup_k\ \sup_{0\leq t\leq T}
    \varphi_k(t)\right] \left(\int_0^T e^{-2\var t}\left(\int_0^t
      \sum_\ell K_{k,\ell}(t,\tau)\,d\tau\right)^2\,dt\right)^{\frac12}\\
  & \nonumber \leq C\, \bar A\,\frac{c}{\alpha^2 \,\var^{3/2}}\, e^{C
    \left[\frac{C_0\,C_W}{\lambda_0-\lambda}\,T +
      \frac{c}{\alpha}\,(T+T^2)\right]}.
\end{align}
Also,
\begin{multline*}
  \biggl\{ \int_T^\infty e^{-2\var t} \left(\int_0^t \sum_\ell
    K_{k,\ell}(t,\tau)\,\varphi_{k-\ell}(\tau)\,
    d\tau\right)^2\,dt\biggr\}^{\frac12}\\
  \leq\left(\sup_{t\geq T} \int_0^t e^{-\var t}\,\sum_\ell
    K_{k,\ell}(t,\tau) \,e^{\var \tau}\,d\tau\right)^{\frac12}
  \left(\int_T^\infty \int_0^t \sum_\ell
    K_{k,\ell}(t,\tau)\,e^{-\var(t-\tau)}\,e^{-2\var
      \tau}\varphi_{k-\ell}(\tau)^2\,d\tau\,dt\right)^{\frac12},
\end{multline*}
and the last term inside parentheses is
\begin{multline*}
  \sum_\ell \int_0^\infty \left(\int_{\max\{\tau;T\}}^\infty
    K_{k,\ell}(t,\tau)\,e^{-\var (t-\tau)}\,dt\right)\,
  e^{-2\var\tau}\,\varphi_{k-\ell}(\tau)^2\,d\tau \\
  \leq \left(\sum_\ell \ \sup_\tau \int_\tau^\infty
    K_{k,\ell}(t,\tau)\,e^{-\var (t-\tau)}\,dt\right) \
  \left[\sup_\ell \int
    e^{-2\var\tau}\,\varphi_\ell(\tau)^2\,d\tau\right].
\end{multline*}

The computation for $K_0$ is the same as in the proof of Theorem
\ref{thmgrowth}, and the terms in $(1+\tau)^{-m}$ are handled in
essentially the same way: simple computations yield
\begin{align}\label{oc5k}
  \biggl\{\int_0^\infty e^{-2\var t} & \, \left(\int_0^T
    \frac{\sum_\ell c_\ell \,
      \varphi_{k-\ell}(\tau)}{(1+\tau)^m}\,d\tau\right)^2 \,dt \biggr\}^{\frac12}\\
  \nonumber & \leq \left(\sup_{0\leq \tau\leq T}\ \sup_\ell
    \varphi_\ell(\tau)\right) \left\{\int_0^\infty e^{-2\var
      t}\left(\int_0^T \frac{(\sum c_\ell)\,
        d\tau}{(1+\tau)^m}\right)^2\,
    dt\right\}^{\frac12}\\
  \nonumber & \leq \bar c \,\frac{C_m\, \bar A}{\sqrt{\var}}\, e^{C\left[
      \left(\frac{C_0\,C_W}{\lambda_0-\lambda}\right)T +
      \frac{c}{\alpha}\, (T+T^2)\right]}
\end{align}
and
\begin{align}\label{oc6k}
  \biggl\{\int_0^\infty & e^{-2\var t}
  \left(\int_T^t \frac{\sum_\ell 
 c_\ell\,\varphi_{k-\ell}(\tau)\,d\tau}{(1+\tau)^m}\right)^2\,dt\biggr\}^{\frac12}\\
  & \leq \left\{ \sup_{t \ge 0, \ \ell} \left(\int_T^t e^{-2\var
        \tau}\,\varphi_\ell(\tau)^2\,d\tau\right)\, \left(\sum_\ell
      c_\ell\right)^2 \left(\int_0^\infty \int_T^t
      \frac{e^{-2\var(t-\tau)}}{(1+\tau)^{2m}}\,d\tau\,dt\right)\right\}^{\frac12}
  \nonumber \\
  & \leq \bar c \, \left(\frac{C_{2m}}{\var\, T^{2m-1}}\right)^{\frac12}\,
   \left(\sup_\ell \int_0 ^{+\infty}
    e^{-2\var\tau}\,\varphi_\ell(\tau)^2\,d\tau\right)^{\frac12}. \nonumber
\end{align}

All in all, we end up with 
\begin{multline} \label{endstep2k} \sup_k\
  \bigl\|\varphi_k(t)\,e^{-\var t}\bigr\|_{L^2(dt)} \leq \left( 1 +
    \frac{C\,C_0\,C_W}{\kappa\,(\lambda_0-\lambda)^2}\right)\,
  \frac{C\,A}{\sqrt{\var}}\, \left[ 1 + \left(\frac{c}{\alpha^2\,\var}+
      \bar c \,C_m\right)\right]\,
  e^{C\,\left[\frac{C_0\,C_W}{\lambda_0-\lambda}\,T + \frac{c}{\alpha}\, (T+T^2)\right]}\\
  \ + a\, \sup_k\ \bigl\|\varphi_k(t)\,e^{-\var t}\bigr\|_{L^2(dt)},
\end{multline}
where
\begin{multline*}
  a =
  \left(1+\frac{C\,C_0\,C_W}{\kappa\,(\lambda_0-\lambda)^2}\right)\
  \Biggl[ \\
  c^2 \, \left(\sup_{t\geq T} \sum_{\ell} \int_0^t e^{-\var
      t}\,K_{k,\ell}(t,\tau)\, e^{\var \tau}\,d\tau\right)^{\frac12}
  \left(\sum_\ell \sup_{\tau\geq 0} \int_\tau^\infty
    e^{\var\tau}\,K_{k,\ell}(t,\tau)\,
    e^{-\var t}\,dt\right)^{\frac12}\\
  + \left(\sup_{t\geq 0}\int_0^t K_0(t,\tau)\,d\tau\right)^{\frac12}
  \left(\sup_{\tau\geq 0}\int_\tau^\infty
    K_0(t,\tau)\,dt\right)^{\frac12} + \,\frac{C_{2m}^{1/2}\,\bar
    c_0}{\sqrt{\var}\,T^{m-1/2}}\Biggr].
\end{multline*}

Applying Proposition \ref{propexpkl}, we see that $a\leq 1/2$ as soon
as $T$ satisfies \eqref{condTk}, and then we deduce from
\eqref{endstep2k} a bound on $\sup_k \|\varphi_k(t)\,e^{-\var
  t}\|_{L^2(dt)}$.

Finally, we conclude as in Step~3 of the proof of Theorem
\ref{thmgrowth}: from \eqref{cond74k},
\begin{align} \label{3rdk} e^{-\var t}\,\varphi_k(t) & \leq
  A_k\,e^{-\var t} + \Biggl[ \biggl(\int_0^t \biggl(\sup_{k\in\Z^d_*}
  |K^0(k,t-\tau)|\,
  e^{2\pi\lambda (t-\tau)|k|}\biggr)^2\,d\tau \biggr)^{\frac12}\\
  \nonumber & \qquad \qquad\qquad + \left(\int_0^t
    K_0(t,\tau)^2\,d\tau\right)^{\frac12}
  + \bar c \, \left(\int_0^\infty 
    \frac{d\tau}{(1+\tau)^{2m}}\right)^{\frac12}\\
  \nonumber & \qquad \qquad\qquad + c\, \sum_\ell \left(\int_0^t e^{-2\var
      t}\,K_{k,\ell}(t,\tau)^2\,e^{2\var\tau}\,d\tau\right)^{\frac12}
  \Biggr]\ \left(\sup_k
    \int_0^\infty\varphi_k(\tau)^2\,e^{-2\var\tau}\,d\tau\right)^{\frac12},
\end{align}
and the conclusion follows by a new application of Proposition
\ref{propexpkl}.
\end{proof}

\section{Approximation schemes}
\label{sec:approx}

Having defined a functional setting (Section \ref{sec:analytic}) and
identified several mathematical/physical mechanisms (Sections
\ref{sec:scattering} to \ref{sec:response}), we are prepared to fight
the Landau damping problem.  For that we need an approximation scheme
solving the nonlinear Vlasov equation. The problem is not to prove the
existence of solutions (this is much easier), but to devise the scheme
in such a way that it leads to relevant estimates for our study.

The first idea which may come to mind is a classical Picard scheme for quasilinear equations:
\begeq\label{picard1}
\pa_t f^{n+1} + v\cdot\nabla_x f^{n+1} + F[f^n]\cdot\nabla_v f^{n+1} = 0.
\endeq
This has two drawbacks: first, $f^{n+1}$ evolves by the characteristics
created by $F[f^n]$, and this will deteriorate the estimates in analytic regularity.
Secondly, there is no hope to get a closed (or approximately closed)
equation on the density associated with $f^{n+1}$. More promising, and more in the
spirit of the linearized approach, would be a scheme like
\begeq\label{picard2}
\pa_t f^{n+1} + v\cdot\nabla_x f^{n+1} + F[f^{n+1}]\cdot\nabla_v f^n =0.
\endeq
(Physically, $f^{n+1}$ forces $f^n$, and the question is whether the reaction
will exhaust $f^{n+1}$ in large time.) But when we write \eqref{picard2} we
are implicitly treating a higher order term ($\nabla_v f$) of the equation
in a perturbative way; so this has no reason to converge.

To circumvent these difficulties, we shall use a {\bf Newton iteration}:
not only will this provide more flexibility in the
regularity indices, but at the same time it will yield an extremely fast
rate of convergence (something like $O(\var^{2^n})$) which will be
most welcome to absorb the large constants coming from Theorem
\ref{thmgrowth} or Theorem \ref{thmgrowthk}.

\subsection{The natural Newton scheme} \label{natural}

Let us adapt the abstract Newton scheme to an abstract evolution equation in the form
\[ \derpar{f}{t} = Q(f),\]
around a stationary solution $f^0$ (so $Q(f^0)=0$). Write the Cauchy problem with initial datum $f_i\simeq f^0$
in the form
\[ \Phi(f) := \Bigl( \pa_t f - Q(f),\ f(0,\,\cdot\,)\Bigr) - (0,f_i).\]
Starting from $f^0$, the Newton iteration consists in solving inductively 
$\Phi(f^{n-1}) + \Phi'(f^{n-1}) \cdot (f^n-f^{n-1}) =0$ for $n\geq 1$. More explicitly, writing $h^n = f^n-f^{n-1}$,
we should solve
\[ \begin{cases} \pa_t h^1 = Q'(f^0)\cdot h^1 \\[2mm]
h^1(0,\,\cdot\,) = f_i - f^0
\end{cases}\]
\[ \forall n\geq 1,\quad
\begin{cases} \pa_t h^{n+1} = Q'(f^n)\cdot h^{n+1} - \bigl[ \pa_t f^n - Q(f^n)\bigr]\\[2mm]
h^{n+1}(0,\,\cdot\,) = 0.
\end{cases}\]
By induction, for $n\geq 1$ this is the same as
\[ \begin{cases} \pa_t h^{n+1} = Q'(f^n)\cdot h^{n+1} + \Bigl[ Q(f^{n-1}+h^n) - Q(f^{n-1}) - Q'(f^{n-1})\cdot h^n\Bigr]\\[2mm]
h^{n+1}(0,\,\cdot\,) = 0.
\end{cases}\]

This is easily applied to the nonlinear Vlasov equation, for which the nonlinearity is quadratic. So we define
the {\bf natural Newton scheme for the nonlinear Vlasov equation} as follows:

\[ f^0 = f^0(v) \quad \text{is given (homogeneous stationary state)}\]
\[ f^n = f^0 + h^1 + \ldots + h^n, \qquad \text{where}\]
\begeq\label{N1}
\begin{cases} \pa_t h^1 + v\cdot\nabla_x h^1 + F[h^1]\cdot\nabla_v f^0 =0 \\[2mm]
h^1(0,\,\cdot\,) = f_i -f^0
\end{cases} \endeq
\begeq\label{N2}
\forall \, n\geq 1,\quad
\begin{cases} \pa_t h^{n+1} + v\cdot\nabla_x h^{n+1} +
  F[f^{n}]\cdot\nabla_v h^{n+1} + F[h^{n+1}]\cdot\nabla_v f^{n} =
  -F[h^{n}]\cdot\nabla_v h^{n}\\[2mm]
  h^{n+1}(0,\,\cdot\,) = 0.
\end{cases} \endeq Here $F[f]$ is the force field created by the
particle distribution $f$, namely \begeq\label{Force} F[f](t,x) = -
\iint_{\T^d\times\R^d} \nabla W(x-y)\, f(t,y,w)\,dy\,dw.\endeq Note
also that all the $\rho^n=\int h^n \, dv$ for $n \ge 1$ have zero
spatial average.

\subsection{Battle plan}

The treatment of \eqref{N1} was performed in Subsection
\ref{sub:revisited}. Now the problem is to handle all equations
appearing in \eqref{N2}. This is much more complicated, because for
$n\geq 1$ the background density $f^{n}$ depends on $t$ and $x$,
instead of just $v$; as a consequence, \sm

(a) Equation \eqref{N2} cannot be considered as a perturbation of free
transport, because of the presence of $\nabla_v h^{n+1}$ in the
left-hand side; \sm

(b) The reaction term $F[h^{n+1}]\cdot\nabla_v f^{n}$ no longer has the
simple product structure (function of $x$)$\times$(function of $v$),
so it becomes harder to get hands on the homogenization phenomenon;
\sm

(c) Because of spatial inhomogeneities, echoes will appear; they are
all the more dangerous that, $\nabla_v f^n$ is unbounded as $t\to\infty$,
{\em even in gliding regularity}.
(It grows like $O(t)$, which is reminiscent of the observation made
by Backus \cite{backus}.)
\sm

The estimates in Sections \ref{sec:scattering} to \ref{sec:response}
have been designed precisely to overcome these problems; however we
still have a few conceptual difficulties to solve before applying
these tools.

Recall the discussion in Subsection \ref{submeasur}: the natural
strategy is to propagate the bound \begeq\label{supf1} \sup_{\tau\geq
  0} \|f_\tau\|_{\cZ^{\lambda,\mu;1}_\tau} < +\infty
\endeq
along the scheme; this estimate contains in particular two crucial
pieces of information:

\bul a control of $\rho_\tau = \int f_\tau\,dv$ in $\cF^{\lambda\tau
  +\mu}$ norm; \sm

\bul a control of $\< f_\tau\> = \int f_\tau\,dx$ in $\cC^{\lambda;1}$
norm.  \sm

So the plan would be to try to get inductively estimates of each $h^n$
in a norm like the one in \eqref{supf1}, in such a way that $h^n$ is
extremely small as $n\to\infty$, and allowing a slight deterioration
of the indices $\lambda,\mu$ as $n\to\infty$. Let us try to see how
this would work: assuming
\[ \forall \, 0 \le k \le n, \quad 
\sup_{\tau\geq 0} \|h_\tau^k\|_{\cZ^{\lambda_k,\mu_k;1}_\tau} \leq
\delta_k,\] we should try to bound $h_\tau^{n+1}$. To ``solve''
\eqref{N2}, we apply the classical method of characteristics: as in
Section \ref{sec:scattering} we define $(X_{\tau,t}^n,V_{\tau,t}^n)$
as the solution of
\[ \begin{cases}
\dps \frac{d}{dt} X_{\tau,t}^n(x,v) = V_{\tau,t}^n(x,v),\qquad
\frac{d}{dt}V_{\tau,t}^n(x,v) = F[f^n]\bigl(t,X^n_{\tau,t}(x,v)\bigr)\\[4mm]
X^n_{\tau,\tau}(x,v) = x, \quad V^n_{\tau,\tau}(x,v) = v.
\end{cases}\]
Then \eqref{N2} is equivalent to
\begeq\label{N2equiv}
\frac{d}{dt} h^{n+1}\Bigl(t,X_{0,t}^n,V_{0,t}^n(x,v)\Bigr)
= \Sigma^{n+1} \Bigl(\tau, X_{0,\tau}^n(x,v),V_{0,\tau}^n(x,v)\Bigr),
\endeq
where
\begeq\label{Sigman} \Sigma^{n+1}(t,x,v) = - F[h^{n+1}]\cdot\nabla_v f^{n}
- F[h^{n}]\cdot\nabla_v h^{n}.
\endeq
Integrating \eqref{N2equiv} in time and recalling that $h^{n+1}(0,\cdot)=0$, we get
\[ h^{n+1}\Bigl( t, X_{0,t}^n(x,v),V_{0,t}^n(x,v)\Bigr)
= \int_0^t \Sigma^{n+1}\Bigl(\tau,X_{0,\tau}^n(x,v),V_{0,\tau}^n(x,v)\Bigr)\,d\tau.\]
Composing with $(X_{t,0}^n,V_{t,0}^n)$ and using \eqref{SSS} yields
\[ h^{n+1}(t,x,v) = \int_0^t \Sigma^{n+1}\Bigl(\tau, X_{t,\tau}^n(x,v), V_{t,\tau}^n(x,v)\Bigr)\,d\tau.\]
We rewrite this using the ``scattering transforms''
\[ \Om_{t,\tau}^n(x,v) = (X_{t,\tau}^n,V_{t,\tau}^n)(x+v(t-\tau),v) = S^n_{t,\tau}\circ S^0_{\tau,t};\]
then we finally obtain
\begin{align}\label{hfinal}
h^{n+1} (t,x,v) & = \int_0^t \bigl(\Sigma_\tau^{n+1}\circ \Om_{t,\tau}^n\bigr) (x-v(t-\tau),v)\,d\tau\\
\nonumber & = 
- \int_0^t \Bigl[ \Bigl( F[h_\tau^{n+1}]\circ \Om_{t,\tau}^n\Bigr)\cdot
\Bigl( \bigl(\nabla_v f_\tau^n\bigr)\circ \Om_{t,\tau}^n\Bigr)\Bigr] (x-v(t-\tau),v)\,d\tau\\
\nonumber & \quad 
- \int_0^t \Bigl[ \Bigl( F[h_\tau^{n}]\circ \Om_{t,\tau}^n\Bigr)\cdot
\Bigl( \bigl(\nabla_v h_\tau^n\bigr)\circ \Om_{t,\tau}^n\Bigr)\Bigr] (x-v(t-\tau),v)\,d\tau.
\end{align}
Since the unknown $h^{n+1}$ appears on both sides of \eqref{hfinal},
we need to get a self-consistent estimate.  For this we have little
choice but to integrate in $v$ and get an integral equation on
$\rho[h^{n+1}] = \int h^n\,dv$, namely
\begin{multline}\label{rho}
\rho[h^{n+1}](t,x) = \int_0^t \int \biggl[ \Bigl( \bigl( \rho[h_\tau^{n+1}]\ast\nabla W\bigr)\circ \Om_{t,\tau}^n\Bigr)\cdot
G_{\tau,t}^n\biggr]\circ S^0_{\tau-t}(x,v)\,dv\,d\tau\ \\
+ (\text{stuff from stage $n$}),
\end{multline}
where $G^n_{\tau,t}=\nabla_v f^n_\tau\circ \Om_{t,\tau}^n$. By
induction hypothesis $G^n_{\tau,t}$ is smooth with regularity indices
roughly equal to $\lambda_n$, $\mu_n$; so if we accept to lose just a
bit more on the regularity we may hope to apply the long-term
regularity extortion and decay estimates from Section
\ref{sec:regdecay}, and then time-response estimates of
  Section \ref{sec:response}, and get the desired damping.

However, we are facing a major problem: composition of $\rho[h_\tau^{n+1}]\ast\nabla W$ by $\Om_{t,\tau}^n$
implies a loss of regularity in the right-hand side with respect to the left-hand side, which is of course
unacceptable if one wants a closed estimate.
The short-term regularity extortion from Section \ref{sec:regdecay} remedies this, but the price to pay is that $G^n$ should
now be estimated at time $\tau'=\tau-bt/(1+b)$ instead of $\tau$, and with index of gliding analytic regularity
roughly equal to $\lambda_n(1+b)$ rather than $\lambda_n$. Now the catch is that the error induced by composition
by $\Om^n$ depends on {\em the whole distribution $f^n$}, not just $h^n$; thus, if the parameter $b$ should
control this error it should stay of order~1 as $n\to\infty$, instead of converging to~0.

So it seems we are sentenced to lose a fixed amount of regularity (or
rather of radius of convergence) in the transition from stage $n$ to
stage $n+1$; this is reminiscent of the ``Nash--Moser syndrom''
\cite{AlGe}.  The strategy introduced by Nash \cite{Na56} to remedy
such a problem (in his case arising in the construction of $C^\infty$
isometric imbeddings) consisted in combining a Newton scheme with
regularization; his method was later developed by Moser \cite{Mo66}
for the $C^\infty$ KAM theorem (see \cite[pp.~19--21]{Mo} for some
interesting historical comments). The Nash--Moser technique is
arguably the most powerful perturbation technique known to this
day. However, despite significant effort, we were unable to set up any
relevant regularization procedure (in gliding regularity, of course)
which could be used in Nash--Moser style, because of three serious
problems:
\sm

\bul The convergence of the Nash--Moser scheme is no longer as fast as
that of the ``raw'' Newton iteration; instead, it is determined by the
regularity of the data, and the resulting rates would be unlikely to
be fast enough to win over the gigantic constants coming from Section
\ref{sec:response}.  \sm

\bul Analytic regularization in the $v$ variable is extremely costly,
especially if we wish to keep a good localization in velocity space,
as the one appearing in Theorem \ref{thminj}(iii), that is exponential
integrability in $v$; then the uncertainty principle basically forces
us to pay $O(e^{C/\var^2})$, where $\var$ is the strength of the
regularization.  \sm

\bul Regularization comes with an increase of amplitude (there is as
usual a trade-off between size and regularity); if we regularize
before composition by $\Om^n$, this will devastate the estimates,
because the analytic regularity of $f\circ g$ depends not only on the
regularity of $f$ and $g$, but also on the amplitude of $g-\Id$.  \sm

Fortunately, it turned out that a ``raw'' Newton scheme could be used; but this required to give up the natural
estimate \eqref{supf1}, and replace it by the {\em pair} of estimates
\begeq\label{pairest}
\begin{cases}
\dps \sup_{\tau\geq 0}\ \|\rho_\tau\|_{\cF^{\lambda\tau + \mu}} < +\infty;\\[4mm]
\dps \sup_{t\geq \tau\geq 0}\ \Bigl\| f_\tau\circ \Om_{t,\tau}\Bigr\|_{\cZ^{\ov{\lambda}(1+b),\ov{\mu};1}_{\tau - \frac{bt}{1+b}}} < +\infty.
\end{cases}
\endeq
Here $b=b(t)$ takes the form ${\rm const.}/(1+t)$, and is kept fixed
all along the scheme; moreover $\lambda,\mu$ will be slightly larger
than $\ov{\lambda}$, $\ov{\mu}$, so that none of the two estimates in
\eqref{pairest} implies the other one. Note carefully that there are
now two times ($t$, $\tau$) explicitly involved, so this is much more
complex than \eqref{supf1}. Let us explain why this strategy is
nonetheless workable.

First, the density $\rho^n=\int f^n\,dv$ determines the
characteristics at stage $n$, and therefore the associated scattering
$\Om^n$. If $\rho_\tau^n$ is bounded in $\cF^{\lambda_n\tau + \mu}$,
then by Theorem \ref{thmscat} we can estimate $\Om^n_{t,\tau}$ in
${\cZ^{\lambda'_n,\mu'_n}_{\tau'}}$, as soon as (essentially)
$\lambda'_n\,\tau' + \mu'_n \leq \lambda_n\,\tau + \mu_n$,
$\lambda'_n<\lambda_n$, and these bounds are uniform in $t$.

Of course, we cannot apply this theorem in the present context,
because $\ov{\lambda}_n (1+b)$ is not bounded above by
$\lambda_n$. However, {\em for large times $t$} we may afford
$\ov{\lambda}_n(1+b(t))<\lambda_n$, while
$\ov{\lambda}_n(1+b)(\tau-bt/(1+b))\leq \lambda_n\tau$ for all times;
this will be sufficient to repeat the arguments in Section
\ref{sec:scattering}, getting uniform estimates {\em in a regularity
  which depends on $t$}.  (The constants are uniform in $t$; but the
index of regularity goes down with $t$.) We can also do this while
preserving the other good properties from Theorem \ref{thmscat},
namely exponential decay in $\tau$, and vanishing near $\tau=t$.

Figure \ref{lambda} below summarizes schematically the way we choose and estimate the
gliding regularity indices.
\med

\begin{figure}[htbp]
\begin{center}
\input{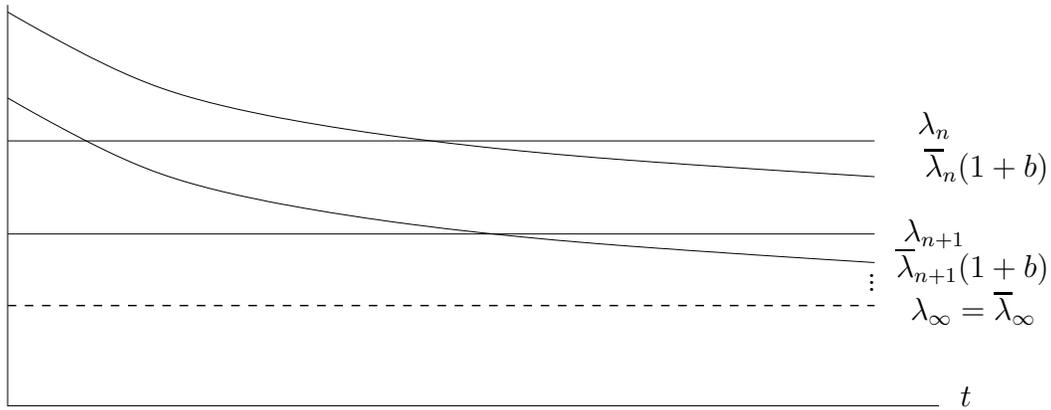}
\caption{Indices of gliding regularity appearing throughout our Newton
  scheme, respectively in the norm of $\rho[h_\tau]$ and in the norm
  of $h_\tau\circ \Om_{t,\tau}$, plotted as functions of $t$}
\label{lambda}
\end{center}
\end{figure}

Besides being uniform in $t$, our bounds need to be uniform in
$n$. For this we shall have to {\em stratify} all our estimates, that
is decompose $\rho[f^n] = \rho[h^1]+\cdots + \rho[h^n]$, and consider
separately the influence of each term in the equations for
characteristics. This can work only if the scheme converges very fast.

Once we have estimates on $\Om^n_{t,\tau}$ in a time-varying
regularity, we can work with the kinetic equation to derive estimates
on $h_\tau^n\circ \Om_{t,\tau}^n$; and then on all
$h_\tau^k\circ\Om^n_{t,\tau}$, also in a norm of time-varying
regularity. We can also estimate their spatial average, in a norm
${\cal C}^{\ov{\lambda}(1+b);1}$; thanks to the exponential
convergence of the scattering transform as $\tau\to\infty$ these
estimates will turn out to be uniform in $\tau$.  

Next, we can use all
this information, in conjunction with Theorem \ref{thmcombi'}, to get
an integral inequality on the norm of $\rho[h^{n+1}_\tau]$ in ${\cal
  F}^{\lambda\tau+\mu}$, where $\lambda$ and $\mu$ are only slightly
smaller than $\lambda_n$ and $\mu_n$.
Then we can go through the response estimates of Section \ref{sec:response},
this gives us an arbitrarily small loss in the
exponential decay rate, at the price of a huge constant which will eventually be
wiped out by the fast convergence of the scheme. So we have an
estimate on $\rho[h^{n+1}]$, and we are in business to continue the
iteration.  (To ensure the propagation of the linear damping
condition, or equivalently of the smallness of $K_0$ in Theorem
\ref{thmgrowth}, throughout the scheme, we shall have to stratify the
estimates once more.)

\section{Local in time iteration}
\label{sec:local}

Before working out the core of the proof of Theorem \ref{thmmain} in
Section \ref{sec:iter}, we shall need a short-time estimate, which
will act as an ``initial regularity layer'' for the Newton scheme.
(This will give us room later to allow the regularity index to depend
on $t$.)  So we run the whole scheme once in this section, and another
time in the next section.

Short-time estimates in the analytic class are not new for the
nonlinear Vlasov equation: see in particular the work of Benachour
\cite{benachour} on Vlasov--Poisson.  His arguments can probably be
adapted for our purpose; also the Cauchy--Kowalevskaya method could
certainly be applied. We shall provide here an alternative method,
based on the analytic function spaces from Section \ref{sec:analytic},
but not needing the apparatus from Sections \ref{sec:scattering} to
\ref{sec:response}. Unlike the more sophisticated estimates which will
be performed in Section \ref{sec:iter}, these ones are ``almost'' Eulerian
(the only characteristics are those of free transport).
The main tool is the

\begin{Lem} \label{lemderivnorm} Let $f$ be an analytic function,
  $\lambda(t) = \lambda - K\,t$, $\mu(t) = \mu - K\,t$; let $T>0$ be
  so small that $\lambda(t), \mu(t)>0$ for $0\leq t\leq T$.
  Then for any $\tau\in [0,T]$ and any $p\geq 1$,
  \begeq\label{leftddtnorm} \left.\frac{d}{dt}^+ \right|_{t=\tau}
  \|f\|_{\cZ^{\lambda(t),\mu(t);p}_\tau} \leq -\frac{K}{1+\tau}\, \|\nabla
  f\|_{\cZ^{\lambda(\tau),\mu(\tau);p}_\tau},
\endeq
where $(d^+/dt)$ stands for the upper right derivative.
\end{Lem}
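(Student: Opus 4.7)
The plan is to differentiate the series defining the $\cZ$-norm termwise, identify the two resulting sums, and recognize them using the alternative representation from Proposition~\ref{propalterZ}. Write
$$a_{\ell,n} := \bigl\|(\nabla_v + 2i\pi\tau\ell)^n \hat{f}(\ell,v)\bigr\|_{L^p(\R^d_v)},$$
so that
$$\|f\|_{\cZ^{\lambda(t),\mu(t);p}_\tau} = \sum_{\ell\in\Z^d}\sum_{n\in\N_0^d} \frac{\lambda(t)^{|n|}}{n!}\, e^{2\pi\mu(t)|\ell|}\, a_{\ell,n}.$$
The shift parameter $\tau$ is fixed: only the prefactor depends on $t$, and each term is monotone decreasing in $t$. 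Since $f$ is analytic, both this series and its formal $t$-derivative converge absolutely in a neighborhood of $(\lambda(\tau),\mu(\tau))$, which legitimates termwise differentiation. With $\lambda' := \lambda(\tau)$, $\mu' := \mu(\tau)$ and $\dot\lambda = \dot\mu = -K$, one gets
$$\left.\frac{d^+}{dt}\right|_{t=\tau}\|f\|_{\cZ^{\lambda(t),\mu(t);p}_\tau} = -K\,(S_1+S_2),$$
$$S_1 := \sum_\ell \sum_{n:|n|\ge 1} \frac{|n|\,\lambda'^{|n|-1}}{n!}\, e^{2\pi\mu'|\ell|}\, a_{\ell,n}, \qquad S_2 := 2\pi\sum_{\ell,n} \frac{\lambda'^{|n|}}{n!}\, |\ell|\, e^{2\pi\mu'|\ell|}\, a_{\ell,n}.$$

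The next step is to identify these sums. Since $\widehat{\nabla_x f}(\ell,v) = 2i\pi\ell\,\hat{f}(\ell,v)$ and $(\nabla_v+2i\pi\tau\ell)^n$ commutes with multiplication by $\ell$, $S_2$ coincides with $\|\nabla_x f\|_{\cZ^{\lambda',\mu';p}_\tau}$ (using the convention that the $L^p$ norm of a vector-valued function is the sum of the $L^p$ norms of its components). For $S_1$, I would first rewrite $|n|=\sum_i n_i$ and substitute $m = n - 1_i$, exploiting $n_i/n! = 1/m!$, to obtain
$$S_1 = \sum_{i=1}^d \sum_{\ell,m} \frac{\lambda'^{|m|}}{m!}\, e^{2\pi\mu'|\ell|}\, a_{\ell,m+1_i}.$$
Introducing the gliding gradient $D_{v,\tau} := \nabla_v + \tau\nabla_x$, a direct check gives $\widehat{D_{v,\tau} f}(\ell,v) = (\nabla_v + 2i\pi\tau\ell)\hat{f}(\ell,v)$; hence the $i$-th component of $(\nabla_v+2i\pi\tau\ell)^m\,\widehat{D_{v,\tau} f}(\ell,v)$ has $L^p(dv)$ norm exactly $a_{\ell,m+1_i}$. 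Summing over $i$ identifies $S_1 = \|D_{v,\tau} f\|_{\cZ^{\lambda',\mu';p}_\tau}$, which is precisely the content of Proposition~\ref{propalterZ}.

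To conclude, I will compare $S_1+S_2$ with $\|\nabla f\|_{\cZ}$. From $\nabla_v = D_{v,\tau} - \tau\nabla_x$ and the triangle inequality,
$$\|\nabla_v f\|_{\cZ^{\lambda',\mu';p}_\tau} \le \|D_{v,\tau} f\|_{\cZ^{\lambda',\mu';p}_\tau} + \tau\,\|\nabla_x f\|_{\cZ^{\lambda',\mu';p}_\tau} = S_1 + \tau S_2,$$
so that
$$\|\nabla f\|_{\cZ^{\lambda',\mu';p}_\tau} = \|\nabla_x f\|_{\cZ^{\lambda',\mu';p}_\tau} + \|\nabla_v f\|_{\cZ^{\lambda',\mu';p}_\tau} \le S_1 + (1+\tau)\,S_2 \le (1+\tau)(S_1+S_2).$$
Dividing by $1+\tau$ and substituting back gives the announced inequality
$$\left.\frac{d^+}{dt}\right|_{t=\tau}\|f\|_{\cZ^{\lambda(t),\mu(t);p}_\tau} \le -\frac{K}{1+\tau}\,\|\nabla f\|_{\cZ^{\lambda(\tau),\mu(\tau);p}_\tau}.$$
The only step needing real care is the identification $S_1 = \|D_{v,\tau} f\|_{\cZ}$, which is exactly what Proposition~\ref{propalterZ} provides; everything else is bookkeeping, including the justification of termwise differentiation (handled by monotone convergence once a small margin in the analyticity radii is invoked).
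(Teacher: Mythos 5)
Your proof is correct and follows essentially the same approach as the paper's: termwise differentiation of the series, identification of the two resulting sums as $\|\nabla_x f\|_{\cZ}$ and $\|(\nabla_v+\tau\nabla_x) f\|_{\cZ}$ (Proposition~\ref{propalterZ}), and then the triangle inequality $\nabla_v = (\nabla_v+\tau\nabla_x) - \tau\nabla_x$ to recover $\|\nabla f\|/(1+\tau)$. The only presentational difference is that you package the final step as a single chain $\|\nabla f\|\leq S_1+(1+\tau)S_2\leq(1+\tau)(S_1+S_2)$, whereas the paper distributes the same triangle inequality term by term inside the sums; these are the same estimate. One small remark: where you assert equality for the termwise derivative, only the one-sided inequality $\frac{d^+}{dt}\sum g_k \leq \sum \frac{d^+}{dt} g_k$ (valid for sums of non-increasing functions, by Fatou) is actually needed and available without any extra convergence hypothesis, which is what the $\frac{d^+}{dt}$ and the $\leq$ in the statement are designed to accommodate.
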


\begin{Rk} Time-differentiating Lebesgue integrability exponents is
  common practice in certain areas of analysis; see
  e.g. \cite{gross:log:75}. Time-differentiation with respect to
  regularity exponents is less common; however, as pointed out to us
  by Strain, Lemma \ref{lemderivnorm} is strongly reminiscent of a
  method recently used by Chemin \cite{chemin} to derive local
  analytic regularity bounds for the Navier--Stokes equation.  We
  expect that similar ideas can be applied to more general situations
  of Cauchy--Kowalevskaya type, especially for first-order equations,
  and maybe this has already been done.
\end{Rk}

\begin{proof}[Proof of Lemma \ref{lemderivnorm}]
For notational simplicity, let us assume $d=1$.
The left-hand side of \eqref{leftddtnorm} is
\begin{align*}
& \sum_{n,k} e^{2\pi\mu(\tau)|k|}\,2\pi \dot{\mu}(\tau)\,|k|\,
\frac{\lambda^n(\tau)}{n!}\, \Bigl\|\bigl(\nabla_v + 2i\pi k\tau\bigr)^n\,\hat{f}(k,v)\Bigr\|_{L^p(dv)}\\
& \qquad 
+ \sum_{n,k} e^{2\pi \mu(\tau)|k|}\,\dot{\lambda}(\tau)\,\frac{\lambda^{n-1}(\tau)}{(n-1)!}\,
\Bigl\| \bigl(\nabla_v + 2i\pi k\tau\bigr)^n\,\hat{f}(k,v)\Bigr\|_{L^p(dv)}\\
& \leq - K \sum_{n,k} e^{2\pi\mu(\tau)|k|}\,2\pi \,|k|\,
\frac{\lambda^n(\tau)}{n!}\, \Bigl\|\bigl(\nabla_v + 2i\pi k\tau\bigr)^n\,\hat{f}(k,v)\Bigr\|_{L^p(dv)}\\
&\qquad - K \sum_{n,k} e^{2\pi\mu(\tau)|k|}\,
\frac{\lambda^n(\tau)}{n!}\, \Bigl\|\bigl(\nabla_v + 2i\pi k\tau\bigr)^{n+1}\,\hat{f}(k,v)\Bigr\|_{L^p(dv)}\\
& \leq - K \sum_{n,k} e^{2\pi\mu(\tau)|k|}\,\frac{\lambda^n(\tau)}{n!}
\,\Bigl\|\bigl(\nabla_v + 2i\pi k\tau\bigr)^n \hat{\nabla_x f}(k,v)\Bigr\|_{L^p(dv)}\\
&\qquad + \frac{K \tau}{1+\tau} \sum_{n,k} e^{2\pi\mu(\tau)|k|}\,\frac{\lambda^n(\tau)}{n!}
\,\Bigl\|\bigl(\nabla_v + 2i\pi k\tau\bigr)^n \hat{\nabla_x f}(k,v)\Bigr\|_{L^p(dv)}\\
&\qquad - \frac{K}{1+\tau} \sum_{n,k} e^{2\pi\mu(\tau)|k|}\,\frac{\lambda^n(\tau)}{n!}
\,\Bigl\|\bigl(\nabla_v + 2i\pi k\tau\bigr)^n \hat{\nabla_v f}(k,v)\Bigr\|_{L^p(dv)},
\end{align*}
where in the last step we used $\|(\nabla_v + 2i\pi k\tau)h\|\geq (1/(1+\tau))( \|\nabla_v h\|
-\tau \|2i\pi kh\|)$. The conclusion follows.
\end{proof}

Now let us see how to propagate estimates through the Newton scheme
described in Section \ref{sec:iter}.  The first stage of the iteration
($h^1$ in the notation of \eqref{N1}) was considered in Subsection
\ref{sub:revisited}, so we only need to care about higher orders.  For
any $k\geq 1$ we solve $\pa_t h^{k+1} + v\cdot\nabla_x h^{k+1} = \tilde \Sigma^{k+1}$, 
where
$$
\tilde \Sigma^{k+1} 
= - \Bigl(F[h^{k+1}]\cdot\nabla_v f^k + F[f^k]\cdot\nabla_v
h^{k+1} + F[h^k]\cdot\nabla_v h^k\Bigr)
$$
(note the difference with \eqref{N2equiv}--\eqref{Sigman}). Recall that
$f^k=f^0+h^1+\ldots+h^k$.  We define $\lambda_k(t) = \lambda_k - 2\,
K\, t$, $\mu_k(t) = \mu_k - K\,t$, where $(\lambda_k)_{k\in\N}$,
$(\mu_k)_{k\in\N}$ are decreasing sequences of positive numbers.

We assume inductively that at stage $n$ of the iteration, we have
constructed $(\lambda_k)_{k\leq n}$, $(\mu_k)_{k\leq n}$,
$(\delta_k)_{k\leq n}$ such that
\[\forall \, k\leq n,\qquad 
\sup_{0\leq t\leq T}
\bigl\|h^k(t,\,\cdot\,)\bigr\|_{\cZ^{\lambda_k(t),\mu_k(t);1}_t} \leq
\delta_k,\] for some fixed $T>0$. The issue is to construct
$\lambda_{n+1}$, $\mu_{n+1}$ and $\delta_{n+1}$ so that the induction
hypothesis is satisfied at stage $n+1$.

At $t=0$, $h^{n+1}=0$. Then we estimate the time-derivative of
$\|h^{n+1}\|_{\cZ^{\lambda_{n+1}(t),\mu_{n+1}(t);1}_t}$. Let us first
pretend that the regularity indices $\lambda_{n+1}$ and $\mu_{n+1}$ do
not depend on $t$; then $h^{n+1}(t) = \int_0^t \tilde \Sigma^{n+1}\circ
S^0_{-(t-\tau)}\,d\tau$, so by Proposition \ref{propTL}
\begin{align*}
\|h^{n+1}\|_{\cZ^{\lambda_{n+1},\mu_{n+1};1}_t} 
& \leq \int_0^t \bigl\|\tilde \Sigma^{n+1}_\tau\circ S^0_{-(t-\tau)}\bigr\|_{\cZ^{\lambda_{n+1},\mu_{n+1};1}_t}\,d\tau\\
& \leq \int_0^t \bigl\|\tilde \Sigma^{n+1}_\tau\|_{\cZ^{\lambda_{n+1},\mu_{n+1};1}_\tau}\,d\tau,
\end{align*}
and thus
\[ \frac{d^+}{dt} \|h^{n+1}\|_{\cZ^{\lambda_{n+1},\mu_{n+1};1}_t} \leq
\|\tilde \Sigma^{n+1}_t\|_{\cZ^{\lambda_{n+1},\mu_{n+1};1}_t}.\]
Finally, according to Lemma \ref{lemderivnorm}, to this estimate we
should add a negative multiple of the norm of $\nabla h^{n+1}$ 
to take into account the time-dependence of
$\lambda_{n+1}$, $\mu_{n+1}$.

All in all, after application of Proposition \ref{propalgZ}, we get
\begin{align*}
\frac{d^+}{dt}
\bigl\|h^{n+1}(t,\,\cdot\,)\bigr\|_{\cZ^{\lambda_{n+1}(t),\mu_{n+1}(t);1}_t}
& \leq \bigl\|F[h^{n+1}_t]\|_{\cF^{\lambda_{n+1}t+\mu_{n+1}}}\,
\|\nabla_v f_t^n\|_{\cZ^{\lambda_{n+1},\mu_{n+1};1}_t}\\
& \quad + \bigl\|F[f^{n}_t]\|_{\cF^{\lambda_{n+1}t+\mu_{n+1}}}\,
\|\nabla_v h_t^{n+1}\|_{\cZ^{\lambda_{n+1},\mu_{n+1};1}_t}\\
& \quad + \bigl\|F[h^{n}_t]\|_{\cF^{\lambda_{n+1}t+\mu_{n+1}}}\,
\|\nabla_v h_t^n\|_{\cZ^{\lambda_{n+1},\mu_{n+1};1}_t}\\
& \quad -K\, \bigl\|\nabla_x h^{n+1}_t\|_{\cZ^{\lambda_{n+1},\mu_{n+1};1}_t}\,
-K\, \|\nabla_v h_t^{n+1}\|_{\cZ^{\lambda_{n+1},\mu_{n+1};1}_t},
\end{align*}
where $K>0$, $t$ is sufficiently small, and all exponents $\lambda_{n+1}$ and $\mu_{n+1}$ 
in the right-hand side actually depend on $t$.  

From Proposition \ref{propZCF} (iv) we
easily get $\|F[h]\|_{\cF^{\lambda t+\mu}} \leq C\,\|\nabla
h\|_{\cZ^{\lambda,\mu;1}_t}$.  Moreover, by Proposition
\ref{propgrad},
\[ \|\nabla f^n\|_{\cZ^{\lambda_{n+1},\mu_{n+1};1}_t} \leq
\sum_{k\leq n} \|\nabla h^k\|_{\cZ^{\lambda_{k+1},\mu_{k+1};1}_t} \leq
C\,\sum_{k\leq n} \frac{\|h^k\|_{\cZ^{\lambda_{k+1},\mu_{k+1};1}_t}}
{\min \bigl\{\lambda_k-\lambda_{n+1}\, ; \, 
\mu_k-\mu_{n+1}\bigr\}}.\]
We end up with the bound
\begin{multline*}
  \frac{d^+}{dt} \bigl\|h^{n+1}(t,\,\cdot\,)\bigr\|_{\cZ^{\lambda_{n+1}(t),\mu_{n+1}(t);1}_t}\\
  \leq \left[ C \left(\sum_{k\leq n}
      \frac{\delta_k}{\min\bigl\{\lambda_k-\lambda_{n+1}\, ; \, 
        \mu_k-\mu_{n+1}\bigr\}} \right) - K \right] \bigl\|\nabla
  h^{n+1}\bigr\|_{\cZ^{\lambda_{n+1}(t),
      \mu_{n+1}(t);1}_t} \\
  +
  \frac{\delta_n^2}{\min\bigl\{\lambda_n-\lambda_{n+1}\, ; \, 
    \mu_n-\mu_{n+1}\bigr\}}.
\end{multline*}

We conclude that if
\begeq\label{cclif}
\sum_{k\leq n} \frac{\delta_k}{\min\bigl\{\lambda_k-\lambda_{n+1} \, ;
  \, \mu_k-\mu_{n+1}\bigr\}}\leq
\frac{K}{C},
\endeq
then we may choose
\begeq\label{thenccl}
\delta_{n+1} = \frac{\delta_n^2}{\min\bigl\{\lambda_n-\lambda_{n+1} \,
  ; \, \mu_n-\mu_{n+1}\bigr\}}.
\endeq

This is our first encounter with the principle of ``stratification''
of errors, which will be crucial in the next section: to control the
error at stage $n+1$, we use not only the smallness of the error from
stage $n$, but also an information about all previous errors; namely
the fact that the convergence of the size of the error is much faster
than the convergence of the regularity loss. Let us see how this
works.  We choose $\lambda_k-\lambda_{k+1} = \mu_k-\mu_{k+1} =
\Lambda/k^2$, where $\Lambda>0$ is arbitrarily small.  Then for $k\leq
n$, $\lambda_k - \lambda_{n+1} \geq \Lambda/k^2$, and therefore
$\delta_{n+1}\leq \delta_n^2\, n^2/\Lambda$. The problem is to check
\begeq\label{seriescheck} \sum_{n=1}^\infty n^2\,\delta_n < +\infty.
\endeq
Indeed, then we can choose $K$ large enough for \eqref{cclif} to be
satisfied, and then $T$ small enough that, say $\lambda^*-2KT \geq
\lambda^\sharp$, $\mu^*-KT\geq \mu^\sharp$, where $\lambda^\sharp <
\lambda^*$, $\mu^\sharp < \mu^*$ have been fixed in advance.

If $\delta_1=\delta$, the general term in the series of
\eqref{seriescheck} is
\[ n^2\, \frac{\delta^{2^n}}{\Lambda^n}\, (2^2)^{2^{n-1}}\,
(3^2)^{2^{n-2}}\, (4^2)^{2^{n-2}} \ldots ((n-1)^2)^2\, n^2.\] To prove
the convergence for $\delta$ small enough, we assume by induction that
$\delta_n\leq z^{a^n}$, where $a$ is fixed in the interval $(1,2)$
(say $a=1.5$); and we claim that this condition propagates if $z>0$ is
small enough.  Indeed,
\[ \delta_{n+1} \leq \frac{z^{2\,a^n}}{\Lambda}\,n^2 \leq
z^{a^{n+1}}\, \left(\frac{z^{(2-a)a^n}\,n^2}{\Lambda}\right),\] and
this is bounded above by $z^{a^{n+1}}$ if $z$ is so small that
\[ \forall \, n\in\N,\qquad z^{(2-a)\,a^n} \leq \frac{\Lambda}{n^2}.\]

This concludes the iteration argument. Note that the convergence is
still extremely fast --- like $O(z^{a^n})$ for any $a<2$. (Of course,
when $a$ approaches~2, the constants become huge, and the restriction
on the size of the perturbation becomes more and more stringent.)

\begin{Rk} The method used in this section can certainly be applied to
  more general situations of Cauchy--Kowalevskaya type. Actually, as
  pointed out to us by Bony and G\'erard, the use of a regularity
  index which decays linearly in time, combined with a Newton
  iteration, was used by Nirenberg \cite{nirenberg} to prove an
  abstract Cauchy--Kowalevskaya theorem.  Nirenberg uses a
  time-integral formulation, so there is nothing in \cite{nirenberg}
  comparable to Lemma \ref{lemderivnorm}, and the details of the proof
  of convergence differ from ours; but the general strategy is
  similar.  Nirenberg's proof was later simplified by Nishida
  \cite{nishida} with a clever fixed point argument; in the present
  section anyway, our final goal is to provide short-term estimates
  for the successive corrections arising from the Newton scheme.
\end{Rk}

\section{Global in time iteration}
\label{sec:iter}

Now let us implement the scheme described in Section \ref{sec:approx},
with some technical modifications.  If $f$ is a given kinetic
distribution, we write $\rho[f]=\int f\,dv$ and $F[f]=-\nabla W \ast
\rho[f]$.  We let 
\begeq\label{fh} 
f^n = f^0 + h^1 + \ldots + h^n, 
\endeq where the successive corrections $h^k$ are defined by the
natural Newton scheme introduced in Section \ref{sec:approx}.  As in
Section \ref{sec:scattering} we define $\Om_{t,\tau}^k$ as the
scattering from time $t$ to time $\tau$, generated by the force field
$F[f^k]=-\nabla W\ast \rho[f^k]$. (Note that $\Om^0 = \Id$.)  \sm

\subsection{The statement of the induction}
\label{sec:statement-induction}

We shall fix $\ov{p}\in [1,\infty]$ and make the following assumptions:

\bul {\em Regularity of the background:} there are $\lambda>0$ and
$C_0>0$ such that
\[ \forall p \in [1,\ov{p}],\qquad \|f^0\|_{\cC^{\lambda;p}} \leq C_0.\]
\sm

\bul {\em Linear damping condition:} 
The stability condition {\bf (L)}
from Subsection \ref{sub:lineardamping} holds with parameters
$C_0$, $\lambda$, (the same as above) and $\kappa>0$.  \sm

\bul {\em Regularity of the interaction:} There are $\gamma>1$ and
$C_F>0$ such that for any $\nu>0$, 
\begeq\label{hypiterforce}
\bigl\|\nabla W \ast \rho\bigr\|_{\cF^{\nu,\gamma}} \leq C_F\,
\|\rho\|_{\dot{\cF}^\nu}.
\endeq
\sm

\bul {\em Initial layer of regularity} (coming from Section~\ref{sec:local}): 
Having chosen $\lambda^\sharp < \lambda$, $\mu^\sharp < \mu$, we assume
that for all $p\in [1,\ov{p}]$ 
\begeq\label{shorttime} 
\forall \, k \ge 1, \quad \sup_{0\leq t\leq T} \Bigl(\|h_t^k\|_{\cZ^{\lambda^\sharp,\mu^\sharp;p}} +
\|\rho[h_t^k]\|_{\cF^{\mu^\sharp}} \Bigr) 
\leq \zeta_k,
\endeq
where $T$ is {\em some} positive time, and $\zeta_k$ converges to zero
extremely fast: $\zeta_k = O(z_I ^{a_I ^k})$, $z_I\leq C \, \delta
<1$, $1<a_I <2$ ($a_I$ chosen in advance, arbitrarily close to $2$).
\sm

\bul {\em Smallness of the solution of the linearized equation}
(coming from Subsection \ref{sub:revisited}): Given
$\lambda_1<\lambda^\sharp$, $\mu_1<\mu^\sharp$, we assume
\begeq\label{controlN1} \forall \, p \in [1,\ov{p}], \quad
\begin{cases} \dps
\sup_{\tau\geq 0}\: \bigl\|\rho[h_\tau^1]\bigr\|_{\cF^{\lambda_1 \tau +\mu_1}}\leq \delta_1 \\[3mm]
\dps \sup_{t\geq \tau\geq 0}\: \bigl\|h_\tau^1 \bigr\|_{\cZ^{\lambda_1 (1+b),\mu_1;p}_{\tau - \frac{bt}{1+b}}} \leq 
\delta_1,
\end{cases}\endeq
where $\delta_1\leq C\,\delta$.
\med

{\bf Then we prove the following induction}: for any $n \ge 1$,
\begeq\label{inductionglobal}
\forall \, k\in \{1, \dots, n\}, \quad \forall \, p \in [1,\ov{p}], \quad 
\begin{cases} \dps
\sup_{\tau\geq 0}\: \bigl\|\rho[h_\tau^k]\bigr\|_{\cF^{\lambda_k\tau +\mu_k}}\leq \delta_k \\[3mm]
\dps \sup_{t\geq \tau\geq 0}\: \Bigl\|h_\tau^k \circ
\Om_{t,\tau}^{k-1}\Bigr\|_{\cZ^{\lambda_k(1+b),\mu_k;p}_{\tau - \frac{bt}{1+b}}} \leq \delta_k,
\end{cases}\endeq
where 
\sm

\bul $(\delta_k)_{k\in\N}$ is a sequence satisfying $0 < C_F \,
\zeta_k \leq \delta_k$, and $\delta_k = O(z^{a^k})$, $z< z_I$, $1< a<a_I$
($a$ arbitrarily close to $a_I$), \sm

\bul $(\lambda_k,\mu_k)$ are decreasing to
$(\lambda_\infty,\mu_\infty)$, where $(\lambda_\infty,\mu_\infty)$ are
arbitrarily close to $(\lambda_1,\mu_1)$; in particular we impose
\begeq\label{condlambda1} 
\lambda^\sharp - \lambda_\infty \leq \min
\Bigl\{1 \, ;\ \frac{\lambda_\infty}{2} \Bigr\}, \quad \mu^\sharp -
\mu_\infty \leq \min \Bigl\{ 1\, ;\ \frac{\mu_\infty}{2}\Bigr\}.
\endeq

\bul $T$ is some small positive time in \eqref{shorttime}; we impose
\begeq\label{condBlambda}
\lambda^\# \, T \leq \frac{\mu^\sharp - \mu_1}{2}.
\endeq

\bul $\dps b = b(t) = \frac{B}{1+t}$, where $B\in (0,T)$ is a (small) constant.

\subsection{Preparatory remarks}
\label{sec:preparatory-remarks}

As announced in \eqref{inductionglobal}, we shall propagate the following ``primary'' 
controls on the density and distribution:
\begin{equation}
  \label{eq:rhon}
  {\bf (E_\rho ^n)} \qquad \forall \, k\in \{1,\ldots,n\}, 
  \quad  \sup_{\tau\geq 0}\:
  \bigl\|\rho[h_\tau^k]\bigr\|_{\cF^{\lambda_k\tau +\mu_k}}
  \leq \delta_k
\end{equation}
and 
\begin{equation}
  \label{eq:distribn}
  {\bf (E_h ^n)} \qquad \forall \, k\in\{1,\ldots,n\},\quad
  \forall \, p \in [1,\ov{p}], \quad 
  \sup_{t\geq \tau\geq 0}\: \Bigl\|h_\tau^k 
  \circ \Om_{t,\tau}^{k-1}\Bigr\|_{\cZ^{\lambda_k(1+b),\mu_k;p}_{\tau - \frac{bt}{1+b}}} \leq \delta_k.
\end{equation}

Estimate ${\bf (E_\rho^n)}$ obviously implies, {\it via}
\eqref{hypiterforce}, up to a multiplicative constant,
\begin{equation}
  \label{eq:forcen}
  {\bf (\tilde E_\rho ^n)} \qquad \forall \, k\in \{1, \dots, n\}, \quad  \sup_{\tau\geq 0}\:
  \bigl\|F[h_\tau^k]\bigr\|_{\cF^{\lambda_k\tau +\mu_k, \gamma}} 
  \leq \delta_k.
\end{equation}

Before we can go from there to stage $n+1$,
we need an additional set of estimates on the scattering
maps $(\Omega^k)_{k=1,\dots,n}$, which will be used to
\begin{enumerate}
\item update the control on $\Omega^k_{t,\tau} -  \Id$;
\sm

\item establish the needed control along the characteristics for the
  background $(\nabla_v f^{n} _\tau) \circ \Omega^n _{t,\tau}$ (same
  index for the distribution and the scattering);
\sm

\item update some technical controls allowing to exchange (asymptotically) gradient and composition by $\Om^k_{t,\tau}$;
this will be crucial to handle the contribution of the zero mode of the background after composition
by characteristics.
\end{enumerate}

This set of scattering estimates falls into three categories. The first group expresses the closeness of $\Om^k$ to $\Id$:
\begin{equation}
  \label{eq:omegan1}
  {\bf (E_\Omega ^n)} \qquad 
  \forall \, k\in \{1, \dots, n\}, \quad 
  \begin{cases} 
    \dps \sup_{t\geq\tau\geq 0}\:
    \Bigl\|\Om^kX_{t,\tau} - \Id \Bigr\|_{\cZ^{\lambda ^* _k (1+b),(\mu^*_k,\gamma)}
      _{\tau - \frac{bt}{1+b}}} \leq 2\, \RR_2^k (\tau,t), \\[3mm]
    \dps \sup_{t\geq\tau\geq 0}\:
    \Bigl\|\Om^kV_{t,\tau} - \Id \Bigr\|_{\cZ^{\lambda ^* _k (1+b),(\mu^*_k,\gamma)}
      _{\tau - \frac{bt}{1+b}}} \leq \RR_1 ^k (\tau,t),
  \end{cases}
\end{equation}
with $\lambda_k > \lambda_k ^* > \lambda_{k+1}$, $\mu_k > \mu_k ^* > \mu_{k+1}$, and
\begeq\label{R12k}
\begin{cases}\dps \RR_1 ^k(\tau,t) = 
\left(\sum_{j=1}^k \frac{\delta_j\,e^{-2\pi (\lambda_j-\lambda^*_j)\tau}}
{2\pi(\lambda_j-\lambda^*_j)}\right)\, \min 
\left\{ (t-\tau)\, ; \ 1 \right\} \\[3mm]
\dps  \RR_2^k(\tau,t) = \left(\sum_{j=1}^k \frac{\delta_j\, 
e^{-2\pi (\lambda_j-\lambda^*_j)\tau}}
{(2\pi(\lambda_j-\lambda^*_j))^2}\right)\, \min
\left\{ \frac{(t-\tau)^2}{2}\, ; \ 1 \right\}.
\end{cases}
\endeq

The second group of estimates expresses the fact that $\Om^n-\Om^k$ is
very small when $k$ is large:
\begin{equation}
  \label{eq:omegan2}
  {\bf (\tilde E_\Omega ^n)} \quad 
  \forall \, k\in \{0, \dots, n-1\}, \quad 
  \begin{cases} 
     \dps \sup_{t\geq\tau\geq 0}\:
    \Bigl\|\Om^nX_{t,\tau} - \Om^k X_{t,\tau} \Bigr\|_{\cZ^{\lambda ^* _n (1+b),(\mu^*_n,\gamma)}
      _{\tau - \frac{bt}{1+b}}} \leq 2\, \RR_2^{k,n} (\tau,t), \\[4mm]
    \dps \sup_{t\geq\tau\geq 0}\:
    \Bigl\|\Om^n V_{t,\tau} - \Om^k V_{t,\tau} \Bigr\|_{\cZ^{\lambda ^* _n (1+b),(\mu^*_n,\gamma)}
      _{\tau - \frac{bt}{1+b}}} \leq \RR_1 ^{k,n} (\tau,t) + \RR_2 ^{k,n}(\tau,t), \\[4mm]
        \dps \sup_{t\geq\tau\geq 0}\:
    \Bigl\|(\Om^k _{t,\tau})^{-1} \circ \Om^n _{t,\tau} - \Id \Bigr\|_{\cZ^{\lambda ^* _n (1+b),\mu^*_n}
      _{\tau - \frac{bt}{1+b}}} \leq 4\bigl(\RR_1 ^{k,n} (\tau,t) + \RR_2 ^{k,n}(\tau,t)\bigr),
  \end{cases}
\end{equation}
with 
\begeq\label{R12kn} \begin{cases}
\dps \RR_1 ^{k,n} (\tau,t) = \left(\sum_{j=k+1}^n \frac{\delta_j\,e^{-2\pi (\lambda_j-\lambda^*_j)\tau}}
{2\pi(\lambda_j-\lambda^*_j)}\right)\, \min 
\left\{ (t-\tau)\, ; \ 1 \right\}\\[3mm]
\dps \RR_2^{k,n}(\tau,t) = \left(\sum_{j=k+1}^n \frac{\delta_j\,e^{-2\pi
      (\lambda_j-\lambda^*_j)\tau}}
  {(2\pi(\lambda_j-\lambda^*_j))^2}\right)\, \min \left\{
  \frac{(t-\tau)^2}{2}\, ; \ 1 \right\}.
\end{cases}
\endeq
(Choosing $k=0$ brings us back to the previous estimates ${\bf (E_\Omega ^n)}$.)
\sm

The last group of estimates expresses the fact that the differential of the scattering is uniformly close to the identity
(in a way which is more precise than what would follow from the first group of estimates):
\begin{equation}
  \label{eq:omegan3}
  {\bf (E_{\nabla \Omega} ^n)} \qquad 
  \forall \, k=1, \dots, n, \quad 
  \begin{cases} 
     \dps \sup_{t\geq\tau\geq 0}\:
    \Bigl\|\nabla \Om^kX_{t,\tau} - (I,0) \Bigr\|_{\cZ^{\lambda ^* _k (1+b),\mu^*_k}
      _{\tau - \frac{bt}{1+b}}} \leq 2\, \RR_2^k (\tau,t), \\[4mm]
    \dps \sup_{t\geq\tau\geq 0}\:
    \Bigl\|\nabla \Om^kV_{t,\tau} - (0,I) \Bigr\|_{\cZ^{\lambda ^* _k (1+b),\mu^*_k}
      _{\tau - \frac{bt}{1+b}}} \leq \RR_1 ^k (\tau,t) + \RR_2 ^k (\tau,t),
  \end{cases}
\end{equation}
where $\nabla=(\nabla_x,\nabla_v)$, and $I$ is the identity matrix.
\med

An important property of the functions $\RR_1^{k,n} (\tau,t)$, $\RR_2
^{k,n} (\tau,t)$ is their fast decay as $\tau\to\infty$ and as
$k\to\infty$, uniformly in $n\geq k$; this is due to the fast
convergence of the sequence $(\delta_k)_{k\in\N}$. Eventually, if
$r\in\N$ is given, we shall have
\begin{equation}
  \label{eq:estimR}
  \forall \, r \ge 1, \quad \RR_1 ^{k,n} (\tau,t) \le \omega_{k,n} ^{r,1}
  (\tau,t), \quad \RR_2 ^{k,n} (\tau,t) \le \omega_{k,n} ^{r,2}(\tau,t)
\end{equation}
with 
$$
\omega_{k,n} ^{r,1}(\tau,t) := C_\omega ^r \, \left( \sum_{j=k+1} ^n
  \frac{\delta_j}{(2\pi(\lambda_j - \lambda^* _j))^{1+r}} \right) \, 
\frac{\min \left\{ (t-\tau)\, ; 1\right\}}{(1+\tau)^r},
$$
and
$$
\omega_{k,n} ^{r,2}(\tau,t) := C_\omega ^r \, \left( \sum_{j=k+1} ^n
  \frac{\delta_j}{(2\pi(\lambda_j - \lambda^* _j))^{2+r}} \right) \, 
\frac{\min \left\{ (t-\tau)^2/2\, ; 1\right\}}{(1+\tau)^r}
$$
for some absolute constant $C_\omega ^r$ depending only on $r$ (we
also denote $\omega^{r,1} _{0,n} = \omega^{r,1} _n$ and $\omega^{r,2}
_{0,n} = \omega^{r,2} _n$).
\med

From the estimates on the characteristics and ${\bf (E_h^n)}$ will follow the following ``secondary controls''
on the distribution function:
\begin{multline}
  \label{eq:distribk}
  {\bf (\tilde E_h ^n)} \qquad \forall \, k\in \{1, \dots, n\}, \quad
  \forall \, p \in [1,\ov{p}], \\ 
\begin{cases} 
\dps  
  \sup_{t\geq \tau\geq 0}\: 
  \Bigl\|(\nabla_x h_\tau^k)  \circ
  \Om_{t,\tau}^{k-1}\Bigr\|_{\cZ^{\lambda_k(1+b),\mu_k;p}_{\tau - \frac{bt}{1+b}}} \leq \delta_k \\[3mm]
  \dps \sup_{t\geq \tau\geq 0}\: 
  \Bigl\|\nabla_x \bigl(h_\tau^k  \circ
  \Om_{t,\tau}^{k-1}\bigr)\Bigr\|_{\cZ^{\lambda_k(1+b),\mu_k;p}_{\tau - \frac{bt}{1+b}}} \leq \delta_k \\[3mm]
  \dps
  \Bigl\|\bigl((\nabla_v + \tau \nabla_x\bigr) h_\tau^k)  \circ
  \Om_{t,\tau}^{k-1}\Bigr\|_{\cZ^{\lambda_k(1+b),\mu_k;p}_{\tau - \frac{bt}{1+b}}}
  \leq \delta_k \\[3mm]
  \dps
  \Bigl\|(\nabla_v + \tau \nabla_x) \bigl(h_\tau^k  \circ \Om_{t,\tau}^{k-1}\bigr)\Bigr\|
_{\cZ^{\lambda_k(1+b),\mu_k;p}_{\tau - \frac{bt}{1+b}}}
  \leq \delta_k \\[3mm]
\dps \sup_{t\geq \tau\geq 0} \frac1{(1+\tau)^2} \ 
\Bigl\| \bigl(\nabla \nabla h_\tau^k\bigr)\circ \Om^{k-1}_{t,\tau}\Bigr\|_{\cZ^{\lambda_k(1+b),\mu_k;1}_{\tau - \frac{bt}{1+b}}}\leq \delta_k\\[3mm]
\dps \sup_{t\geq \tau\geq 0} 
(1+\tau)^2\ \Bigl\| (\nabla h^k_\tau)\circ\Om^{k-1}_{t,\tau}
- \nabla \bigl( h^k_\tau \circ \Om^{k-1}_{t,\tau}\bigr)\Bigr\|_{\cZ^{\lambda_k(1+b),\mu_k;1}_{\tau-\frac{bt}{1+b}}}
\leq \delta_k.
\end{cases}
\end{multline}
\med

The transition from stage $n$ to stage $n+1$ can be summarized as
follows:
\[ {\bf (\tilde{E}_\rho ^n)} \stackrel{{\bf (A_n)}}{\Longrightarrow} 
\left[{\bf (E_{\Omega} ^n)}+{\bf (\tilde E_{\Omega} ^n)}+{\bf (E_{\nabla \Omega} ^n)}\right]\]
\[ \left[ {\bf (E_\rho ^n)} + {\bf (E_{\Omega} ^n)}+{\bf (\tilde E_{\Omega} ^n)}+{\bf (E_{\nabla \Omega} ^n)}
+ {\bf (E_h ^{n})}+{\bf (\tilde E_h ^{n})} \right]
 \stackrel{{\bf (B_n)}}{\Longrightarrow} \left[ {\bf (E_\rho ^{n+1})}+{\bf (\tilde E_\rho ^{n+1})}+{\bf (E_h ^{n+1})}
+{\bf (\tilde E_h ^{n+1})}\right].
\]

The first implication ${\bf (A_n)}$ is proven by an amplification of
the technique used in Section \ref{sec:scattering}; ultimately, it
relies on repeated application of Picard's fixed point theorem in analytic norms.  The
second implication ${\bf (B_n)}$ is the harder part; it uses the
machinery from Sections \ref{sec:regdecay} and \ref{sec:response},
together with the idea of propagating simultaneously a {\em shifted}
$\cZ$ norm for the kinetic distribution and an $\cF$ norm for the density.

In both implications, the stratification of error estimates will
prevent the blow up of constants.  So we shall decompose the force
field $F^n$ generated by $f^n$ as
\[ F^n = F[f^n] = E^1+\ldots + E^n,\]
where $E^k = F[h^k]=-\nabla W\ast \rho[h^k]$.

The plan of the estimates is as follows.
We shall construct inductively a sequence of constant coefficients
\[ \lambda^\sharp > \lambda_1 > \lambda^*_1> \lambda_2 > \ldots > 
\lambda_{n} > \lambda^*_{n} >\lambda_{n+1} > \dots \]
\[ \mu^\sharp > \mu_1 > \mu^*_1 > \mu_2 >  \ldots > \mu_{n} > \mu^*_{n} >
 \mu_{n+1} > \dots\]
(where $\lambda_n, \mu_n$
  will be fixed in the proof of ${\bf (A_n)}$, and  $\lambda_{n+1},\mu_{n+1}$  
in the proof of ${\bf (B_n)}$)
  converging respectively to $\lambda_\infty$ and $\mu_\infty$; and a
  sequence $(\delta_k)_{k\in\N}$ decreasing very fast to zero. For
  simplicity we shall let
\[ \RR^n(\tau,t) = \RR_1^n(\tau,t) + \RR_2^n(\tau,t),\qquad
\RR^{k,n}(\tau,t) = \RR_1^{k,n}(\tau,t) + \RR_2^{k,n}(\tau,t),\]
and assume $2\pi(\lambda_j-\lambda^*_j)\leq 1$; so
\begeq\label{RR}
\RR^{k,n}(\tau,t) \leq C_\omega ^r \, \left( \sum_{j=k+1} ^n
  \frac{\delta_j}{(2\pi(\lambda_j - \lambda^* _j))^{2+r}} \right) \, 
\frac{\min \{t-\tau\, ; \, 1\}}{(1+\tau)^r},\qquad \RR^{0,n} = \RR^n.
\endeq
It will be sufficient to work with some fixed $r$, large enough
(as we shall see, $r=4$ will do). 

To go from stage $n$ to stage $n+1$, we shall do as follows:
\sm

\noindent
- \underline{Implication ${\bf (A_n)}$ (subsection \ref{subsec:A})}:
\sm

{\bf Step 1.} estimate $\Om^n-\Id$ (the bound should be uniform in $n$);
\sm

{\bf Step 2.} estimate $\Om^n-\Om^k$ ($k\leq n-1$; the
error should be small when $k\to\infty$);
\sm

{\bf Step 3.} estimate $\nabla \Om^n - I$;
\sm

{\bf Step 4.} estimate $(\Omega^k)^{-1} \circ \Omega^n$;
\sm 

\noindent
- \underline{Implication ${\bf (B_n)}$ (subsection \ref{subsec:B})}:
\sm

{\bf Step 5.} estimate $h^k$ and its derivatives along the composition by $\Om^n$;
\sm

{\bf Step 6.} estimate $\rho[h^{n+1}]$, using Sections \ref{sec:regdecay} and \ref{sec:response};
\sm

{\bf Step 7.} estimate $F[h^{n+1}]$ from $\rho[h^{n+1}]$;
\sm

{\bf Step 8.} estimate $h^{n+1}\circ\Om^n$;
\sm

{\bf Step 9.} estimate derivatives of $h^{n+1}$ composed with $\Om^n$;
\sm

{\bf Step 10.} show that for $h^{n+1}$, $\nabla$ and composition by $\Om^n$ asymptotically commute.

\subsection{Estimates on the characteristics} \label{subsec:A}
In this subsection, we assume that estimate
${\bf (E_\rho ^n)}$ is proven, and we establish
${\bf (E_{\Omega} ^n)}+{\bf (\tilde E_{\Omega} ^n)}+{\bf (E_{\nabla \Omega}^n)}$.
Let $\lambda^*_n<\lambda_n$, $\mu^*_n< \mu_n$ to be fixed later on.

\subsubsection{Step 1: Estimate of $\Om^n-\Id$}
This is the first and archetypal estimate.
We shall bound $\Om^nX_{t,\tau} - x$ in the hybrid norm
$\cZ^{\lambda_n^*(1+b),\mu_n^*}_{\tau-\frac{bt}{1+b}}$.
The Sobolev correction $\gamma$ will play no role here in the proofs,
and for simplicity we shall forget it in the computations, just recall it in the
final results. (Use Proposition \ref{prophybrsob} whenever needed.)

Since we expect the characteristics for the force field $F^n$ to be
close to the free transport characteristics, it is natural to write
\begeq\label{XnF} X^n_{t,\tau}(x,v) = x-v(t-\tau) + Z^n_{t,\tau}(x,v),
\endeq
where $Z^n_{t,\tau}$ solves
\begeq\label{eqXnF} \begin{cases}
\dps \frac{\pa^2}{\pa\tau^2} Z^n_{t,\tau}(x,v)
= F^n \Bigl( \tau, x-v(t-\tau) + Z^n_{t,\tau}(x,v)\Bigr)\\[2mm]
\dps Z^n_{t,t}(x,v) = 0,\qquad
\pa_\tau Z^n_{t,\tau}\Bigr|_{t=\tau} (x,v) =0.
\end{cases}
\endeq
(With respect to Section \ref{sec:scattering} we have dropped the parameter $\var$, to take advantage of
the ``stratified'' nature of $F^n$; anyway this parameter was cosmetic.) So if we fix $t>0$,
$(Z^n_{t,\tau})$ is a fixed point of the map
\[ \Psi: (W_{t,\tau})_{0\leq\tau\leq t} \longmapsto (Z_{t,\tau})_{0\leq\tau\leq t}\]
defined by
\begeq\label{EDOstrat} \begin{cases}
\dps \frac{\pa^2}{\pa\tau^2} Z_{t,\tau}
= F^n \Bigl( \tau, x-v(t-\tau) + W_{t,\tau}\Bigr)\\[2mm]
\dps Z_{t,t} = 0,\qquad \pa_\tau Z_{t,\tau}\Big|_{\tau=t} =0. \end{cases}
\endeq
The goal is to estimate $Z^n_{t,\tau} - x$ in the hybrid norm
$\cZ^{\lambda_n^*(1+b),\mu_n^*}_{t-\frac{bt}{1+b}}$.
\med

We first bound $(Z_0^n)_{t,\tau} = \Psi(0)$. Explicitly,
\[ (Z_0^n)_{t,\tau}(x,v) = \int_\tau^t (s-\tau)\, F^n\bigl(s,x-v(t-s)\bigr)\,ds.\]
By Propositions \ref{propZCF} (i) and \ref{propTL},
\begin{align}\label{estZ0n}
\bigl\|(Z_0^n)_{t,\tau}& \bigr\|_{\cZ^{\lambda_n^*(1+b),\mu_n^*}_{t-\frac{bt}{1+b}}} \\ \nonumber
& \leq \int_\tau^t (s-\tau)\, \Bigl\|F^n\bigl(s,\,x-v(t-s)\,\bigr)\Bigr\|_{
\cZ^{\lambda_n^*(1+b),\mu_n^*}_{t-\frac{bt}{1+b}}} \\ \nonumber
& = \int_\tau^t (s-\tau)\,\|F^n(s,\,\cdot\,)\|_{
\cZ^{\lambda_n^*(1+b),\mu_n^*}_{s-\frac{bt}{1+b}}} \\ \nonumber
& = \int_\tau^t (s-\tau)\, \|F^n(s,\,\cdot\,)\|_{\cF^{\nu(s,t)}}\,ds,
\end{align}
where  
\begin{align} \label{nust}
\nu(s,t) = \lambda_n^*\bigl|s-b(t-s)\bigr| + \mu_n^*.
\end{align}

{\bf First case:} If $s\geq bt/(1+b)$, then 
\begeq\label{mumu2}
\nu(s,t)\leq \lambda^*_n s + \mu^*_n \leq \lambda_k\,s + \mu_k - (\lambda_k-\lambda^*_n)s
\qquad (1\leq k\leq n).
\endeq

{\bf Second case:} 
If $s< bt/(1+b)$, then necessarily $s\leq B\leq T$. Taking into account \eqref{condlambda1}, we have
\begin{align} \label{nust2} 
\nu(s,t) 
& = \lambda_n^*\,bt + \mu_n^* - \lambda_n^*(1+b) s \\ \label{nust2bis}
& \leq \lambda_n^* B + \mu^* _{n} - (\lambda_k-\lambda^* _n)s. 
\end{align}
(Of course, the assumption $\lambda^\sharp-\lambda_\infty \le \min\{1,\lambda_\infty /2\}$
implies  $\lambda_k -\lambda_n^* \leq \lambda_n^*$.)
In particular, by \eqref{condBlambda},
\begeq\label{mumu3}
\nu(s,t)\leq \mu^\sharp - (\lambda_k-\lambda^*_n)s 
\qquad (1\leq k\leq n).
\endeq
\sm

We plug these bounds into \eqref{estZ0n}, then use $\hat{E}^k(s,0)=0$ and the bounds
\eqref{eq:forcen} and \eqref{mumu2} (for large times), and \eqref{shorttime} and \eqref{mumu3} (for short times). This yields
\begin{align} \label{Z0nagain} & \|(Z_0^n)_{t,\tau} \|_{
    \cZ^{\lambda_n^*(1+b),\mu_n^*}_{t-\frac{bt}{1+b}}}\\
  \nonumber & \leq \sum_{k=1}^n \Bigg( \int_{\tau\vee\frac{bt}{1+b}}^t
  (s-\tau)\,\|E^k(s,\,\cdot\,)\|_{\cF^{\lambda_k s + \mu_k -
      (\lambda_k -\lambda^*_n)s}}\,ds \\ \nonumber & \hspace{6cm} +
  \int_\tau^{\tau\vee\frac{bt}{1+b}}
  (s-\tau)\,\|E^k(s,\,\cdot\,)\|_{\cF^{\mu^\sharp - (\lambda_k
      -\lambda^*_n)s}}\,ds \Bigg) \\ \nonumber & \leq \sum_{k=1}^n
  \Bigg(\int_{\tau\vee\frac{bt}{1+b}}^t (s-\tau)\,e^{-2\pi
    (\lambda_k-\lambda^*_n)s}\, \|E^k(s,\,\cdot\,)\|_{\cF^{\lambda_k s
      + \mu_k}}\,ds \\ \nonumber & \hspace{6cm} +
  \int_\tau^{\tau\vee\frac{bt}{1+b}} (s-\tau)\,e^{-2\pi
    (\lambda_k-\lambda^*_n)s}\,
  \|E^k(s,\,\cdot\,)\|_{\cF^{\mu^\sharp}}\,ds\Bigg) \\ \nonumber &
  \leq \sum_{k=1}^n \delta_k \int_\tau^t (s-\tau)\,e^{-2\pi
    (\lambda_k-\lambda^* _n)s}\,ds\\ \nonumber & \leq \sum_{k=1}^n
  \delta_k \, e^{-2\pi (\lambda_k - \lambda^* _n)\tau}\, \min \left\{
    \frac{(t-\tau)^2}2\, ; \ \frac1{(2\pi(\lambda_k-\lambda^*_n))^2}
  \right\}\leq \RR_2^n(\tau,t).
\end{align}

Let us define the norm
\[ \bbqn (Z_{t,\tau})_{0\leq \tau\leq t}\bbqn_n := \sup_{0\leq\tau\leq t} 
\frac{ \|Z_{t,\tau}\|_{\cZ^{\lambda_n^*(1+b),\mu_n^*}_{t-\frac{bt}{1+b}}}}
{\RR_2^n(\tau,t)}.\] 
(Note the difference with Section \ref{sec:scattering}: now the regularity exponents depend on time(s).)  Inequality \eqref{Z0nagain} means that $\qn \Psi(0)\qn_n \leq 1$.  We shall check that $\Psi$ is $(1/2)$-Lipschitz on the ball $B(0,2)$ in the norm $\qn\ \cdot\ \qn_n$.  This will be subtle: the uniform bounds on the size of the force field, coming from the preceding steps, will allow to get good decaying exponentials, which in turn will imply uniform error bounds at the present stage.

So let $W, \tilde{W}\in B(0,2)$, and let $Z=\Psi(W)$, $\tilde{Z}=\Psi(\tilde{W})$.
As in Section \ref{sec:scattering}, we write
\begin{multline*}
Z_{t,\tau} - \tilde{Z}_{t,\tau}
= \int_0^1 \int_\tau^t (s-\tau)\, \nabla_x F^n
\Bigl( s, \, x-v(t-s) + \Bigl( \theta\, W_{t,s} + (1-\theta)\,\tilde{W}_{t,s}\Bigr)\Bigr)\\
\cdot (W_{t,s}-\tilde{W}_{t,s})\,ds\,d\theta,
\end{multline*}
and deduce
\[ \bbqn \Bigl( Z_{t,\tau} - \tilde{Z}_{t,\tau}\Bigr)_{0\leq \tau\leq t}\bbqn_n 
\leq A(t)\ \bbqn \Bigl( W_{t,s} - \tilde{W}_{t,s}\Bigr)_{0\leq s\leq t}\bbqn_n, \]
where
\begin{multline*} 
A(t) =  \sup_{0\leq \tau\leq s\leq t}
\frac{\RR_2^n(s,t)}{\RR_2^n(\tau,t)} \times \\ \int_0^1 \int_\tau^t (s-\tau)\, 
\Bigl\| \nabla_x F^n \Bigl( s, x-v(t-s) + \bigr(\theta \, W_{t,s} + (1-\theta)\,\tilde{W}_{t,s}\bigr)\Bigr)
\Bigr\|_{{\cZ^{\lambda_n^*(1+b),\mu_n^*}_{t-\frac{bt}{1+b}}}}\,ds\,d\theta.
\end{multline*}
For $\tau\leq s$ we have $\RR_2^n(s,t) \leq \RR_2^n(\tau,t)$. Also, by Propositions
\ref{propcompos2} (applied with $V=0$, $b=-(t-s)$ and
$\sigma=0$ in that statement) and \ref{propZCF},
\[ A(t) \leq \sup_{0\leq \tau\leq t} \int_\tau^t (s-\tau)\, \|\nabla_x
F^n(s,\,\cdot\,)\|_{\cF^{\nu(s,t) + e(s,t)}}\,ds,\]
where $\nu$ is defined by \eqref{nust} and the ``error'' $e(s,t)$
arising from composition is given by
\begin{align*}
e(s,t) = \sup_{0\leq \theta\leq 1}
\Bigl\|\theta\, W_{t,s} + (1-\theta)\, \tilde{W}_{t,s}\Bigr\|_{
{\cZ^{\lambda_n^*(1+b),\mu_n^*}_{t-\frac{bt}{1+b}}}} \leq 2\,\RR_2^n (s,t).
\end{align*}

Since
$$
\RR_2 ^{n} (s,t) \le \omega_{n} ^{1,2} (s,t) := C_\omega ^1 \, \left( \sum_{k=1} ^n
  \frac{\delta_k}{(2\pi(\lambda_k - \lambda^* _k))^{3}} \right) \, 
\frac{\min \left\{ (t-s)^2/2 \, ; \ 1\right\}}{(1+s)}
$$
we have, for all $0 \leq s\leq t$,
\begeq\label{nunu}
2\, \RR_2^n (s,t) \leq \frac{\lambda_n^*}{2}\, b\,(t-s) \, 1_{s \ge bt/(1+b)}
\, + \frac{\mu^\sharp - \mu^*_n}{2}\, 1_{s\leq bt/(1+b)},
\endeq
as soon as 
\begin{equation}
  \label{eq:cond1-rec}
  {\bf (C_1)} \qquad  \forall \, n\geq 1,\quad
  2 \, C_\omega ^1 \, \left( \sum_{k=1} ^n
    \frac{\delta_k}{(2\pi(\lambda_k - \lambda^* _n))^{3}} \right) \le
  \min \left\{ \frac{\lambda^*_n\,B}6 \, ; \frac{\mu^\sharp - \mu^*_n}{2} \right\}.
\end{equation}
We shall check later in Subsection \ref{subsec:cvg-scheme} the
feasibility of condition ${\bf (C_1)}$ --- as well as a number of other
forthcoming ones.

The extra error term in the exponent is sufficiently small to be
absorbed by what we throw away in \eqref{mumu2} or
\eqref{nust2}-\eqref{nust2bis}-\eqref{mumu3}. So we obtain, as in the
estimate of $Z_0^n$, for any $k\in\{1, \dots, n\}$,
\[ (\nu+e)(s,t)
\begin{cases} \leq \lambda_k\, s + \mu  _k - (\lambda_k -\lambda^*_n)\, s \qquad\qquad
\text{for $s\geq bt/(1+b)$}\\[4mm]
\leq \mu^\sharp - (\lambda_k-\lambda^*_n) \, s
\qquad\qquad\qquad \text{for $s\leq bt/(1+b)$},
\end{cases}\]
and we deduce (using \eqref{eq:forcen} and $\gamma\geq 1$) \label{gg1'}
\begin{align*}
A(t) &\leq \sup_{0\leq\tau\leq t}
\sum_{k=1}^n 
\Bigg(
\int_{\tau\vee\frac{bt}{1+b}}^t (s-\tau)\,\|\nabla_x E^k(s,\,\cdot\,)\|_{\cF^{\lambda_ks
+ \mu_k - (\lambda_k -\lambda^*_n) \, s}}\,ds \\
& \hspace{6cm}
+ \int_\tau^{\tau\vee\frac{bt}{1+b}} (s-\tau)\,\|\nabla_x E^k(s,\,\cdot\,)\|_{\cF^{\mu^\sharp - 
(\lambda_k -\lambda^*_n)s}}\,ds \Bigg)\\
& \leq \sup_{0\leq\tau\leq t} \sum_{k=1}^n \delta_k \, \int_\tau^t (s-\tau)\,e^{-(\lambda_k-\lambda^*_n) \, s}\,ds
 \leq \sup_{0\leq\tau\leq t} \RR_2^n (\tau,t) = \RR_2^n(0,t)\\
& \leq \sum_{k=1}^n \frac{\delta_k}{(2\pi(\lambda_k-\lambda^*_n))^2}.
\end{align*}
If the latter quantity is bounded above by $1/2$, then $\Psi$ is
$(1/2)$-Lipschitz and we may apply the fixed point result from Theorem
\ref{thmfpt}. Therefore, under the condition (whose feasibility will
be checked later)
\begin{equation}
  \label{eq:cond2-rec}
  {\bf (C_2)} \qquad \forall n\geq 1,\ \quad 
  \sum_{k=1}^n \frac{\delta_k}{(2\pi(\lambda_k-\lambda^*_n))^2} \le \frac12
\end{equation}
we deduce
\[ \|Z^n_{t,\tau}\|_{
{\cZ^{\lambda_n^*(1+b),\mu_n^*}_{t-\frac{bt}{1+b}}}}
\leq 2\, \RR_2^n(\tau,t). \]

After that, the estimates on the scattering are obtained exactly as in
Section \ref{sec:scattering}: writing $\Om_{t,\tau}^n=
(\Om^nX_{t,\tau}, \Om^nV_{t,\tau})$, recalling the dependence on
$\gamma$ again, we end up with \begeq\label{estOmstep1unshift}
\begin{cases}
\Bigl\|\Om^nX_{t,\tau} - x \Bigr\|_{\cZ^{\lambda^*_n (1+b),(\mu^*_{n},\gamma)}
_{\tau - \frac{bt}{1+b}}}
\leq 2\, \RR_2^n (\tau,t)\\[2mm]
\Bigl\|\Om^nV_{t,\tau} - v \Bigr\|_{\cZ^{\lambda^* _n(1+b), (\mu^*_{n},\gamma)}
_{\tau - \frac{bt}{1+b}}}
\leq \RR_1^n (\tau,t).
\end{cases}
\endeq

\subsubsection{Step 2: Estimate of $\Om^n-\Om^k$}

In this step our goal is to estimate $\Om^n-\Om^k$ for $1\leq k\leq n-1$. The point is that the error should be small as $k\to\infty$, uniformly in $n$, so we can't just write $\|\Om^n-\Om^k\|\leq \|\Om^n-\Id\| + \|\Om^k-\Id\|$. 
Instead, we start again from the differential equation satisfied by $Z^k$ and $Z^n$:
\begin{align*}
&\frac{\pa^2}{\pa\tau^2}
\bigl(Z_{t,\tau}^n-Z_{t,\tau}^k\bigr)(x,v) \\
& \qquad  = F^n \Bigl(\tau, x-v(t-\tau) + Z^n_{t,\tau}(x,v)\Bigr)
- F^k \Bigl(\tau, x-v(t-\tau) + Z^k_{t,\tau}(x,v)\Bigr)\\
& \qquad = \biggl[ F^n\Bigl(\tau, x-v(t-\tau) + Z^n_{t,\tau}\Bigr)
- F^n \Bigl(\tau,x-v(t-\tau)+Z^k_{t,\tau}\Bigr)\biggr]\\
& \qquad \qquad + (F^n-F^k)\Bigl(\tau, x-v(t-\tau) + Z^k_{t,\tau}\Bigr).
\end{align*}
This, together with the boundary conditions $Z^n_{t,t}-Z^k_{t,t}=0$,
$\pa_\tau (Z^n_{t,\tau}-Z^k_{t,\tau})|_{\tau=t} = 0$, implies
\begin{align*}
& Z^n_{t,\tau} - Z^k_{t,\tau} \\
& \qquad = \int_0^1 \int_\tau^t (s-\tau) \, \nabla_x F^n
\Bigl( s, x-v(t-s) + \bigl(\theta\, Z^k_{t,s} + (1-\theta)\, Z^n_{t,s}\bigr)\Bigr)\cdot
(Z^n_{t,s}-Z^k_{t,s})\,ds\,d\theta\\
&\qquad \quad + \int_\tau^t (s-\tau)\, (F^n-F^k)\,
\Bigl(s, x-v(t-s) + Z^k_{t,s}(x,v)\Bigr)\,ds.
\end{align*}

We fix $t$ and define the norm 
\[ \bbqn (Z_{t,\tau})_{0\leq \tau\leq t}\bbqn_{k,n} := \sup_{0\leq\tau\leq t} \frac{
  \|Z_{t,\tau}\|_{
{\cZ^{\lambda_n^*(1+b),\mu_n^*}_{t-\frac{bt}{1+b}}}}}
{\RR_2^{k,n}(\tau,t)},\] 
where $\RR_2^{k,n}$ is defined in \eqref{R12kn}.  Using the bounds on $Z^n,Z^k$ in $\bqn
\cdot \bqn_n$ (since $\bqn \cdot \bqn_n \leq \bqn \cdot \bqn_k$ by
using the fact that $\RR^k _{2} \le \RR^n _{2}$) and proceeding as
before, we get
\begin{multline} \label{Bqn}
\Bqn \left( Z^n_{t,\tau}-Z^k_{t,\tau} \right)_{0\leq \tau\leq t} \Bqn_{k,n}
\leq \frac12 \, \bbqn \left( Z^n_{t,\tau}-Z^k_{t,\tau} \right)_{0\leq \tau\leq t} \bbqn_{k,n}\\
+ \Bqn \left( \int_\tau^t (s-\tau)\, (F^n-F^k) \Bigl(s,x-v(t-s)+Z^k_{t,s}\Bigr)\,ds \right)_{0\leq \tau\leq t} \Bqn_{k,n}.
\end{multline}

Next we estimate
\begin{align*}
& \Bigl\| (F^n-F^k) \Bigl(s, x-v(t-s) + Z^k_{t,s}\Bigr) \Bigr\|_{
{\cZ^{\lambda_n^*(1+b),\mu_n^*}_{t-\frac{bt}{1+b}}}} \\
& = \Bigl\| (F^n-F^k) (s,X^k_{t,s})\Bigr\|_{
\cZ^{\lambda_n^*(1+b),\mu_n^*}_{t-\frac{bt}{1+b}}}\\
& = \Bigl\| (F^n-F^k) (s,\Om^k_{t,s})\Bigr\|_{
\cZ^{\lambda_n^*(1+b),\mu_n^*}_{s-\frac{bt}{1+b}}}
\leq \Bigl\| (F^n-F^k)(s,\,\cdot\,)\Bigr\|_{\cF^{\nu(s,t)+e(s,t)}},
\end{align*}
where the last inequality follows from Proposition \ref{propcompos2}, $\nu$ is again
given by \eqref{nust}, and
\[e(s,t) = 
\bigl\|\Om^kX_{t,s}-\Id\bigr\|_{
\cZ^{\lambda_n^*(1+b),\mu_n^*}_{s-\frac{bt}{1+b}}}
\leq 2 \, \RR_2^k(s,t) \leq 2 \, \RR_2^n(s,t). \]

The same reasoning as in Step~1 yields, under assumptions ${\bf (C_1)}$-${\bf (C_2)}$,
for $k+1 \leq j \leq n$:
\[ (\nu+e)(s,t)
\begin{cases} \leq \lambda_j\, s + 
  \mu _j  - (\lambda_j -\lambda^*_n)\, s \qquad\qquad
\text{for $s\geq bt/(1+b)$}\\[4mm]
\leq \mu^\sharp - (\lambda_j-\lambda^*_n) \, s
\qquad\qquad\qquad\quad \text{for $s\leq bt/(1+b)$},
\end{cases}\]
and so
\[ \bigl\|F^n_s - F^k_s\bigr\|_{\cF^{\nu+e}} \leq
\sum_{j=k+1}^n \delta_j\,e^{-2\pi (\lambda_j-\lambda^* _n)\, s}.\]

For any $\tau\geq 0$, by integrating in time we find
\begin{align*}
& \Bigl\| \int_\tau^t (s-\tau)\, 
(F^n-F^k) \Bigl(s, x-v(t-s) + Z^k_{t,s}\Bigr) \, ds
\Bigr\|_{\cZ^{\lambda_n^*(1+b),\mu_n^*}_{t-\frac{bt}{1+b}}} \\
& \leq \int_\tau^t (s-\tau)\, \sum_{j=k+1}^n \delta_j\,e^{-2\pi (\lambda_j-\lambda^*_n) \, s} \,ds 
\leq \RR_2 ^{k,n}(\tau,t).
\end{align*}
Therefore
$$
\bbqn \left( \int_\tau^t (s-\tau)\, (F^n-F^k) \Bigl(s,x-v(t-s)+Z^k_{t,s}\Bigr)\,ds \right)_{0\leq \tau\leq t} \bbqn_{k,n} \le 1
$$
and by \eqref{Bqn} 
$$
\bbqn \left( Z^n_{t,\tau}-Z^k_{t,\tau} \right)_{0\leq \tau\leq t} \bbqn_{k,n} \le 2.
$$
Recalling the Sobolev correction, we conclude that
\begin{equation}\label{OmnOmkX}
\bigl\|\Om^n X_{t,\tau} - \Om^k X_{t,\tau}\bigr\|_{
\cZ^{\lambda_n^*(1+b),(\mu_n^*,\gamma)}_{\tau-\frac{bt}{1+b}}}
\leq 2\, \RR_2 ^{k,n}(\tau,t).
\end{equation}

For the velocity component, say $U$, we write
\begin{align*}
&\frac{\pa}{\pa\tau}
\bigl(U_{t,\tau}^n-U_{t,\tau}^k\bigr)(x,v) \\
& \qquad  = F^n \Bigl(\tau, x-v(t-\tau) + Z^n_{t,\tau}(x,v)\Bigr)
- F^k \Bigl(\tau, x-v(t-\tau) + Z^k_{t,\tau}(x,v)\Bigr)\\
& \qquad = \biggl[ F^n\Bigl(\tau, x-v(t-\tau) + Z^n_{t,\tau}\Bigr)
- F^n \Bigl(\tau,x-v(t-\tau)+Z^k_{t,\tau}\Bigr)\biggr]\\
& \qquad \qquad + (F^n-F^k)\Bigl(\tau, x-v(t-\tau) + Z^k_{t,\tau}\Bigr).
\end{align*}
where $Z^n,Z^k$ were estimated above, and the boundary conditions are $U^n_{t,t}-U^k_{t,t}=0$.
Thus
\begin{align*}
& U^n_{t,\tau} - U^k_{t,\tau} \\
& \qquad = \int_0^1 \int_\tau^t \nabla_x F^n
\Bigl( s, x-v(t-s) + \bigl(\theta\, Z^k_{t,s} + (1-\theta)\, Z^n_{t,s}\bigr)\Bigr)\cdot
(Z^n_{t,s}-Z^k_{t,s})\,ds\,d\theta\\
&\qquad \quad + \int_\tau^t (F^n-F^k)\,
\Bigl(s, x-v(t-s) + Z^k_{t,s}(x,v)\Bigr)\,ds,
\end{align*}
and from this one easily derives the similar estimates
\begin{equation*}
\begin{cases}
\bigl\|\Om^n X_{t,\tau} - \Om^k X_{t,\tau}\bigr\|_{\cZ^{\lambda^* _n (1+b),\mu^* _{n}}
_{\tau-\frac{bt}{1+b}}} \leq 2\, \RR_2 ^{k,n}(t,\tau) \\[4mm]
\bigl\|\Om^n V_{t,\tau} - \Om^k V_{t,\tau}\bigr\|_{\cZ^{\lambda^* _n(1+b),\mu^*  _{n}}
_{\tau-\frac{bt}{1+b}}} \leq \RR_1 ^{k,n}(t,\tau) + \RR_2 ^{k,n}(t,\tau).
\end{cases}
\end{equation*}

\subsubsection{Step 3: Estimate of $\nabla \Om^n$}

Now we establish a control on the derivative of the scattering.  Of
course, we could deduce such a control from the bound on $\Om^n-\Id$
and Proposition \ref{prophybrsob}(vi): for instance, if $\lambda^{**}
_n<\lambda^* _n$, $\mu^{**} _n <\mu^* _n$, then
\begeq\label{estOm'iter} \bigl\|\nabla \Omega^n _{t,\tau} -
I\bigr\|_{\cZ^{\lambda^{**}_n(1+b),(\mu^{**} _n,\gamma)} _{\tau -
    \frac{bt}{1+b}}} \leq \frac{C\, \RR_2^n(\tau,t)} {\min\, \bigl\{
  \lambda_n^*-\lambda_n^{**} \, ; \ \mu_n^*-\mu_n^{**}\bigr\}}.
\endeq

But this bound involves very large constants, and is useless in our
argument.  Better estimates can be obtained by using again the
equation \eqref{eqXnF}. Writing
\[(\Om^n_{t,\tau}-\Id)(x,v) = \Bigl(
Z^n_{t,\tau}\bigl(x+v(t-\tau),v\bigr),
\dot{Z}_{t,\tau}^n\bigl(x+v(t-\tau),v\bigr)\Bigr),\] where the dot
stands for $\pa/\pa \tau$, we get by differentiation
\[ \nabla_x \Om^n_{t,\tau} - (I,0) = \Bigl( \nabla_x Z^n_{t,\tau}
\bigl(x+v(t-\tau),v\bigr), \,
\nabla_x\dot{Z}^n_{t,\tau}\bigl(x+v(t-\tau),v\bigr)\Bigr),\]
\[ \nabla_v \Om^n_{t,\tau} - (0,I) = \Bigl( (\nabla_v +
(t-\tau)\nabla_x) Z^n_{t,\tau} \bigl(x+v(t-\tau),v\bigr),\, (\nabla_v
+
(t-\tau)\nabla_x)\dot{Z}^n_{t,\tau}\bigl(x+v(t-\tau),v\bigr)\Bigr).\]

Let us estimate for instance $\nabla_x \Om - (I,0)$, or equivalently
$\nabla_x Z^n_{t,\tau}$. By differentiating \eqref{eqXnF}, we obtain
\[ \frac{\pa^2}{\pa \tau^2} \nabla_x Z^n_{t,\tau}(x,v) = 
\nabla_x F^n\Bigl(\tau, x-v(t-\tau)+Z^n_{t,\tau}(x,v)\Bigr)\cdot (\Id + \nabla_x Z^n_{t,\tau}).\]
So $\nabla_x Z^n_{t,\tau}$ is a fixed point of $\Psi: W\longmapsto Q$, where $W$ and $Q$ are functions of $\tau \in [0,t]$
satisfying
\[ \begin{cases} \dps
 \frac{\pa^2Q }{\pa \tau^2} = \nabla_x F^n \bigl(\tau, x-v(t-\tau)+Z^n_{t,\tau}\bigr)(I+W),\\[3mm]
Q(t) =0,\quad \pa_\tau Q(t) =0.
\end{cases}\] We treat this in the same way as in Steps~1 and~2, and
find on $Q_x$ (the $x$ component of $Q$) the same estimates as we had
previously on the $x$ component of $\Om$.  For the velocity component,
a direct estimate from the integral equation expressing the velocity
in terms of $F$ yields a control by $\RR_1 ^n + \RR_2 ^n$.  Finally
for $\nabla_v\Om$ this is similar, noting that $(\nabla_v
+(t-\tau)\nabla_x)(x-v(t-\tau)) = 0$, the differential equation being
for instance:
\[ \frac{\pa^2}{\pa \tau^2} (\nabla_v + (t-\tau) \nabla_x)
Z^n_{t,\tau}(x,v) = \nabla_x F^n\Bigl(\tau,
x-v(t-\tau)+Z^n_{t,\tau}(x,v)\Bigr)\cdot ((\nabla_v + (t-\tau)
\nabla_x) Z^n_{t,\tau}).\]

In the end we obtain
\begeq\label{smallnablat}
  \begin{cases} 
     \dps \sup_{t\geq\tau\geq 0}\:
    \Bigl\|\nabla \Om^nX_{t,\tau} - (I,0) \Bigr\|_{\cZ^{\lambda ^* _n (1+b),\mu^*_n}
      _{\tau - \frac{bt}{1+b}}} \leq 2\, \RR_2^n (\tau,t), \\[4mm]
    \dps \sup_{t\geq\tau\geq 0}\:
    \Bigl\|\nabla \Om^nV_{t,\tau} - (0,I) \Bigr\|_{\cZ^{\lambda ^* _n (1+b),\mu^*_n}
      _{\tau - \frac{bt}{1+b}}} \leq \RR_1 ^n (\tau,t) + \RR_2 ^n (\tau,t).
  \end{cases}
\endeq

\med



\subsubsection{Step 4: Estimate of $(\Omega^k)^{-1} \circ \Omega^n$} 
\label{sec:step-4:-estimate}

We do this by applying Proposition \ref{propinv} with $F=\Om^k$,
$G=\Om^n$.  (Note: we cannot exchange the roles of $\Om^k$ and $\Om^n$
in this step, because we have a better information on the regularity
of $\Om^k$.) Let $\var=\var(d)$ be the small constant appearing in
Proposition \ref{propinv}.  If \begeq\label{C3new} {\bf (C_3)}\qquad
\forall \, k \geq 1,\qquad 3 \, \RR_2^k(\tau,t) + \RR_1^k(\tau,t)\leq
\var,
\endeq
then $\|\nabla\Om^k_{t,\tau}-I \|_{\cZ^{\lambda_k^*(1+b),\mu_k^*}_{\tau-bt/(1+b)}}\leq \var$; if in addition
\begin{multline}\label{C4new} 
{\bf (C_4)}\qquad
\forall \, k\in\{1,\ldots,n-1\},\quad \forall \, t\geq\tau,\quad\\
2(1+\tau)\, (1+B)\, \bigl(3\, \RR_2^{k,n}+\RR_1^{k,n}\bigr)(\tau,t) \leq
\max \bigl\{\lambda_k^*-\lambda_n^* \, ; \, \mu_k^*-\mu_n^*\bigr\},
\end{multline}
then
\[ \begin{cases}
\dps \lambda_n^*(1+b) + 2 \,
\|\Om^n-\Om^k\|_{\cZ^{\lambda_n^*(1+b),\mu_n^*}_{\tau-\frac{bt}{1+b}}}\leq
\lambda_k^* (1+b)\\[3mm]
\dps \mu_n^* + 2\left(1+\left|\tau - \frac{bt}{1+b}\right|\right) \, 
\|\Om^n-\Om^k\|_{\cZ^{\lambda_n^*(1+b),\mu_n^*}_{\tau-\frac{bt}{1+b}}}\leq \mu_k^*.
\end{cases}\]
(Once again, short times should be
treated separately. Further note that the need for the factor
$(1+\tau)$ in ${\bf (C_4)}$ ultimately comes from the fact that we are
composing also in the $v$ variable, see the coefficient $\sigma$ in
the last norm of \eqref{alphaV}.)  Then Proposition \ref{propinv} (ii)
yields
\begin{align*}
\Bigl\| (\Om^k_{t,\tau})^{-1} \circ \Om^n _{t,\tau} - \Id \Bigr\|_{\cZ^{\lambda_n^*(1+b), \mu_n^*}_{\tau-\frac{bt}{1+b}}}
& \leq 2\, \bigl\|\Om^k_{t,\tau}-\Om^n_{t,\tau}\bigr\|_{\cZ^{\lambda_n^*(1+b), \mu_n^*}_{\tau-\frac{bt}{1+b}}}\\
& \leq 4\, (\RR_1^{k,n}+\RR_2^{k,n})(\tau,t).
\end{align*}

\subsubsection{Partial conclusion}

At this point we have established ${\bf (E_{\Omega} ^n)}+{\bf (\tilde E_{\Omega} ^n)}+{\bf (E_{\nabla \Omega} ^n)}$.

\subsection{Estimates on the density and distribution along characteristics}
In this subsection we establish ${\bf (E_\rho ^{n+1})}+
{\bf (\tilde E_\rho ^{n+1})}+{\bf (E_h ^{n+1})}+{\bf (\tilde E_h ^{n+1})}$.
\label{subsec:B}

\subsubsection{Step 5: Estimate of $h^k\circ\Om^n$ and $(\nabla h^k)\circ\Om^n$ ($k\leq n$)} \label{sub:esthk}

Let $k\in \{1,\ldots,n\}$. Since 
\[ 
h_\tau^k \circ \Om^n_{t,\tau} = \left(h_\tau^k\circ\Om^{k-1}_{t,\tau}\right)
\circ \left( (\Om^{k-1} _{t,\tau})^{-1}\circ\Om^n_{t,\tau}\right),
\] 
the control on $h^k\circ\Om^n$ will follow from the control on
$h^k\circ\Om^{k-1}$ in ${\bf (E_h ^n)}$, together with the control on
$(\Om^{k-1})^{-1} \circ \Omega^n$ in ${\bf (\tilde E_\Omega ^n)}$.  If
\begeq\label{a1+tau} 
(1+\tau)\,
\bigl\|(\Om^{k-1}_{t,\tau})^{-1} \circ \Om^n _{t,\tau} -\Id\bigr\|_{\cZ^{\lambda_n^*(1+b),\mu_n^*}_{\tau-\frac{bt}{1+b}}}
\leq \min \bigl\{(\lambda_k-\lambda_n^*)\, ; \, 
(\mu_k-\mu_n^*)\bigr\},
\endeq
then we can apply Proposition \ref{propcompos2} and get, for any $p\in [1,\ov{p}]$, and $t\geq\tau\geq 0$,
\begeq
\Bigl\|h_\tau^k\circ\Om^n_{t,\tau}\Bigr\|_{\cZ^{\lambda_n^*(1+b),\mu_n^*;p}_{\tau-\frac{bt}{1+b}}}
\leq \Bigl\|h_\tau^k\circ\Om^{k-1}
_{t,\tau}\Bigr\|_{\cZ^{\lambda_{k} (1+b),\mu_k;p}_{\tau-\frac{bt}{1+b}}}
\leq \delta_k.
\endeq

In turn, \eqref{a1+tau} is satisfied if
\begin{multline}\label{C5}
  {\bf (C_5)} \qquad
\forall \, k\in\{1,\ldots,n\}, \quad \forall \, \tau\in [0,t],\\  
4 \, (1+\tau)\ \bigl(\RR_1^{k,n}(\tau,t)+\RR_2^{k,n}(\tau,t)\bigr)
\leq \min \bigl\{ \lambda_k-\lambda_n^* \, ; \,  \mu_k-\mu_n^*\bigr\};
\end{multline}
we shall check later the feasibility of this condition.

Then, by the same argument, we also have
\begin{multline*}
\forall \, k\in \{1, \dots, n\}, \quad
  \forall \, p \in [1,\ov{p}], \\ 
  \sup_{t\geq \tau\geq 0}\: 
  \Bigl\|(\nabla_x h_\tau^k)  \circ
  \Om_{t,\tau}^{n}\Bigr\|_{\cZ^{\lambda_n ^* (1+b),\mu^* _n;p}_{\tau -
      \frac{bt}{1+b}}} + 
  \Bigl\|\bigr((\nabla_v + \tau \nabla_x) h_\tau^k\bigr)  \circ
  \Om_{t,\tau}^{n}\Bigr\|_{\cZ^{\lambda_n ^*(1+b),\mu_n ^*;p}_{\tau -
      \frac{bt}{1+b}}}
  \leq \, \delta_k.
\end{multline*}

\subsubsection{Step 6: estimate on $\rho[h^{n+1}]$} \label{sub:estrho}

This step is the first where we shall use the Vlasov equation.
Starting from \eqref{N2}, we apply the method of characteristics to
get, as in Section \ref{sec:approx}, 
\begeq\label{xi} h^{n+1} \bigl(t,
X^n_{0,t}(x,v),V^n_{0,t}(x,v)\bigr) = \int_0^t \Sigma^{n+1}\bigl(\tau,
X^n_{0,\tau}(x,v), V^n_{0,\tau}(x,v)\bigr)\,d\tau,
\endeq
where
\[ \Sigma^{n+1} = - \Bigl( F[h^{n+1}]\cdot\nabla_vf^n + F[h^n]\cdot\nabla_v h^n\Bigr).\]
We compose this with $(X^n_{t,0},V^n_{t,0})$ and apply \eqref{SSS} to get
\[ h^{n+1}(t,x,v) = \int_0^t \Sigma^{n+1}\bigl(\tau,X^n_{t,\tau}(x,v),V^n_{t,\tau}(x,v)\bigr)\,d\tau,\]
and so, by integration in the $v$ variable,
\begin{align}
\rho[h^{n+1}](t,x)
& = \int_0^t \int_{\R^d}  \Sigma^{n+1}\bigl(\tau,X^n_{t,\tau}(x,v),V^n_{t,\tau}(x,v)\bigr)\,dv\,d\tau\\
& \nonumber
= - \int_0^t \int_{\R^d} (R^{n+1}_{\tau,t}\cdot G^n_{\tau,t})(x - v(t-\tau),v) \,dv\,d\tau \\
& \quad - \int_0^t \int_{\R^d}  (R^n_{\tau,t}\cdot H^n_{\tau,t})(x-v(t-\tau),v) \,dv\,d\tau, \nonumber
\end{align}
where (with a slight inconsistency in the notation)
\begin{equation}\label{RGH}
\begin{cases}
 R^{n+1}_{\tau,t} = F[h^{n+1}]\circ\Om^n_{t,\tau},\quad
 R^n_{\tau,t} = F[h^n]\circ\Om^n_{t,\tau},\\[4mm] 
 G^n_{\tau,t} = (\nabla_v f^n)\circ\Om^n_{t,\tau},\quad
 H^n_{\tau,t} = (\nabla_v h^n)\circ\Om^n_{t,\tau}.
\end{cases}
\end{equation}

Since the free transport semigroup and $\Om^n_{t,\tau}$ are
measure-preserving,
\begin{align*}
\forall \ 0 \le \tau \le t, \quad 
& \int_{\T^d} \int_{\R^d} (R^{n+1}_{\tau,t}\cdot G^{n}_{\tau,t})(x - v(t-\tau),v) \,dv\,dx \\
& = \int \int R^{n+1}_{\tau,t}\cdot G^{n}_{\tau,t} \,dv\,dx \\
& = \int \int F[h^{n+1}]\cdot\nabla_v f^n\,dv\,dx\\
& = \int \int \nabla_v\cdot \bigl( F[h^{n+1}] \, f^n\bigr)\,dv\,dx =0,
\end{align*}
and similarly 
$$
\forall \ 0 \le \tau \le t, \quad 
\int_{\T^d} \int_{\R^d}(R^n_{\tau,t}\cdot H^n_{\tau,t})(x - v(t-\tau),v) \,dv\,dx =0. 
$$
This will allow us to apply the inequalities
from Section \ref{sec:regdecay}.
\sm 

\noindent
{\em Substep a.} Let us first deal with the source term
\begeq\label{sigmann} \sigma^{n,n}(t,x)
:= \int_0^t \int (R^{n}_{\tau,t}\cdot H^n_{\tau,t})(x - v(t-\tau),v) \,dv\,d\tau.
\endeq
By Proposition \ref{proprt},
\begeq\label{estsigman}
\bigl\|\sigma^{n,n} (t,\,\cdot\,)\bigr\|_{\cF^{\lambda^* _nt+\mu^*_n}}
\leq \int_0^t \|R^n_{\tau,t}\|_{\cZ^{\lambda^* _n(1+b),\mu^* _n}_{\tau-\frac{bt}{1+b}}}
\, \|H^n_{\tau,t}\|_{\cZ^{\lambda^* _n(1+b),\mu^* _n;1}_{\tau-\frac{bt}{1+b}}}\,d\tau.
\endeq

On the one hand, we have from Step~5
$$
\|H^n_{\tau,t}\|_{\cZ^{\lambda^* _n(1+b),\mu^* _n;1}_{\tau-\frac{bt}{1+b}}}\leq 2 \, (1+\tau) \, \delta_n.
$$ 

On the other hand, under condition ${\bf (C_1)}$, we may apply Proposition \ref{propcompos2}
(choosing $\sigma=0$ in that proposition) to get
\[ \|R^n_{\tau,t}\|_{\cZ^{\lambda^*_n (1+b),\mu^*_n}_{\tau-\frac{bt}{1+b}}}
\leq \bigl\|F[h^n_\tau]\bigr\|_{\cF^{\nu_n}},\]
where
\begin{align*}
\nu_n (t,\tau) & = \mu^*_n + \lambda_n^*(1+b)\, \left|\tau - \frac{bt}{1+b}\right|
+ \bigl\|\Om^nX_{t,\tau} - \Id\bigr\|_{\cZ^{\lambda^* _n(1+b),\mu^* _n}_{\tau-\frac{bt}{1+b}}}\\
& \leq \mu^* _n + \lambda_n^*(1+b)\,\left|\tau-\frac{bt}{1+b}\right|
+ 2 \, \RR_2 ^n(\tau,t).
\end{align*}

Proceeding as in Step~1 (treating small times separately), we deduce
\begin{multline*} \|R^n_{\tau,t}\|_{\cZ^{\lambda^*_n (1+b),\mu^*_n}_{\tau-\frac{bt}{1+b}}}
\leq  \|F[h^n_\tau]\|_{\cF^{\nu_n}} \leq 
e^{-2 \pi (\lambda_n-\lambda^* _n)\tau} \, \|F[h^n_\tau]\|_{\cF^{\bar \nu_n}}\\
\leq C_F\, e^{-2 \pi (\lambda_n-\lambda^* _n)\tau} \, \|\rho[h^n_\tau]\|_{\cF^{\bar \nu_n}}
\leq C_F\,e^{-2 \pi (\lambda_n-\lambda^* _n)\tau} \, \delta_n,
\end{multline*}
with 
\begin{equation}
\label{def:nu}
\begin{cases}
\bar \nu_n(\tau,t) := \mu^\sharp \quad \mbox { when } 0 \le \tau \le bt/(1+b) \\[4mm]
\bar \nu_n (\tau,t):= \lambda_n \tau+ \mu_n \quad \mbox{ when } \tau \ge bt/(1+b).
\end{cases}
\end{equation}
(We have used the gradient structure of the force  to convert (gliding) regularity into decay.)
Thus
\begin{align} \label{Rntaut}
\int_0 ^t \|R^n_{\tau,t}\|_{\cZ^{\lambda^* _n(1+b),\mu^* _n}_{\tau-\frac{bt}{1+b}}}\,
& \|H^n_{\tau,t}\|_{\cZ^{\lambda^* _n(1+b),\mu^* _n;1}_{\tau-\frac{bt}{1+b}}}\,d\tau\\
& \leq 2 \, C_F \, \delta_n ^2 \, \int_0^t e^{-2 \pi (\lambda_n-\lambda^* _n)\tau}\, (1+\tau) \,d\tau \nonumber \\
&  \leq \frac{2 \, C_F \, \delta_n^2}{(\pi \, (\lambda_n-\lambda^* _n))^2}. \nonumber
\end{align}
(Note: This is the power~$2$ which is responsible for the very fast convergence of the Newton scheme.)
\sm 

\noindent
{\em Substep b.} Now let us handle the term
\begeq\label{sigmann+1} \sigma^{n,n+1}(t,x)
:= \int_0^t \int \bigl(R^{n+1}_{\tau,t}\cdot G^n_{\tau,t}\bigr)(x - v(t-\tau),v) \,dv\,d\tau.
\endeq
This is the focal point of all our analysis, because it is in this
term that the self-consistent nature of the Vlasov equation
appears. In particular, we will make crucial use of the time-cheating
trick to overcome the loss of regularity implied by composition; and
also the other bilinear estimates (regularity extortion) from Section
\ref{sec:regdecay}, as well as the time-response study from Section
\ref{sec:response}.  Particular care should be given to the zero
spatial mode of $G^n$, which is associated with instantaneous response
(no echo). In the linearized equation we did not see this problem
because the contribution of the zero mode was vanishing!  \sm

We start by introducing
\begeq\label{ovGn}
\ov{G}^n_{\tau,t} = \nabla_v f^0 + \sum_{k=1}^n \nabla_v 
\left( h^k_\tau \circ \Om^{k-1}_{t,\tau} \right),
\endeq
and we decompose $\sigma^{n,n+1}$ as
\begeq\label{sigmadecomp}
\sigma^{n,n+1} = \ov{\sigma}^{n,n+1} + {\cal E} + \ov{\cal E},
\endeq
where 
\begeq\label{ovsigmann+1}
\ov{\sigma}^{n,n+1}(t,x) = 
\int_0^t \int F[h^{n+1}_\tau]\cdot \ov{G}^n_{\tau,t}\bigl(x-v(t-\tau),v\bigr)\,dv\,d\tau
\endeq
and the error terms ${\cal E}$ and $\ov{\cal E}$ are defined by
\begeq\label{calE}
{\cal E}(t,x) = \int_0^t \int \Bigl( \Bigl(F[h^{n+1}_\tau]\circ\Om^n_{t,\tau} - F[h^{n+1}_\tau]\Bigr)\cdot G^n\Bigr)
\bigl(\tau, x-v(t-\tau),v\bigr)\,dv\,d\tau,
\endeq
\begeq\label{ovcalE}
\ov{\cal E}(t,x) = \int_0^t \int \Bigl(F[h^{n+1}_\tau]\cdot \bigl(G^n-\ov{G}^n\bigr)\Bigr)
\bigl(\tau, x-v(t-\tau),v\bigr)\,dv\,d\tau.
\endeq

We shall first estimate ${\cal E}$ and $\ov{\cal E}$.
\med

\noindent {\em Control of ${\cal E}$:} This is based on the
time-cheating trick from Section \ref{sec:regdecay}, and the
regularity of the force. By Proposition \ref{proprt},
\begin{multline}
\bigl\|{\cal E}(t,\,\cdot\,)\bigr\|_{\cF^{\lambda_n^*t+\mu_n^*}}
\leq \int_0^t \Bigl\| F[h^{n+1}_\tau]\circ \Om^n_{t,\tau} - F[h^{n+1}_\tau]\Bigr\|
_{\cZ^{\lambda_n^*(1+b),\mu_n^*}_{\tau-\frac{bt}{1+b}}} \, \times \\
\|G^n\|_{\cZ^{\lambda_n^*(1+b),\mu_n^*;1}_{\tau-\frac{bt}{1+b}}}\, d\tau.
\end{multline}

From \eqref{fh} and Step~5,
\begin{align} \label{GnZ}
\|G^n\|_{\cZ^{\lambda_n^*(1+b),\mu_n^*;1}_{\tau-\frac{bt}{1+b}}}
& \leq \bigl\|\nabla_v f^0\circ\Om^n_{t,\tau}\bigr\|_{\cZ^{\lambda_n^*(1+b),\mu_n^*;1}_{\tau-\frac{bt}{1+b}}}
+ \sum_{k=1}^n \bigl\|\nabla_v h^k_\tau \circ\Om^n_{t,\tau}\bigr\|_{\cZ^{\lambda_n^*(1+b),\mu_n^*;1}_{\tau-\frac{bt}{1+b}}}\\
& \leq C'_0 + \left(\sum_{k=1}^n \delta_k\right)\, (1+\tau), \nonumber
\end{align}
where $C'_0$ comes from the contribution of $f^0$.

Next, by Propositions \ref{propalgZ} and \ref{propcompos2} (with $V=0$, $\tau=\sigma$, $b=0$),
\begin{align}\label{FFnO}
& \Bigl\|F[h^{n+1}_\tau]\circ\Om^n_{t,\tau} - F[h^{n+1}_\tau]\Bigr\|_{\cZ^{\lambda_n^*(1+b),\mu_n^*}_{\tau-\frac{bt}{1+b}}}\\
& \leq \left(\int_0^1 \Bigl\|\nabla F[h^{n+1}_\tau]\circ \bigl(\Id + \theta (\Om^n_{t,\tau}-\Id)\bigr)
\Bigr\|_{\cZ^{\lambda_n^*(1+b),\mu_n^*}_{\tau-\frac{bt}{1+b}}}\,d\theta \right) \ \bigl\|\Om^n_{t,\tau}-\Id\bigr\|_{\cZ^{\lambda_n^*(1+b),\mu_n^*}_{\tau-\frac{bt}{1+b}}}\nonumber\\
& \leq \bigl\|\nabla F[h^{n+1}_\tau]\bigr\|_{\cF^{\nu_n}}\ 
\bigl\|\Om^n_{t,\tau}-\Id\bigr\|_{\cZ^{\lambda_n^*(1+b),\mu_n^*}_{\tau-\frac{bt}{1+b}}}, \nonumber
\end{align}

where
\[ \nu_n = \mu_n^* + \lambda_n^* (1+b)\, \left|\tau-\frac{bt}{1+b}\right|
+ \bigl\|\Om^nX_{t,\tau} -x\bigr\|_{\cZ^{\lambda_n^*(1+b),\mu_n^*}_{\tau-\frac{bt}{1+b}}}.\]
Small times are taken care of, as usual, by the initial regularity layer, so we only focus on the case
$\tau \geq bt/(1+b)$; then
\begin{align*}
\nu_n & \leq \bigl(\lambda_n^*\tau + \mu_n^*\bigr) -
\lambda_n^*\,b(t-\tau) + 2 \, \RR^n(\tau,t)\\
& \leq \bigl(\lambda_n^*\tau + \mu_n^*\bigr) - \lambda_n^*\,\frac{B\,(t-\tau)}{1+t} 
+ 4\, C_\omega^1\, \left(\sum_{k=1}^n \frac{\delta_k}{(2\pi(\lambda_k-\lambda_k^*))^3}\right)\,
\frac{\min\{t-\tau\, ; \, 1\}}{1+\tau}.
\end{align*}
To make sure that $\nu_n\leq \lambda_n^*\tau + \mu_n^*$, we assume that
\begin{equation}
  \label{C6}
  {\bf (C_6)} \qquad
4\, C^1_\om \ \sum_{k=1}^n \frac{\delta_k}{(2\pi(\lambda_k-\lambda_k^*))^3}\leq \frac{\lambda_\infty^*\, B}{3},
\end{equation}
and we note that
\[ \frac{\min\{t-\tau\, ; \, 1\}}{1+\tau} \leq 3\,
\left(\frac{t-\tau}{1+t}\right).\] 
(This is easily seen by separating four cases: (a) $t\leq 2$, (b)
$t\geq 2$ and $t-\tau\leq 1$, (c) $t\geq 2$ and $t-\tau\geq 1$ and
$\tau\leq t/2$, (d) $t\geq 2$ and $t-\tau\geq 1$ and $\tau\geq t/2$.)

Then, since $\gamma\geq 1$, we have
\begin{align}\label{gammageq1}
  \bigl\|\nabla F[h^{n+1}_\tau]\bigr\|_{\cF^{\nu_n}}
  & \leq \bigl\|\nabla F[h^{n+1}_\tau]\bigr\|_{\cF^{\lambda_n^*\tau+\mu_n^*}}\\
  & \leq \bigl\|F[h^{n+1}_\tau]\bigr\|_{\cF^{\lambda_n^*\tau+\mu_n^*,\gamma}} \nonumber \\
  & \leq C_F\
  \bigl\|\rho[h^{n+1}_\tau]\bigr\|_{\cF^{\lambda_n^*\tau+\mu_n^*}}. \nonumber
\end{align}
(Note: Applying Proposition \ref{propgrad} instead of the regularity
coming from the interaction would consume more regularity than we can
afford to.)

Plugging this back into \eqref{FFnO}, we get
\begin{align*}
  \Bigl\|F[h^{n+1}_\tau]\circ\Om^n_{t,\tau} & - F[h^{n+1}_\tau]\Bigr\|_{\cZ^{\lambda_n^*(1+b),\mu_n^*}_{\tau-\frac{bt}{1+b}}}\\
  & \leq 2\, \RR^n(\tau,t)\, C_F \, \|\rho[h^{n+1}_\tau]\|_{\cF^{\lambda_n^*\tau+\mu_n^*}}\\
  & \leq 2\, C_\om^3\, C_F\, \left(\sum_{k=1}^n
    \frac{\delta_k}{(2\pi(\lambda_k-\lambda_k^*))^5}\right)\,
  \frac1{(1+\tau)^3}\,
  \bigl\|\rho[h^{n+1}_\tau]\bigr\|_{\cF^{\lambda_n^*\tau+\mu_n^*}}.
\end{align*}
Recalling \eqref{calE} and \eqref{GnZ}, applying Proposition \ref{propalgZ}, we conclude that
\begin{multline}\label{controlcalE}
\bigl\|{\cal E}(t,\,\cdot\,)\bigr\|_{\cF^{\lambda_n^*t+\mu_n^*}}
\leq 2\, C_\om^3\, C_F\, \left(C'_0 + \sum_{k=1}^n \delta_k\right)
\left(\sum_{k=1}^n \frac{\delta_k}{(2\pi(\lambda_k-\lambda_k^*))^5}\right)\\
\int_0^t \bigl\|\rho[h^{n+1}_\tau]\bigr\|_{\cF^{\lambda_n^*\tau+\mu_n^*}}\,\frac{d\tau}{(1+\tau)^2}.
\end{multline}
(We could be a bit more precise; anyway we cannot go further since we do not yet have an estimate on
$\rho[h^{n+1}]$.) 
\med

\noindent {\em b2. Control of $\bar {\cal E}$:}
This will use the control on the derivatives of $h^k$. We start again from Proposition \ref{proprt}:
\begeq\label{ovcalEle}
\bigl\|\ov{\cal E}(t,\,\cdot\,)\bigr\|_{\cF^{\lambda_n^*t+\mu_n^*}}
\leq \int_0^t \bigl\|G^n-\ov{G}^n\bigr\|_{\cZ^{\lambda_n^*(1+b),\mu_n^*;1}_{\tau-\frac{bt}{1+b}}}\
\|F[h^{n+1}_\tau]\|_{\cF^{\beta_n}}\,d\tau,
\endeq
where $\beta_n = \lambda_n^*(1+b)|\tau-bt/(1+b)|+\mu_n^*$. We focus again on the case $\tau\geq bt/(1+b)$, so that
(with crude estimates)
\[ \|F[h^{n+1}_\tau]\|_{\cF^{\beta_n}}\leq
\|F[h^{n+1}_\tau]\|_{\cF^{\lambda_n^*\tau + \mu_n^*}}
\leq C_F\,\|\rho[h^{n+1}_\tau]\|_{\cF^{\lambda_n^*\tau+\mu_n^*}},\]
and the problem is to control $G^n-\ov{G}^n$:
\begin{multline} \label{G-G}
\bigl\|G^n-\ov{G}^n\bigr\|_{\cZ^{\lambda_n^*(1+b),\mu_n^*;1}_{\tau-\frac{bt}{1+b}}}
\leq \Bigl\|(\nabla_v f^0)\circ\Om^n_{t,\tau} - \nabla_v f^0\Bigr\|_{\cZ^{\lambda_n^*(1+b),\mu_n^*;1}_{\tau-\frac{bt}{1+b}}}\\
+ \sum_{k=1}^n \Bigl\|(\nabla_v h^k_\tau)\circ\Om^n_{t,\tau} - (\nabla_v h^k_\tau)\circ\Om^{k-1}_{t,\tau} \Bigr\|
_{\cZ^{\lambda_n^*(1+b),\mu_n^*;1}_{\tau-\frac{bt}{1+b}}}
+ \sum_{k=1}^n \Bigl\|(\nabla_v h^k_\tau)\circ\Om^{k-1}_{t,\tau} - \nabla_v \bigl(h^k_\tau\circ\Om^{k-1}_{t,\tau}\bigr) \Bigr\|
_{\cZ^{\lambda_n^*(1+b),\mu_n^*;1}_{\tau-\frac{bt}{1+b}}}.
\end{multline}

By induction hypothesis ${\bf (\tilde E^n_h)}$, and since the
$\cZ^{\lambda,\mu}_\tau$ norms are increasing as a function of
$\lambda,\mu$,
\[ \sum_{k=1}^n \Bigl\|(\nabla_v h^k_\tau)\circ\Om^{k-1} _{t,\tau} -
\nabla_v \bigl(h^k_\tau\circ\Om^{k-1}_{t,\tau}\bigr) \Bigr\|
_{\cZ^{\lambda_n^*(1+b),\mu_n^*;1}_{\tau-\frac{bt}{1+b}}} \leq
\left(\sum_{k=1}^n \delta_k\right)\ \frac1{(1+\tau)^2}.\] It remains
to treat the first and second terms in the right-hand side of
\eqref{G-G}. This is done by inversion/composition as in Step~5; let us consider
for instance the contribution of $h^k$, $k\geq 1$:
\begin{align*}
\Bigl\|\nabla_v h^k_\tau & \circ\Om^n_{t,\tau}
- \nabla_v h^k_\tau \circ
\Om^{k-1}_{t,\tau}\Bigr\|_{\cZ^{\lambda_n^*(1+b),\mu_n^*;1}_{\tau-\frac{bt}{1+b}}}\\
& \leq \int_0^1 \Bigl\|\nabla \nabla_v h^k_\tau \circ
\Bigl( (1-\theta) \Om^n_{t,\tau} + \theta \Om^{k-1}_{t,\tau}\Bigr)\Bigr\|
_{\cZ^{\lambda_n^*(1+b),\mu_n^*;1}_{\tau-\frac{bt}{1+b}}}\
\Bigl\|\Om^n_{t,\tau} -
\Om^{k-1}_{t,\tau}\Bigr\|_{\cZ^{\lambda_n^*(1+b),\mu_n^*}_{\tau-\frac{bt}{1+b}}}
\, d\theta\\
& \leq 2 \, \Bigl\|\nabla \nabla_v h^k_\tau \circ\Om^{k-1}_{t,\tau}\Bigr\|
_{\cZ^{\lambda_k^*(1+b),\mu_k^*;1}_{\tau-\frac{bt}{1+b}}}
\ 
\Bigl\|\Om^n_{t,\tau} - \Om^{k-1}_{t,\tau}\Bigr\|
_{\cZ^{\lambda_n^*(1+b),\mu_n^*}_{\tau-\frac{bt}{1+b}}}  \\
& \leq 4\, \delta_k\, (1+\tau)^2\, \RR^{k-1,n}(\tau,t) \\
& \leq 4\, C_\om^4 \, \delta_k\ \left(\sum_{j=k}^n
  \frac{\delta_j}{(2\pi(\lambda_j-\lambda_j^*))^6}\right) \
\frac1{(1+\tau)^2},
\end{align*}
where in the but-to-last step we used ${\bf (\tilde E_\Om ^{n})}$,
${\bf (\tilde E_\rho ^{n})}$, Propositions \ref{propalgZ} and
\ref{propinv}, Condition ${\bf (C_5)}$ and the same reasoning as in
Step~5.  

Summing up all contributions and inserting in \eqref{ovcalEle} yields
\begin{multline}\label{controlovcalE}
\bigl\|\ov{\cal E}(t,\,\cdot\,)\bigr\|_{\cF^{\lambda_n^*t+\mu_n^*}}\\
\leq 4\, C_F \left[ C_\om^4\, \left(C'_0 + \sum_{k=1}^n \delta_k\right)
\left(\sum_{j=1}^n
  \frac{\delta_j}{(2\pi(\lambda_j-\lambda_n^*))^6}\right)
+ \sum_{k=1}^n \delta_k\right]
\int_0^t \|\rho[h^{n+1}_\tau]\|_{\cF^{\lambda_n^*\tau+\mu_n^*}}\, \frac{d\tau}{(1+\tau)^2}.
\end{multline}

\sm

\noindent {\em b3. Main contribution:} Now we consider
$\ov{\sigma}^{n,n+1}$, which we decompose as
\[ \ov{\sigma}^{n,n+1}_t = \ov{\sigma}^{n,n+1}_{t,0} + \sum_{k=1}^n
\ov{\sigma}_{t,k}^{n,n+1},\] where
\[ \ov{\sigma}^{n,n+1}_{t,0}(x) = \int_0^t \int F[h^{n+1}]\bigl(\tau,x-v(t-\tau),v\bigr)\cdot\nabla_v f^0(v)\,dv\,d\tau,\]
\[ \ov{\sigma}^{n,n+1}_{t,k}(x) = \int_0^t \int \Bigl(
F[h^{n+1}_\tau]\cdot\nabla_v
\bigl(h^k_\tau\circ\Om^{k-1}_{t,\tau}\bigr) \Bigr)
\bigl(\tau,x-v(t-\tau),v\bigr)\,dv\,d\tau.\] Note that their zero mode
vanishes.  For any $k\geq 1$, we apply Theorem \ref{thmcombi'} (with
$M=1$) to get
\begin{multline*}
  \bigl\|\ov{\sigma}^{n,n+1}_{t,k}\bigr\|_{\cF^{\lambda_n^*t+\mu_n^*}}
  \leq \int_0^t K_1^{n,h^k}(t,\tau)\, 
  \bigl\|F[h^{n+1}_\tau]\bigr\|_{\cF^{\nu'_n,\gamma}}\,d\tau\\
  + \int_0^t K_0^{n,h^k}(t,\tau)\,
  \bigl\|F[h^{n+1}_\tau]\bigr\|_{\cF^{\nu'_n,\gamma}}\,d\tau,
\end{multline*}
where

\noindent \bul $\dps\nu'_n = \lambda_n^* (1+b) \left|\tau-\frac{bt}{1+b}\right| + \mu'_n$ 
\sm

\noindent \bul $\dps K_1^{n,h^k}(t,\tau) =
\sup_{0\leq\tau\leq t} \left( \frac{\Bigl\|\nabla_v \bigl(h^k_\tau \circ\Om^{k-1}_{t,\tau}\bigr)
- \bigl\<\nabla_v \bigl(h^k_\tau \circ\Om^{k-1}_{t,\tau}\bigr)\bigr\> \Bigr\|
_{\cZ^{\lambda_k(1+b),\mu_k}_{\tau-bt/(1+b)}}}{1+\tau}\right)\
K_1^{n,k}$,
\sm

\noindent \bul $\dps K_1^{n,k}(t,\tau) =
(1+\tau)\ \sup_{\ell\neq 0,\ m\neq 0}
e^{-2\pi \left(\frac{\mu_k-\mu_n^*}2\right)\,|m|}\
\left(\frac{e^{-2\pi (\mu'_n-\mu_n^*)|\ell-m|}}{1+|\ell-m|^\gamma}\right) \
e^{-2\pi \left(\frac{\lambda_k-\lambda_n^*}2\right)\,|\ell(t-\tau)+m\tau|}$,
\sm

\noindent \bul $\dps K_0^{n,h^k}(t,\tau) = 
\left(\sup_{0\leq \tau\leq t} \Bigl\|\nabla_v \bigl\<h^k_\tau\circ\Om^{k-1}_{t,\tau}\bigr\>\Bigr\|_{\cC^{\lambda_k(1+b);1}}
\right)\ K_0^{n,k}$,
\sm

\noindent \bul $K_0^{n,k} (t,\tau) = e^{-2\pi \left(\frac{\lambda_k-\lambda_n^*}2\right)(t-\tau)}$.
\med

We assume
\begeq\label{mu'n}
\mu'_n = \mu^*_n + \eta\, \left(\frac{t-\tau}{1+t}\right),\qquad \eta>0\quad \text{small},
\endeq
and check that $\nu'_n\leq \lambda_n^*\tau+\mu_n^*$. Leaving apart the
small-time case, we assume $\tau\geq bt/(1+b)$, so that
\[ \nu'_n = \bigl(\lambda_n^*\tau+\mu_n^*\bigr)
- \frac{B\,\lambda_n^*\,(t-\tau)}{1+t} + \eta\left(\frac{t-\tau}{1+t}\right),\]
which is indeed bounded above by $\lambda_n^*\tau+\mu_n^*$ as soon as
\begeq\label{etaBn}
\eta\leq B\,\lambda_\infty^*.
\endeq
Then, with the notation \eqref{hKa},
\begeq
K_1^{n,k}(t,\tau) \leq K_1^{(\alpha_{n,k}),\gamma}(t,\tau),
\endeq
with
\begeq\label{alphank}
\alpha_{n,k} = 2 \pi \, \min\left\{\frac{\mu_k-\mu_n^*}2 \, ; \,
\frac{\lambda_k-\lambda_n^*}2 \, ; \,\eta\right\}.
\endeq
From the controls on $h^k$ (assumption ${\bf (\tilde E ^n _h)}$) we have
\begin{align*}
  \Bigl\|\nabla_v \bigl(h^k_\tau \circ\Om^{k-1}_{t,\tau}\bigr) -
  \bigl\<\nabla_v \bigl(h^k_\tau \circ\Om^{k-1}_{t,\tau}\bigr)\bigr\>
  \Bigr\|_{\cZ^{\lambda_k(1+b),\mu_k;1}_{\tau-\frac{bt}{1+b}}}
  & \leq \Bigl\|\nabla_v \bigl(h^k_\tau\circ\Om^{k-1}_{t,\tau}\bigr)\Bigr\|_{\cZ^{\lambda_k(1+b),\mu_k;1}_{\tau-\frac{bt}{1+b}}}\\
  & \leq \delta_k\,(1+\tau);
\end{align*}
and
\begin{align*}
\Bigl\| \bigl\<\nabla_v \bigl(h^k_\tau \circ\Om^{k-1}_{t,\tau}\bigr)\bigr\> \Bigr\|_{\cC^{\lambda_k(1+b);1}}
& = \Bigl\| \bigl\< (\nabla_v +\tau\nabla_x)\bigl(h^k_\tau\circ\Om^{k-1}_{t,\tau}\bigr)\bigr\>\Bigr\|
_{\cC^{\lambda_k(1+b);1}} \\
& \leq \Bigl\| (\nabla_v +\tau\nabla_x)\bigl(h^k_\tau\circ\Om^{k-1}_{t,\tau}\bigr)\Bigr\|
_{\cZ^{\lambda_k(1+b);1}_{\tau-\frac{bt}{1+b}}} \\
& \leq \delta_k.
\end{align*}
After controlling $F[h^{n+1}]$ by $\rho[h^{n+1}]$, we end up with
\begin{multline}
\bigl\|\ov{\sigma}^{n,n+1}_{t,k}\bigr\|_{\cF^{\lambda_n^* t + \mu_n^*}}
\leq C_F \int_0^t \left(\sum_{k=1}^n \delta_k\,K_1^{(\alpha_{n,k}),\gamma}(t,\tau)\right)\,
\bigl\|\rho[h^{n+1}_\tau]\bigr\|_{\cF^{\lambda_n^*\tau+\mu_n^*}}\,d\tau\\
+ C_F \int_0^t \left( \sum_{k=1}^n \delta_k\, e^{-2\pi \left(\frac{\lambda_k-\lambda_n^*}2\right)(t-\tau)}\right)\,
\bigl\|\rho[h^{n+1}_\tau]\bigr\|_{\cF^{\lambda_n^*\tau+\mu_n^*}}\,d\tau,
\end{multline}
with $\alpha_{n,k}$ defined by \eqref{alphank}.
\med

\noindent{\em Substep c.}
Gathering all previous controls, we obtain the following integral inequality for $\varphi = \rho[h^{n+1}]$:
\begin{multline}\label{multphi}
\left\| \varphi(t,x) - \int_0^t \int (\nabla W\ast\varphi)(\tau,x-v(t-\tau))\cdot\nabla_v f^0(v)\,dv\,d\tau \right\|
_{\cF^{\lambda_n^*t+\mu_n^*}}\\
\leq A_n + \int_0^t 
\Biggl[K_1^n(t,\tau)+K_0^n(t,\tau) 
+ \frac{c_0^n}{(1+\tau)^2}\Biggr]\, \|\varphi(\tau,\,\cdot\,)\|
_{\cF^{\lambda_n^*\tau+\mu_n^*}},
\end{multline}
where, by \eqref{Rntaut}, \eqref{controlcalE} and \eqref{controlovcalE},
\begeq\label{An}
A_n = \sup_{t\geq 0}\ \bigl\|\sigma^{n,n}(t,\,\cdot\,)\bigr\|_{\cF^{\lambda_n^*t+\mu_n^*}}
\leq \frac{2\,C_F\,\delta_n^2}{(\pi(\lambda_n-\lambda_n^*))^2},
\endeq
\[ K_1^n(t,\tau) = \left( C_F\, \sum_{k=1}^n \delta_k\right)\
K_1^{(\alpha_n),\gamma},\qquad \alpha_n = \alpha_{n,n} =
2\pi\,\min\left\{\frac{\mu_n-\mu_n^*}2 \, ;\, \frac{
    \lambda_n-\lambda_n^*}2\, ; \,\eta\right\},\] 
\[ K_0^n(t,\tau) = C_F\, \sum_{k=1}^n \delta_k\,e^{-2\pi
  \left(\frac{\lambda_k-\lambda_n^*}2\right)(t-\tau)},\]
\[ c_0^n = 3 \, C_F\,C_\om^4\, \left(C'_0 + \sum_{k=1}^n
  \delta_k\right)\, \left(\sum_{k=1}^n
  \frac{\delta_k}{(2\pi(\lambda_k-\lambda_k^*))^6}\right) +
\sum_{k=1}^n\delta_k.\] 
(We are cheating a bit when writing
\eqref{multphi}, because in fact one should take into account small
times separately; but this does not cause any difficulty.)

We easily estimate $K_0^n$:
\[ \int_0^t K_0^n(t,\tau)\,d\tau \leq C_F\,
\sum_{k=1}^n \frac{\delta_k}{\pi(\lambda_k-\lambda_n^*)},\]
\[ \int_\tau^\infty K_0^n(t,\tau)\,dt \leq C_F\,
\sum_{k=1}^n \frac{\delta_k}{\pi(\lambda_k-\lambda_n^*)},\]
\[ \left(\int_0^t K_0^n(t,\tau)^2\,d\tau\right)^{1/2}
\leq C_F \sum_{k=1}^n \frac{\delta_k}
{\sqrt{2\pi(\lambda_k-\lambda_n^*)}}.\]

Let us assume that $\alpha_n$ is smaller than $\ov{\alpha}(\gamma)$
appearing in Theorem \ref{thmgrowth}, and that 
\begeq\label{C7} {\bf
  (C_7)}\qquad 3\, C_F\,C_\om^4\left(C'_0 + \sum_{k=1}^n\delta_k + 1
\right)\, \left(\sum_{k=1}^n
  \frac{\delta_k}{(2\pi(\lambda_k-\lambda_k^*))^6}\right) \leq \frac14,
\endeq
\begeq\label{C8} {\bf (C_8)}\qquad C_F\,
\sum_{k=1}^n
 \frac{\delta_k}{\sqrt{2\pi(\lambda_k-\lambda_k^*)}} \leq \frac12,
\endeq
\begeq\label{C9} {\bf (C_9)}\qquad 
C_F\, \sum_{k=1}^n
  \frac{\delta_k}{\pi(\lambda_k-\lambda_k^*)} \leq 
\max \left\{\frac14\, ; \, \chi\right\},
\endeq
(note that in these conditions we have strenghtened the inequalities
by replacing $\lambda_k - \lambda_n ^*$ by $\lambda_k - \lambda_k ^*$
where $\chi>0$ is also defined by Theorem \ref{thmgrowth}).  Applying
that theorem with $\lambda_0=\lambda$, $\lambda^*=\lambda_1$,
 we deduce that for any $\var\in
(0,\alpha_n)$ and $t\geq 0$, \begeq\label{rhon+1}
\|\rho_t^{n+1}\|_{\cF^{\lambda_n^*t+\mu_n^*}}\leq C \, A_n\, \frac{\left( 1+
  c_0 ^n \right)^2}{\sqrt{\var}} \, e^{C \, c_0 ^n} \, 
 \left(1+\frac{c_n}{\alpha_n\,\var}\right) \, e^{C\,T_{\var,n}} \,
e^{C\,c_n\,(1+T_{\var,n}^2)}\,e^{\var \, t},
\endeq 
where
\[ c_n = 2 \, C_F \, \left(\sum_{k=1}^n\delta_k \right)\]
and
\[ T_{\var,n} = C_\gamma \, \max \left\{ \left(\frac{c_n^2}{\alpha_n^5
      \, \var^{2+\gamma}} \right)^{\frac1{\gamma-1}} \, ; \,
  \left(\frac{c_n}{\alpha_n^2\,\var^{\gamma+\frac12}}\right)^{\frac1{\gamma-1}};\,
  \ \frac{(c_0 ^n)^{2/3}}{\var^{1/3}}\right\}.
\]
Pick up $\lambda_n^\dag < \lambda_n^*$ such that
$2\pi(\lambda_n^*-\lambda_n^\dag)\leq \alpha_n$, 
 and choose $\var = 2\pi (\lambda_n^* - \lambda_n^\dag)$; 
recalling that $\hat{\rho}^{n+1}(t,0)=0$, and that
our conditions imply an upper bound on $c_n$ and $c_0 ^n$,
we deduce the {\em uniform} control
\begin{align}\label{controlrhon+1}
  \|\rho^{n+1}_t\|_{\cF^{\lambda_n^\dag t + \mu_n^*}}
  & \leq e^{-2\pi (\lambda_n^*-\lambda_n^\dag)t}\, 
  \|\rho_t^{n+1}\|_{\cF^{\lambda_n^*t+\mu_n^*}} \\
  & \leq C\,
  A_n\,\left(1+\frac{1}{\alpha_n\,(\lambda_n^*-\lambda_n^\dag)^{3/2}}\right)
  \,e^{C\, T_n^2}, \nonumber
\end{align}
where
\begeq\label{Tn} T_n = C \,\left(\frac{1}{\alpha_n^5\,
    (\lambda_n^*-\lambda_n^\dag)^{2+\gamma}}\right)^{\frac1{\gamma-1}}.
\endeq

\subsubsection{Step 7: estimate on $F[h^{n+1}]$} 

As an immediate consequence of \eqref{hypiterforce} and
\eqref{controlrhon+1}, we have \begeq\label{csqforce} \sup_{t\geq 0}\
\bigl\|F[\rho^{n+1}_t]\bigr\|_{\cF^{\lambda_n^\dag t + \mu_n^*,
    \gamma}} \leq C
A_n\,\left(1+\frac{1}{\alpha_n\,(\lambda_n^*-\lambda_n^\dag)^{3/2}}\right)\,
e^{C \, T_n^2}.
\endeq


\subsubsection{Step 8: estimate of $h^{n+1}\circ\Om^n$}

In this step we shall use again the Vlasov equation. We rewrite
\eqref{xi} as
\[ h^{n+1}\bigl(\tau,X^n_{0,\tau}(x,v),V^n_{0,\tau}(x,v)\bigr) =
\int_0^\tau \Sigma^{n+1}\bigl(s, X^n_{0,s}(x,v),
V^n_{0,s}(x,v)\bigr)\,ds;\] 
but now we compose with
$(X^n_{t,0},V^n_{t,0})$, {\em where $t\geq \tau$ is arbitrary}.  This
gives
\[ h^{n+1}\bigl(\tau, X^n_{t,\tau}(x,v),V^n_{t,\tau}(x,v)\bigr)
= \int_0^\tau \Sigma^{n+1}\bigl(s, X^n_{t,s}(x,v),V^n_{t,s}(x,v)\bigr)\,ds.\]

Then for any $p\in [1,\ov{p}]$ and $\lambda_n^\flat<\lambda_n^\dag$,
using Propositions \ref{propTL} and \ref{propalgZ}, and the notation \eqref{RGH}, we get 
\begin{align*} 
& \bigl\|h^{n+1}_\tau \circ\Om^n_{t,\tau}\bigr\|_{\cZ^{(1+b)\lambda^\flat_n,\mu^* _n;p}_{\tau-\frac{bt}{1+b}}} 
= \Bigl\|h^{n+1}_\tau\circ \bigl(X^n_{t,\tau},V^n_{t,\tau}\bigr)\Bigr\|_{\cZ^{(1+b)\lambda^\flat _n,\mu^*_n;p}
    _{t-\frac{bt}{1+b}}} \\
  & \qquad \leq \int_0^\tau \Bigl\| \Sigma^{n+1}\bigl(s, X^n_{t,s},V^n_{t,s}\bigr)\Bigr\|
  _{\cZ^{(1+b)\lambda^\flat _n,\mu^* _n;p}_{t-\frac{bt}{1+b}}}\,ds
   = \int_0^\tau \bigl\|\Sigma^{n+1}(s,\Om^n_{t,s})\bigr\|_{\cZ^{(1+b)\lambda^\flat _n,\mu^* _n;p}
    _{s-\frac{bt}{1+b}}}\,ds \\
  & \qquad \leq \int_0^\tau \bigl\|R^{n+1}_{s,t}\bigr\|_{\cZ^{(1+b)\lambda^\flat _n,\mu^* _n}_{s-\frac{bt}{1+b}}}
  \|G^n_{s,t}\|_{\cZ^{(1+b)\lambda^\flat _n,\mu^* _n;p}_{s-\frac{bt}{1+b}}}\,ds \\
  & \qquad \qquad \qquad \qquad + \int_0^\tau \|R^n_{s,t}\|_{\cZ^{(1+b)\lambda^\flat _n, \mu^* _n}_{s-\frac{bt}{1+b}}} \|H^n_{s,t}\|_{\cZ^{(1+b)\lambda^\flat _n,\mu^* _n;p}_{s-\frac{bt}{1+b}}}\,ds. 
\end{align*}

Then (proceeding as in Step~6 to check that the exponents lie in the
appropriate range)
\begin{equation*}
\|R^{n+1}_{s,t}\|_{\cZ^{(1+b)\lambda_n^\flat,\mu_n^*}_{s-\frac{bt}{1+b}}}
\le C_F \, e^{-2\pi (\lambda_n^\dag-\lambda_n^\flat) s} \, \| \rho^{n+1}_s \|_{\cF^{\bar \nu_n(s)}}
\end{equation*}
and 
\begin{equation*}
\|R^{n}_{s,t}\|_{\cZ^{(1+b)\lambda^\flat,\mu^*_n}_{s-\frac{bt}{1+b}}}
\le C_F \, e^{-2\pi (\lambda_n^\dag-\lambda_n^\flat) s} \, \| \rho^{n}_s \|_{\cF^{\bar \nu_n(s)}}
\le C_F \, e^{-2\pi (\lambda_n^\dag-\lambda_n^\flat) s} \, \delta_n
\end{equation*}
with 
$$
\begin{cases}
\bar \nu_n (s,t):= \mu^\sharp \qquad\qquad \mbox { when } s \le bt/(1+b) \\[4mm]
\bar \nu_n (s,t):= \lambda_n^\dag \, s + \mu_n ^* \quad \mbox{ when } s \ge bt/(1+b).
\end{cases}
$$ 

On the other hand, from the induction assumption ${\bf (E^n_h)}$-${\bf
  (\tilde{E}^n_h)}$ (and again control of composition {\it via} Proposition
\ref{propcompos2}\ldots),
$$
\|H^n_{s,t}\|_{\cZ^{(1+b)\lambda^\flat _n,\mu^* _n;p}_{s-\frac{bt}{1+b}}} \le 2 \, (1+s) \, \delta_n
$$
and 
$$
\|G^n_{s,t}\|_{\cZ^{(1+b)\lambda^\flat _n,\mu^* _n;p}_{s-\frac{bt}{1+b}}} 
\le 2 \, (1+s) \, \left( \sum_{k=1} ^n \delta_k \right).
$$

We deduce that
$$
y(t,\tau) := \bigl\|h^{n+1}_\tau \circ\Om^n_{t,\tau}\bigr\|_{\cZ^{(1+b)\lambda^\flat_n,\mu^* _n;p}_{\tau-\frac{bt}{1+b}}} 
$$
satisfies
\begin{multline*}
  y(t,\tau) \le 2 \, C_F \, \left( \sum_{k=1} ^n \delta_k \right) \,
  \int_0 ^\tau e^{-2\pi (\lambda_n^\dag-\lambda_n^\flat) s} \, \|
  \rho^{n+1}_s \|_{\cF^{\bar \nu_n(s)}} \, (1+s) \, ds \\ + 2 \, C_F
  \, \delta_n ^2 \, \int_0 ^\tau e^{-2\pi
    (\lambda_n^\dag-\lambda_n^\flat) s} \, (1+s) \, ds;
\end{multline*}
so
\begin{multline}\label{foralltau} 
  \forall \ t\geq \tau \ge 0, \\
  \bigl\|h^{n+1}_\tau
  \circ\Om^n_{t,\tau}\bigr\|_{\cZ^{(1+b)\lambda^\flat_n,\mu^*
      _n;p}_{\tau-\frac{bt}{1+b}}} \le \frac{4 \, C_F \, \max
    \left\{\left( \sum_{k=1} ^n \delta_k \right) \, ; 1
    \right\}}{(2\pi \bigl(\lambda_n^\dag-\lambda_n^\flat)\bigr)^2} \,
  \left( \delta_n ^2 + \sup_{s \ge 0} \| \rho^{n+1}_s \|_{\cF^{\bar
        \nu_n(s)}} \right).
\end{multline}

\subsubsection{Step 9: Crude estimates on the derivatives of $h^{n+1}$}

Again we choose $p\in [1,\ov{p}]$.
From the previous step and Proposition~\ref{propgrad2} we deduce, for any $\lambda_n^\ddag$ such that
$\lambda_n^\ddag<\lambda_n^\flat<\lambda_n^\ddag$, and any
$\mu_n^\ddag<\mu_n^*$,
\begin{multline} \label{derivh}
\Bigl\|\nabla_x \bigl(h^{n+1}_\tau\circ\Om^n_{t,\tau}\bigr)\Bigr\|
_{\cZ^{\lambda_n^\ddag(1+b),\mu_n^\ddag;p}_{\tau-\frac{bt}{1+b}}} 
+ \Bigl\|(\nabla_v+\tau\nabla_x) 
       \bigl(h^{n+1}_\tau\circ\Om^n_{t,\tau}\bigr)\Bigr\|
_{\cZ^{\lambda_n^\ddag(1+b),\mu_n^\ddag;p}_{\tau-\frac{bt}{1+b}}} \\
\leq \frac{C(d)}{\min\,\{\lambda_n^\flat-\lambda_n^\ddag\, ; \,  
\mu_n^*-\mu_n^\ddag\}}\,
\bigl\|h^{n+1}_\tau\circ\Om^n_{t,\tau}\bigr\|_{\cZ^{\lambda_n^\flat(1+b),\mu_n^*;p}_{\tau-\frac{bt}{1+b}}};
\end{multline}
and
\begeq\label{derivvh}
\Bigl\|\nabla \bigl(h^{n+1}_\tau\circ\Om^n_{t,\tau}\bigr)\Bigr\|
_{\cZ^{\lambda_n^\ddag(1+b),\mu_n^\ddag;p}_{\tau-\frac{bt}{1+b}}} 
\leq \frac{C(d)\, (1+\tau)}{\min\,\{\lambda_n^\flat-\lambda_n^\ddag
  \,; \, \mu_n^*-\mu_n^\ddag\}}\,
\bigl\|h^{n+1}_\tau\circ\Om^n_{t,\tau}\bigr\|_{\cZ^{\lambda_n^\flat(1+b),
\mu_n^*;p}_{\tau-\frac{bt}{1+b}}}.
\endeq

Similarly,
\begeq\label{derivxh}
\Bigl\|\nabla \nabla \bigl(h^{n+1}_\tau\circ\Om^n_{t,\tau}\bigr)\Bigr\|
_{\cZ^{\lambda_n^\ddag(1+b),\mu_n^\ddag;p}_{\tau-\frac{bt}{1+b}}} 
\leq \frac{C(d)\,(1+\tau)^2}{\min\,\{\lambda_n^\flat-\lambda_n^\ddag
  \, ; \, \mu_n^* -\mu_n^\ddag\}^2}\,
\bigl\|h^{n+1}_\tau\circ\Om^n_{t,\tau}\bigr\|_{\cZ^{\lambda_n^\flat(1+b),
\mu_n^*;p}_{\tau-\frac{bt}{1+b}}}.
\endeq

\subsubsection{Step 10: Chain-rule and refined estimates on
  derivatives of $h^{n+1}$}

From Step~3 we have 
\begeq\label{nOm-1}
\bigl\|\nabla\Om^n_{t,\tau}\bigr\|_{\cZ^{\lambda_n^* (1+b),\mu_n^*}
_{\tau-\frac{bt}{1+b}}}
+
\bigl\|(\nabla\Om^n_{t,\tau})^{-1}\bigr\|_{\cZ^{\lambda_n^*(1+b),\mu_n^*}
_{\tau-\frac{bt}{1+b}}}
\leq C(d)
\endeq
and ({\it via} Proposition \ref{propgrad2})
\begin{align}\label{nnOmn}
\Bigl\|\nabla \nabla \Om^n_{t,\tau}\Bigr\|_{\cZ^{\lambda_n^\ddag(1+b),\mu_n^\ddag}_{\tau-\frac{bt}{1+b}}}
& \leq \frac{C(d)\,(1+\tau)}{\min\,\{\lambda_n^*-\lambda_n^\ddag \,
  ;\, \mu_n^*-\mu_n^\ddag\}}\,
\bigl\|\nabla\Om^n_{t,\tau}\bigr\|_{\cZ^{\lambda_n^*(1+b),\mu_n^*}_{\tau-\frac{bt}{1+b}}}\\
& \leq \frac{C(d)\, (1+\tau)}{\min\,\{\lambda_n^*-\lambda_n^\ddag\, ;
  \, \mu_n^*-\mu_n^\ddag\}}. \nonumber
\end{align}
Combining these bounds with Step~9, Proposition \ref{propalgZ} and the
identities \begeq\label{chainruleOm}
\begin{cases}
(\nabla h)\circ\Om = (\nabla\Om)^{-1}\, \nabla(h\circ\Om)\\[2mm]
(\nabla\nabla h)\circ\Om = (\nabla\Om)^{-2}\, \nabla\nabla(h\circ\Om) - 
(\nabla\Om)^{-1}\,\nabla^2\Om\, (\nabla\Om)^{-1} (\nabla h\circ\Om),
\end{cases}\endeq
we get
\begin{align} \label{nablahO}
\Bigl\|(\nabla h^{n+1}_\tau)\circ\Om^n_{t,\tau}\Bigr\|_{\cZ^{\lambda_n^\ddag(1+b),\mu_n^\ddag;1}_{\tau-\frac{bt}{1+b}}}
& \leq C(d)\ \Bigl\|\nabla \bigl(h^{n+1}_\tau\circ\Om^n_{t,\tau}\bigr)\Bigr\|
_{\cZ^{\lambda_n^\ddag(1+b),\mu_n^\ddag;1}_{\tau-\frac{bt}{1+b}}}\\
& \leq \frac{C(d)\, (1+\tau)}{\min\{\lambda_n^\flat-\lambda_n^\ddag\,
  ; \,\mu_n^*-\mu_n^\ddag\}} \,
\Bigl\|h^{n+1}_\tau\circ\Om^n_{t,\tau} \Bigr\|
_{\cZ^{\lambda_n^\flat(1+b),\mu_n^*;1}_{\tau-\frac{bt}{1+b}}}
\nonumber
\end{align}
and
\begin{align} \label{Bignabla2}
\Bigl\|& (\nabla^2 h^{n+1}_\tau)\circ\Om^n_{t,\tau}\Bigr\|
_{\cZ^{\lambda_n^\ddag(1+b),\mu_n^\ddag;1}_{\tau-\frac{bt}{1+b}}} \\
& \leq
C(d)\ \left[ \Bigl\|\nabla^2 \bigl(h^{n+1}_\tau \circ\Om^n_{t,\tau}\bigr)\Bigr\|
_{\cZ^{\lambda_n^\ddag(1+b),\mu_n^\ddag;1}_{\tau-\frac{bt}{1+b}}}
+ \bigl\|\nabla^2 \Om^n_{t,\tau}\bigr\|_{\cZ^{\lambda_n^\ddag(1+b),\mu_n^\ddag;1}_{\tau-\frac{bt}{1+b}}}\
\bigl \|(\nabla
h^{n+1}_\tau)\circ\Om^n_{t,\tau}\bigr\|_{\cZ^{\lambda_n^\ddag(1+b),\mu_n^\ddag;1}_{\tau-\frac{bt}{1+b}}} \right]
\nonumber \\
& \leq \frac{C(d)\, (1+\tau)^2}{\min\,\{
  \lambda_n^\flat-\lambda_n^\ddag \, ; \,\mu_n^*-\mu_n^\ddag\}^2}\
\bigl\|h^{n+1}_\tau\circ\Om^n_{t,\tau}\bigr\|_{\cZ^{\lambda_n^\flat(1+b),\mu_n^*;p}_{\tau-\frac{bt}{1+b}}}.
\nonumber
\end{align}

This gives us the bounds $\|(\nabla h^{n+1})\circ\Om^n\| = O(1+\tau)$,
$\|(\nabla^2 h^{n+1})\circ\Om^n\| = O((1+\tau)^2)$, which are optimal if one does not distinguish between the
$x$ and $v$ variables. We shall now refine these estimates. First we write
\[\nabla (h^{n+1}_\tau\circ\Om^n_{t,\tau}) - (\nabla h^{n+1}_\tau)\circ\Om^n_{t,\tau}
= \nabla (\Om^n_{t,\tau} - \Id)\cdot \bigl[(\nabla h^{n+1}_\tau)\circ\Om^n_{t,\tau}\bigr],\]
and we deduce ({\it via} Propositions \ref{propalgZ} and \ref{propgrad2})
\begin{align} \label{nabnab}
\Bigl\| & \nabla\bigl(h^{n+1}_\tau\circ\Om^n_{t,\tau}\bigr)  - (\nabla h^{n+1}_\tau)\circ\Om^n_{t,\tau}\Bigr\|
_{\cZ^{\lambda_n^\ddag(1+b),\mu_n^\ddag;p}_{\tau-\frac{bt}{1+b}}}\\
& \leq \bigl\|\nabla (\Om^n_{t,\tau}-\Id)\bigr\|_{\cZ^{\lambda_n^\ddag(1+b),\mu_n^\ddag}
_{\tau-\frac{bt}{1+b}}}\
\bigl\|(\nabla h^{n+1}_\tau)\circ\Om^n_{t,\tau}\bigr\|_{\cZ^{\lambda_n^\ddag(1+b),\mu_n^\ddag;p}_{\tau-\frac{bt}{1+b}}}
\nonumber \\
& \leq C(d)\
\left(\frac{1+\tau}{\min\,\{\lambda_n^\flat-\lambda_n^\ddag \,; \, \mu_n^*-\mu_n^\ddag\}}\right)^2\
\bigl\|\Om^n_{t,\tau}-\Id\bigr\|_{\cZ^{\lambda_n^\flat(1+b),\mu_n^*}_{\tau-\frac{bt}{1+b}}}\,
\bigl\|h^{n+1}_\tau \circ\Om^n_{t,\tau}\bigr\|_{\cZ^{\lambda_n^\flat(1+b),\mu_n^*;p}_{\tau-\frac{bt}{1+b}}}
\nonumber \\
& \leq \frac{C(d)\,C_\om^4}{\min\,\{\lambda_n^\flat-\lambda_n^\ddag\,;\, \mu_n^*-\mu_n^\ddag\}^2}\,
\left(\sum_{k=1}^n \frac{\delta_k}{(2\pi(\lambda_k-\lambda_k^*))^6}\right)\
(1+\tau)^{-2}\ \bigl\|h^{n+1}_\tau\circ\Om^n_{t,\tau}\bigr\|_{\cZ^{\lambda_n^\flat(1+b),\mu_n^*;p}_{\tau-\frac{bt}{1+b}}}. \nonumber
\end{align}
(Note: $\Om^n-\Id$ brings the time-decay, while $h^{n+1}$ brings the smallness.)
\sm

This shows that $(\nabla h^{n+1})\circ\Om^n\simeq \nabla (h^{n+1}\circ\Om^n)$ as $\tau\to\infty$. In view of Step~9,
this also implies the refined gradient estimates
\begin{multline} \label{refined} 
  \bigl\|(\nabla_x
  h^{n+1}_\tau)\circ\Om^n_{t,\tau}\bigr\|
  _{\cZ^{\lambda_n^\ddag,\mu_n^\ddag;p}_{\tau-\frac{bt}{1+b}}}
  + \Bigl\|\bigl((\nabla_v+\tau\nabla_x)
  h^{n+1}_\tau\bigr)\circ\Om^n_{t,\tau}\Bigr\|
  _{\cZ^{\lambda_n^\ddag,\mu_n^\ddag;p}_{\tau-\frac{bt}{1+b}}}\\
  \leq \ov{C}\,
  \bigl\|h^{n+1}_\tau\circ\Om^n_{t,\tau}\bigr\|
  _{\cZ^{\lambda_n^\flat(1+b),\mu_n^*;p}_{\tau-\frac{bt}{1+b}}},
\end{multline}
with
\[ \ov{C}= C(d)\ \left[ \frac{C_\om^4}{
    \min\,\{\lambda_n^\flat-\lambda_n^\ddag\,;\,
    \mu_n^*-\mu_n^\ddag\}^2}\, \left(\sum_{k=1}^n
    \frac{\delta_k}{(2\pi(\lambda_k-\lambda_k^*))^6}\right) +
  \frac1{\min\,\{\lambda_n^\flat-\lambda_n^\ddag\,; \,
    \mu_n^*-\mu_n^\ddag\}}\right].\]

\subsubsection{Conclusion}
Given $\lambda_{n+1}<\lambda_n^*$, $\mu_{n+1}<\mu_n^*$, we define
\[ \lambda_{n+1}=\lambda_n^\ddag, \qquad \mu_{n+1} = \mu_n^\ddag,\]
and we impose
\[ \lambda_n^*-\lambda_n^\dag = \lambda_n^\dag - \lambda_n^\flat =
\lambda_n^\flat-\lambda_n^\ddag = \frac{\lambda_n^*-\lambda_{n+1}}3,\]
\[ \mu_n^*-\mu_n^\ddag = \mu_n^*-\mu_{n+1}.\] 
Then from
\eqref{controlrhon+1}, \eqref{csqforce}, \eqref{foralltau},
\eqref{derivh}, \eqref{Bignabla2}, \eqref{nabnab} and \eqref{refined}
we see that ${\bf (E_\rho ^{n+1})}$, ${\bf (\tilde E_\rho ^{n+1})}$,
${\bf (E_h ^{n+1})}$, ${\bf (\tilde E_h ^{n+1})}$ have all been
established in the present subsection, with \begeq\label{dn+1}
\delta_{n+1} = \frac{C(d)\,C_F(1+C_F)\,(1+C_\om^4)\,e^{C\,T_n^2}}
{\min\, \{ \lambda_n^*-\lambda_{n+1} \, ; \, \mu_n^*-\mu_{n+1}\}^9}\
\max \left\{ \left( \sum_{k=1} ^n \delta_k \right) \, ; \, 1 \right\}
\ \left(1+ \sum_{k=1}^n
  \frac{\delta_k}{(2\pi(\lambda_k-\lambda_k^*))^6}\right)\,
\delta_n^2.
\endeq

\subsection{Convergence of the scheme} \label{subsec:cvg-scheme}

For any $n\geq 1$, we set \begeq\label{setLambda}
\lambda_n-\lambda_n^* = \lambda_n^*-\lambda_{n+1} =
\mu_n-\mu_n^*=\mu_n^*-\mu_{n+1} = \frac{\Lambda}{n^2},
\endeq
for some $\Lambda>0$. By choosing $\Lambda$ small enough, we can make
sure that the conditions $2\pi(\lambda_k-\lambda_k^*)<1$,
$2\pi(\mu_k-\mu_k^*)<1$ are satisfied for all $k$, as well as the
other smallness assumptions made throughout this section. Moreover, we
have $\lambda_k-\lambda_k^*\geq \Lambda/k^2$, so conditions ${\bf
  (C_1)}$ to ${\bf (C_9)}$ will be satisfied if
\[ \sum_{k=1}^n k^{12}\,\delta_k \leq \Lambda^6\,\omega,\qquad
\sum_{j=k+1}^n j^6\,\delta_j \leq \Lambda^3\,\omega \,
\left(\frac1{k^2}-\frac1{n^2}\right),\] for some small explicit
constant $\omega>0$, depending on the other constants appearing in the
problem. Both
conditions are satisfied if \begeq\label{condkdelta} \sum_{k=1}^\infty
k^{12}\,\delta_k \leq \Lambda^6\,\omega.
\endeq

Then from \eqref{Tn} we have $T_n\leq
C_\gamma\,(n^2/\Lambda)^{\frac{7+\gamma}{\gamma-1}}$,
so the induction relation on $\delta_n$ allows 
\begeq\label{deltan+1} \delta_1\leq C\,\delta,\qquad \delta_{n+1} =
C\,\left(\frac{n^2}{\Lambda}\right)^9\,
e^{C\,(n^2/\Lambda)^{\frac{14+2\gamma}{\gamma-1}}}\,
\delta_n^2.
\endeq 

To establish this relation we also assumed that $\delta_n$ is bounded
below by $C_F\,\zeta_n$, the error coming from the short-time
iteration; but this follows easily by construction, since the
constraints imposed on $\delta_n$ are much worse than those on
$\zeta_n$.  

Having fixed $\Lambda$, we will check that for $\delta$ small enough,
\eqref{deltan+1} implies both the fast convergence of
$(\delta_k)_{k\in\N}$, and the condition \eqref{condkdelta}, which
will justify {\it a posteriori} the derivation of
\eqref{deltan+1}. (An easy induction is enough to turn this into a
rigorous reasoning.)

For this we fix $a\in(1,a_I)$, $0 < z < z_I < 1$, 
and we check by induction
\begeq\label{Delta}
\forall \, n\geq 1,\qquad \delta_n \leq \Delta\,z^{a^n}.
\endeq
If $\Delta$ is given, \eqref{Delta} holds for $n=1$ as soon as
$\delta\leq (\Delta/C)\,z^a$.  Then, to go from stage $n$ to stage
$n+1$, we should check that
\[ \frac{C\,n^{18}}{\Lambda^9}\,
e^{Cn^{\frac{28+4\gamma}{\gamma-1}}/\Lambda^{\frac{14+2\gamma}{\gamma-1}}}\,
\Delta^2\,z^{2\,a^n}\leq \Delta\, z^{a^{n+1}};\] this is true if
\[ \frac1{\Delta} \geq \frac{C}{\Lambda^9}\ \sup_{n\in\N} \left(
  n^{19}\, e^{C\,n^{\frac{28+4\gamma}{\gamma-1}}/
    \Lambda^{\frac{14+2\gamma}{\gamma-1}}}\, z^{(2-a)\,a^n}\right).\]
Since $a<2$, the supremum on the right-hand side is finite, and we
just have to choose $\Delta$ small enough.  Then, reducing $\Delta$
further if necessary, we can ensure \eqref{condkdelta}. This concludes
the proof.

\begin{Rk} This argument almost fully exploits the bi-exponential
  convergence of the Netwon scheme: a convergence like, say,
  $O(e^{-n^{1000}})$, would not be enough to treat values of $\gamma$
  which are close to~1.  In Subsection \ref{sub:itermode} we shall
  present a more cumbersome approach which is less greedy in the
  convergence rate, but still needs convergence like
  $O(e^{-n^\alpha})$ for $\alpha$ large enough.
\end{Rk}

\section{Coulomb/Newton interaction}
\label{sec:coulomb}

In this section we modify the scheme of Section \ref{sec:iter} to
treat the case $\gamma=1$.  We provide two different strategies. The
first one is quite simple and will only come close to treat this case,
since it will hold on (nearly) {\em exponentially large times} in the
inverse of the perturbation size. The second one, somewhat more
involved, will hold up to infinite times. 

\subsection{Estimates on exponentially large times} \label{sub:explarge}

In this subsection we adapt the estimates of Section \ref{sec:iter} to
the case $\gamma=1$, under the additional restriction that $0\leq
t\leq A^{1/(\delta (\log\delta)^2)}$ for some constant $A>1$.  

In the iterative scheme, the only place where we used $\gamma>1$ (and
not just $\gamma\geq 1$) is in Step~6, when it comes to the echo
response {\it via} Theorem \ref{thmgrowth}. Now, in the case
$\gamma=1$, the formula for $K_1^n$ should be
\[ K_1^n(t,\tau) = \sum_{k=1}^n \delta_k\,
K_1^{(\alpha_{n,k}),1}(t,\tau),\] with $\alpha_{n,k} = 2 \pi \,
\min\,\{(\mu_k-\mu_n^*)/2 \, ; \, (\lambda_k-\lambda_n^*)/2\, ; \,
\eta\}$. By Theorem \ref{thmgrowth} (ii) this induces, in addition to
other well-behaved factors, an uncontrolled exponential growth
$O(e^{\epsilon_n t})$, with
  \[ \epsilon_n = \Gamma\, \sum_{k=1}^n
  \frac{\delta_k}{\alpha^3_{n,k}};\] in particular $\epsilon_n$ will
  remain bounded and $O(\delta)$ throughout the scheme.

Let us replace \eqref{setLambda} by
\[ \lambda_n-\lambda_n^* = \lambda_n^*-\lambda_{n+1} =
\mu_n-\mu_n^*=\mu_n^*-\mu_{n+1} = \frac{\Lambda}{n\,(\log(e+n))^2},\]
where $\Lambda>0$ is very small. (This is allowed since the series
$\sum 1/(n(\log(e+n))^2)$ converges --- the power~2 could of course be replaced by
any $r>1$.)  Then during the first stages of
the iteration we can absorb the $O(e^{\epsilon_n t})$ factor by the
loss of regularity if, say,
\[ \epsilon_n \leq \frac{\Lambda}{2\,n\,(\log(e+n))^2}.\] Recalling
that $\epsilon_n=O(\delta)$, this is satisfied as soon as
\begeq\label{Nlogd} n \leq N :=
\frac{K}{\delta\,(\log(1/\delta))^2}, \endeq where $K>0$ is a positive
constant depending on the other parameters of the problem but of
course not on $\delta$.  So during these first stages we get the same
long-time estimates as in Section \ref{sec:iter}.

For $n>N$ we cannot rely on the loss of regularity any longer; at this
stage the error is about
\[ \delta_N \leq C\,\delta^{a^N},\] where $1<a<2$.  To get the bounds
for larger values of $n$, we use impose a restriction on the
time-interval, say $0\leq t\leq \Tm$. Allowing a degradation of the
rate $\delta^{a^n}$ into $\delta^{\und{a}^n}$ with $\und{a}<a$, we see
that the new factor $e^{\epsilon_n \Tm}$ can be eaten up by the scheme
if
\[ e^{\epsilon_n\Tm}\, \delta^{(a-\und{a})\,\und{a}^n}\leq 1,\qquad
\forall \, n\geq N.\]
This is satisfied if
\[ \Tm = O\left( \und{a}^N\,\frac{\log\frac1{\delta}}{\delta}\right).\]

Recalling \eqref{Nlogd}, we see that the latter condition holds true if
\[ \Tm= O \left(A^{\frac1{\delta(\log\delta)^2}}\,
  \frac{\log\frac1{\delta}}{\delta}\right)\] for some well-chosen
constant $A>1$.  Then we can complete the iteration, and end up with a
bound like
\[ \|f_t-f_i\|_{\cZ^{\lambda',\mu'}_t} \leq C\,\delta\qquad \forall \,
t\in [0,\Tm],\] where $C$ is another constant independent of
$\delta$. The conclusion follows easily.

\subsection{Mode-by-mode estimates} \label{sub:itermode}

Now we shall change the estimates of Section \ref{sec:iter} a bit more
in depth to treat arbitrarily large times for $\gamma=1$. The main
idea is to work {\em mode by mode} in the estimate of the spatial
density, instead of looking directly for norm estimates.

Steps~1 to~5 remain the same, and the changes mainly occur in Step~6.

Substep~6(a) is unchanged, but we only retain from that substep
\begeq\label{retainsuba} \forall \, \ell\in\Z^d,\qquad
e^{2\pi(\lambda_n^*t+\mu_n^*)|\ell|}\, \bigl| (\sigma_t^{n,n})^{\hat{
  }}(\ell)\bigr| \leq
\frac{2\,C_F\,\delta_n^2}{(\pi(\lambda_n-\lambda_n^*))^2}.
\endeq

Substep b is more deeply changed. Let $\hat{\mu}_n<\mu_n^*$.  \sm

\bul First, for each $\ell\in\Z^d$, we have, by Propositions
\ref{prop424}, \ref{prop425} and the last part of Proposition
\ref{proprt},
\begin{align*}
  e^{2\pi(\lambda_n^*t+\hat{\mu}_n)|\ell|}\,& \bigl|\hat{\cal E}(t,\ell)\bigr| \\[1mm]
  & \leq \int_0^t \sum_{m\in\Z^d} \Bigl\| P_m \Bigl(
  F[h^{n+1}_\tau]\circ \Om^n_{t,\tau} - F[h^{n+1}_\tau]\Bigr) \Bigr\|
  _{\cZ^{\lambda_n^*(1+b),\hat{\mu}_n}_{\tau-\frac{bt}{1+b}}}\\
  & \qquad\qquad\qquad\qquad\qquad\qquad\qquad
  \bigl\|P_{\ell-m}G^n_{\tau,t}\bigr\|
  _{\cZ^{\lambda_n^*(1+b),\hat{\mu}_n;1}_{\tau-\frac{bt}{1+b}}}\,d\tau\\[1mm]
  & \leq \int_0^t \sum_{m\in\Z^d} \sum_{m' \in \Z^d} \left\| P_{m-m'}
    \int_0^1 \nabla F[h^{n+1}_\tau]\circ \bigl(\Id + \theta
    (\Om^n_{t,\tau}-\Id)\bigr)\,d\theta \right\|
  _{\cZ^{\lambda_n^*(1+b),\hat{\mu}_n}_{\tau-\frac{bt}{1+b}}}\\
  & \qquad\qquad\qquad\qquad \left\| P_{m'} \left( \Omega^n _{t,\tau}
      - \Id \right)
  \right\|_{\cZ^{\lambda_n^*(1+b),\hat{\mu}_n}_{\tau-\frac{bt}{1+b}}}
  \,
  \bigl\|P_{\ell-m}G^n_{\tau,t}\bigr\|
  _{\cZ^{\lambda_n^*(1+b),\hat{\mu}_n;1}_{\tau-\frac{bt}{1+b}}}\,d\tau\\[1mm]
  & \leq \int_0^1 \int_0^t
  \|G^n_{\tau,t}\|_{\cZ^{\lambda_n^*(1+b),\mu_n^*;1}_{\tau-\frac{bt}{1+b}}}
  \, \left\| \Omega^n _{t,\tau}
      - \Id  \right\|_{\cZ^{\lambda_n^*(1+b), \mu^* _n}_{\tau-\frac{bt}{1+b}}}
    \,   \sum_{m,m' \in\Z^d} e^{-2\pi(\mu_n^*-\hat{\mu}_n)|\ell-m|} \\
  & \qquad\qquad  e^{-2\pi(\mu_n^*-\hat{\mu}_n)|m'|}\, 
  \Bigl\|P_{m-m'} \Bigl( \nabla F[h^{n+1}_\tau]\circ
  \bigl(\Id + \theta (\Om^n_{t,\tau}-\Id)\bigr)\Bigr)\Bigr\|
  _{\cZ^{\lambda_n^*(1+b),\hat{\mu}_n}_{\tau-\frac{bt}{1+b}}}\,d\tau\,d\theta\\[1mm]
  & \leq \int_0^t
  \|G^n\|_{\cZ^{\lambda_n^*(1+b),\mu_n^*;1}_{\tau-\frac{bt}{1+b}}} \, 
  \left\| \Omega^n _{t,\tau}
      - \Id
    \right\|_{\cZ^{\lambda_n^*(1+b),\mu^* _n}_{\tau-\frac{bt}{1+b}}}
    \,
  \sum_{m,\,m',\,q\,\in\Z^d} e^{-2\pi (\mu_n^*-\hat{\mu}_n)|\ell-m|}\,
  e^{-2\pi(\mu_n^*-\hat{\mu}_n)|m'|} \\ 
  &\qquad\qquad\qquad\qquad\qquad\qquad
  e^{-2\pi(\mu_n^*-\hat{\mu}_n)|m-m'-q|}\, \Bigl\|P_q
  \bigl( \nabla
  F[h^{n+1}_\tau]\bigr)\Bigr\|_{\cF^{\hat{\nu}_n}}\,d\tau,
\end{align*}
where
\[ \hat{\nu}_n = \hat{\mu}_n + \lambda_n^*(1+b)\left|\tau- \frac{bt}{1+b}\right|
+ \bigl\|\Om^nX_{t,\tau}-x \bigr\|_{\cZ^{\lambda_n^*(1+b),\mu_n^*}_{\tau-\frac{bt}{1+b}}}.\]

For $\alpha\leq 1$ we have
\[ \sum_{m,m'\in\Z^d} e^{-2\pi\alpha |\ell-m|}\,e^{-2\pi\alpha |m'|}
\, e^{-2\pi\alpha|m-m'-q|}\leq
\frac{C(d)}{\alpha^d}\,e^{-2\pi\frac{\alpha}{2}|\ell-q|},\]
we can argue as in Substep 6(b) of Section \ref{sec:iter} to get
\begin{multline}\label{newstepbE}
e^{2\pi(\lambda_n^*t+\mu_n^*)|\ell|}\,
\bigl|\hat{\cal E}(t,\ell)\bigr|
\leq \frac{C}{(\mu_n^*-\hat{\mu}_n)^d}\,
\left(C'_0 + \sum_{k=1}^n\delta_k\right)\
\left(\sum_{k=1}^n \frac{\delta_k}{(2\pi(\lambda_k-\lambda_k^*))^5}\right)\\
\sum_{q\in\Z^d} e^{-\pi (\mu_n^*-\hat{\mu}_n)|\ell-q|}
\int_0^t e^{2\pi(\lambda_n^*\tau+\hat{\mu}_n)|q|}\,
\bigl|\rho[h^{n+1}_\tau]^{\hat{ }}(q)\bigr|\,\frac{d\tau}{(1+\tau)^2}.
\end{multline}
\sm

\bul Next, we use again Proposition \ref{prop424} and simple estimates
to bound $\ov{\cal E}$:
\begin{multline*}
e^{2\pi(\lambda_n^*t+\hat{\mu}_n)|\ell|}\,
\bigl|{\ov{\cal E}\,}^{\hat{ }}(t,\ell)\bigr|\leq
\int_0^t \|G^n-\ov{G}^n\|_{\cZ^{\lambda_n^*(1+b),\mu_n^*;1}_{\tau-\frac{bt}{1+b}}}\\
\sum_{m\in\Z^d} e^{-2\pi(\mu_n^*-\hat{\mu}_n)|m|}\,
\bigl\|P_{\ell-m}\bigl(F[h^{n+1}_\tau]\bigr)\bigr\|_{\cF^{\hat{\beta}_n}}\,d\tau,
\end{multline*}
where $\hat{\beta}_n= \lambda_n^*(1+b)|\tau-bt/(1+b)|+\hat{\mu}_n$.
Reasoning as in Substep 6(b) of Section \ref{sec:iter}, we arrive at
\begin{multline}\label{newstepbEbar}
  e^{2\pi(\lambda_n^*t+\hat{\mu}_n)|\ell|}\, \bigl|{\ov{\cal
      E}\,}^{\hat{ }}(t,\ell)\bigr|\leq C \, \left(C'_0+\sum_{k=1}^n
    \delta_k\right)
  \left(\sum_{j=1}^n \frac{\delta_j}{(2\pi(\lambda_j-\lambda_n^*))^6} 
    + \sum_{k=1}^n\delta_k\right)\\
  \sum_{m\in\Z^d} e^{-2\pi(\mu_n^*-\hat{\mu}_n)|m|} \int_0^t
  e^{2\pi(\lambda_n^*\tau + \hat{\mu}_n)|\ell-m|}
  \bigl|\rho[h^{n+1}_\tau]^{\hat{
    }}(\ell-m)\bigr|\,\frac{d\tau}{(1+\tau)^2}.
\end{multline}
\sm

\bul Then we consider the ``main contribution'' $\ov{\sigma}^{n,n+1}$,
which we decompose as in Section~\ref{sec:iter}: 
\[ \ov{\sigma}^{n,n+1}_t = \ov{\sigma}^{n,n+1}_{t,0} + \sum_{k=1}^n
\ov{\sigma}_{t,k}^{n,n+1},\]
and we write for $k \ge 1$:
\begin{multline*}
  e^{2\pi(\lambda_n^*t+\hat{\mu}_n)}\,
  \bigl|\bigl(\ov{\sigma}^{n,n+1}_{t,k}\bigr)^{\hat{ }}(\ell)\bigr|
  \leq \sum_{m\in\Z^d} \int_0^t K_{\ell,m}^{n,h^k}(t,\tau)\,
  \bigl\|P_{\ell-m}\bigl(F[h^{n+1}_\tau]\bigr)\bigr\|_{\cF^{\nu'_n,\gamma}}\,d\tau\\
  + \int_0^t K_0^{n,h^k}(t,\tau)\, \bigl\|P_\ell
  \bigl(F[h^{n+1}_\tau]\bigr)\bigr\|_{\cF^{\nu'_n,\gamma}}\, d\tau,
\end{multline*}
where
\[ \nu'_n = \lambda_n^*(1+b)\left|\tau-\frac{bt}{1+b}\right| + \mu'_n,\]
\[ K_{\ell,m}^{n,h^k}(t,\tau) = \sup_{0\leq\tau\leq t}\
\sup_{0\leq\tau\leq t} \left( \frac{\Bigl\|\nabla_v \bigl(h^k_\tau
    \circ\Om^{k-1}_{t,\tau}\bigr) - \bigl\<\nabla_v \bigl(h^k_\tau
    \circ\Om^{k-1}_{t,\tau}\bigr)\bigr\> \Bigr\|
    _{\cZ^{\lambda_k(1+b),\mu_k}_{\tau-bt/(1+b)}}}{1+\tau}\right)\
K_{\ell,m}^{n,k},\]
\[ K_{\ell,m}^{n,k}(t,\tau) = (1+\tau)\, e^{-2\pi
  \left(\frac{\mu_k-\hat{\mu}_n}2\right)|m|}\, \left(\frac{e^{-2\pi
      (\mu'_n-\hat{\mu}_n)|\ell-m|}}{1+|\ell-m|^\gamma}\right)\,
e^{-2\pi
  \left(\frac{\lambda_k-\lambda_n^*}{2}\right)\,|\ell(t-\tau)+m\tau|},
\]
and the formula for $K_0^{n,h^k}$ is unchanged with respect to Section
\ref{sec:iter}.

Assuming $\mu'_n = \hat{\mu}_n + \eta\,(t-\tau)/(1+t)$ and reasoning as in 
Substep 6(b) of Section~\ref{sec:iter},
we end up with the following estimate on the ``main term'':
\begin{multline} \label{mainterm'}
e^{2\pi(\lambda_n^*t+\hat{\mu}_n)|\ell|}\, \bigl|\bigl(\ov{\sigma}_{t,k}^{n,n+1}\bigr)^{\hat{ }}
(\ell)\bigr| \\
\leq  C \sum_{m\in\Z^d} \int_0^t \left(\sum_{k=1}^n \delta_k\, K_{\ell,m}^{(\alpha_{n,k}),\gamma}
(t,\tau) \right)\ e^{2\pi(\lambda_n^*\tau+\hat{\mu}_n)|\ell-m|}\,
\bigl|\rho[h^{n+1}_\tau]^{\hat{ }}(\ell-m)\bigr|\,d\tau\\
+ C \sum_{m\in\Z^d} \int_0^t \left(\sum_{k=1}^n \delta_k
e^{-2\pi \left(\frac{\lambda_k-\lambda_n^*}{2}\right)(t-\tau)}\right)\,
e^{2\pi(\lambda_n^*\tau+\hat{\mu}_n)|\ell|}\,
\bigl|\rho[h^{n+1}_\tau]^{\hat{ }}(\ell-m)\bigr|\,d\tau.
\end{multline}
\sm

Then Substep 6(c) becomes, with $\Phi(\ell,t) = \rho[h^{n+1}_\tau]^{\hat{ }}(\ell)$,
\begin{multline*}
  e^{2\pi(\lambda_n^*t+\hat{\mu}_n)|\ell|} \left| \Phi(\ell,t) -
    \int_0^t K^0(\ell,t-\tau)\,\Phi(\ell,\tau)\,d\tau\right|
  \leq \frac{C\,\delta_n^2}{(\lambda_n-\lambda_n^*)^2}\\
  + \sum_{m\in\Z^d} \int_0^t \left[ K^n_{\ell,m}(t,\tau) +
    \frac{c^n_m}{(1+\tau)^2}\right]\,
  e^{2\pi(\lambda_n^*\tau+\hat{\mu}_n)|\ell-m|}\,|\Phi(\ell-m,\tau)|\,d\tau\\
  + \int_0^t
  K_0^n(t,\tau)\,e^{2\pi(\lambda_n^*\tau+\hat{\mu}_n)|\ell|}\,
  |\Phi(\ell,\tau)|\,d\tau,
\end{multline*}
with
\[ K^n_{\ell,m} = C\, \sum_{k=1}^n \delta_k\,
K^{(\hat{\alpha}_n),\gamma}_{\ell,m},\qquad \hat{\alpha}_n = 2 \pi \, \min\
\left\{ \frac{\mu_n-\hat{\mu}_n}{2};\ \frac{\lambda_n-\lambda_n^*}2 ;\
  \eta\right\},\]
\[ K_0^n(t,\tau) = C\, \sum_{k=1}^n \delta_k\,e^{-2\pi
  \left(\frac{\lambda_k-\lambda_n^*}{2}\right) (t-\tau)},\]
\[ c^n_m = \frac{C}{(\mu_n^*-\hat{\mu}_n)^d}\
\left(C'_0 + \sum_{k=1}^n \delta_k\right) \, 
\left(1+ \sum_{k=1}^n \frac{\delta_k}{(2\pi(\lambda_k-\lambda_k^*))^6}\right)\,
e^{-\pi (\mu_n^*-\hat{\mu}_n)|m|}.\]
Note that
\[ \sum_{m\in\Z^d} c_m^n + 
\left( \sum_{m\in\Z^d} \left(c_m^n\right)^2 \right)^{1/2} \leq
\frac{C}{(\mu_n^*-\hat{\mu}_n)^{2d}}
\left(C'_0 + \sum_{k=1}^n \delta_k\right)
\left(1+\sum_{k=1}^n \frac{\delta_k}{(2\pi(\lambda_k-\lambda_k^*))^6}\right).\]

Then we can apply Theorem \ref{thmgrowthk} and deduce (taking already into account, for
the sake of lisibility of the formula, that $\sum \delta_k$ and $\sum
\delta_k / (\lambda_k - \lambda_k ^*)$ are uniformly bounded)
\begin{multline*}
  e^{2\pi(\lambda_n^*t+\hat{\mu}_n)|\ell|}\,
  \bigl|(\rho_t^{n+1})^{\hat{ }} (\ell)\bigr|\leq C\,
  \frac{\delta_n^2}{(\lambda_n-\lambda_n^*)^2 \,
    \hat{\alpha}_n\,\var^{3/2} \, (\mu_n^*-\hat{\mu}_n)^{2d}} \,
  \exp\left(\frac{C\,(1+\hat{T}_{\var,n}^2)}{(\mu_n^*-\hat{\mu}_n)^{2d}}\right)
  \,e^{\var  t},
\end{multline*}
where
\begin{equation*} \hat{T}_{\var,n}  = C\, \max \left\{
    \left(\frac{1}{\alpha_n^{3+2d}\,\var^{\gamma+2}}\right)^{\frac1{\gamma}};\
    \left(\frac{1}{\alpha_n^d\,\var^{\gamma+\frac12}}\right)^{\frac1{\gamma-\frac12}};\
    \left(\frac{\left(\sum_m c^n _m\right)^2}{\var}\right)^{\frac13}\right\}.
\end{equation*} 

If $\lambda_n^\dag < \lambda_n^*$ and $\mu_n^\dag< \hat{\mu}_n$ 
are chosen as before and $\var = 2 \pi (\lambda_n ^* - \lambda^\dag)$, this implies a uniform bound on
\[ \|\rho_t^{n+1}\|_{\cF^{\lambda_n^\dag t + \mu_n^\dag}} \leq
\frac{C}{(\hat{\mu}_n-\mu_n^\dag)^d}\, \sup_{\ell\in\Z^d}\ e^{2\pi
  (\lambda_n^*t+\hat{\mu}_n)|\ell|}\,\bigl|(\rho_t^{n+1})^{\hat{
  }}(\ell)\bigr|\] 
obtained with the formula above with
\[
\hat{T}_{\var,n}=\hat{T}_n = C\, \max \left\{ \frac1{\lambda_n^*-\lambda_n^\dag},\
    \frac1{\lambda_n-\lambda_n^*},\ \frac1{\mu_n-\mu_n^*},\
    \frac1{\mu_n^*-\hat{\mu}_n}\right\}^{\max
    \left\{ \frac{5+\gamma+2d}{\gamma};\, \frac{d + \gamma
        +1/2}{\gamma-1/2}; \, \frac{4d+1}{3}\right\}}.
\]
\med

Then Steps~7 to~10 of the iteration can be repeated with the only
modification that $\mu_n^*$ is replaced by $\mu_n^\dag$.

\sm

The convergence (Subsection \ref{subsec:cvg-scheme}) works just the
same, except that now we need more intermediate regularity indices
$\mu_n$:
\[ \mu_{n+1}=\mu_n^{\ddag} < \mu_n^\dag < \hat{\mu}_n < \mu_n^*;\]
the obvious choice being to let $\mu_n^\dag - \mu_n^\ddag = 
\hat{\mu}_n - \mu_n^\dag = \mu_n^* - \hat{\mu}_n$.

Choosing $\lambda_n-\lambda_{n+1}$ and $\mu_n-\mu_{n+1}$ of the order
of $\Lambda/n^2$, we arrive in the end at the induction
\[ \delta_{n+1} \le C\,\left(\frac{n^2}{\Lambda}\right)^{9+6d}\, e^{C
  \, \left(\frac{n^2}{\Lambda}\right)^{\xi(d,\gamma)}} \,\delta_n^2\]
with 
\begin{equation}\label{eq:defxi}
\xi(d,\gamma) := 2d + 2 \max \left\{ \frac{5+\gamma+2d}{\gamma};\, \frac{d
    + \gamma +1/2}{\gamma-1/2}; \, \frac{4d+1}{3}\right\}.
\end{equation} 

Then the convergence of the scheme (and {\it a posteriori}
justification of all the assumptions) is done exactly as in
Section~\ref{sec:iter}.

\section{Convergence in large time}
\label{sec:convergence}

In this section we prove Theorem \ref{thmmain} as a simple consequence of the
uniform bounds established in Sections \ref{sec:iter} and
\ref{sec:coulomb}.

So let $f^0,L,W$ satisfy the assumptions of Theorem \ref{thmmain}. To
simplify notation we assume $L=1$.

The second part of Assumption \eqref{condanalf0} precisely means that
$f^0\in \cC^{\lambda;1}$. We shall actually assume a slightly more
precise condition, namely that for some $\ov{p}\in [1,\infty]$,
\begeq\label{condanalf0'} \sum_{n\in\N_0^d} \frac{\lambda^n}{n!}\,
\|\nabla_v^n f^0\|_{L^p(\R^d)} \leq C_0 <+\infty,\qquad \forall \, p
\in [1,\ov{p}].
\endeq
(It is sufficient to take $\ov{p}=1$ to get Theorem \ref{thmmain}; but
if this bound is available for some $\ov{p}>1$ then it will be
propagated by the iteration scheme, and will result in
more precise bounds.) Then we pick up $\und{\lambda}\in (0,\lambda)$,
$\und{\mu}\in(0,\mu)$, $\beta>0$, $\beta'\in(0,\beta)$.  By symmetry,
we only consider nonnegative times.

If $f_i$ is an initial datum satisfying the smallness condition
\eqref{fif0}, then by Theorem \ref{thminj}, we have a smallness
estimate on $\|f_i-f^0\|_{\cZ^{\lambda',\mu';p}}$ for all $p\in
[1,\ov{p}]$, $\lambda'<\lambda$, $\mu'<\mu$.  Then, as in Subsection
\ref{sub:revisited} we can estimate the solution $h^1$ to the
linearized equation \begeq\label{linVlencore}
\begin{cases}
\pa_t h^1 + v\cdot\nabla_x h^1 + F[h^1]\cdot\nabla_v f^0 =0\\[3mm]
h^1(0,\,\cdot\,) = f_i-f^0,
\end{cases}
\endeq
and we recover uniform bounds in $\cZ^{\hat{\lambda},\hat{\mu};p}$
spaces, for any $\hat{\lambda}\in (\und{\lambda},\lambda)$,
$\hat{\mu}\in(\und{\mu},\mu)$.  
More precisely, 
\begeq\label{borneh1}
\sup_{t\geq 0} \|\rho[h^1_t]\|_{\cF^{\hat{\lambda} t+\hat{\mu}}} +
\sup_{t\geq 0}
\|h^1(t,\,\cdot\,)\|_{\cZ^{\hat{\lambda},\hat{\mu};p}_t} \leq C\,
\delta,
\endeq
with $C=C(d,\lambda',\hat{\lambda},\mu',\hat{\mu},W,f^0)$ (this is
of course assuming $\var$ in Theorem \ref{thmmain} to be small
enough).  

We now set $\lambda_1 = \lambda'$,
and we run the iterative scheme of Sections
\ref{sec:local}--\ref{sec:iter}--\ref{sec:coulomb} for all $n\geq
2$. If $\var$ is small enough, up to slightly lowering $\lambda_1$, we
may choose all parameters in such a way that
\[ \lambda_k, \lambda_k^* \xrightarrow[k\to\infty]{}
\lambda_\infty>\und{\lambda}, \qquad \mu_k,\mu^*_k
\xrightarrow[k\to\infty]{} \mu_\infty > \und{\mu};\] then we pick up
$B>0$ such that
\[ \mu_\infty - \lambda_\infty (1+B)B \geq \mu'_\infty > \und{\mu},\]
and we let $b(t)=B/(1+t)$.

As a result of the scheme, we have, for all $k\geq 2$,
\begeq\label{k2} \sup_{t\geq \tau \geq 0}\: \bigl\| h^k_\tau \circ
\Om^{k-1}_{t,\tau}\bigr\|_{\cZ^{\lambda_\infty(1+b),\mu_\infty;1}_{\tau
    -\frac{bt}{1+b}}} \leq \delta_k,
\endeq
where $\sum_{k=2}^\infty \delta_k \leq C\,\delta$ and $\Om^k$ is the
scattering associated to the force field generated by
$h^1+\ldots+h^k$. Choosing $t=\tau$ in \eqref{k2} yields
\[ \sup_{t\geq 0}\: \|h_t^k\|_{\cZ^{\lambda_\infty(1+B),\mu_\infty;1}
  _{t-\frac{Bt}{1+B+t}}}\leq \delta_k.\] By Proposition
\ref{propincluZ}, this implies
\[ \sup_{t\geq 0}
\|h_t^k\|_{\cZ^{\lambda_\infty(1+B),\mu_\infty-\lambda_\infty(1+B)B;1}_t}\leq
\delta_k.\] In particular, we have a uniform estimate on $h_t^k$ in
$\cZ^{\lambda_\infty,\mu'_\infty;1}_t$.  Summing up over $k$ yields
for $f=f^0 + \sum_{k\geq 1}h^k$ the estimate \begeq\label{supft}
\sup_{t\geq 0}\:
\bigl\|f(t,\,\cdot\,) - f^0\bigr\|_{\cZ^{\lambda_\infty,\mu'_\infty;1}_t}\leq
C\,\delta.
\endeq

Passing to the limit in the Newton scheme, one shows that $f$ solves
the nonlinear Vlasov equation with initial datum $f_i$. (Once again we
do not check details; to be rigorous one would need to establish
moment estimates, locally in time, before passing to the limit.) This
implies in particular that $f$ stays nonnegative at all times.

Applying Theorem \ref{thminj} again, we deduce from \eqref{supft}
\[ \sup_{t\geq 0}\:
\bigl\|f(t,\,\cdot\,)-f^0\bigr\|_{\cY^{\und{\lambda},\und{\mu}}_t}\leq
C\,\delta;\] or equivalently, with the notation used in Theorem
\ref{thmmain}, \begeq\label{boundff0} \sup_{t\geq 0}\: \left\|
  f(t,x-vt,v) -f^0(v) \right\|_{\und{\lambda},\und{\mu}}\leq
C\,\delta.
\endeq
Moreover, $\rho =\int f\,dv$ satisfies similarly
\[ \sup_{t\geq 0}\: \|\rho(t,\,\cdot\,)\|_{\cF^{\lambda_\infty t +
    \mu_\infty}}\leq C\,\delta.\] It follows that $|\hat{\rho}(t,k)|
\leq C\,\delta\,e^{-2\pi\lambda_\infty|k|t}\,e^{-2\pi\mu_\infty|k|}$,
for any $k\neq 0$.  On the one hand, by Sobolev embedding, we deduce
that for any $r\in\N$,
\[ \|\rho(t,\,\cdot\,)-\<\rho\>\|_{C^r(\T^d)}\leq C_r\, \delta\,
e^{-2\pi \lambda't};\] 
on the other hand, multiplying $\hat{\rho}$ by the
Fourier transform of $\nabla W$, we see that the force $F=F[f]$
satisfies \begeq\label{Faussi} \forall \, t\geq 0,\quad \forall
\, k\in\Z^d,\qquad |\hat{F}(t,k)| \leq
C\,\delta\,e^{-2\pi\lambda'|k|t}\,e^{-2\pi\mu'|k|},\endeq for some
$\lambda'>\und{\lambda}$, $\mu'>\und{\mu}$.  

Now, from \eqref{boundff0} we have, for any $(k,\eta)\in\Z^d\times\R^d$, and any $t\geq 0$,
\begeq\label{fttilde}
\Bigl|\tilde{f}(t,k,\eta+kt) - \tilde{f}^0(\eta)\Bigr|\leq C\,\delta\,e^{-2\pi\mu'|k|}\,e^{-2\pi\lambda'|\eta|};
\endeq
so
\begeq\label{ftildeketa}
|\tilde{f}(t,k,\eta)| \leq \bigl|\tilde{f}^0(\eta+kt)\bigr| + C\,\delta\,e^{-2\pi\mu'|k|}\, e^{-2\pi\lambda'|\eta+kt|}.
\endeq
In particular, for any $k\neq 0$, and any $\eta\in\R^d$,
\begeq\label{tildefketa}
\tilde{f}(t,k,\eta) = O(e^{-2\pi\lambda't}).
\endeq

Thus $f$ is asymptotically close (in the weak topology) to its spatial average $g=\<f\>=\int f\,dx$.
Taking $k=0$ in \eqref{fttilde} shows that, for any $\eta\in\R^d$,
\begeq\label{tildegt} 
|\tilde{g}(t,\eta) - \tilde{f}^0(\eta)|\leq C\,\delta\,e^{-2\pi\lambda'|\eta|}.
\endeq

Also, from the nonlinear Vlasov equation, for any $\eta\in\R^d$ we have
\begin{align*}
\tilde{g}(t,\eta)
& = \tilde{f}_i (0,\eta)
- \int_0^t \int_{\T^d _L} \int_{\R^d} F(\tau,x)\cdot\nabla_v f (\tau,x,v)\,e^{-2i\pi \eta\cdot v}\,dv\,dx\,d\tau\\
& = \tilde{f}_i(0,\eta) - 2i\pi \sum_{\ell\in\Z^d} \int_0^t \hat{F}(\tau,\ell)\cdot\eta\,
\tilde{f}(\tau,-\ell,\eta)\,d\tau.
\end{align*}
Using the bounds \eqref{Faussi} and \eqref{tildefketa}, it is easily shown that the above time-integral converges
exponentially fast as $t\to\infty$, with rate $O(e^{-\lambda'' t})$ for any $\lambda''<\lambda'$, to its limit
\begeq\label{gtilde8} \tilde{g}_\infty(\eta) = 
\tilde{f}_i(0,\eta) - 2i\pi \sum_{\ell\in\Z^d} \int_0^\infty \hat{F}(\tau,\ell)\cdot\eta\,
\tilde{f}(\tau,-\ell,\eta)\,d\tau.
\endeq
By passing to the limit in \eqref{tildegt} we see that
\[ |\tilde{g}_\infty(\eta) - \tilde{f}^0(\eta)|\leq
C\,\delta\,e^{-2\pi\lambda'|\eta|},\] and this concludes the proof of
Theorem \ref{thmmain}.

\section{Non-analytic perturbations} \label{sec:NA}

Although the vast majority of studies of Landau damping assume that
the perturbation is analytic, it is natural to ask whether this
condition can be relaxed. As we noticed in Remark \ref{rklowreg}, this
is the case for the linear problem. As for nonlinear Landau damping,
once the analogy with KAM theory has been identified, it is anybody's
guess that the answer might come from a Moser-type argument. However,
this is not so simple, because the ``loss of convergence'' in our
argument is much more severe than the ``loss of regularity'' which
Moser's scheme allows to overcome. 

For instance, the second-order correction $h^2$ satisfies
\[ \pa_t h^2 + v\cdot\nabla_x h^2 + F[f^1]\cdot\nabla_v h^2 +
F[h^2]\cdot\nabla_v f^1 = - F[h^1]\cdot\nabla_v h^1.\] The action of
$F[f^1]$ is to curve trajectories, which does not help in our
estimates. Discarding this effect and solving by Duhamel's formula and
Fourier transform, we obtain, with $S=-F[h^1]\cdot\nabla_v h^1$,
$\rho^2 = \int h^2\,dv$,
\begin{multline}\label{2ndorder}
  \hat{\rho}^2(t,k) \simeq \int_0^t K^0(t-\tau,k)\,\hat{\rho}^2(\tau,k)\,d\tau\\
  + 2i\pi \int_0^t \sum_\ell (k-\ell) \,
  \hat{W}(k-\ell)\,\hat{\rho}^2(\tau,k-\ell)\, \tilde{\nabla_v
    h^1}\bigl(\tau,\ell,k(t-\tau)\bigr)\,d\tau + \int_0^t
  \tilde{S}\bigl(\tau,k,k(t-\tau)\bigr)\,d\tau.
\end{multline} 
(The term with $K^0$ includes the contribution of $\nabla_v f^0$.)

Our regularity/decay estimates on $h^1$ will never be better than
those on the solution of the free transport equation, {\em i.e.},
$h_i(x-vt,v)$, where $h_i=f_i-f^0$. Let us forget about the effect of
$K^0$ in \eqref{2ndorder}, replace the contribution of $S$ by a
decaying term $A(kt)$. Let us choose $d=1$ and assume
$\hat{h}_i(\ell,\,\cdot\,)=0$ if $\ell\neq \pm 1$. For $k>0$ let us
use the long-time approximation
\[ \tilde{h}_i(-1,k(t-\tau)-\tau)\,1_{[0,t]}(\tau)\,d\tau \simeq
\frac{c}{k+1}\,\delta_{\frac{kt}{k+1}},\qquad c = \int
\tilde{h}_i(-1,s)\,ds = \hat{h}_i(-1,0);\] note that $c\neq 0$ in
general. Plugging all these simplifications in \eqref{2ndorder} and
choosing $\hat{W}(k)=1/|k|^{1+\gamma}$ suggests the {\em a priori}
simpler equation \begeq\label{baby} \varphi(t,k) = A(kt) +
\frac{ckt}{(k+1)^{\gamma+2}}\ \varphi\left(\frac{kt}{k+1}, k+1\right).
\endeq
Replacing $\varphi(t,k)$ by $\varphi(t,k)/A(kt)$ reduces to $A=1$, and
then we can solve this equation by power series as in Subsection
\ref{subheurcoul}, obtaining \begeq\label{varphiA} \varphi(t,k) \simeq
A(kt)\, e^{(ckt)^{\frac1{\gamma+2}}}.
\endeq
With a polynomial deterioration of the rate we could use a
regularization argument, but the fractional exponential is much worse.

However, our bounds are still good enough to establish decay for
Gevrey perturbations.  Let us agree that a function $f=f(x,v)$ lies in
the Gevrey class ${\cal G}^\nu$, $\nu\geq 1$, if $|\tilde{f}(k,\eta)|=
O\bigl(\exp(-c|(k,\eta)|^{1/\nu})\bigr)$ for some $c>0$; in particular
${\cal G}^1$ means analytic.  (An alternative convention would be to
require the $n$th derivative to be $O(n!^\nu)$.)  As we shall explain,
we can still get nonlinear Landau damping if the initial datum $f_i$
lies in ${\cal G}^\nu$ for $\nu$ close enough to~1. Although this is
still quite demanding, it already shows that nonlinear Landau damping
is not tied to analyticity or quasi-analyticity, and holds for a large
class of compactly supported perturbations.

\begin{Thm} \label{thmNA} Let $\lambda >0$. Let $f^0=f^0(v)\geq 0$ be
  an analytic homogeneous profile such that
\[ \sum_{n\in\N_0^d} \frac{\lambda^n}{n!} \, \|\nabla_v^n
f^0\|_{L^1(\R^d)} < +\infty, \] and let $W=W(x)$ satisfy
$|\hat{W}(k)|=O(1/|k|)$, such that Condition {\bf (L)} from Subsection
\ref{sub:lineardamping} holds.  Let $\nu \in (1,1+\theta)$ with
$\theta= 1/\xi(d,\gamma)$, where $\xi$ was defined in
\eqref{eq:defxi}. Let $\beta>0$ and let $\alpha<1/\nu$.  Then
there is $\var>0$ such that if
\[ \delta:=\sup_{k,\eta}\ \Bigl( \bigl|(\tilde{f}_i - \tilde{f}^0)
(k,\eta)\bigr|\, e^{\lambda|\eta|^{1/\nu}}\,
e^{\lambda|k|^{1/\nu}}\Bigr) + \iint
\bigl|(f_i-f^0)(x,v)\bigr|\,e^{\beta|v|}\,dv\,dx \ \leq \var,\] then
as $t\to +\infty$ the solution $f=f(t,x,v)$ of the nonlinear Vlasov
equation on $\T^d\times\R^d$ with interaction potential $W$ and
initial datum $f_i$ satisfies, for all $r\in\N$,
\[ \forall \, (k,\eta),\qquad \Bigl|\tilde{f}(t,k,\eta) -
\tilde{f}_\infty(\eta)\Bigr| = O \bigl(\delta\,
e^{-ct^\alpha}\bigr);\]
\[ \|F(t,\,\cdot\,)\|_{C^r(\T^d)} = O \bigl(\delta\,e^{-c t^\alpha}\bigr)\]
for some $c>0$ and some homogeneous Gevrey profile $f_\infty$,
where $F$ stands for the self-consistent force.
\end{Thm}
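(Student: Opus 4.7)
The plan is to extend the Newton scheme of Sections \ref{sec:iter}--\ref{sec:coulomb} to the Gevrey setting by means of a stage-dependent regularization, in the spirit of Nash--Moser. The key observation is that the bi-exponential convergence $\delta_n \lesssim z^{a^n}$ of the analytic Newton iteration leaves substantial room to absorb the error introduced by truncating Gevrey data to a low-frequency analytic part, provided the Gevrey index $\nu$ is close enough to $1$. The hypothesis $\nu < 1 + 1/\xi(d,\gamma)$ is exactly the balance condition between the scheme's convergence rate and the exploding constants $\exp(C T_n^2)$, $T_n \sim n^{2\xi}$, produced by the response estimate of Theorem \ref{thmgrowth}.

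The first step is a dyadic decomposition $h_i = f_i - f^0 = \sum_{j\geq 0} g_j$ with $g_j$ Fourier-supported on $|(k,\eta)|\sim 2^j$; the Gevrey bound gives $\|g_j\|_{\cY^{\lambda_j,\mu_j}_0} \lesssim \delta \, e^{-c\, 2^{j/\nu}}$ for any $\lambda_j, \mu_j$ bounded by $c\, 2^{j(1/\nu-1)}/2$. Next I would run a modified Newton iteration in which the $n$-th stage is initialized with the partial sum $h_i^{\leq N_n}$, $N_n \sim 2^n$, which is analytic with stage-dependent radius $\lambda_n \sim n^{-\sigma}$ for a suitable $\sigma = \sigma(\nu)$, and the truncation residue, of order $\exp(-c\, 2^{n/\nu})$, is treated as a source term at that stage (playing the role of the correction arising from $h^n$ in the analytic scheme). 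Throughout, the analytic norms $\cZ^{\lambda,\mu}_\tau$ of Section \ref{sec:analytic} are replaced by Gevrey-type variants with weights $e^{\lambda|\eta|^{1/\nu}} e^{\mu|k|^{1/\nu}}$. The algebra, composition, scattering, and bilinear inequalities of Sections \ref{sec:analytic}--\ref{sec:regdecay} extend to this setting with worse but manageable constants, since Gevrey classes are still algebras and still admit a ``gliding'' formulation well suited to composition with the characteristics $\Om^n_{t,\tau}$.

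The main obstacle, as anticipated in the heuristic discussion of Subsection \ref{subqualit}, is the response analysis of Section \ref{sec:response}: one must replace Proposition \ref{propexpK} by a subexponential-moment estimate of the form
\[
\int_0^t K^{(\alpha),\gamma}(t,\tau)\, e^{c\tau^\alpha}\,d\tau \leq C\, e^{c t^\alpha},\qquad \alpha < 1/\nu,
\]
which can be established by revisiting the resonant-region analysis of Proposition \ref{propexpK} with the change of variables $u = c\alpha\, \tau^{\alpha-1} \ell\, t/(k+\ell)$ inside each echo window. Plugging this into the growth Theorem \ref{thmgrowth} (recast with subexponential weights) yields a bound $\varphi(t) \leq A\, e^{c t^\alpha}$ on the density at each Newton stage, and the condition $\nu < 1 + 1/\xi$ ensures that the compounded constant $\exp(C T_n^2)$ at stage $n$ is dominated by the truncation gain $\exp(-c\, 2^{n/\nu})$, so that the scheme still converges. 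Once the modified scheme is closed, the limiting solution satisfies $\|\rho(t,\cdot)\|_{\cF^{c t^\alpha}} \lesssim \delta$, and the convergence of $f$ to a Gevrey asymptotic profile $f_\infty$, together with the subexponential decay of the force, is deduced from this uniform bound exactly as in Section \ref{sec:convergence}.
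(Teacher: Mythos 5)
Your proposal takes a genuinely different route from the paper, and in fact it coincides with the ``alternative approach'' that the paper explicitly mentions and deliberately sets aside in the remark at the very end of Section~\ref{sec:NA}. You propose to rebuild the machinery directly in Gevrey-weighted norms $e^{\lambda|\eta|^{1/\nu}}e^{\mu|k|^{1/\nu}}$, with a dyadic decomposition of the Fourier support at scale $2^{j}$ and a Nash--Moser-flavored treatment of truncation residues as source terms. The paper does something structurally lighter: it decomposes $h_i = f_i - f^0$ by a smooth partition of unity localized at \emph{polynomial} Fourier scales $n^K$ (not dyadic), so that each piece $h_i^n$ is genuinely analytic with a radius $\bar\lambda_n \sim n^{-(1-1/\nu)K}$; it then runs the already-established \emph{analytic} Newton scheme, feeding $h_i^n$ as the initial datum of the $n$-th correction and letting $\lambda_n \to 0$ polynomially. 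No Gevrey-weighted functional inequalities are ever needed; Theorem~\ref{thmgrowth} is invoked verbatim at each stage, and the balance $(\nu-1)+\nu/K < 1/\xi$ (with $K\to\infty$) produces the threshold $\nu < 1+1/\xi(d,\gamma)$.

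What each approach buys: your Gevrey-norm strategy, if carried through, would give a more intrinsic theory (a single weighted scale, no summation over pieces at the end), and the paper's closing remark acknowledges this as viable. The paper's choice is economical precisely because it recycles Propositions~\ref{propalgZ}, \ref{propcompos2}, \ref{propexpK} and Theorem~\ref{thmgrowth} unchanged; it also makes transparent why the threshold $\nu < 1+1/\xi$ appears, since the competition is between the gain $\exp(-cn^{K/\nu})$ from the Gevrey decay of the pieces and the loss $\exp(cn^{[(1-1/\nu)K+1]\xi})$ from the response constants at shrinking radius. One further point: your dyadic scale $N_n\sim 2^n$ makes these two competing quantities double-exponential in $n$, and one should check carefully that the resulting exponent condition still reduces to $\nu<1+1/\xi$ rather than a different constant, because the $+1$ in the paper's exponent $[(1-1/\nu)K+1]$ arises from the polynomial differentiation and would not appear in the same way under a geometric scale.

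The substantive gap in your sketch, as distinguished from a difference of route, is the subexponential-moment estimate for the echo kernel. You assert that $\int_0^t K^{(\alpha),\gamma}(t,\tau)\,e^{c\tau^{\alpha}}\,d\tau \leq C e^{ct^{\alpha}}$ can be obtained by ``revisiting the resonant-region analysis'' with a change of variables. This is not a mechanical substitution: the proof of Proposition~\ref{propexpK} hinges on the fact that the maximizing integer $k$ for a given $\tau$ satisfies $k-1 < \ell\tau/(t-\tau) < k+1$, and the integration against the linear weight $e^{\var\tau}$ then matches the width $O(t/k)$ of each echo window and the location $\tau \approx kt/(k+\ell)$ of its center. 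With a sublinear weight $e^{c\tau^{\alpha}}$ these quantities scale differently in $k$ and $t$, and it is not obvious that the same cancellation goes through with the same $\gamma$-exponent. The paper avoids this issue entirely by never changing the weight; the stretched-exponential decay $e^{-ct^{\alpha}}$ only emerges \emph{a posteriori} from the supremum $\sup_n e^{-cn^s}e^{-\lambda_n t}$, which is an elementary optimization, not a modified response estimate. If you wish to pursue the Gevrey-norm alternative, you would need to prove this kernel bound in detail rather than stating it; as written, it is the missing load-bearing lemma.
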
 

\begin{Rk} In view of \eqref{varphiA}, one may hope that the result
  remains true for $\theta=2$.  Proving this would require much more
  precise estimates, including among other things a qualitative improvement
  of the constants in Theorem \ref{thminj} (recall Remark \ref{rkMiro}).
\end{Rk}

\begin{Rk} One could also relax the analyticity of $f^0$, but there is
  little incentive to do so.
\end{Rk} 

\begin{proof}[Sketch of proof of Theorem \ref{thmNA}]
  We first decompose $h_i=f_i-f^0$, using truncation by a smooth
  partition of unity in Fourier space: 
  \[ h_i = \sum_{n\geq 0} \cF^{-1} \left( \tilde h_i\,\chi_n\right)
  \equiv \sum_{n\geq 0} h_i^n,\] 
where $\cF$ is the Fourier transform. Each bump function $\chi_n$ should be localized around the domain
(in Fourier space)
\[ D_n = \Bigl\{ n^K \leq |(k,\eta)| \leq (n+1)^K \Bigr\},\] for some
exponent $K>1$; but at the same time $\cF^{-1}(\chi_n)$ should be
exponentially decreasing in $v$.  To achieve this, we let
\[ \chi_n = 1_{D_n}\ast \gamma,\qquad \gamma(\eta) = e^{-\pi
  |\eta|^2}.\] Then $\cF^{-1}(\chi_n) = \cF^{-1}(1_{D_n})\,\gamma$ has
Gaussian decay, independently of $n$; so there is a uniform bound on
$\iint |h_i^n(x,v)|\,e^{\beta|v|}\,dv\,dx$, for some $\beta>0$.  

On the other hand, if $(k,\eta)\in D_n$ and $(k',\eta')\notin
(D_{n-1}\cup D_n\cup D_{n+1})$, then $|k-k'|+|\eta-\eta'|\geq
c\,n^{K-1}$ for some $c>0$; from this one obtains, after simple
computations,
\[ \bigl|\chi_n(k,\eta)\bigr| \leq 1_{(n-1)^K\leq |(k,\eta)|\leq (n+2)^K}\,
+ C\,e^{-c\,n^{2(K-1)}}\,e^{-c\,(|k|^2+|\eta|^2)}. \]
So (with constants $C$ and $c$ changing from line to line)
\begin{align*}
  \bigl|\tilde{h}_i^n(k,\eta)\bigr| & \leq C\,
  e^{-\lambda|k|^{\frac1{\nu}}}\,e^{-\lambda|\eta|^{\frac1{\nu}}}\,
  1_{(n-1)^K \leq |(k,\eta)|\leq (n+2)^K}\ + C\,e^{-c\,n^{2(K-1)}}\,e^{-c(|k|+|\eta|)} \\
  & \leq C\, \max \Bigl\{ e^{-\frac{\lambda}2\,(n-1)^{\frac{K}{\nu}}},\,
  e^{-c\,n^{2(K-1)}}\Bigr\}\, e^{-\ov{\lambda}_n (|k|+|\eta|)},
\end{align*}
where
\[ \bar \lambda_n \sim
\frac{\lambda}{2}\,(n+2)^{-\bigl(1-\frac1{\nu}\bigr)K}.\]

If $K\geq 2$ then $2(K-1)>K/\nu$; so
$\|h_i^n\|_{\cY^{\ov{\lambda}_n,\ov{\lambda}_n}}\leq C\,
e^{-\frac{\lambda}2\,n^{K/\nu}}$. Then we may apply Theorem
\ref{thminj} to get a bound on $h_i^n$ in the space
$\cZ^{\hat{\lambda}_n,\hat{\lambda}_n;1}$ with
$\hat{\lambda}_n=\ov{\lambda}_n/2$, at the price of a constant
$\exp(C\,(n+2)^{(1-1/\nu)K})$. Assuming $K\nu > (1-1/\nu)K$, {\em
  i.e.}, $\nu<2$, we end up with \begeq\label{endupGevrey}
\|h_i^n\|_{\cZ^{\hat{\lambda}_n,\hat{\lambda}_n};1} = O \bigl( e^{-c
  n^{K/\nu}}\bigr),\qquad \hat{\lambda}_n = \frac{\ov{\lambda}_n}{2}.
\endeq

Then we run the iteration scheme of Section \ref{sec:approx} with the
following modifications: (1) instead of $h^n(0,\,\cdot\,)=0$, choose
$h^n(0,\,\cdot\,)=h_i^n$, and (2) choose regularity indices $\lambda_n
\sim \hat \lambda_n$ which go to zero as $n$ goes to infinity. This
generates an additional error term of size
$O\bigl(\exp(-c\,n^{\frac{K}{\nu}})\bigr)$, and imposes that
$\lambda_n-\lambda_{n+1}$ be of order
$n^{-\bigl[\bigl(1-\frac1{\nu}\bigr)K+1\bigr]}$. When we apply the
bilinear estimates from Section \ref{sec:regdecay}, we can take
$\ov{\lambda}-\lambda$ to be of the same order; so $\alpha=\alpha_n$
and $\var=\var_n$ can be chosen proportional to
$n^{-\bigl[\bigl(1-\frac1{\nu}\bigr)K+1\bigr]}$. 
Then the large constants coming from the time-response will be, as in
Section \ref{sec:coulomb}, of order $n^q\,e^{cn^r}$, with $q\in\N$ and
$r=[(1-1/\nu)K+1]\xi$, and the scheme will still converge like
$O(e^{-cn^s})$ for any $s<K/\nu$, provided that $K/\nu>r$, {\em i.e.},
\[ (\nu-1) + \frac{\nu}{K} < \frac1{\xi}.\] 

The rest of the argument is similar to what we did in Sections
\ref{sec:iter} to \ref{sec:convergence}.  In the end the decay rate
of any nonzero mode of the spatial density $\rho$ is controlled by
\begin{multline*} \sum_n e^{-c n^s}\,e^{-\lambda_n t} \leq
  \left(\sum_n e^{-c n^s}\right) \ \sup_n \ \Bigl[ \exp(-cn^s)\, \exp
  \bigl(-c\, n^{-\bigl(1-\frac1{\nu}\bigr)}\, t\bigr)\Bigr] \\ \leq
  C\,\exp(-c\,t^{s/K}),
\end{multline*}
and the result follows since $s/K$ is arbitrarily close to $1/\nu$.
\end{proof} 

\begin{Rk} An alternative approach to Gevrey regularity consists in
  rewriting the whole proof with the help of Gevrey norms such as
  \[ \|f\|_{\cC^{\lambda}_\nu} = \sum_{n\in\N}
  \frac{\lambda^n\,\|f^{(n)}\|_\infty}{n!^{\nu}}, \qquad
  \|f\|_{\cF^{\lambda}_\nu} = \sum_{k\in\Z} e^{2\pi\lambda
    |k|^{1/\nu}}\, |\hat{f}(k)|,\] which satisfy the algebra property
  for any $\nu\geq 1$. Then one can hybridize these norms, rewrite the
  time-response in this setting, estimate fractional exponential
  moments of the kernel, etc. 
\end{Rk}

\begin{Rk} \label{rkCr} In a more general $C^r$ context, we do not
  know whether decay holds for the nonlinear Vlasov--Poisson equation.
  Speculations about this issue can be found in \cite{linzeng:preprint} where it is
  shown that (unlike in the linearized case) one needs more than one derivative
  on the perturbation. As a first step in this direction, we mention that our methods imply
  a bound like $O(\delta/(1+t)^{r-\ov{r}})$ for times $t=O(1/\delta)$, where $\ov{r}$
  is a constant and $r>\ov{r}$, as soon as the initial perturbation has norm $\delta$ 
  in a functional
  space $\cW^r$ involving $r$ derivatives in a certain sense. 
  The reason why this is nontrivial is that the natural
  time scale for nonlinear effects in the Vlasov--Poisson equation is not $O(1/\delta)$,
  but $O(1/\sqrt{\delta})$, as predicted by O'Neil \cite{oneil} and very
  well checked in numerical simulations~\cite{manfredi:landau:97}.\footnote{ Passing from
    $O(1/\sqrt{\delta})$ to $O(1/\delta)$ is arguably an
    infinite-dimensional counterpart of Laplace's averaging principle,
    which yields stability for certain Hamiltonian systems over time
    intervals $O(1/\delta^2)$ rather than $O(1/\delta)$.} Let us sketch the argument
  in a few lines. Assume that
    (for some positive constants $c,C$)
  \begeq\label{hin} h_i = \sum_n h_i^n,\qquad
  \| h_i ^n \|_{\cZ^{\lambda_n,\lambda_n;1}} \leq
  \frac{C^n}{2^{rn}},\qquad \lambda_n = \frac{c\,n}{2^n}.
  \endeq
  Then we may run the Newton scheme again choosing $\alpha_n\sim c\, n/2^n$,
  $c_n = O(\delta\,2^{-(r-r_1)n})$ 
  and $\var_n =c'\,\delta$. Over a time-interval
  of length $O(1/\delta)$, Theorem \ref{thmgrowth}(ii) only yields a
  multiplicative constant $O(e^{c\,\delta\,t}/\alpha_n^9) = O(2^{10n})$.  
  In the end, after Sobolev injection again, we recover
  a time-decay on the force $F$ like
\[ \delta \sum_n 2^{nr_2}\,2^{-nr}\,e^{-\lambda_n t}\leq
C\,\delta\,\sup_n \Bigl(2^{-n(r-r_3)}\,e^{-\lambda_n t}\Bigr) \leq \frac{C\,\delta}{(1+t)^{r-r_4}},\]
  as desired. Equation \eqref{hin} means that $h_i$ is of size $O(\delta)$ in a functional
  space $\cW^r$ whose definition is close to the Littlewood--Paley characterization
  of a Sobolev space with $r$ derivatives. In fact, if the conjecture formulated in Remark 
  \ref{rkMiro} holds true, then it can be shown that $\cW^r$ contains all functions 
  in the Sobolev space $W^{r+r_0,2}$ satisfying an adequate moment condition, 
  for some constant $r_0$.  
\end{Rk}

\section{Expansions and counterexamples}
\label{sec:cex}

A most important consequence of the proof of Theorem \ref{thmmain} is
that the asymptotic behavior of the solution of the nonlinear Vlasov
equation can in principle be determined at arbitrary precision as the
size of the perturbation goes to~0. Indeed, if we define
$g^k_\infty(v)$ as the large-time limit of $h^k$ (say in positive
time), then $\|g^k\|=O(\delta_k)$, so $f^0+g^1_\infty + \ldots +
g^n_\infty$ converges very fast to $f_\infty$. In other words, to
investigate the properties of the time-asymptotics of the system, we
may freely exchange the limits $t\to\infty$ and $\delta\to 0$, perform
expansions, etc. This
at once puts on rigorous grounds many asymptotic expansions used by
various authors --- who so far implicitly postulated the possibility
of this exchange.

With this in mind, let us estimate the first corrections to the
linearized theory, in the regime of a very small perturbation and
small interaction strength (which can be achieved by a proper
scaling of physical quantities).
We shall work in dimension $d=1$ and in a periodic box of length $L=1$. 

\subsection{Simple excitation}
For a start, let us consider the case where the perturbation affects only the first spatial frequency.
We let\sm

\bul $f^0(v) = e^{-\pi\,v^2}$: the homogeneous (Maxwellian)
distribution; \sm

\bul $\var \, \rho_i(x) = \var\,\cos(2\pi x)$: the initial space
density perturbation; \sm

\bul $\var \, \rho_i(x)\,\theta(v)$: the initial perturbation of the
distribution function; we denote by $\varphi$ the Fourier transform of
$\theta$; \sm

\bul $\alpha\, W$: the interaction potential, with $W(-x)=W(x)$. We do
not specify its form, but it should satisfy the assumptions in Theorem
\ref{thmmain}.  \sm

We work in the asymptotic regime $\var\to 0$, $\alpha\to 0$.
The parameter $\var$ measures the size of the perturbation, while $\alpha$ measures
the strength of the interaction; after dimensional change, if $W$ is an inverse power,
$\alpha$ can be thought of as an inverse power of the ratio (Debye length)/(perturbation wavelength).
We will not write norms explicitly, but all our computations can be made in
the norms introduced in Section~\ref{sec:analytic}, with small losses
in the regularity indices --- as we have done in all this paper.

The first-order correction $h^1=O(\var)$ to $f^0$ is provided by the
solution of the linearized equation \eqref{Vllin}, here taking the
form
\[ \pa_t h^1 + v\cdot\nabla_x h^1 + F[h^1]\cdot\nabla_v f^0 =0,\]
with initial datum $h^1(0,\,\cdot\,)= h_i:= f_i - f^0$. As in Section \ref{sec:linear} we get a closed equation
for the associated density $\rho[h^1]$:
\[ \hat{\rho}[h^1](t,k) = \tilde{h}_i(k,kt) - 4\pi^2\,\alpha\,\hat{W}(k) \int_0^t \hat{\rho}[h^1](\tau,k)\,
e^{-\pi (k(t-\tau))^2}\,(t-\tau)\,k^2\,d\tau.\]
It follows that $\hat{\rho}[h^1](t,k)=0$ for $k\neq \pm 1$, so the behavior of $\hat{\rho}[h^1]$ is entirely
determined by $u_1(t) = \hat{\rho}[h^1](t,1)$ and $u_{-1}(t) = \hat{\rho}[h^1](t,-1)$, which satisfy
\begin{align}\label{u1}
u_1(t) & = \frac{\var}2\,\varphi(t) - 4\pi^2\alpha\, \hat{W}(1) \int_0^t u_1(\tau)\,e^{-\pi(t-\tau)^2}\,(t-\tau)\,d\tau\\
& = \frac{\var}2\, \Bigl [ \varphi(t) + O(\alpha)\Bigr]. \nonumber
\end{align}
(This equation can be solved explicitly \cite[eq.~6]{bouchet}, but we only need the expansion.)
Similarly,
\begin{align}\label{u-1}
u_{-1}(t) & = \frac{\var}2\, \varphi(-t)  - 4\pi^2\alpha\, \hat{W}(1) \int_0^t u_{-1}(\tau)\,
e^{-\pi(t-\tau)^2}\,(t-\tau)\,d\tau\\
& = \frac{\var}2\, \Bigl [ \varphi(-t) + O(\alpha)\Bigr]. \nonumber
\end{align}

The corresponding force, in Fourier transform, is given by $\hat{F}^1(t,1) =
-2i\pi\alpha \, \hat{W}(1)\, u_1(t)$ and $\hat{F}^1(t,-1) = 2i\pi \alpha
\, \hat{W}(1)\,u_{-1}(t)$. 

From this we also deduce the Fourier transform of $h^1$ itself:
\begeq\label{Fh1} \tilde{h}^1(t,k,\eta) = 
\tilde{h}_i(k,\eta+kt) - 4\pi^2\alpha \, \hat{W}(k)
\int_0^t \hat{\rho}[h^1](\tau,k)\,e^{-\pi (\eta+k(t-\tau))^2}\,\bigl(\eta+k(t-\tau)\bigr)\cdot k\,d\tau;
\endeq
this is 0 if $k\neq \pm 1$, while
\begin{align}\label{h1t} \tilde{h}^1(t,1,\eta) 
& = \frac{\var}{2}\varphi(\eta+t)
- 4\pi^2\alpha\, \hat{W}(1) \int_0^t u_1(\tau)\,e^{-\pi (\eta+(t-\tau))^2}\,\bigl(\eta+(t-\tau)\bigr)\,d\tau\\
&  = \frac{\var}2 \Bigl[ \varphi(\eta+t) + O(\alpha)\Bigr], \nonumber
\end{align}
\begin{align}\label{h-1t} \tilde{h}^1(t,-1,\eta)
& = \frac{\var}{2}\varphi(\eta-t)
+ 4\pi^2\alpha\, \hat{W}(1) \int_0^t u_{-1}(\tau)\,e^{-\pi (\eta-(t-\tau))^2}\,\bigl(\eta-(t-\tau)\bigr)\,d\tau\\
& = \frac{\var}2 \Bigl[ \varphi(\eta-t) + O(\alpha)\Bigr]. \nonumber
\end{align}

To get the next order correction, we solve, as in Section \ref{sec:iter},
\[ \pa_t h^2 + v\cdot\nabla_x h^2 + F[h^1]\cdot\nabla_v h^2 + F[h^2]\cdot(\nabla_v f^0 + \nabla_v h^1)
= - F[h^1]\cdot\nabla_v h^1,\]
with zero initial datum.
Since $h^2= O(\var^2)$, we may neglect the terms $F[h^1]\cdot\nabla_v h^2$ and $F[h^2]\cdot\nabla_v h^1$
which are both $O(\alpha\,\var^3)$. 
So it is sufficient to solve
\begeq\label{path'2} 
\pa_t h'_2 + v\cdot\nabla_x h'_2 + F[h'_2]\cdot\nabla_v f^0 = - F[h^1]\cdot\nabla_v h^1
\endeq
with vanishing initial datum.
As $t\to\infty$, we know that the solution $h'_2(t,x,v)$ is asymptotically close to its spatial average
$\<h'_2\> = \int h'_2\,dx$. Taking the integral over $\T^d$ in \eqref{path'2} yields
\[ \pa_t \<h'_2\> = - \bigl\< F[h^1]\cdot\nabla_v h^1\bigr\>.\]

Since $h^1$ converges to $\<h_i\>$, the deviation of $f$ to $\<f_i\>$ is given, at order $\var^2$, by
\begin{align*}
g(v) & = -\int_0^{+\infty} \bigl\< F[h_1]\cdot\nabla_v h^1\bigr\>(t,v)\,dt \\
& = -\int_0^{+\infty} \sum_{k\in\Z} \hat{F}[h^1](t,-k)\cdot\nabla_v \hat{h}^1(t,k,v)\,dt.
\end{align*}
Applying the Fourier transform and using
\eqref{u1}-\eqref{u-1}-\eqref{h1t}-\eqref{h-1t}, we deduce
\begin{align*}
\tilde{g}(\eta) & = - \int_0^{+\infty} \sum_{k\in\Z} \hat{F}[h^1](t,-k)\cdot\tilde{\nabla_v h}^1(t,k,\eta)\,dt\\
& = - \int_0^{+\infty} \hat{F}[h^1](t,-1)\,(2i\pi\eta)\,\tilde{h}^1(t,1,\eta)\,dt\\
& \quad - \int_0^{+\infty} \hat{F}[h^1](t,1)\,(2i\pi\eta)\,\tilde{h}^1(t,-1,\eta)\,dt\\
& = \pi^2\var^2\alpha\, \hat{W}(1)\,\eta
\left( \int_0^{+\infty} \varphi(-t)\, \varphi(\eta+t)\,dt
- \int_0^{+\infty} \varphi(t)\,\varphi(\eta-t)\,dt + O(\alpha) \right)\\
& = - \pi^2\var^2\alpha\, \hat{W}(1)\,\eta
\left( \int_{-\infty}^{+\infty} \varphi(t)\, \varphi(\eta-t)\,\sign (t)\,dt + O(\alpha)\right).
\end{align*}

Summarizing:
\begeq\label{sumcex}
\begin{cases}
\dps \lim_{t\to\infty} \tilde{f}(t,k,\eta) = 0 \quad \text{if $k\neq 0$} \\[3mm]
\dps \lim_{t\to\infty} \tilde{f}(t,0,\eta) =
\tilde{f}_i(t,0,\eta) 
- \var^2\,\alpha\, \bigl(\pi^2\,\hat{W}(1)\bigr)\,\eta
\left( \int_{-\infty}^{+\infty} \varphi(t)\, \varphi(\eta-t)\,\sign (t)\,dt + O(\alpha)\right).
\end{cases}
\endeq

Since $\varphi$ is an arbitrary analytic profile, this simple
calculation already shows that the asymptotic profile is not necessarily the spatial mean of the
initial datum.

Assuming $\var\ll \alpha$, higher order expansions in $\alpha$ can be obtained by bootstrap
on the equations \eqref{u1}-\eqref{u-1}-\eqref{h1t}-\eqref{h-1t}: for instance,
\begin{align*} & \lim_{t\to\infty} \tilde{f}(t,0,\eta) = \
\tilde{f}_i(0,\eta) - \var^2\,\alpha\,\bigl(\pi^2\,\hat{W}(1)\bigr)\,\eta\,
\int_{-\infty}^{+\infty} \varphi(t)\, \varphi(\eta-t)\,\sign (t)\,dt  \\
& \quad 
- \var^2\,\alpha^2\, \bigl(2\pi^2\hat{W}(1)\bigr)^2\,\eta \,
\biggl\{ \int_0^\infty \int_0^t 
\Bigl( \varphi(\eta+t)\,\varphi(-\tau) - \varphi(\eta-t)\,\varphi(\tau)\Bigr)\,e^{-\pi(t-\tau)^2}\,
(t-\tau) \\
& \qqquad  + \varphi(\tau)\, \varphi(-t)\, e^{-\pi (\eta+(t-\tau))^2}\, \bigl(\eta+(t-\tau)\bigr)\\
& \qqquad + \varphi(-\tau)\,\varphi(t)\, e^{-\pi (\eta-(t-\tau))^2}\, \bigl(\eta-(t-\tau)\bigr) \biggr\}\,d\tau
\ \ + O (\var^2\alpha^3).
\end{align*} 

What about the limit in negative time? Reversing time is equivalent to
changing $f(t,x,v)$ into $f(t,x,-v)$ and letting time go forward. So
we define $S(v):=-v$, $T(\varphi)(\eta) :=
\var^2\,\alpha\,\pi^2\,\hat{W}(1)\,\eta \int_{-\infty}^{+\infty}
\varphi(t) \varphi(\eta-t)\,\sign(t)\,dt$; then $T(\varphi\circ S) =
T(\varphi)\circ S$, which means that the solutions constructed above
are always {\em homoclinic} at order $O(\var^2\alpha)$. The same is
true for the more precise expansions at order $O(\var^2\alpha^2)$, and
in fact it can be checked that the whole distribution $f^2$ is
homoclinic; in other words, $f$ is homoclinic up to possible
corrections of order $O(\var^4)$.  To exhibit heteroclinic deviations,
we shall consider more general perturbations.

\subsection{General perturbation}

Let us now consider a ``general'' initial datum $f_i(x,v)$ close to
$f^0(v)$, and expand the solution $f$.  We write $\var \, \varphi_k(\eta) =
(f_i-f^0)^{\tilde{\quad }}(k,\eta)$ and $\rho^m = \rho[h^m]$. The
interaction potential is assumed to be of the form $\alpha\,W$ with
$\alpha\ll 1$ and $W(x)=W(-x)$. The first equations of the Newton
scheme are \begeq\label{rho1gen} \hat{\rho}^1(t,k) =
\var\,\varphi_k(kt) - 4\pi^2\alpha\,\hat{W}(k) \int_0^t
\hat{\rho}^1(\tau,k)\,\tilde{f}^0\bigl(k(t-\tau)\bigr)\,|k|^2\,(t-\tau)\,d\tau,
\endeq
\begeq\label{h1gen}
\tilde{h}^1(t,k,\eta) = \var\,\varphi_k(\eta + kt)
- 4 \pi^2\alpha\,\hat{W}(k) \int_0^t \hat{\rho}^1(\tau,k)\,\tilde{f}^0\bigl(\eta+k(t-\tau)\bigr)\,
k\cdot \bigl(\eta+k(t-\tau)\bigr)\,d\tau,
\endeq
\begin{align}\label{h2gen}
\tilde{h}^2(t,k,\eta)
& = -4\pi^2\alpha\, \hat{W}(k)
\int_0^t \hat{\rho}^2 (\tau,k)\, \tilde{f}^0\bigl(\eta+k(t-\tau)\bigr)\,
k\cdot \bigl(\eta+k(t-\tau)\bigr)\,d\tau \\
& \nonumber - 4 \pi^2 \alpha \int_0^t \sum_\ell \hat{W}(\ell)\,\hat{\rho}^1(\tau,\ell)\,
\tilde{h}^1\bigl(\tau,k-\ell, \eta+k(t-\tau)\bigr)\,
\ell\cdot \bigl(\eta+k(t-\tau)\bigr)\,d\tau \\
& \nonumber - 4 \pi^2 \alpha \int_0^t \sum_\ell \hat{W}(\ell)\,\hat{\rho}^2(\tau,\ell)\,\tilde{h}^1
\bigl(\tau,k-\ell,\eta+k(t-\tau)\bigr)\,
\ell\cdot \bigl(\eta+k(t-\tau)\bigr)\,d\tau \\
& \nonumber - 4\pi^2\alpha \int_0^t \sum_\ell \hat{W}(\ell)\, \hat{\rho}^1(\tau,\ell)\,\tilde{h}^2
\bigl(\tau,k-\ell,\eta+k(t-\tau)\bigr)\,
\ell\cdot \bigl(\eta+k(t-\tau)\bigr)\,d\tau,
\end{align}
\begin{align}\label{r2gen}
\hat{\rho}^2(t,k)
& = -4\pi^2\alpha\, \hat{W}(k)
\int_0^t \hat{\rho}^2 (\tau,k)\, \tilde{f}^0\bigl(k(t-\tau)\bigr)\,
|k|^2\,(t-\tau)\bigr)\,d\tau \\
& \nonumber - 4 \pi^2 \alpha \int_0^t \sum_\ell \hat{W}(\ell)\,\hat{\rho}^1(\tau,\ell)\,
\tilde{h}^1\bigl(\tau,k-\ell, k(t-\tau)\bigr)\,
\ell\cdot k\, (t-\tau)\,d\tau \\
& \nonumber - 4 \pi^2 \alpha \int_0^t \sum_\ell \hat{W}(\ell)\,\hat{\rho}^2(\tau,\ell)\,\tilde{h}^1
\bigl(\tau,k-\ell,k(t-\tau)\bigr)\,
\ell\cdot k\, (t-\tau)\,d\tau \\
& \nonumber - 4\pi^2\alpha \int_0^t \sum_\ell \hat{W}(\ell)\, \hat{\rho}^1(\tau,\ell)\,\tilde{h}^2
\bigl(\tau,k-\ell,k(t-\tau)\bigr)\,
\ell\cdot k\, (t-\tau)\,d\tau.
\end{align}
Here $k$ and $\ell$ run over $\Z^d$.

From \eqref{rho1gen}--\eqref{h1gen} we see that $\rho^1$ and $h^1$ depend {\em linearly} on $\var$, and
\begeq\label{hatrho11} 
\hat{\rho}^1(t,k) = \var\,\bigl[\varphi_k(kt) + O(\alpha)\bigr],\qquad
\tilde{h}^1(t,k,\eta) = \var\, \bigl[\varphi_k(\eta+kt) + O(\alpha)\bigr].
\endeq

Then from \eqref{h2gen}--\eqref{r2gen}, $\rho^2$ and $h^2$ are $O(\var^2\,\alpha)$; so by plugging
\eqref{hatrho11} in these equations we obtain
\begeq\label{22}
\hat{\rho}^2(t,k) = -4\pi^2\var^2\,\alpha \int_0^t \sum_\ell \hat{W}(\ell)\,
\varphi_\ell(\ell\tau)\, \varphi_{k-\ell}(kt-\ell\tau)\,\ell\cdot k\, (t-\tau)\,d\tau\
+ O(\var^2\,\alpha^2) + O(\var^3\,\alpha),
\endeq
\begeq\label{32}
\tilde{h}^2(t,k,\eta) = -4\pi^2\var^2\,\alpha
\int_0^t \sum_\ell \hat{W}(\ell)\,\varphi_\ell(\ell\tau)\,\varphi_{k-\ell}(\eta+kt-\ell\tau)\,
\ell\cdot \bigl(\eta+k(t-\tau)\bigr)\,d\tau\ + O(\var^2\,\alpha^2) + O(\var^3\,\alpha).
\endeq
We plug these bounds again in the right-hand side of \eqref{h2gen} to find
\begeq\label{tildeh20}
\tilde{h}^2(t,0,\eta) = (\II)_\var(t,\eta) + (\III)_\var(t,\eta) + O(\var^3\,\alpha^3),
\endeq
where
\[ (\II)_\var(t,\eta) = -4\pi^2\,\alpha\int_0^t \sum_\ell (\ell\cdot\eta)\,
\hat{W}(\ell)\,\hat{\rho}^1(\tau,\ell)\,\tilde{h}^1(\tau,-\ell,\eta)\,d\tau\]
is quadratic in $\var$, and $(\III)_\var(t,\eta)$ is a third-order correction:
\begin{align}\label{III}
(\III)_\var(t,\eta)
= & 16 \pi^4\,\var^3\,\alpha^2
\sum_{m,\ell\in\Z^d} \hat{W}(\ell)\,\hat{W}(m)\\
& \int_0^t \int_0^\tau \varphi_m(ms)\,
\Bigl\{\varphi_{\ell-m}(\ell\tau-ms)\,\varphi_{-\ell}(\eta-\ell\tau)\,
(\ell\cdot m)\,(\tau-s) \nonumber \\
& \qquad\qquad\qquad + \varphi_\ell (\ell\tau)\,\varphi_{-\ell-m}(\eta-\ell\tau-ms)\,m\cdot (\eta-\ell(\tau-s))\Bigr\}\,
(\ell\cdot \eta)\,ds\,d\tau. \nonumber
\end{align}

If $\tilde{f}^0$ is even, changing $\varphi_k$ into $\varphi_k(-\,\cdot\,)$ and $\eta$ into $-\eta$
amounts to change $k$ into $-k$ at the level of \eqref{rho1gen}--\eqref{h1gen}; but then
$(\II)_\var$ is invariant under this operation. We conclude that $f$ is always homoclinic at
second order in $\var$, and we consider the influence of the third-order term \eqref{III}.
Let
\[ C[\varphi](\eta) := \lim_{t\to\infty} (\III)_\var(t,\eta).\]
After some relabelling, we find
\begin{multline}\label{limIII}
C[\varphi](\eta) = 16 \pi^4\,\var^3\,\alpha^2
\sum_{k,\ell\in\Z^d}
\hat{W}(k)\,\hat{W}(\ell)\\
\int_0^\infty \int_0^t \varphi_\ell(\ell\tau)\ \Bigl\{
\varphi_{k-\ell}(kt-\ell\tau)\,\varphi_{-k}(\eta-kt)\, (k\cdot\ell)\,(t-\tau)\\
+ \varphi_k(kt) \, \varphi_{-k-\ell}(\eta-kt-\ell\tau)\,\ell\cdot (\eta-k(t-\tau))
\Bigr\}\, (k\cdot\eta)\,d\tau\,dt.
\end{multline}
Now assume that $\varphi_{-k}=\sigma\,\varphi_k$ with $\sigma=\pm 1$.
($\sigma=1$ means that the perturbation is even in $x$; $\sigma=-1$ that it is odd.)
Using the symmetry $(k,\ell)\leftrightarrow (-k,-\ell)$ one can check that
\[ C[\varphi\circ S]\circ S = \sigma\,C[\varphi],\] where
$S(z)=-z$. In particular, if the perturbation is odd in $x$, then the
third-order correction imposes a heteroclinic behavior for the
solution, as soon as $C[\varphi]\neq 0$. 

To construct an example where $C[\varphi]\neq 0$, we set $d=1$, $f^0$
$=$ Gaussian, $f_i-f^0 = \sin(2\pi x)\,\theta_1(v) + \sin(4\pi x)\,\theta_2(v)$,
$\varphi_1 = -\varphi_{-1} = \tilde{\theta}_1/2$,
$\varphi_2 = -\varphi_{-2} = \tilde{\theta}_2/2$.
The six pairs $(k,\ell)$ contributing to
\eqref{limIII} are $(-1,1)$, $(1,-1)$, $(1,2)$, $(2,1)$, $(-1,-2)$, $(-2,-1)$,
By playing on the respective sizes of $\hat{W}(1)$ and
$\hat{W}(2)$ (which amounts in fact to changing the size of the box),
it is sufficient to consider the terms with coefficient
$\hat{W}(1)^2$, {\it i.e.}, the pairs $(-1,1)$ and $(1,-1)$.  Then the
corresponding bit of $C[\varphi](\eta)$ is
\begin{align*} 
- 16 \pi^4\,\var^3\,\alpha^2\,\hat{W}(1)^2\,
\eta \int_0^\infty \int_0^t 
\Bigl[ & \varphi_1(\tau)\,\varphi_1(\eta+t)\,\varphi_2(-t+\tau)\,(t-\tau) \\
& + \varphi_1(\tau) \,\varphi_1(t)\,\varphi_2(\eta+t-\tau)\,(\eta+t-\tau) \\
& +\varphi_1(-\tau)\,\varphi_1(\eta-t)\,\varphi_2(t+\tau)\,(t-\tau)\\
& +\varphi_1(-\tau)\,\varphi_1(t)\,\varphi_2(\eta-t+\tau)\,(t-\tau-\eta)\Bigr]\,d\tau\,dt.
\end{align*}
If we let $\varphi_1$ and $\varphi_2$ vary in such a way that they become
positive and almost concentrated on $\R_+$, the only remaining term is
the one in $\varphi_1(\tau)\,\varphi_2(\eta+t-\tau)\,\varphi_1(t)$,
and its contribution is negative for $\eta>0$. 
So, at least for certain values of $W(1)$ and $W(2)$ there is a choice of analytic
functions $\varphi_1$ and $\varphi_2$, such that $C[\varphi]\neq 0$.
This demonstrates the existence of heteroclinic trajectories.
\sm

To summarize: At first order in $\var$, the convergence is to the
spatial average; at second order there is a homoclinic correction; at
third order, if at least three modes with zero sum are excited, there
is possibility of heteroclinic behavior.

\begin{Rk} \label{rkFB}
As pointed out to us by Bouchet, the existence of heteroclinic trajectories implies
that the asymptotic behavior cannot be predicted on the basis of the invariants of the equation
and the interaction; indeed, the latter do not distinguish between the forward and backward solutions.
\end{Rk}

\section{Beyond Landau damping}
\label{sec:beyond}

We conclude this paper with some general comments about the physical
implications of Landau damping.

Remark \ref{rkFB} show in particular that
there is no ``universal'' large-time behavior of the solution of the
nonlinear Vlasov equation in terms of just, say, conservation laws and
the initial datum; the {\em dynamics} also have to enter
explicitly. One can also interpret this as a lack of ergodicity: the
nonlinearity is not sufficient to make the system explore the space of
all ``possible'' distributions and to choose the most favorable one,
whatever this means. Failure of ergodicity for a system of finitely
many particles was already known to occur, in relation to the KAM
theorem; this is mentioned e.g. in \cite[p.~257]{MP:Euler:book} for
the vortex system. There it is hoped that such behavior disappears as
the dimension goes to infinity; but now we see that it also exists
even in the infinite-dimensional setting of the Vlasov equation.

At first, this seems to be bad news for the statistical theory of the Vlasov
equation, pioneered by Lynden-Bell \cite{LB:violent} and explored by
various authors \cite{CSR,miller,robert,THLB:Hfunctions,WZS:mnras}, since even the
sophisticated variants of this theory try to predict the likely final
states in terms of just the characteristics of the initial data.  In
this sense, our results provide support for an objection raised by
Isichenko \cite[p.~2372]{isichenko:nonlinearlandau} against the
statistical theory.

However, looking more closely at our proofs and results, proponents of
the statistical theory will have a lot to rejoice about.

To start with, our results are the first to rigorously establish that the nonlinear Vlasov
equation does enjoy some asymptotic ``stabilization'' property in
large time, without the help of any extra diffusion or ensemble
averaging.

Next, the whole analysis is perturbative: each stable spatially
homogeneous distribution will have its small ``basin of damping'', and
it may be that some distributions are ``much more stable'' than
others, say in the sense of having a larger basin.

Even more importantly, in Section \ref{sec:response} we have crucially
used the smoothness to overcome the potentially destabilizing
nonlinear effects. So any theory based on nonsmooth functions might not be constrained
by Landau damping.  This certainly applies to a statistical theory,
for which smooth functions should be a zero-probability set.

Finally, to overcome the nonlinearity, we had to cope with huge constants
(even qualitatively larger than those appearing in classical KAM theory).
If one believes in the explanatory virtues of proofs, 
these large constants might be the indication that Landau damping is a
thin effect, which might be neglected when it comes to predict the
``final'' state in a ``turbulent'' situation.

Further work needs to be done to understand whether these
considerations apply equally to the electrostatic and gravitational
cases, or whether the electrostatic case is favored in these respects;
and what happens in ``low'' regularity.

Although the underlying mathematical and physical mechanisms differ,
nonlinear Landau damping (as defined by Theorem \ref{thmmain}) may
arguably be to the theory of Vlasov equation what the KAM theorem is
to the theory of Hamiltonian systems. Like the KAM theorem, it might
be conceptually important in theory and practice, and still be severely
limited.\footnote{It is a well-known scientific paradox that the KAM
  theorem was at the same time tremendously influential in the science
  of the twentieth century, and so restrictive that its assumptions
  are essentially never satisfied in practice.}

Beyond the range of application of KAM theory lies the softer, more
robust {\em weak KAM theory} developed by Fathi \cite{fathi} in
relation to Aubry--Mather theory. By a nice coincidence, a Vlasov
version of the weak KAM theory has just been developed by Gangbo and
Tudorascu \cite{GT}, although with no relation to Landau
damping. Making the connection is just one of the many developments
which may be explored in the future.

\appendix

\section*{Appendix}

\def\thesection{A} In this appendix we gather some elementary tools,
our conventions, and some reminders about calculus.
We write $\N_0 = \{0,1,2,\ldots\}$.

\subsection{Calculus in dimension $d$} \label{app:exp}

If $n\in\N_0^d$ we define
\[ n! = n_1!\ldots n_d!\]
and 
\[ \Cnk{n}{m} = \Cnk{n_1}{m_1}\ldots \Cnk{n_d}{m_d}.\]

If $z\in\C^d$ and $n\in\Z^d$, we let 
\[ \|z\| = |z_1|+\ldots + |z_d|;\qquad
z^n = z_1^{n_1}\ldots z_n^{n_d}\in\C;\qquad
|z|^n = |z^n|.\]

In particular, if $z\in\C^d$ we have
\[ e^{\|z\|} = e^{|z_1|+\ldots +|z_d|}
= \sum_{n\in\N_0^d} \frac{\|z\|^n}{n!}.\]
We may write $e^{|z|}$ instead of $e^{\|z\|}$.

\subsection{Multi-dimensional differential calculus} \label{appdc}

The Leibniz formula for functions $f,g:\R\to\R$ is
\[ (fg)^{(n)} = \sum_{m\leq n} \Cnk{n}{m} f^{(m)} g^{(n-m)},\]
where of course $f^{(n)}=d^nf/dx^n$. The expression of derivatives of composed functions
is given by the Fa\`a di Bruno formula:
\[ (f\circ G)^{(n)} = \sum_{\sum j m_j = n}
\frac{n!}{m_1!\ldots m_n!}\,
\Bigl(f^{(m_1+\ldots+m_n)}\circ G \Bigr)
\prod_{j=1}^n \left( \frac{G^{(j)}}{j!}\right)^{m_j}.\]
\med

These formulas remain valid in several dimensions, provided that one defines,
for a multi-index $n=(n_1,\ldots,n_d)$,
\[ f^{(n)} = \frac{\pa^{n_1}}{\pa x_1^{n_1}}\ldots \frac{\pa^{n_d}}{\pa x_d^{n_d}} f.\]

They also remain true if $(\pa_1,\ldots,\pa_d)$ is replaced by a $d$-tuple of 
{\em commuting} derivation operators.

As a consequence, we shall establish the following Leibniz-type formula for
operators that are combinations of gradients and multiplications.

\begin{Lem} \label{lemlei}
Let $f$ and $g$ be functions of $v\in\R^d$, and $a,b\in\C^d$. Then for any $n\in\N^d$,
\[ (\nabla_v + (a+b))^n (fg) = 
\sum_{m\leq n} \Cnk{n}{m} (\nabla_v + a)^m f\, (\nabla_v + b)^{n-m} g.\]
\end{Lem}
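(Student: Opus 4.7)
The plan is to prove this via the exponential conjugation trick, which converts shifted gradients into ordinary gradients and then reduces the claim to the classical multidimensional Leibniz rule.

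The key observation is that for any constant vector $a \in \C^d$, one has the operator identity
\[
(\nabla_v + a) \varphi = e^{-a\cdot v}\,\nabla_v\bigl(e^{a\cdot v}\varphi\bigr),
\]
since $\nabla_v(e^{a\cdot v}\varphi) = e^{a\cdot v}(a\varphi + \nabla_v\varphi)$. Iterating component by component (all operators involved commute, as $a$ is constant) yields
\[
(\nabla_v + a)^m \varphi = e^{-a\cdot v}\,\nabla_v^m\bigl(e^{a\cdot v}\varphi\bigr)
\qquad \forall m\in\N_0^d.
\]

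Applying this identity with $a$ replaced by $a+b$ to the left-hand side of the lemma gives
\[
(\nabla_v + (a+b))^n(fg) = e^{-(a+b)\cdot v}\,\nabla_v^n\bigl(e^{(a+b)\cdot v} fg\bigr)
= e^{-(a+b)\cdot v}\,\nabla_v^n\Bigl(\bigl(e^{a\cdot v}f\bigr)\bigl(e^{b\cdot v}g\bigr)\Bigr).
\]
Now I would invoke the standard multidimensional Leibniz formula recalled in Appendix \ref{appdc} on the product $(e^{a\cdot v}f)(e^{b\cdot v}g)$ to obtain
\[
\nabla_v^n\Bigl(\bigl(e^{a\cdot v}f\bigr)\bigl(e^{b\cdot v}g\bigr)\Bigr)
= \sum_{m\leq n} \Cnk{n}{m}\,\nabla_v^m\bigl(e^{a\cdot v}f\bigr)\,
\nabla_v^{n-m}\bigl(e^{b\cdot v}g\bigr).
\]
Multiplying back by $e^{-(a+b)\cdot v} = e^{-a\cdot v}\,e^{-b\cdot v}$ and redistributing the exponential factors, each summand becomes
\[
\bigl(e^{-a\cdot v}\nabla_v^m(e^{a\cdot v}f)\bigr)\,\bigl(e^{-b\cdot v}\nabla_v^{n-m}(e^{b\cdot v}g)\bigr)
= (\nabla_v + a)^m f\,\cdot\,(\nabla_v + b)^{n-m} g,
\]
by the conjugation identity applied twice more. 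Summing over $m\leq n$ yields exactly the claimed formula.

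There is no real obstacle here: once the conjugation identity is spotted, the proof is a one-line reduction to the classical Leibniz rule. An alternative, equally painless route would be a direct induction on $|n|$, reducing the general case to the scalar one-variable identity $(\partial + (a+b))(fg) = (\partial + a)f \cdot g + f\cdot(\partial + b)g$ and iterating in each coordinate; but the exponential conjugation approach has the advantage of explaining \emph{why} the formula holds (it is the ordinary Leibniz rule in disguise) and generalizes transparently to any first-order derivations shifted by commuting multiplication operators.
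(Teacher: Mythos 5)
Your proof is correct, and it takes a genuinely different route from the paper. The paper's argument expands the right-hand side directly into a triple sum
\[
\sum_{m,q,r} \Cnk{n}{m}\Cnk{m}{q}\Cnk{n-m}{r}\,\nabla_v^q f\,\nabla_v^r g\,a^{m-q}\,b^{n-m-r},
\]
then performs a change of indices ($p=q+r$, $s=m-q$) and recombines via the binomial theorem and the ordinary Leibniz rule to recover the left-hand side; it is a purely combinatorial reindexing computation. Your exponential conjugation argument, $(\nabla_v + a)^m\varphi = e^{-a\cdot v}\nabla_v^m(e^{a\cdot v}\varphi)$, instead \emph{transforms} the shifted operators into ordinary gradients, so the identity becomes the classical multidimensional Leibniz formula applied to $(e^{a\cdot v}f)(e^{b\cdot v}g)$ wrapped in conjugating exponentials. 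What each buys: the paper's computation is self-contained and elementary (no auxiliary identity to establish, just index gymnastics), whereas yours is shorter once the conjugation lemma is on the table, and it gives a clearer conceptual explanation — the ``shifted Leibniz rule'' really is the usual one in disguise — which, as you note, generalizes immediately to any commuting first-order derivations perturbed by multiplication operators. Either route is fully rigorous; your intermediate step $e^{-(a+b)\cdot v}=e^{-a\cdot v}e^{-b\cdot v}$ and the redistribution of the exponential factors are exactly right, and the iteration of the conjugation identity to arbitrary multi-indices is justified since all the operators involved commute componentwise.
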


\begin{proof}
The right-hand side is equal to
\[ \sum_{m,q,r} \Cnk{n}{m} \Cnk{m}{q}\Cnk{n-m}{r}
\nabla_v^q f\, \nabla_v^r g\, a^{m-q}\, b^{n-m-r}.\]
After changing indices $p=q+r$, $s=m-q$, this becomes
\begin{align*} 
\sum_{s,p,r} \Cnk{n}{p} \Cnk{p}{r} \Cnk{n-p}{s} 
\nabla_v^rg\, \nabla_v^{p-r}f\, a^s \, b^{n-p-s}
& = \sum_p \Cnk{n}{p} \nabla_v^p (fg)\, (a+b)^{n-p}\\
& = (\nabla_v + (a+b))^n (fg).
\end{align*}
\end{proof}

\subsection{Fourier transform} \label{appFourier}

If $f$ is a function $\R^d\to\R$, we define
\begeq\label{tildef}
\tilde{f}(\eta) = \int_{\R^d} e^{-2i\pi \eta\cdot v}\,f(v)\,dv;
\endeq
then we have the usual formulas
\[ f(v) = \int_{\R^d} \tilde{f}(\eta)\, e^{2i\pi \eta\cdot v}\,d\eta;\qquad
\tilde{\nabla f}(\eta) = 2i\pi \eta\, \tilde{f}(\eta).\]

Let $\T^d_L = \R^d/(L \Z^d)$. If $f$ is a function $\T^d_L\to\R$, we define
\begeq\label{hatf} \hat{f}^{(L)}(k) = \int_{\T^d_L} e^{-2i\pi \frac{k}{L}\cdot x}\, f(x)\,dx;
\endeq
then we have
\[ f(x) = \frac1{L^d} \sum_{k\in\Z^d} \hat{f}^{(L)}(k)\, e^{2i\pi \frac{k}{L}x};\qquad
\hat{\nabla f}^{(L)}(k) = 2i\pi \frac{k}{L}\, \hat{f}^{(L)}(k).\]

If $f$ is a function $\T^d_L \times \R^d \to\R$, we define 
\begeq\label{tf}
\tilde{f}^{(L)}(k,\eta) = \int_{\T^d_L} \int_{\R^d} e^{-2i\pi
  \frac{k}{L}\cdot x}\, e^{-2i\pi \eta\cdot v}\,f(x,v)\,dx\,dv;
\endeq
so that the reconstruction formula reads
\[f(x,v) = \frac1{L^d}\sum_{k\in\Z^d} \int_{\R^d} \tilde{f}^{(L)}(k,\eta)\, 
e^{2i\pi \frac{k}{L}\cdot x} e^{2i\pi \eta\cdot v}\,dv.\]

When $L=1$ we do not specify it: so we just write
\[ \hat{f} = \hat{f}^{(1)};\qquad \tilde{f} = \tilde{f}^{(1)}.\]
(There is no risk of confusion since in that case, \eqref{tf} and \eqref{tildef} coincide.)

\subsection{Fixed point theorem} \label{app:FP}

The following theorem is one of the many variants of the Picard fixed point theorem.
We write $B(0,R)$ for the ball of center~0 and radius~$R$.

\begin{Thm}[Fixed point theorem] \label{thmfpt}
Let $E$ be a Banach space, $F:E\to E$, and $R=2\|F(0)\|$. If $F$ is $(1/2)$-Lipschitz
$B(0,R)\to E$, then it has a unique fixed point in $B(0,R)$.
\end{Thm}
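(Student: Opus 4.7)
The plan is to apply the classical Banach--Picard iteration, using the two hypotheses — the size constraint $R = 2\|F(0)\|$ and the Lipschitz constant $1/2$ — to guarantee, respectively, that the ball $B(0,R)$ is stable under $F$ and that the iterates form a Cauchy sequence.

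First I would check stability: for any $x \in B(0,R)$,
\[
\|F(x)\| \leq \|F(x) - F(0)\| + \|F(0)\| \leq \tfrac12 \|x\| + \tfrac{R}{2} \leq \tfrac{R}{2} + \tfrac{R}{2} = R,
\]
so $F$ maps $B(0,R)$ into itself. Next I would define the Picard iterates $x_0 = 0$ and $x_{n+1} = F(x_n)$, which all lie in $B(0,R)$ by the previous step. The contraction hypothesis yields $\|x_{n+1} - x_n\| \leq 2^{-n}\|x_1 - x_0\| = 2^{-n}\|F(0)\|$, so the series $\sum (x_{n+1} - x_n)$ converges absolutely in $E$; since $E$ is complete, $(x_n)$ converges to some $x_\infty \in B(0,R)$ (the ball being closed). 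Passing to the limit in $x_{n+1} = F(x_n)$, using continuity of $F$ on $B(0,R)$ (which is implied by the Lipschitz bound), yields $F(x_\infty) = x_\infty$.

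For uniqueness, if $x,y \in B(0,R)$ are both fixed points, then $\|x - y\| = \|F(x) - F(y)\| \leq \tfrac12 \|x - y\|$, forcing $\|x-y\| = 0$.

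There is no real obstacle here: the only mild subtlety is ensuring that the closed (or open, depending on convention) ball $B(0,R)$ contains the limit, which is why one works with the stability estimate above and notes that $R$ has been chosen precisely to make the bound $R/2 + R/2 \leq R$ saturate, leaving no margin but also requiring no extra smallness. If the convention is that $B(0,R)$ denotes the open ball, one can either replace it by the closed ball throughout (the Lipschitz hypothesis extends by continuity) or observe that the iterates actually satisfy the strict bound $\|x_n\| < R$ for $n \geq 1$ whenever $F(0) \neq 0$ (and the statement is trivial when $F(0) = 0$).
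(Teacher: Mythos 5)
Your proof is correct and takes essentially the same route as the paper: Picard iteration initialized at $0$, the geometric bound showing the iterates stay in $B(0,R)$, completeness to extract the limit, and the $1/2$-Lipschitz estimate for uniqueness. The extra care you take about whether $B(0,R)$ is open or closed, and the explicit stability check $F(B(0,R)) \subset B(0,R)$, are minor elaborations of what the paper leaves implicit.
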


\begin{proof}
Uniqueness is obvious. To prove existence, run the classical Picard iterative scheme
initialized at 0: $x_0=0$, $x_1=F(0)$, $x_2=F(F(0))$, etc. It is clear that
$(x_n)$ is a Cauchy sequence and
$\|x_n\|\leq \|F(0)\|(1+\ldots + 1/2^n)\leq 2 \|F(0)\|$, so $x_n$ converges
in $B(0,R)$ to a fixed point of $F$.
\end{proof}

\bibliographystyle{acm}
\bibliography{./landau}


\signcm \signcv

\end{document}